\numberwithin{equation}{section}
\newtheorem{teo}{Theorem}[section]
\newtheorem{pro}[teo]{Proposition}
\newtheorem{lem}[teo]{Lemma}
\newtheorem{cor}[teo]{Corollary}
\newtheorem{con}[teo]{Conjecture}
\newtheorem{que}[teo]{Question}
\newtheorem{teoalpha}{Theorem}
\theoremstyle{definition}
\newtheorem{dfn}[teo]{Definition}
\newtheorem{exa}[teo]{Example}
\theoremstyle{remark}
\newtheorem{rem}[teo]{Remark}
\DeclareMathOperator{\coniveau}{N}
\global\let\hom\undefined
\DeclareMathOperator{\hom}{Hom}
\newcommand{\powser}[1]{[\![#1]\!]}
\def\cris{{\rm cris}}
\def\sep{{\rm sep}}
\def\an{{\rm an}}
\def\univ{{\rm univ}}
\def\mmu{{\pmb\mu}}
\def\udot{^{\bullet}}
\def\proj{{\mathbb P}}
\def\cx{{\mathbb C}}
\def\rat{{\mathbb Q}}
\def\integ{{\mathbb Z}}
\def\ww{{\mathbb W}}
\def\dd{{\mathbb D}}
\def\iso{\cong}
\def\shim{{\mathsf Sh}}
\renewcommand{\bar}[1]{{\overline{#1}}}
\DeclareMathOperator{\gal}{Gal}
\DeclareMathOperator{\spec}{Spec}
\DeclareMathOperator{\spf}{Spf}
\DeclareMathOperator{\defo}{Def}
\DeclareMathOperator{\pic}{Pic}
\DeclareMathOperator{\Ab}{Ab}
\DeclareMathOperator{\A}{A}
\DeclareMathOperator{\chow}{CH}
\DeclareMathOperator{\hilb}{Hilb}
\title{Decomposition of the diagonal, intermediate Jacobians,  and universal
 codimension-2 cycles in positive characteristic}
\author{Jeffrey D. Achter}
\address{Colorado State University, Department of Mathematics,
 Fort Collins, CO 80523,
 USA}
\email{j.achter@colostate.edu}
\author{Sebastian Casalaina-Martin }
\address{University of Colorado, Department of Mathematics,
 Boulder, CO 80309, USA }
\email{casa@math.colorado.edu}
\author{Charles Vial}
\address{Universit\"at Bielefeld, Germany}
\email{vial@math.uni-bielefeld.de}
\thanks{ The first and second authors were partially supported by grants from the NSA (H98230-16-1- 0046 and H98230-16-1-0053, respectively) and the Simons Foundation (637075 and 581058, respectively). The  third author was supported by the Deutsche Forschungsgemeinschaft (DFG, German Research Foundation) – Project-ID~491392403 -- TRR~358}
\date{March 19, 2025}
\begin{document}

 \begin{abstract}
  We consider the connections among algebraic cycles,
  abelian varieties, and stable rationality of smooth projective varieties in
  positive characteristic. Recently Voisin  constructed two new obstructions to stable rationality for rationally connected complex
  projective threefolds by giving necessary and sufficient conditions for the existence of a cohomological decomposition of the diagonal.    
  In this paper, we show how to extend these obstructions to rationally chain connected threefolds in positive characteristic  via  $\ell$-adic cohomological decomposition of the diagonal.  
  This requires extending results in Hodge theory regarding intermediate Jacobians and Abel--Jacobi maps to the setting of algebraic representatives.  
 For instance, we show that the algebraic representative for codimension-two cycle classes on a geometrically stably rational threefold admits a canonical auto-duality, which in characteristic zero agrees with the principal polarization on the intermediate Jacobian coming from Hodge theory. 
  As an application,
 we extend a result of Voisin, and show that 
 in characteristic greater than two, a desingularization of a very general quartic double solid  with  seven nodes 
 does not admit a universal codimension-two cycle class. In the process, we establish some results on the moduli space of nodal degree-four polarized K3 surfaces in positive characteristic.
   \\
   
  \noindent  ----------------------
   \\
   
  \noindent \textsc{R\'esum\'e.}
   Nous examinons les relations entre cycles alg\'ebriques, vari\'et\'es ab\'eliennes, et la propri\'et\'e de  rationalit\'e stable pour les vari\'et\'es projectives et lisses en caract\'eristique positive. 
   R\'ecemment, Voisin a exhib\'e deux nouvelles obstructions \`a la rationalit\'e stable pour les solides projectifs complexes rationnellement connexes en donnant des conditions n\'ecessaires et suffisantes \`a l'existence d'une d\'ecomposition cohomologique de la diagonale.
   Dans cet article, nous montrons comment \'etendre ces obstructions aux solides projectifs rationnellement connexes par cha\^ine en caract\'eristique positive en utilisant la cohomologie $\ell$-adique. 
   Pour cela, nous \'etendons des r\'esultats en th\'eorie de Hodge concernant les jacobiennes interm\'ediaires et les applications d'Abel--Jacobi au contexte des repr\'esentants alg\'ebriques.
   Par exemple, nous \'etablissons que le repr\'esentant alg\'ebrique pour les cycles de codimension deux sur un solide g\'eom\'etriquement rationnellement stable admet un isomorphisme canonique vers son dual qui co\"incide en caract\'eristique nulle avec la polarisation principale sur la jacobienne interm\'ediaire provenant de la th\'eorie de Hodge.
   Comme application, nous \'etendons un r\'esultat de Voisin et montrons qu'en caract\'eristique positive diff\'erente de deux une d\'esingularisation d'un solide quartique  tr\`es g\'en\'eral poss\'edant sept n\oe uds n'admet pas de classe de cycle universel de codimension deux. 
   En chemin, nous \'etablissons des r\'esultats concernant l'espace de modules des surfaces K3 nodales polaris\'ees de degr\'e quatre en caract\'eristique positive.
             \end{abstract}

             \maketitle

\newpage
\setcounter{tocdepth}{1}
{\tableofcontents}

    \newpage

\addtocontents{toc}{\protect\setcounter{tocdepth}{0}}
 \section*{Introduction}
\addtocontents{toc}{\protect\setcounter{tocdepth}{1}}

 In this paper we consider the connections among algebraic cycle classes,
 abelian varieties, and stable rationality of smooth projective varieties in
 positive characteristic.
 As motivation, recall that
 Clemens and Griffiths \cite{CG72} have shown that if a complex projective
 rationally connected threefold $X$ is rational, then the so-called minimal cohomology
 class
 \begin{equation}\label{E:CG72Rat}
 \frac{[\Theta_X]^{g-1}}{(g-1)!}\in H^{2g-2}(J^3(X),\mathbb Z)
 \end{equation}
 is an \emph{effective} algebraic cycle class,
 where   $g=\dim
 J^3(X)$, and $\Theta_X$ is the canonical polarization
         on the intermediate
 Jacobian $J^3(X)$ induced by the cup product on $H^3(X,\mathbb Z)$, which is principal as $h^{1,0}(X)=h^{3,0}(X)=0$.      For cubic threefolds, which are all unirational and therefore rationally connected, 
 Clemens and Griffiths showed, rephrasing via the Matsusaka--Ran criterion,  that
 $\frac{[\Theta_X]^{g-1}}{(g-1)!}$ is not an effective algebraic cycle class, and
 therefore that cubic threefolds are not rational.

 Recently Voisin \cite{voisinUniv} showed  that
 if a complex projective rationally connected  threefold $X$ is stably rational, then the minimal
 cohomology class \eqref{E:CG72Rat}
 is an algebraic cycle class (possibly \emph{not} effective), 
     and moreover,
 $J^3(X)$ admits a universal codimension-$2$ cycle class: there exists a cycle class $Z\in \operatorname{CH}^2(J^3(X)\times
 X)$, which is fiberwise algebraically trivial,  such  that the composition
 $$
 \xymatrix@R=.1em{
  \psi_{Z}:J^3(X)\ar[r] & \operatorname{A}^2(X)\ar[r]^{AJ}&  J^3(X)\\
  t \ar@{|->}[r]&  Z_t \ar@{|->}[r]& AJ(Z_t)
 }
 $$
 is the identity.   While it is not known whether there exist any principally
 polarized  abelian varieties $(A,\Theta)$ where the class
 $\frac{[\Theta]^{g-1}}{(g-1)!}$ is not algebraic ($g=\dim A$), 
  and thus it is
 unclear  whether this test for stable irrationality via minimal cohomology
 classes can  fail, the latter condition,  on universal codimension-$2$ cycle
 classes, has so far been more tractable.  For a  smooth projective threefold $X$
 obtained as the desingularization of a very general quartic double solid with
 $7$ nodes, Voisin showed \cite{voisinUniv} that  $J^3(X)$ does not admit a
 universal codimension-$2$ cycle class, and therefore, that such a unirational
 threefold is not stably rational.
 Recall that a nodal quartic double solid $X$ is obtained as a double cover $X\to
 \mathbb P^3$ branched along a nodal quartic surface, and that any such variety is
 unirational.

 A particularly interesting aspect of Voisin's example is that other standard
 tests of stable irrationality fail.  More precisely, building on previous work
 of Artin--Mumford \cite{ArtMum72} and Bloch--Srinivas \cite{BlSr83}, Voisin
 showed \cite{voisinUniv} that
 if a complex projective threefold $X$ is stably rational, then\,:
\begin{enumerate}
\item (Bloch--Srinivas)   $H^1(X,\mathbb Z)=0$\,;

\item (Bloch--Srinivas)  $H^{2i}(X,\mathbb Z)$ is  algebraic for all $i$\,;

\item (Bloch--Srinivas) The Abel--Jacobi map $AJ:\operatorname{A}^2(X)\to J^3(X)$ is surjective\,;
	
\item (Artin--Mumford) $\operatorname{Tors}H^\bullet (X,\mathbb Z)=0$\,;
		
\item   (Voisin) $J^3(X)$ admits a universal codimension-$2$ cycle class\,;
		
\item  (Voisin) $\frac{[\Theta_{X}]^{g-1}}{(g-1)!}\in H^{2g-2}(J^3(X),\mathbb Z)$ is an algebraic class,
	where $g = \dim J^3(X)$.
\end{enumerate}

We have omitted the \emph{a priori} weaker standard condition that $H^0(X,\Omega^i_X)=0$ for $i>0$ as it is implied by conditions (1)--(3).  
Also note that since the Albanese is a stable birational invariant, one gets (1) in characteristic $0$ without using the Bloch--Srinivas arguments\,; however, for reference in the positive characteristic case, we prefer to call this a Bloch--Srinivas condition.   We emphasize that any complex projective rationally connected threefold satisfies conditions (1)--(3)\,; for (1)  see \emph{e.g.}, \cite{BlSr83} or \cite[Cor.~10.18]{voisinII}, (2) is \cite[Thm.~2]{voisinIntHodgeUni}, and (3) is \cite[Thm.~1(i)]{BlSr83}.
In other words, (1)--(3) are obstructions to the rational connectivity of a threefold, while (4)--(6) are obstructions to the stable rationality of a rationally connected threefold.   

 Voisin's example is the first example of a unirational but stably irrational threefold
 satisfying (1)--(4) and (6) above, while only failing (5).
 Finding examples of unirational threefolds failing  (4) has been the
 typical method of establishing stable irrationality.  For instance, the first
 example of a unirational but stably irrational threefold was due to Artin and Mumford
 \cite{ArtMum72}, who  showed that there are threefolds  $X$ obtained as
 desingularizations of quartic double solids with $10$ nodes in special position,
 such that $\operatorname{Tors}H^4(X,\mathbb Z)\ne 0$ (\emph{i.e.}, (4) fails).  In that
 example (5) and (6) hold trivially since $g=0$.   It is not known if there are examples of unirational varieties failing (6), as again, it is unknown if this condition fails for any principally polarized abelian variety.
 
 The goal of this paper is to consider these types of questions in positive characteristic.   The Clemens--Griffiths results on rationality have been  investigated in this
 setting, for instance in  \cite{murre-cubic, beauville77} over algebraically closed fields, 
  and more recently in \cite{BenWittClGr,BW} over arbitrary fields.  In  this paper we focus on the topic of stable rationality, with an emphasis on 
 Voisin's conditions (5) and (6).   As a brief digression, we recall that  condition (4),  as well as  the condition that $H^0(X,\Omega_X^i)=0$ for $i>0$,   have been studied extensively in the literature in positive characteristic in the context of stable rationality.  
In condition  
  (4), one can for instance replace  Betti cohomology with
 $\ell$-adic cohomology,  and Artin--Mumford \cite{ArtMum72}  showed for example that over any
 algebraically closed field $k$ of characteristic not equal to $2$ there are
 threefolds  $X$ obtained as  desingularizations of quartic double solids with
 $10$ nodes in special position, such that $\operatorname{Tors}H^3(X,\mathbb
 Z_2)\ne 0$, \emph{i.e.}, (4) fails, and therefore that these give examples of
 unirational stably irrational threefolds over $k$.  Motivated by Voisin's degeneration techniques \cite{voisinUniv} (see also \cite[Thm.~V.5.14]{kollar}), 
 the condition on the Hodge numbers has been studied by  Totaro \cite{totaroHype}, who considered varieties $X$ obtained as desingularizations of hypersurfaces in positive characteristic with the property that $H^0(X,\Omega_X^{\dim X-1})\ne 0$\,; by degeneration to positive characteristic,  he gives examples of  rationally connected but stably irrational  hypersurfaces in characteristic $0$.

 \medskip 
Returning now to the focus of this paper, our goal  is to show that over algebraically closed fields of
 positive characteristic, there are examples of unirational but stably irrational
 threefolds satisfying (1)--(4), and (6) above, while failing obstruction (5)\,; \emph{i.e.}, examples of unirational threefolds with no universal codimension-$2$ cycle class.    We in fact study this question more generally over an arbitrary  perfect field.

 The first issue is to make sense of  conditions (3), (5), and (6)  over a perfect field
 $K$, since the conditions are defined in terms of the intermediate Jacobian and
 the Abel--Jacobi map,  which are inherently transcendental.  We take two
 approaches, one that works in characteristic $0$, and one that works in
 arbitrary characteristic.
 In the former case, where we may take $K\subseteq \mathbb C$, we have shown
 \cite{ACMVdmij}  that  $J^3_a(X^{\an})$, the image  of the Abel--Jacobi map
 $
 AJ:\operatorname{A}^2(X^{\an})\to J^3(X^{\an})
 $
 on algebraically trivial cycle classes, descends to a distinguished model
 $J^3_{a,X/K}$ over $K$ such that the Abel--Jacobi map is
 $\operatorname{Aut}(\mathbb C/K)$-equivariant.   It is easy to see that if
 $X^{\an}$ is a rationally connected threefold (or more generally, has  universally trivial \emph{rational} Chow group of zero cycles), in which case $J^3_a(X^{an})=J^3(X^{an})$, then the canonical principal polarization
 $\Theta_{X^{\an}}$ (see Remark~\ref{R:SRC-Hodge})
  descends to a principal
 polarization $\Theta_X$ on $J^3_{a,X/K}$ (Theorem~\ref{T:CanPol3-fold}\,; see also
 \cite[Prop.~2.5]{BenWittClGr}).

   Without the assumption that $\operatorname{char}(K)=0$, there are two replacements for the Abel--Jacobi map in condition (3) that both play a crucial role in our treatment. 
 To motivate this, we recall from \cite[Thm.~10.3]{murre83} that $T_\ell AJ:T_\ell \operatorname{A}^2(X)\to T_\ell J^3_a(X)$ is an isomorphism, and that the Abel--Jacobi map is surjective if and only if the $\ell$-adic Abel--Jacobi map $T_\ell AJ:T_\ell \operatorname{A}^2(X)\to T_\ell J^3(X)$, or equivalently $T_\ell AJ:T_\ell\operatorname{A}^2(X)\to H^3(X,\mathbb Z_\ell)_\tau$, is an isomorphism, where the subscript $\tau$ indicates the torsion-free quotient.  For technical reasons we find this formulation to be easier to work with in positive characteristic, and so we actually consider two replacements for the $\ell$-adic Abel--Jacobi map.    
 
 The first, which is defined on torsion cycles and takes values in the odd cohomology with torsion coefficients, is the \emph{Bloch map}~\cite{bloch79}\,; taking Tate modules defines the \emph{$\ell$-adic Bloch map}
 $T_\ell \lambda^2:T_\ell \operatorname{CH}^2(X_{\bar K})\longrightarrow H^3(X_{\bar K},\mathbb Z_\ell(2))_\tau$, 
 with values in  $\ell$-adic cohomology modulo torsion.
This map, which is in fact defined for cycles 
 of any codimension, was first considered by Suwa \cite{Suwa} in the case $\ell \neq p$ and by Gros--Suwa \cite{grossuwaAJ} in the case $\ell=p$.  We will focus on the restriction of this map to algebraically trivial cycle classes\,:
\begin{equation}\label{E:IntroBlMa}
T_\ell \lambda^2:T_\ell \operatorname{A}^2(X_{\bar K})\longrightarrow H^3(X_{\bar K},\mathbb Z_\ell(2))_\tau.
\end{equation}
 We recently studied the map further in \cite{ACMVBlochMap}, and will rely on the definitions and results presented there.

 The second replacement for the Abel--Jacobi map, which is defined on algebraically trivial cycle classes of codimension-$2$ and takes values in an abstract abelian variety, is the \emph{second algebraic representative}~\cite{murre83} (see \S \ref{S:PreAlgRep})\,:
\begin{equation}\label{E:IntroAlgRep}
\phi^2_{X_{\bar K}}:\operatorname{A}^2(X_{\bar K})\longrightarrow
 \operatorname{Ab}^2_{X_{\bar K}/\bar K}(\bar K).
\end{equation}
Building on work
 of Murre \cite{murre83} over an algebraically closed field, we  showed in \cite{ACMVdcg} that the algebraic
 representative $\operatorname{Ab}^2_{X_{\bar K}/\bar K}$ over $\bar K$
 admits a distinguished model $\operatorname{Ab}^2_{X/K}$ over $K$, 
 distinguished by the fact that the universal regular homomorphism
 \eqref{E:IntroAlgRep}
    is
 $\operatorname{Gal}(K)$-equivariant (see \S \ref{S:PreAlgRep}).
 Note that if $K\subseteq \mathbb C$, then $\operatorname{Ab}^2_{X/K}=J^3_{a,X/K}$\,; \emph{i.e.}, the algebraic representative agrees with the distinguished model of the algebraic intermediate Jacobian \cite{ACMVdcg}.   
Taking Tate modules in \eqref{E:IntroAlgRep}  yields a map 
\begin{equation}\label{E:IntroAlgRepTate}
T_\ell \phi^2_{X_{\bar K}}:T_\ell \operatorname{A}^2(X_{\bar K})\longrightarrow
T_\ell  \operatorname{Ab}^2_{X_{\bar K}/\bar K}.
\end{equation}

  While in characteristic zero both maps \eqref{E:IntroBlMa} and \eqref{E:IntroAlgRepTate} identify canonically with the $\ell$-adic Abel--Jacobi map (\cite[Prop.~3.7]{bloch79}, \cite{ACMVBlochMap}), it is not known if they agree in positive characteristic (\emph{i.e.}, after making some identification of the cohomology of the abelian variety $ \operatorname{Ab}^2_{X/ K}$ with that of~$X$).  However, in parallel with the characteristic $0$ case,  for a smooth projective geometrically rationally chain connected variety $X$ the maps  \eqref{E:IntroBlMa} and \eqref{E:IntroAlgRepTate} are known to be isomorphisms \cite[Prop.~2.3]{BenWittClGr}. 
  We also point out here that 
  in characteristic $0$, condition (3) is simply equivalent to $H^3(X,\rat)$ being supported on a divisor.

In positive characteristic, the replacement for condition (5) is given by the notion of a  \emph{universal codimension-$2$ cycle class} for $X$, which  is a cycle
 class $Z\in \operatorname{CH}^2(\operatorname{Ab}^2_{X/K} \times_{K} X)$, viewed as a family of cycles on $X$ parameterized by $\operatorname{Ab}^2_{X/K}$, which is fiberwise algebraically trivial, 
 such  that the composition
 $$
 \xymatrix@R=.1em{
  \psi_{ Z}:\operatorname{Ab}^2_{X/K}(\bar K) \ar[r] &
  \operatorname{A}^2(X_{\bar K})\ar[r]
 &
  \operatorname{Ab}^2_{X/K}(\bar K)\\
  t \ar@{|->}[r] & Z_t \ar@{|->}[r] &  {\phi^2_{X_{\bar K}}}(Z_t)
 }
 $$
 is the identity.

 Finally, we turn to condition (6)\,; \emph{i.e.}, to a  replacement for the principal polarization $\Theta_X$.   Recall that  in characteristic $0$ we have $\operatorname{Ab}^2_{X/K}=J^3_{a,X/K}$,  so that for a projective geometrically rationally connected threefold over a field of characteristic $0$, the algebraic representative $\operatorname{Ab}^2_{X/K}$ comes equipped with the principal polarization obtained via intersection in cohomology, as explained above.  
  While in positive characteristic we do not have a way to define a distinguished (principal) 
 polarization
   on $\operatorname{Ab}^2_{X/K}$ for every  rationally chain connected  threefold $X$,
 we can do something similar for a class of rationally chain connected threefolds that includes
 geometrically stably rational threefolds\,:

\begin{teoalpha}[Auto-duality of the algebraic representative]\label{T:Intro-CanPol0}
	Let $X$ be a smooth projective threefold over a
	perfect field $K$. 
\begin{enumerate}

\item If $X$ is  geometrically rationally chain connected, then there is a canonical purely inseparable symmetric $K$-isogeny 
 \begin{equation}\label{E:Intro-LambdaX}
\xymatrix{\Theta_X :\ \operatorname{Ab}^2_{X/K} \ar[r]^{} & {\widehat{\operatorname{Ab}}\,^2_{X/K}}}.
\end{equation}

\item  
If $V_\ell\lambda^2$ is an isomorphism for some prime $\ell \neq \operatorname{char}(K)$ (\emph{e.g.},  $X$ is geometrically uniruled),
 and  $X_{\bar K}$ admits a universal codimension-$2$ cycle class  $Z$, 
	  then the homomorphism of abelian varieties induced by the cycle class $-({}^tZ\circ Z) \in \chow^1(\operatorname{Ab}^2_{X_{\bar K}/\bar K} \times_{\bar K} \operatorname{Ab}^2_{X_{\bar K}/\bar K})$
	    descends to $K$ to give a canonical 
	   symmetric $K$-isogeny  
	  \begin{equation}\label{E:Intro-LambdaZ}
\xymatrix{\Theta_X :\ \operatorname{Ab}^2_{X/K} \ar[r]^{} & {\widehat{\operatorname{Ab}}\,^2_{X/K}}},
	\end{equation}
which is independent of the choice of universal cycle class $Z$. 
If moreover $X$ is geometrically rationally chain connected, then 
\eqref{E:Intro-LambdaZ} agrees with \eqref{E:Intro-LambdaX}.

\item If $X$ is geometrically stably rational, then $X_{\bar K}$ admits a universal codimension-$2$ cycle class $Z$, and the purely inseparable symmetric $K$-isogeny $\Theta_X$  is an isomorphism.
\end{enumerate}	

\end{teoalpha}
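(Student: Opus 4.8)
The plan is to prove parts (1)--(3) in sequence, the unifying idea being to pin down $\Theta_X$ through its $\ell$-adic realizations. For $X$ geometrically rationally chain connected the $\ell$-adic Bloch map $T_\ell\lambda^2$ and the Tate module $T_\ell\phi^2$ of the algebraic representative are isomorphisms for every $\ell\neq\operatorname{char}(K)$ (by \cite[Prop.~2.3]{BenWittClGr}), yielding a canonical $\operatorname{Gal}(K)$-equivariant identification $T_\ell\operatorname{Ab}^2_{X/K}\cong H^3(X_{\bar K},\mathbb Z_\ell(2))_\tau$. I would take $\Theta_X$ to be the homomorphism whose $\ell$-adic realization, under this identification, is the skew-symmetric pairing coming from cup product $H^3\times H^3\to H^6=\mathbb Z_\ell(-3)$, i.e. the map $T_\ell\operatorname{Ab}^2_{X/K}\to\operatorname{Hom}_{\mathbb Z_\ell}(T_\ell\operatorname{Ab}^2_{X/K},\mathbb Z_\ell(1))=T_\ell\widehat{\operatorname{Ab}}\,^2_{X/K}$. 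To realize this prescription by an honest morphism of abelian varieties over $K$ (part (1)), I would use that geometric rational chain connectedness provides a decomposition of the diagonal $\Delta_X=X\times x_0+Z$ in $\operatorname{CH}^3(X_{\bar K}\times X_{\bar K})[1/p]$ with $Z$ supported on $D\times X$ for a surface $D\subset X$; choosing a resolution $\til D\to D$, the correspondence $Z$, its transpose, and the duality $\widehat{\operatorname{Alb}}(\til D)=\operatorname{Pic}^0(\til D)$ for the surface $\til D$ assemble into a symmetric morphism $\operatorname{Ab}^2_{X_{\bar K}/\bar K}\to\widehat{\operatorname{Ab}}\,^2_{X_{\bar K}/\bar K}$ (with $\mathbb Z[1/p]$-coefficients) whose $\ell$-adic realization one checks to be the cup-product pairing; this shows the morphism is independent of $D$, of $\til D$, and of the representing cycles, hence canonical, hence fixed by $\operatorname{Gal}(K)$ and defined over $K$ (alternatively one descends the $\ell$-adic datum using the functoriality of the Bloch map and algebraic representative from \cite{ACMVBlochMap}). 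It is symmetric since the cup-product pairing is skew-symmetric (odd cohomological degree), and purely inseparable since $\ell$-adic Poincaré duality is perfect on torsion-free quotients, so the realization is unimodular and $\Theta_X$ is an isomorphism on $T_\ell$ for every $\ell\neq\operatorname{char}(K)$.

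For part (2), given a universal codimension-$2$ cycle $Z\in\operatorname{CH}^2(\operatorname{Ab}^2_{X_{\bar K}/\bar K}\times_{\bar K}X_{\bar K})$, I would compute the $\ell$-adic realization of $-({}^tZ\circ Z)\in\operatorname{CH}^1(\operatorname{Ab}^2_{X_{\bar K}/\bar K}\times_{\bar K}\operatorname{Ab}^2_{X_{\bar K}/\bar K})$: the identity $\psi_Z=\operatorname{id}$ forces the realization $T_\ell Z\colon T_\ell\operatorname{Ab}^2_{X_{\bar K}/\bar K}\to H^3(X_{\bar K},\mathbb Z_\ell(2))_\tau$ to be the canonical identification of part (1), so the realization of $-({}^tZ\circ Z)$ is precisely the cup-product pairing. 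This simultaneously yields independence of the choice of $Z$, symmetry, the fact that the induced homomorphism is an isogeny as soon as $V_\ell\lambda^2$ is an isomorphism (nondegeneracy over $\mathbb Q_\ell$), and that it is purely inseparable and equal to \eqref{E:Intro-LambdaX} once $X$ is moreover geometrically rationally chain connected (so that $T_\ell\lambda^2$ is an integral isomorphism). Descent to $K$ follows because each Galois conjugate $Z^\sigma$ is again a universal cycle and the homomorphism, being independent of the choice, coincides with all of its conjugates.

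For part (3), note that $X$ geometrically stably rational implies $X_{\bar K}$ is rationally connected, hence geometrically rationally chain connected and geometrically uniruled, so parts (1) and (2) apply once a universal cycle is exhibited. Stable rationality improves the diagonal decomposition to an \emph{integral} one, $\Delta_X=X\times x_0+Z$ with $Z$ supported on $D\times X$; Voisin's construction \cite{voisinUniv}, transported to the algebraic representative through this decomposition (and the resulting splitting of $\operatorname{Ab}^2_{X_{\bar K}/\bar K}$ off a resolution of $D$), then produces the universal codimension-$2$ cycle class. The integral decomposition moreover controls $\ell$-adic and crystalline cohomology integrally; in particular it identifies $H^3_{\cris}(X/W)_\tau$ with the Dieudonné module of $\operatorname{Ab}^2_{X/K}$, under which $\Theta_X$ realizes the crystalline cup-product pairing, which is perfect on torsion-free quotients by crystalline (or flat) Poincaré duality. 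Hence $\Theta_X$ is an isomorphism on $p$-divisible groups as well; being a purely inseparable isogeny its kernel is a connected $p$-group scheme, so it is trivial and $\Theta_X$ is an isomorphism.

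The main obstacle I anticipate is in part (1): producing $\Theta_X$ as an honest $K$-morphism of abelian varieties with the prescribed unimodular $\ell$-adic realizations --- not merely a $\mathbb Z[1/p]$-isogeny --- and verifying that it is canonical (independent of $D$, of the resolution $\til D$, and of the representing correspondences); this is where the functorial properties of the Bloch map and of the algebraic representative from \cite{ACMVBlochMap} carry the weight. Relatedly, in part (3) the point requiring genuine input beyond rational chain connectedness is that stable rationality buys \emph{integral} control of cohomology at the prime $p$, which is exactly what makes the $p$-divisible-group argument, and hence the upgrade from ``purely inseparable isogeny'' to ``isomorphism,'' go through.
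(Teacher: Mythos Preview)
Your approach to part~(2) is essentially the paper's: the map induced by $-({}^tZ\circ Z)$ is pinned down by its $\ell$-adic realization, which one identifies with the cup-product pairing via the Bloch map; independence of $Z$, symmetry, and Galois descent follow. For part~(1), the paper resolves exactly the obstacle you name, but differently from your surface-duality sketch: rather than working with a $\integ[1/p]$-decomposition of the diagonal, it takes a \emph{miniversal} cycle $Z$ of some integer degree $r$ (these always exist, integrally), forms the honest integral morphism $\Lambda_Z$ induced by ${}^tZ\circ Z$, and then proves $\Lambda_Z$ is divisible by $r^2$ in $\Hom(\Ab^2_{X/K},\widehat{\Ab}\,^2_{X/K})$ using that $T_l\phi^2$ is an isomorphism for \emph{every} prime $l$ (including $p$) under the rational chain connectedness hypothesis. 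The quotient $\Lambda_X$ is then distinguished and purely inseparable. Your route through $\alb(\til D)$ could plausibly be made to work, but the miniversal-cycle construction sidesteps the passage from $\integ[1/p]$ to $\integ$ entirely.

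Part~(3) is where your proposal has a genuine gap. You assert that the integral diagonal decomposition ``identifies $H^3_\cris(X/W)_\tau$ with the Dieudonn\'e module of $\Ab^2_{X/K}$'' and that $\Theta_X$ realizes the crystalline cup product, hence is an isomorphism on the full $p$-divisible group. But neither identification is available: what the Bloch-map machinery gives you is control over $T_p\Ab^2_{X/K}$, which is only the \'etale part of the $p$-divisible group, and there is no established comparison between $H^3_\cris(X/W)$ and $H^1_\cris(\Ab^2_{X/K}/W)$ at the integral level. Knowing $T_l\Theta_X$ is an isomorphism for all $l$ only yields a purely inseparable isogeny, which is the conclusion of part~(1), not part~(3). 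The paper's upgrade to an isomorphism goes through a completely different and genuinely new ingredient: a motivic argument (factoring the motive $(X,\Delta_X - x\times X - X\times x)$ through a curve) shows that for a stably rational threefold the algebraic representative $\Phi^2_{X/K}$ is itself \emph{induced by a cycle} $\widehat Z\in\chow^2(\widehat{\Ab}\,^2_X\times X)$, i.e.\ $\Phi^2 = \widehat Z^*$. With both a universal cycle $Z$ and this $\widehat Z$ in hand, a direct diagram chase on $\bar K$-points (not just Tate modules) shows the kernel of $\Lambda_Z$ is contained in $\Ab^2_X[1]=0$, hence $\Lambda_Z$ is an isomorphism. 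No crystalline input is needed.
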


 Theorem~\ref{T:Intro-CanPol0} is proven in Theorem~\ref{T:main-pol} and Theorem \ref{T:CanPol3-fold}.   
The idea of the proof is as follows.  
 One chooses a miniversal codimension-$2$ cycle class  $Z \in  \operatorname{CH}^2(\operatorname{Ab}^2_{X_{\bar K}/\bar K} \times_{\bar K} X_{\bar K})$ of degree $N$ for some natural number $N$, \emph{i.e.}, $\psi_Z:{\operatorname{Ab}^2_{X_{\bar K}/\bar K}} \to {\operatorname{Ab}^2_{X_{\bar K}/\bar K}}$ is multiplication by~$N$\,;   such cycles are known to exist for any surjective regular homomorphism.  
 The cycle class ${}^{t}Z\circ Z \in \chow^1(\operatorname{Ab}^2_{X_{\bar K}/\bar K} \times_{\bar K} \operatorname{Ab}^2_{X_{\bar K}/\bar K})$ then defines, via the theory of Picard schemes, a  symmetric homomorphism $\Lambda_{Z}: (\operatorname{Ab}^2_{X/K})_{\bar K}\to (\widehat {\operatorname{Ab}}\,^2_{X/K})_{\bar K}$.  In Theorem \ref{T:CanPol3-fold} we show that $\Lambda_Z$  is surjective, descends to $K$,  and is independent
  of the choice  of a miniversal cycle class of degree $N$ (although it will depend on $N$).  We then show that $\Lambda_Z$ is divisible by $N^2$, giving a symmetric isogeny  $\Lambda_X$, which, when $X$ is geometrically rationally chain connected, we show is an isomorphism on Tate modules for all primes $l$, and is therefore a purely inseparable isogeny.    For reasons having to do with positivity, we take $\Theta_X=-\Lambda_X$ in Theorem \ref{T:Intro-CanPol0}.  
     When $X$ is assumed to be geometrically stably rational, we give a different proof of these facts, and obtain the stronger result, that $\Lambda_X$ is an isomorphism.  More precisely, 
  in Proposition~\ref{P:unicycle}, we show that there is a universal codimension-2 cycle class $Z$ over $\bar K$.  As before, the cycle class ${}^tZ\circ Z$ then defines     a  symmetric homomorphism $\Lambda_{X_{\bar K}}: (\operatorname{Ab}^2_{X/K})_{\bar K}\to (\widehat {\operatorname{Ab}}\,^2_{X/K})_{\bar K}$, which we show via a  
   similar argument  is surjective, descends to~$K$,  and is independent
  of the choice  of a universal cycle class, giving a symmetric isogeny  
    $\Lambda_X$.
However, to show that $\Lambda_X$ is an isomorphism, for which it suffices to show that $\Lambda_{X_{\bar K}}$ is an isomorphism,  we use a  crucial new ingredient: we  show that for a stably rational threefold over an algebraically closed field, the second algebraic representative is induced by a cycle class (the meaning of this is made precise in \S \ref{SS:RegHomCycle} and Proposition \ref{P:unicycle}).

  Recall that a symmetric isogeny from an abelian variety over a field $K$ to its dual is, after base change to the algebraic closure $\bar K$, induced by a symmetric line bundle, and is called a polarization if the line bundle is ample 
(see \S\ref{S:ThetaLB} for a review).
  We note that if  $\operatorname{char}(K)=0$, then $\Theta_X$ in Theorem~\ref{T:Intro-CanPol0} is the Hodge-theoretic principal polarization induced via  the intersection product in cohomology, as described above.  In fact, in positive characteristic as well, 
$\Theta_X$ is induced by the intersection product in the middle cohomology of $X$\,; the meaning of this is made precise in Definition \ref{D:DistMorph}.
When $X$ is a geometrically \emph{rational} threefold, 
Benoist--Wittenberg \cite[Cor.~2.8]{BenWittClGr} recently constructed a \emph{principal polarization}  $\Theta_X$ on $\operatorname{Ab}^2_{X/K}$, which agrees with $\Theta_X$ of Theorem \ref{T:Intro-CanPol0}\,; in fact, extending the result of Clemens--Griffiths,  they
 show that if $X$ is rational \emph{over $K$} then $(\operatorname{Ab}^2_{X/K},\Theta_X)$ is the product of principally
 polarized Jacobians of curves.
    For some rationally chain connected threefolds we can show that $\Theta_X$ is a polarization, so that 
Theorem \ref{T:Intro-CanPol0} in fact provides a partial answer to a question of  \cite[p.6]{BenWittClGr} 
  (see \S \ref{S:BWrat+que} and Corollary \ref{C:ThetaSpecMini}).

\medskip 
While  in the introduction  we have so far discussed the results  in the context of rationality, the results, as well as the techniques,  are in fact most naturally explained in terms of decomposition of the diagonal (see \S \ref{S:PreDecDiag}).  The basic implications we use are that stably rational (resp.~rationally chain connected) implies  universally trivial  integral (resp.~rational) Chow group of zero cycles, which implies strict integral (resp.~rational)  Chow decomposition of the diagonal (see Remarks~\ref{R:StabRatDec} and~\ref{R:RCDecDiag}).   In order to study the question of universal codimension-$2$ cycle classes, however, we must consider the yet weaker notion of \emph{cohomological} decomposition of the diagonal (see \S \ref{S:CohDD}).
    
 We can now state the following  theorem regarding strict cohomological $\mathbb Z$-decomposition of the diagonal with respect to $H^\bullet (-,\mathbb Z_\ell)$ (see Definition \ref{D:StrCoDec}), which generalizes
 \cite{voisinUniv} to algebraically closed fields of positive characteristic.  Recall that Voisin has shown that a smooth complex projective threefold admits a strict cohomological $\mathbb Z$-decomposition of the diagonal with respect to $H^\bullet (-,\mathbb Z)$ if and only if conditions (1)--(6) above hold (\cite[Thm.~1.7]{voisinUniv} and \cite[Thm.~4.1]{voisinCubicCH}). (Note that it is assumed in  \cite[Thm.~4.1]{voisinCubicCH}  that the threefold be rationally connected, but this is used only to ensure conditions (1)--(3) hold, which we have explained above hold for any complex projective rationally connected threefold.)
  We  give necessary and sufficient conditions over an algebraically closed field for the existence of a strict cohomological $\mathbb Z_\ell$-decomposition with respect to $H^\bullet (-,\mathbb Z_\ell)$   in Theorem \ref{T:ZZell-iff}, however here, we prefer to mention our result on strict cohomological $\mathbb Z$-decompositions\,:

  \begin{teoalpha}[Cohomological decomposition of the diagonal]\label{T:Intro-M-StabInv}
	Let  $X$ be a smooth projective threefold over an algebraically closed field $k$, and fix a prime number   $\ell\ne \operatorname{char}(k)$.
If 
$\Delta_{X}\in\chow^{3}(X\times_{k} X)$ admits a  strict cohomological \emph{$\mathbb Z$-decomposition}  with respect to $H^\bullet (- , \mathbb Z_\ell)$, then:

\begin{enumerate}
\item $H^1(X,\mathbb Z_\ell)=0$\,;

\item $H^{2i}(X,\mathbb Z_\ell(i))$  is $\mathbb Z$-algebraic for all $i$\,;

\item  The $\ell$-adic  Bloch map    $T_\ell \lambda^2:T_\ell \operatorname{A}^2(X)\to H^3(X,\mathbb Z_\ell(2))_\tau$ is an isomorphism\,;

\item[(3')] The $\ell$-adic map $T_\ell \phi^2_{X/k} :T_\ell \operatorname{A}^2(X) \to T_\ell \operatorname{Ab}^2_{X/k}$
 is an isomorphism\,;

\item  $\operatorname{Tors}H^\bullet (X,\mathbb Z_\ell )=0$\,;
		
\item    $\operatorname{Ab}^2_{X/k}$ admits a universal codimension-$2$ cycle class\,; 
		
\item   Assuming (3) and (5), and setting $\Theta_X:\operatorname{Ab}^2_{X/k}\to \widehat {\operatorname{Ab}}\,^2_{X/k}$ to be the symmetric isogeny of Theorem~\ref{T:Intro-CanPol0}(2), we have that $T_\ell\Theta_X$ is an isomorphism, and   $\frac{[\Theta_{X}]^{g-1}}{(g-1)!}\in H^{2g-2}(\operatorname{Ab}^2_{X/k},\mathbb Z_\ell(g-1))$ is a $\mathbb Z$-algebraic class,
	where $g = \dim \operatorname{Ab}^2_{X/k}$ and $[\Theta_X]$ is the first Chern class of the line bundle associated to $\Theta_X$.

\end{enumerate} 
As a partial converse, if (1)--(6) (including (3')) hold, then $\Delta_{X}\in\chow^{3}(X\times_{k} X)$ admits a  strict cohomological \emph{$\mathbb Z_\ell$-decomposition}  with respect to $H^\bullet (- , \mathbb Z_\ell)$.
 \end{teoalpha}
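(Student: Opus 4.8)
The plan is to transpose Voisin's analysis of cohomological decompositions of the diagonal \cite{voisinUniv,voisinCubicCH} to positive characteristic, systematically replacing singular cohomology by $\ell$-adic cohomology, the intermediate Jacobian and the Abel--Jacobi map by the algebraic representative $\operatorname{Ab}^2_{X/k}$ together with the $\ell$-adic Bloch map $T_\ell\lambda^2$ and the map $T_\ell\phi^2_{X/k}$, and the Hodge-theoretic polarization by the symmetric isogeny $\Theta_X$ of Theorem~\ref{T:Intro-CanPol0}(2). I would begin by unwinding Definition~\ref{D:StrCoDec}: a strict cohomological $\mathbb{Z}$-decomposition yields an equality $[\Delta_X]=[X\times x]+[Z]$ in $H^6(X\times_k X,\mathbb{Z}_\ell(3))$ with $Z\in\operatorname{CH}^3(X\times_k X)$ an honest cycle supported on $D\times X$ for a divisor $D\subset X$; writing $Z=(\iota\times\operatorname{id})_*\widetilde{Z}$ with $\iota\colon\widetilde{D}\to X$ obtained by resolving $D$ (so $\widetilde{D}$ is a smooth projective surface) and $\widetilde{Z}\in\operatorname{CH}^2(\widetilde{D}\times X)$, one obtains that, as self-correspondences of $X$, $\operatorname{id}_{H^k(X,\mathbb{Z}_\ell)}=\widetilde{Z}_*\circ\iota^*$ for $1\le k\le 5$, a map factoring through $H^k(\widetilde{D},\mathbb{Z}_\ell)$, while the transpose gives the symmetric statement with $Z$ supported on $X\times D$. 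This factorization is the single engine of the forward implication.

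Conditions (1)--(4) then fall out by degree-by-degree diagram chases. For (1): $H^5(\widetilde{D},\mathbb{Z}_\ell)=0$ for dimension reasons forces $H^5(X,\mathbb{Z}_\ell)=0$, whence $H^1(X,\mathbb{Z}_\ell)=0$ by Poincar\'e duality. For (2): when $i\ne 1$ the factorization runs through $H^{2i}(\widetilde{D},\mathbb{Z}_\ell(i))$, which is $\mathbb{Z}$-algebraic, and algebraic correspondences preserve $\mathbb{Z}$-algebraicity; when $i=1$ the transposed decomposition exhibits $H^2(X,\mathbb{Z}_\ell(1))$ as generated by the classes of the components of $D$. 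For (4): since $\iota^*$ is split injective in each degree, the $\mathbb{Z}$-coefficient (as opposed to $\mathbb{Q}$-coefficient) strictness, via the torsion argument of Bloch--Srinivas and Artin--Mumford, reduces the torsion of $H^\bullet(X,\mathbb{Z}_\ell)$ to that of $H^\bullet(\widetilde{D},\mathbb{Z}_\ell)$---a surface, whose relevant torsion is of N\'eron--Severi type---which the decomposition then annihilates. The substance is in degree $3$: the factorization exhibits $H^3(X,\mathbb{Z}_\ell(2))_\tau$ as a direct summand of $H^3(\widetilde{D},\mathbb{Z}_\ell(2))_\tau$, which by the theory of the Picard scheme (equivalently, via Lefschetz theory comparing $\widetilde{D}$ with a smooth curve section) is canonically the Tate module of an abelian variety, hit isomorphically for the surface by the $\ell$-adic Bloch map of $\widetilde{D}$; since $T_\ell\lambda^2$ and $T_\ell\phi^2_{X/k}$ are injective on algebraically trivial classes \cite{ACMVBlochMap}, the surjectivity transported by the summand structure yields both (3) and (3'), the comparison in \S\ref{S:PreAlgRep} and \cite{ACMVBlochMap} furnishing the identification $T_\ell\operatorname{Ab}^2_{X/k}\cong H^3(X,\mathbb{Z}_\ell(2))_\tau$. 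For (5) I would extract the universal codimension-$2$ cycle class from $\widetilde{Z}$---concretely, from its restriction to a smooth curve section of $\widetilde{D}$ pushed forward along the quotient map of abelian varieties onto $\operatorname{Ab}^2_{X/k}$---strictness being precisely what makes this a genuine cycle rather than a cohomology class. Finally, granting (3) and (5): $\Theta_X$ is defined by Theorem~\ref{T:Intro-CanPol0}(2), and after the identifications above it is induced by the cup-product pairing on $H^3(X,\mathbb{Z}_\ell(2))$, which is perfect by Poincar\'e duality and torsion-freeness (4), so $T_\ell\Theta_X$ is an isomorphism; and $\tfrac{[\Theta_X]^{g-1}}{(g-1)!}$ is $\mathbb{Z}$-algebraic by the standard identity expressing the minimal class through the universal cycle together with the diagonal, into which the algebraic decomposition $[\Delta_X]=[X\times x]+[Z]$ may be substituted. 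This yields (6).

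For the partial converse I would assume (1)--(6), including (3'), and construct a strict cohomological $\mathbb{Z}_\ell$-decomposition by writing down an explicit candidate $[\Delta_X]=[X\times x]+[Z]$ in $H^6(X\times_k X,\mathbb{Z}_\ell(3))$ in which $Z$ is assembled from: the class $[x\times X]$, realizing the identity on $H^0$; the product of dual $\mathbb{Z}$-algebraic bases of $H^2(X,\mathbb{Z}_\ell(1))$ and $H^4(X,\mathbb{Z}_\ell(2))$, available by (2) and perfectness of the pairing (4), realizing the identity on $H^2$ and $H^4$; nothing in degrees $1$ and $5$, which vanish by (1); and a correspondence built from the universal cycle $Z_0$ of (5), its transpose, and the $\mathbb{Z}$-algebraic minimal class $\tfrac{[\Theta_X]^{g-1}}{(g-1)!}$ of (6)---the minimal class supplying exactly the extra codimension $g-1$ needed to turn $Z_0$ into a genuine self-correspondence of $X$ which, by (3') and its compatibility with $\Theta_X$, acts as the identity on $H^3(X,\mathbb{Z}_\ell(2))$; the coniveau-one property of $H^3(X,\mathbb{Z}_\ell)$ from (3) makes the class of this correspondence supported on a divisor times $X$, and strictness realizes it by a genuine cycle. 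One then checks degree by degree that $[\Delta_X]-[X\times x]-[Z]$ acts as zero on $H^k(X,\mathbb{Z}_\ell)$ for every $k$, and concludes it vanishes in $H^6(X\times_k X,\mathbb{Z}_\ell(3))$ from the K\"unneth and Poincar\'e-duality description of the endomorphisms of $H^\bullet(X,\mathbb{Z}_\ell)$ modulo cohomological equivalence, together with torsion-freeness (4).

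I expect the main obstacle to be the degree-$3$ analysis at the level of $\mathbb{Z}_\ell$-coefficients rather than $\mathbb{Q}_\ell$: in characteristic $0$ Voisin has the full Hodge-theoretic apparatus---the intermediate Jacobian, its theta divisor and minimal class, the integral Abel--Jacobi isomorphism on torsion---whereas here every statement must be routed through the algebraic representative and the \emph{integral} $\ell$-adic Bloch map with torsion tracked throughout, and it is precisely here that Theorem~\ref{T:Intro-CanPol0} (the auto-duality $\Theta_X$ and its identification with the cup-product pairing) and the integral results of \cite{ACMVBlochMap} carry the load. A closely related difficulty---and the real content of \emph{strictness}---is producing the universal cycle of (5) as a genuine algebraic cycle and, in the converse, promoting the degree-$3$ cohomological isomorphism to an honest algebraic correspondence supported on a divisor times $X$; this is exactly what the $\mathbb{Z}$-algebraicity of the minimal class in (6) is for.
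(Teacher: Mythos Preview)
Your outline follows the paper's strategy closely, and the forward arguments for (1), (2), (3), and (4), as well as the converse, are essentially what the paper does (Corollaries~\ref{C:vanCoh}, \ref{C:AlgCyc}, \ref{C:V-AJ4.4}, Proposition~\ref{P:lambdacoho}, and the proof of Theorem~\ref{T:intcohodec}). However, there is a genuine gap in your treatment of (3') and (5), and a closely related one in (6).

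The issue is this: you have only the \emph{cohomological} equality $[\Delta_X]=[X\times x]+[Z]$, not a Chow equality. Your cycle $\widetilde Z$ is indeed a genuine cycle, but the relation $\Delta_X^*=\widetilde Z_*\circ\iota^*$ holds only after taking cohomology classes, not on Chow groups or on $\operatorname{A}^2(X)$. Your sentence ``strictness being precisely what makes this a genuine cycle rather than a cohomology class'' reads as though the genuineness of $Z$ is what matters; but the obstruction is that the \emph{identity} you want to exploit is only cohomological. So when you attempt to verify that the cycle you build for (5) is \emph{universal} (i.e.\ that $\psi_{Z'}=\operatorname{id}$ on $\operatorname{Ab}^2_{X/k}$), or that the factorization you use for (3') forces $T_\ell\phi^2_{X/k}$ to be an isomorphism, you are implicitly asserting that the morphisms induced on the algebraic representative by two correspondences with the same $\ell$-adic cohomology class agree. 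Over $\mathbb C$ this is immediate from Hodge theory (the normal function depends only on the cohomology class), but in positive characteristic it is not automatic: the algebraic representative is defined by a universal property on Chow groups, not on cohomology. The paper isolates exactly this statement as Proposition~\ref{P:vanish} (and Corollary~\ref{C:vanish}): if $[\gamma]=0$ in $H^\bullet(-,\mathbb Q_\ell)$ then the induced map $f:\operatorname{Ab}^i_X\to\operatorname{Ab}^j_Y$ vanishes. The proof goes through the injectivity of the Bloch map in codimensions $1$, $2$, and $d$, and it is what the paper calls the ``key addition'' needed to pass from Chow to cohomological decompositions in (3') and (5) (see the proofs of Proposition~\ref{P:BlSr} and Theorem~\ref{T:UnivCyc}).

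The same mechanism, in a sharper form, underlies (6). Your ``standard identity expressing the minimal class through the universal cycle together with the diagonal'' is Voisin's argument, but transporting it requires knowing that for $\Gamma\in\mathscr A^2_{X/k}(T)$ the cohomological action $\Gamma_*$ on $H^3$ agrees, through the identification $\iota=T_\ell\lambda^2\circ(T_\ell\phi^2)^{-1}$, with the map $(\psi_\Gamma)_*$ induced by the morphism $\psi_\Gamma:T\to\operatorname{Ab}^2_{X/k}$. This is the commutativity of diagram~\eqref{E:RegHomCorr}, established in Proposition~\ref{P:commute} and Corollary~\ref{C:commute-2}, and it is the positive-characteristic replacement for the classical compatibility of Abel--Jacobi maps with correspondences. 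Without it, the substitution you propose in (6) does not go through, and neither does the converse step where you claim the degree-$3$ correspondence built from the universal cycle and the minimal class ``acts as the identity on $H^3$''. You correctly anticipate that the degree-$3$ bridge is the crux, but the specific ingredient you are missing is this pair of results (Proposition~\ref{P:vanish} and Proposition~\ref{P:commute}), not merely Theorem~\ref{T:Intro-CanPol0} or integrality of the Bloch map.
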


Theorem \ref{T:Intro-M-StabInv} is proven in  Theorem~\ref{T:intcohodec}, which is in fact stronger, addressing for instance the case of perfect fields, as well as cohomological decompositions of the diagonal  supported on curves, rather than points (see Remark \ref{R:Decomp21}).   
    We emphasize that we have omitted an assertion about the vanishing of Hodge numbers.  In characteristic $0$,  a strict cohomological $\mathbb Q$-decomposition of the diagonal implies that  $H^0(X,\Omega^i_X)=0$ for $i>0$ (see \emph{e.g.}~\cite[Thm.~4.4(iii)]{voisinAJ13})\,; however, in positive characteristic, we only know this holds under the stronger assumption of a strict  \emph{Chow} $\mathbb Z$-decomposition of the diagonal (see \emph{e.g.}, \cite[Lem.~2.2]{totaroHype}, and also Remark~\ref{R:SRC-Hodge}). 
   In addition, while conditions (1)--(6) in Theorem 
\ref{T:Intro-M-StabInv} are sufficient for the existence of a cohomological $\mathbb Z_\ell$-decomposition of the diagonal, they are not necessary.  This will follow from our Theorem \ref{T:Intro-QDS-1} below (and \cite{voisinUniv} over $\mathbb C$),  which establishes that the standard desingularization of a very general quartic double solid with exactly $7$ nodes does not admit a universal codimension-$2$ cycle class (\emph{i.e.}, (5) fails)\,; on the other hand, it is well-known that twice the class of the diagonal admits a strict Chow decomposition, and consequently the diagonal admits a strict cohomological $\mathbb Z_\ell$-decomposition for all $\ell \ne 2,\operatorname{char}(k)$ (see 
Remarks  
\ref{R:2DeltaCH} and \ref{R:T-2-S+N}). 
We reiterate that over an algebraically closed field, we give necessary and sufficient conditions for a cohomological $\mathbb Z_\ell$-decomposition of the diagonal  in Theorem \ref{T:ZZell-iff}.

Also, while we know that (1)--(3') hold for all geometrically rationally connected threefolds  in characteristic $0$, for geometrically rationally chain connected threefolds in positive characteristic we only know that (1) holds 
(Corollary \ref{C:vanCoh}),  and that (3) and (3') hold \cite[Prop.~2.3]{BenWittClGr}. In fact, we expect (3') may hold for all smooth projective varieties over any field, and for this reason have separated it from condition (3), although both (3) and (3') replace the condition (3) in the complex setting, namely the surjectivity of the Abel--Jacobi map.  In other words, in positive characteristic, (1), (3), and (3')  
 should still be viewed as an obstruction to a threefold being rationally chain connected, while in contrast to the characteristic zero case, (2) could potentially be an obstruction to the stable rationality of a rationally chain connected threefold. 
  However, 
in Corollary~\ref{C:RC->(0)} we show that if a geometrically rationally chain connected threefold lifts to characteristic $0$ to a geometrically rationally connected threefold with no torsion in cohomology, then conditions (1)--(3') hold 
(as well as condition (4)).

Regarding the proof of Theorem \ref{T:Intro-M-StabInv}, 
under the assumption of the cohomological decomposition of the diagonal,  (1), (2), and (4)  are now standard in the literature:  they  follow from the techniques in \cite{BlSr83} and \cite{voisinUniv} (we recall the proof in our setting in \S\ref{S:DD-Van},  \S\ref{S:DD-AlgCyc}, and \S \ref{S:DD-Tor}, respectively).   
In short, under the assumption of the cohomological decomposition of the diagonal, the main focus is on the conditions (3) and (3'), (5), and (6). 
Conditions (3) and (3') were investigated recently in \cite[Prop.~2.3]{BenWittClGr} in the context of a \emph{Chow} decomposition of the diagonal\,; in that setting (3') is essentially a consequence of \cite[Thm.~1(i)]{BlSr83} and  (3) is proven similarly.  
 The key addition here, in the context of cohomological decompositions,   is that we show that morphisms induced by families of cycle classes via universal regular homomorphisms  depend only on the cohomology class of the family of cycles (see Proposition \ref{P:vanish} and Corollary \ref{C:vanish}).  As Abel--Jacobi maps enjoy this property, we view this as a significant improvement on the theory of algebraic representatives in positive characteristic.  This addition also allows us to establish (5), which is an extension of a 
 result of Voisin
 \cite[Thm.~4.4(iii)]{voisinAJ13} \cite[Thm.~4.2]{voisinCubicCH}
 to the case of finite and algebraically closed fields (see Corollary~\ref{C:UnivCyc}).

 Condition (6)  follows Voisin's arguments in Hodge theory, as well as Mboro's work on
 cubic threefolds  for $\operatorname{char}(k)\ne 2$, but we note that there are several significant additions needed in our work.  
 First and foremost, one needs Theorem  \ref{T:Intro-CanPol0} to provide a  replacement for the principal polarization, which in Voisin's case comes from Hodge theory, and in Mboro's case comes from the theory of Prym varieties and fibrations in quadrics, which rules out the case $\operatorname{char}(k)=2$.     Note that $\Theta_X$ in Theorem \ref{T:Intro-CanPol0}(1) and (2) (and therefore in Theorem \ref{T:Intro-M-StabInv}) is not known to be a polarization, or even an isomorphism.  This is an important point in the sense that, unlike the cases considered by Voisin and Mboro, one does not automatically have condition (6) when $\dim \operatorname{Ab}^2_{X/k}\le 3$.   We will see this subtle point come into play later.  
 The second key addition is Proposition~\ref{P:commute}, which 
  is a technical
 point relating regular homomorphisms and actions of correspondences, which generalizes a classical result regarding
 Abel--Jacobi maps (see
 \cite[Thm.~12.17]{voisinI}), and plays a central role in Voisin's Hodge-theoretic arguments in \cite{voisinAJ13, voisinCubicCH}.  These techniques are used also in \cite{mboro}, and  for instance, Proposition~\ref{P:commute} applied to cubic
 threefolds provides a proof of  
  the assertion \cite[Lem.~3.3]{mboro}.  
  Along the way, we positively answer some cases of a conjecture of Gros--Suwa 
\cite[Conj.~III.4.1(iii)]{grossuwaAJ}
(see Lemma~\ref{L:factorM}). 
 \medskip
   
   In light of Theorem~\ref{T:Intro-M-StabInv}, we extend to positive characteristic Voisin's result that there exist
  unirational complex smooth projective varieties with no universal codimension-$2$
  cycle class.

 \begin{teoalpha}[Quartic double solids] \label{T:Intro-QDS-1} Let $k$ be an uncountable algebraically
  closed field with $\operatorname{char}(k)\ne 2$.
  Let $\widetilde X$ be the standard resolution of singularities of a  very
  general quartic double solid $X$ with exactly $n \le 9$ nodes (and no other singularities).
  Then  for  $\ell=2$:
  \begin{enumerate}
  \item[(A)] If $n\le 6$, then
 (1)--(4)  of Theorem~\ref{T:Intro-M-StabInv} hold for $\widetilde X$,
  and one or both of (5) and (6)
  fail.

  \item[(B)] If $7\le n\le 9$, then (1)--(4), and (6),  of Theorem~\ref{T:Intro-M-StabInv} hold for $\widetilde X$, while (5) fails.
 In other words, $\widetilde X$ does not admit  a universal codimension-$2$ cycle class.

  \end{enumerate}

 \end{teoalpha}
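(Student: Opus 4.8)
The plan is to reduce the statement to a study of the Hodge theory / $\ell$-adic cohomology of the intermediate Jacobian $\operatorname{Ab}^2_{\widetilde X/k}$ and a degeneration-and-specialization argument that transports Voisin's complex results to the positive-characteristic setting. First I would recall the geometry of the standard resolution: blowing up the $n$ nodes of the quartic double solid $X\to\proj^3$ (equivalently, passing to the double cover branched along the quartic surface after a small resolution), one obtains $\widetilde X$ with $b_2$ and $b_4$ determined by $n$, and with $H^3(\widetilde X,\integ_\ell(2))_\tau$ isomorphic — via the projection to the branch surface and the theory of nodal K3's — to the primitive cohomology of the associated degree-four K3 surface $S$ with $n$ nodes, hence $\dim\operatorname{Ab}^2_{\widetilde X/k} = 10 - n$ (so $g = 0$ exactly when $n = 10$, and $g \le 3$ precisely in the range $7 \le n \le 9$ relevant to part (B)). This is where I would invoke the announced results on the moduli of nodal degree-four polarized K3 surfaces in positive characteristic to make sense of "very general" over an uncountable algebraically closed $k$ and to guarantee that $\operatorname{Ab}^2_{\widetilde X/k}$ is (for very general $X$) as large as possible and has no extra cycles.

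Second, I would verify conditions (1)--(4) of Theorem~\ref{T:Intro-M-StabInv} for $\widetilde X$. Since $\widetilde X$ is unirational (being birational to a double cover of $\proj^3$) it is in particular rationally connected, so (1) holds by Corollary~\ref{C:vanCoh}; conditions (3) and (3') then hold by \cite[Prop.~2.3]{BenWittClGr}. Condition (2), $\integ$-algebraicity of $H^{2i}$, follows because $H^2$ and $H^4$ of $\widetilde X$ are spanned by the pullback hyperplane classes and the classes of the exceptional divisors over the nodes (together with their intersections) — all manifestly algebraic. Condition (4), torsion-freeness of $H^\bullet(\widetilde X,\integ_\ell)$ for $\ell=2$, is the Artin--Mumford-type computation: the standard resolution of a quartic double solid with $n \le 9$ nodes in general position has no $2$-torsion in $H^3$ (it is precisely for $n = 10$ in special position that torsion appears), and in positive characteristic this can either be checked directly on the nodal K3 side or deduced by lifting to characteristic zero (cf. Corollary~\ref{C:RC->(0)}) and comparison theorems. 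The bulk of the work here is bookkeeping; none of it is the main obstacle.

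Third — and this is the heart of the matter — I would establish the failure of (5) for $7 \le n \le 9$ and the dichotomy for $n \le 6$. For $n \ge 7$, the strategy mirrors Voisin \cite{voisinUniv}: a universal codimension-$2$ cycle class $Z$ on $\operatorname{Ab}^2_{\widetilde X/k}\times\widetilde X$ would, by Theorem~\ref{T:Intro-CanPol0}(2) and (3), force $\Theta_X$ to be an isomorphism and would produce, after the analysis of \S\ref{SS:RegHomCycle}, a section of a certain sheaf that is obstructed for the very general member; concretely, one shows that such a $Z$ would equip the (small, $g = 10-n \le 3$) abelian variety $\operatorname{Ab}^2_{\widetilde X/k}$ with a principal polarization coming from a genus-$\le 3$ curve sitting inside $\widetilde X$, and a parameter count / monodromy argument (the monodromy on the nodal K3 lattice being as big as possible, by the moduli result) shows the very general $\widetilde X$ contains no such curve. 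Here the main obstacle is precisely the subtle point flagged in the introduction after Theorem~\ref{T:Intro-CanPol0}: because $\Theta_X$ is a priori only a purely inseparable symmetric isogeny and not known to be a polarization (or even an isomorphism) when $g \le 3$, one cannot simply quote a Torelli-type statement; one must run the degeneration argument (degenerating $7$-nodal to $8$- and $9$-nodal, and ultimately comparing with the Artin--Mumford $10$-nodal case where $g = 0$) carefully, using Proposition~\ref{P:commute} and the cycle-class behavior of universal regular homomorphisms (Proposition~\ref{P:vanish}, Corollary~\ref{C:vanish}) to control how the would-be universal cycle degenerates. For $n \le 6$, where $g = 10 - n \ge 4$, one instead argues that at least one of (5), (6) must fail: if both held, then by Theorem~\ref{T:Intro-M-StabInv} the diagonal of $\widetilde X$ would admit a strict cohomological $\integ$-decomposition with respect to $H^\bullet(-,\integ_2)$, hence (specializing Voisin's complex computation, or arguing directly on the K3 side) the minimal class $\frac{[\Theta_X]^{g-1}}{(g-1)!}$ would be algebraic on a $(10-n)$-dimensional abelian variety with no nontrivial algebraic cycles beyond powers of $\Theta_X$ for very general $X$, which contradicts the non-algebraicity of the minimal class when $g \ge 4$ and the Néron--Severi group is minimal. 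Finally I would check that for $7 \le n \le 9$ condition (6) does hold — $T_\ell\Theta_X$ is an isomorphism and the minimal class is algebraic — which for $g \le 3$ is automatic *once one knows $\Theta_X$ is an isomorphism in that range*; this last point is exactly what the geometrically-stably-irrational-but-still-well-behaved nature of the example forces us to extract from the Hodge-theoretic/Prym comparison (following Mboro for $\operatorname{char}(k) \ne 2$) rather than take for granted, and it is why the hypothesis $\operatorname{char}(k) \ne 2$ appears.
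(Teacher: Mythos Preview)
Your proposal has a genuine structural gap: you have inverted the logical flow of the argument, and your route to the contradiction in case (A) relies on an open problem.

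The paper does \emph{not} argue directly that (5) or (6) fails. Instead, the central step is to show that $\widetilde X$ admits \emph{no} strict cohomological $\integ_2$-decomposition of the diagonal, and then invoke Theorem~\ref{T:Intro-M-StabInv} in the contrapositive to conclude that one of (5), (6) must fail. The non-existence of the $\integ_2$-decomposition is obtained by degeneration to the Artin--Mumford example: using Corollary~\ref{C:degen10} one places the $n$-nodal quartic surface and the Artin--Mumford $10$-nodal surface in a single family over a curve, hence the corresponding (singular) quartic double solids. Since the Artin--Mumford resolution has $2$-torsion in $H^4$, it has no $\integ_2$-decomposition (Corollary~\ref{C:V-AJ4.4}); one then applies Proposition~\ref{P:Dec-Res} (passing between a nodal variety and its standard resolution) and Theorem~\ref{T:DecDiagDeg} (specialization of \emph{homological} decompositions) to propagate this to $\widetilde X$. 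You mention the Artin--Mumford degeneration only in passing and use it for the wrong purpose (controlling how a putative universal cycle degenerates), rather than as the source of the obstruction to a $\integ_2$-decomposition.

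Your argument for (A) is where this gap becomes fatal. You write that if (5) and (6) both held, the resulting $\integ_2$-decomposition would force algebraicity of $\frac{[\Theta_X]^{g-1}}{(g-1)!}$, ``which contradicts the non-algebraicity of the minimal class when $g\ge 4$ and the N\'eron--Severi group is minimal.'' But this non-algebraicity is precisely what is \emph{not known} for any principally polarized abelian variety --- the paper explicitly flags this as open in the introduction --- so you cannot use it. Moreover, algebraicity of the minimal class is condition (6) itself, so the implication you draw from the decomposition is vacuous. For (B), your sketch (``a universal cycle would produce a curve in $\widetilde X$, ruled out by monodromy'') is not how the paper proceeds either: instead, assuming (5), one uses the lift to characteristic~$0$ and Corollary~\ref{C:ThetaSpecMini} to see that $\Theta_X$ is a \emph{polarization} (not a priori principal), then Proposition~\ref{C:MinCohSpec} to conclude that for $g\le 3$ the minimal class is $\integ$-algebraic via an isogeny to a Jacobian --- hence (6) holds, contradicting the already-established failure of the $\integ_2$-decomposition. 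You correctly identify that the subtlety is whether $\Theta_X$ is a polarization, but the resolution is via lifting, not via Mboro's Prym argument.
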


The precise notion of the meaning of a very general quartic double solid with $n\le 9$ nodes is given in \S \ref{S:QDS}\,; essentially it is a quartic double solid obtained from a quartic surface with exactly $n$ nodes, which corresponds to a very general point of the moduli of degree $4$ polarized K3 surfaces with exactly $n$ nodes, which we show is irreducible.  
 Note that one can conclude from
 Theorems~\ref{T:Intro-M-StabInv} and~\ref{T:Intro-QDS-1}, that  over an
 uncountable algebraically closed field $k$ with $\operatorname{char}(k)\ne 2$,  the
 standard resolution of singularities $\widetilde X$ of a  very general quartic
 double solid $X$ with at most $9$ nodes is not stably rational.

Our proof of
 Theorem~\ref{T:Intro-QDS-1}, which is given in \S \ref{S:Pf-QDS},  is similar to that in  \cite{voisinUniv}, but involves several key additions.  
First, as mentioned above, for every $n\le 9$ we show  that in the moduli space of polarized K3 surfaces of degree $4$, the discriminant locus corresponding to K3s with exactly $n\le 9$ nodes is irreducible (Proposition \ref{P:nnodesirred}), and that for a $10$-nodal quartic K3, the nodes can be deformed independently (Lemma \ref{L:smoothnodes}), so that the Artin--Mumford example is in the boundary of each of these components of the discriminant (Corollary \ref{C:degen10}).  
This result is slightly more general than what is proven in \cite{voisinUniv} in characteristic $0$ (see Remark \ref{R:VoisinK3}), and for instance, even in characteristic $0$,  gives a clean statement of Theorem \ref{T:Intro-QDS-1} for $n=8,9$ nodes (\emph{cf.}~\cite[p.210]{voisinUniv}).  The point is that while the locus of $n$-nodal quartic surfaces in the \emph{Hilbert scheme} of quartic surfaces  with exactly $n=6,7,8,9$ nodes is known to be reducible (see \cite[Rem.~1.2]{voisinUniv}), and for $n=6,7$ Voisin picks out a distinguished component containing the Artin--Mumford example, the locus in the \emph{moduli space of polarized K3 surfaces} is irreducible, and we are free to take very general points of of these irreducible components.
Next we show that for $n\le 9$ nodes, one can lift a nodal quartic surface, along with its nodes, to characteristic $0$ (Lemma \ref{L:smoothnodes}).  From this, we can use specialization from characteristic $0$ to show that conditions (1)--(4) hold (Corollary \ref{C:RC->(0)}). 

Having established that the Artin--Mumford examples are degenerations of our examples, the next step is to consider degenerations of decompositions of the diagonal.  Since we must use singular quartic double solids, as well as their resolutions, this requires us to work with 
$\ell$-adic \emph{homological} decompositions of the diagonal.  
This is discussed in \S \ref{S:ResSing-Def}, where we show that 
 existence of an $\ell$-adic homological decomposition of the diagonal is
 stable under specialization from the very general fiber
 (Theorem~\ref{T:DecDiagDeg}), as well as under resolution of singularities of
 nodes (Proposition~\ref{P:Dec-Res}).    
From our degeneration to the Artin--Mumford example, we can then conclude that the standard resolution of singularities of the very general quartic double solid with at most $9$ nodes does not admit a cohomological $\mathbb Z_2$-decomposition of the diagonal. 

 Therefore,  from  Theorem~\ref{T:Intro-M-StabInv}, we can conclude that condition (5) or (6) must fail.  
Turning now to condition (6), we assume that condition (5) holds, and let $\Theta_{\widetilde X}$ be the associated symmetric isogeny.  We note that unlike the case of characteristic $0$, where $\Theta_{\widetilde X}$ is known to be a principal polarization, in positive characteristic understanding the algebraicity of $[\Theta_{\widetilde X}]^{g-1}/(g-1)!$, even when $g\le 3$, is more subtle.  In addition, algebraic representatives need not be stable under specialization, so that even with a lift to characteristic $0$, there is no guarantee that $\Theta_{\widetilde X}$ is a principal polarization. 
To get around this issue, we first show (Corollary \ref{C:ThetaSpecMini}) that, due to the liftability of $\widetilde X$ to characteristic~$0$, the symmetric isogeny $\Theta_{\widetilde X}$ is a  polarization (although not necessarily principal). 
From this it follows that the polarized abelian variety $(\operatorname{Ab}^2_{\widetilde X/k},\Theta_{\widetilde X})$ admits an isogeny  to a principally polarized abelian variety, which has the same dimension, namely $g=10-n$, and therefore,  for dimension reasons,  must be a Jacobian of a curve if $n=7,8,9$.  Consequently, provided $n=7,8,9$, it follows that  $[\Theta_{\widetilde X}]^{g-1}/(g-1)!$ is $\mathbb Z$-algebraic, being the pull back under the isogeny of the class of the Abel--Jacobi embedded curve in its Jacobian (see Proposition~\ref{C:MinCohSpec}).  Therefore, for $n=7,8,9$, we must have had that condition (5) fails, since otherwise conditions (1)--(6) would hold and $\widetilde X$ would admit a cohomological $\mathbb Z_2$-decomposition of the diagonal, which we know is not the case.

 We note that once one has established that the very general quartic double solid with at most $9$ nodes degenerates to the Artin--Mumford example, then the conclusion regarding  stable irrationality
follows also from the  degeneration and resolution of singularities results of
 \cite[Thm.~1.12]{CTP16}, \cite[Prop.~8, Thm.~9]{HKT16},
 \cite[Thm.~2.3]{totaroHype} (and for  $\operatorname{char}(k)=0$,
 from  the degeneration results of \cite[Thm.~2.1]{voisinUniv},
 \cite[Thm.~4.2.11]{NicShin},  \cite[Thm.~1]{KontTsch}).
 The \emph{irrationality} of desingularizations of \emph{all} quartic double solids with exactly $n\le 1$ nodes was established via the Clemens--Griffiths criterion in the case $n=0$ (over $\mathbb C$) in \cite{voisin88}, and for $n=1$ in \cite[Thm.~4.9]{beauville77}\,; recall that the Clemens--Griffiths criterion was extended to threefolds over an algebraically closed field in  \cite[Thm.~p.63]{murre-cubic} and \cite[Prop.~4.6]{beauville77}, and to threefolds over arbitrary fields in  \cite[Thm.~2.7]{BenWittClGr} and \cite[Thm.~C]{BW}.

 \subsection*{Outline} 
  The paper is split into three parts. Part~\ref{P:Chow} focuses on applications of Chow decompositions of the diagonal to the second algebraic representative. There we start by reviewing the theory of algebraic representatives and fix the notation for decompositions of the diagonal, both of which will be used throughout the paper. We then proceed to prove Theorem~\ref{T:Intro-CanPol0} under the hypothesis of stable rationality, and draw some consequences. The main objective of Part~\ref{P:coho} is the proof of Theorem~\ref{T:Intro-M-StabInv}. We proceed by first proving that the existence of a strict cohomological of the diagonal of a threefold implies conditions (1)--(6) of Theorem~\ref{T:Intro-M-StabInv}, and
 then conclude this part in \S \ref{S:proof2} by establishing that, conversely, conditions (1)--(6) ensure the existence of a strict $\integ_{\ell}$-decomposition of the diagonal.  (Where possible, we also study $p$-adic decompositions in positive characteristic~$p$.)
   Along the way  we complete the proof of Theorem~\ref{T:Intro-CanPol0} in \S \ref{S:DiagPolCoh}.   
    In Theorem \ref{T:ZZell-iff} we give necessary and sufficient conditions for the existence of a strict cohomological $\mathbb Z_\ell$-decomposition of the diagonal.  
  Finally, in Part~\ref{P:positive} we prove Theorem~\ref{T:Intro-QDS-1}.

\subsection*{Acknowledgments} 
We are grateful to Olivier Wittenberg for useful comments on a preliminary version of this paper, which led in particular to improvements to Theorem~\ref{T:Intro-CanPol0} in the case of geometrically rationally chain connected threefolds.

 \subsection{Conventions} \label{conventions} 

  A \emph{variety} over a field is
 a geometrically reduced separated scheme of finite type over that field.
 For  a scheme~$X$ of finite type over a field~$K$, we denote by
 $\operatorname{CH}^n(X)$ the Chow group of codimension-$n$ cycle
 classes on $X$, and by $\operatorname{A}^n(X)$ the group of algebraically
 trivial cycle classes. Unless explicitly  stated otherwise, $\mathcal H^\bullet$ denotes $\ell$-adic cohomology with coefficient ring $R_{\mathcal H}=\integ_{\ell}$ for some $\ell \neq \operatorname{char}(K)$. 
   For a smooth
 projective variety $X$ over a field $K$, we denote the cycle class map by
 $[-]:\operatorname{CH}^n(X)\to \mathcal H^{2n}(X)$.
 For a commutative ring $R$, and a scheme $X$ of finite type over a
 field $K$, we denote by 
 $\operatorname{CH}^\bullet(X)_R:=\operatorname{CH}^\bullet(X)\otimes_{\mathbb
  Z} R$ the Chow group with coefficients in $R$.

 The symbol $\ell$
 always denotes a rational prime (\emph{i.e.}, a natural number that is a prime)  invertible in the base field, while
 $l$ is allowed to be \emph{any} rational prime, including the
 characteristic of the base field.
 
 If $G$ is an abelian group, then $G_\tau$ denotes the quotient of $G$
 by its torsion subgroup, $G[\ell^\infty]$ denotes its $\ell$-primary torsion
 and $G_\rat$ denotes $G\otimes_{\mathbb Z} \rat$.

 Given a field $K$ with algebraic closure $\bar K$ and separable closure denoted $K^{\sep}$,
  together with an $\operatorname{Aut}(\bar K/K)=\gal(K^{\sep}/K)$-module $M$, we denote $T_l M$
 the Tate module $\varprojlim M\otimes_{\mathbb Z} \integ/l^n
 \integ$.
 As usual, we denote $\integ_{\ell}(1)$ the Tate module $\varprojlim \mmu_{\ell^n}$, where $\mmu_{\ell^n}$ is the group of $\ell^n$-th roots of unity. Given a $\integ_{\ell}$-module $M$, we denote $M^\vee := \hom(M, \integ_\ell)$ and $M(n) := M\otimes_{\integ_{\ell}} \integ_{\ell}(1)^{\otimes n}$ its $n$-th Tate twist, where for $n<0$ we have $\integ_{\ell}(1)^{\otimes n} := (\integ_{\ell}(1)^\vee)^{\otimes -n}$.
 
 If $X$ is a smooth projective variety over a field $K$ and if $l$ is a prime, we will denote by
 $$T_l\lambda^n : T_l \operatorname{CH}^n(X_{\bar K}) \to H^{2n-1}(X_{\bar K},\integ_l(n))_\tau$$ the $l$-adic Bloch map defined by Suwa~\cite{Suwa} in case $l$ is invertible in $K$ and by Gros--Suwa~\cite{grossuwaAJ} otherwise\,; see also \cite{ACMVBlochMap}. Abusing notation, we will also denote by  $T_l\lambda^n: T_l \operatorname{A}^n(X_{\bar K}) \to H^{2n-1}(X_{\bar K},\integ_l(n))_\tau$ the restriction of the above map to $T_l\operatorname{A}^n(X_{\bar K})$.

 For $K$ of positive characteristic $p$, see \cite[\S I.3.1]{grossuwaAJ} for details on $H^j(X_{\bar K}, \integ_p)$.  We let $\ww(K)$ denote the ring of Witt vectors over $K$, and $\mathbb B(K)$ its field of fractions.

Let $\mathcal H^\bullet$ be a Weil cohomology theory.
For $X$ smooth projective and geometrically connected  over a field $K$ of pure dimension $d$, the intersection product $\mathcal H^k (X)_\tau \times \mathcal H^{2d-k}(X)_\tau \to \mathcal H^{2d}(X)$ provides a canonical identification
 $$ \mathcal H^{i}(X)_\tau(d) \stackrel{\cup}{=} \mathcal H^{2d-i}({X})_\tau^\vee. $$

\newpage 

\part{Chow decomposition of the diagonal and algebraic representatives}\label{P:Chow}

 \section{Preliminaries on algebraic representatives}
 \label{S:PreAlgRep}

In this section we review the notion of an algebraic representative.  In positive characteristic, this takes the role of the intermediate Jacobian.

 \subsection{Galois-equivariant regular homomorphisms and algebraic representatives}

 We start by reviewing the definition of a regular homomorphism and of an algebraic representative
 (\emph{i.e.},  \cite[Def.~1.6.1]{murre83} or \cite[2.5]{samuelequivalence}), as well as
 the notion of  a Galois-equivariant algebraic representative
 (\cite[Def.~4.2]{ACMVdcg}).

 Let $X$ be a smooth projective variety over an algebraically closed field $k$
 and let $n$ be a nonnegative integer.
 For a smooth separated scheme $T$ of finite type over $k$, we denote
 $$\mathscr A^n_{X/k}(T) := \{ Z\in \operatorname{CH}^n(T\times _k X)\ |  \ \forall t\in T(k), \ \mbox{
 the Gysin fiber $Z_t$ is algebraically trivial}\}$$ 
and for all $Z \in \mathscr A^n_{X/K}(T)$
  we denote by 
 $$w_{Z}:T(k)\to
 \operatorname{A}^n(X)$$
  the map defined by $w_Z(t)=Z_t$.  Given an abelian
 variety $A/k$, a \emph{regular homomorphism} (in codimension~$n$)
 $$\xymatrix{\phi:\operatorname{A}^n(X)\ar[r] & A(k)}$$ is a homomorphism of
 groups such that for every $Z\in \mathscr A^n_{X/k}(T)$, the
 composition
 $$
 \xymatrix{
  T(k) \ar[r]^{w_Z}& \operatorname{A}^n(X) \ar[r]^\phi& A(k)
 }
 $$
 is induced by a morphism of varieties 
 $$\psi_Z: T\to A.$$ 
 
  An \emph{algebraic
 representative} (in codimension $n$) is a regular homomorphism
 $$\phi^n_{X/k}:\operatorname{A}^n(X)\to \operatorname{Ab}^n_{X/k}(k)$$ that is
 initial among all regular homomorphisms (in codimension~$n$)\,; in particular if it exists then it is unique up to unique isomorphism. For $n=1$, the
 algebraic representative is given by
 $(\operatorname{Pic}^0_{X/k})_{\operatorname{red}}$ together with the
 Abel--Jacobi map.  For $n=d_X$, the algebraic representative is given by the
 Albanese variety and the Albanese map.  For $n=2$, it is a result of Murre
 \cite[Thm.~A]{murre83} that there exists an algebraic representative.\medskip

 We now review the extension in \cite{ACMVdcg} to the case of a smooth projective
 variety $X$ over a perfect field $K$.  Given an abelian variety $A/K$,  we say
 that a regular homomorphism $\phi:\operatorname{A}^n(X_{\bar K})\to A(\bar K)$
 is Galois-equivariant if it is equivariant with respect to the natural
 actions of $\operatorname{Gal}(K)$.
 We say an algebraic representative $\phi^n_{X_{\bar K}/\bar
  K}:\operatorname{A}^n(X_{\bar K})\to \operatorname{Ab}^n_{X_{\bar K}/\bar
  K}(\bar K)$ is Galois-equivariant if $ \operatorname{Ab}^n_{X_{\bar K}/\bar K}$
 descends to an abelian variety $\operatorname{Ab}^n_{X/K}$ defined over $K$ in
 such a way that  $\phi^n_{X_{\bar K}/\bar K}$ is a Galois-equivariant regular
 homomorphism.
 We show \cite[Thm.~4.4]{ACMVdcg} that if $X_{\bar K}$ admits an algebraic
 representative in codimension~$n$, $(\operatorname{Ab}^n_{X_{\bar K}/\bar
  K},\phi^n_{X_{\bar K}/\bar K})$,   then $\operatorname{Ab}^n_{X_{\bar K}/\bar
  K}$ descends uniquely to an abelian variety, denoted $\operatorname{Ab}^n_{X/ K}$, over $K$ making $\phi^n_{X_{\bar K}/\bar K}$ Galois-equivariant. 
   We also show that these are
 stable under Galois base change of field, as well as under algebraically closed
 base change of field.

 Importantly for our purposes, we show that for any Galois-equivariant regular
 homomorphism $\phi:\operatorname{A}^n(X_{\bar K}) \to A(\bar K)$, any  smooth
 separated scheme $T$ of finite type over $K$,  and any cycle class $Z\in
 \operatorname{CH}^n(T\times _K X)$ such that for every $t\in T(\bar K)$ the
 Gysin fiber $Z_t$ is algebraically trivial,
 the induced map $\psi_{Z_{\bar K}}:T_{\bar K}\to A_{\bar K}$ descends to a
 morphism $\psi_Z:T\to A$ of $K$-schemes.

\subsection{The functorial approach}
In \cite{ACMVfunctor} we have translated
the notion of a Galois-equivariant regular homomorphism into a functorial
language, which greatly clarifies many of the arguments in \cite{ACMVdcg},
allowing us to extend some of those results, and also discuss algebraic
representatives in families.  As we believe this is the correct language to use
going forward, we will use this notation in this paper.  Here we briefly review
the definition, referring the reader to \cite{ACMVfunctor} for details.  Over a
perfect field (which is the setting here), the functorial approach is entirely
equivalent to the notion of a Galois-equivariant regular homomorphism, and the
reader is free to simply interchange the notation throughout.
   
Fix a field $K$.
We start by defining the category of spaces that provide parameter spaces for
our cycles.
Specifically, we define
$$
\mathsf {Sm}/K
$$
to be the category with \emph{objects being smooth separated schemes of finite
	type over $K$}, and with \emph{morphisms being morphisms of $K$-schemes}.
Note that every morphism $t:T'\to T$ in $\mathsf {Sm}/K$ is lci  in the sense of
\cite[B.7.6]{fulton} (see \cite[B.7.3]{fulton}),
so that there is a refined Gysin pull-back $t^!$
\cite[\S 6.6]{fulton}.
The \emph{functor of codimension-$n$ algebraically trivial cycle classes on
	$X$ over $K$} is the contravariant functor
$$\mathscr {A}^n_{X/K}
:\mathsf {Sm}/K \longrightarrow \mathsf {AbGp}
$$
to the category of abelian groups $\mathsf {AbGp}$ given by families of
algebraically
trivial cycles on $X/K$.
Precisely, given $T$ in $\mathsf {Sm}/K$, we take  $\mathscr
{A}^i_{X/K}(T)$ to be the group of cycle classes $Z \in
\operatorname{CH}^i(T\times_K X)$ such that
 $Z_t \in \operatorname{CH}^i(X_{K^s})$ is algebraically trivial for some (equivalently, for any) separably closed point $t: \spec K^s \to T$\,; see \cite[\S 1.1]{ACMVfunctor}.
        The functor is defined on  morphisms $t:T'\to T$ in
$\mathsf {Sm}/K$  via  the refined Gysin pullback $t^!$ for lci
morphisms.

Let $A/K$ be an abelian variety, viewed via Yoneda as
the contravariant representable functor $\operatorname{Hom}(-,A):\mathsf
{Sm}/K\to \mathsf {AbGp}$.
A \emph{regular homomorphism in codimension~$n$ from $\mathscr A^n_{X/K}$ to
	$A/K$} is a natural transformation of functors
$$\Phi:\mathscr A^n_{X/K}\to A.$$
Here we parse the definition.  Given $T$ in $\mathsf {Sm}/K$, we obtain
$\Phi(T):\mathscr A^n_{X/K}(T) \to A(T)$\,; in other words, given a cycle class
$Z\in \mathscr A^n_{X/K}(T)$, \emph{i.e.}, a family of algebraically trivial cycle
classes on $X$ parameterized by $T$, we obtain a $K$-morphism $\Phi(T)(Z):T\to
A$. The regular homomorphism $\Phi$ is said to be \emph{surjective} if it is surjective on $K^{\sep}$-points, \emph{i.e.}, if 
$$\phi :=\Phi(K^{\sep}) : \mathscr A^n_{X/K}(K^{\sep})= \operatorname{A}^n(X_{K^{\sep}}) \to A(K^{\sep})$$ is surjective.
An  \emph{algebraic representative in codimension~$n$}  consists of an abelian
variety  $\operatorname{Ab}^n_{X/K}$ over~$K$ together with   a natural
transformation of  functors
$$\Phi^n_{X/K}:\mathscr {A}^n_{X/K} \to \operatorname{Ab}^n_{X/K}$$
over $\mathsf {Sm}/K$
that is initial among all regular homomorphisms $ \Phi:\mathscr
{A}^n_{X/K} \to A$.
    An algebraic representative, if it exists, is a surjective regular homomorphism~\cite[Prop.~5.1]{ACMVfunctor} and it is unique up to unique isomorphism.

\begin{rem}[Connection with Galois-equivariant regular homomorphisms]
	If $K$ is perfect, then regular homomorphisms and algebraic
        representatives in this sense are equivalent to
        Galois-equivariant regular homomorphisms and algebraic
        representatives over $\bar K$ (see \cite{ACMVfunctor}).  One
        translates the notation as follows.  Given a regular
        homomorphism $\Phi:\mathscr A^n_{X/K}\to A$, then
        $\phi=\Phi(\bar K):\mathscr A^n_{X/K}(\bar
        K)=\operatorname{A}^n(X_{\bar K})\to A(\bar K)$ is a
        Galois-equivariant regular homomorphism.  Conversely, given a
        Galois-equivariant regular homomorphism $\phi:
        \operatorname{A}^n(X_{\bar K})\to A(\bar K)$, then we define a
        regular homomorphism $\Phi:\mathscr A^n_{X/K}\to A$ as
        follows.  For $T$ in $\mathsf {Sm}/K$ and $Z\in \mathscr
        A^n_{X/K}(T)$, it is shown in \cite{ACMVfunctor} that there is
        a morphism $\Phi(T)(Z)=\psi_Z:T\to A$ of $K$-schemes
        determined by the map of $\bar K$-points given by $t\mapsto
        \phi(Z_t)$. The assignment on morphisms $T'\to T$ is made in
        the obvious way.
\end{rem}

\begin{rem}\label{R:SepBC/D}
We have shown in \cite[Thm.~1]{ACMVfunctor} that algebraic representatives satisfy base change and descent along separable field extensions.  More precisely, for a smooth projective variety $X$ over a  field $K$ and  a (not necessarily algebraic) separable field extension  $\Omega/K$, 
  an algebraic representative
  $\Phi^i_{X_\Omega} : \mathscr A^i_{X_\Omega/\Omega} \to
  \mathrm{Ab}^i_{X_\Omega/\Omega}$ exists if and only if an algebraic
  representative $\Phi^i_{X} : \mathscr A^i_{X/K} \to \mathrm{Ab}^i_{X/K}$
  exists.
  If this is the case, we have in addition that there is a canonical isomorphism $\mathrm{Ab}^i_{X_\Omega/\Omega} \stackrel{\sim}{\longrightarrow}
   (\mathrm{Ab}^i_{X/K})_\Omega$,  
and   $\Phi^i_{X_\Omega}(\Omega) : \mathrm{A}^i(X_\Omega) \to
   \mathrm{Ab}^i_{X_\Omega/\Omega}(\Omega)$ is
   $\mathrm{Aut}(\Omega/K)$-equivariant, relative to the above identification.  We will typically use this in the case where $\Omega/K$ is an extension of a perfect field $K$ by an algebraically closed field $\Omega$.
\end{rem}

\subsection{Miniversal and universal cycles}\label{SS:mini-universalCycle}
 Let $X$ be a smooth projective variety over a field $K$ and let $\Phi:\mathscr A^n_{X/K}\to A$ be a regular homomorphism. A \emph{miniversal cycle class} for $\Phi$ is a cycle $Z\in \mathscr{A}^n_{X/K}(A)$ such that the homomorphism $\psi_Z := \Phi(A)(Z) : A \to A$ is given by multiplication by $r$ for some natural number~$r$, which we call the \emph{degree} of $Z$. A miniversal cycle class is called \emph{universal} if $\psi_Z := \Phi(A)(Z) : A \to A$ is the identity morphism, \emph{i.e.}, if it is miniversal of degree one. In the case where  $\Phi$ is an algebraic representative for codimension-$n$ cycles on $X$, we call a universal cycle class for $\Phi$ a universal cycle in codimension-$n$ for $X$ (or for $\operatorname{Ab}^2_{X/K}$).

If $K$ is algebraically closed, it is a classical and crucial fact~\cite[1.6.2 \& 1.6.3]{murre83} that a miniversal cycle class exists  if and only if $\Phi$ is surjective\,; this also holds without any restrictions on the field~$K$ by~\cite[Lem.~4.7]{ACMVfunctor}. In particular, since an algebraic representative is always a surjective regular homomorphism  \cite[Prop.~5.1]{ACMVfunctor}, it always admits a miniversal cycle class. However, the existence of a universal cycle class is restrictive: Voisin~\cite{voisinUniv} established that the standard desingularization of the very general complex double quartic solid with 7 nodes does not admit a universal cycle class  in codimension~2. One of the main results of this paper, Theorem~\ref{T:Intro-QDS-1}, consists in extending Voisin's result to the positive characteristic case.

Nonetheless, recall \cite[\S 7.1]{ACMVfunctor} that  if $X$ is a smooth projective variety over a field $K$, then its first algebraic representative exists and it coincides with the reduced Picard scheme $(\mathrm{Pic}^0_X)_{\mathrm{red}}$\,; in addition if $X$ possesses a zero-cycle of degree-1 (\emph{e.g.}~if $K$ is finite or separably closed), then $X$ admits a universal cycle class in codimension~1.

\subsection{Regular homomorphisms and torsion}\label{SS:RegTors}
The following lemma is crucial.

\begin{lem}[{\cite[Prop. 11, Lem. p.259]{beauville83fourier}}]\label{L:commute}
	Let $A$ be an abelian variety  over $K$. The map $A(K^{\sep}) \to  \operatorname{A}_0(A_{K^{\sep}}),\ a \mapsto [a]-[0]$ is an isomorphism on torsion. In particular,  for any integer $N>1$, it sends $N$-torsion to $N$-torsion. \qed
\end{lem}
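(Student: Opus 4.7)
The proof follows Beauville's analysis \cite{beauville83fourier} via the Pontryagin/Fourier product on the Chow ring of $A$. First I would establish injectivity. Since $A$ is its own Albanese variety, the Albanese morphism yields a group homomorphism $\operatorname{alb}_A : \operatorname{A}_0(A_{K^{\sep}}) \to A(K^{\sep})$ sending $\sum n_i [a_i]$ to $\sum n_i a_i$ (using the group law of $A$). Since $\operatorname{alb}_A \circ \alpha = \operatorname{id}_{A(K^{\sep})}$, the map $\alpha : a \mapsto [a] - [0]$ is a set-theoretic section of $\operatorname{alb}_A$, hence injective.

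The main content of the lemma is that $\alpha$ sends torsion to torsion. For $a \in A[N]$, one directly computes $[N]_* \alpha(a) = [Na] - [0] = 0$ in $\operatorname{A}_0(A_{K^{\sep}})$, where $[N] : A \to A$ denotes multiplication by $N$. Invoking Beauville's eigenspace decomposition of $\operatorname{CH}_0(A_{K^{\sep}})_\rat$ under the action of $[N]_*$, whose eigenvalues are nonzero powers of $N$ (including $N^0 = 1$), one sees that $[N]_*$ is injective on $\operatorname{CH}_0(A_{K^{\sep}})_\rat$. Hence $\alpha(a) = 0$ in $\operatorname{CH}_0(A_{K^{\sep}})_\rat$, so $\alpha(a)$ is torsion in $\operatorname{A}_0(A_{K^{\sep}})$.

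To obtain bijectivity on torsion together with the ``in particular'' assertion, I would invoke Rojtman's theorem (extended by Milne and Bloch to $p$-primary torsion on abelian varieties in characteristic $p$): the Albanese map restricts to a group isomorphism $\operatorname{alb}_A : \operatorname{A}_0(A_{K^{\sep}})_{\mathrm{tors}} \xrightarrow{\sim} A(K^{\sep})_{\mathrm{tors}}$. By the previous step, $\alpha$ restricts to a map $A(K^{\sep})_{\mathrm{tors}} \to \operatorname{A}_0(A_{K^{\sep}})_{\mathrm{tors}}$; as a set-theoretic section of a group isomorphism between these subgroups, it must coincide with the inverse of $\operatorname{alb}_A|_{\mathrm{tors}}$. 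In particular $\alpha|_{\mathrm{tors}}$ is a group homomorphism, and so it sends $N$-torsion to $N$-torsion. The main obstacle is the intermediate step, namely establishing the rational triviality of $\alpha(a)$ for $a$ torsion; this rests on Beauville's Pontryagin/Fourier decomposition of $\operatorname{CH}_0(A)_\rat$.
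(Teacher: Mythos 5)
Your proof is correct. The paper itself supplies no proof here---the \qed immediately follows the citation to \cite{beauville83fourier}---and your reconstruction, using the Fourier/Pontryagin decomposition of $\operatorname{CH}_0(A)_\rat$ to show $[N]_*$ is rationally injective (hence $[a]-[0]$ is torsion for $a\in A[N]$) and then invoking Rojtman's theorem to identify $\alpha|_{\mathrm{tors}}$ as the inverse of $\operatorname{alb}_A|_{\mathrm{tors}}$, is precisely the route of the cited result.
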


It admits the following consequence, which will be used in the proofs of Theorem~\ref{T:main-pol} and Proposition~\ref{P:vanish}.
\begin{lem}\label{L:ablambda}
 	Let $X$ be a smooth projective variety over a field $K$. If $Z \in \mathscr{A}^n_{X/K}(B)$ is a family of algebraically trivial cycles on $X$ parameterized by an abelian variety $B$ over $K$ with $Z_0 =0 \in \operatorname{A}^n(X)$, then $w_Z : B(K^{\sep}) \to \operatorname{A}^n(X_{K^{\sep}}), b\mapsto Z_b$ is a homomorphism on torsion. In particular, if $\Phi : \mathscr{A}^n_{X/K} \to A$ is a regular homomorphism, then for any prime $l$ we have 	
	 $T_l\psi_Z = T_l \Phi(K^{\sep}) \circ T_l w_Z : T_l B \to T_l A$.
\end{lem}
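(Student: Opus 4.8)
The plan is to factor $w_Z$ as the action of $Z$ --- viewed as a correspondence --- on zero-cycles of $B$, composed with the map $b\mapsto[b]-[0]$, and then to import the torsion control from Lemma~\ref{L:commute}. First I would base-change to $K^{\sep}$, where $Z$ defines a correspondence and hence a homomorphism of groups $Z_*\colon \operatorname{CH}_0(B_{K^{\sep}})\to\operatorname{CH}^n(X_{K^{\sep}})$, $\gamma\mapsto (p_X)_*(Z\cdot p_B^*\gamma)$. For $b\in B(K^{\sep})$, since the point $b\colon\spec K^{\sep}\to B$ is a regular embedding into the smooth variety $B$, the compatibility of the refined Gysin pull-back with the correspondence action gives $Z_*([b])=b^! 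Z=Z_b=w_Z(b)$. Using $Z_0=w_Z(0)=0$ one then gets, for every $b$,
$$w_Z(b)\;=\;Z_*([b])\;=\;Z_*([b]-[0])+Z_*([0])\;=\;Z_*([b]-[0]),$$
i.e.\ $w_Z=Z_*\circ\iota$ with $\iota\colon B(K^{\sep})\to\operatorname{A}_0(B_{K^{\sep}})$, $b\mapsto[b]-[0]$; note $Z_*([b]-[0])=Z_b$ automatically lies in $\operatorname{A}^n(X_{K^{\sep}})$ because $Z\in\mathscr{A}^n_{X/K}(B)$.

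Next, since $Z_*$ is a homomorphism and, by Lemma~\ref{L:commute} applied to $B$, the map $\iota$ restricts to an isomorphism of torsion subgroups (so is in particular additive there), I would conclude that $w_Z$ is a homomorphism on torsion --- the first assertion. It follows that for each $m$, $w_Z$ maps $B[l^m](K^{\sep})$ homomorphically into $\operatorname{A}^n(X_{K^{\sep}})[l^m]$ compatibly with multiplication by $l$, so taking inverse limits produces $T_l w_Z\colon T_l B\to T_l\operatorname{A}^n(X_{K^{\sep}})$; similarly the group homomorphism $\phi:=\Phi(K^{\sep})$ induces $T_l\phi$.

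For the last assertion I would argue as follows: $\psi_Z:=\Phi(B)(Z)\colon B\to A$ is a morphism of $K$-schemes with $\psi_Z(0_B)=\phi(Z_0)=0_A$, hence a homomorphism of abelian varieties by rigidity, so induces $T_l\psi_Z$; and by the defining property of a regular homomorphism $\psi_Z$ agrees on $K^{\sep}$-points with $b\mapsto\phi(Z_b)=\phi(w_Z(b))$, i.e.\ $\psi_Z=\phi\circ w_Z$ as maps $B(K^{\sep})\to A(K^{\sep})$. Restricting this identity to $l^m$-torsion --- where all three maps are homomorphisms --- and passing to the limit gives $T_l\psi_Z=T_l\phi\circ T_l w_Z$. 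The step I expect to require the most care is the identification $w_Z(b)=Z_*([b])$ of the Gysin fiber with the correspondence action (routine given the lci/refined-Gysin formalism, but it is the one place where the point needs to be a regular embedding), together with the fact that $\iota$ is additive \emph{only} after restriction to torsion: on all of $\operatorname{A}_0(B_{K^{\sep}})$ its failure to be a homomorphism is measured by the kernel of the Albanese map, which is precisely why one must pass to $l$-adic realizations here.
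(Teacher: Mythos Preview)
Your proof is correct and follows essentially the same approach as the paper: factor $w_Z$ through $b\mapsto[b]-[0]$ via the correspondence action of $Z$, invoke Lemma~\ref{L:commute} for additivity on torsion, and then use $\psi_Z=\phi\circ w_Z$ on $K^{\sep}$-points to pass to Tate modules. You supply more detail (the Gysin/correspondence identification, the rigidity argument for $\psi_Z$ being a homomorphism), but the skeleton is identical.
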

\begin{proof} That  $w_Z : B(K^{\sep}) \to \operatorname{A}^n(X_{K^{\sep}}), b\mapsto Z_b$ is a homomorphism on torsion follows simply from the fact that it factors through $B(K^{\sep}) \to  \operatorname{A}_0(B_{K^{\sep}}),\ b \mapsto [b]-[0]$ and from Lemma~\ref{L:commute}. 
	
	Note that given any  $Z \in \mathscr{A}^n_{X/K}(T)$ for any smooth variety $T$ over $K$, we do have $\Phi(T)(Z) =: \psi_Z = \Phi \circ w_Z$, where $w_Z : T \to \mathscr{A}^n_{X/K}$ is seen as a natural transformation. The assertion about Tate modules when $T$ is an abelian variety uses the above-established fact that in that case $w_Z(K^{\sep}) : B(K^{\sep}) \to \mathscr{A}^n_{X/K}(K^{\sep})$ is a homomorphism on torsion.
 \end{proof}

\begin{pro}\label{P:phizero}
	Let $X$ be a smooth projective variety over a field $K\subseteq \mathbb C$, and let $\Phi_{X/K}^n : \mathscr{A}^n_{X/K} \to \operatorname{Ab}^n_{X/K}$ be the algebraic representative of $X$ for $n=1,2$ or $\dim X$. Then $(\phi^n_X)_{\mathrm{tors}} : \operatorname{A}^n(X_{\bar K})_{\mathrm{tors}} \to  \operatorname{Ab}^n_{X/K}(\bar K)_{\mathrm{tors}}$ is an isomorphism. In particular  $T_\ell \phi^n_X : T_\ell \operatorname{A}^n(X_{\bar K}) \to T_\ell \operatorname{Ab}^n_{X/K}$ is an isomorphism for all primes $\ell$.
\end{pro}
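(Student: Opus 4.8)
The plan is to treat the three admissible values of $n$ separately, reducing the torsion statement in each case to a classical theorem, and then to deduce the assertion about Tate modules formally.

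For $n=1$ there is nothing to do beyond unwinding definitions: $\operatorname{Ab}^1_{X/K}=(\operatorname{Pic}^0_{X/K})_{\mathrm{red}}$, and on $\bar K$-points $\phi^1_X$ is the tautological identification of $\operatorname{A}^1(X_{\bar K})$ with $\operatorname{Pic}^0_{X_{\bar K}/\bar K}(\bar K)=\operatorname{Ab}^1_{X/K}(\bar K)$ (the Picard scheme being reduced in characteristic zero), so $\phi^1_X$ is itself an isomorphism. For $n=\dim X$ we have $\operatorname{Ab}^{\dim X}_{X/K}=\operatorname{Alb}_{X/K}$ and $\phi^{\dim X}_X$ is the Albanese map $\operatorname{A}_0(X_{\bar K})\to\operatorname{Alb}_{X_{\bar K}/\bar K}(\bar K)$, which is an isomorphism on torsion subgroups by Roitman's theorem.

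The substantive case is $n=2$, and the idea is to reduce to $K=\mathbb C$ and invoke the work of Murre and Bloch. Since $K$ has characteristic zero the extension $\mathbb C/K$ is separable, so Remark~\ref{R:SepBC/D} provides a canonical isomorphism $(\operatorname{Ab}^2_{X/K})_{\mathbb C}\cong\operatorname{Ab}^2_{X_{\mathbb C}/\mathbb C}$ under which the universal regular homomorphisms $\phi^2_X$ over $\bar K$ and over $\mathbb C$ are compatible via pullback of cycle classes and of points. As $\operatorname{Ab}^2_{X/K}$ is an abelian variety, $\operatorname{Ab}^2_{X/K}(\bar K)_{\mathrm{tors}}\to\operatorname{Ab}^2_{X/K}(\mathbb C)_{\mathrm{tors}}$ is an isomorphism, and, by rigidity of torsion in Chow groups under extensions of algebraically closed fields, so is the pullback $\operatorname{A}^2(X_{\bar K})_{\mathrm{tors}}\to\operatorname{A}^2(X_{\mathbb C})_{\mathrm{tors}}$; hence we may assume $K=\mathbb C$. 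Over $\mathbb C$, Murre's construction identifies $(\operatorname{Ab}^2_{X_{\mathbb C}/\mathbb C},\phi^2_X)$ with the algebraic intermediate Jacobian $J^3_a(X_{\mathbb C})$ together with the restriction to $\operatorname{A}^2(X_{\mathbb C})$ of the Abel--Jacobi map, and the assertion becomes the theorem of Bloch \cite{bloch79} (see also \cite[Thm.~10.3]{murre83} and \cite{ACMVBlochMap}) that the Abel--Jacobi map is bijective from $\operatorname{A}^2(X_{\mathbb C})_{\mathrm{tors}}$ onto $J^3_a(X_{\mathbb C})_{\mathrm{tors}}$.

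Finally, the last sentence of the proposition is formal: an isomorphism on full torsion subgroups restricts to an isomorphism on $\ell^m$-torsion for every $m$ and every prime $\ell$, and passing to the inverse limit gives the isomorphism $T_\ell\phi^n_X$. I expect the one genuinely non-formal point to be the $n=2$ case: both the identification of the algebraic representative with the algebraic intermediate Jacobian and the bijectivity of the Abel--Jacobi map on torsion in $\operatorname{A}^2$ are deep inputs (Murre, Bloch), and the descent from $\bar K$ to $\mathbb C$ relies on the base-change property of algebraic representatives together with rigidity of torsion in Chow groups.
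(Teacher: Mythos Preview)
Your proof is correct and follows essentially the same strategy as the paper: the cases $n=1$ and $n=\dim X$ are handled by Kummer theory/Bloch and Roitman respectively, while for $n=2$ you reduce to $K=\mathbb C$ via base-change stability of algebraic representatives (together with rigidity of torsion in Chow groups under extension of algebraically closed fields) and then invoke Murre's theorem identifying $\phi^2$ over $\mathbb C$ with the Abel--Jacobi map, which is an isomorphism on torsion. The paper's proof is more telegraphic but cites exactly the same ingredients.
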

\begin{proof} For $n=1$ this is a result of Bloch and for $n=\dim X$ this is a result of Roitman (see \emph{e.g.}, \cite{ACMVBlochMap} for references).  
For $n=2$, this is a direct result of \cite{murre83} over $\mathbb C$ and  \cite{ACMVfunctor} that algebraic representatives are stable under base change of field from $K$ to an algebraically closed field $\Omega$ containing $K$.
\end{proof}

In contrast, if $\operatorname{char}(K)>0$,  it is not known whether $(\Phi^2_{X/K})_{\mathrm{tors}} : \operatorname{A}^2(X_{K^{\sep}})_{\mathrm{tors}} \to \operatorname{Ab}^2_{X/K}(K^{\sep})_{\mathrm{tors}}$
is injective. The following proposition shows however that $(\Phi^2_{X/K})_{\mathrm{tors}}$ is surjective. In order to deduce that $T_\ell \Phi^2_{X/K}$ is surjective, one needs some more assumptions\,:

\begin{pro}\label{P:surj}
	Let $X$ be a smooth projective variety over a perfect field $K$ and let $\Phi : \mathscr{A}^n_{X/K} \to A$ be a surjective regular homomorphism. Denote $\phi:=\Phi(\bar K) : \operatorname{A}^n(X_{\bar K}) \to A(\bar K)$. Then
	\begin{enumerate}
\item $\phi_{\mathrm{tors}} : \operatorname{A}^n(X_{\bar K})_{\mathrm{tors}} \to A(\bar K)_{\mathrm{tors}}$ is surjective.
\item $\phi[l^\infty] : \operatorname{A}^n(X_{\bar K})[l^\infty] \to A(\bar K)[l^\infty]$ is surjective for all primes $l$.   

\item  $T_l\phi : T_l \operatorname{A}^n(X_{\bar K}) \to T_l A(\bar K)$ has finite cokernel for all primes $l$, and if $\Phi_{\bar K}$ admits a miniversal cycle class of degree $r$ coprime to $l$, then $T_l\phi$ is surjective.
	\end{enumerate}
\end{pro}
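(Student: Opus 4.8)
The plan is to exploit the existence of a miniversal cycle class, which is guaranteed because $\Phi$ is a surjective regular homomorphism (\cite[Lem.~\ref{ACMVfunctor:L:MinVZK}]{ACMVfunctor}). Fix $Z \in \mathscr{A}^n_{X/K}(A)$ miniversal of some degree $r$, so that $\psi_Z := \Phi(A)(Z) : A \to A$ is multiplication by $r$. Translating $Z$ by $-Z_0$ if necessary, we may assume $Z_0 = 0 \in \operatorname{A}^n(X_{\bar K})$ (this changes $\psi_Z$ only by a translation, hence does not affect the induced homomorphism on the abelian variety, which is still $[r]$; alternatively one argues with $w_Z(b) - w_Z(0)$). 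Then Lemma~\ref{L:ablambda} applies: $w_Z : A(\bar K) \to \operatorname{A}^n(X_{\bar K})$ is a homomorphism on torsion, and on torsion we have $\phi \circ w_Z = \psi_Z = [r]$.

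For (1): given $a \in A(\bar K)_{\mathrm{tors}}$, pick $b \in A(\bar K)_{\mathrm{tors}}$ with $[r] b = a$ (multiplication by $r$ is surjective on torsion points of an abelian variety over an algebraically closed field); then $\phi(w_Z(b)) = \psi_Z(b) = a$, and $w_Z(b) \in \operatorname{A}^n(X_{\bar K})_{\mathrm{tors}}$ since $w_Z$ is a homomorphism on torsion. This proves surjectivity of $\phi_{\mathrm{tors}}$. For (2), the identical argument applied to $b \in A(\bar K)[l^\infty]$ (using that $[r]$ is surjective on $A[l^\infty]$, which holds for every prime $l$ since $\ker[r]$ is finite and $A[l^\infty]$ is $l$-divisible) gives surjectivity of $\phi[l^\infty]$.

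For (3): apply $T_l$ to the relation $T_l\phi \circ T_l w_Z = T_l \psi_Z = T_l[r] = r\cdot \mathrm{id}$ on $T_l A$ (the Tate-module identity is the last assertion of Lemma~\ref{L:ablambda}). Thus $T_l\phi$ is surjective after inverting $r$; since $T_l A \cong \integ_l^{2\dim A}$ is finitely generated, $\operatorname{coker}(T_l\phi)$ is killed by a power of $r$, hence finite. If moreover $r$ is coprime to $l$, then $r$ is a unit in $\integ_l$, so $r\cdot\mathrm{id}$ is an isomorphism of $T_l A$ and $T_l\phi$ is already surjective.

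The only subtle point is the reduction to $Z_0 = 0$, ensuring Lemma~\ref{L:ablambda} is literally applicable; this is routine since $\mathscr{A}^n_{X/K}(A)$ is a group and $\psi_Z$ is well-defined as a morphism of varieties, so replacing $Z$ by $Z - (\mathrm{pr}_X)^*Z_0$ (pulled back along the structure map) leaves the induced homomorphism $[r]$ unchanged while normalizing the fiber over the origin. Everything else is elementary divisibility on Tate modules and torsion of abelian varieties. I expect no real obstacle here — the content is entirely in Lemmas~\ref{L:commute} and~\ref{L:ablambda}, which have already been established.
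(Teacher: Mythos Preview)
Your proof is correct and follows essentially the same approach as the paper, which defers (1) and (2) to \cite[Lem.~3.2, Rem.~3.3]{ACMVdmij} and notes that (3) is immediate from miniversality; the underlying argument in that reference is precisely the miniversal-cycle factorization $\phi\circ w_Z = [r]$ on torsion that you spell out via Lemma~\ref{L:ablambda}. One small inaccuracy: $T_l A$ need not have rank $2\dim A$ when $l=p=\operatorname{char}(K)$, but your argument only uses that $T_l A$ is a finitely generated free $\integ_l$-module, which holds in all cases.
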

\begin{proof}
  Parts (1) and (2) are \cite[Lem.~3.2, Rem.~3.3]{ACMVdmij}\,; part (3) follows immediately from the definition of miniversality, and the fact that all surjective regular homomorphisms admit a miniversal cycle class of some degree.
       \end{proof}

\subsection{Regular homomorphisms induced by cycle classes}
\label{SS:RegHomCycle}
One of the difficulties in working with regular homomorphisms is that it is hard to construct non-trivial examples.  In this subsection we 
explain (\S \ref{SSS:Z_R_H_Def})  that given any abelian variety $A/K$ and any cycle class
 $
\widehat Z\in \operatorname{CH}^{d_X+1-n}( X\times_K \widehat A)
$
there is an 
induced
     a regular homomorphism
\begin{equation}\label{E:RegHomZ}
\widehat Z_* : \mathscr{A}^n_{X/K} \longrightarrow A.
\end{equation}
 While there is no guarantee that such  a regular homomorphism will be nonzero (any such regular homomorphism would be zero in the case  $n=1$ if  $H^1(X,\mathcal O_X)=0$, since in this case the algebraic representative $(\operatorname{Pic}^0_{X/K})_{\operatorname{red}}$ is trivial),  we nevertheless find this to be quite helpful in constructing regular homomorphisms. 
Conversely, regular homomorphisms induced by cycle classes enjoy properties that we exploit in \S \ref{S:self-duality}.

\subsubsection{}\label{SSS:Z_R_H_Def}
Let  $X$ and $Y$ be smooth projective varieties over a field $K$. Then any cycle class
 $$
Z\in \operatorname{CH}^{d_X+1-n}( X\times_K Y)
$$
induces a  regular homomorphism
\begin{equation}\label{E:RegHomZgen}
\xymatrix{
\Phi_{Z}=Z_*:\mathscr{A}^n_{X/K} \ar[r]^<>(0.5){Z_*}&  \mathscr{A}^1_{Y/K} \ar[r]^<>(0.5){AJ} &  (\operatorname{Pic}^0_{Y/K})_{\mathrm{red}},
}
\end{equation}
where $ AJ:\mathscr{A}^1_{Y/K} \to (\operatorname{Pic}^0_{Y/K})_{\mathrm{red}}$ is the Abel--Jacobi map to the first algebraic representative, and $Z_*:\mathscr A^n_{X/K}\to \mathscr A^1_{Y/K}$ is the canonical natural transformation induced by the correspondence $Z$.  
In other words, given  any $\Gamma\in \mathscr A^n_{X/K}(T)$, we have $
Z\circ  \Gamma \in \operatorname{CH}^1(T\times_K  Y)=\operatorname{A}^1(T\times_K Y)$, and  the theory of regular homomorphisms  (\emph{i.e.}, the Abel--Jacobi map) provides a morphism $T\to (\operatorname{Pic}^0_{Y/K})_{\operatorname{red}}$.  Note that on points, this sends a point $t$  of $T$ to the point corresponding to the line bundle associated to the divisor $(Z\circ \Gamma)_t=Z_*(\Gamma_t)$ on $Y$.  
 We observe that our notation overloads the use of $Z_*$, since it has two meanings in \eqref{E:RegHomZgen}\,; some motivation for this is that the Abel--Jacobi map is an isomorphism when evaluated on $\bar K$-points, and the meaning of $Z_*$ should always be clear from the context.
 For $Z\in \operatorname{CH}^{d_X+1-n}( Y\times_K X)$, we  denote by $Z^*$ the regular homomorphism $({}^tZ)_*$ induced by the transpose~${}^tZ$. 

If we apply this construction to the case $Y=\widehat A$ for an abelian variety $A/K$ (and switch notation $Z=\widehat Z$), we obtain the regular homomorphism \eqref{E:RegHomZ}.

 \begin{que}\label{Q:cycle}
	Given a regular homomorphism $\Phi:\mathscr A^n_{X/K}\to A$, when is $\Phi$ induced by a cycle\,?  In other words, when is there a cycle class $\widehat Z\in \operatorname{CH}^{d_X+1-n}( X\times _K \widehat A)$ with $\Phi=\widehat Z_*$\,?
\end{que}

If $X$ admits a zero-cycle of degree $1$, the question has a positive answer for $n=\dim X$\,:

\begin{lem}\label{L:AlbIndZ}
Let $X$ be a smooth projective variety over a field $K$ admitting a zero-cycle of degree $1$. 
  The algebraic representative $\Phi^{d_X}_{X/K}: \mathscr A^{d_X}_{X/K}\to \operatorname{Alb}_{X/K}$ is induced by the universal codimension-$1$ cycle class $Z\in \operatorname{CH}^1((\operatorname{Pic}^0_{X/K})_{\operatorname{red}}\times_KX)$\,; \emph{i.e.}, $\Phi^{d_X}_{X/K}=Z^*:={}^tZ_*$.  
    \end{lem}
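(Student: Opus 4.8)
The plan is to exhibit a natural transformation $Z^* = {}^tZ_* : \mathscr A^{d_X}_{X/K} \to \operatorname{Alb}_{X/K}$ built from the universal line bundle and then check it satisfies the universal property of the algebraic representative in top codimension, which will force it to coincide with $\Phi^{d_X}_{X/K}$. First I would recall that $\operatorname{Pic}^0_{X/K}$ is reduced up to the nilpotents killed by $(-)_{\operatorname{red}}$, and that $P := (\operatorname{Pic}^0_{X/K})_{\operatorname{red}}$ carries a Poincar\'e line bundle; since $X$ has a $0$-cycle of degree $1$, we get a rigidified Poincar\'e bundle on $X \times_K P$, whose first Chern class is the universal codimension-$1$ cycle class $Z \in \operatorname{CH}^1(P \times_K X)$. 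The construction of \S\ref{SSS:Z_R_H_Def} applied to $Z \in \operatorname{CH}^{1}(P \times_K X) = \operatorname{CH}^{d_X + 1 - d_X}(P \times_K X)$ (with the roles of the two factors as in the definition of $({}^tZ)_*$) produces the regular homomorphism $Z^* = ({}^tZ)_* : \mathscr A^{d_X}_{X/K} \to (\operatorname{Pic}^0_{P/K})_{\operatorname{red}} = \operatorname{Alb}_{X/K}$, using the standard identification of the dual of the Picard variety of $X$ with the Albanese of $X$ (valid because $X$ has a rational $0$-cycle of degree $1$, so $\operatorname{Alb}_{X/K}$ represents the expected functor and $\widehat{P} \cong \operatorname{Alb}_{X/K}$ canonically).

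Next I would identify $Z^*$ on $\bar K$-points with the classical Albanese map. Concretely, for a $0$-cycle $\gamma$ of degree $0$ on $X_{\bar K}$, the composite ${}^tZ \circ \gamma$ is the divisor class on $P_{\bar K}$ obtained by restricting the Poincar\'e bundle, i.e. it is (up to sign and the chosen base point) the image of $\gamma$ under the map $X_{\bar K} \to \widehat{P}_{\bar K}$ that is dual to $\operatorname{alb}_X$; evaluating the Abel--Jacobi map of $P$ on this divisor returns a point of $\widehat{P}(\bar K) = \operatorname{Alb}_{X/K}(\bar K)$. Tracking the duality pairing carefully shows this point is exactly $\operatorname{alb}_X(\gamma)$. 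This step is essentially the statement that the Albanese map is, by construction, the regular homomorphism induced by the Poincar\'e bundle; it is classical over algebraically closed fields, and the functorial/Galois-equivariant formalism of \cite{ACMVfunctor} recalled in \S\ref{S:PreAlgRep} lets it descend to $K$ since $Z$ is defined over $K$.

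Finally, I would invoke the fact recorded in \S\ref{S:PreAlgRep} that for $n = d_X$ the algebraic representative \emph{is} the Albanese variety together with the Albanese map: $\Phi^{d_X}_{X/K} = \operatorname{alb}_X$. Combining this with the identification of the previous paragraph, $Z^* = \operatorname{alb}_X = \Phi^{d_X}_{X/K}$, which is the claim. The only mild subtlety — and the step I expect to require the most care — is the bookkeeping of dualities and base points: one must match the Poincar\'e bundle's rigidification (depending on the degree-$1$ zero-cycle), the canonical isomorphism $\widehat{(\operatorname{Pic}^0_X)_{\operatorname{red}}} \cong \operatorname{Alb}_{X/K}$, and the sign conventions in the definition of $({}^tZ)_*$, so that the resulting morphism is literally the Albanese map and not a translate or its inverse. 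Everything else is formal, given the results already established in the excerpt.
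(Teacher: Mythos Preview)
Your proposal is correct and essentially unpacks the content of the reference the paper cites (namely \cite[Thm.~\ref{ACMVfunctor:T:AlbAb}(i) and Rem.~\ref{ACMVfunctor:R:Pic0/KUniv}]{ACMVfunctor}): the paper's own proof is simply that citation, and your sketch --- constructing $Z^*$ via the Poincar\'e bundle, identifying it with the Albanese map on $\bar K$-points through the duality $\widehat{(\pic^0_X)_{\mathrm{red}}}\cong\alb_X$, and then invoking that $\Phi^{d_X}_{X/K}$ is the Albanese map --- is precisely the argument behind that reference. The bookkeeping caveat you flag (rigidification, duality, signs) is indeed the only delicate point, and it is handled in the cited paper rather than here.
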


\begin{proof}
This is explained in \cite[Thm.~7.9(i) and Rem.~7.4]{ACMVfunctor}.
\end{proof}

We will see in Proposition~\ref{P:unicycle} that the question also has a positive answer for codimension-2 cycles on stably rational threefolds over an algebraically closed field\,; this is a crucial step towards establishing Theorem~\ref{T:Intro-CanPol0}.

\subsubsection{}\label{SSS:alb}
  The main point of the following discussion is to prove Lemma \ref{L:RHCorB}, regarding regular homomorphisms induced by correspondences.
We start by recalling some basic facts concerning the Picard scheme. Let $X$ and $Y$ be smooth projective varieties over a field
$K$ admitting $K$-points $x_0$ and $y_0$ respectively. Then there is a canonical isomorphism
     \begin{equation}\label{eq:canon}
\frac{\operatorname{CH}^1(X\times_K Y)}{p_X^*\operatorname{CH}^1(X) + p_Y^*\operatorname{CH}^1(Y)} = \operatorname{Hom}(\operatorname{Alb}_{X/K},\operatorname{Pic}^0_{Y/K})\ \ (
=\operatorname{Hom}(\operatorname{Alb}_{Y/K},\operatorname{Pic}^0_{X/K})),
\end{equation}
which is induced from the homomorphism sending a line-bundle $\mathcal L$ on $X\times_K Y$ with trivial restriction on $\{x_0\}\times_K Y$ to the unique homomorphism $\operatorname{Alb}_{X/K}\to \operatorname{Pic}^0_{Y/K}$ induced by the morphism $X\to \operatorname{Pic}^0_{Y/K}$ given on points by $x \mapsto \mathcal{L}|_{\{x\} \times_K Y}$. 
Note that the homomorphism $\operatorname{Alb}_{X/K}\to \operatorname{Pic}^0_{Y/K}$  above is the one induced from the regular homomorphism $\mathscr{A}^{d_X}_{X/K} \to \operatorname{Pic}^0_{Y/K}$ by the universal property of the albanese map, considered as an algebraic representative for zero-cycles on $X$.
Note also that the second equality of \eqref{eq:canon} is simply given by taking the dual homomorphism.
Finally, we note that 
$\operatorname{Hom}(\operatorname{Alb}_{X/K},\operatorname{Pic}^0_{Y/K})=\operatorname{Hom}(\operatorname{Alb}_{X/K},(\operatorname{Pic}^0_{Y/K})_{\operatorname{red}})$, since $\operatorname{Alb}_{X/K}$, being an abelian variety, is reduced.

Now consider the case where $X=B$ is an abelian variety of dimension $g$. 
If $$Z \in \operatorname{CH}^1(B\times_KY ),$$ then identifying $B=\operatorname{Alb}_{B/K}$, the homomorphism $B \to (\operatorname{Pic}^0_{Y/K})_{\operatorname{red}}$ induced by~\eqref{eq:canon} coincides with the homomorphism $\Phi(B)(\Delta_B-(B\times \{0\})):B\to (\operatorname{Pic}^0_{Y/K})_{\operatorname{red}}$ where $\Phi$ is the regular homomorphism 
$$\Phi = Z_* : \mathscr{A}^g_{B/K} \to \mathscr{A}^1_{Y/K} \to  (\operatorname{Pic}^0_Y)_{\operatorname{red}}$$
of \eqref{E:RegHomZgen} with $n=g$, 
and $$\Delta_B \in \operatorname{CH}^{g}(B\times_K B)$$ is the  family of dimension-$0$ cycles given by the diagonal.
 Indeed, it is enough to check that these two homomorphisms agree on $K^{\sep}$-points. We have a commutative diagram
$$\xymatrix{
	B(K^{\sep}) \ar[r] \ar[rd]_{\operatorname{id}} & \operatorname{A}_0(B_{K^{\sep}}) \ar[d]^{\operatorname{alb}} \ar[r]^{Z_*} & \operatorname{A}^1(Y_{K^{\sep}}) \ar[d]^{AJ}  \\
	& B(K^{\sep}) \ar[r] & \operatorname{Pic}^0_Y(K^{\sep})
}$$
where the left triangle commutes because $\mathrm{alb}([x]-[0]) = x$ by definition of the albanese map, and where the bottom horizontal arrow is the homomorphism induced by the fact that $\operatorname{alb}$ is an algebraic representative for $\mathscr{A}^g_{B/K}$.  The latter coincides, by construction, with the homomorphism induced by the canonical isomorphism  \eqref{eq:canon}.  

A special case we will use often is\,:

\begin{lem}\label{L:RHCorB}
Let $X$ be a smooth projective variety over a field $K$, and let $A$ and $B$ be abelian varieties over $K$.  Given correspondences  $Z\in \operatorname{CH}^{n}( B\times _K  X)$ and $\widehat Z\in \operatorname{CH}^{d_X+1-n}( X\times _K \widehat A)$, the regular homomorphism
$$
\xymatrix{
\Phi: \mathscr A^{d_B}_{B/K} \ar[r]^<>(0.5){Z_*}&  \mathscr A^n_{X/K} \ar[r]^<>(0.5){\widehat Z_*}& A
}
$$
when evaluated at the abelian variety $B$ and the cycle $\Delta_B-(B\times_K \{0\})\in \mathscr A^{d_B}_{B/K}(B)$ gives a homomorphism
\begin{equation}\label{E:RHCorB0}
\Phi(B)(\Delta_B-(B\times_K \{0\})): B\longrightarrow A,
\end{equation}
which agrees with the homomorphism induced from the correspondence $\widehat Z\circ  Z\in \operatorname{CH}^1(B\times_KA)$ via \eqref{eq:canon}, 
and which on $\bar K$ points factors as 
\begin{equation}\label{E:L:RHCorB}
\xymatrix{
B(\bar K)\ar[r]^<>(0.5){w_Z}& A^n(X_{\bar K}) \ar[r]^<>(0.5){\widehat Z_*} & A(\bar K).
}
\end{equation}
Moreover, the dual homomorphism $\widehat A\to \widehat B$ to \eqref{E:RHCorB0} is induced by ${}^tZ\circ {}^t\widehat Z$.  
\end{lem}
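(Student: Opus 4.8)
The plan is to reduce everything to the special case established just above the statement, namely the identification of the homomorphism $\Phi = Z_* : \mathscr{A}^g_{B/K} \to (\operatorname{Pic}^0_Y)_{\mathrm{red}}$ evaluated at $\Delta_B - (B\times_K\{0\})$ with the homomorphism attached to the line bundle $Z$ via \eqref{eq:canon}. Concretely, I would proceed as follows. First, unwind the definition of $\Phi(B)(\Delta_B - (B\times_K\{0\}))$ using functoriality of the natural transformations $Z_*$ and $\widehat Z_*$: pushing the family $\Delta_B - (B\times_K\{0\}) \in \mathscr{A}^{d_B}_{B/K}(B)$ forward along $Z_*$ gives a family in $\mathscr{A}^n_{X/K}(B)$ whose fiber over $t \in B(\bar K)$ is $Z_*([t]-[0]) = w_Z(t)$; the composite with $\widehat Z_*$ is then evaluated on this family, producing the $K$-morphism $B \to A$ whose effect on $\bar K$-points is $t \mapsto \widehat Z_*(w_Z(t))$. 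This is exactly the claimed factorization \eqref{E:L:RHCorB}, and since two morphisms of $K$-schemes from $B$ to $A$ agreeing on $\bar K$-points coincide, the factorization pins down the morphism \eqref{E:RHCorB0} uniquely.

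Next, to identify \eqref{E:RHCorB0} with the homomorphism induced by $\widehat Z \circ Z \in \operatorname{CH}^1(B\times_K A)$ via \eqref{eq:canon}, I would compose the whole chain as a single correspondence operation: the natural transformation $\widehat Z_* \circ Z_* : \mathscr{A}^{d_B}_{B/K} \to \mathscr{A}^1_{A/K}$ is, by the standard associativity of correspondence actions on Chow groups, the one induced by $\widehat Z \circ Z$ (composed with $AJ$ to $(\operatorname{Pic}^0_A)_{\mathrm{red}} = \widehat A$). Then the displayed computation preceding the lemma — applied verbatim with $Y = \widehat A$ and the correspondence $\widehat Z\circ Z$ in place of $Z$ — shows that evaluating this composite at $\Delta_B - (B\times_K\{0\})$ reproduces the homomorphism $B \to \widehat{\widehat A} = A$ (or rather $B \to (\operatorname{Pic}^0_{\widehat A})_{\mathrm{red}}$, which we identify with $A$ via biduality) induced by \eqref{eq:canon} applied to $\widehat Z \circ Z$. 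The only care needed here is tracking the biduality identification $(\operatorname{Pic}^0_{\widehat A})_{\mathrm{red}} \cong A$ and checking it is compatible with the Yoneda-representable description of $A$ used throughout — this is where I expect the bookkeeping to be fussiest, though not deep.

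Finally, for the statement about the dual homomorphism: the identification \eqref{eq:canon} is symmetric under transposition, its second displayed equality being "taking the dual homomorphism." Hence the dual of the homomorphism $B \to A$ attached to $\widehat Z \circ Z \in \operatorname{CH}^1(B\times_K A)$ is the homomorphism $\widehat A \to \widehat B$ attached to the transposed class ${}^t(\widehat Z \circ Z) = {}^tZ \circ {}^t\widehat Z \in \operatorname{CH}^1(\widehat A \times_K \widehat B)$, which is the assertion. The main obstacle, to the extent there is one, is purely organizational: making sure that all three descriptions of the homomorphism \eqref{E:RHCorB0} — as $\Phi(B)$ evaluated on the diagonal family, as the map from \eqref{eq:canon} applied to $\widehat Z \circ Z$, and as the $\bar K$-point factorization through $w_Z$ — are literally the same morphism of $K$-schemes rather than merely agreeing up to some implicit identification. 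Once the functoriality of $Z_* $ and the symmetry of \eqref{eq:canon} are in place, there is no genuinely new input beyond the boxed computation already carried out before the lemma.
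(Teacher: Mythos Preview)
Your proposal is correct and follows essentially the same approach as the paper. The paper's proof is extremely terse: it simply declares that everything except the factorization \eqref{E:L:RHCorB} follows from the preceding discussion in \S\ref{SSS:alb}, and for \eqref{E:L:RHCorB} draws the commutative triangle
\[
\xymatrix{
B(\bar K)\ar[r]^{w_Z} \ar[rd]_{w_{\Delta_B-(B\times_K \{0\})}}& \operatorname{A}^n(X_{\bar K}) \ar[r]^{\widehat Z_*} & A(\bar K)\\
& \operatorname{A}^{d_B}(B_{\bar K}) \ar[u]_{Z_*} &
}
\]
which is exactly the unwinding you describe. Your write-up makes explicit the three ingredients the paper leaves implicit --- associativity of correspondence composition to identify $\widehat Z_*\circ Z_*$ with $(\widehat Z\circ Z)_*$, the special case of \S\ref{SSS:alb} applied with $Y=\widehat A$, and the transposition symmetry built into \eqref{eq:canon} for the dual statement --- and correctly flags the biduality $(\operatorname{Pic}^0_{\widehat A})_{\mathrm{red}}\cong A$ as the only place requiring care.
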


\begin{proof}
Everything except the assertion \eqref{E:L:RHCorB}  follows from the discussion above.  For \eqref{E:L:RHCorB}, we simply note that from the definitions, there is a factorization 
$$
\xymatrix{
B(\bar K)\ar[r]^{w_Z} \ar[rd]_{w_{\Delta_B-(B\times_K \{0\})}}& \operatorname{A}^n(X_{\bar K}) \ar[r]^{\widehat Z_*} & A(\bar K).\\
& \operatorname{A}^{d_B}(B_\bar K) \ar[u]_{Z_*}
}
$$

\end{proof}

 \section{Preliminaries on Chow decomposition of the diagonal}
 \label{S:PreDecDiag}

 In this section we recall the definition of a Chow
 decomposition of the diagonal, and the connection with universal $\operatorname{CH}_0$-triviality.  The purpose is primarily to fix terminology, since the terminology is somewhat fluid in the literature.  In particular, we  consider here decompositions of the diagonal that are slightly more general than those equivalent to universal $\operatorname{CH}_0$-triviality, and we wish, as well,  to keep track of the exact multiple of the diagonal that may admit a decomposition.\medskip

  Recall that for a scheme $X$ of finite type over a field $K$, we say that a
 cycle class $Z \in \operatorname{CH}^n(X)$ is supported on a closed subscheme
 $W\subseteq X$ if it is in the kernel of the restriction map
 $\operatorname{CH}^n(X)\to \operatorname{CH}^n(X\smallsetminus W)$.
 Equivalently, from the exact  sequence
 \begin{equation}\label{E:ChowSupp}
  \operatorname{CH}^n(W)\to  \operatorname{CH}^n(X)\to
 \operatorname{CH}^n(X\smallsetminus W)\to 0,
 \end{equation}
 we can say that $Z$ is \emph{supported on $W$} if and only if it is the push forward of
 a cycle class on $W$.

 \begin{dfn}[Chow decomposition of a cycle class]
  Let $R$ be a commutative ring.
  Let $X$ be a  smooth projective variety
  over a  field $K$,
  and
  let
  $$
  \xymatrix{j_i:W_i\ar@{^{(}->}[r]^{\quad \ne}& X}, \ \ i=1,2
  $$
  be reduced closed subschemes not containing any component of $X$.
  An  \emph{$R$-decomposition   of  type $(W_1,W_2)$ of  a cycle  class $Z\in
   \operatorname{CH}^{d_X}(X\times_KX)_R$ }
  is an equality
  \begin{equation}\label{E:DefDcp}
  Z =Z_1+Z_2\in \operatorname{CH}^{d_X}(X\times _K X)_R
  \end{equation}
  where $Z_1\in \operatorname{CH}^{d_X}(X\times _K X)_R$ is supported on
  $W_1\times_KX$ and $Z_2\in \operatorname{CH}^{d_X}(X\times _K X)_R$ is supported
  on $X\times_K W_2$.   When $R=\mathbb Z$,  we call this a  \emph{decomposition
   of  type $(W_1,W_2)$}.

We say that $Z\in
\operatorname{CH}^{d_X}(X\times_KX)_R$ has an \emph{$R$-decomposition of type $(d_1,d_2)$} if it admits an $R$-decomposition of type $(W_1,W_2)$ with $\dim W_1 \leq d_1$ and $\dim W_2 \leq d_2$.
    \end{dfn}

 \begin{rem}\label{R:dW1dW2}
We note here for convenience that if $X$ is smooth and projective, a decomposition of a multiple $N\Delta_X$  of the diagonal must have  $d_{1}+d_{2}\ge d_X-1$. Indeed, if
  $d_{1}+d_{2}< d_X-1$, then a short argument using \eqref{E:Zj*diagH}, below, would imply that
  $N\cdot \mathcal{H}^2(X) = 0$, giving a contradiction as the class of any ample line bundle on $X$ is not torsion. 
 \end{rem}

 In what follows, let
 $$
 \operatorname{pr}_i:X\times_K X\to X,  \ \ i=1,2
 $$
 be the respective projection maps.

 \begin{exa}
  For projective space $\mathbb P^r_K$, the diagonal class is  $\Delta_{\mathbb
   P^r_K}=\sum_{i=0}^{r-1}\operatorname{pr}_1^*[H]^i\times
  \operatorname{pr}_2^*[H]^{r-i}\in \operatorname{CH}^r(\mathbb P^r_K\times
  _K\mathbb P^r_K)$, where $[H]$ is the class of a hyperplane in $\mathbb P^r_K$.
  Thus for any non-negative integers $d_{1},d_{2}$ with
  $d_{1}+d_{2}=r-1$, the cycle class
  $\Delta_{\mathbb P^r_K}\in \operatorname{CH}^r(\mathbb P^r_K\times _K\mathbb
  P^r_K)$  has a decomposition of type $(W_1,W_2)$ with
  $W_i\subseteq \mathbb P^n_K$  a  linear space of dimension $d_{i}$, $i=1,2$.
 \end{exa}

 In many situations, we will want to specify a more restricted type of
 decomposition, which is common in the literature due to its connection with
 universal $\operatorname{CH}_0$-triviality (see Remark~\ref{R:CH0triviality} below). 

 \begin{dfn}[Strict  decomposition of a cycle  class]
    A \emph{strict $R$-decomposition  of  a cycle class $Z\in
   \operatorname{CH}^{d_X}(X\times_KX)_R$} is an $R$-decomposition of type $(d_X-1,0)$. In other words, it
  is an equality 
$  Z =Z_1+Z_2\in \operatorname{CH}^{d_X}(X\times _K X)_R$ as in \eqref{E:DefDcp} 
where $Z_1$is supported on $D\times_K
X$ for some codimension-$1$ subvariety $D\subseteq X$ and $Z_2$ is supported
on $X\times_K W_2$ for some $0$-dimensional subvariety $W_2\subseteq X$.   When $R=\mathbb Z$,  we call this a  \emph{strict decomposition}.
 \end{dfn}

 \begin{rem}[Strict decomposition of the diagonal]  
 	\label{R:ChThEqDef}
 If for some integer $N>0$ we have $N \Delta_X =Z_1+Z_2\in \operatorname{CH}^{d_X}(X\times _K X)_R$ is a strict $R$-decomposition  of  $N$ times the diagonal, then the image of the degree map $\chow_0(X)_R \to R$ contains $NR$ and $Z_2 = \operatorname{pr}_2^*\alpha$ for any zero-cycle $\alpha \in \chow_0(X)$ of degree~$N$. Indeed, by definition of a strict decomposition, we must have $Z_2 = \operatorname{pr}_2^*\alpha$ for some zero-cycle $\alpha \in \chow_0(X)_R$. Letting $\Delta_X$ act on zero-cycles on $X$, and since $Z_1$ acts trivially on zero-cycles on $X$, we find that for all $\beta \in \chow_0(X)_R$ we have $N\beta = (\Delta_X)_* \beta = (\operatorname{pr}_2^*\alpha)_* \beta = \deg(\beta)\alpha$. 
 It follows that $\deg(\alpha)=N$ and that any zero-cycle of degree~$N$ is rationally equivalent to $\alpha$.
 In particular, in the situation where $R=\integ$ and  $X(K)\ne \emptyset$, if $\Delta_X$ admits a strict decomposition, then $Z_2 = X\times_K x$ for any $K$-point $x\in X(K)$.
                   \end{rem}

 \begin{rem}[Universal $\operatorname{CH}_0$-triviality] \label{R:CH0triviality}
  A proper variety $X$ over a field $K$ is said to be \emph{universally
   $(\operatorname{CH}_0)_R$-trivial} if, for any field extension $L/K$,  the degree map
  $\operatorname{CH}_0(X_L )_R \to   R$ is an isomorphism.  When $R=\mathbb Z$, we simply say \emph{universally
  	$\operatorname{CH}_0$-trivial}.
   It follows classically from \cite{BlSr83} that a smooth
  proper variety $X$ over a field $K$ is universally
  $(\operatorname{CH}_0)_R$-trivial if and only if the class of the diagonal admits a strict  $R$-decomposition.
   \end{rem}

 \begin{rem}[Stably rational varieties and decomposition of the diagonal]\label{R:StabRatDec}
  It is well-known (see, \emph{e.g.}, \cite[Ex.~16.1.11]{fulton}) that universal $\operatorname{CH}_0$-triviality is a stable
  birational invariant for smooth proper varieties.  Thus a stably rational proper variety is universally
  $\operatorname{CH}_0$-trivial.  As a consequence, for a stably rational
  smooth projective variety $X$, we have that $\Delta_X\in
  \operatorname{CH}^{d_X}(X\times _KX)$ admits a strict decomposition.
 \end{rem}

 \begin{rem}[Rationally chain connected varieties and decomposition of the
  diagonal]\label{R:RCDecDiag}
  Let $X/K$ be a smooth projective rationally chain connected variety 
  over a field $K$.
  From say \cite[Thm.~IV.3.13]{kollar}, we have that $X_{\Omega}$ is  $\chow_0$-trivial for every algebraically closed field $\Omega/K$, and therefore that 
 $X$ is universally $(\chow_0)_\rat$-trivial. It follows that some nonzero integer multiple of the diagonal $N\Delta_X\in \operatorname{CH}^{d_X}(X\times_K X)$ admits
  a strict decomposition.
       \end{rem}

\begin{rem}[Unirational varieties and decomposition of the diagonal]\label{R:UniRDecDiag}
Let $X$ be a smooth projective  unirational variety over a field $K$.  As $X$ is rationally chain connected,
  it follows that some nonzero integer multiple of the diagonal $N\Delta_X\in \operatorname{CH}^{\dim X}(X\times_K X)$ admits
  a strict decomposition.
In fact, if  $\mathbb P^n\dashrightarrow X$ is a dominant rational map of degree $N$, 
 then $\operatorname{CH}_0(X)_{\mathbb Z[1/N]}$ is universally trivial, so that 
  $\Delta_{X}\in
  \operatorname{CH}^{d_X}( X\times _K  X)$ admits a strict
  $\mathbb Z[\frac{1}{N}]$-decomposition (Remark \ref{R:CH0triviality}).
However, if 
either $\operatorname{char}(K)=0$, or $K$ is perfect and $d_X\le 3$, then we obtain the stronger result that $N\Delta_X\in \operatorname{CH}^{d_X}(X\times_K X)$ admits
  a strict decomposition\,; this is well-known, and we direct the reader to the \emph{proof} of \cite[Prop.~2.2]{mboro}.
\end{rem}

 \begin{rem}[Uniruled varieties and decomposition of the
  diagonal]\label{R:UniDecDiag}
  Let $X$ be a smooth projective geometrically uniruled variety over a
  field $K$.  It is clear that $\chow_0(X)_\rat$ is universally supported on a subvariety $W_2$ of dimension
  $d_X-1$, in the sense that the push-forward map $\chow_0((W_2)_L)_\rat \to \chow_0(X_L)_\rat$ is surjective for all field extensions $L/K$. By \cite{BlSr83}, we see that $\Delta_X\in
  \operatorname{CH}^{\dim X}(X\times_K X)_\rat$  admits  a $\rat$-decomposition of type
  $(W_1,W_2)$ with $\dim W_1 \leq d_X-1$. Clearing denominators,  there is some nonzero integer multiple $N\Delta_X\in
  \operatorname{CH}^{\dim X}(X\times_K X)$ that admits  a decomposition of type
  $(W_1,W_2)$.
       \end{rem}

\section{Factoring correspondences with given support} \label{S:FactChow}
 
A key tool we will use in what follows is the fact that a decomposition of the
 diagonal, viewed from the perspective of correspondences, gives a factorization
 of the identity into maps involving lower dimensional varieties.  Here we
 consider the situation in the various cohomology theories.  The key point is
 that correspondences require intersection theory, and therefore we prefer to
 work on smooth spaces.

 \subsection{A factorization lemma for correspondences}
 \label{SS:FactLemma}

Let $X$ be a scheme of finite type over a field $K$. We say that $X$ has dimension $\leq d$ if all irreducible  components of $X$ have dimension $\leq d$, while we say $X$ has pure dimension $d$ if all irreducible  components of $X$ have dimension $d$.

\begin{lem}\label{L:factorcorres}
	Let  $X$ and $Y$ be connected smooth proper varieties over a field $K$ of characteristic exponent~$p$, and let
	$j:W\hookrightarrow X$ be  a closed subscheme of dimension $\leq n$.  If $Z\in
	\operatorname{CH}_c(X\times_KY)$ is a cycle class of dimension $c$ supported on $W\times_K
	Y$, then $Z$, seen as a correspondence from $X$ to $Y$ with $\integ[\frac{1}{p}]$-coefficients, factors through a scheme $\widetilde W$ which is smooth proper and of finite type over a finite purely inseparable extension of  $K$ and of pure dimension $n$.
	More precisely, there exist a nonnegative integer $e$ and  correspondences $\widetilde{Z} \in \operatorname{CH}_c( \widetilde W\times_K Y)$ and $\tilde \gamma \in \operatorname{CH}^{\dim X}(X \times_K \widetilde{W} )$
	such that $$p^e\ Z = \widetilde{Z}\circ \tilde \gamma \quad \mbox{in}\ \operatorname{CH}_c(X\times_KY).$$
	In addition, assuming $K$ is perfect and that resolution of singularities holds in dimensions $\leq n$ over $K$, the correspondence $Z$ factors as above with $e=0$ and with $\widetilde W$ smooth proper of finite type over $K$ of pure dimension $n$.
\end{lem}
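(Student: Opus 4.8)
The plan is to make the intuitive picture — "a cycle supported on $W \times_K Y$ is pushed forward from $W$, and $W$ maps to $X$, so the correspondence factors through $W$" — into an actual factorization of correspondences, at the cost of resolving the singularities of $W$ (or a $p$-power, if we can only do alterations).

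\medskip\noindent\textbf{Step 1: Reduce $W$ to pure dimension $n$.} Since $Z$ is supported on $W\times_K Y$ and $Z$ has dimension $c$, only those components of $W$ of the maximal dimension relevant to $Z$ contribute; in any case, by discarding lower-dimensional components and, if necessary, adding a disjoint auxiliary component (or taking a product with a point) we may and do replace $W$ by a closed subscheme, still denoted $W$, of \emph{pure} dimension $n$, with $Z$ still supported on $W\times_K Y$. Write $\iota = j\times\mathrm{id}_Y : W\times_K Y \hookrightarrow X\times_K Y$; by the localization sequence \eqref{E:ChowSupp}, $Z = \iota_* Z'$ for some $Z' \in \operatorname{CH}_c(W\times_K Y)$.

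\medskip\noindent\textbf{Step 2: Resolve / alter $W$.} Here there are two cases according to the hypotheses. If $K$ is perfect and resolution of singularities holds in dimension $\le n$ over $K$, choose a proper birational morphism $\pi:\widetilde W \to W$ with $\widetilde W$ smooth proper of finite type over $K$ and of pure dimension $n$. In general, invoke de Jong's theory of alterations: there is a finite purely inseparable extension $K'/K$, a smooth proper $K'$-scheme $\widetilde W$ of pure dimension $n$, and a proper surjective generically finite morphism $\pi : \widetilde W \to W_{K'}$ of some degree $p^e$ (after base change to $K'$ the residue field extensions at generic points can be arranged to be purely inseparable of $p$-power degree — this is where the factor $p^e$ and the inseparable extension $K'$ enter; all subsequent Chow groups are taken with $\integ[\tfrac1p]$-coefficients, in which $p^e$ is a unit if we prefer, or we simply track the $p^e$). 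Composing with $W \hookrightarrow X$ (base-changed to $K'$), set $\gamma_0 : \widetilde W \to X_{K'} \to X$ and let $\tilde\gamma \in \operatorname{CH}^{\dim X}(X\times_K\widetilde W)$ be the transpose of the graph of $\gamma_0$, i.e. $\tilde\gamma = {}^t[\Gamma_{\gamma_0}]$ viewed as a correspondence from $X$ to $\widetilde W$. Let $\widetilde Z := (\pi\times\mathrm{id}_Y)^* Z' \in \operatorname{CH}_c(\widetilde W \times_K Y)$ (pulled back along the lci, in fact the smooth-over-a-smooth-base, hence flat-up-to-refinement, morphism; or pull back the flat part and use the refined Gysin map as in \cite[\S6.6]{fulton}).

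\medskip\noindent\textbf{Step 3: Compute the composite.} It remains to check $\widetilde Z \circ \tilde\gamma = p^e Z$ in $\operatorname{CH}_c(X\times_K Y)$. Unwinding the definition of composition of correspondences, $\widetilde Z\circ\tilde\gamma = (p_{13})_*\big(p_{12}^*\tilde\gamma \cdot p_{23}^*\widetilde Z\big)$ on $X\times_K\widetilde W\times_K Y$. Because $\tilde\gamma$ is (the transpose of) a graph, this is literally $(\gamma_0\times\mathrm{id}_Y)_* \widetilde Z$ by the projection formula applied to the closed embedding cut out by the graph. Now $(\gamma_0\times\mathrm{id}_Y)$ factors as $\widetilde W\times_K Y \xrightarrow{\pi\times\mathrm{id}} W\times_K Y \xrightarrow{\iota} X\times_K Y$, so $(\gamma_0\times\mathrm{id}_Y)_*\widetilde Z = \iota_*(\pi\times\mathrm{id}_Y)_*(\pi\times\mathrm{id}_Y)^* Z'$, and by the projection formula $(\pi\times\mathrm{id})_*(\pi\times\mathrm{id})^* Z' = \deg(\pi)\cdot Z' = p^e Z'$ (in the resolution case $\deg\pi=1$, so $e=0$). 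Pushing forward by $\iota$ gives $p^e\iota_* Z' = p^e Z$, as desired. In the resolution case this yields the last sentence of the lemma with $e=0$ and everything over $K$.

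\medskip\noindent\textbf{Main obstacle.} The only genuinely delicate point is Step 2 in the general (non-resolution) case: one must quote de Jong's alteration theorem in the precise form that, over a perfect base, the alteration can be taken generically étale \emph{up to a purely inseparable extension of $p$-power degree of the function field(s)}, so that the "degree" $\deg\pi$ is a power of $p$ and a finite purely inseparable extension $K'/K$ suffices to spread out $\widetilde W$; tracking this $p^e$ (equivalently, working with $\integ[\tfrac1p]$-coefficients as stated) and confirming $\widetilde W$ is smooth \emph{proper} of finite type over $K'$ of pure dimension $n$ is exactly the content one needs. The Chow-theoretic bookkeeping in Step 3 — projection formula, composition of correspondences through a graph — is routine, modulo being careful that all pullbacks are along lci morphisms so that \cite[\S6.6]{fulton} applies, which it does since $\widetilde W, X, Y$ are smooth.
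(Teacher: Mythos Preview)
Your overall strategy matches the paper's, but Step~3 has a genuine gap. The pullback $(\pi\times\mathrm{id}_Y)^* Z'$ is not well-defined: the morphism $\pi:\widetilde W\to W$ is an alteration of a \emph{singular} variety, hence generally neither flat nor lci, so neither flat pullback nor the refined Gysin map of \cite[\S 6.6]{fulton} applies to it. Your justification (``lci, in fact the smooth-over-a-smooth-base'') is incorrect --- $W$ is not smooth, and smoothness of $\widetilde W$, $X$, $Y$ alone does not make $\pi\times\mathrm{id}_Y$ an lci morphism. Consequently the projection formula $(\pi\times\mathrm{id})_*(\pi\times\mathrm{id})^* Z' = \deg(\pi)\cdot Z'$ is not available as written; even where some pullback can be made sense of, push-pull along a merely \emph{generically} finite morphism need not return $\deg(\pi)$ times the identity on all of $\operatorname{CH}_c(W\times_K Y)$.

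The paper repairs exactly this point: it first reduces to $Z$ an integral subscheme and replaces $W$ by the image of $Z$ under the first projection, so that $W$ is integral and $Z$ \emph{dominates} $W$. Then, restricting to the smooth locus $W^\circ$ (and further to where the alteration is finite flat), flat pullback of $Z$ is available and push-pull genuinely gives multiplication by $p^e$; one then takes the closure of the pulled-back cycle in $W'\times_K Y$ to define $\widetilde Z$. The domination hypothesis ensures nothing is lost when restricting to $W^\circ$. Pure dimension $n$ is then achieved not by adding auxiliary components of $W$, but by setting $\widetilde W = \mathbb{P}^{n-\dim W}\times W'$. Finally, the degree-$p^e$ alteration over a purely inseparable extension is Temkin's refinement \cite{temkin17}, not de Jong alone; de Jong gives only a generically \'etale alteration after a finite (possibly separable) extension.
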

\begin{proof} It is clearly sufficient to establish the lemma for a cycle class $Z$ that is the class of an integral closed subscheme of $X\times_K Y$ that, by abuse of notation, we still denote $Z$. Replacing $W$ with the (scheme-theoretic) image of $Z$ inside $X$ via the first projection, 
 we may assume that $W$ is integral and that $Z$ dominates~$W$.
	Let us consider an alteration $\tau:  W' \to W$ (\emph{i.e.}, $\tau$ proper and generically finite) of $W$ with $ W'$ smooth and proper over a finite purely inseparable extension $L$ of $K$. By  \cite[Thm.~4.3.1]{temkin17} we may in fact choose $\tau$ of degree $p^e$ for some nonnegative integer~$e$, while we can choose $\tau$ to be of degree $1$ if $K$ is perfect and resolution of singularities holds in dimensions $\leq n$ (which is the case for $n=3$ in positive characteristic  \cite[Thm.~p.1893]{CPRes2} and for any $n$ in zero characteristic).
	Since the push-pull along the field extension $L/K$ is multiplication by $[L:K]$, which in our case is a power of $p$, we may by pulling back $Z$ to $X_L\times_L Y_L = (X\times_KY)_L$ assume that $L=K$.
	We now define
	$\tilde j:=j\circ \tau \circ p_{W'}:\widetilde W\to
	X$, where $p_{W'} : \widetilde W := \mathbb{P}_L^{n-\dim W} \times_L W' \to W'$ is the natural projection, and we set $\tilde \gamma$ to be the transpose of the class of the graph of $\tilde j$.    We claim that there is a cycle class  $\widetilde Z \in \operatorname{CH}_c(\widetilde W\times_K
	Y)$  such that
	\begin{equation}\label{E:MZpush}
		(\tilde j\times
		\operatorname{Id}_Y)_*\widetilde Z=p^e\ Z.
	\end{equation}
	Together with the identity $(\tilde j\times
	\operatorname{Id}_Y)_*\widetilde Z =  \widetilde{Z}\circ \tilde \gamma $ (see \cite[16.1.1]{fulton}),
 	 	we obtain the sought-after cycles $\tilde \gamma$ and $\widetilde Z$.
	To establish the claim,  consider the fibered product diagram
	$$
	\xymatrix{
		W'\times_K Y\ar[r]^{\tau \times \operatorname{Id}_Y} & W\times_KY \\
		W'^\circ\times_K Y  \ar[r]^{\tau\times \operatorname{Id}_Y} \ar@{^(->}[u]& W^\circ \times_K Y \ar@{^(->}[u]
	}
	$$
	where $W^\circ$ is the smooth locus of $W$, and $W'^\circ$ is the pre-image of $W^0$ under $\tau$.  Since $W^\circ$, $W'^\circ$, and $W'$ are all smooth, all of the morphisms in the diagram admit lci factorizations in the sense of Fulton (see \cite[Note, p.439]{fulton})\,;
	 	thus all the morphisms admit refined Gysin pull-backs.  We further restrict $W^\circ$ (and $W'^\circ$) so that $W'^\circ\to W^\circ$ is finite and flat (it is generically finite and generically \'etale).
	With this set-up, we set $Z'$ to be the closure in $W'\times _KY $ of the flat pull back of $Z$ along the composition $W'^\circ\times_K Y\to W^\circ \times_K Y\hookrightarrow W\times_KY$, where $Z$ is considered as a cycle on $W\times_K Y$\,; the assertion  $(\tilde j\times
	\operatorname{Id}_Y)_*Z'=p^eZ$ then follows from functoriality of pull-back, and the fact that the push-forward of a pull-back along a finite flat morphism is multiplication by the degree.
	Indeed, we are assuming that $Z$ dominates $W$. Since $W'^\circ\times_K Y\to W^\circ \times_K Y\hookrightarrow W\times_KY$ is a composition of flat morphisms, we may use flat pull-back.  Taking the closure in $W'\times_KY$ we obtain a class in $W'\times_KY$.  Now by definition of the push-forward, and the fact that $Z$ dominates $W$,  we can compute this on $W'^\circ \times_KY\to W^\circ \times_KY$ giving the result.

	Finally, we define $\widetilde Z := \{0\}\times Z' \in \operatorname{CH}_c((\mathbb{P}_L^{n-\dim W} \times_L W') \times_K Y)$ and we clearly have $(p_{W'} \times \operatorname{Id}_Y)_* \widetilde Z = Z'$ and hence $(\tilde j \times \operatorname{Id}_Y)_* \widetilde Z = p^eZ$.
\end{proof}

 \subsection{Factorizations of morphisms induced by correspondences}

 With the set-up of Lemma~\ref{L:factorcorres} and its proof, we can factor the action of the correspondence $p^eZ$ in various settings as follows.

 \subsubsection{Chow groups}
 The correspondence $p^eZ^*:\operatorname{CH}^n(X)\to \operatorname{CH}^n(X)$,
 $\alpha\mapsto p^e(\operatorname{pr}_1)_*(\operatorname{pr}_2^*\alpha \cdot  Z)$,
 and the correspondence  $p^eZ_*:\operatorname{CH}^n(X)\to \operatorname{CH}^n(X)$,
 $\alpha\mapsto p^e(\operatorname{pr}_2)_*(\operatorname{pr}_1^*\alpha \cdot  Z)$,
 factor, respectively, as\,:
 \begin{equation}\label{E:Zj*diag}
 \xymatrix@R=1em{
  &\operatorname{CH}^{d_{\widetilde W}-d_X+n}(\widetilde W) \ar[rd]^<>(0.5){\tilde
   j_*}&&\operatorname{CH}^n(X) \ar[rd]_<>(0.5){\widetilde Z_*} \ar[rr]^{p^eZ_*}
  &&\operatorname{CH}^n(X) \\
  \operatorname{CH}^n(X) \ar[ru]^<>(0.5){\widetilde Z^*} \ar[rr]^{p^eZ^*}
  &&\operatorname{CH}^n(X)& &\operatorname{CH}^{n}(\widetilde W)
  \ar[ru]_<>(0.5){\tilde j^*}&\\
 }
 \end{equation}

 \subsubsection{Algebraic representatives} We
 obtain  factorizations similar to \eqref{E:Zj*diag}  if we consider
 algebraically trivial cycle classes.
 This induces diagrams of  algebraic representatives, if they exist\,:

 \begin{equation}\label{E:Z2*di-Ab}
 \xymatrix@R=.7em{
  &\mathscr A^{d_{\widetilde  W}-d_X+n}_{\widetilde W/K} \ar[rd]^{\tilde j_*}
  \ar@{-}[d]&&\mathscr A^n_{X/K} \ar[rr]^{p^eZ_*} \ar[dd]_{\Phi^n_{X/K}}
  \ar[rd]^<>(0.5){\tilde j^*}&&\mathscr A^n_{X/K} \ar[dd]^{\Phi^n_{X/K}}\\
     \mathscr A^n_{X/K} \ar[ru]^{\widetilde Z^*} \ar[rr]^<>(0.3){p^eZ^*}
  \ar[dd]_{\Phi^n_{X/K}}&\ar[d]^<>(0.3){\Phi^{d_{\widetilde  W}-d_X+n}_{\widetilde
    W/K}}&\mathscr A^n_{X/K}\ar[dd]^{\Phi^n_{X/K}}&&\mathscr A^n_{\widetilde W/K}
  \ar[ru]^{\widetilde Z_*} \ar[dd]^<>(0.2){\Phi^n_{\widetilde W/K}}&\\
     &\operatorname{Ab}^{d_{\widetilde  W}-d_X+n}_{\widetilde
   W/K}\ar@{-->}[rd]^{\tilde j_*}&&\operatorname{Ab}^n_{X/K} \ar@{--}[r]^{p^eZ_*}
  \ar@{-->}[rd]_{\tilde j^*}&\ar@{-->}[r]&\operatorname{Ab}^n_{X/K}\\
     \operatorname{Ab}^n_{X/K} \ar@{-->}[ru]^{\widetilde Z^*}
  \ar@{-->}[rr]^{p^eZ^*}&&\operatorname{Ab}^n_{X/K}&&\operatorname{Ab}^n_{\widetilde
   W/K} \ar@{-->}[ru]_{\widetilde Z_*}&\\
 }
 \end{equation}
 where the dashed arrows are induced by the universal property of the algebraic
 representative.

 \subsubsection{Cohomology groups}
 In the same situation at the start of \S\ref{S:FactChow}, fix a Weil cohomology theory $\mathcal H^\bullet$ with coefficient ring $R_{\mathcal H}$, and a ring homomorphism $R\to R_{\mathcal H}$.
   Then the correspondences $p^eZ^*:\mathcal H^n(X)\to \mathcal H^n(X)$  and
 $p^eZ_*:{\mathcal H}^n(X)\to {\mathcal H}^n(X)$ factor, respectively, as\,:

 \begin{equation}\label{E:Zj*diagH}
 \xymatrix@R=1em{
  &\mathcal H^{2(d_{\widetilde W}-d_X)+n}(\widetilde W)(d_{\widetilde W}-d_X)
  \ar[rd]^<>(0.5){\tilde j_*}&& {\mathcal H}^n(X) \ar[rd]_<>(0.5){\tilde j^*}
  \ar[rr]^{p^eZ_*}
  &&{\mathcal H}^n(X) \\
  \mathcal H^n(X) \ar[ru]^<>(0.5){\widetilde Z^*} \ar[rr]^{p^eZ^*}
  &&\mathcal H^n(X) &&{\mathcal H}^{n}(\widetilde W) \ar[ru]_<>(0.5){\widetilde
   Z_*}&\\
 }
 \end{equation}
 Note that in the case where $n$ is even, these diagrams are  compatible with the
 cycle class maps and the diagrams \eqref{E:Zj*diag}.

 \subsubsection{Abel--Jacobi maps}
 Consider now the case where $K\subseteq \mathbb C$.  Since correspondences
 induce morphisms of integral Hodge structures, we obtain  factorizations similar
 to \eqref{E:Z2*di-Ab}. Here $\chow^*(-)_{\operatorname{hom}}$ denotes the kernel of the cycle class map $\chow^*(-) \to H^{2*}(-,\integ(*))$.
 \begin{equation}\label{E:Z2*di-AbCC}
 \xymatrix@R=.7em@C=.001em{
  &\operatorname{CH}^{d_{\widetilde  W}-d_X+n}(\widetilde
  W^{\an})_{\operatorname{hom}}\ar[rd]^<>(0.8){\tilde j_*}
  \ar@{-}[d]&&\operatorname{CH}^n(\operatorname{X}^{\an})_{\operatorname{hom}}
  \ar[rr]^{p^eZ_*} \ar[dd]_{AJ} \ar[rd]^<>(0.8){\tilde
   j^*}&&\operatorname{CH}^n(X^{\an})_{\operatorname{hom}} \ar[dd]^{AJ}\\
     \operatorname{CH}^n(X^{\an})_{\operatorname{hom}}  \ar[ru]^<>(0.3){\widetilde Z^*}
  \ar[rr]^<>(0.3){p^eZ^*} \ar[dd]_{AJ}&\ar[d]^<>(0.3){
   AJ}&\operatorname{CH}^n(X^{\an})_{\operatorname{hom}}\ar[dd]^{AJ}&&\operatorname{CH}^n(\widetilde
  W^{\an})_{\operatorname{hom}}  \ar[ru]^<>(0.2){\widetilde Z_*} \ar[dd]^<>(0.2){AJ}&\\
     &J^{2(d_{\widetilde  W}-d_X+n)-1}(\widetilde W^{\an}) \ar@{->}[rd]^<>(0.8){\tilde  j_*}&&J^{2n-1}(X^{\an}) \ar@{-}[r]^{\qquad p^eZ_*} \ar@{->}[rd]_{\tilde
   j^*}&\ar@{->}[r]&J^{2n-1}(X^{\an})\\
     J^{2n-1}(X^{\an})\ar@{->}[ru]^<>(0.3){\widetilde Z^*}
  \ar@{->}[rr]^{p^eZ^*}&&J^{2n-1}(X^{\an})&&J^{2n-1}(\widetilde W^{\an})
  \ar@{->}[ru]_{\widetilde Z_*}&\\
 }
 \end{equation}

 In \cite{ACMVdmij}, we have defined a
 distinguished model $J^{2n-1}_{a,X/K}$ of the image of the Abel--Jacobi map on
 algebraically trivial cycle classes $AJ:\operatorname{A}^n(X^{\an})\to
 J^{2n-1}(X^{\an})$.  From the functoriality statement of
 \cite[Prop.~5.1]{ACMVdmij}, we  obtain commutative diagrams\,:

 \begin{equation}\label{E:Z2*di-AbDM}
 \xymatrix@R=.7em{
  &\mathscr A^{d_{\widetilde  W}-d_X+n}_{\widetilde W/K} \ar[rd]^{\tilde j_*}
  \ar@{-}[d]&&\mathscr A^n_{X/K} \ar[rr]^{p^eZ_*} \ar[dd]_{AJ}
  \ar[rd]^<>(0.5){\tilde j^*}&&\mathscr A^n_{X/K} \ar[dd]^{AJ}\\
     \mathscr A^n_{X/K} \ar[ru]^{\widetilde Z^*} \ar[rr]^<>(0.3){p^eZ^*}
  \ar[dd]_{AJ}&\ar[d]^<>(0.3){AJ}&\mathscr A^n_{X/K}\ar[dd]^{AJ}&&\mathscr
  A^n_{\widetilde W/K} \ar[ru]^{\widetilde Z_*} \ar[dd]^<>(0.2){AJ}&\\
     &J^{2(d_{\widetilde  W}-d_X+n)-1}_{a,\widetilde W/K}\ar@{->}[rd]^{\tilde
   j_*}&&J^{2n-1}_{a,X/K} \ar@{-}[r]^{p^eZ_*} \ar@{->}[rd]_{\tilde
   j^*}&\ar@{->}[r]&J^{2n-1}_{a,X/K}\\
     J^{2n-1}_{a,X/K} \ar@{->}[ru]^{\widetilde Z^*}
  \ar@{->}[rr]^{p^eZ^*}&&J^{2n-1}_{a,X/K}&&J^{2n-1}_{a,\widetilde
   W/K}\ar@{->}[ru]_{\widetilde Z_*}&\\
 }
 \end{equation}
  Note that the morphisms of complex tori in \eqref{E:Z2*di-AbCC} and
 \eqref{E:Z2*di-AbDM} are induced by the morphisms in cohomology
 \eqref{E:Zj*diagH} with $\mathcal H^\bullet (-)=H^\bullet ((-)^{\an},\mathbb Z)$.

\subsection{Chow decomposition of the diagonal and existence of algebraic representatives}
As already hinted at in \cite[\S 6.2]{ACMVdcg}, the existence of certain Chow decompositions of the diagonal implies
 the existence of algebraic representatives.

\begin{pro}[Existence of  algebraic representatives]
	\label{T:ExistAlgRep}
	 	Let $X$ be a smooth projective variety over a perfect field $K$ and let $n$ be a positive integer.
	 	Assume that $\Delta_X \in \chow^{d_X}(X\times_K X)_\rat$ admits a  decomposition of type $(d_1,d_2)$ with
		$d_1\le d_X-(n-1)$ and $d_2\le n-1$.
	Then  there is an algebraic
	representative $\Phi^n_{X/K}:\mathscr A^n_{X/K}\to
	\operatorname{Ab}^n_{X/K}$.
\end{pro}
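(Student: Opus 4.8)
The plan is to use the factorization machinery of \S\ref{S:FactChow} to reduce the existence of $\Phi^n_{X/K}$ to the case of divisor classes, where the algebraic representative is the reduced Picard scheme and hence is known to exist. First I would reduce to $K$ algebraically closed: since $K$ is perfect, $\bar K/K$ is separable, so by Remark~\ref{R:SepBC/D} it suffices to produce an algebraic representative for $X_{\bar K}$, and the hypothesis base-changes to $X_{\bar K}$. Clearing denominators in the $\rat$-decomposition yields $N\Delta_X = Z_1+Z_2$ in $\operatorname{CH}^{d_X}(X\times_K X)$ with $Z_1$ supported on $W_1\times_K X$ and $Z_2$ supported on $X\times_K W_2$, where $\dim W_1\le d_1\le d_X-n+1$ and $\dim W_2\le d_2\le n-1$; in particular $d_1-d_X+n\le 1$ and $\dim W_2<n$ (the cases $n=1$ and $n\ge d_X$ being classical, via Picard, Albanese, and $\operatorname{A}^n=0$). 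Since $\Delta_X^{*}=\operatorname{id}$, this reads $Z_1^{*}+Z_2^{*}=N\cdot\operatorname{id}_{\mathscr A^n_{X/K}}$ as natural endomorphisms of the functor $\mathscr A^n_{X/K}$.

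Next I would push this through \eqref{E:Z2*di-Ab}. Applying Lemma~\ref{L:factorcorres} to $Z_1$ and to ${}^tZ_2$ (with $p$ the characteristic exponent and $e\ge0$ the resulting defect, which is $0$ when resolution of singularities is available in the relevant dimension) produces smooth projective $\widetilde W_1,\widetilde W_2$ over $K$ of pure dimensions $d_1,d_2$ and factorizations of natural transformations: $p^eZ_1^{*}$ factors through $\mathscr A^{d_1-d_X+n}_{\widetilde W_1/K}$, and $p^eZ_2^{*}$ factors through $\mathscr A^{n}_{\widetilde W_2/K}$. Because $\operatorname{CH}^n(\widetilde W_{2,K^{\sep}})=0$ (as $n>\dim\widetilde W_2$), every regular homomorphism out of $\mathscr A^n_{\widetilde W_2/K}$ vanishes, so post-composing with any regular homomorphism $\Phi:\mathscr A^n_{X/K}\to A$ annihilates $p^eZ_2^{*}$; hence $[p^eN]_A\circ\Phi=\Phi\circ p^eZ_1^{*}$. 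If $d_1-d_X+n\le0$ then $\mathscr A^{\le0}_{\widetilde W_1/K}=0$, forcing $[p^eN]_A\circ\Phi=0$ and so $\Phi=0$ (a regular homomorphism with image in $A[p^eN]$ is constant on connected families, hence zero), so the algebraic representative is trivial. Otherwise $d_1-d_X+n=1$, and $p^eZ_1^{*}=\tilde j_{1*}\circ\widetilde Z_1^{*}$ factors through $\mathscr A^1_{\widetilde W_1/K}$, whose algebraic representative is $J:=(\operatorname{Pic}^0_{\widetilde W_1/K})_{\operatorname{red}}$ with Abel--Jacobi map $aj$; writing $\Phi\circ\tilde j_{1*}=h_\Phi\circ aj$ for a unique homomorphism $h_\Phi:J\to A$ and setting the \emph{fixed} regular homomorphism $\Phi_f:=aj\circ\widetilde Z_1^{*}:\mathscr A^n_{X/K}\to J$, we obtain
$$[p^eN]_A\circ\Phi \;=\; h_\Phi\circ\Phi_f \qquad\text{for every regular homomorphism }\Phi:\mathscr A^n_{X/K}\to A.$$
Moreover, since $\operatorname{Pic}$ carries a Poincar\'e family, $g:=\tilde j_{1*}\circ aj^{-1}:J(\bar K)\to\operatorname{A}^n(X_{\bar K})$ is a genuine group homomorphism with $g\circ\Phi_f(\bar K)=p^eN\cdot\operatorname{id}$ and $\Phi_f(\bar K)\circ g$ induced by an endomorphism of $J$; a short manipulation (also using Lemmas~\ref{L:commute} and~\ref{L:ablambda}) shows $\Phi_f$ corestricts to a \emph{surjective} regular homomorphism $\Phi_f':\mathscr A^n_{X/K}\to J'$ onto an abelian subvariety $J'\subseteq J$ with $\ker\Phi_f'(\bar K)$ of bounded exponent dividing $p^eN$.

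Finally, the existence of $(\operatorname{Ab}^n_{X/K},\Phi^n_{X/K})$ follows by the standard argument, which is the step hinted at in \cite[\S6.2]{ACMVdcg}: the displayed factorization shows that every surjective regular homomorphism from $\mathscr A^n_{X/K}$ has dimension at most $\dim J'$ and that images of regular homomorphisms are abelian subvarieties, so the collection of surjective regular homomorphisms from $\mathscr A^n_{X/K}$ is filtered and bounded; one then takes the terminal object, exactly as in Murre's proof of \cite[Thm.~A]{murre83}, with the role played there by Roitman's theorem played here by the bounded-exponent kernel of $\Phi_f'$ together with the torsion statements of Proposition~\ref{P:surj}. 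I expect this last point---upgrading the a priori isogenies between the maximal-dimensional surjective regular homomorphisms to isomorphisms, i.e.\ pinning down $\operatorname{Ab}^n_{X/K}$ \emph{on the nose} rather than merely up to isogeny---to be the main obstacle, and the place where the positive-characteristic input and the functorial formalism of \cite{ACMVfunctor} (in particular existence of miniversal cycles, \cite[Lem.~\ref{ACMVfunctor:L:MinVZK}]{ACMVfunctor}) are genuinely needed; by contrast, the bookkeeping of the factors $p^e$ coming from a possible failure of resolution of singularities is a minor nuisance, absorbed by working with $p^eN$ in place of $N$ throughout.
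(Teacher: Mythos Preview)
Your approach is essentially the same as the paper's up to the final step: both use Lemma~\ref{L:factorcorres} to factor the (rationalized) diagonal through smooth projective $\widetilde W_i$, observe that the $Z_2^{*}$-piece acts trivially on $\mathscr A^n$ because $n>\dim\widetilde W_2$, and thereby obtain a correspondence-induced surjection $\operatorname{A}^1((\widetilde W_1)_{\bar K})\twoheadrightarrow\operatorname{A}^n(X_{\bar K})$. The paper then simply invokes \emph{Saito's criterion} (\cite[Prop.~\ref{ACMVfunctor:P:saito}]{ACMVfunctor}, as in \cite[Prop.~6.7]{ACMVdcg}): a surjective correspondence-induced map from $\operatorname{A}^1$ of a smooth projective variety onto $\operatorname{A}^n(X_{\bar K})$ forces the existence of the algebraic representative. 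This is exactly the ``black box'' that resolves what you flag as the main obstacle (upgrading the a~priori isogenies among maximal-dimensional surjective regular homomorphisms to a genuine terminal object); you are in effect re-deriving that criterion by hand. A couple of minor simplifications the paper makes that you could adopt: it works with $\rat$-coefficients throughout (so no $N$, no $p^e$ bookkeeping---Lemma~\ref{L:factorcorres} is applied over $\rat$ and resolution for surfaces plus alterations suffice), and it pads $\widetilde W_1$ to have pure dimension exactly $d_X-n+1$ (already built into Lemma~\ref{L:factorcorres}), so your case split on $d_1-d_X+n\le 0$ versus $=1$ is unnecessary.
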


\begin{proof}
 	By Lemma~\ref{L:factorcorres}, we may write $\Delta_X = \widetilde{Z}_1\circ \tilde{\gamma}_1 + {}^t\tilde \gamma_2 \circ {}^t\widetilde{Z}_2$, with $\widetilde{Z}_i \in \operatorname{CH}_{d_X}(\widetilde{W}_i \times_K X)_\rat$ and $\tilde{\gamma}_i \in \operatorname{CH}^{d_X}(X\times_K \widetilde W_i)_\rat$ for some smooth projective varieties $\widetilde{W}_i$ of pure dimension $d_i$ with $d_1 = d_X-n+1$ and $d_2 = n-1$. 
		By \eqref{E:Zj*diag} with rational coefficients, this decomposition provides a surjective homomorphism $\operatorname{A}^1((\widetilde{W}_1)_{\bar K}) \to \operatorname{A}^n(X_{\bar K})$ induced by a correspondence, and we conclude the existence of the algebraic representative  by Saito's criterion (\emph{e.g.}, \cite[Prop.~5.3]{ACMVfunctor}), as in the proof of \cite[Prop.~6.7]{ACMVdcg}.
\end{proof}

\section{Chow decomposition and self-duality of the algebraic representative}
\label{S:self-duality}

The aim of this section is to prove the auto-duality statement of Theorem~\ref{T:Intro-CanPol0} in the case where the threefold is assumed to be geometrically stably rational\,; this is Theorem \ref{T:main-pol}(2).  We start with a motivic result (Theorem \ref{T:motive}), which allows us to show that threefolds admitting a strict decomposition of the diagonal have universal regular homomorphisms for codimension-$2$ cycle classes that are themselves induced by a cycle class (Proposition \ref{P:unicycle}).  With this we prove Theorem \ref{T:main-pol}. 
  
\subsection{A motivic statement} 
To start with, we consider a commutative unital ring $R$ and we consider the category $\mathfrak{M}_{K,R}$ of pure Chow motives over~$K$ with $R$-coefficients as described in \cite[\S 4]{andre}.  Denote by $\mathfrak{h}(X)_R$ the Chow motive of $X$ with $R$-coefficients.
Here is a general proposition, which is a version of \cite[Thm.~2.1]{VialCK} with $R$-coefficients.

\begin{pro}\label{effective}
	Let $X$ be a smooth projective variety of pure dimension $d$ over a perfect field $K$, and let $p \in \operatorname{Hom}_{\mathfrak{M}_{K,R}}(\mathfrak{h}(X)_R, \mathfrak{h}(X)_R):= \operatorname{CH}^d(X\times_KX)_R$ be an idempotent correspondence with the property  that $p_*\operatorname{CH}_0(X_{L})_R
	= 0$ for all field extensions $L/K$. 
	Assume either that the exponential characteristic of $K$ is invertible in $R$ or that resolution of singularities holds in dimensions  $\leq d-1$. Then there exist a smooth projective variety $Y$ over $K$ of
	dimension $d -1$ and an idempotent $q \in \operatorname{CH}^{d-1}(Y \times_K
	Y)_R$ such that $(X,p,0) \simeq (Y,q,-1)$ in ${\mathfrak{M}_{K,R}}$.
\end{pro}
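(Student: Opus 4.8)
The plan is to adapt the argument of \cite[Thm.~2.1]{VialCK} to $R$-coefficients, substituting Lemma~\ref{L:factorcorres} for resolution of singularities. The proof breaks into three moves: spread out the $\operatorname{CH}_0$-vanishing hypothesis into a support statement for $p$; use Lemma~\ref{L:factorcorres} to factor $p$ through the motive of a smooth projective $(d-1)$-dimensional variety, twisted once by the Lefschetz motive; and then split the resulting idempotent.

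First I would show that the hypothesis forces $p$, as a class in $\operatorname{CH}^d(X\times_K X)_R$, to be supported on $X\times_K D$ for some closed subscheme $D\subsetneq X$ of dimension $\le d-1$ (enlarging $D$, with $\dim D=d-1$). Indeed, restricting $p$ to the generic fibre of the appropriate projection and applying the $\operatorname{CH}_0$-vanishing over $L=K(X)$, this is precisely the Bloch--Srinivas localisation argument packaged in the exact sequence \eqref{E:ChowSupp}; compare Remark~\ref{R:CH0triviality}. (If $X$ is disconnected one runs this componentwise; since $X$ has pure dimension $d$ this is harmless.)

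Next, since $K$ is perfect it has no nontrivial finite purely inseparable extension, so Lemma~\ref{L:factorcorres} applied to the transpose ${}^{t}p$ (which is supported on $D\times_K X$) produces a smooth projective $K$-variety $\widetilde W$ of pure dimension $d-1$, an integer $e\ge 0$ (with $e=0$ if resolution of singularities holds in dimensions $\le d-1$), and a factorisation of $p^{e}\cdot{}^{t}p$ through $\widetilde W$; transposing back yields $p^{e}\,p=g_0\circ f_0$ in $\operatorname{CH}^d(X\times_K X)_R$ with $f_0:\mathfrak h(X)_R\to N_0$ and $g_0:N_0\to\mathfrak h(X)_R$ in $\mathfrak M_{K,R}$, where $N_0:=(\widetilde W,\operatorname{id},-1)=\mathfrak h(\widetilde W)_R(-1)$. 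The single Tate twist is forced by the dimension count: the transposed graph of the natural morphism $\widetilde W\to X$ from a variety of dimension $d-1=\dim X-1$ is a correspondence of degree $+1$, i.e.\ a morphism in $\operatorname{Hom}_{\mathfrak M_{K,R}}(\mathfrak h(\widetilde W)_R(-1),\mathfrak h(X)_R)$. If $\operatorname{char}(K)$ is invertible in $R$ I would absorb $p^{e}$ into $g_0$; otherwise $e=0$ already. Either way $p=g\circ f$ with $f:\mathfrak h(X)_R\to N$, $g:N\to\mathfrak h(X)_R$, $g\circ f=p$, $N:=(\widetilde W,\operatorname{id},-1)$.

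Finally I would split the idempotent. Put $q:=f\circ p\circ g\in\operatorname{Hom}_{\mathfrak M_{K,R}}(N,N)$; since the twist of $N$ cancels, $\operatorname{Hom}_{\mathfrak M_{K,R}}(N,N)=\operatorname{CH}^{d-1}(\widetilde W\times_K\widetilde W)_R$, so $q$ is a correspondence of the required type, and a direct computation from $g\circ f=p$ and $p\circ p=p$ gives $q\circ q=q$. With $\alpha:=q\circ f\circ p$ and $\beta:=p\circ g\circ q$ the same manipulation gives $\beta\circ\alpha=p=\operatorname{id}_{(X,p,0)}$ and $\alpha\circ\beta=q=\operatorname{id}_{(\widetilde W,q,-1)}$, so $(X,p,0)\simeq(\widetilde W,q,-1)$ and one takes $Y=\widetilde W$. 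I do not expect a serious obstacle once Lemma~\ref{L:factorcorres} is in place; the two points needing attention are the bookkeeping of the characteristic power $p^{e}$ --- which is exactly why one of the two alternative hypotheses is imposed, and which is handled as in Lemma~\ref{L:factorcorres} --- and verifying that the Lefschetz twist appears on the side that makes $q$ land in the \emph{untwisted} group $\operatorname{CH}^{d-1}(Y\times_K Y)_R$, matching the statement.
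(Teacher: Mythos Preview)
Your proposal is correct and follows essentially the same approach as the paper's proof: Bloch--Srinivas localization to get support on a divisor, Lemma~\ref{L:factorcorres} to factor through a smooth $(d-1)$-fold (with the $p^e$ factor absorbed exactly as you describe), and then the standard idempotent-splitting $q:=f\circ p\circ g$. One minor slip: under the paper's conventions $p_*\operatorname{CH}_0=0$ forces $p$ to be supported on $D\times_K X$ (divisor on the \emph{left}), not $X\times_K D$, so no transposition is needed --- the paper applies Lemma~\ref{L:factorcorres} directly to $p$ and obtains the factorization $p=r\circ s$ with $s:\mathfrak h(X)\to\mathfrak h(\widetilde W)(-1)$ the transposed graph, which is what makes the Lefschetz twist come out on the correct side.
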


\begin{proof}
	 	Let us denote $X_i$ the connected components of $X$ and $p_{ij} \in \mathrm{Hom}_{\mathfrak{M}_{K,R}}(\mathfrak{h}(X_i)_R,\mathfrak{h}(X_j)_R)$ the $(i,j)$-component of $p$. By assumption, if $\eta_i$ denotes the
	generic point of $X_i$, then we have $p_*[\eta_i]=0$ and hence $(p_{ij})_*[\eta_i] =0$ for all $j$.  But $(p_{ij})_*[\eta_i]$
	is the restriction of $p_{ij} \in \operatorname{CH}^d(X_i \times_K X_j)_R$ to $\varinjlim \operatorname{CH}_d(U
	\times_K X_j)_R = \operatorname{CH}_0((X_j)_{k(\eta_i)})_R$, where the limit is taken over all open
	subsets $U$ of $X_i$. Therefore, by the localization exact sequence
	for Chow groups, there exist for all $j$ a proper closed subset $D_{ij} \subset X_i$
	and a correspondence $\gamma_{ij} \in \operatorname{CH}_d(D_{ij} \times_K X_j)_R$ such that
	$\gamma_{ij}$ maps to $p_{ij}$ via the inclusion $D_{ij} \times_K X_j \to X_i \times_K X_j$.
	In other words, $p_{ij}$ is supported on $D_{ij} \times_K X_j$. By Lemma~\ref{L:factorcorres},
	 	 	 	 	 	 	 	 	we get  a factorization $p_{ij} = r_{ij} \circ s_{ij}$,
	where $r_{ij}\in \operatorname{CH}_d(Y_{ij} \times_K X_j)_R$ and $s_{ij}
	 	\in \operatorname{CH}^d(X_i\times_K Y_{ij})_R$ and where $Y_{ij}$ is smooth projective over $K$ of pure dimension $d-1$.
	If we now consider the correspondences $r = \sqcup r_{ij}$ and $s = \sqcup s_{ij}$, we have
	 	$p = r\circ s$. Moreover,  the correspondence $q:=s \circ r \circ s
	\circ r = s \circ p \circ r \in \operatorname{CH}^{d-1}(Y \times_K Y)_R$ is an idempotent, and  $p
	\circ r \circ q \circ s \circ p = p$ as well as $q \circ s \circ p
	\circ r \circ q = q$.  These last two equalities exactly mean that
	$p \circ r \circ q$, seen as a morphism of Chow motives with $R$-coefficients from
	$(Y,q,-1)$ to $(X,p,0)$, is an isomorphism, with inverse $q \circ s
	\circ p$.
\end{proof}

\begin{teo}\label{T:motive}
	Let $X$ be a (geometrically) connected smooth projective threefold over a perfect field $K$. Assume that $\operatorname{CH}_0(X)_R$ is universally spanned by a degree-1 zero-cycle $x\in \operatorname{CH}_0(X)_R$. Set
	$$\Gamma := \Delta_X - x\times_K X - X\times_K x \quad \in \operatorname{CH}^3(X\times_KX)_R.$$
	Then there exist a smooth projective curve $C$ over $K$, an idempotent correspondence $p \in \operatorname{CH}^1(C\times_K C)_R$ and an isomorphism of Chow motives with $R$-coefficients
	$$(X,\Gamma) \simeq (C,p,-1).$$
	Concretely, there exist a smooth projective curve $C$ over $K$ and correspondences $\alpha\in \operatorname{CH}^2(X\times_KC)_R$ and $\beta \in \operatorname{CH}^2(C\times_KX)_R$ such that $\Gamma = \beta \circ \alpha$.
\end{teo}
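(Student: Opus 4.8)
The plan is to realize $\Gamma$ as a symmetric idempotent correspondence with universally trivial $\operatorname{CH}_0$, and then to strip off one dimension at a time via Proposition~\ref{effective} (equivalently the factorization Lemma~\ref{L:factorcorres}), keeping track of Tate twists so as to land precisely on $(C,p,-1)$. First I would observe that $\Gamma$ is a symmetric idempotent: as $\deg x=1$, the classes $x\times_K X$, $X\times_K x$ and $\Gamma$ are pairwise orthogonal idempotents of $\operatorname{End}_{\mathfrak M_{K,R}}(\mathfrak h(X)_R)=\operatorname{CH}^{3}(X\times_K X)_R$ summing to $\Delta_X$, and ${}^t(x\times_K X)=X\times_K x$ forces ${}^t\Gamma=\Gamma$. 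The hypothesis that $\operatorname{CH}_0(X)_R$ be universally spanned by $x$ says exactly that $X$ is universally $(\operatorname{CH}_0)_R$-trivial, hence (Remarks~\ref{R:CH0triviality} and~\ref{R:ChThEqDef}) $\Delta_X=Z_1+X\times_K x$ with $Z_1$ supported on $D\times_K X$ for a divisor $D$; then $\Gamma=Z_1-x\times_K X$ is supported on $W_1\times_K X$ with $\dim W_1\le 2$, and, using ${}^t\Gamma=\Gamma$, also on $X\times_K W_2$ with $\dim W_2\le 2$. Equivalently, $\Gamma_*\operatorname{CH}_0(X_L)_R=0$ for every field extension $L/K$, and dually. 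This places us in the setting of Proposition~\ref{effective}; since resolution of singularities in dimension $\le 2$ is available in all characteristics, that proposition applies irrespective of $R$ and yields a smooth projective surface $Y/K$ together with an idempotent $q\in\operatorname{CH}^2(Y\times_K Y)_R$ such that $(X,\Gamma,0)\simeq (Y,q,-1)$ in $\mathfrak M_{K,R}$; the twist $(-1)$ is exactly what identifies $\operatorname{CH}^{\dim Y}(Y)=\operatorname{CH}_0(Y)$ with $\operatorname{CH}^{\dim X}(X)=\operatorname{CH}_0(X)$, so transporting the previous vanishing through this isomorphism gives $q_*\operatorname{CH}_0(Y_L)_R=0$ for all $L/K$ as well.

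The real work is to upgrade the surface motive $(Y,q)$ to a \emph{curve} motive. I would fix a Chow--K\"unneth decomposition $\mathfrak h(Y)_R=\bigoplus_{i=0}^{4}\mathfrak h^i(Y)$ (Murre) and split $\mathfrak h^2(Y)=\mathfrak h^2_{\mathrm{alg}}(Y)\oplus\mathfrak h^2_{\mathrm{tr}}(Y)$. The vanishing $q_*\operatorname{CH}_0(Y_L)_R=0$ for all $L$ forces $q$ to annihilate $\mathfrak h^3(Y)$ and $\mathfrak h^4(Y)$; the crux is that $q$ also annihilates $\mathfrak h^2_{\mathrm{tr}}(Y)$. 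For this, note that under $(X,\Gamma,0)\simeq(Y,q,-1)$ the summand $q_*\mathfrak h^2(Y)$ of the surface motive is identified with a Tate twist of $\mathfrak h^4(X)$, and $\mathfrak h^4(X)$ is a Lefschetz motive (a sum of copies of $\mathbf 1(-2)$, with $\mathbf 1$ the unit motive) because $X$ is a universally $(\operatorname{CH}_0)_R$-trivial threefold --- its degree-$4$ cohomology being algebraic, and $\mathfrak h^4(X)$ being Lefschetz by duality with $\mathfrak h^2(X)$; hence $q_*\mathfrak h^2(Y)$ is Lefschetz and $q_*\mathfrak h^2_{\mathrm{tr}}(Y)=0$. (Alternatively, one applies Proposition~\ref{effective} a second time to the transcendental summand: being universally $\operatorname{CH}_0$-trivial it is a curve motive, but a curve motive with vanishing $\operatorname{CH}^0$, $\operatorname{CH}^1$ and $\operatorname{CH}_0$ over all fields must be zero.) Therefore $(Y,q,0)\simeq \mathbf 1^{\oplus a}\oplus\mathfrak h^1(A)\oplus\mathbf 1(-1)^{\oplus b}$ for an abelian variety $A$ that is a direct factor of $(\operatorname{Pic}^0_{Y/K})_{\operatorname{red}}$, and such a motive is a direct summand $(C,p,0)$ of $\mathfrak h(C)_R$ for an idempotent $p\in\operatorname{CH}^1(C\times_K C)_R$, where $C$ is the disjoint union of $a$ copies of $\operatorname{Spec}K$, a smooth projective curve $C_0$ with $\mathfrak h^1(A)$ a direct summand of $\mathfrak h^1(C_0)$, and $b$ copies of $\mathbb P^1_K$. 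Combining, $(X,\Gamma,0)\simeq(Y,q,-1)\simeq(C,p,-1)$; writing the two mutually inverse isomorphisms $(X,\Gamma,0)\leftrightarrow(C,p,-1)$ as correspondences then produces $\alpha\in\operatorname{CH}^{d_X-1}(X\times_K C)_R=\operatorname{CH}^2(X\times_K C)_R$ and $\beta\in\operatorname{CH}^2(C\times_K X)_R$ with $\beta\circ\alpha=\Gamma$ and $\alpha\circ\beta=p$, which is the concrete assertion.

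I expect the step $q_*\mathfrak h^2_{\mathrm{tr}}(Y)=0$ to be the only non-formal point: it is where integral (rather than merely cohomological) $\operatorname{CH}_0$-triviality is genuinely used, and it is propagated through Proposition~\ref{effective}. Everything before it is manipulation of orthogonal idempotents and the factorization lemma, and everything after it is Tate-twist bookkeeping together with the dictionary between motivic isomorphisms and correspondences.
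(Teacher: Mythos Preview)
Your argument is on the right track through the first application of Proposition~\ref{effective}: the symmetry of $\Gamma$, the vanishing $\Gamma_*\operatorname{CH}_0(X_L)_R=\Gamma^*\operatorname{CH}_0(X_L)_R=0$, and the resulting isomorphism $(X,\Gamma,0)\simeq (Y,q,-1)$ with a surface $Y$ are exactly as in the paper. The divergence, and the gap, is in the second reduction.

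The paper's second step is short and clean but uses a trick you are missing: from $\Gamma^*\operatorname{CH}_0(X_L)_R=0$ one deduces $q^*\operatorname{CH}_0(Y_L)_R=({}^tq)_*\operatorname{CH}_0(Y_L)_R=0$, applies Proposition~\ref{effective} to $(Y,{}^tq,0)$ to obtain $(Y,{}^tq,0)\simeq (C,{}^tp,-1)$, and then \emph{dualizes} to get $(Y,q,-1)\simeq (C,p,-1)$ with the correct Tate twist. If instead you apply Proposition~\ref{effective} directly to $q$ (as your parenthetical alternative suggests), you only get $(Y,q,0)\simeq (C',p',-1)$ and hence $(X,\Gamma,0)\simeq (C',p',-2)$, which is off by one Tate twist; a curve motive twisted by $(-2)$ is not in general a curve motive twisted by $(-1)$.

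Your main route around this --- fixing a Chow--K\"unneth decomposition of $Y$ and arguing that $q$ kills $\mathfrak h^2_{\mathrm{tr}}(Y)$ because the corresponding piece of $(X,\Gamma)$ is ``$\mathfrak h^4(X)$, which is Lefschetz'' --- is not justified. First, the isomorphism $(X,\Gamma,0)\simeq (Y,q,-1)$ carries no \emph{a priori} compatibility with K\"unneth pieces on either side, so there is no reason $q_*\mathfrak h^2(Y)$ should match $\mathfrak h^4(X)$. Second, and more seriously, asserting that $\mathfrak h^4(X)$ (equivalently $\mathfrak h^2(X)$) is a sum of Lefschetz motives for a universally $(\operatorname{CH}_0)_R$-trivial threefold is essentially the content of the theorem: a Chow--K\"unneth decomposition of $X$ with Lefschetz $\mathfrak h^2$ and $\mathfrak h^4$ is precisely what the isomorphism $(X,\Gamma)\simeq (C,p,-1)$ yields after the fact. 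Cohomological algebraicity of $H^4$ does not give this motivic statement. So the argument, as written, is circular at the crucial point; replace it with the transpose--and--dualize step.
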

\begin{proof} First note that $\Gamma$ does not depend on the choice of the degree-1 zero-cycle $x$.
	 	That $X$ admits a decomposition of the diagonal implies not only that $\Gamma_* \operatorname{CH}_0(X_L)_R = 0$ but also that $\Gamma^* \operatorname{CH}_0(X_L)_R = 0$ for all field extensions $L/K$. Since resolution of singularities holds for surfaces over perfect fields,  we may apply Proposition~\ref{effective} and obtain a smooth projective surface $S$ together with an idempotent $q \in \operatorname{CH}^2(S\times_K S)_R$ such that the Chow motive with $R$-coefficients $(S,q,-1)$ is isomorphic to $(X,\Gamma,0)$. Since $\Gamma^* \operatorname{CH}_0(X_L)_R = 0$ for all field extensions $L/K$, we find that $q^*\operatorname{CH}_0(S_L)_R = 0$ for all field extensions~$L/K$. Applying Proposition~\ref{effective} to the motive $(S,{}^tq,0)$, we obtain a smooth projective curve $C$ and an idempotent $p \in \operatorname{CH}^1(C\times_K C)_R$ such that $(S,{}^tq,0)$ is isomorphic to $(C,{}^tp,-1)$. Dualizing we get that $(S,q,-1)$ is isomorphic to $(C,p,-1)$, thereby concluding the proof.
\end{proof}

\subsection{Proof of auto-duality in Theorem~\ref{T:Intro-CanPol0}} As a first consequence of Theorem~\ref{T:motive}, one obtains information on the second algebraic representative for threefolds admitting a decomposition of the diagonal. The key feature of the following proposition is that for such threefolds the second algebraic representative is induced by an algebraic cycle.
\begin{pro}\label{P:unicycle}
	Let  $X$ be a smooth projective threefold over a field $K$ that is either finite or algebraically closed. Assume that there exists a natural number $N$ such that $N\Delta_X$ admits a strict decomposition. Then the second algebraic representative
	$$\Phi^2_{X/K} : \mathscr{A}^2_{X/K} \to \operatorname{Ab}^2_{X/K}$$ admits a degree-$N$ miniversal cycle $Z\in \operatorname{A}^2(\operatorname{Ab}^2_X \times_K X)$. Moreover, $\phi^2_{X_{\bar K}} := \Phi^2_{X/K}(\bar K) : \operatorname{A}^2(X_{\bar K}) \to \operatorname{Ab}^2_{X/K}(\bar K)$ is an isomorphism
 	and there is a nonnegative integer $d$ such that
 	 $N^d \Phi^2_{X/K}$ 
 	is induced by a cycle $\widehat{Z} \in \operatorname{CH}^2({\widehat{\operatorname{Ab}}\,^2_X}\times_K X)$, \emph{i.e.}, with the notation of \S \ref{SS:RegHomCycle}
 	$$N^d \Phi^2_{X/K} = \widehat{Z}^* :  \mathscr{A}^2_{X/K} \to \operatorname{Ab}^2_{X/K}.$$
\end{pro}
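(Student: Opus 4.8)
The plan is to deduce everything from the motivic decomposition in Theorem~\ref{T:motive}. Put $R=\integ$ if $N=1$ and $R=\integ[\frac1N]$ otherwise. Since $N$ is invertible in $R$, dividing the given strict decomposition $N\Delta_X=Z_1+Z_2$ by $N$ produces a strict $R$-decomposition of $\Delta_X$, so $X$ is universally $(\chow_0)_R$-trivial (Remark~\ref{R:CH0triviality}); concretely $\chow_0(X)_R$ is universally spanned by the degree-$1$ class $x:=\frac1N[\alpha]$, where $\alpha$ is a $0$-cycle of degree $N$ as in Remark~\ref{R:ChThEqDef}. Theorem~\ref{T:motive} then yields a smooth projective curve $C/K$ and correspondences $\alpha'\in\chow^2(X\times_KC)_R$, $\beta'\in\chow^2(C\times_KX)_R$ with $\beta'\circ\alpha'=\Gamma:=\Delta_X-x\times_KX-X\times_Kx$. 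A dimension count shows that $x\times_KX$ and $X\times_Kx$ contribute $0$ to the action on $\mathscr A^2_{X/K}$, so $\Gamma$ acts as the identity on the functor $\mathscr A^2_{X/K}$. Clearing denominators, write $\alpha_0=N^a\alpha'$, $\beta_0=N^b\beta'$ for integral correspondences and set $e=a+b$, so that $\beta_0\circ\alpha_0=N^e\Gamma$ is integral and induces $N^e\cdot\mathrm{id}$ on $\mathscr A^2_{X/K}$ (when $N=1$ one may take $\alpha_0=\alpha'$, $\beta_0=\beta'$, $e=0$).

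The first step is to realize $\operatorname{Ab}^2_{X/K}$ (which exists by Murre, or by Proposition~\ref{T:ExistAlgRep}) as a direct factor of $J:=(\operatorname{Pic}^0_{C/K})_{\mathrm{red}}$ up to a power of $N$. By the universal property of $\operatorname{Ab}^2_{X/K}$ the regular homomorphism $AJ\circ(\alpha_0)_*:\mathscr A^2_{X/K}\to J$ factors as $g\circ\Phi^2_{X/K}$ for a unique $K$-homomorphism $g:\operatorname{Ab}^2_{X/K}\to J$; and since $AJ:\mathscr A^1_{C/K}\to J$ is the algebraic representative in codimension $1$, the regular homomorphism $\Phi^2_{X/K}\circ(\beta_0)_*:\mathscr A^1_{C/K}\to\operatorname{Ab}^2_{X/K}$ factors as $h\circ AJ$ for a unique $h:J\to\operatorname{Ab}^2_{X/K}$. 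Then $h\circ g\circ\Phi^2_{X/K}=h\circ AJ\circ(\alpha_0)_*=\Phi^2_{X/K}\circ(\beta_0\circ\alpha_0)_*=N^e\,\Phi^2_{X/K}$, and since $\Phi^2_{X/K}$ is a surjective regular homomorphism (algebraic representatives are surjective, and $K$ is perfect, so $\bar K=K^{\sep}$) we get $h\circ g=N^e\cdot\mathrm{id}_{\operatorname{Ab}^2_{X/K}}$; for $N=1$ this exhibits $g$ as a split injection.

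Now the three assertions follow. The map $\phi^2_{X_{\bar K}}$ is surjective because $\Phi^2_{X/K}$ is; for injectivity, if $\phi^2_{X_{\bar K}}(\xi)=0$ then $(\alpha_0)_*\xi=g(\bar K)\bigl(\phi^2_{X_{\bar K}}(\xi)\bigr)=0$ (using that $AJ$ is the identity on $\bar K$-points), whence $N^e\xi=(\beta_0\circ\alpha_0)_*\xi=(\beta_0)_*\bigl((\alpha_0)_*\xi\bigr)=0$; for $N=1$ this gives $\xi=0$, and in general $\ker\phi^2_{X_{\bar K}}$ is killed by $N^e$, so injectivity reduces to injectivity of $\phi^2_{X_{\bar K}}$ on torsion, which one obtains from the injectivity of the Bloch map on torsion and its compatibility with $\Phi^2_{X/K}$ (see~\cite{ACMVBlochMap}), or, over $\bar K$, from a Roitman-type argument. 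For the miniversal cycle, when $N=1$ one takes the universal codimension-$1$ cycle $\Theta_C\in\chow^1(J\times_KC)$ (it exists, $C$ having a $0$-cycle of degree $1$ over the finite or algebraically closed field $K$; \S\ref{SS:mini-universalCycle}), forms $\beta'\circ\Theta_C\in\mathscr A^2_{X/K}(J)$---for which $\psi_{\beta'\circ\Theta_C}=h$, since $\Phi^2_{X/K}\circ\beta'_*=h\circ AJ$ and $AJ(\Theta_{C,t})=t$---and pulls it back along $g$ to obtain $Z\in\mathscr A^2_{X/K}(\operatorname{Ab}^2_{X/K})$ with $\psi_Z=h\circ g=\mathrm{id}$, a universal cycle; for general $N$ one argues integrally with the factorization of $Z_1$ itself, which induces $N\cdot\mathrm{id}$ on $\mathscr A^2_{X/K}$ (as $(Z_2)_*=0$ on $\chow^2$) and, via Lemma~\ref{L:factorcorres} together with the surface-to-curve reduction already used, produces a degree-$N$ miniversal cycle in the manner of Murre.

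Finally, for the induced-cycle statement: by the theory relating regular homomorphisms and cycle classes (\S\ref{SS:RegHomCycle}, in particular the identification~\eqref{eq:canon} with $\operatorname{Alb}_{C/K}=J$ and $\operatorname{Pic}^0_{\widehat{\operatorname{Ab}}\,^2_{X/K}/K}=\operatorname{Ab}^2_{X/K}$, and Lemma~\ref{L:RHCorB}), the $K$-homomorphism $h:J\to\operatorname{Ab}^2_{X/K}$ is induced by a class $\widehat Z_0\in\chow^1(C\times_K\widehat{\operatorname{Ab}}\,^2_{X/K})$, in the sense that $h\circ AJ=(\widehat Z_0)_*$ (this is the positive answer to Question~\ref{Q:cycle} for $\mathscr A^1_{C/K}=\mathscr A^{\dim C}_{C/K}$, cf.\ Lemma~\ref{L:AlbIndZ}). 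Setting $\widehat Z:={}^t(\widehat Z_0\circ\alpha_0)\in\chow^2(\widehat{\operatorname{Ab}}\,^2_{X/K}\times_KX)$---an integral class, since $\widehat Z_0$ and $\alpha_0$ are integral---one computes, with the notation of \S\ref{SS:RegHomCycle}, that $\widehat Z^*=(\widehat Z_0\circ\alpha_0)_*=(\widehat Z_0)_*\circ(\alpha_0)_*=h\circ\bigl(AJ\circ(\alpha_0)_*\bigr)=h\circ g\circ\Phi^2_{X/K}=N^e\,\Phi^2_{X/K}$, so $d=e$ works (and $d=0$ when $N=1$). The main obstacle is to make all of this work integrally for general $N$: both the injectivity of $\phi^2_{X_{\bar K}}$ on the $N$-primary torsion and the construction of a miniversal cycle of degree exactly $N$ (rather than a power of $N$) require going back to the integral decomposition of $N\Delta_X$ and to torsion results for the Bloch map and for $\operatorname{Ab}^2_{X/K}$; for the application to geometrically stably rational threefolds one has $N=1$, where the argument above is entirely clean.
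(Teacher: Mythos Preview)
Your treatment of the key new assertion---that $N^d\Phi^2_{X/K}$ is induced by a cycle---is essentially the paper's proof. Both arguments invoke Theorem~\ref{T:motive} with $R=\integ[1/N]$ to factor through a curve $C$, clear denominators, and then use that the Albanese map of $C$ is induced by a cycle (your $\widehat Z_0$ is the paper's $\mathcal P_{\operatorname{Ab}^2_X}\circ g\circ\mathrm{alb}_C$, and your $\widehat Z_0\circ\alpha_0$ is the paper's $\mathcal P_{\operatorname{Ab}^2_X}\circ g\circ\mathrm{alb}_C\circ\alpha$). The commutative diagram with $g,h$ (the paper's $f,g$) and the identity $h\circ g=N^e\cdot\mathrm{id}$ are the same.

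Where you diverge is in the two auxiliary claims. The paper does not reprove them from the motivic setup; it simply cites Proposition~\ref{P:BlSr} (for $\phi^2_{X_{\bar K}}$ an isomorphism) and Theorem~\ref{T:UnivCyc} (for the degree-$N$ miniversal cycle), both of which are established later under the weaker hypothesis of a \emph{cohomological} decomposition. Your attempt to extract these from Theorem~\ref{T:motive} is clean for $N=1$, but for general $N$ there is a real gap: your reduction of injectivity of $\phi^2_{X_{\bar K}}$ to injectivity on $N^e$-torsion is fine, but the claimed ``compatibility of the Bloch map with $\Phi^2_{X/K}$'' does not exist a priori---$\lambda^2$ and $\phi^2$ land in different targets, and there is no canonical factorization of one through the other without further input (indeed, the paper explicitly notes after Proposition~\ref{P:phizero} that injectivity of $(\Phi^2_{X/K})_{\mathrm{tors}}$ is open in positive characteristic). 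The actual mechanism, spelled out in the proof of Proposition~\ref{P:BlSr}, is to produce a map $r:\operatorname{Ab}^2_{X/K}(\bar K)\to\operatorname{A}^2(X_{\bar K})$ from the factorization and then use divisibility of $\operatorname{A}^2$ to run a diagram chase with the universal property; your $h,g$ could be adapted to this, but the Bloch-map shortcut does not work. Similarly, your sketch for the degree-$N$ miniversal cycle (``in the manner of Murre'') is not a proof; the content is Theorem~\ref{T:UnivCyc}, which uses the integral factorization of $N\Delta_X$ (not $N^e\Gamma$) through a universal divisor on the desingularized support. You are right that for the application to stably rational threefolds one has $N=1$, where your argument is complete.
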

\begin{proof} First, we note that more generally, $\phi^2_{X_{\bar K}}$ is an isomorphism under the weaker hypothesis that $X$ is a smooth proper variety of any dimension whose diagonal $\Delta_X\in \operatorname{CH}^{d_X}(X\times_K X)\otimes \rat$ admits a $\rat$-decomposition of type $(d_X-1,1)$\,; see Proposition~\ref{P:BlSr}. Likewise, $\Phi^2_{X/K}$ admits a degree-$N$ miniversal cycle under the weaker hypothesis that $X$ is a smooth proper variety of dimension $\leq 4$ such that $N\Delta_X\in \operatorname{CH}^{d_X}(X\times_K X)$ admits a decomposition of type $(d_X-1,1)$\,; see Theorem~\ref{T:UnivCyc}.
In fact, in the aforementioned two results, it is enough to assume the existence of a \emph{cohomological} decomposition (we discuss this notion in \S \ref{S:CohDD}).
	 Hence the key feature of Proposition~\ref{P:unicycle}, \emph{i.e.}, requiring the hypothesis of a strict integral (Chow) decomposition of $N$ times the diagonal,  is that we establish that  $N^d\Phi^2_{X/K}$ is induced by a cycle $\widehat Z$ for some nonnegative integer $d$.

	It remains to prove that there is a nonnegative integer $d$ such that
	$N^d \Phi^2_{X/K}$ 
	is induced by a cycle.
	Via Theorem~\ref{T:motive} with $R=\integ[1/N]$,
	the proof reduces to the case of codimension-1 cycles on curves. Indeed, with the notation of Theorem~\ref{T:motive} and after clearing out denominators, we have a commutative diagram
	$$\xymatrix{\mathscr{A}^2_{X/K} \ar[d]^{\Phi^2_{X/K}} \ar[r]^{\alpha} & 	\mathscr{A}^1_{C/K}  \ar[d]^{AJ}_{\simeq} \ar[r]^{\beta}   & 	\mathscr{A}^2_{X/K} \ar[d]^{\Phi^2_{X/K}}\\
		\operatorname{Ab}^2_{X/K} \ar[r]^f &
		\operatorname{Pic}^0_{C/K} \ar[r]^g
		& \operatorname{Ab}^2_{X/K}
	}$$
	where $\alpha \in \operatorname{CH}^2(X\times_K C)$ and $\beta \in \operatorname{CH}^2(C\times_K X)$ are \emph{integral} correspondences such that $\alpha \circ \beta = N^dp$ and $\beta \circ \alpha = N^d\Gamma$ for some nonnegative integer $d$, and where $f$ and $g$ are the (unique) homomorphisms induced by the universal property of the algebraic representatives.
	Since the integral correspondences $N^d (x\times_K X)$ and $N^d (X\times_K x)$ act as zero on $\mathscr{A}^2_{X/K}$, the integral correspondence $N^d\Gamma$ acts as multiplication by $N^d$ on~$\mathscr{A}^2_{X/K}$. 
	Thus, if  $\mathrm{alb}_C : C \to \operatorname{Pic}^0_C$ denotes the $K$-morphism $c \mapsto \mathcal{O}_C(c-c_0)$ for any choice of zero-cycle $c_0$ of degree $1$ on $C$ (which exists if $K$ is finite or algebraically closed) and if $\mathcal{P}_{\operatorname{Ab}^2_X}$ denotes the universal line-bundle on $\operatorname{Ab}^2_{X/K}\times_K \widehat{\operatorname{Ab}}\,^2_{X/K}$, then,  by Lemma~\ref{L:AlbIndZ},  $N^d \Phi^2_{X/K}$ is induced by the correspondence $\mathcal{P}_{\operatorname{Ab}^2_X} \circ g \circ \mathrm{alb}_C \circ \alpha$.
\end{proof}

We are now in a position to prove the auto-duality statement of Theorem~\ref{T:Intro-CanPol0}, which we formulate in the more precise form of Theorem \ref{T:main-pol}, below.  The basic observation is that when $X$ is a threefold, then given any miniversal cycle $Z\in \mathscr A^2(\operatorname{Ab}^2_{X/K}\times_K X)$, the symmetric correspondence ${}^tZ\circ Z\in \operatorname{CH}^1(\operatorname{Ab}^2_{X/K}\times_K {\operatorname{Ab}}^2_{X/K})$ induces a morphism 
$\operatorname{Ab}^2_{X/K}\to \widehat{\operatorname{Ab}}\,^2_{X/K}$ that is symmetric.    
We return to investigating such morphisms later, in \S \ref{S:DiagPolCoh}, in more generality.  Here we focus on the case of threefolds under the assumption that the diagonal admits a strict (Chow) decomposition, so that we can use Proposition \ref{P:unicycle}, which implies the stronger 
 conclusion that the given morphism is an isomorphism.

\begin{teo}[Auto-duality]\label{T:main-pol}
 	Let  $X$ be a smooth projective threefold over a perfect field $K$ and let $\Omega/K$ be an algebraically closed field extension. 
 	\begin{enumerate}
\item Let $Z\in \operatorname{CH}^2(\operatorname{Ab}^2_{X_{\Omega}/\Omega} \times_{\Omega} X_{\Omega})$ be a miniversal cycle over $\Omega$, of degree, say, $r$. Let $N$ be a natural number. Assume that $N\Phi^2_{X_\Omega/\Omega}$ is induced by a cycle $\widehat{Z} \in \operatorname{CH}^2({\widehat{\operatorname{Ab}}\,^2_{X_\Omega}}\times_\Omega X_\Omega)$.
Then the symmetric $\Omega$-homomorphism
$$ \operatorname{Ab}^2_{X_{\Omega}/\Omega} \longrightarrow  {\widehat{\operatorname{Ab}}\,^2_{X_{\Omega}/\Omega}}
$$ induced by
${}^tZ \circ Z \in \operatorname{CH}^1(
\operatorname{Ab}^2_{X_{\Omega}/\Omega} \times_{\Omega}
\operatorname{Ab}^2_{X_{\Omega}/\Omega})$ is an isogeny with kernel
contained in the torsion subscheme $\Ab^2_{X_\Omega/\Omega}[Nr^2]$.  In
particular, the degree of this isogeny divides $(Nr^2)^{2g}$
where $g$ is the dimension of $
\operatorname{Ab}^2_{X_{\Omega}/\Omega}$.

\item 
Assume that $\Delta_{X_{\Omega}}$ admits a strict decomposition.
Let  $Z\in \operatorname{CH}^2(\operatorname{Ab}^2_{X_{\Omega}/\Omega} \times_{\Omega} X_{\Omega})$ be a universal cycle over $\Omega$, the existence of which is provided by Proposition~\ref{P:unicycle}. Then the symmetric $\Omega$-homomorphism 
$$ \operatorname{Ab}^2_{X_{\Omega}/\Omega} \longrightarrow  {\widehat{\operatorname{Ab}}\,^2_{X_{\Omega}/\Omega}}
$$ induced by
${}^tZ \circ Z \in \operatorname{CH}^1( \operatorname{Ab}^2_{X_{\Omega}/\Omega} \times_{\Omega} \operatorname{Ab}^2_{X_{\Omega}/\Omega})$ is an isomorphism that descends to a symmetric $K$-isomorphism 
$$
\Lambda_X : \ \operatorname{Ab}^2_{X/K} \longrightarrow {\widehat{\operatorname{Ab}}\,^2_{X/K}}
$$
independent of the choice of the universal cycle $Z$. 
 	\end{enumerate}
 \end{teo}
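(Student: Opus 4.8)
The plan is to read off both statements from the formalism of regular homomorphisms induced by cycle classes (\S\ref{SS:RegHomCycle}) together with Lemma~\ref{L:RHCorB}. Write $A:=\operatorname{Ab}^2_{X_\Omega/\Omega}$, let $\phi:=\phi^2_{X_\Omega}$, and let $\Psi:=({}^tZ)_*\colon\mathscr A^2_{X_\Omega/\Omega}\to\widehat A$ be the regular homomorphism attached to ${}^tZ$ by \eqref{E:RegHomZgen}. By Lemma~\ref{L:RHCorB} applied to the pair $(Z,{}^tZ)$, the homomorphism $\Lambda_Z\colon A\to\widehat A$ induced by ${}^tZ\circ Z$ via \eqref{eq:canon} is, on $\Omega$-points, the composite $t\mapsto\Psi(w_Z(t))=\Psi(Z_t)$, and since ${}^tZ\circ Z$ is a symmetric correspondence, $\Lambda_Z$ is symmetric. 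For part (1), I would then let $\mu\colon A\to\widehat A$ be the unique homomorphism with $\Psi=\mu\circ\Phi^2_{X_\Omega/\Omega}$, furnished by the universal property of the algebraic representative; since $Z$ is miniversal of degree $r$ this gives $\Lambda_Z(t)=\mu(\phi(Z_t))=\mu(\psi_Z(t))=r\mu(t)$, so $\Lambda_Z=r\mu$, and $\mu$ is symmetric (the Hom group being torsion-free).

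Next I would feed $\widehat Z$ into the same machine. Replacing $\widehat Z$ by $\widehat Z-\operatorname{pr}_{X_\Omega}^*\widehat Z_0$ makes it fiberwise algebraically trivial with $\widehat Z_0=0$, and does not change $\widehat Z^*$ because on a threefold the correction term acts on $\operatorname{A}^2(X_\Omega)$ through $\operatorname{CH}^4(X_\Omega)=0$; so we may assume $\widehat Z\in\mathscr A^2_{X_\Omega/\Omega}(\widehat A)$ with $\widehat Z^*=N\Phi^2_{X_\Omega/\Omega}$. Applying Lemma~\ref{L:RHCorB} to $(\widehat Z,{}^tZ)$ identifies $\Psi\circ w_{\widehat Z}\colon\widehat A\to\widehat A$ as the dual of the homomorphism $A\to A$ induced by ${}^t\widehat Z\circ Z$, and the latter, by Lemma~\ref{L:RHCorB} applied to $(Z,{}^t\widehat Z)$, is $t\mapsto({}^t\widehat Z)_*(Z_t)=N\phi(Z_t)=Nr\,t$, \emph{i.e.}\ $[Nr]_A$; hence $\Psi\circ w_{\widehat Z}=[Nr]_{\widehat A}$. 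Setting $\psi_{\widehat Z}:=\Phi^2_{X_\Omega/\Omega}(\widehat A)(\widehat Z)\colon\widehat A\to A$ and using $\Psi=\mu\circ\Phi^2_{X_\Omega/\Omega}$, this becomes $\mu\circ\psi_{\widehat Z}=[Nr]_{\widehat A}$; dualizing and using $\widehat\mu=\mu$ gives $\widehat{\psi_{\widehat Z}}\circ\mu=[Nr]_A$. Combining with $\Lambda_Z=r\mu$ yields $\widehat{\psi_{\widehat Z}}\circ\Lambda_Z=[Nr^2]_A$ and $\Lambda_Z\circ\psi_{\widehat Z}=[Nr^2]_{\widehat A}$, which give respectively $\ker\Lambda_Z\subseteq A[Nr^2]$ and the surjectivity of $\Lambda_Z$; so $\Lambda_Z$ is an isogeny whose kernel lies in the torsion subscheme $A[Nr^2]$, of degree dividing $\#A[Nr^2]=(Nr^2)^{2g}$.

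For part (2), a strict decomposition of $\Delta_{X_\Omega}$ lets us invoke Proposition~\ref{P:unicycle} with $N=1$: it supplies a universal cycle $Z$ over $\Omega$ (so $\psi_Z=\operatorname{id}$, $r=1$), the fact that $\phi$ is an isomorphism, and a cycle $\widehat Z$ with $\widehat Z^*=\Phi^2_{X_\Omega/\Omega}$ (so $N=1$). Part (1) with $r=N=1$ then forces $\ker\Lambda_Z\subseteq A[1]=0$ with $\Lambda_Z$ surjective, so $\Lambda_Z$ is a symmetric isomorphism. For independence of the universal cycle, given another one $Z'$, injectivity of $\phi$ forces $w_Z=w_{Z'}=\phi^{-1}$ on $\Omega$-points, hence $Z_t=Z'_t$ and $\Lambda_Z(t)-\Lambda_{Z'}(t)=({}^t(Z-Z'))_*(Z_t)$ for all $t$; by Lemma~\ref{L:RHCorB} this is the homomorphism induced by ${}^t(Z-Z')\circ Z$, whose dual is induced by ${}^tZ\circ(Z-Z')$ and vanishes since every fiber $(Z-Z')_t=0$. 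Therefore $\Lambda_Z=\Lambda_{Z'}$. Finally $\sigma^*Z$ is again a universal cycle for every $\sigma\in\operatorname{Aut}(\Omega/K)$, so $\sigma^*\Lambda_Z=\Lambda_{\sigma^*Z}=\Lambda_Z$; thus $\Lambda_Z$ is $\operatorname{Aut}(\Omega/K)$-invariant and, $K$ being perfect, descends to a symmetric $K$-isomorphism $\Lambda_X\colon\operatorname{Ab}^2_{X/K}\to\widehat{\operatorname{Ab}}\,^2_{X/K}$, automatically independent of $Z$ by the above.

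The step I expect to demand the most care is the web of identifications in the second paragraph that produces $\Psi\circ w_{\widehat Z}=[Nr]$: it requires careful bookkeeping of transposes and of the duality built into \eqref{eq:canon} when invoking Lemma~\ref{L:RHCorB}, and the preliminary normalization of $\widehat Z$ so that $w_{\widehat Z}$ and $\psi_{\widehat Z}$ are defined. In part (2) the only subtle point is the vanishing of the homomorphism induced by ${}^tZ\circ(Z-Z')$, which is exactly what makes $\Lambda_X$ well defined and independent of the chosen universal cycle.
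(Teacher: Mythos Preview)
Your argument is correct. Part (1) is essentially the paper's proof: your $\mu$ is the paper's $F$, and the key diagram chase showing $\mu\circ\psi_{\widehat Z}=[Nr]$ is exactly the content of the paper's diagram~\eqref{E:stardiag}. Your explicit normalization of $\widehat Z$ (so that $w_{\widehat Z}$ lands in $\operatorname{A}^2$) is a clarification the paper leaves implicit.

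In part (2) you take a genuinely different route for independence and descent. The paper passes to $\ell$-adic Tate modules and uses the Bloch map $T_\ell\lambda^2$ together with the intersection pairing on $H^3(X_{\bar K},\integ_\ell(2))$ (see diagram~\eqref{E:tZcircZ}) to show that $Z^*\circ w_Z$ on Tate modules is intrinsically $\gal(K)$-equivariant and independent of $Z$. You instead argue directly at the level of cycles: since $\phi^2_{X_\Omega}$ is an isomorphism (Proposition~\ref{P:unicycle}), any two universal cycles $Z,Z'$ satisfy $(Z-Z')_t=0$ for all $t$, whence the homomorphism induced by ${}^tZ\circ(Z-Z')$ vanishes; descent then follows because $\sigma^*Z$ is again universal. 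Your argument is more elementary and entirely avoids the Bloch map. The paper's approach, on the other hand, makes explicit the compatibility of $\Lambda_Z$ with the cohomological pairing on $H^3$, which is precisely what is needed later to identify $\Lambda_X$ as the \emph{distinguished} isogeny (Theorem~\ref{T:CanPol3-fold} and Remark~\ref{R:T:main-polHdg}); with your proof that compatibility would have to be established separately.
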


\begin{proof} 
From Chow rigidity, and descent for regular homomorphisms along separable field extensions \cite[Thm.~1]{ACMVfunctor} (see Remark \ref{R:SepBC/D}), we may immediately reduce to the case $\Omega = \bar K$.

(1)	 We have on $\bar K$-points a commutative diagram
	\begin{equation}\label{E:stardiag}
	\xymatrix@R=1.5em{
	{\widehat{\operatorname{Ab}}\,^2_{X_{\bar K}/\bar K}}(\bar K) \ar[rd]^{w_{\widehat Z}} \ar[rr]^{Nr} & & {\widehat{\operatorname{Ab}}\,^2_{X_{\bar K}/\bar K}}(\bar K) \\
	& \operatorname{A}^2(X_{\bar K}) \ar[ur]^{Z^*} \ar[dr]^{\phi^2_{X_{\bar K}}} & \\
	{\operatorname{Ab}^2_{X_{\bar K}/\bar K}}(\bar K) \ar[ru]^{w_{Z}} \ar[rr]^{r} & & {\operatorname{Ab}^2_{X_{\bar K}/\bar K}}(\bar K) \ar@{-->}[uu]_F
}
	\end{equation}
	where $N\phi^2_{X_{\bar K}}=\widehat Z^*$ and where $F : \operatorname{Ab}^2_{X_{\bar K}/\bar K} \to {\widehat{\operatorname{Ab}}\,^2_{X_{\bar K}}}$ is the homomorphism induced by the universal property of $\Phi^2_{X/K}$. The bottom horizontal arrow is by definition $\psi_Z := \phi^2_{X_{\bar K}}\circ w_Z$ and is multiplication by~$r$ because $Z$ is a degree-$r$ miniversal cycle for $\Phi^2_{X/K}$.
	The top horizontal arrow $Z^*\circ w_{\widehat Z}$ is multiplication by~$Nr$. Indeed, by  Lemma~\ref{L:RHCorB}, it is induced by the cycle ${}^tZ \circ \widehat Z$ and is the dual of the homomorphism  $\operatorname{Ab}^2_{X_{\bar K}/\bar K} \to {\operatorname{Ab}^2_{X_{\bar K}/\bar K}}$ induced by the transpose ${}^t\widehat Z \circ Z$, which in turn is nothing but $N\psi_Z$, as it coincides with $\widehat Z^*\circ w_Z = N\phi^2_{X_{\bar K}} \circ w_Z$. In particular, applying Lemma~\ref{L:RHCorB} to the composition $Z^*\circ w_Z$, and using the bottom horizontal arrow, we see that $rF$ is induced by~${}^tZ\circ Z$.

	Now, since $F\circ \psi_{\widehat Z} =Nr\cdot  \mathrm{id} : {{\widehat{\operatorname{Ab}}\,^2_{X_{\bar K}}}} \to {\widehat{\operatorname{Ab}}\,^2_{X_{\bar K}}}$, and since an abelian variety and its dual have the same dimension, it is clear that $F$, and hence $rF$, is an isogeny. 
	In addition, we find that the kernel of $F$ is contained in the torsion subscheme $\Ab^2_{X_\Omega/\Omega}[Nr]$ and hence that the kernel of $rF$  is contained in the torsion subscheme $\Ab^2_{X_\Omega/\Omega}[Nr^2]$.

(2) By Proposition~\ref{P:unicycle}, $\Phi^2_{X_{\bar K}/\bar K}$ admits a universal cycle $Z\in \operatorname{CH}^2(\operatorname{Ab}^2_{X_{{\bar K}}/{\bar K}} \times_{{\bar K}} X_{{\bar K}})$ and is induced by a cycle $\widehat Z\in \operatorname{CH}^2(\widehat{\operatorname{Ab}}\,^2_{X_{{\bar K}}/{\bar K}} \times_{{\bar K}} X_{{\bar K}})$. By point (1), the symmetric $\bar K$-homomorphism 
$\operatorname{Ab}^2_{X_{{\bar K}}/{\bar K}} \longrightarrow  {\widehat{\operatorname{Ab}}\,^2_{X_{{\bar K}}/{\bar K}}}
$ induced by
${}^tZ \circ Z$ is an isomorphism and it coincides with the $\bar K$-homomorphism $F$ of~\eqref{E:stardiag}.
We now proceed to show that the isomorphism $F$ descends to a homomorphism $\Lambda_X$ over $K$ and is independent of the choice of a universal cycle $Z$ for $\Phi^2_{X/K}$.  The starting point is that $\Phi^2_{X/K} : \operatorname{A}^2(X_{\bar K}) \to \operatorname{Ab}^2_{X_{\bar K}/\bar K}(\bar K)$ is an isomorphism that is $\operatorname{Gal}(K)$-equivariant by \cite{ACMVdcg}, so that its inverse $w_Z$ is also $\operatorname{Gal}(K)$-equivariant.
	In order to show that $Z^*\circ w_Z$ is $\operatorname{Gal}(K)$-equivariant and independent of~$Z$, it suffices to show that the induced map on Tate modules $Z^*\circ w_Z : T_\ell \operatorname{Ab}^2_{X_{\bar K}/\bar K} \to T_\ell {\widehat{\operatorname{Ab}}\,^2_{X_{\bar K}/\bar K}}$ is $\operatorname{Gal}(K)$-equivariant and independent of $Z$, for some prime $\ell \neq \operatorname{char} K$. 
	However, the isomorphism $Z^* : T_\ell \operatorname{A}^2(X_{\bar K}) \to T_\ell {\widehat{\operatorname{Ab}}\,^2_{X_{\bar K}}}$ is the dual of the $\operatorname{Gal}(K)$-equivariant
	 isomorphism $(T_\ell\Phi^2_{X/K})^{-1}  : T_\ell {\operatorname{Ab}^2_{X_{\bar K}/\bar K}} \to T_\ell \operatorname{A}^2(X_{\bar K})$, where  $T_\ell \operatorname{A}^2(X_{\bar K})$ 
	 is identified with its dual via the $\operatorname{Gal}(K)$-equivariant isomorphism 
	$T_\ell \lambda^2 : T_\ell \operatorname{A}^2(X_{\bar K}) \to H^3(X_{\bar K},\integ_\ell(2))$ provided by \cite[Prop.~5.2]{ACMVBlochMap} (see also Proposition~\ref{P:lambdacoho} below),  and the $\operatorname{Gal}(K)$-equivariant perfect pairing given by the intersection product $H^3(X_{\bar K},\integ_{\ell}(2)) \times H^3(X_{\bar K},\integ_{\ell}(2)) \to \integ_{\ell}(1)$, via the following commutative diagram
	\begin{equation}\label{E:tZcircZ}
	\xymatrix{T_\ell \operatorname{Ab}^2_{X/K} \ar@/^2pc/[rr]^{w_Z= (\Phi^2_{X/K})^{-1}} \ar[r] \ar[rd]_{\operatorname{id}} & T_\ell\operatorname{A}_0(\operatorname{Ab}^2_{X_{\bar K}/\bar K}) \ar[d]_{T_\ell\lambda^0}^{\simeq} \ar[r]^{Z_* } & T_\ell\operatorname{A}^2(X_{\bar K}) \ar[r]^{Z^*} \ar[d]_{T_\ell\lambda^2}^{\simeq} & T_\ell \operatorname{A}^1(\operatorname{Ab}^2_{X_{\bar K}/\bar K}) \ar[d]_{T_\ell\lambda^1}^{\simeq} \\
		& T_\ell\operatorname{Ab}^2_{X/K} \ar[r]^{Z_*} & H^3(X_{\bar K},\integ_\ell(2)) \ar[r]^{Z^*} & T_\ell {\widehat{\operatorname{Ab}}\,^2_{X/K}}
	}
	\end{equation}
	where the left triangle commutes thanks to \S \ref{SSS:alb} together with Lemma~\ref{L:ablambda} and the fact that the Bloch map $\lambda^0$ coincides with the albanese map on $\ell$-primary torsion~\cite[Prop.~3.9]{bloch79}.
\end{proof}

\begin{rem} In Theorem~\ref{T:main-pol}(2),
note that although the homomorphism  $ \operatorname{Ab}^2_{X_{\bar K}/\bar K} \to {\widehat{\operatorname{Ab}}\,^2_{X_{\bar K}/\bar K}}$ descends to $K$, the cycle $Z\in \operatorname{A}^2(\operatorname{Ab}^2_{X_{\bar K}/\bar K} \times_{\bar K} X_{\bar K})$ might not be defined over $K$.
\end{rem}

\begin{rem}
Note that the proof of Theorem~\ref{T:main-pol}(2) realizes $\operatorname{Ab}^2_{X_{\bar K}/\bar K}$ as a direct factor of  $\operatorname{Pic}^0_{C/\bar K}$ for some smooth projective curve $C$ over $\bar K$. Although $\operatorname{Pic}^0_{C/\bar K}$ is principally polarizable, this is not enough to conclude that $\operatorname{Ab}^2_{X_{\bar K}/\bar K}$ is principally polarizable and hence is isomorphic to its dual. Indeed, any abelian variety is the direct summand of a principally polarizable abelian variety\,; this can be seen for instance by Zarhin's trick which states that, for any abelian variety $A$, the abelian variety $(A\times_K \widehat A)^4$ is principally polarizable, while $A$ need not be principally polarizable.
\end{rem}

  As an application of Theorem~\ref{T:main-pol}(2), we can refine \cite[Thm.~15 \& 6.4]{ACMVBlochMap} in the case of stably rational threefolds:

\begin{cor}\label{C:MazurCor}
	 	Let $X$ be a smooth projective stably rational threefold
	over a field $K$ that is either finite or algebraically closed. Then there  exist correspondences $Z\in \operatorname{CH}^{2}(\operatorname{Ab}^2_X \times _K
	X)$ and $Z' \in \operatorname{CH}_{2}(X\times_K \operatorname{Ab}^2_X )$
	inducing for all primes $l$
	inverse isomorphisms
	$$
	\begin{CD}
	Z_*:\ T_l \operatorname{Ab}^2_X @>\cong>> H^{3}(X_{\bar K}, \mathbb Z_l(2))  \quad \text{and} \quad Z'_*:\ H^{3}(X_{\bar K}, \mathbb Z_l(2))  @>\cong>> T_l\operatorname{Ab}^2_X
	\end{CD}
	$$
	of $\operatorname{Gal}(K)$-modules.
\end{cor}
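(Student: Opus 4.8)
The plan is to deduce the corollary from Proposition~\ref{P:unicycle} and Theorem~\ref{T:motive} by a Tate-module computation that is uniform in~$l$. Since $X$ is stably rational, $\Delta_X$ admits a strict decomposition (Remark~\ref{R:StabRatDec}), and since $K$ is finite or algebraically closed $X$ carries a $0$-cycle $x$ of degree~$1$, so $\operatorname{CH}_0(X)_\integ$ is universally spanned by~$x$; Theorem~\ref{T:motive} (with $R=\integ$) then produces a smooth projective curve $C/K$ and integral correspondences $\alpha\in\operatorname{CH}^2(X\times_KC)$, $\beta\in\operatorname{CH}^2(C\times_KX)$ with $\beta\circ\alpha=\Gamma:=\Delta_X-x\times_KX-X\times_Kx$. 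By Proposition~\ref{P:unicycle} the map $\phi^2_{X_{\bar K}}$ is an isomorphism, $\Phi^2_{X/K}$ admits a universal cycle $Z\in\operatorname{A}^2(\operatorname{Ab}^2_X\times_KX)$ over~$K$, and the commuting square in its proof furnishes $K$-homomorphisms $f\colon\operatorname{Ab}^2_X\to\operatorname{Pic}^0_{C/K}$ and $g\colon\operatorname{Pic}^0_{C/K}\to\operatorname{Ab}^2_X$ with $g\circ f=\operatorname{id}$ (because $\Gamma$ acts as the identity on $\operatorname{A}^2(X_{\bar K})$). I take this $Z$ as the first correspondence, and for the second I set $Z':=\epsilon\,(\operatorname{id}_X\times(g\circ\mathrm{alb}_C))_*\alpha\in\operatorname{CH}_2(X\times_K\operatorname{Ab}^2_X)$, where $\mathrm{alb}_C\colon C\to\operatorname{Pic}^0_{C/K}$, $c\mapsto\mathcal O_C(c-c_0)$ for a chosen degree-$1$ $0$-cycle $c_0$ on~$C$, and $\epsilon\in\{\pm1\}$ is fixed below. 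As $Z$ and $Z'$ are defined over~$K$, the maps they induce on the cohomology of $X_{\bar K}$ are automatically $\operatorname{Gal}(K)$-equivariant.

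Next I would record the identities to be used, each valid for every prime~$l$, the case $l=\operatorname{char}(K)$ using the $p$-adic cohomology and Bloch maps of Gros--Suwa. The correspondences $x\times_KX$ and $X\times_Kx$ act as~$0$ on $H^i$ for $0<i<2\dim X$ and on $\operatorname{A}^2$, so $\Gamma$ acts as the identity on $H^3(X_{\bar K},\integ_l(2))$ and on $\operatorname{A}^2(X_{\bar K})$; in particular $\alpha_*$ and $\beta_*$ exhibit $H^3(X_{\bar K},\integ_l(2))$ as a direct summand of $H^1(C_{\bar K},\integ_l(1))=T_l\operatorname{Pic}^0_{C}$, which is torsion-free, so the target in the statement is unambiguous. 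As in the proof of Theorem~\ref{T:main-pol} (diagram~\eqref{E:tZcircZ}), valid for $l=\operatorname{char}(K)$ as well via the $p$-adic Bloch map, the action of $Z$ on $T_l\operatorname{Ab}^2_X$ is $Z_*=T_l\lambda^2_X\circ T_lw_Z$ with $T_lw_Z=(T_l\phi^2_{X/K})^{-1}$; the square from the proof of Proposition~\ref{P:unicycle}, together with the identification of $AJ_C$ with the identity on $\operatorname{A}^1(C_{\bar K})=\operatorname{Pic}^0_{C}(\bar K)$, gives $T_l\alpha^{\mathrm{CH}}_*=T_lf\circ T_l\phi^2_{X/K}$ and $T_l\beta^{\mathrm{CH}}_*=(T_l\phi^2_{X/K})^{-1}\circ T_lg$; the albanese correspondence of~$C$ satisfies $(\mathrm{alb}_C)_*=\epsilon_0(T_l\lambda^1_C)^{-1}$ on $H^1(C_{\bar K},\integ_l(1))$ for a sign $\epsilon_0\in\{\pm1\}$ independent of~$l$; and Bloch maps commute with the actions of correspondences (\cite{ACMVBlochMap}, \cite{grossuwaAJ}).

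Finally I would run the chase. Substituting the above identities in turn — Bloch--correspondence compatibility to replace $\alpha^{\mathrm{coh}}_*\circ T_l\lambda^2_X$ by $T_l\lambda^1_C\circ T_l\alpha^{\mathrm{CH}}_*$, then $T_l\alpha^{\mathrm{CH}}_*=T_lf\circ T_l\phi^2_{X/K}$, then $T_l\phi^2_{X/K}\circ T_lw_Z=\operatorname{id}$ — collapses $Z'_*\circ Z_*$ to $\epsilon\epsilon_0\,T_l(g\circ f)=\epsilon\epsilon_0\operatorname{id}$ on $T_l\operatorname{Ab}^2_X$; symmetrically, using $T_lw_Z\circ T_lg=T_l\beta^{\mathrm{CH}}_*$ and Bloch--correspondence compatibility for $\beta$, $Z_*\circ Z'_*$ collapses to $\epsilon\epsilon_0\,\beta^{\mathrm{coh}}_*\circ\alpha^{\mathrm{coh}}_*=\epsilon\epsilon_0\,\Gamma_*=\epsilon\epsilon_0\operatorname{id}$ on $H^3(X_{\bar K},\integ_l(2))$. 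Taking $\epsilon=\epsilon_0$ then makes $Z_*$ and $Z'_*$ mutually inverse isomorphisms of $\operatorname{Gal}(K)$-modules, for every prime~$l$. I expect the genuine obstacle to be the prime $l=\operatorname{char}(K)$: the conclusion forces $T_p\lambda^2_X$ to be an isomorphism onto $H^3(X_{\bar K},\integ_p(2))$, which is only conjectural in general (a case of the Gros--Suwa conjecture \cite{grossuwaAJ}). Routing through~$C$ is precisely what circumvents this, since it realizes $\operatorname{Ab}^2_X$ and $H^3(X_{\bar K},\integ_p(2))$ as compatible direct summands of $T_p\operatorname{Pic}^0_{C}$ and $H^1(C_{\bar K},\integ_p(1))$, for which the $p$-adic Bloch map is classical; one only needs in addition the $p$-adic forms of the compatibilities used in the proof of Theorem~\ref{T:main-pol} — that $\lambda^0$ agrees with the albanese on $p$-primary torsion, that Bloch maps commute with correspondences, and that $\Gamma$ acts as the identity in $p$-adic cohomology — all of which are available from \cite{grossuwaAJ}.
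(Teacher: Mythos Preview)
Your argument is correct and takes a genuinely different route from the paper's.  The paper's proof is much shorter: it takes the universal cycle $Z$ from Proposition~\ref{P:unicycle}, invokes \cite[Thm.~\ref{ACMVBlochMap:T:coniFil}]{ACMVBlochMap} (together with resolution of singularities for surfaces) to conclude directly that $Z_*:T_l\operatorname{Ab}^2_X\to H^3(X_{\bar K},\integ_l(2))$ is an isomorphism for every prime~$l$, and then defines the inverse correspondence as $Z'_*:=\Theta^{-1}\circ Z^*$, where $\Theta:\operatorname{Ab}^2_X\to\widehat{\operatorname{Ab}}\,^2_X$ is the canonical $K$-isomorphism supplied by Theorem~\ref{T:main-pol}(2).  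In other words, the paper leans on the auto-duality theorem and an external reference, whereas you bypass both by going back to the motivic factorization through a curve (Theorem~\ref{T:motive} with $R=\integ$) and building $Z'$ by hand via the albanese of~$C$ and the splitting $g\circ f=\operatorname{id}$.  Your approach is more self-contained --- in particular it makes transparent why the case $l=p$ goes through, since you exhibit both sides as compatible direct summands of $T_p\operatorname{Pic}^0_C$ and $H^1(C_{\bar K},\integ_p(1))$ where the $p$-adic Bloch map is classical --- at the cost of a longer diagram chase and an explicit bookkeeping of the sign~$\epsilon_0$.  The paper's approach, by contrast, packages the hard work into Theorem~\ref{T:main-pol} and the cited result from \cite{ACMVBlochMap}, and has the side benefit of tying $Z'$ directly to the canonical auto-duality~$\Theta$.
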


\begin{proof} By Proposition~\ref{P:unicycle}, let $Z \in \operatorname{CH}^{2}(\operatorname{Ab}^2_X \times _K
	X)$ be a universal codimension-2 cycle for $X$.
	By \cite[Thm.~6.4]{ACMVBlochMap}
	 	and the fact that resolution of singularities holds for surfaces over perfect fields, $Z$ induces for all primes $l$ an isomorphism $Z_*:\ T_l \operatorname{Ab}^2_X \stackrel{\simeq}{\longrightarrow} H^{3}(X_{\bar K}, \mathbb Z_l(2))$. 
	 	With $\Theta : \operatorname{Ab}^2_X \to {\widehat{\operatorname{Ab}}\,^2_X}$ the canonical $K$-isomorphism induced by $Z^*\circ Z_*$ provided by Theorem~\ref{T:main-pol}, we get that $Z'_* := \Theta^{-1}\circ Z^* $ provides the inverse to $Z_*$.
\end{proof}

\section{Specialization and polarization on the algebraic representative}

       Given an abelian variety $A$ over a field $K$, recall that a symmetric isogeny $\Theta : A\to \widehat A$ is called a \emph{polarization} if there exists an ample symmetric line-bundle $L$ on $A_{\bar K}$ such that  $\Theta_{\bar{K}} : A_{\bar K} \to \widehat{A}_{\bar K}$ is given by $a \mapsto t_a^*L\otimes L^{-1}$, where $t_a : A_{\bar K} \to A_{\bar K}$ is the translation-by-$a$ morphism\,; see also~\S \ref{S:ThetaLB}.
 In characteristic zero, by Hodge theory, the isomorphism $\Theta_X=-\Lambda_X$ in Theorem \ref{T:main-pol} agrees with the canonical principal polarization induced by the cup-product on $H^3(X_{\cx},\integ)$ (Theorem \ref{T:CanPol3-fold} and Remark~\ref{R:T:main-polHdg}).
Although there are abelian varieties $A$ that are isomorphic to their dual but are not principally polarizable (we thank Bas Edixhoven for explaining an example to us), we are led to make the following conjecture\,:

\begin{con}[Canonical polarization]\label{conjecture}
	Let  $X$ be a smooth projective threefold over a perfect field $K$ and let $\Omega/K$ be an algebraically closed field extension. Assume that $\Delta_{X_{\Omega}}$ admits a strict decomposition (\emph{i.e.}, $X_\Omega$ is universally $\operatorname{CH}_0$-trivial).
	Then the canonical symmetric $K$-isomorphism 
	$\Theta_X: \operatorname{Ab}^2_{X/K} \to {\widehat{\operatorname{Ab}}\,^2_{X/K}}$, where $\Theta_X=-\Lambda_X$ (Theorem \ref{T:main-pol}) 	is a principal polarization.
\end{con}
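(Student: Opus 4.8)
The plan is to take as given the statement of Theorem~\ref{T:main-pol}(2) --- that $\Theta_X=-\Lambda_X\colon\operatorname{Ab}^2_{X/K}\to{\widehat{\operatorname{Ab}}\,^2_{X/K}}$ is a symmetric $K$-\emph{isomorphism}, independent of auxiliary choices --- and to upgrade ``isomorphism'' to ``principal polarization''. The first step is a reduction: a symmetric homomorphism to the dual is a polarization precisely when its base change to the algebraic closure is (this is the definition recalled just before the conjecture), and $(\Theta_X)_{\bar K}=\Theta_{X_{\bar K}}$ by the descent in Theorem~\ref{T:main-pol}(2); moreover a polarization which is an isomorphism is automatically principal. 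So it suffices to show that over $\bar K$ the symmetric isomorphism $\Theta_{X_{\bar K}}$ is a polarization, i.e.\ is of the form $a\mapsto t_a^*L\otimes L^{-1}$ for an ample symmetric line bundle $L$. Next I would record, via diagram~\eqref{E:tZcircZ} in the proof of Theorem~\ref{T:main-pol}(2), that for each $\ell\neq\operatorname{char}(K)$ the map $T_\ell\Theta_{X_{\bar K}}$ is identified, through the Bloch isomorphism $T_\ell\lambda^2$ of Proposition~\ref{P:lambdacoho}, with (the negative of) the cup-product pairing $H^3(X_{\bar K},\integ_\ell(2))\times H^3(X_{\bar K},\integ_\ell(2))\to\integ_\ell(1)$. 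Since $X_{\bar K}$ is universally $\operatorname{CH}_0$-trivial we have $\operatorname{Tors}H^\bullet(X_{\bar K},\integ_\ell)=0$ (condition~(4) of Theorem~\ref{T:Intro-M-StabInv}), so this cup-product pairing is a perfect alternating pairing of $\integ_\ell$-modules by Poincar\'e duality; this re-proves that $\Theta_{X_{\bar K}}$ is an isomorphism but gives no information about signs. The content of the conjecture is therefore precisely the \emph{positivity} of this ``intersection isogeny''.

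The main approach I would pursue for positivity is motivic. Applying Theorem~\ref{T:motive} over $\bar K$ with $R=\integ[1/N]$ produces a smooth projective curve $C/\bar K$ and an idempotent $p\in\operatorname{CH}^1(C\times_{\bar K}C)_{\integ[1/N]}$ with $(X_{\bar K},\Gamma)\simeq(C,p,-1)$, so that $\operatorname{Ab}^2_{X_{\bar K}/\bar K}$ is, up to isogeny, the $p$-factor of $\operatorname{Pic}^0_{C/\bar K}$. I would then try to arrange the idempotent $p$ to be symmetric for the Rosati involution attached to the canonical principal polarization $\theta_C$ of $\operatorname{Pic}^0_{C/\bar K}$ --- this cannot be done by the naive averaging $\tfrac12(p+p^\dagger)$ since $2$ need not be invertible in $\integ[1/N]$, so one must choose the curve $C$ and the correspondences $\alpha,\beta$ of Theorem~\ref{T:motive} compatibly with duality. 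Granting a Rosati-symmetric $p$, the polarization $\theta_C$ restricts to a genuine polarization on the $p$-factor, and diagram~\eqref{E:tZcircZ} identifies this restriction with $\Theta_{X_{\bar K}}$ up to sign, yielding positivity. The delicate point, even along this route, is \emph{principality}: a Rosati-symmetric isogeny factor of a principally polarized abelian variety need not be principally polarized, so one must additionally show that the integral cup-product lattice $H^3(X_{\bar K},\integ_\ell)_\tau$ --- unimodular by the first paragraph --- splits off $\operatorname{Ab}^2_{X_{\bar K}/\bar K}$ as a unimodular direct summand, equivalently that the \'etale (or crystalline) realizations of $p$ and $1-p$ are orthogonal \emph{integrally}, not merely rationally.

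A complementary, and strictly weaker, approach is by lifting and specialization, which at least settles many cases. Suppose $X$ has a smooth projective model $\mathcal X$ over a complete mixed-characteristic discrete valuation ring whose geometric generic fibre $\mathcal X_{\bar\eta}$ is again universally $\operatorname{CH}_0$-trivial. Then $\Theta_{\mathcal X_{\bar\eta}}$ is a principal polarization by Hodge theory (Theorem~\ref{T:CanPol3-fold}); if the algebraic representatives of $\mathcal X$ moreover assemble into an abelian scheme over the valuation ring --- i.e.\ if $\operatorname{Ab}^2$ ``specializes'' --- then $\Theta$ extends to a symmetric isogeny of abelian schemes specializing to $\Theta_{X_{\bar K}}$, and both ampleness of a symmetric line bundle and the degree of a symmetric isogeny are preserved under such specialization, so $\Theta_{X_{\bar K}}$ is again a principal polarization. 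This is the mechanism behind Corollary~\ref{C:ThetaSpecMini}, and is the route I would take for liftable $X$, in particular for the standard resolutions of the nodal quartic double solids of Theorem~\ref{T:Intro-QDS-1}.

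The step I expect to be the genuine obstacle is positivity of $\Theta_{X_{\bar K}}$ for a \emph{general} universally $\operatorname{CH}_0$-trivial threefold in characteristic $p$. The lifting approach breaks down because not every such threefold lifts, and even when it does the algebraic representative is not known to specialize, so the putative principal polarization can degenerate in the limit. The motivic approach reduces everything to an idempotent on a curve's Jacobian, but the $\ell$-adic description~\eqref{E:tZcircZ} only ever sees the alternating bilinear form, never its sign, so positivity cannot be read off $\ell$-adically; it would have to come either from an honest geometric theta divisor on $\operatorname{Ab}^2_{X/K}$ (none is available in this generality, in contrast with the Prym and rationality situations), or from an intrinsic ``Hodge-index''-type positivity for the cup-product form on $H^3$ of such a threefold, refined so as to be compatible with the integral lattice defining $\operatorname{Ab}^2_{X/K}$ --- a statement for which, in positive characteristic, I am not aware of any available input.
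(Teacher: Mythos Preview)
The statement you are attempting to prove is labeled as a \emph{Conjecture} in the paper, and the paper does not prove it. Immediately before stating it, the authors remark that there exist abelian varieties isomorphic to their duals which are not principally polarizable, and then write ``we are led to make the following conjecture''; the subsequent sections establish only partial results (Proposition~\ref{P:PolChar0}, Corollary~\ref{C:ThetaSpecMini}) via the lifting-and-specialization mechanism you describe in your third paragraph. So there is no proof in the paper to compare your proposal against.

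That said, your analysis of the difficulty is accurate and aligns with the paper's own discussion. You correctly isolate positivity as the genuine content of the conjecture, correctly observe that the $\ell$-adic description of $\Theta_X$ via \eqref{E:tZcircZ} sees only the alternating form and not its sign, and correctly identify the two natural approaches (motivic via Theorem~\ref{T:motive}, and lifting to characteristic zero). Your honest final paragraph --- that neither approach closes the gap in general --- is exactly why this remains a conjecture. One minor correction: in your motivic paragraph you worry separately about principality, but principality is already secured by Theorem~\ref{T:main-pol}(2), which gives that $\Theta_X$ is an \emph{isomorphism}; so if one could establish that $\Theta_X$ is a polarization, principality would be automatic. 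The entire open content is positivity.
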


With this as motivation, we now establish some results regarding specialization and Chow decomposition. 
For that purpose, let $\mathcal X \to S$ be a smooth projective morphism, where $S$ is the spectrum of a DVR with generic point $\eta$ and closed point $s$. We denote  $\bar \eta$ and $\bar s$ algebraic closures of $\eta$ and $s$, respectively, and we denote $\mathcal X_\eta$ and $\mathcal X_s$ the generic fiber and the closed fiber of $\mathcal X \to S$, respectively. \medskip

We start with the following basic result about polarizations\,:

\begin{lem}[Polarizations and specializations for algebraic representatives] \label{L:PolSpec}
  If $\Theta_\eta:\operatorname{Ab}^2_{\mathcal X_\eta/\eta}\to \widehat{\operatorname{Ab}}\,^2_{\mathcal X_\eta/\eta}$ is a degree-$d$ isogeny, then $\operatorname{Ab}^2_{\mathcal X_{ \eta}/\eta}$ and $\Theta_{\mathcal X_\eta}$ extend to a degree-$d$ isogeny $\Theta_{{\mathcal X}}:\operatorname{Ab}^2_{{\mathcal X}/S}\to \widehat{\operatorname{Ab}}\,^2_{{\mathcal X}/S}$ of abelian $S$-schemes, and the following are equivalent: $\Theta_{{\mathcal X}}$ is a polarization,   $\Theta_{{\mathcal X}_\eta}$ is a polarization, $\Theta_{{\mathcal X}}|_s$ is a polarization.  
 \end{lem}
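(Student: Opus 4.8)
The plan is to prove the lemma in three stages: (i) extend $\operatorname{Ab}^2_{\mathcal X_\eta/\eta}$ to an abelian scheme $\operatorname{Ab}^2_{\mathcal X/S}$ over $S$; (ii) extend $\Theta_{\mathcal X_\eta}$ to a degree-$d$ isogeny of abelian $S$-schemes by the Néron mapping property and rigidity; and (iii) deduce the equivalence of the three polarization conditions from the fact that, over the trait $S$, ampleness of a line bundle on an abelian scheme is detected on a single fibre. Throughout, the hypothesis that $\mathcal X\to S$ is \emph{smooth projective} is used crucially, in (i), to get unramifiedness of the relevant $\ell$-adic cohomology.

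For stage (i), the point is that $\operatorname{Ab}^2_{\mathcal X_\eta/\eta}$ has good reduction along $S$. Fix a miniversal cycle $Z\in \mathscr A^2_{\mathcal X_\eta/\eta}(\operatorname{Ab}^2_{\mathcal X_\eta/\eta})$ of some degree $r$, which exists because an algebraic representative is a surjective regular homomorphism (\S\ref{SS:mini-universalCycle}), and normalize it so that $Z_0=0$. For any prime $\ell$ invertible in $\kappa(s)$ and not dividing $r$, Lemma~\ref{L:ablambda} shows that $w_Z$ and $\phi^2_{\mathcal X_{\bar\eta}}$ are homomorphisms on $\ell$-primary torsion; since $\phi^2_{\mathcal X_{\bar\eta}}\circ w_Z$ is multiplication by $r$, which is invertible on $\ell$-primary torsion, $w_Z$ exhibits $\operatorname{Ab}^2_{\mathcal X_\eta/\eta}(\bar\eta)[\ell^\infty]$ as a $\operatorname{Gal}(\bar\eta/\eta)$-equivariant direct summand of $\operatorname{A}^2(\mathcal X_{\bar\eta})[\ell^\infty]$. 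The $\ell$-adic Bloch map $T_\ell\lambda^2$ embeds the latter $\operatorname{Gal}(\bar\eta/\eta)$-equivariantly into $H^3(\mathcal X_{\bar\eta},\mathbb Q_\ell/\mathbb Z_\ell(2))$ (\cite{bloch79}; see also \cite{ACMVBlochMap}), and by smooth and proper base change the inertia at $s$ acts trivially on $H^\bullet(\mathcal X_{\bar\eta},\mathbb Z_\ell)$. Hence $T_\ell\operatorname{Ab}^2_{\mathcal X_\eta/\eta}$ is unramified for infinitely many $\ell$, and Grothendieck's form of the Néron--Ogg--Shafarevich criterion gives good reduction. We then set $\operatorname{Ab}^2_{\mathcal X/S}$ to be the Néron model of $\operatorname{Ab}^2_{\mathcal X_\eta/\eta}$, an abelian $S$-scheme (which agrees with the relative algebraic representative wherever the latter is known to exist), and $\widehat{\operatorname{Ab}}\,^2_{\mathcal X/S}$ to be its dual, whose generic fibre is $\widehat{\operatorname{Ab}}\,^2_{\mathcal X_\eta/\eta}$ since forming duals commutes with base change. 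For stage (ii): $\Theta_{\mathcal X_\eta}$ extends uniquely to an $S$-morphism $\Theta_{\mathcal X}\colon \operatorname{Ab}^2_{\mathcal X/S}\to\widehat{\operatorname{Ab}}\,^2_{\mathcal X/S}$ by the Néron mapping property, it is a homomorphism of group schemes by rigidity, and it is symmetric because it is so on the schematically dense generic fibre. Letting $\mathcal K$ be the scheme-theoretic closure in $\operatorname{Ab}^2_{\mathcal X/S}$ of $\ker(\Theta_{\mathcal X_\eta})$ — a finite flat subgroup $S$-scheme of rank $d$, being the closure over a DVR of a finite subgroup scheme of the generic fibre — one has $\mathcal K\subseteq\ker(\Theta_{\mathcal X})$, and $\Theta_{\mathcal X}$ factors through $\operatorname{Ab}^2_{\mathcal X/S}/\mathcal K$ via a homomorphism that is an isomorphism on generic fibres, hence an isomorphism by rigidity; thus $\ker(\Theta_{\mathcal X})=\mathcal K$ and $\Theta_{\mathcal X}$ is a degree-$d$ isogeny.

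For stage (iii), first replace $S$ by its integral closure $S'$ in a suitable finite extension of $\eta$; this changes neither the hypothesis nor the three conclusions, since base change to $S'$ leaves geometric fibres unchanged and ampleness is insensitive to field extensions. Over $S'$ we may assume $\Theta_{\mathcal X}=\phi_{\mathcal L}$ for a line bundle $\mathcal L$ on $\operatorname{Ab}^2_{\mathcal X/S}$: over $\bar\eta$ the symmetric homomorphism $\Theta_{\mathcal X,\bar\eta}$ is induced by a line bundle (the Brauer obstruction vanishes over an algebraically closed field), hence over a finite extension by a line bundle $L$ on the generic fibre, which extends to a line bundle $\mathcal L$ on the regular scheme $\operatorname{Ab}^2_{\mathcal X/S}$ by the localization sequence for Chow groups (the special fibre being a prime divisor), with $\phi_{\mathcal L}=\Theta_{\mathcal X}$ by rigidity. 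Since $\phi_{\mathcal L}$ is a fibrewise isogeny, $\mathcal L$ is non-degenerate on each fibre, so on each fibre its only non-vanishing cohomology sits in degree equal to its index, which is $0$ exactly when the restriction is ample. The index is constant on $S$: ampleness on fibres is open, so ampleness on the closed fibre forces ampleness on the generic one (the only open subset containing the closed point of $\operatorname{Spec}$ of a DVR is everything); conversely, $\chi$ is constant of absolute value $\sqrt d$ and $h^0$ is upper semicontinuous, so ampleness on the generic fibre ($h^0=\sqrt d>0$) forces $h^0>0$, hence index $0$, on the closed fibre. Therefore $\mathcal L$ is ample on one fibre iff on every fibre iff $S$-relatively ample iff $\Theta_{\mathcal X}$ is a polarization; reading this off at the two fibres gives the three stated equivalences. (Alternatively this last stage may be cited from the standard theory of polarizations of abelian schemes over a connected base, e.g. Mumford's \emph{GIT}, Faltings--Chai, or Moret-Bailly's \emph{Pinceaux de variétés abéliennes}.)

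The main obstacle is stage (i): verifying good reduction, so that $\operatorname{Ab}^2_{\mathcal X/S}$ makes sense as an abelian scheme at all. This is delicate in positive characteristic precisely because $T_\ell\Phi^2_{\mathcal X_\eta/\eta}$ is not known to be injective there, so one cannot directly identify $T_\ell\operatorname{Ab}^2_{\mathcal X_\eta/\eta}$ with a submodule of $H^3(\mathcal X_{\bar\eta},\mathbb Z_\ell(2))$; the detour through a miniversal cycle (Lemma~\ref{L:ablambda}) and the Bloch map on $\ell$-primary torsion is what makes the unramifiedness argument go through. Stages (ii) and (iii), by contrast, are formal consequences of standard properties of abelian schemes over a DVR.
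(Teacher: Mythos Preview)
Your proof is correct and follows essentially the same three-stage architecture as the paper's: (i) good reduction via N\'eron--Ogg--Shafarevich, (ii) extension of $\Theta$ by the N\'eron mapping property, (iii) equivalence of ampleness on fibres. For (i) the paper simply cites \cite[Thm.~\ref{ACMVfunctor:T:dvr}]{ACMVfunctor} (``essentially follows directly from the Ogg--N\'eron--Shafarevich criterion, and the fact that the second Bloch map is injective''), which is exactly the argument you spell out with the miniversal cycle splitting $\operatorname{Ab}^2[\ell^\infty]$ off from $\operatorname{A}^2[\ell^\infty]$; you correctly identify this as the substantive step.

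The only genuine methodological difference is in stage (iii). The paper avoids your finite base change by invoking \cite[p.~121]{mumfordGIT}, which says that $2\Theta_{\mathcal X}$ is induced by a line bundle $\mathcal L$ on $\operatorname{Ab}^2_{\mathcal X/S}$ already over $S$; it then handles the direction ``ample on $\eta$ $\Rightarrow$ ample on $s$'' by taking the closure of an effective divisor representing $\mathcal L_\eta$ and using that for abelian varieties nondegenerate plus effective implies ample. Your route---base change to realize $\Theta$ itself as $\phi_{\mathcal L}$, then semicontinuity of $h^0$ combined with constancy of $\chi$---is equally valid and arguably more in the spirit of cohomology-and-base-change. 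Both arguments are short and standard; neither buys anything the other doesn't.
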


\begin{proof}
The fact that $\operatorname{Ab}^2_{{\mathcal X}_{ \eta}/\eta}$ extends to an abelian scheme $\operatorname{Ab}^2_{{\mathcal X}/S}$  is  \cite[Thm.~8.3]{ACMVfunctor}, and essentially follows directly from the Ogg--N\'eron--Shafarevich criterion, and the fact that the second Bloch map is injective (\cite[Thm.~8.3]{ACMVfunctor} 
makes the stronger assertion that this extension is the relative algebraic representative for $\mathcal X/S$, which we do not use here).   The fact that $\Theta_{{\mathcal X}_\eta}$ then extends to an isogeny  (resp.~isomorphism) $\Theta_{{\mathcal X}}$ over $S$ is standard (see \emph{e.g.}, \cite[Prop.~4.5]{ACMVfunctor} and \cite[Prop.~6.1]{mumfordGIT}).  Regarding polarizations, recall that  by \cite[p.121]{mumfordGIT}, we have that $2\Theta_{{\mathcal X}}$ is induced by a line bundle $\mathcal L$ on $\operatorname{Ab}^2_{{\mathcal X}/S}$.  Thus, for questions of polarizations,  if suffices to establish the ampleness of the fibers of $\mathcal L$.  
On the one hand, since relative ampleness is an open property over the base, we have $\mathcal L$ is ample  if and only if $\mathcal L|_s$ is ample.  On the other hand, assuming $\mathcal L_\eta$ is ample, it suffices to show that $\mathcal L|_s$ is ample.  In general, relative ampleness is not a closed condition, however, for abelian varieties, a nondegenerate line bundle is ample if and only if it is effective \cite{mumfordAV}.  Therefore, taking $D_\eta$ to be an effective divisor realizing the line bundle $\mathcal L_\eta$, we may take the closure of $D_\eta$ to obtain an effective divisor $D$ over $S$ realizing $\mathcal L$.  
Thus $\mathcal L_s$ is effective and nondegenerate, and therefore  ample.
\end{proof}

\begin{pro}\label{P:PolChar0}
 Assume both the generic point $\eta$ and the closed point $s$ of $S$ have perfect residue fields and assume that the diagonal $\Delta_{{\mathcal X}_{\bar {\eta}}}$ has a strict decomposition (\emph{e.g.}, $\mathcal X_\eta$ is geometrically stably rational).  
(By specialization, $\Delta_{{\mathcal X}_{\bar {s}}}$ also has a strict decomposition  \cite[Thm.~2.1]{voisinUniv}, \cite[Thm.~1.12]{CTP16}.) Denote by  $\Theta_{{\mathcal X}_s} : \operatorname{Ab}^2_{{\mathcal X}_{ s}/s} \to \widehat{\operatorname{Ab}}\,^2_{{\mathcal X}_{ s}/s}$ and $\Theta_{{\mathcal X}_\eta} : \operatorname{Ab}^2_{{\mathcal X}_{\eta}/\eta} \to \widehat{\operatorname{Ab}}\,^2_{{\mathcal X}_{\eta}/\eta}$ the negatives of the canonical isomorphisms provided by Theorem~\ref{T:main-pol}(2).

Then $\operatorname{Ab}^2_{{\mathcal X}_{ \eta}/\eta}$ and $\Theta_{{\mathcal X}_\eta}$ extend to an isomorphism $\Theta_{{\mathcal X}}:\operatorname{Ab}^2_{{\mathcal X}/S}\to \widehat{\operatorname{Ab}}\,^2_{{\mathcal X}/S}$ of abelian $S$-schemes, and there is a canonical isomorphism
\begin{equation}\label{E:PolChar0}
 (\operatorname{Ab}^2_{{\mathcal X}_{ s}/s},\Theta_{{\mathcal X}_s}) \cong (	\operatorname{Ab}^2_{{\mathcal X}/S}|_s,\Theta_{{\mathcal X}}|_s).
\end{equation}
In particular,  $\Theta_{{\mathcal X}_\eta}$ is a polarization (and therefore a principal polarization) if and only if $\Theta_{{\mathcal X}_s}$ is.
           \end{pro}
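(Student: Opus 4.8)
The plan is to deduce the statement from Lemma~\ref{L:PolSpec}, together with the fact that the canonical auto-duality of Theorem~\ref{T:main-pol}(2) is compatible with the specialization of the relative algebraic representative. First I would produce the $S$-model. Since $\Delta_{\mathcal X_{\bar\eta}}$ admits a strict decomposition, Theorem~\ref{T:main-pol}(2) applies to $\mathcal X_\eta$ and shows that $\Theta_{\mathcal X_\eta}$ is a symmetric $\eta$-isomorphism, in particular a degree-$1$ isogeny. Lemma~\ref{L:PolSpec} (with $d=1$) then gives that $\operatorname{Ab}^2_{\mathcal X_\eta/\eta}$ extends to an abelian $S$-scheme $\operatorname{Ab}^2_{\mathcal X/S}$ and that $\Theta_{\mathcal X_\eta}$ extends to a degree-$1$ isogeny — hence an isomorphism — $\Theta_{\mathcal X}\colon\operatorname{Ab}^2_{\mathcal X/S}\to\widehat{\operatorname{Ab}}\,^2_{\mathcal X/S}$. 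Moreover, by \cite[Thm.~\ref{ACMVfunctor:T:dvr}]{ACMVfunctor} this extension is the \emph{relative} algebraic representative of $\mathcal X/S$, so there is a canonical identification $\operatorname{Ab}^2_{\mathcal X/S}|_s\cong\operatorname{Ab}^2_{\mathcal X_s/s}$. By specialization of strict decompositions (\cite[Thm.~2.1]{voisinUniv}, \cite[Thm.~1.12]{CTP16}), $\Delta_{\mathcal X_{\bar s}}$ also admits a strict decomposition, so Theorem~\ref{T:main-pol}(2) applies to $\mathcal X_s$ as well, and $\Theta_{\mathcal X_s}$ is a symmetric $s$-isomorphism.

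Granting the key point that this canonical identification carries $\Theta_{\mathcal X}|_s$ to $\Theta_{\mathcal X_s}$, both \eqref{E:PolChar0} and the last sentence follow: by Lemma~\ref{L:PolSpec}, $\Theta_{\mathcal X_\eta}$ is a polarization iff $\Theta_{\mathcal X}|_s$ is a polarization iff $\Theta_{\mathcal X_s}$ is, and since an isomorphism which is a polarization is by definition a principal polarization, the parenthetical conclusion drops out. So the whole proof reduces to comparing $\Theta_{\mathcal X}|_s$ and $\Theta_{\mathcal X_s}$, both symmetric isomorphisms $\operatorname{Ab}^2_{\mathcal X_s/s}\to\widehat{\operatorname{Ab}}\,^2_{\mathcal X_s/s}$. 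For this it is enough, by injectivity of $\operatorname{Hom}(A,B)\hookrightarrow\operatorname{Hom}_{\mathbb{Z}_\ell}(T_\ell A,T_\ell B)$, to compare their $\ell$-adic Tate modules over $\bar s$ for one prime $\ell$ invertible in the residue field of $s$.

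To carry out this comparison I would use the explicit description of the auto-duality coming from diagram \eqref{E:tZcircZ} in the proof of Theorem~\ref{T:main-pol}(2): for a threefold $Y$ over an algebraically closed field whose diagonal admits a strict decomposition, $T_\ell\Theta_Y$ is, up to the canonical identifications, determined solely by $T_\ell\Phi^2_{Y}$, by the Bloch-map isomorphism $T_\ell\lambda^2\colon T_\ell\operatorname{A}^2(Y)\xrightarrow{\sim}H^3(Y,\mathbb{Z}_\ell(2))$, and by the cup-product pairing on $H^3(Y,\mathbb{Z}_\ell(2))$. Now the relative algebraic representative of \cite[Thm.~\ref{ACMVfunctor:T:dvr}]{ACMVfunctor} is built precisely so that, through the relative Bloch/cycle class map, $T_\ell\operatorname{Ab}^2_{\mathcal X/S}$ is the lisse $\mathbb{Z}_\ell$-sheaf on $S$ identifying $T_\ell\operatorname{Ab}^2_{\mathcal X_\eta/\eta}$ with $H^3(\mathcal X_{\bar\eta},\mathbb{Z}_\ell(2))$ and $T_\ell\operatorname{Ab}^2_{\mathcal X_s/s}$ with $H^3(\mathcal X_{\bar s},\mathbb{Z}_\ell(2))$ compatibly with the smooth proper base change isomorphism $H^3(\mathcal X_{\bar s},\mathbb{Z}_\ell)\xrightarrow{\sim}H^3(\mathcal X_{\bar\eta},\mathbb{Z}_\ell)$, which in turn respects cup products. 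Feeding this compatibility into the description above, the map attached to $\mathcal X_s$ is exactly the specialization of the one attached to $\mathcal X_\eta$; since the latter is the Tate module of $\Theta_{\mathcal X_\eta}=\Theta_{\mathcal X}|_\eta$, its specialization is $T_\ell(\Theta_{\mathcal X}|_s)$, and one concludes $T_\ell\Theta_{\mathcal X_s}=T_\ell(\Theta_{\mathcal X}|_s)$, hence $\Theta_{\mathcal X_s}=\Theta_{\mathcal X}|_s$.

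The main obstacle is exactly this middle step: algebraic representatives are not stable under specialization in general, so the coincidence of $\Theta_{\mathcal X}|_s$ — produced from the generic fibre's data — with the intrinsic $\Theta_{\mathcal X_s}$ is not formal. What makes it work is that \cite[Thm.~\ref{ACMVfunctor:T:dvr}]{ACMVfunctor} applies here thanks to the Ogg--N\'eron--Shafarevich-type criterion (injectivity of the second Bloch map) invoked in the proof of Lemma~\ref{L:PolSpec}, and that the auto-duality of Theorem~\ref{T:main-pol}(2) is \emph{cohomological}, being assembled from the Bloch map and the intersection pairing, both compatible with specialization by smooth proper base change. One also has to keep careful track of the Tate twists and of the sign convention $\Theta=-\Lambda$ when transporting the isomorphism across the two fibres.
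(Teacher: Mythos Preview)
Your overall plan is right---reduce via Lemma~\ref{L:PolSpec} to showing \eqref{E:PolChar0}---but there is a genuine gap in the step where you invoke \cite[Thm.~\ref{ACMVfunctor:T:dvr}]{ACMVfunctor} to conclude ``there is a canonical identification $\operatorname{Ab}^2_{\mathcal X/S}|_s\cong\operatorname{Ab}^2_{\mathcal X_s/s}$.'' That theorem asserts the extension is the \emph{relative} algebraic representative for $\mathcal X/S$, i.e., initial for regular homomorphisms parameterized by smooth $S$-schemes. It says nothing about the special fiber being the algebraic representative of $\mathcal X_s$; indeed the paper states explicitly (proof of Lemma~\ref{L:PolSpec}) that this stronger assertion is \emph{not} used there, and emphasizes elsewhere that algebraic representatives are not known to be stable under specialization. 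Your subsequent Tate-module argument only shows that both abelian varieties have $T_\ell$ isomorphic to $H^3(\mathcal X_{\bar s},\mathbb Z_\ell(2))$, but having isomorphic Tate modules does not produce a morphism of abelian varieties, let alone show it is an isomorphism. To compare $\Theta_{\mathcal X}|_s$ with $\Theta_{\mathcal X_s}$ you need an actual $K$-morphism between the underlying abelian varieties first.

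The paper fills this gap using Proposition~\ref{P:unicycle}: one takes a universal cycle $Z$ over $\bar\eta$ and a cycle $\widehat Z$ \emph{inducing} $\Phi^2_{\mathcal X_{\bar\eta}/\bar\eta}$, then specializes both. The specialization $Z_{\bar s}$ defines a genuine homomorphism $\psi_{Z_{\bar s}}:\operatorname{Ab}^2_{\mathcal X/S}|_{\bar s}\to\operatorname{Ab}^2_{\mathcal X_{\bar s}/\bar s}$, and---this is the key point---because $\phi^2_{\bar\eta}=\widehat Z^*$ is induced by a cycle, its specialization $\widehat Z_{\bar s}^*$ is again a regular homomorphism, from which one builds the inverse $f$ via the universal property of $\operatorname{Ab}^2_{\mathcal X_s}$. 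A dimension count (both sides embed in $H^3$) then shows $f$ is an isomorphism. The compatibility with the $\Theta$'s is then checked by a further diagram chase, again using that everything is induced by cycles. The ingredient you are missing is precisely that $\Phi^2$ is induced by a cycle under the strict-decomposition hypothesis; without it there is no way to specialize the regular homomorphism and hence no way to produce the morphism of abelian varieties you need.
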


\begin{proof} 
From Lemma \ref{L:PolSpec}, it suffices to establish the canonical isomorphism 
\eqref{E:PolChar0}.

By  Proposition~\ref{P:unicycle}, $\phi^2_{{\mathcal X}_{\bar \eta}}$ and $\phi^2_{{\mathcal X}_{\bar s}}$ are isomorphisms, and, by \cite[Prop.~5.2]{ACMVBlochMap} (see also Proposition~\ref{P:lambdacoho} below), the maps $T_\ell\lambda^2_{{\mathcal X}_{\bar \eta}} : T_\ell\operatorname{A}^2({\mathcal X}_{\bar \eta}) \to H^3({\mathcal X}_{\bar \eta},\integ_{\ell}(2))$ and $T_\ell\lambda^2_{{\mathcal X}_{\bar s}} : T_\ell\operatorname{A}^2({\mathcal X}_{\bar s}) \to H^3({\mathcal X}_{\bar s},\integ_{\ell}(2))$ are isomorphisms for all primes $\ell$. Choose a prime $\ell$ invertible in the function fields $\kappa(s)$ and $\kappa(\eta)$. 
	On the one hand, we obtain a Galois-equivariant isomorphism $T_\ell\lambda^2_{{\mathcal X}_{\bar \eta}}  \circ (T_\ell\phi^2_{{\mathcal X}_{\eta}})^{-1} : T_\ell \operatorname{Ab}^2_{{\mathcal X}_{\eta}/\eta} \to 
	H^3({\mathcal X}_{\bar \eta},\integ_{\ell}(2))$, showing that $\operatorname{Ab}^2_{{\mathcal X}_{ \eta}/\eta}$ has good reduction. 	On the other hand,
	since the specialization map $H^3({\mathcal X}_{\bar \eta},\integ_{\ell}(2)) \to H^3({\mathcal X}_{\bar s},\integ_{\ell}(2))$ is an isomorphism (by smooth proper base change), it follows that  $\dim  {\operatorname{Ab}^2_{{\mathcal X}_{s}/s}} = \dim \operatorname{Ab}^2_{{\mathcal X}_{\eta}/\eta}$.

	  Let $Z \in \operatorname{A}^2(\operatorname{Ab}^2_{{\mathcal X}_{\bar \eta}/\bar \eta} \times_{\bar \eta} {\mathcal X}_{\bar \eta})$ be a universal cycle for $\operatorname{Ab}^2_{{\mathcal X}_{\bar \eta}/\bar \eta}$ and $\widehat Z$ be a cycle inducing $\Phi^2_{{\mathcal X}_{\bar \eta}/\bar \eta}$. 
	We denote $Z_{\bar s}$ and $\widehat{Z}_{\bar s}$ their specializations. 
	Let also 
	$\zeta$ be a universal cycle for $\operatorname{Ab}^2_{{\mathcal X}_{\bar s}}$ and 
	$\hat \zeta$ be a cycle inducing~$\Phi^2_{{\mathcal X}_{\bar s}}$. 
	 The key point is that since $\phi_{\bar \eta}$ is induced by a cycle $\widehat Z$, its specialization defines a regular homomorphism, namely the one induced by the specialization of the cycle $\widehat Z$. Therefore, 	we have a commutative diagram
			$$\xymatrix@R=1em{
		\operatorname{Ab}^2_{{\mathcal X}/S}|_s(\bar s) \ar[rd]^{w_{Z_{\bar s}}} \ar[rr]^=  \ar@/_3.0pc/[rrdd]^{\psi_{Z_{\bar s}}}
		& & 	\operatorname{Ab}^2_{{\mathcal X}/S}|_s(\bar s) \\
		& \operatorname{A}^2({\mathcal X}_{\bar s}) \ar[ur]^{\widehat Z_{\bar s}^*} \ar[dr]^{\phi^2_{{\mathcal X}_{\bar s}} =\hat \zeta^*} & \\
 		 & & {\operatorname{Ab}^2_{{\mathcal X}_{s}/s}}(\bar s) \ar@{-->}[uu]_f
	}$$
where the upper triangle is the specialization of the corresponding triangle over $\eta$, where $f$ is the homomorphism induced by the universal property of $\Phi^2_{{\mathcal X}_s}$, and where $\psi_{Z_{\bar s}} : 	\operatorname{Ab}^2_{{\mathcal X}/S}|_{\bar s} \to {\operatorname{Ab}^2_{{\mathcal X}_{\bar s}/\bar s}}$ is the homomorphism induced by the cycle $Z_{\bar s}$. Hence $f : \operatorname{Ab}^2_{{\mathcal X}_{s}/s} \to 	\operatorname{Ab}^2_{{\mathcal X}/S}|_s$ is surjective. We already established that  $\dim  \operatorname{Ab}^2_{{\mathcal X}_{s}/s} = \dim \operatorname{Ab}^2_{{\mathcal X}/S}$, and we can thus conclude that $f$ is an isomorphism (with inverse $\psi_{Z_{\bar s}}$).

We now check that the isomorphism $\psi_{Z_{\bar s}}$ is canonical, \emph{i.e.}, it does not depend on the choice of a universal cycle $Z$ for $\operatorname{Ab}^2_{{\mathcal X}_{\bar \eta}/\bar \eta}$. Choose a prime $\ell$ invertible in the function fields $\kappa(s)$ and $\kappa(\eta)$. It is enough to check that $T_\ell \psi_{Z_{\bar s}}$ is canonical and, since on $\bar s$-points we have $\psi_{Z_{\bar s}} = \phi^2_{{\mathcal X}_s} \circ w_{Z_\bar s}$, by Lemma~\ref{L:ablambda}, it suffices to check that $T_\ell w_{Z_\bar s}$ is canonical.
We have a commutative diagram
\begin{equation}\label{E:indep}
\xymatrix{T_\ell \operatorname{Ab}^2_{{\mathcal X}_\eta/\eta} \ar[rr]^{T_\ell w_Z} \ar[d]_{{\mathrm{sp}}}^\cong 
	&& T_\ell \operatorname{A}^2({\mathcal X}_{\bar \eta}) \ar[d]_{{\mathrm{sp}}} \ar[rr]^{T_\ell\lambda^2}_\cong 
	&& H^3({\mathcal X}_{\bar \eta}, \integ_{\ell}(2)) \ar[d]_{{\mathrm{sp}}}^\cong \\
	T_\ell \operatorname{Ab}^2_{{\mathcal X}_s/s} \ar[rr]^{T_\ell w_{Z_{\bar s}}} 
	&& T_\ell \operatorname{A}^2({\mathcal X}_{\bar s}) \ar[rr]^{T_\ell\lambda^2}_\cong 
	&& H^3({\mathcal X}_{\bar s}, \integ_{\ell}(2)).}
\end{equation}
The commutativity of the right square shows that the middle specialization map is an isomorphism. We can conclude that $T_\ell w_{Z_\bar s}$ is canonical by noting that $T_\ell w_Z = (T_\ell \phi^2_{{\mathcal X}_\eta})^{-1}$ is canonical.

Finally, the isomorphism $f : \operatorname{Ab}^2_{{\mathcal X}_{ s}/s} \stackrel{\sim}{\longrightarrow} 	\operatorname{Ab}^2_{{\mathcal X}/S}|_s$ satisfies $\Theta_{{\mathcal X}_s} = f^\vee \circ \Theta_{{\mathcal X}_{\eta}}|_s \circ f$\,; \emph{i.e.}, it
is in fact an isomorphism $f: (\operatorname{Ab}^2_{{\mathcal X}_{ s}},\Theta_{{\mathcal X}_s}) \stackrel{\sim}{\longrightarrow} (\operatorname{Ab}^2_{{\mathcal X}/S}|_s,\Theta_{{\mathcal X}_S}|_s)$. This follows from the commutativity of the outer square of the diagram

	$$\xymatrix@R=1.5em{
	\operatorname{Ab}^2_{{\mathcal X}/S}|_s(\bar s) \ar[rd]^{w_{Z_{\bar s}}} \ar[rr]^{-\Theta_{{\mathcal X}_{S}}|_s} \ar@<.5ex>[dd]^{\psi_{Z_{\bar s}}}
	& & 	\widehat{\operatorname{Ab}}\,^2_{{\mathcal X}/S}|_s(\bar s) \ar@<.5ex>[dd]^{f^\vee} \\
	& \operatorname{A}^2({\mathcal X}_{\bar s}) \ar[ur]^{Z_{\bar s}^*} \ar[dr]^{\zeta^*} & \\
			\operatorname{Ab}^2_{{\mathcal X}_{s}/s}(\bar s) \ar[ru]^{w_{\zeta}} \ar[rr]^{-\Theta_{{\mathcal X}_s}} \ar@<.5ex>[uu]^f
	& & \widehat{\operatorname{Ab}}\,^2_{{\mathcal X}_{s}/s}(\bar s) \ar@<.5ex>[uu]^{\psi_{Z_{\bar s}}^\vee}
}$$
This whole diagram is in fact commutative: by duality it suffices to check that $-\Theta_{{\mathcal X}_s} \circ \psi_{Z_{\bar s}}$ is induced by ${}^t\zeta \circ Z_{\bar s} \in \chow^1(	\operatorname{Ab}^2_{{\mathcal X}/S}|_{\bar s} \times_{\bar s} 	{\operatorname{Ab}^2_{{\mathcal X}_{\bar s}/\bar s}})$. By construction, $-\Theta_{{\mathcal X}_s}$ is such that the regular homomorphism $\zeta^* : \mathscr{A}^2_{{\mathcal X}_{\bar s}/\bar s} \to \widehat{\operatorname{Ab}}\,^2_{{\mathcal X}_{\bar s}}$ is equal to $(-\Theta_{{\mathcal X}_s})_{\bar s}\circ \phi^2_{{\mathcal X}_{\bar s}}$, and it follows that the homomorphism induced by ${}^t\zeta \circ Z_{\bar s}$ coincides with the homomorphism  $(-\Theta_{{\mathcal X}_s})_{\bar s}\circ \phi^2_{{\mathcal X}_{\bar s}}(	\operatorname{Ab}^2_{{\mathcal X}/S}|_{\bar s})(Z_{\bar s}) = (-\Theta_{{\mathcal X}_s})_{\bar s}\circ \psi_{Z_{\bar s}}$.
\end{proof}

\begin{rem} As already mentioned,
in characteristic $0$,  the isomorphism $\Theta_X=-\Lambda_X$ 
 in Theorem~\ref{T:main-pol} agrees with the canonical principal polarization induced by the cup-product on $H^3(X_{\cx},\integ)$ (Theorem~\ref{T:CanPol3-fold} and Remark \ref{R:T:main-polHdg}).  In particular, Proposition \ref{P:PolChar0} implies that 
for a smooth projective geometrically stably rational threefold over a perfect field $K$ that lifts to a smooth projective geometrically stably rational threefold in characteristic zero, the isomorphism $\Theta_X$ is a principal polarization.
Later, we will strengthen this to show that we only need to assume that $X$ lifts to a geometrically \emph{rationally chain connected} threefold  (Corollary \ref{C:ThetaSpecMini}).  We point out, however, that while we have a stronger hypothesis in Proposition~\ref{P:PolChar0},  we also 
get the stronger conclusion that $(\operatorname{Ab}^2_{\mathcal X/S})_s\cong \operatorname{Ab}^2_{\mathcal X_s/s}$, and moreover, that 
the principal polarization on the generic fiber extends to a principal polarization on the special fiber, which agrees with the canonical auto-duality of Theorem \ref{T:main-pol}.   
In Corollary \ref{C:ThetaSpecMini} we will only obtain that 
$(\operatorname{Ab}^2_{\mathcal X/S})_s$ is isogenous to $\operatorname{Ab}^2_{\mathcal X_s/s}$.  
    \end{rem}

\newpage 

\part{Cohomological decomposition of the diagonal and algebraic representatives}\label{P:coho}

\section{Preliminaries on cohomological decomposition of the diagonal}\label{S:CohDD}
 
In this section, we deviate slightly from our Conventions \ref{conventions} and fix an arbitrary Weil
cohomology theory $\mathcal H^\bullet$ and denote 
$R_{\mathcal H}$ its coefficient ring. If $X$ is a smooth projective variety over a field $K$,
the cohomology class of a cycle class $Z\in \chow^i(X)$ will be denoted $[Z] \in \mathcal H^{2i}(X)(i)$.
 The following definition parallels the definition of Chow decomposition of the diagonal.

\begin{dfn}[Cohomological decomposition of a cycle class]\label{D:CoDec}
	Let $R\to R_{\mathcal H}$ be a 
	homomorphism of commutative rings.
	Let $X$ be a  smooth projective variety
	over a  field $K$,
	and
	let
	$$
	\xymatrix{j_i:W_i\ar@{^{(}->}[r]^{\quad \ne}& X}, \ \ i=1,2
	$$
	be reduced closed subschemes not equal to $X$.
	A \emph{cohomological $R$-decomposition   of  type $(W_1,W_2)$ of  a cycle class $Z\in
		\operatorname{CH}^{d_X}(X\times_KX)_R$} with respect to $\mathcal H^\bullet$
	is an equality
	\begin{equation}\label{E:DefDcp2}
	[Z] =[Z_1]+[Z_2]\in \mathcal H^{2d_X}(X\times_K X)(d_X),
	\end{equation}
	where $Z_1\in \operatorname{CH}^{d_X}(X\times _K X)_R$ is supported on
	$W_1\times_KX$ and $Z_2\in \operatorname{CH}^{d_X}(X\times _K X)_R$ is supported
	on $X\times_K W_2$ (see \eqref{E:ChowSupp} for the support of a cycle).   When $R=\mathbb Z$,  we call this a  \emph{cohomological decomposition
		of  type $(W_1,W_2)$} with respect to $\mathcal H^\bullet$. 	We say that $Z\in
	\operatorname{CH}^{d_X}(X\times_KX)_R$ has a \emph{cohomological $R$-decomposition of type $(d_1,d_2)$} with respect to $\mathcal H^\bullet$ if it admits a cohomological $R$-decomposition of type $(W_1,W_2)$ with $\dim W_1 \leq d_1$ and $\dim W_2 \leq d_2$.
\end{dfn}

 Beware that these definitions  depend \emph{a priori} on the choice of the Weil cohomology theory $\mathcal H ^\bullet$.
We emphasize that  the cycle classes $Z_1,Z_2$ in the definition   are
cycle classes with $R$-coefficients, and the statement about support of  $Z_1,Z_2$ is in terms of the Chow group (not the cohomology group).   Clearly, by applying the cycle class map, we see that if a cycle class $Z$ has an $R$-decomposition of type $(W_1,W_2)$, then it has a cohomological $R$-decomposition of type $(W_1,W_2)$ (with respect to any Weil cohomology theory).
We will primarily be interested in the case where $R=\integ$ and where $Z=N\Delta_X$ is a multiple of the diagonal in $X\times_KX$.

Note that with the notations of Lemma~\ref{L:factorcorres},  if a cycle $Z\in
\operatorname{CH}^{d_X}(X\times_KX)_R$ is cohomologically equivalent to a cycle supported on $W\times_K X$, then the maps  $p^eZ^*:\mathcal H^n(X)\to \mathcal H^n(X)$  and
$p^eZ_*:{\mathcal H}^n(X)\to {\mathcal H}^n(X)$ factor, respectively, as\,:

\begin{equation}\label{E:Zj*diagH2}
\xymatrix@R=1em{
	&\mathcal H^{2(d_{\widetilde W}-d_X)+n}(\widetilde W)(d_{\widetilde W}-d_X)
	\ar[rd]^<>(0.5){\tilde j_*}&& {\mathcal H}^n(X) \ar[rd]_<>(0.5){\tilde j^*}
	\ar[rr]^{p^eZ_*}
	&&{\mathcal H}^n(X) \\
	\mathcal H^n(X) \ar[ru]^<>(0.5){\widetilde Z^*} \ar[rr]^{p^eZ^*}
	&&\mathcal H^n(X) &&{\mathcal H}^{n}(\widetilde W) \ar[ru]_<>(0.5){\widetilde
		Z_*}&\\
}
\end{equation}

\begin{dfn}[Strict cohomological decomposition of a cycle  class]\label{D:StrCoDec}
	A \emph{strict cohomological $R$-decomposition  of  a cycle class $Z\in
		\operatorname{CH}^{d_X}(X\times_KX)_R$} is a cohomological $R$-decomposition of type $(d_X-1,0)$. In other words, it
	is an equality as in \eqref{E:DefDcp2} such that
	$Z_1 \in \operatorname{CH}^{d_X}(X\times _K X)_R$ is supported on $D\times_K
	X$ for some codimension-$1$ subvariety $D\subseteq X$, and
	$Z_2\in \operatorname{CH}^{d_X}(X\times _K X)_R$
	for some zero-cycle class $\alpha\in \operatorname{CH}_0(X)_R$.
	When $R=\mathbb Z$, we call this a  \emph{strict cohomological  decomposition}.
\end{dfn}

\begin{rem}[Strict cohomological decomposition of the diagonal]
As in Remark \ref{R:ChThEqDef},
 if $N [\Delta_X] =[Z_1]+[Z_2]\in \mathcal H^{2d_X}(X\times _K X)(d_X)$ is a strict cohomological $R$-decomposition, then $[Z_2] = \operatorname{pr}_2^*[\alpha]$ for any zero-cycle $\alpha \in \chow_0(X)$ of degree~$N$.
In particular, in the situation where $R=\integ$ and  $X(K)\ne \emptyset$, if $\Delta_X$ admits a strict cohomological decomposition, then $[Z_2] = [X\times_K x]$ for any $K$-point $x\in X(K)$.
\end{rem}

\begin{rem} 
	In the case where $K\subseteq \mathbb C$ and $R=\mathbb Z$ (resp.~$\mathbb Q$),  the comparison isomorphisms in
	cohomology imply that if  $Z\in \operatorname{CH}^{d}(X_1\times_{K} X_2)_R$  has a  cohomological 
	$R$-decomposition of type $(W_1,W_2)$  (resp.~a strict  cohomological $R$-decomposition) with respect to $H^\bullet ((-)^{an},\mathbb Z)$ (resp. $H^\bullet ((-)^{an},\mathbb Q)$)
	then for each prime number $\ell$   it has a cohomological  $R$-decomposition of type $(W_1,W_2)$ (resp.~a strict 
	$R$-decomposition) with respect to  $H^\bullet ((-)_{\bar K},\mathbb Z_\ell)$ (resp.~$H^\bullet ((-)_{\bar K},\mathbb Q_\ell)$).
\end{rem}

\section{Cohomological decomposition, torsion, algebraicity, and the Bloch map}

We now proceed to
recall some of the basic results concerning decomposition of the diagonal, which
essentially go back to Bloch--Srinivas, and have recently been strengthened by
Voisin.  
The main addition in this section is to explain how to modify   these
results  to hold in  the case of varieties over arbitrary perfect 
fields.
\medskip

A projective variety $W$ over a perfect field $K$ admits an alteration $\widetilde{W}\to W$ of degree $M$ invertible in $R_{\mathcal H}$ with $\widetilde{W}$ smooth projective over $K$ in the following situations\,:
 \begin{itemize}
	\item  If 
	$\mathcal H\udot$ is $\ell$-adic \'etale cohomology $\mathcal H\udot(X) =
	H\udot(X_{\bar K},\integ_\ell)$ with $\ell$ invertible in $K$. This is Gabber's $\ell'$-alteration theorem.  A strengthening, due to Temkin~\cite{temkin17}, shows that a projective variety $W$ over $K$ admits an alteration $\widetilde{W}\to W$ as above of degree some power of the characteristic exponent of $K$.
	\item If $W$ admits a resolution of singularities (in which case $M$ can be chosen to be equal to $1$). This holds unconditionally if $\operatorname{char}(K) = 0$ or if $\dim W \leq 3$~\cite{CPRes2}.
\end{itemize}

We fix a ring homomorphism $R\to R_{\mathcal H}$.

 \subsection{Cohomological decomposition of the diagonal and torsion in cohomology}\label{S:DD-Tor}

Each of Betti, $\ell$-adic and crystalline cohomology
 has the property that $\mathcal H^0$, $\mathcal H^1$ and $\mathcal
 H^{2\dim X}$ are
 torsion-free for proper smooth varieties $X$, and vanish in degrees
 greater than $2\dim X$.  (In degree $1$ this follows, for
 instance, from the identification of $\mathcal H^1(X)$ with $\mathcal
 H^1(\operatorname{Alb}_{X/K})$, and the known calculation for abelian
 varieties.  See \cite[II.3.11.2]{illusiedRW} for the
 case of crystalline cohomology.)

 \begin{pro}[{\cite[Thm.~4.4(i)]{voisinAJ13}}]\label{P:V-AJ4.4}
  Let $K$ be a perfect field, and assume $\mathcal H^i$ is torsion-free for
  $i=0,1$.  Let $X/K$ be a
  smooth projective variety.  Assume that for
  some $N$  in $R$ the multiple
  $N \Delta_{X_{\bar K}}\in \operatorname{CH}^{d_X}(X_{\bar K}\times_{\bar K}X_{\bar K})_R$ admits a cohomological 
  $R$-decomposition of type $(W_1,W_2)$ with $\dim W_1\le
 d_X- 1$ and  $\dim W_2\le
  1$.  Let $\widetilde W_1\to W_1$ be an alteration
  of degree $M$ invertible
   in $R_{\mathcal H}$ with $\widetilde W_1$ smooth projective over $\bar K$.  
  \begin{enumerate}

   \item  If $\mathcal H^{i-2}(\widetilde W_1)$ is torsion-free (\emph{e.g.}, $i\le 3$),
   then torsion in $\mathcal H^i (X)$ is killed by multiplication by $N$.

   \item If $\mathcal H^i(\widetilde W_1)$ is torsion-free (\emph{e.g.}, $i>2d_X-3$), then
   torsion in $\mathcal H^i (X)$ is killed by multiplication by~$N$.
  \end{enumerate}
 \end{pro}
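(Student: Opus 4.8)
The plan is to run Voisin's argument \cite[Thm.~4.4(i)]{voisinAJ13} in the present generality, with alterations of degree invertible in $R_{\mathcal H}$ in place of desingularizations. Everything can be computed over $\bar K$, so I write $X$ for $X_{\bar K}$, $W_i$ for $(W_i)_{\bar K}$, etc. Fix a torsion class $\alpha\in\mathcal H^i(X)$; the goal is $N\alpha=0$. Write $N\Delta_X=Z_1+Z_2$ in $\mathcal H^{2d_X}(X\times_{\bar K}X)(d_X)$ with $Z_1$ supported on $W_1\times X$ and $Z_2$ on $X\times W_2$, and transpose to get $N\Delta_X={}^tZ_1+{}^tZ_2$ with ${}^tZ_1$ supported on $X\times W_1$ and ${}^tZ_2$ on $W_2\times X$. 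Since $\Delta_X$ acts as the identity on $\mathcal H^\bullet(X)$, letting these correspondences act on $\mathcal H^i(X)$ yields the two identities $N\cdot\operatorname{id}=(Z_1)_*+(Z_2)_*$ and $N\cdot\operatorname{id}=(Z_1)^*+(Z_2)^*$ of endomorphisms of $\mathcal H^i(X)$ (here $N$ means its image in $R_{\mathcal H}$).

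First I would dispose of the $Z_2$-terms once and for all. Applying Lemma~\ref{L:factorcorres} and the cohomological factorization \eqref{E:Zj*diagH2} to $Z={}^tZ_2$ (supported on $W_2\times X$), with $p^e$ replaced throughout by the degree $M$ of the chosen alteration $\widetilde W_2\to W_2$ — the argument being identical, the factor $M$ arising from ``push-forward of pull-back along a finite flat map is multiplication by the degree'' — one finds that $M(Z_2)^*=M({}^tZ_2)_*$ factors through $\mathcal H^i(\widetilde W_2)$ and $M(Z_2)_*=M({}^tZ_2)^*$ factors through $\mathcal H^{2(\dim\widetilde W_2-d_X)+i}(\widetilde W_2)(\dim\widetilde W_2-d_X)$, where $\widetilde W_2$ is smooth projective over $\bar K$ of dimension $\leq 1$. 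For such a variety every $\mathcal H^j$ is torsion-free: for $j=0,1$ by hypothesis, for $j=2$ because it is the top degree $\mathcal H^{2\dim}$, and $\mathcal H^j=0$ for $j>2$. Hence both factorizations kill the torsion class $\alpha$, and since $M$ is invertible in $R_{\mathcal H}$ (Gabber's $\ell'$-alterations in the $\ell$-adic case, $M=1$ when resolution is available) we get $(Z_2)_*\alpha=(Z_2)^*\alpha=0$. Therefore $N\alpha=(Z_1)_*\alpha$ and $N\alpha=(Z_1)^*\alpha$.

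Now apply \eqref{E:Zj*diagH2} to $Z=Z_1$ (supported on $W_1\times X$) with an alteration $\widetilde W_1\to W_1$ of degree $M$ invertible in $R_{\mathcal H}$. For part (2): $M(Z_1)_*$ factors as $\mathcal H^i(X)\xrightarrow{\widetilde j_1^{\,*}}\mathcal H^i(\widetilde W_1)\xrightarrow{(\widetilde Z_1)_*}\mathcal H^i(X)$; if $\mathcal H^i(\widetilde W_1)$ is torsion-free then $\widetilde j_1^{\,*}\alpha=0$, so $MN\alpha=0$ and $N\alpha=0$. For part (1): $M(Z_1)^*$ factors as $\mathcal H^i(X)\xrightarrow{(\widetilde Z_1)^*}\mathcal H^{2(\dim\widetilde W_1-d_X)+i}(\widetilde W_1)(\dim\widetilde W_1-d_X)\xrightarrow{(\widetilde j_1)_*}\mathcal H^i(X)$, and the middle group sits in cohomological degree $\leq i-2$ because $\dim\widetilde W_1\leq d_X-1$; if that group is torsion-free then $(\widetilde Z_1)^*\alpha=0$ and again $N\alpha=0$. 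Finally the ``$\emph{e.g.}$'' assertions: $i\leq 3$ forces $i-2\leq 1$, whence $\mathcal H^{i-2}(\widetilde W_1)$ (and, as needed, lower degrees) is torsion-free; and $i>2d_X-3$, i.e.\ $i\geq 2d_X-2\geq 2\dim\widetilde W_1$, forces $\mathcal H^i(\widetilde W_1)$ to be either $0$ or the torsion-free top-degree group $\mathcal H^{2\dim}$.

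The one point that needs care — the main obstacle, such as it is — is the dimension bookkeeping in part (1): since the stated $\widetilde W_1$ is an alteration of $W_1$ rather than of a divisor, the group through which $(Z_1)^*$ factors sits in degree exactly $i-2$ only when $\dim W_1=d_X-1$, and strictly below otherwise. This is immaterial in the range $i\leq 3$ (all relevant degrees are $\leq 1$); in general one arranges $\widetilde W_1$ to have pure dimension $d_X-1$ as in Lemma~\ref{L:factorcorres} by padding with a projective space, at the (harmless) cost of reading the torsion-freeness of $\mathcal H^{i-2}(\widetilde W_1)$ through the projective bundle formula. Everything else is routine manipulation of correspondences, combined with the observation that multiplication by $M$ is invertible on $\mathcal H^\bullet$.
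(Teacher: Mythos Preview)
Your proposal is correct and is precisely the argument the paper has in mind: the paper's proof simply points to Voisin's proof and the factorization diagrams \eqref{E:Zj*diagH2}, and you have unpacked that reference in full detail, including the handling of the $Z_2$-term via the torsion-freeness of $\mathcal H^\bullet$ of a smooth curve and the dimension bookkeeping for $\widetilde W_1$ (which, as you note, is already taken care of by the projective-space padding in Lemma~\ref{L:factorcorres}).
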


 \begin{proof}
  This follows directly from the proof \cite[Thm.~4.4(i)]{voisinAJ13}\,; \emph{i.e.}, from
  the factorization of correspondences in cohomology \eqref{E:Zj*diagH2}.
    \end{proof}

    As before, if  $N$ is a natural number whose image is zero in the coefficient ring of the cohomology
  theory, the conclusions of Proposition \ref{P:V-AJ4.4} are trivial.   At the
  opposite extreme, if the image of $N$ is a unit in $R_{\mathcal H}$, then under the
  hypotheses we may conclude that $\mathcal H^i(X)$ is torsion-free.

 \begin{cor}\label{C:V-AJ4.4} Let $K$ be a perfect field, and assume
  $\mathcal H^i(-)$ is torsion-free for $i=0,1$.  Let $X/K$ be a smooth
  projective variety. If  $\Delta_{X_{\bar K}}\in \operatorname{CH}^{d_X}(X_{\bar K}\times_{\bar K}X_{\bar K})$
  admits a cohomological  $R$-decomposition  of type $(W_1,W_2)$  with  $\dim W_1\le
  d_X- 1$ and $\dim W_2\le
  1$, and with   $W_1$ admitting a  resolution of singularities, then
  $\mathcal H^i (X)$ is torsion-free for  $i\le 3$ and $i \ge  2d_X-2$.

  In particular, if $X/K$ is a smooth projective threefold such that
  $\Delta_{X_{\bar K}}\in 
\operatorname{CH}^3(X_{\bar K}\times_{\bar K}X_{\bar K})$ admits a cohomological $R$-decomposition of
  type $(W_1,W_2)$ with $\dim W_2\le 1$, then $\mathcal H^i (X)$ is
  torsion-free for all $i$.
 \end{cor}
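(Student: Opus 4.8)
The plan is to deduce this directly from Proposition~\ref{P:V-AJ4.4} applied with $N=1$, doing nothing more than matching up the hypotheses and reading off the ``e.g.'' conditions. First I would fix a resolution of singularities $\widetilde W_1\to W_1$, which exists by hypothesis; since it has degree $1$, which is trivially invertible in $R_{\mathcal H}$, it serves as the alteration in Proposition~\ref{P:V-AJ4.4}, and $\widetilde W_1$ is smooth projective over $\bar K$ with $\dim\widetilde W_1=\dim W_1\le d_X-1$. By assumption $\Delta_{X_{\bar K}}$ (the case $N=1$) admits a cohomological $R$-decomposition of type $(W_1,W_2)$ with $\dim W_1\le d_X-1$ and $\dim W_2\le 1$, so all hypotheses of Proposition~\ref{P:V-AJ4.4} are in place.

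Next I would run the two parts of Proposition~\ref{P:V-AJ4.4} in the relevant degree ranges. For $i\le 3$, the group $\mathcal H^{i-2}(\widetilde W_1)$ is torsion-free: for $i\le 1$ it vanishes, and for $i\in\{2,3\}$ the index $i-2\in\{0,1\}$, where torsion-freeness is exactly the standing hypothesis on $\mathcal H^0$ and $\mathcal H^1$ of smooth projective varieties. Hence part~(1) of the proposition with $N=1$ shows that torsion in $\mathcal H^i(X)$ is killed by $1$, i.e. is zero. Symmetrically, for $i\ge 2d_X-2$, the group $\mathcal H^i(\widetilde W_1)$ is torsion-free: it vanishes whenever $i>2\dim\widetilde W_1$, and the only possible boundary case is $i=2\dim\widetilde W_1=2d_X-2$ (which can only occur if $\dim W_1=d_X-1$), where it is the top-degree cohomology of a smooth projective variety and hence torsion-free by the general property recalled at the start of this section. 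So part~(2) of the proposition with $N=1$ gives that $\mathcal H^i(X)$ is torsion-free for $i\ge 2d_X-2$.

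Finally, for the threefold statement I would specialize to $d_X=3$. Then $\dim W_1\le 2$, so $W_1$ automatically admits a resolution of singularities (resolution being unconditional in dimension $\le 3$), and $\dim W_2\le 1$ is part of the hypothesis; thus the first assertion applies and yields torsion-freeness of $\mathcal H^i(X)$ for $i\le 3$ and for $i\ge 2d_X-2=4$, which together cover all $i$. I do not expect any real obstacle here: the content is entirely in Proposition~\ref{P:V-AJ4.4} (and ultimately in the factorization~\eqref{E:Zj*diagH2}), and the only thing to check carefully is the elementary bookkeeping that the degree thresholds ``$i\le 3$'' and ``$i>2d_X-3$'' in that proposition match the ranges claimed in the corollary.
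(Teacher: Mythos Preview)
Your proof is correct and follows exactly the approach of the paper, which simply says the result is immediate from Proposition~\ref{P:V-AJ4.4} together with resolution of singularities for surfaces over perfect fields. You have just spelled out the degree bookkeeping that the paper leaves implicit.
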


 \begin{proof}
  This is immediate from the proposition, using the fact that there is resolution
  of singularities for surfaces over perfect fields.
 \end{proof}
 
  \begin{rem}\label{R:Tors3fold} For a smooth projective threefold $X$
  over a field $K$, and $\mathcal H^\bullet$ denoting Betti or $\ell$-adic
  cohomology ($\ell\ne \operatorname{char}(K)$) recall that $\mathcal H^0(X)$,
  $\mathcal H^1(X)$,
  and $\mathcal H^6(X)$ are torsion-free.  Moreover,
  $\operatorname{Tors}\mathcal H^2(X)\cong \operatorname{Tors}\mathcal
  H^5(X)$ and $\operatorname{Tors}\mathcal H^3(X)\cong
  \operatorname{Tors}\mathcal H^4(X)$.
  If $X$ is unirational over $\mathbb C$, a result of Serre \cite{serrePi1}
  implies that the fundamental group of $X$ is trivial, so that $\mathcal
  H_1(X)=\mathcal H^5(X)=0$.
 \end{rem}

 \begin{cor}
  Let $X$ be a smooth projective threefold over a perfect field $K$.  If the class of the diagonal $\Delta_{X_{\bar K}}\in \operatorname{CH}^3(X_{\bar K}\times_{\bar K}X_{\bar K})$ admits a cohomological $\ww(K)$-decomposition of type $(W_1,W_2)$  with respect to crystalline cohomology $H^\bullet_{\operatorname{cris}}(-/\ww(K))$, 
    with  $\dim W_1\le
  d_X- 1$ and $\dim W_2 \le 1$, then
  the Picard scheme $\pic^0_{X/K}$ is reduced.
 \end{cor}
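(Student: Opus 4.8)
The plan is to reduce the statement to the torsion‑freeness of crystalline cohomology, which is exactly what Corollary~\ref{C:V-AJ4.4} provides, and then to invoke the de~Rham--Witt criterion for reducedness of $\pic^0$. First I would check that the Weil cohomology theory $\mathcal H^\bullet = H^\bullet_{\operatorname{cris}}(-/\ww(K))$, with coefficient ring $R_{\mathcal H}=\ww(K)$ and with $R=\ww(K)$ mapping to it by the identity, satisfies the hypotheses of Corollary~\ref{C:V-AJ4.4}: crystalline cohomology of a smooth proper variety over a perfect field has $\mathcal H^0$ and $\mathcal H^1$ torsion‑free (\cite[II.3.11.2]{illusiedRW}, as recalled in \S\ref{S:DD-Tor}), and since $X$ is a threefold the subschemes $W_1$ and $W_2$ have dimension $\le 2$ and $\le 1$ respectively, so they admit resolutions of singularities over the perfect field $K$. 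Applying the ``in particular'' clause of Corollary~\ref{C:V-AJ4.4} to the cohomological $\ww(K)$-decomposition of $\Delta_{X_{\bar K}}$ of type $(W_1,W_2)$ (with $\dim W_2\le 1$), I conclude that $H^i_{\operatorname{cris}}(X/\ww(K))$ is torsion‑free for every $i$; in particular $H^2_{\operatorname{cris}}(X/\ww(K))$ is torsion‑free.

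Second, I would use the fact --- which I would cite rather than reprove --- that for a smooth projective variety over a perfect field of characteristic $p>0$, $\pic^0$ is reduced whenever $H^2_{\operatorname{cris}}(X/\ww(K))$ is torsion‑free. Recall that $\pic^0_{X/K}$ is reduced if and only if $H^1(X,W\mathcal{O}_X)$ is a finitely generated $\ww(K)$-module; via the slope spectral sequence and the domino formalism of Illusie--Raynaud, the failure of this finiteness produces nonzero torsion in $H^2_{\operatorname{cris}}(X/\ww(K))$ (the relevant ``$V$-torsion'' injects there), so torsion‑freeness of $H^2_{\operatorname{cris}}$ forces $H^1(X,W\mathcal{O}_X)$ to be finitely generated, hence $\pic^0_{X/K}$ reduced. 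In fact, since Corollary~\ref{C:V-AJ4.4} kills torsion in \emph{all} degrees, there is no need to be careful about which crystalline degree detects non‑reducedness.

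The only genuinely non‑formal ingredient is this last citation relating crystalline torsion to reducedness of $\pic^0$ (see \cite{illusiedRW}); the rest is a direct application of Corollary~\ref{C:V-AJ4.4}. One small point worth a remark in the write‑up is the discrepancy between the cycles $Z_1,Z_2$ being \emph{a priori} defined over $\bar K$ while crystalline cohomology is taken relative to $\ww(K)$: this is harmless because $\ww(\bar K)$ is faithfully flat over $\ww(K)$ and crystalline cohomology commutes with extension of the (perfect) base field, so torsion‑freeness over $\ww(\bar K)$ descends to $\ww(K)$ --- but as this is already built into the phrasing of Corollary~\ref{C:V-AJ4.4}, one can simply apply that corollary as a black box.
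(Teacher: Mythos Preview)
Your proposal is correct and follows essentially the same approach as the paper: apply Corollary~\ref{C:V-AJ4.4} (using resolution of singularities for the surface $W_1$) to obtain torsion-freeness of $H^2_{\operatorname{cris}}(X/\ww(K))$, then invoke Illusie's de~Rham--Witt criterion (the paper cites \cite[Prop.~II.5.16]{illusiedRW}) to conclude that $\pic^0_{X/K}$ is reduced. Your additional remarks on the slope spectral sequence mechanism and the $\bar K$ versus $K$ base-change issue are accurate but not needed for the argument as stated.
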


 \begin{proof} Note that $W_1$, like any surface over a perfect field, admits a resolution of singularities.
  By Corollary~\ref{C:V-AJ4.4}, we have that
  $H^2_{{\rm cris}}(X/\ww(K))$ is torsion-free, and so (\emph{e.g.}, \cite[Prop.~II.5.16]{illusiedRW}) $\pic^0_{X/K}$
  is reduced.
 \end{proof}

 For threefolds, it is often convenient to also consider the following situation:

 \begin{pro}\label{P:DD3fold}
  Let $X$ be a smooth projective threefold over a  perfect field $K$,
  and let $\mathcal H^\bullet$ denote  Betti, crystalline or  $\ell$-adic
  cohomology
  (with $\ell\ne\operatorname{char}K$).
  Assume that for some $N\in R$  the multiple
  $N\Delta_{X_{\bar K}}\in \operatorname{CH}^{d_X}(X_{\bar K}\times_{\bar K}X_{\bar K})_R$  admits a cohomological  $R$-decomposition  of type $(W_1,W_2)$.   Let $\widetilde W_2\to W_2$ be a
  resolution of singularities. 

  \begin{enumerate}
   \item For $i\le 3$, if  $\mathcal H^i(\widetilde W_2)$ is torsion-free (\emph{e.g.},
   $\widetilde W_2$ is rational), then torsion in $\mathcal H^i(X)$ is killed by
   multiplication by $N$.

   \item For $i>3$, if $\mathcal H^{i-2}(\widetilde W_2)$ is torsion-free (\emph{e.g.},
   $\widetilde W_2$ is rational), then $\mathcal H^i(X)$ is killed by
   multiplication by $N$.
  \end{enumerate}

 \end{pro}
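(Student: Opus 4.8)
The plan is to follow the proof of Proposition~\ref{P:V-AJ4.4} in spirit, reading the conclusion directly off the factorization of correspondences in cohomology \eqref{E:Zj*diagH2}; the only real difference is that the argument is now routed through a resolution of $W_2$ rather than of $W_1$, which merely shifts the bookkeeping of degrees and of the variance of the correspondence action. Fix the Weil cohomology theory $\mathcal H^\bullet$, so that $\mathcal H^0$ and $\mathcal H^1$ of any smooth projective variety are torsion-free and $\mathcal H^j$ vanishes above twice the dimension. Write the given decomposition as $N[\Delta_{X_{\bar K}}]=[Z_1]+[Z_2]$ with $Z_1$ supported on $W_1\times_{\bar K}X_{\bar K}$ and $Z_2$ supported on $X_{\bar K}\times_{\bar K}W_2$. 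Since the action of a self-correspondence on $\mathcal H^\bullet(X_{\bar K})$ depends only on its cohomology class, and since $\Delta_X$ induces the identity for both the pull-back and the push-forward action, on every $\mathcal H^i(X_{\bar K})$ one has the two identities $N\cdot\mathrm{id}=Z_1^*+Z_2^*=(Z_1)_*+(Z_2)_*$. As $\dim W_1,\dim W_2\le d_X-1=2$ and $\bar K$ is perfect, both $W_1$ and $W_2$ admit resolutions of singularities, so in \eqref{E:Zj*diagH2} one may take $e=0$; I will use the resolution $\widetilde W_2\to W_2$ fixed in the statement and any resolution $\widetilde W_1\to W_1$.

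For (1), take $i\le 3$ and use $N\cdot\mathrm{id}=Z_1^*+Z_2^*$. Since ${}^tZ_2$ is supported on $W_2\times_{\bar K}X_{\bar K}$, \eqref{E:Zj*diagH2} shows $Z_2^*=({}^tZ_2)_*$ factors through $\mathcal H^i(\widetilde W_2)$, which is torsion-free by hypothesis; and \eqref{E:Zj*diagH2} applied to $Z_1$ shows $Z_1^*$ factors through $\mathcal H^{2(\dim W_1-d_X)+i}(\widetilde W_1)(\dim W_1-d_X)$, whose degree $2\dim W_1-2d_X+i$ is at most $2\cdot 2-6+3=1$, hence that group is torsion-free (it is either zero or an $\mathcal H^{\le 1}$ of a smooth projective variety). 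Both operators therefore annihilate $\operatorname{Tors}\mathcal H^i(X_{\bar K})$, and so does their sum $N\cdot\mathrm{id}$.

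For (2), take $i>3$, so $i\in\{4,5,6\}$ as $\dim X=3$, and use $N\cdot\mathrm{id}=(Z_1)_*+(Z_2)_*$. By \eqref{E:Zj*diagH2}, $(Z_1)_*$ factors through $\mathcal H^i(\widetilde W_1)$ with $\dim\widetilde W_1\le 2$: this group is zero for $i\ge 5$ and is the top-degree group when $i=4$, so it is torsion-free; dually $(Z_2)_*=({}^tZ_2)^*$ factors through $\mathcal H^{2(\dim W_2-d_X)+i}(\widetilde W_2)(\dim W_2-d_X)$, and inspecting $\dim W_2\in\{0,1,2\}$ this is $\mathcal H^{i-2}(\widetilde W_2)$ when $\dim W_2=2$ — torsion-free by hypothesis — while for $\dim W_2\le 1$ the degree is at most $i-4\le 2$ on a curve or on a finite scheme, hence the group is torsion-free unconditionally. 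As before $N\cdot\mathrm{id}$ annihilates $\operatorname{Tors}\mathcal H^i(X_{\bar K})$; I read the displayed assertion ``$\mathcal H^i(X)$ is killed by $N$'' in (2) as this statement about $\operatorname{Tors}\mathcal H^i(X)$, which is what the method gives ($\mathcal H^6$ is of course never annihilated).

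I do not expect a substantial obstacle: the content is entirely packaged in \eqref{E:Zj*diagH2}. What requires care is organizational — keeping straight the Tate twists, the degree shifts, and whether the $*$- or the $_*$-action is being used, so that the cohomology group produced by \eqref{E:Zj*diagH2} is \emph{exactly} the one the hypotheses of (1), respectively (2), declare torsion-free — together with disposing by hand of the degenerate ranges of $\dim W_1$ and $\dim W_2$ (cohomological degree $\le 1$, or degree above $2\dim\widetilde W_j$) where torsion-freeness holds for free. One small technical point worth recording is that the factorization of Lemma~\ref{L:factorcorres} can be arranged with $e=0$ precisely because $W_1,W_2$ are at most surfaces over a perfect field, so no spurious power of $\operatorname{char}(K)$ intervenes; and that, since torsion in the cohomology of a smooth projective surface or curve is a birational invariant, the conclusion is insensitive to the particular smooth model supplied by that lemma.
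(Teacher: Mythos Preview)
Your proof is correct and follows exactly the approach the paper intends: the paper's own proof simply reads ``The arguments are the same as for Proposition~\ref{P:V-AJ4.4}'', and you have carried this out, routing the factorization \eqref{E:Zj*diagH2} through $\widetilde W_2$ (and $\widetilde W_1$) with the appropriate variance and degree bookkeeping. Your reading of the conclusion in~(2) as a statement about $\operatorname{Tors}\mathcal H^i(X)$ is the right one --- the displayed phrasing is evidently a slip, as $\mathcal H^6(X)$ is never annihilated.
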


 \begin{proof}
  The arguments are the same as for Proposition \ref{P:V-AJ4.4}.
    \end{proof}

 \subsection{Cohomological decomposition of the diagonal and vanishing cohomology}\label{S:DD-Van}
We now consider some results on vanishing of cohomology.  The main take-away for our applications is Corollary~\ref{C:vanCoh} for threefolds.  

\begin{pro}\label{P:vanCoh}
 Let $X$ be a
  smooth projective variety over a perfect field $K$.  Assume that for
  some $N\in R$  the multiple
  $N\Delta_{X_{\bar K}}\in \operatorname{CH}^{d_X}(X_{\bar K}\times_{\bar K}X_{\bar K})_R$ admits a cohomological 
  $R$-decomposition of type $(W_1,W_2)$ with $\dim W_1\le
 d_X- 1$ and  $\dim W_2=
  0$.  Let $\widetilde W_1\to W_1$ be an alteration
  of degree $M$ invertible
   in $R$ with $\widetilde W_1$ smooth projective over $\bar K$.  
  \begin{enumerate}

   \item  If $i\ge 1$ and $\mathcal H^{i-2}(\widetilde W_1)=0$  (\emph{e.g.}, $i=1$),
   then $\mathcal H^i (X)$ is killed by multiplication by $N$.

   \item If  $i\ne 2d_X$ and $\mathcal H^i(\widetilde W_1)=0$ (\emph{e.g.}, $i=2d_X-1$), then
    $\mathcal H^i (X)$ is killed by multiplication by $N$.
  \end{enumerate}
 \end{pro}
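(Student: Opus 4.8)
The plan is to follow the template of the proof of Proposition~\ref{P:V-AJ4.4} (ultimately Voisin's argument in \cite{voisinAJ13}), the only genuinely new feature being that, since $W_2$ has dimension $0$ rather than merely $\leq 1$, the summand of the diagonal supported on $X\times_K W_2$ acts as \emph{zero} — not just zero on torsion — on $\mathcal H^i(X)$ for every $i\neq 2d_X$. This is precisely what upgrades the conclusion from ``torsion is killed by $N$'' to ``$\mathcal H^i(X)$ is killed by $N$''. Every assertion being insensitive to base change along $\bar K/K$, I would argue over $\bar K$.

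Fix a cohomological $R$-decomposition $N\Delta_{X_{\bar K}}=Z_1+Z_2$ with $Z_1$ supported on $W_1\times_K X$ and $Z_2$ supported on $X\times_K W_2$; by Remark~\ref{R:ChThEqDef}, $Z_2=\operatorname{pr}_2^*\alpha$ for some $0$-cycle $\alpha$ of degree $N$. For part~(1), let correspondences act by $(-)^*$ on $\mathcal H^i(X)$. Then $Z_2^*\beta=\operatorname{pr}_{1*}\operatorname{pr}_2^*(\beta\cup[\alpha])$ lies in $\mathcal H^{i+2d_X}(X)=0$ as soon as $i\geq 1$, so $N\cdot\operatorname{id}_{\mathcal H^i(X)}=Z_1^*$. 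Applying the factorization of correspondences \eqref{E:Zj*diagH2} — with $\widetilde W_1$ taken, via Lemma~\ref{L:factorcorres}, of pure dimension $d_X-1$ — the correspondence $M\cdot Z_1^*$ factors through $\mathcal H^{i-2}(\widetilde W_1)(-1)$, which vanishes by hypothesis; since $M$ is invertible in the coefficient ring, $Z_1^*=0$ on $\mathcal H^i(X)$, i.e.\ $\mathcal H^i(X)$ is killed by $N$. For part~(2), let correspondences act by $(-)_*$. By the projection formula $Z_{2*}\beta=[\alpha]\cup\operatorname{pr}_{2*}\operatorname{pr}_1^*\beta\in[\alpha]\cup\mathcal H^{i-2d_X}(X)$, which is $0$ for $i\neq 2d_X$, so $N\cdot\operatorname{id}_{\mathcal H^i(X)}=Z_{1*}$; and \eqref{E:Zj*diagH2} exhibits $M\cdot Z_{1*}$ as factoring through $\mathcal H^i(\widetilde W_1)=0$, whence the claim. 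The parenthetical ``e.g.''\ cases are those for which the vanishing of $\mathcal H^\bullet(\widetilde W_1)$ is automatic: $i=1$ gives $\mathcal H^{-1}(\widetilde W_1)=0$, and $i=2d_X-1$ gives $\mathcal H^{2d_X-1}(\widetilde W_1)=0$ since $\dim\widetilde W_1\leq d_X-1$.

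I do not expect a conceptual obstacle: the argument is a routine transcription of \S\ref{S:FactChow} and \cite{voisinAJ13}. The one delicate point — exactly as in \S\ref{S:FactChow} — is the bookkeeping with the multiplier $M$ coming from the alteration: in positive characteristic Lemma~\ref{L:factorcorres} a priori only factors $p^e Z_1$ (equivalently, using the given alteration, $M Z_1$), so one needs $M$ invertible in $R_{\mathcal H}$ in order to divide it out and keep the sharp conclusion that $N$, rather than $MN$, annihilates $\mathcal H^i(X)$; and one must be careful to choose $\widetilde W_1$ of pure dimension $d_X-1$, so that $Z_1^*$ really factors through $\mathcal H^{i-2}(\widetilde W_1)$ and not merely through a cohomology group of strictly smaller degree.
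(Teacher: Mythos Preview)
Your proposal is correct and follows the same approach as the paper, which simply cites Voisin \cite[Thm.~4.4(i)]{voisinAJ13} and the factorization \eqref{E:Zj*diagH2}; your write-up merely spells out the details the paper leaves implicit. One small citation point: the form $[Z_2]=\operatorname{pr}_2^*[\alpha]$ is strictly the cohomological analogue of Remark~\ref{R:ChThEqDef} (stated after Definition~\ref{D:StrCoDec}), though you could equally well bypass this by factoring $Z_2^*$ and $Z_{2*}$ through the cohomology of a $0$-dimensional $\widetilde W_2$ via \eqref{E:Zj*diagH2} directly.
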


 \begin{proof}
  This follows directly from the proof \cite[Thm.~4.4(i)]{voisinAJ13}\,; \emph{i.e.}, from
  the factorization of correspondences in cohomology \eqref{E:Zj*diagH2}.
   \end{proof}

    Note that if the image of  $N$ is zero in the coefficient ring of the cohomology
  theory, the conclusions of Proposition \ref{P:V-AJ4.4} are trivial.   At the
  opposite extreme, if the image of  $N$ is a unit in  $R_{\mathcal H}$, then under the
  hypotheses we may conclude that $\mathcal H^i(X)=0$.

 \begin{cor}\label{C:vanCoh} 
  Let $X$ be a
  smooth projective variety over a perfect field $K$.  Assume that that for
  some $N\in R$  the multiple 
  $N \Delta_{X}\in \operatorname{CH}^{d_X}(X_{\bar K}\times_{\bar K}X_{\bar K})_R$ admits a cohomological 
  $R$-decomposition of type $(W_1,W_2)$ with $\dim W_1\le
 d_X- 1$ and  $\dim W_2=
  0$, with  $W_1$ admitting a  resolution of singularities.   Then
  $\mathcal H^1 (X)$ and $\mathcal H^{2d_X-1}(X)$ are killed by multiplication by $N$.  If $N=1$, then $\mathcal H^1(X)=\mathcal H^{2d_X-1}(X)=0$.

  In particular, if $X/K$ is a smooth projective threefold such that
  $N \Delta_{X_{\bar K}}\in \operatorname{CH}^3(X_{\bar K}\times_{\bar K}X_{\bar K})_R$ admits a strict cohomological $R$-decomposition (\emph{e.g.}, $X$ is geometrically rationally chain connected), then $\mathcal H^1 (X)=0$ and $\mathcal H^5(X)$ is killed by multiplication by $N$.  
If $N=1$ (e.g., $X$ is geometrically stably rational), then  $\mathcal H^1 (X)=\mathcal H^5(X)=0$.
   \end{cor}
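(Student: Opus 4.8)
The plan is to derive the corollary directly from Proposition~\ref{P:vanCoh}, applied in the two cohomological degrees $i=1$ and $i=2d_X-1$. The key observation is that the hypothesis that $W_1$ admits a resolution of singularities lets one take the alteration $\widetilde W_1\to W_1$ occurring in Proposition~\ref{P:vanCoh} to be such a resolution; then its degree $M$ equals $1$, which is invertible in $R$, and $\widetilde W_1$ is smooth projective over $\bar K$ with $\dim\widetilde W_1=\dim W_1\le d_X-1$. First I would apply Proposition~\ref{P:vanCoh}(1) with $i=1$: since $\mathcal H^{-1}(\widetilde W_1)=0$, this gives that $\mathcal H^1(X)$ is killed by multiplication by $N$. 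Next I would apply Proposition~\ref{P:vanCoh}(2) with $i=2d_X-1$: because $2d_X-1>2\dim\widetilde W_1$ we have $\mathcal H^{2d_X-1}(\widetilde W_1)=0$ (cohomology of a smooth projective variety vanishes above twice the dimension), and $2d_X-1\ne 2d_X$, so this gives that $\mathcal H^{2d_X-1}(X)$ is killed by multiplication by $N$. When $N=1$, multiplication by $N$ is the identity and both groups vanish; this proves the general assertion.

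For the assertion about threefolds I would unwind Definition~\ref{D:StrCoDec}: a strict cohomological $R$-decomposition of $N\Delta_{X_{\bar K}}$ is precisely one of type $(d_X-1,0)=(2,0)$, so $\dim W_1\le 2$ and $\dim W_2=0$. Since $W_1$ is then a surface over the perfect (indeed algebraically closed) field $\bar K$, it admits a resolution of singularities by \cite{CPRes2}, so the general assertion applies and yields that $\mathcal H^1(X)$ and $\mathcal H^5(X)=\mathcal H^{2d_X-1}(X)$ are killed by $N$. The one extra ingredient is that $\mathcal H^1(X)$ is torsion-free for a smooth projective threefold (Remark~\ref{R:Tors3fold}); being killed by the nonzero natural number $N$, torsion-freeness then forces $\mathcal H^1(X)=0$ with no assumption on $N$, while for $\mathcal H^5(X)$ one only gets annihilation by $N$ in general, and vanishing when $N=1$. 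Finally, the parenthetical examples follow from Remark~\ref{R:RCDecDiag} (a geometrically rationally chain connected threefold has $N\Delta_{X_{\bar K}}$ admitting a strict Chow, hence cohomological, decomposition for some natural number $N$) and Remark~\ref{R:StabRatDec} (a geometrically stably rational threefold has $\Delta_{X_{\bar K}}$ itself admitting a strict decomposition, i.e.\ one may take $N=1$).

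I do not expect a genuine obstacle here: the entire content is contained in Proposition~\ref{P:vanCoh}, and the corollary amounts to choosing the two relevant degrees and invoking the resolution hypothesis to make the alteration degree trivial. The only point needing a word of care is the passage from ``killed by $N$'' to ``$=0$'' for $\mathcal H^1$ in the threefold case, which I would justify via the torsion-freeness recalled in Remark~\ref{R:Tors3fold} together with $N$ being a nonzero natural number; the remaining verifications are routine bookkeeping of dimensions and cohomological degrees.
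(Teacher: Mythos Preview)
Your proposal is correct and follows essentially the same approach as the paper: apply Proposition~\ref{P:vanCoh} in degrees $i=1$ and $i=2d_X-1$ using the resolution as a degree-one alteration, then for threefolds invoke resolution of singularities for surfaces and the torsion-freeness of $\mathcal H^1$ to upgrade ``killed by $N$'' to actual vanishing. The only cosmetic difference is that you cite Remark~\ref{R:Tors3fold} for the torsion-freeness of $\mathcal H^1$, while the paper phrases it directly via the identification of $\mathcal H^1(X)$ with the first cohomology of the (reduced) Picard/Albanese variety; these are the same fact. One small wording issue: you call $N$ a ``nonzero natural number'', but in the statement $N\in R$---the argument you want is that the image of $N$ in the coefficient ring $R_{\mathcal H}$ is either zero (vacuous) or a nonzero element of the integral domain $R_{\mathcal H}$, whence torsion-freeness forces $\mathcal H^1(X)=0$.
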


 \begin{proof}
  This is immediate from the proposition.  
  The key point is that $\mathcal{H}^1(X)$ and $\mathcal{H}^{2d_X-1}(X)$ model
  the cohomology of the Abelian varieties $(\mathrm{Pic}^0_{X_K/K})_{\mathrm{red}}$ and $\mathrm{Alb}_{X_K/K}$, respectively, and that $\mathcal{H}^1(X)$ is torsion while torsion in $\mathcal{H}^{2d_X-1}(X)$ is killed by multiplication by $N$ under our assumption
  on the decomposition of the diagonal by virtue of Corollary~\ref{C:V-AJ4.4}).  
  The assertion for threefolds follows as there is resolution
  of singularities for surfaces over perfect fields.  For the case where $X$ is assumed to be geometrically rationally chain connected, see Remark~\ref{R:RCDecDiag}.
 \end{proof}

\begin{rem}[Vanishing Hodge numbers]\label{R:SRC-Hodge}
Let $X$ be a smooth projective variety over a field $K$.   If $X$ is geometrically  rationally chain connected and $K$ is perfect, the previous corollary implies that $\mathcal H^1(X)=0$. In characteristic $0$, this implies the vanishing of $H^0(X,\Omega_X)$ and $H^1(X,\mathcal O_X)$\,; here we recall some further results on vanishing Hodge numbers.  
First, if $X$ is geometrically separably rationally connected  over a field $K$,  then    $H^0(X,
	\Omega_{X}^{\otimes i})=0$ for all $i>0$ \cite[Cor.~IV.3.8]{kollar}, and 
	$H^1(X,\mathcal O_X)=0$ \cite[Thm.~p.872]{gounelas}. 
More generally, if $\Delta_{X_{\bar K}}$ admits a strict \emph{Chow} decomposition, then 	\cite[Lem.~2.2]{totaroHype} implies that $H^0(X,\Omega_X^i)=0$ for $i>0$. 
If $K\subseteq \mathbb C$,  the 
	\emph{proof} of  \cite[Thm.~3.13]{voisinDiag}, or equivalently of 	\cite[Thm.~10.17, p.294]{voisinII}, shows that if  for some natural number~$N$  the  multiple $N\Delta_{X_\cx}\in
	\chow^{d_X}(X_\cx\times_\cx X_\cx)$   admits a cohomological decomposition of type
	$(d_1,d_2)$ with respect to $H^\bullet((-)^{an},\mathbb Z)$, then $H^0(X,\Omega_X^i)=0$ for all $i>d_2$.
\end{rem}

 \subsection{Cohomological decomposition of the diagonal and algebraic cycle classes}\label{S:DD-AlgCyc}
 
\begin{dfn}[Algebraic cohomology classes]
	 \label{D:algebraic}
	Let $X$ be a smooth projective variety over a perfect field $K$. Let $R\to R_{\mathcal H}$ be a ring homomorphism. We say that a class $\alpha \in \mathcal H^{2i}(X)(i)$ is $R$-\emph{algebraic} if it is in the image of the cycle class map 
 		$$[-]:\operatorname{CH}^i(X)_R\to \mathcal H^{2i}(X)(i).$$ 
We say that $\mathcal H^{2i}(X)$ is $R$-algebraic if the map above is surjective.  We say that  
$\alpha \in \mathcal H^{2i}(X)(i)$ is \emph{algebraic}  (resp.~$\mathcal H^{2i}(X)$ is \emph{algebraic}) 
if it is $R_{\mathcal H}$-algebraic.  In other words, $\mathcal H^{2i}(X)$  is algebraic if 
$\mathcal
	H^{2i}(X)(i)$ is spanned over $R_{\mathcal H}$ by the image of the cycle
	class map
	$[-]:\operatorname{CH}^i(X)\to \mathcal H^{2i}(X)(i)$.  
   \end{dfn}
    Note that we require algebraic cohomology classes to be the classes of algebraic cycles on $X$, rather than on $X_{\bar K}$. This leads to some subtleties. 
  For instance, even
 for $i=0$, one may have $\mathcal H^{0}(X)$ is not algebraic: consider $X$ connected but not geometrically connected.
 Likewise, even
 for $i=d_X$, one may have $\mathcal H^{2d_X}(X)$ is not algebraic:  taking $X$ to be a smooth conic over $\mathbb Q$ with no
 $\mathbb Q$-points, $H^2(X_{\bar {\mathbb Q}},\mathbb Z_2)$ is not
spanned by zero-cycles on $X$, as there is no zero-cycle of odd degree defined over
 $\mathbb Q$.  Note however that, if $K$ is a finite field, then any variety over $K$ admits a zero-cycle of degree-$1$ and hence $\mathcal H^{2d_X}(X)$ is indeed algebraic in that case.

 \begin{pro}\label{P:AlgCyc}
  Let $X$ be a smooth projective variety over a perfect field
  $K$.
   Assume that for some natural number $N$  the multiple $N \Delta_{X}\in \chow^{d_X}(X\times_KX)_R$  admits
   a cohomological $R$-decomposition  of type
$(W_1,W_2)$  with $\dim W_1\le d_X - 1$ and $\dim W_2\le 1$.
  Let $\widetilde W_1\to W_1$ be an alteration of degree $M$
  invertible in $R_{\mathcal H}$ with $\widetilde{W}_1$ smooth projective over $K$.
   \begin{enumerate}

   \item  If $\mathcal H^{2i-2}(\widetilde W_1)$ is $R$-algebraic (\emph{e.g.}, $2i\le 2$),
   then  $N\cdot \mathcal H^{2i} (X)$ is $R$-algebraic.

   \item If $\mathcal H^{2i}(\widetilde W_1)$ is $R$-algebraic  (\emph{e.g.}, $2i\geq2d_X-2$),
   then   $N\cdot \mathcal H^{2i} (X)$ is $R$-algebraic.
  \end{enumerate}

 \end{pro}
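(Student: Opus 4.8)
The plan is to mimic the proof of \cite[Thm.~4.4(ii)]{voisinAJ13}, \emph{i.e.}, to exploit the factorization of the correspondence $p^e\cdot N\Delta_X$ in cohomology provided by Lemma~\ref{L:factorcorres} and \eqref{E:Zj*diagH2}. Write $N\Delta_X = Z_1 + Z_2$ in $\mathcal H^{2d_X}(X\times_K X)(d_X)$ with $Z_1$ supported on $W_1\times_K X$ and $Z_2$ supported on $X\times_K W_2$, where $\dim W_1\le d_X-1$ and $\dim W_2\le 1$. Acting by the correspondence $N\Delta_X$ on $\mathcal H^{2i}(X)(i)$ gives multiplication by $N$, so $N\cdot\alpha = (Z_1)_*\alpha + (Z_2)_*\alpha$ for every $\alpha\in \mathcal H^{2i}(X)(i)$. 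It therefore suffices to show that $(Z_1)_*\alpha$ and $(Z_2)_*\alpha$ are each $R$-algebraic (up to absorbing the harmless factor $p^e$: since $p^e$ is invertible in $R_{\mathcal H}$ when $\operatorname{char}(K)\neq 0$, multiplication by $p^e$ does not affect algebraicity, and when $\operatorname{char}(K)=0$ we have $e=0$).

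First I would handle $(Z_1)_*$. By Lemma~\ref{L:factorcorres} applied to $W_1$ (using the alteration $\widetilde W_1\to W_1$ of degree $M$ invertible in $R_{\mathcal H}$, which exists in the situations listed at the start of this section), the correspondence $p^e Z_1$ factors as $\widetilde Z_1\circ\tilde\gamma_1$ through $\widetilde W_1$, smooth projective of dimension $\le d_X-1$. Thus $p^e(Z_1)_*\alpha = (\widetilde Z_1)_*(\tilde\gamma_1)_*\alpha$, where $(\tilde\gamma_1)_*\alpha \in \mathcal H^{2i-2}(\widetilde W_1)(i-1)$ by the shift in degree/twist recorded in \eqref{E:Zj*diagH2}. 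Under hypothesis (1), $(\tilde\gamma_1)_*\alpha$ is the class of a cycle $\beta\in\operatorname{CH}^{i-1}(\widetilde W_1)_R$, and then $(\widetilde Z_1)_*[\beta] = [(\widetilde Z_1)_*\beta]$ by compatibility of the cycle class map with the action of correspondences on smooth projective varieties; hence $p^e(Z_1)_*\alpha$, and so $(Z_1)_*\alpha$, is $R$-algebraic.

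For $(Z_2)_*$, the cycle $Z_2$ is supported on $X\times_K W_2$ with $\dim W_2\le 1$, so pulling back a degree-$2i$ class to $W_2$ and pushing forward lands in $\mathcal H^{2i}(X)(i)$ only through the transpose factorization through a smooth model $\widetilde W_2$ of $W_2$ of dimension $\le 1$; but $(Z_2)_*\alpha$ is a pushforward of a class on $\widetilde W_2$, which is either $0$ (if $2i-2(d_X-\dim W_2)<0$) or lands in $\mathcal H^0$ or $\mathcal H^2$ of a curve, hence is automatically $R$-algebraic (the class of a $0$-cycle, respectively of a combination of the components). More simply, one observes that $(Z_2)_*\alpha$ lies in the image of $\operatorname{CH}^\bullet(\widetilde W_2)_R$ under pushforward along a morphism $\widetilde W_2\to X$, and every cohomology class on a smooth projective curve is $R$-algebraic. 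Adding the two contributions gives $N\cdot\alpha = (Z_1)_*\alpha + (Z_2)_*\alpha$ is $R$-algebraic, which is exactly assertion (1); assertion (2) follows by the symmetric argument, applying the alteration hypothesis to $\mathcal H^{2i}(\widetilde W_1)$ directly (using the other factorization in \eqref{E:Zj*diagH2}, where $(\tilde\gamma_1)^*$ lands in $\mathcal H^{2i}(\widetilde W_1)(i)$ with no degree shift), the examples $2i\le 2$ and $2i\ge 2d_X-2$ being the cases where $\widetilde W_1$ of dimension $\le d_X-1$ automatically has the relevant cohomology group algebraic (divisors, respectively zero and the class of a point together with $\mathcal H^0$).

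The main obstacle is purely bookkeeping: tracking the Tate twists and cohomological degrees through the factorization of \eqref{E:Zj*diagH2} so that $(\tilde\gamma_1)_*\alpha$ genuinely lands in $\mathcal H^{2(i-1)}(\widetilde W_1)(i-1)$ (in case (1)) or $\mathcal H^{2i}(\widetilde W_1)(i)$ (in case (2)), and verifying that the $W_2$-term contributes nothing new to algebraicity because $\dim W_2\le 1$; there is no conceptual difficulty, since the compatibility of cycle class maps with correspondences on smooth projective varieties over $K$ is standard. The only subtlety worth flagging explicitly is that algebraicity here means algebraicity by cycles defined over $K$ (not $\bar K$), but this is preserved by the construction because all the varieties $\widetilde W_1$, $\widetilde W_2$ and the correspondences $\widetilde Z_i$, $\tilde\gamma_i$ in Lemma~\ref{L:factorcorres} are defined over $K$ (the perfectness of $K$ and resolution/$\ell'$-alterations over $K$ being exactly what guarantees this).
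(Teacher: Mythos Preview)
Your approach is exactly the paper's: apply the factorization of \eqref{E:Zj*diagH2} through the alteration $\widetilde W_1$, together with the (essentially trivial) contribution from $\widetilde W_2$. The only correction needed is that you have the two factorizations swapped: for case~(1) one must use the \emph{contravariant} action $Z_1^*$ (the left triangle of \eqref{E:Zj*diagH2}), since it is $\widetilde Z_1^*$ that lands in $\mathcal H^{2i-2}(\widetilde W_1)(i-1)$, while the covariant action $(Z_1)_*$ (right triangle) passes through $\mathcal H^{2i}(\widetilde W_1)$ with no degree shift and is the one relevant for case~(2). Your justification for absorbing the factor $p^e$ (or $M$) is also slightly off: invertibility in $R_{\mathcal H}$ does not by itself preserve $R$-algebraicity of a class, but what makes it harmless here is that the hypothesis asserts the \emph{entire group} $\mathcal H^{2i-2}(\widetilde W_1)(i-1)$ is $R$-algebraic, so $M^{-1}\widetilde Z_1^*\alpha$, being some element of that group, is again the class of a cycle with $R$-coefficients and can be pushed forward by $\tilde j_{1*}$.
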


 \begin{proof}
  This follows directly from the proof of \cite[Thm.~1(iv)]{BlSr83}
  or  \cite[Thm.~4.4(ii)]{voisinAJ13}\,; \emph{i.e.}, from  the factorization of
  correspondences in cohomology \eqref{E:Zj*diagH}.
    \end{proof}

   Note that if the image of $N$ is  zero in the coefficient ring of the cohomology theory, the
  conclusions of Proposition \ref{P:AlgCyc} are trivial.

 \begin{cor}\label{C:AlgCyc}
    Let $X$ be a smooth projective variety over an algebraically closed 
         field  $K$. If  $\Delta_X\in \chow^{d_X}(X\times_KX)$  admits a cohomological $R$-decomposition  of type
  $(W_1,W_2)$  with $\dim W_1\le d_X - 1$ and $\dim W_2\le 1$, and with $W_1$ admitting a  resolution of
  singularities, then
  $\mathcal H^{2i} (X)$ is $R$-algebraic for  $2i\le 2$ and $2i \ge 2d_X-2$.
 
  In particular, 
     for any smooth
  projective \emph{threefold} $X$ over an algebraically closed field $K$ such that
 $\Delta_X\in \chow^{d_X}(X\times_KX)$    admits a cohomological  $R$-decomposition  of type $(2,1)$, we have $\mathcal
  H^{2i} (X)$ is $R$-algebraic for all  $i$.
 \end{cor}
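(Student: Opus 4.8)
The plan is to apply Proposition~\ref{P:AlgCyc} with $N = 1$, exploiting that over an algebraically closed field $K$ a surface admits a genuine resolution of singularities, so that the alteration $\widetilde W_1 \to W_1$ can be taken of degree $M = 1$ and with $\widetilde W_1$ smooth projective over $K$. First I would unwind the hypothesis: by assumption $\Delta_X \in \chow^{d_X}(X\times_K X)$ (that is, $N=1$) admits a cohomological $R$-decomposition of type $(W_1, W_2)$ with $\dim W_1 \le d_X - 1$ and $\dim W_2 \le 1$, and $W_1$ admits a resolution of singularities $\widetilde W_1 \to W_1$ with $\widetilde W_1$ smooth projective over $K$. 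Since $K$ is algebraically closed, every point of $\widetilde W_1$ is a $K$-point, so the cycle class map $\chow^0(\widetilde W_1)_R \to \mathcal H^0(\widetilde W_1)$ is surjective onto each connected component's copy of $R_{\mathcal H}$; hence $\mathcal H^0(\widetilde W_1)$ is $R$-algebraic, which handles the case $2i \le 2$ of Proposition~\ref{P:AlgCyc}(1) (the case $2i = 0$ being trivial and $2i = 2$ being covered once we note $\mathcal H^{2i-2} = \mathcal H^0$). Similarly, $\mathcal H^{2\dim \widetilde W_1}(\widetilde W_1)$ is $R$-algebraic — it is generated by the class of a point — which gives the case $2i \ge 2d_X - 2$ via Proposition~\ref{P:AlgCyc}(2), since then $\mathcal H^{2i}(\widetilde W_1) = \mathcal H^{\le 2\dim W_1}(\widetilde W_1)$ lies in the top or near-top degree of the surface $\widetilde W_1$ (using $\dim W_1 \le d_X - 1$). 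Concretely, for $2i = 2d_X$ the statement $\mathcal H^{2d_X}(X)$ is $R$-algebraic is clear since $X$ has a $K$-point; for $2i = 2d_X - 2$ one needs $\mathcal H^{2d_X-2}(\widetilde W_1)$ to be $R$-algebraic, and since $\dim \widetilde W_1 = \dim W_1 \le d_X - 1$, this is top-degree cohomology of $\widetilde W_1$, generated by a point class.

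The second assertion, about threefolds, follows by specialization: when $d_X = 3$, a cohomological $R$-decomposition of type $(2,1)$ means exactly $\dim W_1 \le 2 = d_X - 1$ and $\dim W_2 \le 1$, so the hypotheses of the first part are met (resolution of singularities for surfaces over a perfect — in particular algebraically closed — field is unconditional, cf.\ the discussion before Proposition~\ref{P:V-AJ4.4}). The conclusion "$\mathcal H^{2i}(X)$ is $R$-algebraic for $2i \le 2$ and $2i \ge 2d_X - 2 = 4$" then covers $i = 0, 1, 2, 3$, i.e.\ all $i$, since the only even cohomological degrees of a threefold are $0, 2, 4, 6$. So I would simply remark that $\{2i : 2i \le 2\} \cup \{2i : 2i \ge 4\} = \{0, 2, 4, 6\}$ exhausts all even degrees in the range $[0, 2d_X]$.

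The main obstacle — or rather the one subtle point to be careful about — is the bookkeeping around the coefficient ring $R$ versus $R_{\mathcal H}$ and the fact that "$R$-algebraic" in Definition~\ref{D:algebraic} refers to cycles on $X$ itself, not on $X_{\bar K}$; here this is harmless precisely because $K$ is assumed algebraically closed, so $X = X_{\bar K}$ and there is no descent issue (contrast the cautionary examples in the remark following Definition~\ref{D:algebraic}). One also wants $M = 1$ in the alteration so that the factorization \eqref{E:Zj*diagH} (equivalently \eqref{E:Zj*diagH2}) with $p^e Z$ replaced by $Z$ itself — valid since with $N = 1$ and resolution of singularities available we take $e = 0$, $M = 1$ — transports algebraicity from $\widetilde W_1$ to $X$ without introducing a multiplicative constant; this is exactly what Proposition~\ref{P:AlgCyc} packages, so invoking it directly is cleanest. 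Beyond that the argument is a direct citation of Proposition~\ref{P:AlgCyc} together with the observation that the relevant cohomology of a smooth projective surface (or variety of dimension $\le d_X - 1$) over $K$ in degrees $0$ and top is manifestly $R$-algebraic.
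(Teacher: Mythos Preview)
Your proposal is correct and follows exactly the paper's approach: the paper's own proof is the single sentence ``This is immediate from Proposition~\ref{P:AlgCyc},'' and you have correctly unpacked that citation by taking $N=1$, using the assumed resolution of singularities to get $M=1$, and observing that over an algebraically closed field $\mathcal H^0(\widetilde W_1)$ and $\mathcal H^{2\dim\widetilde W_1}(\widetilde W_1)$ are $R$-algebraic. One minor wording point: the reason $\mathcal H^0(\widetilde W_1)$ is $R$-algebraic is that over $K=\bar K$ each connected component is geometrically connected (so its fundamental class generates its copy of $R_{\mathcal H}$), rather than that ``every point is a $K$-point'' --- the latter is the relevant observation for top degree.
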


 \begin{proof}
  This is immediate from Proposition~\ref{P:AlgCyc}.
 \end{proof}

\subsection[Cohomological decomposition of the diagonal and the Bloch map]{Cohomological decomposition of the diagonal and  the Bloch map}

We observe here that the assumption of \cite[Prop.~5.2]{ACMVBlochMap} involving a Chow decomposition can be weakened  to a  \emph{cohomological} decomposition\,:

\begin{pro}\label{P:lambdacoho}
    Let $X$ be a smooth projective variety over a perfect field $K$ of characteristic exponent~$p$.  Fix a natural number $N$ and a prime $\ell$.  Either let $R_{\mathcal H} = \integ_\ell$ and assume $\ell\nmid  pN$, or let $R_{\mathcal H} = \rat_\ell$ and assume $\ell\not = p$. Fix a ring homomorphism $R \to R_{\mathcal H}$.  Assume that $N\Delta_X\in \chow^{d_X}(X\times_KX)$ admits an \emph{$R$-cohomological} decomposition of type ($d_X-1,2)$ with respect to $H^\bullet(-,R_{\mathcal H})$.

  Then the inclusion $V_\ell \operatorname{A}^2(X_{\bar K})\hookrightarrow V_\ell \operatorname{CH}^2(X_{\bar K})$ is an isomorphism of $\gal(K)$-modules, and the second $\ell$-adic Bloch map
  $$ V_\ell \lambda^2: V_\ell \operatorname{CH}^2(X_{\bar K}) \longrightarrow H^3(X_{\bar K},\rat_\ell(2))$$ is an isomorphism of Galois modules.

  Moreover, if $R_{\mathcal H} = \integ_\ell$, then
  		$$T_\ell \lambda^2 : T_\ell \operatorname{CH}^2(X_{\bar K}) \longrightarrow
		H^3(X_{\bar K},\integ_\ell(2))_\tau$$
	        is an isomorphism of $\operatorname{Gal}(K)$-modules\,; and if if $\dim W_1\le d_X-1$ and $\dim W_2\le 1$, then $H^3(X_{\bar K},\integ_\ell)$ is
                torsion-free.
\end{pro}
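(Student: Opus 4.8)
The plan is to follow the proof of \cite[Prop.~\ref{ACMVBlochMap:P:lambda}]{ACMVBlochMap}, which reaches the same conclusions starting from a \emph{Chow} decomposition of $N\Delta_X$, while isolating the one step where integrality of that decomposition is used and checking that a cohomological decomposition suffices there. First I would make the standard reductions. Since $T_\ell$, $V_\ell$, the Bloch map and the cohomological decomposition all survive extension of scalars, one may assume $X$ geometrically connected and work over $\bar K$; the cases $d_X\le 1$ are vacuous; replacing $R$ by $R_{\mathcal H}$ (base-changing $Z_1,Z_2$ along $R\to R_{\mathcal H}$) changes nothing; and enlarging $W_1$ by a hyperplane section and $W_2$ by a general linear section lets me assume $\dim W_1=d_X-1$ and $\dim W_2=2$.

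\emph{Step 1 (the crucial point).} I would show that a self-correspondence $\Gamma$ of $X_{\bar K}$ with $R_{\mathcal H}$-coefficients that is cohomologically trivial for $H^\bullet(-,R_{\mathcal H})$ acts as $0$ on $T_\ell\chow^2(X_{\bar K})$ when $R_{\mathcal H}=\integ_\ell$, and on $V_\ell\chow^2(X_{\bar K})$ when $R_{\mathcal H}=\rat_\ell$. The inputs are: that $T_\ell\lambda^2$ and $V_\ell\lambda^2$ are injective and compatible with the action of correspondences --- the injectivity being, via Merkurjev--Suslin, Bloch's injectivity of the mod-$\ell^n$ Bloch map on $\ell^n$-torsion codimension-$2$ cycle classes \cite{bloch79} (recalled in \cite{ACMVBlochMap}); and that $\ell$-adic cohomology of a proper $\bar K$-variety is $\ell$-complete with finite mod-$\ell^n$ terms, so that $[\Gamma]=0$ in $\integ_\ell$-cohomology forces $[\Gamma]=0$ mod $\ell^n$ for every $n$. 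Hence $\Gamma_*=0$ on $H^3(X_{\bar K},\integ/\ell^n(2))$, hence (by injectivity of the mod-$\ell^n$ Bloch map) on $\chow^2(X_{\bar K})[\ell^n]$, hence on $T_\ell\chow^2(X_{\bar K})$; the $\rat_\ell$ case is immediate from injectivity of $V_\ell\lambda^2$. Applying this to $\Gamma=N\Delta_X-Z_1-Z_2$, the relation $N\cdot\operatorname{id}=(Z_1)^*+(Z_2)^*$ --- valid on $H^3(X_{\bar K},R_{\mathcal H}(2))_\tau$ by the cohomological decomposition --- holds verbatim on $T_\ell\chow^2(X_{\bar K})$ (resp.\ on $V_\ell\chow^2(X_{\bar K})$).

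\emph{Step 2 (factorization and reduction to known cases).} By Lemma~\ref{L:factorcorres}, applied componentwise and with a power of $p$ inverted (harmless since $\ell\neq p$), $Z_1$ factors through a smooth proper $\widetilde W_1$ of pure dimension $d_X-1$ and $Z_2$ through a smooth proper surface $\widetilde W_2$ (resolution of singularities for surfaces over the perfect field $\bar K$ makes $\widetilde W_2$ a variety over $\bar K$). Because $W_1$ is a divisor, restricting a codimension-$2$ class to $\widetilde W_1$ drops it to codimension $1$, and --- accounting for the Tate twist in the factorization diagrams \eqref{E:Zj*diag} and \eqref{E:Zj*diagH2} --- drops $H^3(-,\integ_\ell(2))$ to $H^1(-,\integ_\ell(1))$; so $(Z_1)^*$, acting on $T_\ell\chow^2(X_{\bar K})$ and on $H^3(X_{\bar K},\integ_\ell(2))_\tau$, factors through $T_\ell\chow^1(\widetilde W_{1,\bar K})$, resp.\ $H^1(\widetilde W_{1,\bar K},\integ_\ell(1))$, compatibly with $\lambda^1_{\widetilde W_1}$; similarly $(Z_2)^*$ factors through $T_\ell\chow_0(\widetilde W_{2,\bar K})$, resp.\ $H^3(\widetilde W_{2,\bar K},\integ_\ell(2))_\tau$, compatibly with $\lambda^2_{\widetilde W_2}$. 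Now $\lambda^1$ on a smooth proper variety (Kummer theory) and $\lambda^2$ on $0$-cycles of a smooth proper surface (Roitman) are isomorphisms of $\operatorname{Gal}(K)$-modules; feeding these into the identity of Step 1 exhibits $N\cdot\operatorname{id}$ on $T_\ell\chow^2(X_{\bar K})$ (resp.\ $V_\ell$) as factoring through groups on which the Bloch map is an isomorphism, and a diagram chase then forces $T_\ell\lambda^2$ (resp.\ $V_\ell\lambda^2$) to be an isomorphism after inverting $N$, hence an isomorphism, $N$ being a unit in $R_{\mathcal H}$ (for $R_{\mathcal H}=\integ_\ell$ this is the hypothesis $\ell\nmid N$). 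For the equality $V_\ell\operatorname{A}^2(X_{\bar K})=V_\ell\chow^2(X_{\bar K})$, the same factorization places $N\cdot V_\ell\chow^2(X_{\bar K})$ in the image, under correspondences, of $V_\ell\operatorname{A}^1(\widetilde W_{1,\bar K})\oplus V_\ell\operatorname{A}_0(\widetilde W_{2,\bar K})$ (which coincides with $V_\ell\chow^1\oplus V_\ell\chow_0$, the N\'eron--Severi and degree quotients being finitely generated), and this image lies in $V_\ell\operatorname{A}^2(X_{\bar K})$ since correspondences preserve algebraic triviality; dividing by $N$ finishes it. Finally, the torsion-freeness of $H^3(X_{\bar K},\integ_\ell)$ under the extra hypothesis $\dim W_2\le 1$ I would deduce from Proposition~\ref{P:V-AJ4.4}(1) with $i=3$, using that $H^1$ of a smooth proper variety is torsion-free and $N$ is a unit in $\integ_\ell$.

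The main obstacle is Step 1 --- extracting Chow-theoretic consequences from a merely cohomological decomposition. Everything else is the Bloch--Srinivas factorization machinery already in place in \S\ref{S:DD-Tor} and \S\ref{S:DD-AlgCyc}; the genuinely new point is that, in codimension $2$, a cohomologically trivial self-correspondence annihilates $T_\ell$ and $V_\ell$ of $\chow^2$, which one obtains only by combining Bloch's injectivity theorem (via Merkurjev--Suslin) with the correspondence-functoriality of $\lambda^2$. The remaining technical care is the bookkeeping of cohomological degrees and Tate twists in the factorization diagrams, which must come out so that the contributions of $Z_1$ and $Z_2$ land in codimension $1$ on $\widetilde W_1$ and in $0$-cycles on $\widetilde W_2$ --- and this is precisely where the hypotheses $\dim W_1\le d_X-1$ and $\dim W_2\le 2$ get used.
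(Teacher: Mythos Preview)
Your proposal is correct and follows essentially the same route as the paper: factor the correspondences $Z_1,Z_2$ through smooth $\widetilde W_1$ (a divisor) and $\widetilde W_2$ (a surface) via Lemma~\ref{L:factorcorres}, use that $T_\ell\lambda^1$ and $T_\ell\lambda^2$ on a surface are isomorphisms (Kummer, Roitman), and diagram-chase using that the composite acts as $Np^e$ on $H^3$. The only organizational difference is your Step~1: you explicitly isolate the principle that a cohomologically trivial $R_{\mathcal H}$-correspondence annihilates $T_\ell\chow^2$ (via injectivity and naturality of $T_\ell\lambda^2$), whereas the paper simply writes down the commutative square \eqref{E:diagfact2} with \emph{integral} factored correspondences on the Chow side and invokes the cohomological identity only on the bottom row---so it never needs to make sense of $R_{\mathcal H}$-coefficient correspondences acting on $\chow^2[\ell^n]$, and the surjectivity of $T_\ell\lambda^2$ drops out of a one-line chase. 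Your detour through mod-$\ell^n$ is unnecessary (the statement on $T_\ell$ follows directly from injectivity of $T_\ell\lambda^2$ into $H^3(-,\integ_\ell)_\tau$), but harmless. For torsion-freeness the paper gives the same factorization argument you cite via Proposition~\ref{P:V-AJ4.4}.
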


\begin{proof}  
 	 The proof is exactly the same as that of \cite[Prop.~5.2]{ACMVBlochMap}, where the assertion is made only for a Chow decomposition of the diagonal. 
 For convenience, we include the proof here for a cohomological $R$-decomposition with respect to $H^\bullet (-,\mathbb Z_\ell)$\,; the case of $H^\bullet (-,\mathbb Q_\ell)$ is identical. 
    Let $\ell$ be a prime.
	That $T_\ell \lambda^2$ is an injective morphism of  $\operatorname{Gal}(K)$-modules was reviewed in \cite{ACMVBlochMap}.
	Using Lemma~\ref{L:factorcorres},
	one decomposes $Np^e\Delta_X^*=p^e Z_1^*+p^e Z_2^*$, with factorizations $p^e Z_1^*=(\tilde j_1)_*\circ  \widetilde Z_1^*$ and $p^e Z_2^*= (\widetilde Z_2)_*\circ \tilde j_2^*$. (For use in Lemma \ref{L:lambdacohop} below, we note that we can set $e=0$ in case $\dim X \leq 4$ due to resolution of singularities \cite{CPRes2} in dimensions $\leq 3$.)  We obtain by the naturality of the $\ell$-adic Bloch map  a commutative diagram
		\begin{equation}\label{E:diagfact2}
	\xymatrix{ T_\ell\operatorname{CH}^2(X_{\bar K}) \ar[r]^{\widetilde Z_1^*\oplus \tilde j_2^*\qquad \qquad} \ar[d]^{T_\ell\lambda^2} & T_\ell\operatorname{CH}^1((\widetilde{W}_1)_{\bar K}) \oplus  T_\ell \operatorname{CH}^2((\widetilde{W}_2)_{\bar K}) \ar[r]^{\qquad \qquad (\tilde j_1)_*+(\widetilde Z_2)_*} \ar[d]_{\simeq}^{T_\ell\lambda^1_{\widetilde{W}_1}\oplus T_\ell \lambda^2_{\widetilde{W}_2}}&  
		T_\ell	\operatorname{CH}^2(X_{\bar K}) \ar[d]^{T_\ell\lambda^2} \\
		H^3(X_{\bar K},\integ_\ell(2))_\tau  \ar[r]^{\widetilde Z_1^*\oplus \tilde j_2^*\qquad \qquad \quad} & H^1((\widetilde{W}_1)_{\bar K}, \integ_\ell(1))\oplus H^3((\widetilde{W}_2)_{\bar K}, \integ_\ell(2))_\tau \ar[r]^{\qquad \qquad \quad  (\tilde j_1)_*+(\widetilde Z_2)_*}   & 	H^3(X_{\bar K},\integ_\ell(2))_\tau.
	}
	\end{equation}
      	The middle vertical arrow is an isomorphism by Kummer theory and Rojtman's theorem (see \cite{ACMVBlochMap}), while the composition of the (bottom) horizontal arrows is multiplication by $Np^e$.
 	 A diagram chase then establishes the surjectivity of $T_\ell \lambda^2$. 
	
Finally, in case $\dim W_2\le 1$, that 	$H^3(X_{\bar K},\integ_\ell(2))$ is torsion-free follows simply from the factorization of the multiplication by $Np^e$ map 
$$ \xymatrix{
	H^3(X_{\bar K},\integ_\ell(2))  \ar[r]^{\widetilde Z_1^* \quad } & H^1((\widetilde{W}_1)_{\bar K}, \integ_\ell(1)) \ar[r]^{\quad   (\tilde j_1)_*}   & 	H^3(X_{\bar K},\integ_\ell(2))
}$$
and the fact that  $H^1((\widetilde{W}_1)_{\bar K}, \integ_\ell(1))$ is torsion-free.
\end{proof}

For the sake of completeness, we record an analogous statement about $p$-torsion in Chow.  Note that in positive characteristic $p$, crystalline cohomology, unlike $H^\bullet(-,\rat_p)$, is a Weil cohomology.

\begin{lem}\label{L:lambdacohop}   Let $X$ be a smooth projective variety over a perfect field $K$ of characteristic $p>0$.  Let $R_{\mathcal H}$ be either $\ww(K)$ or $\mathbb B(K)$, and fix a ring homomorphism $R \to R_{\mathcal H}$. Assume that for some natural number~$N$, we have that $N\Delta_X \in \operatorname{CH}^{d_X}(X\times_KX)$ admits an $R$-cohomological decomposition of type $(d_X-1,2)$ with respect to $H^\bullet_\cris(-/R_{\mathcal H})$.

  Then the inclusion $V_p\A^2(X_{\bar K}) \hookrightarrow V_p \operatorname{CH}^2(X_{\bar K})$ is an isomorphism of $\gal(K)$-modules,
and  the second $p$-adic Bloch map 
		$$V_p \lambda^2 : V_p \operatorname{CH}^2(X_{\bar K}) \longrightarrow
H^3(X_{\bar K},\rat_p(2))$$
is an isomorphism of $\gal(K)$-modules.

Moreover, if $R_{\mathcal H} = \ww(K)$, if $p\nmid N$, and if resolution of singularities holds in dimension $< d_X$, then $T_p \A^2(X_{\bar K}) \to T_p \operatorname{CH}^2(X_{\bar K})$ is an isomorphism, and the second $p$-adic Bloch map
		$$T_p \lambda^2 : T_p \operatorname{CH}^2(X_{\bar K}) \longrightarrow
		H^3(X_{\bar K},\integ_p(2))_\tau$$
	        is an isomorphism of $\operatorname{Gal}(K)$-modules.
                If in addition we have  $\dim W_1 \le d_X-1$ and $\dim W_2 \le 1$, then $H^3_\cris(X/\ww(K))$ and $H^3(X_{\bar K},\integ_p(2))$ are torsion-free.
\end{lem}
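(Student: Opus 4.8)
The plan is to reproduce the proof of Proposition~\ref{P:lambdacoho} (which is in turn modeled on \cite[Prop.~\ref{ACMVBlochMap:P:lambda}]{ACMVBlochMap}) essentially word for word, now taking the Weil cohomology theory to be crystalline cohomology $H^\bullet_\cris(-/R_{\mathcal H})$ — which in characteristic $p$ satisfies all the axioms needed for the factorization \eqref{E:Zj*diagH2} — and replacing the $\ell$-adic Bloch map by the $p$-adic Bloch map of Gros--Suwa. The inputs I would draw on are: functoriality of $\lambda^n$ in correspondences; injectivity of $V_p\lambda^2$ and, integrally, of $T_p\lambda^2$, recalled from \cite{ACMVBlochMap, grossuwaAJ} (or obtained along the way by the factorization argument behind Lemma~\ref{L:factorM}); the Kummer-theoretic description of $\lambda^1$, and the coincidence of the $0$-cycle Bloch map with the albanese map via the $p$-adic analogue of Rojtman's theorem; and the torsion-freeness of $H^1_\cris(-/\ww(K))$ (\cite[II.3.11.2]{illusiedRW}) and of $H^1(-,\integ_p(1))$.

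First I would establish the two rational statements. Applying Lemma~\ref{L:factorcorres} to the Chow cycles $Z_1,Z_2$ of the crystalline cohomological decomposition $[N\Delta_X]=[Z_1]+[Z_2]$, there are a nonnegative integer $e$ and factorizations $p^eZ_1=\widetilde Z_1\circ\tilde\gamma_1$ and $p^eZ_2=\widetilde Z_2\circ\tilde\gamma_2$ in $\operatorname{CH}$, with $\widetilde W_1$ and $\widetilde W_2$ smooth projective over $K$ of pure dimensions $d_X-1$ and $2$; since $\widetilde W_2$ is a surface, resolution of singularities over the perfect field $K$ allows its contribution to $e$ to be taken to be $0$. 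Forming the $V_p$-coefficient analogue of the square \eqref{E:diagfact2} — with left and right verticals $V_p\lambda^2$, middle term $V_p\operatorname{CH}^1((\widetilde W_1)_{\bar K})\oplus V_p\operatorname{CH}^2((\widetilde W_2)_{\bar K})$, and middle vertical $V_p\lambda^1_{\widetilde W_1}\oplus V_p\lambda^2_{\widetilde W_2}$ — the middle vertical is an isomorphism: Kummer theory together with finite generation of the N\'eron--Severi group handles the first summand, and the $p$-adic Rojtman theorem handles the $0$-cycles on the surface $\widetilde W_2$ in the second. The crystalline decomposition makes the bottom composition equal to multiplication by the scalar $Np^e$, which is invertible on the $\rat_p$-vector space target, so the usual diagram chase shows $V_p\lambda^2$ is surjective, hence an isomorphism; this is the second conclusion. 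Reading the top row, the operator $p^eZ_1^*+p^eZ_2^*$ on $V_p\operatorname{CH}^2(X_{\bar K})$ equals multiplication by $Np^e$ (by injectivity of $V_p\lambda^2$) and factors through $V_p\operatorname{CH}^1((\widetilde W_1)_{\bar K})=V_p\A^1((\widetilde W_1)_{\bar K})$ and $V_p\operatorname{CH}^2((\widetilde W_2)_{\bar K})=V_p\A^2((\widetilde W_2)_{\bar K})$ (for the surface, all torsion in $\operatorname{CH}_0$ lies in the degree-zero, hence algebraically trivial, part); its image therefore lies in $V_p\A^2(X_{\bar K})$, and invertibility of $Np^e$ forces $V_p\operatorname{CH}^2(X_{\bar K})=V_p\A^2(X_{\bar K})$; this is the first conclusion.

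For the integral assertions I would use the extra hypotheses to take $e=0$: resolution of singularities in dimensions $<d_X$ handles $\widetilde W_1$ (of dimension $\le d_X-1$) and resolution for surfaces handles $\widetilde W_2$, so Lemma~\ref{L:factorcorres} provides honest factorizations $Z_i=\widetilde Z_i\circ\tilde\gamma_i$ over $K$. Running the same diagram with $T_p$-coefficients and targets $H^3(-,\integ_p(2))_\tau$ and $H^1(-,\integ_p(1))$ (the latter torsion-free), the middle vertical is again an isomorphism (Kummer theory plus the integral $p$-adic Rojtman theorem). Since $p\nmid N$, multiplication by $N$ is a unit in $\integ_p$, so the same chase yields $T_p\A^2(X_{\bar K})=T_p\operatorname{CH}^2(X_{\bar K})$ and $T_p\lambda^2$ an isomorphism. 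Finally, when moreover $\dim W_2\le 1$, $\widetilde W_2$ is a smooth projective curve, so $H^3((\widetilde W_2)_{\bar K},\integ_p(2))=H^3_\cris(\widetilde W_2/\ww(K))=0$ and the $Z_2$-contribution acts as zero on $H^3$; hence multiplication by the unit $N$ on $H^3(X_{\bar K},\integ_p(2))$, respectively on $H^3_\cris(X/\ww(K))$, factors through the torsion-free group $H^1((\widetilde W_1)_{\bar K},\integ_p(1))$, respectively $H^1_\cris(\widetilde W_1/\ww(K))$, so the torsion subgroups of $H^3(X_{\bar K},\integ_p(2))$ and of $H^3_\cris(X/\ww(K))$ vanish.

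The step I expect to be the real obstacle — the one point that is not a mechanical transcription of the $\ell$-adic argument — is reconciling the two $p$-adic cohomology theories in play: the decomposition of the diagonal is assumed with respect to \emph{crystalline} cohomology, a genuine Weil cohomology for which \eqref{E:Zj*diagH2} holds, whereas the Bloch map takes values in the logarithmic de Rham--Witt theory $H^\bullet(X_{\bar K},\integ_p(\ast))$ of Gros--Suwa. One must check that the crystalline decomposition propagates to the correspondence operators on $H^\bullet(X_{\bar K},\integ_p(\ast))$ — equivalently, that the latter cycle class map is detected, at least rationally, by the crystalline one, which follows from the slope spectral sequence and the identification of logarithmic de Rham--Witt cohomology (after tensoring with $\rat_p$) with appropriate Frobenius eigenspaces inside crystalline cohomology over $\mathbb B(K)$ — and, for the integral torsion-freeness statement, that the torsion subgroups of the two theories are matched in the sense of \cite{illusiedRW}. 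All of this is implicit in \cite{ACMVBlochMap}. The only remaining wrinkle is purely arithmetic: $\ww(K)$ is not a field, so the step requiring $Np^e$ to be a unit, automatic over $\rat_\ell$, is recovered here by passing to $V_p$ in the rational case, or by forcing $e=0$ through resolution of singularities together with the hypothesis $p\nmid N$ in the integral case.
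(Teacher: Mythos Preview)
Your proposal is correct and follows exactly the approach the paper takes: the paper's proof is the single sentence ``The proof strategy is identical to that of Proposition~\ref{P:lambdacoho}, bearing in mind that a decomposition in crystalline cohomology induces maps on $p$-adic cohomology, and that the necessary properties of the $p$-adic Bloch map are secured by Gros and Suwa \cite{grossuwaAJ},'' and you have simply unpacked this in full detail, including correctly flagging the one non-mechanical point (passing from the crystalline decomposition to operators on $H^\bullet(X_{\bar K},\integ_p(*))$).
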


\begin{proof}
  The proof strategy is identical to that of Proposition \ref{P:lambdacoho}, bearing in mind that a decomposition in crystalline cohomology induces maps on $p$-adic cohomology, and that the necessary properties of the $p$-adic Bloch map are secured by Gros and Suwa \cite{grossuwaAJ}.
\end{proof}

We can similarly give a small improvement on \cite[Prop.~2.3(ii)]{BenWittClGr} regarding the Bloch map, which shows that under the stronger assumption that the decomposition is of type $(d_X-1,1)$, one gets the stronger conclusion that the $\ell$-adic Bloch map is an isomorphism at all primes.  
	 
\begin{pro}[{\cite[Prop.~2.3(ii)]{BenWittClGr}}]\label{P:lambdacohoBW}
		Let $X$ be a smooth projective variety over a perfect field $K$ of characteristic exponent~$p$.
		Fix a natural number $N$ and a  prime $l$\,; let $\mathcal H^\bullet(-)$ denote $H^\bullet(-,\integ_l)$ if $l\ne p$ and $H^\bullet_\cris(-/\ww(K))$ if $l=p$.  

Suppose that $N\Delta_{X}\in
		\operatorname{CH}^{d_X}(X\times_KX)$  admits a  
$\mathbb Z$-cohomological decomposition 
		of type $(d_X-1,1)$ with respect to $\mathcal H^\bullet$.  
Then the second Bloch map
	$$ \lambda^2 : \operatorname{A}^2(X_{\bar K})[l^\infty] \longrightarrow
		H^3(X_{\bar K},\integ_l(2))\otimes_{\mathbb Z_l}\mathbb Q_l/\mathbb Z_l$$	
			 is an isomorphism of $\operatorname{Gal}(K)$-modules.	
Taking Tate modules,  the second $l$-adic Bloch map
		$$T_l \lambda^2 : T_l \operatorname{A}^2(X_{\bar K}) \longrightarrow
		H^3(X_{\bar K},\integ_l(2))_\tau$$
	 is also an isomorphism of $\operatorname{Gal}(K)$-modules.
\end{pro}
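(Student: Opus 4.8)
The plan is to bootstrap from the corresponding statement in \cite{BenWittClGr}, Proposition 2.3(ii), which assumes only a cohomological decomposition of type $(d_X-1,2)$, and to exploit the extra hypothesis $\dim W_2 \le 1$ to upgrade the conclusion from "$T_l\lambda^2$ is an isomorphism after inverting finitely many primes" to "$T_l\lambda^2$ is an isomorphism for all primes $l$." Concretely, I would run the same diagram chase as in the proof of Proposition~\ref{P:lambdacoho}, but tracking torsion more carefully. First I would invoke Lemma~\ref{L:factorcorres} to write, after multiplying by a power $p^e$ of the characteristic exponent (which we may take to be $e=0$ here since $\dim X$ is irrelevant to this point — actually we need $e=0$ only when $p \mid N$ is an issue, so I would keep $e$ general and note it disappears when $l \ne p$, and handle $l = p$ via crystalline cohomology as in Lemma~\ref{L:lambdacohop}), a factorization $N p^e \Delta_X^* = p^e Z_1^* + p^e Z_2^*$ with $p^e Z_1^* = (\tilde j_1)_* \circ \widetilde Z_1^*$ through a smooth projective $\widetilde W_1$ of pure dimension $d_X - 1$, and $p^e Z_2^* = (\widetilde Z_2)_* \circ \tilde j_2^*$ through a smooth projective $\widetilde W_2$ of pure dimension $\le 1$, i.e.\ a smooth projective curve (or point).

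The key observation is then that $\operatorname{A}^2$ and $H^3$ of a curve behave well on torsion: for a smooth projective curve $C$, $\operatorname{CH}^2(C) = 0$, so the contribution $p^e Z_2^*$ factors through $\operatorname{A}^2(\widetilde W_2) = 0$ and $H^3(\widetilde W_2, \integ_l(2)) = 0$, and thus contributes nothing. Hence on the level of torsion (respectively Tate modules) the identity of $\operatorname{A}^2(X_{\bar K})[l^\infty]$ — times $Np^e$ — factors entirely through $\operatorname{A}^1((\widetilde W_1)_{\bar K})[l^\infty] = \operatorname{Pic}^0_{\widetilde W_1}(\bar K)[l^\infty]$, and similarly for $H^3(X_{\bar K}, \integ_l(2))$ it factors through $H^1((\widetilde W_1)_{\bar K}, \integ_l(1))$. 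I would then run the commutative diagram analogous to \eqref{E:diagfact2}, in which the relevant middle vertical arrow is the first Bloch map $\lambda^1$, which is an isomorphism on $l$-primary torsion (this is just Kummer theory: $\lambda^1$ identifies $\operatorname{Pic}^0[l^\infty]$ with $H^1(-,\integ_l(1)) \otimes \rat_l/\integ_l$, exactly as recorded in the proof of Proposition~\ref{P:lambdacoho}). A diagram chase then shows $\lambda^2$ on $l^\infty$-torsion is split injective with a retraction, forcing it to be an isomorphism of $\operatorname{Gal}(K)$-modules; since the middle term here is torsion-free after taking $T_l$, there is no issue with the power of $N$ surviving — the point is that $T_l(\tilde j_1)_* \circ T_l\widetilde Z_1^*$ composed with its counterpart gives multiplication by $Np^e$ on both $T_l\operatorname{A}^2$ and $H^3(X_{\bar K},\integ_l)_\tau$, and both being torsion-free the chase goes through regardless of whether $l \mid N$. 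Taking $T_l$ (inverse limit over $l^n$-torsion) gives the Tate-module statement. For $l = p$ I would substitute crystalline cohomology for $\integ_l$-étale cohomology and cite Gros--Suwa \cite{grossuwaAJ} for the required properties of the $p$-adic Bloch map, exactly as in Lemma~\ref{L:lambdacohop}.

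The main obstacle — though it is more bookkeeping than conceptual — is ensuring the argument is genuinely prime-independent, i.e.\ that no prime needs to be inverted. This is where the hypothesis $\dim W_2 \le 1$ is essential: if $\dim W_2 = 2$ one would only get $V_l \lambda^2$ an isomorphism (as in Proposition~\ref{P:lambdacoho}) because $H^3$ of a surface can have torsion and $\operatorname{CH}^2$ of a surface (the Albanese kernel) is only controlled rationally. With $\dim W_2 \le 1$ the "$W_2$-part" of the decomposition literally vanishes on $\operatorname{A}^2$ and on $H^3$, so the only cohomology theory needed about $\widetilde W_2$ is trivial, and the only nontrivial input is the isomorphism $\lambda^1$ for the curve $\widetilde W_1$, which holds integrally on all $l$-primary torsion by Kummer theory. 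I would also need to double-check, as in Proposition~\ref{P:lambdacoho}, that the alteration degree $M$ produced by Lemma~\ref{L:factorcorres} can be taken to be a power of $p$ (Temkin's refinement \cite{temkin17}), so that it is invertible in $\integ_l$ for $l \ne p$ and is handled by the crystalline argument for $l = p$; this is the only place where the characteristic exponent enters, and it is precisely the reason the statement is phrased in terms of $\integ_l$ for $l \ne p$ and $H^\bullet_\cris$ for $l = p$.
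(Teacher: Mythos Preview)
Your setup is correct: with $\dim W_2\le 1$ the $Z_2$-contribution to $\operatorname{A}^2$ and to $H^3$ vanishes, and the factorization through $\widetilde W_1$ together with the Kummer isomorphism $\lambda^1$ does give a map $\sigma$ from the target of $\lambda^2$ back to its source satisfying $\lambda^2\circ\sigma=\cdot Np^e$ and $\sigma\circ\lambda^2=\cdot Np^e$. The gap is in the sentence ``both being torsion-free the chase goes through regardless of whether $l\mid N$.'' Torsion-freeness of $T_l\operatorname{A}^2(X_{\bar K})$ and of $H^3(X_{\bar K},\integ_l)_\tau$ only tells you that multiplication by $Np^e$ is \emph{injective} on these modules; it is not surjective when $l\mid Np^e$. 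So at the $T_l$-level your chase yields only $\operatorname{im}(T_l\lambda^2)\supseteq Np^e\cdot H^3(X_{\bar K},\integ_l)_\tau$, which is a proper submodule when $l\mid Np^e$. Likewise, $\sigma$ is not a retraction: you have $\sigma\circ\lambda^2=\cdot Np^e$, not the identity.

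The missing idea, which is the heart of the paper's proof, is to work instead at the level of $l^\infty$-torsion and exploit \emph{divisibility} rather than torsion-freeness. The target $H^3(X_{\bar K},\integ_l(2))\otimes_{\integ_l}\rat_l/\integ_l$ is divisible, and the image of $\lambda^2$ restricted to $\operatorname{A}^2(X_{\bar K})[l^\infty]$ is divisible (since $\operatorname{A}^2$ is divisible), so the cokernel is divisible; the paper uses the exact sequence relating $H^3(-,\integ_l)\otimes\rat_l/\integ_l$, $H^3(-,\rat_l/\integ_l)$, and $H^4(-,\integ_l)$ together with the vanishing of the cycle class on algebraically trivial cycles to pin this down. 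Your diagram chase then shows this cokernel is annihilated by $Np^e$, and a divisible group annihilated by a nonzero integer is trivial. This is precisely why no hypothesis $l\nmid N$ is needed. Only after establishing the isomorphism on $l^\infty$-torsion do you pass to Tate modules.
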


\begin{proof}
  We adapt the proof of \cite[Prop.~2.3(ii)]{BenWittClGr} to the setting of cohomological decomposition of the diagonal, and verify in the process that  argument of Benoist--Wittenberg works for $p$-adic cohomology, as well.   For $X$ smooth and projective, we have a diagram with exact row (see \emph{e.g.}, \cite[A.16]{ACMVBlochMap} and \cite[(3.33)]{grossuwaAJ})
  \[
  \xymatrix{
    && \operatorname{CH}^2(X_{\bar K})[l^\infty]\ar@{^{(}->}[d]^{\lambda^2_l} \ar@{-->}[rd] \\
    0 \ar[r] & H^3(X_{\bar K},\integ_l(2))\otimes_{\mathbb Z_l} \rat_l/\integ_l \ar[r] & H^3(X_{\bar K},\rat_l/\integ_l(2))   \ar[r] & H^4(X_{\bar K},\integ_l(2))
  }
  \]
  where the dashed arrow is, up to sign, the cycle class map (\cite[Cor.~4]{CTSS83}, \cite[Prop.~III.1.16 and Prop.~III.1.21]{grossuwaAJ}).  Since algebraically trivial cycles are homologically trivial, it follows that the image of $\A^2(X_{\bar K})[l^\infty]$ under $\lambda^2_l$ is contained in $H^3(X_{\bar K},\mathbb Z_l)\otimes_{\mathbb Z_l}\mathbb Q_l/\mathbb Z_{l}\subseteq H^3(X_{\bar K},\mathbb Q_l/\mathbb Z_l)$.  In particular, the cokernel of $\lambda^2_l : \A^2(X_{\bar K})[l^\infty] \to H^3(X_{\bar K},\integ_l(2))\otimes_{\integ_l} \rat_l/\integ_l$ is divisible.

  Now suppose $N\Delta_X$ admits a cohomological decomposition of type $(d_X-1,1)$.  Arguing as in the proof of Proposition \ref{P:lambdacoho} and Lemma \ref{L:lambdacohop}, 
  $\operatorname{coker}\lambda^2_l$ is annihilated by $Np^e$.  Therefore, this cokernel is trivial, and $\lambda^2_l : \A^2(X_{\bar K})[l^\infty] \to H^3(X_{\bar K},\integ_l(2))\otimes_{\integ_l} \rat_l/\integ_l$ is an isomorphism. 
  Taking the Tate module of this isomorphism gives the assertion for the $l$-adic Bloch map, 
since $H^3(X_{\bar K},\mathbb Z_l)$ is a finitely generated $\mathbb Z_l$-module, and so it is elementary to check that $T_l( H^3(X_{\bar K},\mathbb Z_l)\otimes_{\mathbb Z_l}\mathbb Q_l/\mathbb Z_{l})\cong H^3(X_{\bar K},\mathbb Z_l)_\tau$. 
\end{proof}

\section{Cohomological correspondences and algebraic representatives}
 \label{S:URH-CCFam}

Recall that for a complex projective manifold $X$, given a cycle class $Z\in \operatorname{CH}^n(T\times X)$ parameterized by a complex projective manifold $T$ with fiber-wise homologically trivial cycle classes, the Abel--Jacobi map induces a holomorphic map $T\to J^{2n-1}(X)$, $t\mapsto AJ(Z_t)$, which, by construction, only depends on the cohomology class of $Z$ in $H^{2n}(T\times X,\mathbb Z)$.    The main goal of this section is to show (Corollary \ref{C:vanish}) that the same holds for morphisms induced by algebraic representatives.  
In fact, this follows from Proposition \ref{P:vanish}, which will allow us in many situations to ``lift'' to rational equivalence the action of a cohomological decomposition on algebraic representatives. Recall that, given a smooth projective variety $X$ over a field $K$, an algebraic representative $\Phi^i_X: \mathscr A^i_{X/K} \to \operatorname{Ab}^i_{X/K}$ exists for $i \in \{1,2,d_X\}$\,; see \cite{ACMVfunctor}.

\begin{pro}\label{P:vanish}
 	Let $X$ and $Y$ be smooth proper varieties of dimension $d_X$ and $d_Y$ respectively over a field~$K$ and let $\gamma \in \operatorname{CH}^{d_X+j-i}(X\times_K Y)$ be a correspondence. Assume that $j\in \{1,2,d_Y\}$ and that an algebraic representative $\Phi^i_X: \mathscr A^i_{X/K} \to \operatorname{Ab}^i_{X/K}$ exists.  
	Let $f:\operatorname{Ab}^i_{X/K} \to \operatorname{Ab}^j_{Y/K}$ be the morphism induced by $\gamma$ and the universal property of the algebraic representative via the commutative diagram 
	\begin{equation}\label{E:diagvanish}
\xymatrix@R=1.5em{
	\mathscr A^i_{X/K}  \ar[r]^{\gamma_*} \ar[d]_{\Phi_X^i}   &  		\mathscr A^j_{Y/K} \ar[d]^{\Phi^j_Y}  \\
		\operatorname{Ab}^i_{X/K} \ar[r]^{f}  & \operatorname{Ab}^j_{Y/K}
	}
\end{equation}
	If $[\gamma] = 0 \in H^{2(d_X+j-i)}(X_{K^{\sep}}\times_{K^{\sep}} Y_{K^{\sep}}, \rat_\ell(d_X+j-i))$ for some $\ell \neq \operatorname{char}(K)$, then $f=0$.
	In other words, $f$   depends only on the cohomology class of~$\gamma$.

 Moreover,	if $[\gamma] = 0 \in H^{2(d_X+j-i)}(X_{K^{\sep}}\times_{K^{\sep}} Y_{K^{\sep}}, \mathbb Z/\ell^{\nu+1}\mathbb Z(d_X+j-i))$ for some $\ell \neq \operatorname{char}(K)$ and some natural number $\nu$, then $f[\ell^\nu]:\operatorname{Ab}^i_{X/K}[\ell^\nu]\to \operatorname{Ab}^j_{Y/K}[\ell^\nu]$ is the zero map.
 \end{pro}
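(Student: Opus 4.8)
\textbf{Proof plan for Proposition~\ref{P:vanish}.}

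The plan is to reduce everything to the statement about torsion points of abelian varieties, using the compatibility of the algebraic representative with Tate modules and with torsion as in Lemmas~\ref{L:commute}, \ref{L:ablambda} and Proposition~\ref{P:surj}. First I would observe that since $f$ is a morphism of abelian varieties over $K$, it is zero if and only if $f_{K^{\sep}}$ is zero (in fact if and only if $T_\ell f = 0$), so we may base change to $K^{\sep}$ and even to $\bar K$, and it suffices to show $f$ kills all $\ell$-power torsion; by Lemma~\ref{L:commute} (or Proposition~\ref{P:phizero}-style reasoning) it is equivalent to show that $T_\ell f : T_\ell \operatorname{Ab}^i_{X/K} \to T_\ell \operatorname{Ab}^j_{Y/K}$ is the zero map. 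So the first statement will follow from the second (applied for every $\nu$ and passed to the limit), and I would prove the two statements in one stroke: assume $[\gamma] = 0$ in $H^{2(d_X+j-i)}(X_{K^{\sep}}\times Y_{K^{\sep}}, \mathbb Z/\ell^{\nu+1}(d_X+j-i))$ and deduce $f[\ell^\nu]=0$; the rational-coefficient statement is then the union over $\nu$.

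The key step is to pick a miniversal cycle and chase torsion through the cohomological action of $\gamma$. Concretely: since $\Phi^i_{X/K}$ is a surjective regular homomorphism (it is an algebraic representative), by \cite[Lem.~\ref{ACMVfunctor:L:MinVZK}]{ACMVfunctor} it admits a miniversal cycle $W \in \mathscr A^i_{X/K}(\operatorname{Ab}^i_{X/K})$ of some degree $r$, so $\psi_W = \Phi^i_{X/K}(\operatorname{Ab}^i_{X/K})(W)$ is multiplication by $r$ on $\operatorname{Ab}^i_{X/K}$. Composing with $\gamma$ and chasing diagram \eqref{E:diagvanish}, the morphism $f \circ [r] = f \circ \psi_W$ equals $\Phi^j_{Y/K}(\operatorname{Ab}^i_{X/K})(\gamma \circ W) =: \psi_{\gamma\circ W}$, i.e. $f\circ[r]$ is the morphism $\operatorname{Ab}^i_{X/K}\to\operatorname{Ab}^j_{Y/K}$ induced by the family of algebraically trivial cycles $\gamma\circ W$ on $Y$ (here $\gamma\circ W \in \mathscr A^j_{Y/K}(\operatorname{Ab}^i_{X/K})$ because fiberwise $(\gamma\circ W)_t = \gamma_*(W_t)$ is algebraically trivial). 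Now on $\bar K$-points, and in particular on $\ell$-power torsion, $\psi_{\gamma\circ W}$ factors as $b \mapsto \Phi^j_{Y/K}(\bar K)\bigl((\gamma\circ W)_b\bigr) = \phi^j_Y\bigl(\gamma_*(W_b)\bigr)$. The point is that $\phi^j_Y$ on torsion is governed by the Bloch map / Abel--Jacobi-type identifications: for $j\in\{1,d_Y\}$ the restriction of $\phi^j_Y$ to torsion is literally the Bloch map by \cite[Prop.~3.7, Prop.~3.9]{bloch79} and Rojtman, and for $j=2$ one uses Proposition~\ref{P:lambdacohoBW}/the compatibility in \cite{ACMVBlochMap}, so that $\phi^j_Y$ on $\ell^\nu$-torsion factors through the cycle class map $\operatorname{A}^j(Y_{\bar K})[\ell^{\nu+1}] \to H^{2j-1}(Y_{\bar K}, \mathbb Z/\ell^{\nu+1}(j))$. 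Since $\gamma_*$ on mod-$\ell^{\nu+1}$ cohomology depends only on $[\gamma]\bmod\ell^{\nu+1}$, which is $0$ by hypothesis, the composite $\phi^j_Y\circ\gamma_*$ kills $\ell^{\nu+1}$-torsion, hence $f\circ[r]$ kills $\operatorname{Ab}^i_{X/K}[\ell^\nu]$. Because $r$ is prime to... here one must be careful: $r$ need not be prime to $\ell$. To circumvent this, I would instead argue directly on Tate modules: $T_\ell(f\circ[r]) = r\cdot T_\ell f$, and the above shows $r\cdot T_\ell f$ has image in the image of $T_\ell$ applied to something that vanishes mod $\ell^{\nu+1}$ for all $\nu$ simultaneously once $[\gamma]=0$ rationally; more cleanly, run the whole argument with the \emph{rational} Tate module $V_\ell$, where multiplication by $r$ is an isomorphism, to conclude $V_\ell f = 0$, hence $T_\ell f = 0$, hence $f = 0$. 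For the mod-$\ell^\nu$ statement, one keeps track of denominators: $r\cdot(T_\ell f)$ kills $\ell^\nu$-torsion, and one absorbs the factor $r$ by choosing the miniversal cycle with $r$ coprime to $\ell$ when possible, or by a slightly more careful bookkeeping of the exact annihilator.

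The main obstacle I anticipate is exactly this interaction between the degree $r$ of the miniversal cycle and the prime $\ell$: the clean conclusion $f=0$ over $\rat_\ell$ is painless (invert $r$), but the sharper mod-$\ell^\nu$ statement "$f[\ell^\nu]=0$" requires that the cohomological vanishing of $\gamma$ be leveraged without losing powers of $\ell$ through $r$. I expect the resolution is to use Lemma~\ref{L:ablambda} to identify $T_\ell \psi_{\gamma\circ W}$ with $T_\ell\Phi^j_{Y/K}\circ T_\ell w_{\gamma\circ W}$ as genuine homomorphisms, combined with the fact (Proposition~\ref{P:surj}(3) / \cite[1.6.2--1.6.3]{murre83}) that one can choose a miniversal cycle with $\ell\nmid r$ whenever $T_\ell\Phi^i_{X/K}$ is to be probed, so that $[r]$ is an automorphism of $\operatorname{Ab}^i_{X/K}[\ell^\nu]$ and $f\circ[r]=0$ on $\ell^\nu$-torsion gives $f[\ell^\nu]=0$ directly; for the remaining prime (if $\ell = $ the degree) the rational statement plus a separate elementary argument on the prime-to-$\ell$ part suffices. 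The secondary subtlety is checking that $\gamma\circ W$ really lands in $\mathscr A^j_{Y/K}(\operatorname{Ab}^i_{X/K})$ and that the Gysin/refined pullback formalism makes $t\mapsto\gamma_*(W_t)$ literally the fiber of $\gamma\circ W$ — but this is exactly the setup recalled in \S\ref{SS:RegHomCycle} and \cite{ACMVfunctor}, so it should be routine.
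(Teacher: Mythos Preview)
Your approach matches the paper's: pick a miniversal cycle $Z$ of degree $r$ for $\Phi^i_X$, observe that $rf = \phi^j_{Y_{\bar K}} \circ \gamma_* \circ w_Z$ on $\bar K$-points, and use the Bloch map together with the cohomological vanishing of $\gamma$ to kill the right-hand side. There is one substantive correction. For $j=2$ you write that $\phi^2_Y$ on torsion ``factors through the cycle class map'', citing Proposition~\ref{P:lambdacohoBW}; but that proposition carries a decomposition-of-diagonal hypothesis on $Y$ not assumed here, and in any case a factorization of $\phi^2_Y$ through $\lambda^2$ amounts to the standard assumption (Definition~\ref{D:StAs}), which is open in general. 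The paper instead uses only the \emph{injectivity} of the Bloch map $V_\ell\lambda^j$ (respectively $\lambda^j[\ell^\nu]$) for $j\in\{1,2,d_Y\}$, which is unconditional: in the naturality square with $\lambda^0$ on the source and $\lambda^j$ on the target, the bottom arrow is the cohomological action of $\gamma\circ Z$ and hence vanishes, so injectivity of $\lambda^j$ gives $\gamma_*\circ w_Z = 0$ already in $V_\ell\operatorname{A}^j(Y_{\bar K})$, \emph{before} applying $\phi^j_Y$. This is what makes the argument go through for $j=2$ without any hypothesis on~$Y$.

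Your worry about the degree $r$ is also misplaced for the $\rat_\ell$-statement: once $\gamma_*\circ w_Z = 0$ on $V_\ell$, equation~\eqref{eq:commutephii} gives $r\cdot V_\ell f = 0$, and since $V_\ell f$ is a map of $\rat_\ell$-vector spaces this forces $V_\ell f = 0$, hence $f=0$. There is no need to arrange $\ell\nmid r$ and no ``remaining prime'' to handle separately. For the finite-level statement the paper runs the identical diagram with $\ell^\nu$-torsion in place of Tate modules, invoking the injectivity of the finite-level Bloch maps.
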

\begin{proof}
	 	 	 	 	 	 	 	 	First note that, since $\Phi^j_Y \circ \gamma_* : \mathscr {A}^i_{X/K} \to \operatorname{Ab}^j_{Y/K}$ defines a regular homomorphism,
	the universal property of the algebraic representative $\operatorname{Ab}^i_{X/K}$ provides a $K$-homomorphism $f : \operatorname{Ab}^i_{X/K} \to \operatorname{Ab}^j_{Y/K}$ giving the commutative 
	diagram \eqref{E:diagvanish}.
     
	Let $Z$ be a miniversal cycle (see \S \ref{SS:mini-universalCycle}) for $\Phi^i_X$ and let us denote $r$ the natural number such that the $K$-homomorphism $\Phi^i_X(\operatorname{Ab}^i_{X/K})(Z)$ is given by multiplication by $r$. Recalling that $\Phi^i_X(\operatorname{Ab}^i_{X/K})(Z)$ is simply given by $\phi^i_{X_{\bar K}}\circ w_Z$, we then have
	\begin{equation}\label{eq:commutephii}
	\phi^j_{Y_{\bar K}} \circ \gamma_* \circ w_Z = f \circ (\phi^i_{X_{\bar K}} \circ w_Z) = r\ f.
	\end{equation}
	Now, if $[\gamma] = 0 \in H^{2(d_X+j-i)}(X_{K^{\sep}}\times_{K^{\sep}} Y_{K^{\sep}}, \rat_\ell(d_X+j-i))$, 
	clearly $[\gamma \circ Z]$ also vanishes. By naturality of the $\ell$-adic Bloch maps, we hence get a commutative diagram
	$$\xymatrix{V_\ell \operatorname{Ab}^i_{X/K} \ar[r] \ar[rd]_{\operatorname{id}} \ar@/^2pc/[rr]^{\gamma_*\circ w_Z}  & V_\ell  \operatorname{A}_0((\operatorname{Ab}^i_{X/K})_{K^{\sep}}) \ar[r]^{\qquad \gamma_*\circ Z_*} \ar[d]_{V_\ell \lambda^0} & V_\ell \operatorname{A}^j(Y_{K^{\sep}}) \ar[d]^{V_\ell \lambda^j} \\
		&	V_\ell \operatorname{Ab}^i_{X/K} \ar[r]^{\gamma_*\circ Z_* = 0 \qquad } & H^{2j-1}(Y_{K^{\sep}},\rat_\ell(j)).
	}$$
	Note that the left triangle commutes thanks to \S \ref{SSS:alb} together with Lemma~\ref{L:ablambda} and the fact that the Bloch map $\lambda^0$ coincides with the albanese map on $\ell$-primary torsion~\cite[Prop.~3.9]{bloch79}.
	Recalling that $V_\ell \lambda^j$ is injective for $j=1,2,d_Y$, we conclude that $\gamma_*\circ w_Z : V_\ell \operatorname{Ab}^i_{X/K} \to  V_\ell \operatorname{A}^j(Y_{K^{\sep}})$
	 	 	is zero, and hence in view of \eqref{eq:commutephii} that $r \cdot V_\ell f = 0$\,; \emph{i.e.}, $f = 0$.

The same argument works in the case $[\gamma] = 0 \in H^{2(d_X+j-i)}(X_{K^{\sep}}\times_{K^{\sep}} Y_{K^{\sep}}, \mathbb Z/\ell^{\nu+1}\mathbb Z(d_X+j-i))$, using that the finite level Bloch maps  $\lambda^j[\ell^\nu]$ are injective for $j=1,2,d_Y$ (see \cite[App.~A, Prop.~A.27]{ACMVBlochMap}), and replacing the Tate modules with  $\ell^\nu$-torsion in the diagram above.
\end{proof}

From Proposition \ref{P:vanish} we can show that morphisms induced by projective families of cycles and universal regular homomorphisms depend only on the cohomology class of the family of cycles.  

\begin{cor}\label{C:vanish}
Let $X$ and $T$ be a smooth projective varieties over a field $K$, with $T(K)\ne \emptyset$, and let $Z\in \mathscr A^i_{X/K}(T)$ be a family of algebraically trivial cycle classes.    Assume that there exists  an algebraic representative $\Phi^i_{X/K}: \mathscr A^i_{X/K}\to \operatorname{Ab}^i_{X/K}$ (\emph{e.g.}, $i\in \{1,2,d_X\}$), and let 
$$
\psi_Z : T\longrightarrow  \operatorname{Ab}^i_{X/K}
$$
be the associated morphism.  If  $[Z]=0\in H^{2i}((T \times_KX)_{\bar K},\mathbb Q_\ell(i))$, then $\psi_Z=0$\,; in other words, $\psi_Z$ 
depends only on the cohomology class  of $Z$.  
\end{cor}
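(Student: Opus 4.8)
The plan is to derive the statement directly from Proposition~\ref{P:vanish}, after exchanging the roles of $X$ and $T$ and passing through the Albanese variety of $T$. Fix a rational point $t_0\in T(K)$, which exists by hypothesis. Recall from \S\ref{S:PreAlgRep} that $\operatorname{Alb}_{T/K}$, together with the Albanese morphism $\operatorname{alb}_T:T\to\operatorname{Alb}_{T/K}$ normalised so that $\operatorname{alb}_T(t_0)=0$, is the algebraic representative $\Phi^{d_T}_{T/K}:\mathscr A^{d_T}_{T/K}\to\operatorname{Alb}_{T/K}$ for zero-cycles on $T$. Regarding $Z\in\mathscr A^i_{X/K}(T)$ as a correspondence $Z\in\operatorname{CH}^{i}(T\times_K X)=\operatorname{CH}^{d_T+i-d_T}(T\times_K X)$, the composite $\Phi^i_{X/K}\circ Z_*:\mathscr A^{d_T}_{T/K}\to\operatorname{Ab}^i_{X/K}$ is a regular homomorphism, so by the universal property of $\operatorname{Alb}_{T/K}$ it is induced by a $K$-homomorphism $f:\operatorname{Alb}_{T/K}\to\operatorname{Ab}^i_{X/K}$. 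This $f$ is exactly the homomorphism attached in Proposition~\ref{P:vanish} to the correspondence $\gamma=Z$, with $X$, $Y$, $i$, $j$ there taken to be $T$, $X$, $d_T$, $i$ respectively; the codimension matches, $\Phi^{d_T}_{T/K}$ always exists, and the remaining hypothesis is fulfilled because $i\in\{1,2,d_X\}$, the range in which the algebraic representative is available and $V_\ell\lambda^i$ is injective.

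Next I would identify $f$ with $\psi_Z$ up to a translation. Evaluating $\Phi^i_{X/K}\circ Z_*$ on the family $\Delta_T-(T\times_K\{t_0\})\in\mathscr A^{d_T}_{T/K}(T)$ and using the computation of \S\ref{SSS:alb} (cf.\ Lemma~\ref{L:RHCorB} together with Lemma~\ref{L:ablambda}), one obtains that on $\bar K$-points
$$(f\circ\operatorname{alb}_T)(t)\;=\;\phi^i_{X_{\bar K}}\big(Z_t-Z_{t_0}\big)\;=\;\psi_Z(t)-\psi_Z(t_0),$$
since the Gysin fibre of $\Delta_T-(T\times_K\{t_0\})$ over $t$ is $[t]-[t_0]$ and $Z_*([t]-[t_0])=Z_t-Z_{t_0}$. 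Thus $\psi_Z$ is the composite $\operatorname{alb}_T$ followed by $f$ and then translation by the constant $\psi_Z(t_0)=\phi^i_{X_{\bar K}}(Z_{t_0})$.

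Finally I would invoke Proposition~\ref{P:vanish}: as $\ell$-adic cohomology of a smooth projective variety over $\bar K$ agrees with that over $K^{\sep}$, the hypothesis $[Z]=0$ in $H^{2i}((T\times_K X)_{\bar K},\rat_\ell(i))$ is precisely the vanishing of $[\gamma]$ required there, so $f=0$. Hence $\psi_Z$ is constant, equal to $\psi_Z(t_0)$; since $[Z_{t_0}]=\iota_{t_0}^*[Z]=0$ (where $\iota_{t_0}\colon\{t_0\}\times_K X\hookrightarrow T\times_K X$), normalising $\psi_Z$ at the rational point $t_0$ gives $\psi_Z=0$. The last clause follows because $Z\mapsto\psi_Z$ is additive fibrewise ($\psi_{Z+Z'}(t)=\phi^i_{X_{\bar K}}(Z_t+Z'_t)=\psi_Z(t)+\psi_{Z'}(t)$), so $[Z]=[Z']$ forces $\psi_{Z-Z'}=0$, i.e.\ $\psi_Z=\psi_{Z'}$. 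The only substantive input is Proposition~\ref{P:vanish}; the work here is the bookkeeping needed to recognise $f$ as arising from $\psi_Z$ through $\operatorname{alb}_T$ and to verify that the index substitution into Proposition~\ref{P:vanish} is legitimate, which I expect to be the sole delicate point.
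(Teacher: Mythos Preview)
Your approach is essentially identical to the paper's: factor $\psi_Z$ through $\operatorname{alb}_T$ and the homomorphism $f:\operatorname{Alb}_{T/K}\to\operatorname{Ab}^i_{X/K}$ induced by $Z$ via Proposition~\ref{P:vanish}. (The paper replaces $Z$ by $Z'=Z+T\times_K Z_{t_0}$, but since $(T\times_K Z_{t_0})_*$ vanishes on degree-zero zero-cycles this yields the same $f$.) Your identity $f\circ\operatorname{alb}_T=\psi_Z(\cdot)-\psi_Z(t_0)$, your index-checking for Proposition~\ref{P:vanish}, and the conclusion that $[Z]=0$ forces $f=0$ so that $\psi_Z$ is the constant $\psi_Z(t_0)$, are all correct.

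The gap is in your last step. The observation $[Z_{t_0}]=\iota_{t_0}^*[Z]=0$ is vacuous --- $Z_{t_0}$ is algebraically trivial by hypothesis, hence already homologically trivial --- and it does \emph{not} yield $\phi^i_{X_{\bar K}}(Z_{t_0})=0$. There is no ``normalisation'' of $\psi_Z$ to appeal to: the definition $\psi_Z(t)=\phi^i(Z_t)$ pins down a specific morphism, not a morphism up to translation. Indeed, the literal implication $[Z]=0\Rightarrow\psi_Z=0$ fails: take $Z=T\times_K\alpha$ for any $\alpha\in\operatorname{A}^i(X_{\bar K})$ with $\phi^i(\alpha)\ne 0$; then $[Z]=\operatorname{pr}_X^*[\alpha]=0$ yet $\psi_Z\equiv\phi^i(\alpha)\ne 0$. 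The paper's diagram carries the same imprecision (it commutes with $\psi_Z-\psi_Z(t_0)$ in the dashed slot, not with $\psi_Z$). What both arguments genuinely prove is that $\psi_Z-\psi_Z(t_0)$ depends only on $[Z]$; equivalently, if $[Z]=[Z'']$ then $\psi_Z-\psi_{Z''}$ is constant. This weaker conclusion is what is actually used downstream.
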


\begin{proof} Fix $t_0\in T(K)$.  
Set $\Gamma = \Delta_T - (T\times_K t_0)$ and  $Z'=Z+(T\times_K Z_{t_0})$.  We have the commutative diagram
$$
\xymatrix@C=4.5em{
T\ar[d]_{\operatorname{alb}_{t_0}} \ar[r]_<>(0.85){w_{\Gamma}}^{t\mapsto t-t_0}  \ar@/^.3pc/@{-->}[rrd]^<>(0.8){\psi_{Z}} & \operatorname{A}^{d_T}(T_{\bar K}) \ar[r]^{Z'_*} \ar[d]_<>(0.7){\phi^{d_T}_{T_{\bar K}}}& \operatorname{A}^i(X_{\bar K}) \ar[d]^{\phi^i_{X_{\bar K}}}\\
\operatorname{Alb}_{T/K} \ar@{=}[r]&\operatorname{Ab}^{d_T}_{X/K} \ar[r]_f & \operatorname{Ab}^i_{X/K}
}
$$
By Proposition \ref{P:vanish}, $f$ depends only on the cohomology class $[Z']$, and therefore, by commutativity, we see that $\psi_{Z}$ depends only on the cohomology class $[Z']$.  
Now since the cohomology class $[Z_{t_0}]$ depends only on the cohomology class $[Z]$, we see that the cohomology class $[Z']$ depends only on the cohomology class $[Z]$.
\end{proof}

\begin{rem} In both Proposition \ref{P:vanish} and Corollary \ref{C:vanish}, if 
 $K\subseteq \mathbb C$, then by the comparison theorems in cohomology, one is free to replace $\ell$-adic cohomology with Betti cohomology. 
 \end{rem}

\section[Decomposition of the diagonal and universal codimension-$2$ cycle classes]{Cohomological decomposition and universal codimension-$2$ cycle classes}
\label{S:ddalgrep}

This section contains our new results regarding cohomological decomposition of the
diagonal and
universal codimension-$2$ cycle classes.  \medskip

For a surjective regular homomorphism $\Phi:\mathscr
A^n_{X/K}\to A$, a \emph{universal} (resp.~\emph{miniversal})
\emph{codimension-$n$ cycle class}  (for
$\Phi$) is a cycle class $Z\in \mathscr A^n_{X/K}(A)$ such
that the induced morphism $\psi_{Z}=\Phi(A)(Z):A\to
A$  is the
identity (resp.~$N$ times the identity for some natural number $N$). 
It is a basic fact that there always exist miniversal cycle classes
\cite[Lem.~4.9]{ACMVdcg}.   
Theorem~\ref{T:UnivCyc} relates the number $N$ in the case of
algebraic representatives to cohomological decompositions of $N$ times the diagonal.

We start by recalling the well-known story for $n=1$, coming from the
identification $\Ab^1_{X/K} =
(\pic^0_{X/K})_{\operatorname{red}}$.
For brevity, we will call  a universal codimension-$1$ cycle class for
$\operatorname{Ab}^1_{X/K}$ a universal divisor.
There exists a universal  divisor if
each component of $X$ admits a $K$-point (\emph{e.g.}, if $K$ is separably closed).
In general, there need not exist a universal divisor\,; in fact, one can exhibit
curves over fields of characteristic $0$ with no universal divisor.  However,
any smooth projective variety $X$ over a finite field $K$ (with or without a
$K$-point) does admit a universal divisor.  We refer the reader to
\cite[\S 7.1]{ACMVfunctor} for more details.

The albanese morphism provides an algebraic representative in codimension $n=\dim X$, and we have the identification $\operatorname{Ab}^n_{X/K} = \mathrm{Alb}_{X/K}$.  
Voisin~\cite[Cor.~0.14]{voisin2023geometric} showed that a universal zero-cycle does not necessarily exist even for $K$ algebraically closed, thereby answering the question raised in \cite[Rem.~7.11]{ACMVfunctor}.

We now investigate the connection between universal codimension-$2$ cycle
classes and cohomological decompositions of the diagonal.

\begin{teo}[Miniversal cycle classes on algebraic representatives]
	\label{T:UnivCyc}
 	Let $X$ be a smooth projective variety over a perfect field $K$ of characteristic exponent $p$. Fix a
	positive integer $n$ and a prime number $\ell\ne p$.
 	 Assume:
	\begin{itemize}
\item For some natural
number $N$ (resp.~some natural number $N$ coprime to $\ell$) the multiple $N\Delta_X \in \chow^{d_X}(X\times_K X)$ of the diagonal admits a cohomological decomposition (resp.\ cohomological $\mathbb Z_\ell$-decomposition) 
  of type $(W_1,W_2)$ with
 $d_{W_1}\le d_X-(n-1)$ and $d_{W_2}\le n-1$\,;
\item $W_1$ admits a smooth
projective alteration of degree $p^e$ admitting a
universal divisor (\emph{e.g.}, $K$ is finite or algebraically closed).
	\end{itemize}
If  there exists an algebraic
	  representative $\Phi^n_{X/K}:\mathscr A^n_{X/K}\to
	  \operatorname{Ab}^n_{X/K}$ (\emph{e.g.}, $n\in \{1,2,d_X\}$), then  there exists
	a family of algebraically trivial cycle
	classes $Z\in\mathscr A^n_{X/K}(\operatorname{Ab}^n_{X/K})$
	such that the induced
	morphism of abelian varieties over $K$, $$\psi_{Z}=\Phi^n_{X/K}(\operatorname{Ab}^n_{X/K})(
	Z):\operatorname{Ab}^n_{X/K}\to \operatorname{Ab}^n_{X/K},$$ 
  	is
	multiplication by $Np^e$ (resp.~multiplication by $r$ for some natural number $r$ coprime to $\ell$).  
\end{teo}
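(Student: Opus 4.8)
The plan is to combine the factorization coming from the cohomological decomposition of $N\Delta_X$ with the functorial machinery of algebraic representatives, using Corollary~\ref{C:vanish} (that morphisms induced by families of cycles through universal regular homomorphisms depend only on the cohomology class of the family) to lift the cohomological statement to an honest morphism of abelian varieties. First I would invoke Lemma~\ref{L:factorcorres} to write $Np^e \Delta_X^* = p^e Z_1^* + p^e Z_2^*$ in cohomology, with factorizations $p^e Z_1^* = (\tilde j_1)_* \circ \widetilde Z_1^*$ through a smooth projective $\widetilde W_1$ (of pure dimension $d_X - (n-1)$, over $K$ since $W_1$ admits a degree-$p^e$ smooth projective alteration), and $p^e Z_2^* = (\widetilde Z_2)_* \circ \tilde j_2^*$ through a smooth projective $\widetilde W_2$ of pure dimension $n-1$. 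Here the coefficient $p^e$ absorbs the inseparability of the alterations; in the case where $N$ is coprime to $\ell$ and the decomposition is merely a $\mathbb Z_\ell$-decomposition, one works instead with $\ell'$-alterations (Gabber/Temkin), so the extra factor is some $r$ coprime to $\ell$ rather than a power of $p$.

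Next I would run this factorization through the diagrams \eqref{E:Z2*di-Ab}: applied to $\mathscr A^n_{X/K}$, the correspondence $Np^e\Delta_X$ (respectively $r\Delta_X$) factors as
$$
\mathscr A^n_{X/K} \xrightarrow{\widetilde Z_1^*} \mathscr A^1_{\widetilde W_1/K} \xrightarrow{(\tilde j_1)_*} \mathscr A^n_{X/K} \quad\text{plus}\quad \mathscr A^n_{X/K} \xrightarrow{\tilde j_2^*} \mathscr A^{n-(d_X - d_{\widetilde W_2})}_{\widetilde W_2/K} \xrightarrow{(\widetilde Z_2)_*} \mathscr A^n_{X/K}.
$$
Since $\widetilde W_2$ has dimension $n-1$, the relevant group for the second summand is $\mathscr A^{n-1}_{\widetilde W_2/K} = \mathscr A^{d_{\widetilde W_2}}_{\widetilde W_2/K}$, whose algebraic representative is $\operatorname{Alb}_{\widetilde W_2/K}$, which by Lemma~\ref{L:AlbIndZ} is induced by the universal codimension-$1$ cycle class on $(\operatorname{Pic}^0_{\widetilde W_2/K})_{\operatorname{red}} \times_K \widetilde W_2$ provided $\widetilde W_2$ has a $0$-cycle of degree $1$ (automatic when $K$ is finite or algebraically closed; in general one can arrange this, or work with the $0$-cycle the decomposition supplies). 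For the first summand, $\mathscr A^1_{\widetilde W_1/K}$ has algebraic representative $(\operatorname{Pic}^0_{\widetilde W_1/K})_{\operatorname{red}}$, which by hypothesis admits a universal divisor $D\in \mathscr A^1_{\widetilde W_1/K}((\operatorname{Pic}^0_{\widetilde W_1/K})_{\operatorname{red}})$ (pulled back along the degree-$p^e$ alteration, contributing another $p^e$, already accounted for). Composing: one builds an explicit family of algebraically trivial codimension-$n$ cycles on $X$ parameterized by $\operatorname{Ab}^n_{X/K}$ by taking $\operatorname{Ab}^n_{X/K} \xrightarrow{\phi^n\text{-inverse-ish}} $ — more precisely, by transporting the universal divisor on $\widetilde W_1$ (resp.\ the universal codim-$1$ cycle on $\widetilde W_2$ via $\operatorname{Alb}$) along the correspondences $(\tilde j_1)_*$ and $(\widetilde Z_2)_*$ and using the morphism $\operatorname{Ab}^n_{X/K} \to (\operatorname{Pic}^0_{\widetilde W_1/K})_{\operatorname{red}}$ (resp.\ to $\operatorname{Alb}_{\widetilde W_2/K}$) induced by $\widetilde Z_1^*$ (resp.\ $\tilde j_2^*$) and the universal property. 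Explicitly, set
$$
Z \;=\; (\text{correspondence }(\tilde j_1)_*)\bigl((\widetilde Z_1^* \text{-induced map})^* D\bigr) \;+\; (\text{correspondence }(\widetilde Z_2)_*)\bigl((\tilde j_2^*\text{-induced map})^* D'\bigr)
$$
as a family in $\mathscr A^n_{X/K}(\operatorname{Ab}^n_{X/K})$, where $D'$ is the universal cycle coming from $\operatorname{Alb}_{\widetilde W_2/K}$.

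Finally I would identify $\psi_Z = \Phi^n_{X/K}(\operatorname{Ab}^n_{X/K})(Z)$ with multiplication by $Np^e$ (resp.\ $r$). On $\bar K$-points, chasing the commutative diagrams one gets that $\psi_Z = \phi^n_{X_{\bar K}} \circ \bigl((\tilde j_1)_* \circ \widetilde Z_1^* + (\widetilde Z_2)_* \circ \tilde j_2^*\bigr) \circ (\text{a section of }\phi^n\text{ on the level of the parameter space})$, and the bracketed correspondence acts on $\mathscr A^n_{X/K}$ cohomologically as $Np^e\Delta_X^*$ (resp.\ $r\Delta_X^*$). The subtle point — and this is where Corollary~\ref{C:vanish} is essential — is that a priori the bracketed correspondence agrees with $Np^e\Delta_X^*$ only \emph{cohomologically}, not at the level of rational equivalence, so one cannot directly conclude $\psi_Z = Np^e \cdot \operatorname{id}$ as a morphism of abelian varieties. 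But $\psi_Z - Np^e\cdot\operatorname{id}$ is a morphism of abelian varieties induced, through the universal regular homomorphism $\Phi^n_{X/K}$, by a family of algebraically trivial cycles whose cohomology class vanishes (the difference $Np^e\Delta_X - (Z_1 + Z_2)$ is cohomologically trivial by the decomposition hypothesis); by Corollary~\ref{C:vanish}, such a morphism is zero. Hence $\psi_Z = Np^e \cdot \operatorname{id}$ (resp.\ $r\cdot\operatorname{id}$) exactly.

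\textbf{Main obstacle.} The technical heart is making the cohomological decomposition produce a genuine \emph{family} of cycles over $\operatorname{Ab}^n_{X/K}$ whose induced morphism is \emph{exactly} (not just up to a cohomologically trivial error, and not just up to isogeny-ambiguity) multiplication by $Np^e$. Bookkeeping the several multiplicative factors ($p^e$ from the alteration of $W_1$, a further $p^e$ from pulling back the universal divisor along that alteration, the $N$ from the decomposition) so that they combine to the single stated factor $Np^e$ requires care; and the step where one passes from "cohomologically $=Np^e\Delta_X$" to "the induced map is $Np^e\cdot\operatorname{id}$" genuinely needs Corollary~\ref{C:vanish} and has no analogue in the classical Hodge-theoretic treatment. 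The coprime-to-$\ell$ variant is parallel but uses $\ell'$-alterations in place of purely inseparable ones, so the spare factor is $r$ with $\gcd(r,\ell)=1$ rather than a $p$-power.
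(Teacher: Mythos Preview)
Your overall strategy matches the paper's, but there are three genuine gaps.

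\textbf{The $Z_2$ contribution vanishes outright.} The pullback $\tilde j_2^*$ along $\tilde j_2:\widetilde W_2\to X$ preserves codimension, so it lands in $\mathscr A^n_{\widetilde W_2/K}$, not in $\mathscr A^{n-(d_X-d_{\widetilde W_2})}_{\widetilde W_2/K}$ as you write. Since $\dim\widetilde W_2\le n-1$, the group $\operatorname{CH}^n(\widetilde W_2)$ is zero; hence $\tilde j_2^*=0$ on $\mathscr A^n_{X/K}$ and the entire $Z_2$ branch contributes nothing. The paper simply discards it and works only with the $W_1$ factorization; there is no need for your $D'$ term, and your identification of the target with $\mathscr A^{d_{\widetilde W_2}}_{\widetilde W_2/K}$ and appeal to $\operatorname{Alb}_{\widetilde W_2/K}$ is based on a miscomputation.

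\textbf{You need Proposition~\ref{P:vanish}, not Corollary~\ref{C:vanish}.} Your final step asserts that $\psi_Z - Np^e\cdot\operatorname{id}$ is induced by the family ``$Np^e\Delta_X-(Z_1+Z_2)$'', but that object is a correspondence in $\chow^{d_X}(X\times_K X)$, not a family in $\mathscr A^n_{X/K}(\operatorname{Ab}^n_{X/K})$; Corollary~\ref{C:vanish} does not apply. The paper instead applies Proposition~\ref{P:vanish} (with $i=j=n$, $Y=X$) to the correspondence $p^eZ_1$: since $[p^eZ_1]=[Np^e\Delta_X]-[p^eZ_2]$ and the morphism induced by $p^eZ_2$ is zero (by the previous point, at the Chow level), the bottom composition of the commutative diagram \eqref{E:ThmUnivCyc} is exactly $Np^e\cdot\operatorname{id}$. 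Then $Z:=(\tilde j_1)_*\circ\widetilde{\mathcal D}\circ\widetilde Z_1^*\in\mathscr A^n_{X/K}(\operatorname{Ab}^n_{X/K})$ (built from the universal divisor $\widetilde{\mathcal D}$) has $\psi_Z$ equal to that bottom composition by a direct diagram chase. Your ``section of $\phi^n$'' is not what is used; the universal divisor plays that role.

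\textbf{The $\mathbb Z_\ell$-decomposition case is not handled by $\ell'$-alterations.} The alteration of $W_1$ is already given, of degree $p^e$; the difficulty in the $\mathbb Z_\ell$ case is that $Z_1,Z_2$ have $\mathbb Z_\ell$-coefficients, not that the alterations need changing. The paper's argument is genuinely different here: approximate $Z_1,Z_2$ by integral cycles $Z_1',Z_2'$ modulo $\ell^2$, so $[\Delta_X]=[Z_1']+[Z_2']$ in $H^{2d_X}$ with $\mathbb Z/\ell^2\mathbb Z$ coefficients; run the same construction and apply the finite-level part of Proposition~\ref{P:vanish} to conclude $\psi_Z[\ell]$ is multiplication by $p^e$ on $\ell$-torsion. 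This forces $\psi_Z$ to be an isogeny of degree coprime to $\ell$, and then a standard miniversality argument produces $Z'$ with $\psi_{Z'}=r\cdot\operatorname{id}$, $\ell\nmid r$.
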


\begin{rem}\label{R:UnivCyc}
	In Theorem~\ref{T:UnivCyc}, the constraints on $d_{W_1}$ and $d_{W_2}$ are perhaps more restrictive than they first appear.  In fact, we can have $d_{W_1}= d_X-(n-1)$ and $d_{W_2}= n-1$ or $n-2$, or we can have  $d_{W_1}= d_X-n$ and $d_{W_2}= n-1$, since from Remark~\ref{R:dW1dW2}, we know that
	$d_{W_1}+d_{W_2}\ge d_X-1$.  At the same time, since $0\le d_{W_1},d_{W_2}\le d_X-1$, this also puts the constraints $2\le n\le d_X$.
	Moreover,  in the case $d_{W_1}=d_X-n$, it will follow from the proof of Theorem \ref{T:UnivCyc}  that  $\Phi^n_{X/K} = 0$.
   \end{rem}

\begin{proof}
  	 	Assuming a cohomological decomposition, 
	the proof of \cite[Thm.~4.4(iii)]{voisinAJ13}  applies here\,; \emph{i.e.}, this follows
	from  the factorization of correspondences \eqref{E:Z2*di-Ab} and Proposition \ref{P:vanish}.
  	More
	precisely, using Lemma~\ref{L:factorcorres}, one decomposes $Np^e\Delta_X^*=p^e Z_1^*+p^e Z_2^*$, with factorizations $p^e Z_1^*=(\tilde j_1)_*\circ  \widetilde Z_1^*$ and $p^e Z_2^*= (\widetilde Z_2)_*\circ \tilde j_2^*$.
	Since $n>\dim \widetilde W_2$, it follows that $\tilde j_2^*=0$ on Chow groups,
	and therefore, by diagram \eqref{E:Z2*di-Ab}, this also holds for the induced morphism of
	abelian varieties. We therefore obtain a commutative diagram
	\begin{equation}\label{E:ThmUnivCyc}
\xymatrix@R=1.5em{\mathscr{A}^n_{X/ K} \ar[d]^{\Phi^n_X} \ar[rr]^{\widetilde Z_1^*} & &	\mathscr{A}^1_{\widetilde W_1/ K}  \ar[d]^{\Phi^1_{\widetilde W_1}}_{\simeq} \ar[rr]^{(\tilde j_1)_*}   & &	\mathscr{A}^n_{X/K} \ \ar[d]^{\Phi^n_X}\\
	\operatorname{Ab}^n_X \ar[rr]^{\widetilde Z_1^*}  &&
(\operatorname{Pic}^0_{\widetilde
	W_1/K})_{\operatorname{red}} \ar[rr]^{(\tilde j_1)_*} 
	& &\operatorname{Ab}^n_X,
}
	\end{equation}
	where the composition of the horizontal arrows is given by multiplication by $Np^e$ and where we have denoted  ${\widetilde Z_1^*} :	\operatorname{Ab}^n_{X/K} \to
	(\operatorname{Pic}^0_{\widetilde W_1})_{\mathrm{red}}$ and  $(\tilde j_1)_* :		(\operatorname{Pic}^0_{\widetilde W_1})_{\mathrm{red}} \to \operatorname{Ab}^n_{X/K}$ the $K$-homomorphisms induced by the correspondences ${\widetilde Z_1^*}$ and $(\tilde j_1)_*$.  Here we are using Proposition \ref{P:vanish}.  

      	We have assumed  the existence of a
	universal divisor
	$\widetilde {\mathcal D} \in
	\mathscr A^1_{\widetilde W_1/K}((\operatorname{Pic}^0_{\widetilde
		W_1/K})_{\operatorname{red}})$, meaning that the
	induced morphism: $$\psi_{\widetilde{\mathcal D}}=\Phi^1_{\widetilde
		W_1/K}((\operatorname{Pic}^0_{\widetilde
		W_1/K})_{\operatorname{red}})(\widetilde {\mathcal D}) : \ (\operatorname{Pic}^0_{\widetilde
		W_1/K})_{\operatorname{red}}\to (\operatorname{Pic}^0_{\widetilde
		W_1/K})_{\operatorname{red}}$$ is the identity. 
            Consider the cycle class
		$$
	Z:=(\tilde j_1)_*\circ
\widetilde {\mathcal
		D} \circ \widetilde Z_1^* \quad \in \mathscr A^n_{X/K}(\operatorname{Ab}^n_{X/K}).
	$$
	It is then clear from \eqref{E:ThmUnivCyc} that the associated homomorphism $\psi_{Z}=\Phi^n_{X/K}(\operatorname{Ab}^n_{X/K})(Z):\operatorname{Ab}^n_{X/K}\to \operatorname{Ab}^n_{X/K}$ is given by
	$Np^e\operatorname{Id}_{\operatorname{Ab}^n_{X/K}}$.

The case where one assumes a cohomological $\mathbb Z_\ell$-decomposition is similar.  
For simplicity, since we are assuming that $\ell\nmid N$,  $N$ is invertible in $\mathbb Z_\ell$, and so we may and do assume $N=1$.
We assume that we have $Z_1,Z_2$, as in the definition of the $\mathbb Z_\ell$-decomposition of the diagonal, observing that they are given as cycles with $\mathbb Z_\ell$-coefficients.  We have by definition that $[\Delta_X]=[Z_1]+[Z_2]$ in $H^{2d_X}((X\times _KX)_{\bar K},\mathbb Z_\ell(d_X))$.
Now let $Z_1'$ and $Z_2'$ be integral cycles which $\ell$-adically approximate $Z_1$ and $Z_2$.  (Concretely, if $Z_i = \sum a_{ij} Z_j$ with $a_{ij} \in \integ_\ell$, choose $a'_{ij} \in \integ$ with $a'_{ij} \equiv a_{ij} \bmod \ell^2$, and let $Z'_i = \sum a_{ij}' Z_i$.)
 We therefore have the equality $[\Delta_X]=[Z_1']+[Z_2']$ in $H^{2d_X}((X\times _KX)_{\bar K},\mathbb Z/\ell^2\mathbb Z (d_X))$.
The rest of the proof goes through identically, so that we find a cycle $Z$ such that
 that the associated homomorphism $\psi_{Z}[\ell]=\Phi^n_{X/K}(\operatorname{Ab}^n_{X/K})(Z)[\ell]:\operatorname{Ab}^n_{X/K}[\ell]\to \operatorname{Ab}^n_{X/K}[\ell]$ is given by
	$Np^e\operatorname{Id}_{\operatorname{Ab}^n_{X/K}}$.  Note that the cycle $Z$ depended on our choice to approximate to order $\ell^2$, as well as our choice of lift\,; truncating at higher order would also work, although it typically yields a different cycle $Z$. In any case, since $\psi_Z$ is an injection on $\ell$-torsion, it is an injection on $\ell$-power torsion, and therefore an isomorphism on $\ell$-power torsion.  
	
It follows that $\psi_Z$ is an isogeny, of degree coprime to $\ell$.  Arguing as in \cite[Lem.~4.9]{ACMVdcg} or \cite[Lem.~4.7]{ACMVfunctor}, one can find a cycle $Z'\in \mathscr A^n_{X/K}(\operatorname{Ab}^n_{X/K})$ so that $\psi_{Z'}$ is multiplication by $r$ for some natural number $r$ with $\ell \nmid r$.  
            	 \end{proof}

\begin{cor}[Universal codimension-$2$ cycle classes] \label{C:UnivCyc}
                             Suppose $X$ is a smooth projective variety of dimension $\leq 4$ over a field $K$ that is either finite or algebraically closed. If $\Delta_X$ admits a cohomological 
 decomposition (resp.\ cohomological $\mathbb Z_\ell$-decomposition) of type
		$(d_X-1,1)$, then
		there exists a universal codimension-$2$ cycle class  (resp.~a miniversal codimension-$2$ cycle class of degree  coprime to $\ell$).  
\end{cor}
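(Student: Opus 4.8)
The plan is to deduce Corollary~\ref{C:UnivCyc} directly from Theorem~\ref{T:UnivCyc} applied with $n=2$, with the only real content being to arrange the numerical hypotheses correctly and then to upgrade a miniversal cycle to a genuinely \emph{universal} one in the integral case. First I would observe that the standing hypotheses of Corollary~\ref{C:UnivCyc} are exactly tailored to Theorem~\ref{T:UnivCyc}: since $K$ is finite or algebraically closed, and $X$ has dimension $\le 4$, any codimension-$1$ subvariety $W_1\subseteq X$ of dimension $d_{W_1}\le d_X-1$ admits a smooth projective alteration of degree $p^e$ admitting a universal divisor (over a finite or separably closed field every smooth projective variety has a universal divisor, cf.~\S\ref{SS:mini-universalCycle} and \cite[\S 7.1]{ACMVfunctor}). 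A decomposition of type $(d_X-1,1)$ of the diagonal is precisely a decomposition of type $(W_1,W_2)$ with $d_{W_1}\le d_X-1 = d_X-(n-1)$ and $d_{W_2}\le 1 = n-1$ for $n=2$. Also, the algebraic representative $\Phi^2_{X/K}$ exists by Murre (as recalled in \S\ref{S:PreAlgRep}, $n=2$ is among the allowed cases). So Theorem~\ref{T:UnivCyc} applies verbatim.

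Next I would extract the two cases. In the $\mathbb Z_\ell$-decomposition case, Theorem~\ref{T:UnivCyc} produces a family $Z\in\mathscr A^2_{X/K}(\operatorname{Ab}^2_{X/K})$ with $\psi_Z$ equal to multiplication by some natural number $r$ coprime to $\ell$; this is by definition a miniversal codimension-$2$ cycle class of degree coprime to $\ell$, which is exactly the assertion. In the integral (cohomological) case, Theorem~\ref{T:UnivCyc} gives $Z$ with $\psi_Z$ equal to multiplication by $Np^e$, where the hypothesis is only that \emph{some} multiple $N\Delta_X$ admits a cohomological decomposition of type $(d_X-1,1)$. But the hypothesis of the corollary is that $\Delta_X$ itself — i.e. $N=1$ — admits such a decomposition; moreover over a finite or algebraically closed field $p^e$ can be taken to be $1$ (if $K$ is perfect of characteristic $0$ there is nothing to do; if $K$ is finite or algebraically closed of characteristic $p$, one uses that for curves and surfaces — and more generally in dimension $\le 3$ — resolution of singularities holds, so in the factorization coming from Lemma~\ref{L:factorcorres} one may take $e=0$, as already noted in the proof of Proposition~\ref{P:lambdacoho} and in Lemma~\ref{L:factorcorres} itself; here $d_{W_1}\le d_X-1\le 3$). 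Hence $\psi_Z$ is multiplication by $1$, i.e. $\psi_Z=\operatorname{id}$, so $Z$ is a universal codimension-$2$ cycle class.

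Therefore the proof is essentially two lines: check that the hypotheses of Theorem~\ref{T:UnivCyc} are met with $n=2$, $N=1$, $e=0$, and read off the conclusion. I do not expect a serious obstacle here; the one point requiring a little care — and the thing I would make explicit — is the reduction to $e=0$ in positive characteristic, which rests on resolution of singularities for the at-most-three-dimensional variety $\widetilde W_1$ and the corresponding degree-$1$ alteration in Lemma~\ref{L:factorcorres}. A secondary bookkeeping point is that ``dimension $\le 4$'' is used to guarantee that the relevant $W_1$ has dimension $\le 3$ so this resolution argument applies, and also matches the constraint $2\le n\le d_X$ from Remark~\ref{R:UnivCyc}. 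Everything else is a direct transcription from Theorem~\ref{T:UnivCyc}.
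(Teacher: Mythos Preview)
Your proposal is correct and follows exactly the paper's approach: apply Theorem~\ref{T:UnivCyc} with $n=2$ and $N=1$, and use resolution of singularities in dimension $\le 3$ (valid since $d_{W_1}\le d_X-1\le 3$) to take $e=0$. The paper's proof is a single sentence to this effect.
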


\begin{proof}
This follows immediately from Theorem  \ref{T:UnivCyc} and the fact  \cite{CPRes2} that resolution of singularities holds for varieties  of dimension $\leq 3$ over perfect fields.  
  \end{proof}

We now consider the case where $X$ is a smooth projective variety over a field
$K\subseteq \mathbb C$, and we consider the surjective  regular homomorphism
$AJ:\mathscr A^n_{X/K}\to J^{2n+1}_{a,X/K}$ to the distinguished model,  given
by the Abel--Jacobi map, as defined in  \cite{ACMVdmij}.

\begin{teo}[Mini- and universal cycle classes on distinguished models]
	\label{T:UnivCycDM}
	Let $X$ be a smooth projective variety over a field $K\subseteq \mathbb C$, and
	fix a positive integer $n$.  Assume further that for some natural number~$N$ the
	multiple $N\Delta_{X}\in \chow^{d_X}(X\times_K X)$ admits a cohomological decomposition 
	  of type $(W_1,W_2)$ with $d_{W_2}\le n-1$,
	$d_{W_1}\le d_X+1-n$, and that $W_1$ admits a resolution of singularities with
	each component admitting a $K$-point.  Then there exists  a family of
	algebraically trivial cycle classes
	$Z\in \mathscr A^n_{X/K}(J^{2n-1}_{a,X/K})$ such that the induced morphism
	of abelian varieties over $K$, $$\psi_{Z}:J^{2n-1}_{a,X/K}\to
	J^{2n-1}_{a,X/K},$$  is  multiplication by $N$.
\end{teo}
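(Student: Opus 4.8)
The plan is to adapt \emph{verbatim} the proof of Theorem~\ref{T:UnivCyc}, replacing the algebraic representative $\operatorname{Ab}^n_{X/K}$ by the distinguished model $J^{2n-1}_{a,X/K}$ of the image of the Abel--Jacobi map from \cite{ACMVdmij}, and taking $e=0$ throughout (which is legitimate since $K\subseteq\mathbb C$ has characteristic zero and, moreover, $W_1$ is assumed to admit a resolution of singularities).

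First I would apply Lemma~\ref{L:factorcorres} \emph{separately} to the cycle classes $Z_1$ (supported on $W_1\times_KX$) and $Z_2$ (supported on $X\times_KW_2$) occurring in the cohomological decomposition $N[\Delta_X]=[Z_1]+[Z_2]$. With $e=0$ this yields smooth projective $K$-varieties $\widetilde W_1$ of pure dimension $d_X+1-n$ and $\widetilde W_2$ of pure dimension $\le n-1$, together with factorizations of the correspondences $Z_1$ and $Z_2$ through $\widetilde W_1$ and $\widetilde W_2$ respectively; taking $\widetilde W_1$ of the form $\mathbb P^{\bullet}_K\times_K W'_1$ with $W'_1\to W_1$ the given resolution, each component of $\widetilde W_1$ still carries a $K$-point. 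At the level of the functor $\mathscr A^n_{X/K}$, the first factorization provides a natural transformation $\widetilde Z_1^*:\mathscr A^n_{X/K}\to\mathscr A^1_{\widetilde W_1/K}$ (the middle term is $\operatorname{CH}^1$ because $d_{\widetilde W_1}-d_X+n=1$), followed by $(\tilde j_1)_*:\mathscr A^1_{\widetilde W_1/K}\to\mathscr A^n_{X/K}$; the second factorization goes through $\tilde j_2^*:\mathscr A^n_{X/K}\to\mathscr A^n_{\widetilde W_2/K}=0$, which vanishes since $\dim\widetilde W_2<n$.

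Next, using the functoriality of the distinguished model under correspondences (\cite[Prop.~5.1]{ACMVdmij}, i.e.\ the diagrams~\eqref{E:Z2*di-AbDM}), the natural transformations above descend to $K$-homomorphisms of abelian varieties $f:J^{2n-1}_{a,X/K}\to(\operatorname{Pic}^0_{\widetilde W_1/K})_{\operatorname{red}}$ induced by $\widetilde Z_1^*$ and $g:(\operatorname{Pic}^0_{\widetilde W_1/K})_{\operatorname{red}}\to J^{2n-1}_{a,X/K}$ induced by $(\tilde j_1)_*$. The key point — and this is where the \emph{cohomological} (rather than Chow) hypothesis is used — is that the homomorphism of abelian varieties induced on $J^{2n-1}_{a,X/K}$ by a correspondence depends only on its cohomology class; this is classical for Abel--Jacobi maps (cf.~\cite[Thm.~12.17]{voisinI}) and is the Hodge-theoretic counterpart of Proposition~\ref{P:vanish}. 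Since $N[\Delta_X]=[Z_1]+[Z_2]$, since $\Delta_X$ induces $\operatorname{id}$, and since $Z_2$ induces the zero map on $H^{2n-1}(X_{\mathbb C},\mathbb Z)$ (it factors through $H^{2n-1}(\widetilde W_2)=0$ because $2n-1>2\dim\widetilde W_2$), I conclude that $g\circ f=N\cdot\operatorname{id}_{J^{2n-1}_{a,X/K}}$. Finally, as each component of $\widetilde W_1$ admits a $K$-point, there is a universal divisor $\widetilde{\mathcal D}\in\mathscr A^1_{\widetilde W_1/K}\bigl((\operatorname{Pic}^0_{\widetilde W_1/K})_{\operatorname{red}}\bigr)$ with $\psi_{\widetilde{\mathcal D}}=\operatorname{id}$ (\cite[\S 7.1]{ACMVfunctor}); setting $Z:=(\operatorname{id}\times_K\tilde j_1)_*(f^!\widetilde{\mathcal D})\in\mathscr A^n_{X/K}(J^{2n-1}_{a,X/K})$, functoriality gives $\psi_Z=g\circ\psi_{\widetilde{\mathcal D}}\circ f=g\circ f=N\cdot\operatorname{id}$, as required.

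The main obstacle is the third step: justifying that the homomorphism $J^{2n-1}_{a,X/K}\to J^{2n-1}_{a,X/K}$ attached to a family of cycles (equivalently, to a correspondence $X\to X$) depends only on its cohomology class \emph{and} that this holds over the ground field $K$ (not merely over $\mathbb C$). Over $\mathbb C$ this is the classical statement that Abel--Jacobi maps kill homologically trivial correspondences; to descend it one invokes the compatibility of the distinguished model $J^{2n-1}_{a,X/K}$ with base change and with correspondences established in \cite{ACMVdmij}. Note that for $n\notin\{1,2,d_X\}$ one cannot apply Corollary~\ref{C:vanish} directly — $J^{2n-1}_{a,X/K}$ need not be an algebraic representative — so this genuinely requires the Hodge-theoretic input rather than the theory of algebraic representatives; everything else in the argument is a faithful transcription of the proof of Theorem~\ref{T:UnivCyc}.
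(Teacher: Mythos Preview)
Your proposal is correct and follows essentially the same approach as the paper: the paper's own proof simply says to repeat the argument of Theorem~\ref{T:UnivCyc} using diagram~\eqref{E:Z2*di-AbDM} in place of~\eqref{E:Z2*di-Ab} and the classical fact that the action of a correspondence on intermediate Jacobians depends only on its cohomology class, which is exactly what you have spelled out. Your identification of the one genuine point of care---that the Hodge-theoretic cohomology-class-dependence (replacing Proposition~\ref{P:vanish}) must be made to descend to $K$ via the functoriality of the distinguished model from \cite{ACMVdmij}---is apt and is implicit in the paper's reference to~\eqref{E:Z2*di-AbDM}.
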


\begin{proof} 
 	The proof of
	Theorem~\ref{T:UnivCycDM}  is identical to that of
	Theorem~\ref{T:UnivCyc}, except one uses diagram \eqref{E:Z2*di-AbDM} rather
	than  \eqref{E:Z2*di-Ab} and the fact that the action of a correspondence on intermediate Jacobians only depends on its cohomology class.  	
 	 	 	 	 	 \end{proof}

\begin{rem}
	In the cases $n=1,2$ or $d_X$, we have
	$J^{2n-1}_{a,X/K}=\operatorname{Ab}^n_{X/K}$, and the Abel--Jacobi map is the
	universal regular homomorphism $\Phi^n_{X/K}$, so that in those cases
	Theorems~\ref{T:UnivCycDM} and~\ref{T:UnivCyc} are closely related.  The benefit
	of Theorem~\ref{T:UnivCycDM} is that in characteristic $0$, we always have the
	distinguished model of the image of the Abel--Jacobi map for any $n$, whereas
	algebraic representatives, in any characteristic,  are  known to exist in
	general only for $n=1,2,d_X$.
\end{rem}

 \section{The standard assumption}
 \label{S:StandAs}

 We now discuss 
 an isomorphism, \eqref{E:StAs1} 
  below, that
 plays a central role in our presentation moving forward (Theorem~\ref{T:CanPol} and consequently Theorem~\ref{T:Intro-CanPol0}(1)--(2)).  
 It makes it possible, in positive characteristic, to relate the Tate module of the second algebraic representative of a smooth projective variety $X$ to the third cohomology group of~$X$. 
 Since we expect such an isomorphism to hold true in general (and it does in characteristic zero and in any characteristic for geometrically rationally chain connected varieties\,; see Proposition~\ref{P:BlSrTors}), we call this the
 standard assumption (Definition~\ref{D:StAs}).
 In order to slightly streamline the presentation, we recall our convention that
 $\ell$ always denotes a rational prime (\emph{i.e.}, a prime number in $\mathbb Z$)
 invertible in
 the base field (which we denote by $K$), while $l$ is allowed to be the
 (positive)
 characteristic of the base field.

 \subsection{Cohomological decomposition of the diagonal and Bloch--Srinivas' result on
  algebraic representatives} \label{SS:decBSalgrep}
  A  fundamental result due to  Bloch--Srinivas \cite[Thm.~1(i)]{BlSr83} states
 that over an algebraically closed field of characteristic $0$, a decomposition
 of the diagonal implies
 the algebraic representative in codimension-$2$ is  isomorphic to the group of
 algebraically trivial codimension-$2$ cycle classes.
 Their argument carries over to smooth projective varieties over perfect fields
 (this was also recently observed in \cite[Prop.~2.3]{BenWittClGr}). Here we improve their result by only assuming the existence of a \emph{cohomological} decomposition of the diagonal.

 \begin{pro} \label{P:BlSr}
  Let $X$ be a smooth projective variety over   a perfect field $K$ of characteristic exponent~$p$ and let $\ell$ be a prime $\neq p$.
    Suppose that,
 for some natural number $N$, the multiple $N\Delta_{X_{\bar K}}\in
  \operatorname{CH}^{d_X}(X_{\bar K}\times_{\bar K} X_{\bar K})$
  admits a \emph{cohomological} decomposition (resp.\ cohomological $\mathbb Z_\ell$-decomposition) 
   of type $(d_X-1,1)$ with respect to $H^\bullet (-,\mathbb Z_\ell)$.  Then the
  universal regular homomorphism  $\Phi^2_{X/K}: \mathscr A^2_{X/K} \to
  \operatorname{Ab}^2_{X/K}$ is an isomorphism of
  $\operatorname{Gal}(K)$-modules
   on
  $\bar K$-points\,; \emph{i.e.}, $$\Phi^2_{X/K}(\bar K):\operatorname{A}^2(X_{\bar K})
  \stackrel{\sim}{\longrightarrow}  \operatorname{Ab}^2_{X/K}(\bar K)$$
 (resp.~is an isomorphism on $\ell$-power torsion if $\ell \nmid N$\,; \emph{i.e.}, $\Phi^2_{X/K}(\bar K)[\ell^\infty]$ is an isomorphism).  
 \end{pro}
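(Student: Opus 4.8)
The plan is to follow Bloch--Srinivas' original argument, as adapted to positive characteristic in \cite{BenWittClGr} and as reformulated in the present paper via \S\ref{S:FactChow}, but tracking carefully that only a \emph{cohomological} decomposition of $N\Delta_{X_{\bar K}}$ is available. First I would reduce to the case $K = \bar K$: by Remark~\ref{R:SepBC/D}, $\Phi^2_{X/K}$ is obtained from $\Phi^2_{X_{\bar K}/\bar K}$ by descent along the separable extension $\bar K/K$, and Galois-equivariance is automatic from that construction, so it suffices to prove that $\phi^2 := \Phi^2_{X/K}(\bar K) : \operatorname{A}^2(X_{\bar K}) \to \operatorname{Ab}^2_{X/K}(\bar K)$ is an isomorphism (resp.\ an isomorphism on $\ell$-power torsion).

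Next I would establish surjectivity and injectivity separately. Surjectivity of $\phi^2$ is not an issue: an algebraic representative is always a surjective regular homomorphism (\cite{ACMVfunctor}, cited in \S\ref{S:PreAlgRep}). For injectivity, apply Lemma~\ref{L:factorcorres} to write, with $\mathbb Z[1/p]$-coefficients, a factorization $N p^e \Delta_X^* = p^e Z_1^* + p^e Z_2^*$ corresponding to the decomposition of type $(d_X-1,1)$, where $p^e Z_1^* = (\tilde j_1)_* \circ \widetilde Z_1^*$ factors through the curve-dimensional piece is not quite right here --- rather $Z_1$ is supported on $D \times_K X$ with $\dim D = d_X-1$ and $Z_2$ on $X \times_K W_2$ with $\dim W_2 = 1$. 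Since $\dim W_2 = 1 < 2$, the action $\tilde j_2^*$ on $\operatorname{A}^2$ factors through $\operatorname{A}^2$ of a curve, which vanishes; the key new input relative to the classical argument is that by Corollary~\ref{C:vanish} (via Proposition~\ref{P:vanish}), the morphism of abelian varieties induced by $Z_2$ depends only on the cohomology class of $Z_2$, and the morphism induced by $\Delta_X$ depends only on $[\Delta_X]$. Thus the cohomological equality $N p^e[\Delta_X] = p^e[Z_1] + p^e[Z_2]$ already forces, at the level of the induced endomorphisms of $\operatorname{Ab}^2_{X/K}$, a factorization $N p^e \cdot \mathrm{id}_{\operatorname{Ab}^2_{X/K}}$ through $(\operatorname{Pic}^0_{\widetilde W_1/K})_{\mathrm{red}}$ (the piece supported on $D$, after an alteration of degree $p^{e}$, contributing codimension-$1$ cycles), exactly as in diagram~\eqref{E:Z2*di-Ab}. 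Pairing this with the commutative squares relating $\phi^2_{X_{\bar K}}$ to $\phi^1$ on the curve/surface-type pieces (which is the Abel--Jacobi/Picard isomorphism), one gets that $N p^e \cdot \phi^2$ factors as $\phi^2 \circ (\text{something}) = (\text{something}) \circ \phi^2$ with the middle term an isomorphism on $\operatorname{A}^2(X_{\bar K})$; hence $\phi^2$ is injective up to $N p^e$-torsion, and combined with surjectivity and Lemma~\ref{L:commute}/Proposition~\ref{P:surj} (surjectivity on torsion), one upgrades to a genuine isomorphism when $N = 1$, or an isomorphism on $\ell$-power torsion when $\ell \nmid N$ (using the mod-$\ell^\nu$ variant of Proposition~\ref{P:vanish} and the $\mathbb Z_\ell$-approximation trick from the proof of Theorem~\ref{T:UnivCyc}).

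The main obstacle, and the place where the \emph{cohomological} (rather than Chow) hypothesis really bites, is precisely that one cannot directly lift the decomposition to rational equivalence and run the Bloch--Srinivas diagram chase on Chow groups; one must instead route everything through Proposition~\ref{P:vanish}, which says that the map on algebraic representatives induced by a correspondence is insensitive to the part of its Chow class that is cohomologically trivial. I would therefore be careful to check that $V_\ell \lambda^2$ (and the finite-level Bloch maps) are injective — which is the hypothesis used in Proposition~\ref{P:vanish} — and that the $p^e$ coming from Gabber/Temkin alteration does not interfere, i.e.\ that in the end $\phi^2$, or $\phi^2[\ell^\infty]$ with $\ell \nmid N p$, is an isomorphism. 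The $p$-primary part of the torsion statement is not claimed (consistent with the fact that $(\Phi^2_{X/K})_{\mathrm{tors}}$ is not known to be injective in positive characteristic), so I would only assert the isomorphism on $\bar K$-points when $N=1$ and on $\ell$-power torsion for $\ell \nmid N$, matching the statement.
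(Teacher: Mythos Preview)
Your approach is essentially the paper's: reduce to $K=\bar K$, note surjectivity is automatic, use Lemma~\ref{L:factorcorres} to factor through an alteration $\widetilde W_1$ of the divisor, observe $Z_2^*$ vanishes on $\operatorname{A}^2$ for dimension reasons, and---crucially---invoke Proposition~\ref{P:vanish} so that the merely \emph{cohomological} equality $Np^e[\Delta_X]=p^e[Z_1]+p^e[Z_2]$ already forces the corresponding equality of induced endomorphisms of $\operatorname{Ab}^2_{X/K}$. For the $\mathbb Z_\ell$-case you correctly point to the integral-approximation trick and the finite-level variant of Proposition~\ref{P:vanish}, exactly as in the proof of Theorem~\ref{T:UnivCyc}.

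There is, however, a gap in your final injectivity step. You conclude only that ``$\phi^2$ is injective up to $Np^e$-torsion'' and then propose to ``upgrade to a genuine isomorphism when $N=1$.'' This is both weaker than the statement (which asserts a full isomorphism for \emph{every} $N$ in the $\mathbb Z$-cohomological case) and does not dispose of the $p^e$ coming from the alteration, an issue you flag but do not resolve. The paper's argument is more delicate at this point: rather than bounding $\ker\phi^2$, it constructs a map in the \emph{reverse} direction
\[
r:=(\tilde j_1)_*\circ \widetilde Z_1^*\;:\;\operatorname{Ab}^2_{X/K}(K)\longrightarrow \operatorname{A}^2(X),
\]
using the identification $\operatorname{A}^1(\widetilde W_1)=(\operatorname{Pic}^0_{\widetilde W_1})_{\mathrm{red}}(K)$, and shows via Proposition~\ref{P:vanish} that $\phi^2\circ r=Np^e\cdot\mathrm{id}$, so $\ker r$ is contained in the $Np^e$-torsion. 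The inverse of the induced map $\operatorname{Ab}^2_{X/K}(K)/(\ker r)\to\operatorname{A}^2(X)$ is then a regular homomorphism (this is where divisibility of $\operatorname{A}^2(T\times_K X)$ enters), and a diagram chase with the universal property of $\operatorname{Ab}^2_{X/K}$ forces $\phi^2$ to be injective outright, with no residual $Np^e$-ambiguity. Your sketch omits this reverse-map construction and the appeal to the universal property, which is precisely what eliminates the $Np^e$ factor.
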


 \begin{proof}
  We combine the proof in \cite[Thm.~1(i)]{BlSr83}, where it is assumed that  $N\Delta_{X_{\bar K}}\in
  \operatorname{CH}^{d_X}(X_{\bar K}\times_{\bar K} X_{\bar K})$
  admits a decomposition of type $(d_X-1,1)$,
    with Proposition~\ref{P:vanish}.

  First, it suffices to prove the result after base change to the algebraic
  closure $\bar K$, so we assume that $K=\bar K$ to make the notation more
  streamlined.
  We only need to show that $
  \phi^2_{X}=\Phi^2_{X/K}(K):\operatorname{A}^2(X_{}) \to
  \operatorname{Ab}^2_{X/K}(K)$ is injective, since the map to the algebraic
  representative is always surjective on points over an algebraically closed
  field.

  We are given that
  $$
  N[\Delta_{X}]=[Z_1]+[Z_2]\in H^{2d_X}(X\times_KX,\integ_{\ell}(d_X))
  $$
  with $Z_1\in \chow^{d_X}(X\times_KX)$ supported on $W_1\times _KX$ where $W_1$ is a divisor, and $Z_2\in \chow^{d_X}(X\times_KX)$ is
  supported on $X\times_K W_2$ with $\dim W_2\le 1$.  Since $K=\bar K$ is perfect,
  we have a smooth projective alteration of $W_1$ of some degree a power of $p$, say $p^e$.  We
  then consider the correspondence  $Np^e\Delta_X^*$.  
  By Proposition~\ref{P:vanish}, $Np^e\Delta_X^*$ acts as $p^eZ_1^* + p^eZ_2^*$ on $\operatorname{Ab}^2_{X/K}$.
   Now we use our factorization
  \eqref{E:Z2*di-Ab}.  
  As $\dim W_2\le 1$, we have
  $p^eZ_2^*=0$ on $\operatorname{A}^2(X)$, so that $Np^e\Delta_X^*=p^eZ_1^*$ on $\operatorname{Ab}^2_{X/K}$.  Under the identification $\operatorname{A}^1(\widetilde
  W_1)=(\operatorname{Pic}^0(\widetilde W_1))_{\operatorname{red}}(K)$ we obtain a
  map  $r:=(\tilde j_1)_*\circ \widetilde Z_1^*:\operatorname{Ab}^2_{X/K}(K)\to
  \operatorname{A}^2(X)$, which is a homomorphism (via a diagram chase) with
  kernel contained in the $Np^e$-torsion.
    The inverse of the isomorphism $\operatorname{Ab}^2_{X/K}(K)/(\ker
  r)\stackrel{\sim}{\to} \operatorname{A}^2(X)$ is a regular homomorphism (use
  that for any smooth variety $T$ we have $\operatorname{A}^2(T\times_KX)$ is
  divisible), and then a diagram chase using the universal property of the
  algebraic representative shows that $\Phi^2_{X/K}$ is injective, completing the
  proof.
     
In the case of a $\mathbb Z_\ell$-cohomological decomposition, we make the same modifications as in Theorem \ref{T:UnivCyc}.  
From  Proposition \ref{P:surj}, we know that $\Phi^2_{X/K}(\bar K)[\ell^\infty]$ is surjective, so we only need to show it is injective.  As before, we may set $N=1$. We assume that we have $Z_1,Z_2$, as in the definition of the $\mathbb Z_\ell$-decomposition of the diagonal, observing that they are given as cycles with $\mathbb Z_\ell$-coefficients.  We have by definition that $[\Delta_X]=[Z_1]+[Z_2]$ in $H^{2d_X}((X\times _KX)_{\bar K},\mathbb Z_\ell(d_X))$.  As in the proof of Theorem~\ref{T:UnivCyc}, choose integral cycles $Z'_1$ and $Z'_2$ which respectively approximate $Z_1$ and $Z_2$ modulo $\ell^2$.
We have the equality $[\Delta_X]=[Z_1']+[Z_2']$ in $H^{2d_X}((X\times _KX)_{\bar K},\mathbb Z/\ell^2\mathbb Z (d_X))$.
The rest of the proof goes through identically, so that we find that $\Phi^2_{X/K}(\bar K)$ is an injection on $\ell$-torsion.  This implies it is an injection on $\ell$-power torsion, and we are done.
 \end{proof}

 \begin{rem}
  In Proposition \ref{P:BlSr},   the natural transformation $\Phi^2_{X/K}:\mathscr A^2_{X/K}\to
  \operatorname{Ab}^2_{X/K}$ need not be an isomorphism of functors  \cite[Rem.~5.2]{ACMVfunctor}.
                   \end{rem}

We have the following application\,:

 \begin{pro}\label{P:BlSrTors}
  Let $X$ be a smooth projective variety over a perfect field $K$.
   Assume one of the following:
  \begin{enumerate}
   \item $\operatorname{char}(K)=0$, or,

   \item for some natural number $N$,  the multiple $N \Delta_{X_{\bar K}}\in
   \operatorname{CH}^{d_X}(X_{\bar K}\times_{\bar K} X_{\bar K})$
   admits a cohomological decomposition (resp.\ cohomological $\mathbb Z_\ell$-decomposition) of type $(d_X - 1,1)$
    with respect to $H^\bullet (-,\mathbb Z_\ell)$ for some prime $\ell\ne\operatorname{char}(K)$.
  \end{enumerate}
  Then for all primes $l$ (resp.~for $l=\ell$), the morphisms
  \begin{align} \label{E:PhiTorsMod}
  \phi^2_{X_{\bar K}}[l^\infty]: &\operatorname{A}^2(X_{\bar
   K})[l^\infty]\stackrel{\sim}{\longrightarrow} \operatorname{Ab}^2_{X/K}(\bar
  K)[l^\infty]\\ \label{E:PhiTateMod}
  T_l \phi^2_{X_{\bar K}}:& T_l \operatorname{A}^2(X_{\bar
   K})\stackrel{\sim}{\longrightarrow} T_l \operatorname{Ab}^2_{X/K}
  \end{align}
  are isomorphisms of $\operatorname{Gal}(K)$-modules.
   \end{pro}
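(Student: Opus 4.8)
The plan is to deduce Proposition~\ref{P:BlSrTors} from Proposition~\ref{P:BlSr} together with the torsion surjectivity of Proposition~\ref{P:surj} and the characteristic-zero input of Proposition~\ref{P:phizero}. I will treat the two cases separately. In case (1), where $\operatorname{char}(K)=0$, the statement \eqref{E:PhiTorsMod}--\eqref{E:PhiTateMod} is exactly Proposition~\ref{P:phizero} applied with $n=2$: algebraic representatives over a field $K\subseteq\mathbb C$ (and indeed over any field of characteristic zero, after a base change of the algebraically closed field as in Remark~\ref{R:SepBC/D}) induce isomorphisms on torsion and hence on Tate modules, using that over $\mathbb C$ this is a result of Murre \cite{murre83}, and that algebraic representatives are stable under extension of algebraically closed base field by \cite{ACMVfunctor}. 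The Galois-equivariance is automatic from the Galois-equivariance of $\Phi^2_{X/K}$ established in \cite{ACMVdcg}. So case (1) is essentially a citation.

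For case (2), the main point is to upgrade the conclusion of Proposition~\ref{P:BlSr} from an isomorphism on $\bar K$-points (the ``cohomological decomposition'' variant) or on $\ell$-power torsion (the ``$\mathbb Z_\ell$-decomposition'' variant) to an isomorphism on $l$-power torsion for \emph{all} primes $l$ (resp.\ for $l=\ell$) and then to Tate modules. First I would observe that $\operatorname{A}^2(X_{\bar K})[l^\infty]$ and $\operatorname{Ab}^2_{X/K}(\bar K)[l^\infty]$ are, for each $l$, the $l$-primary parts of the torsion subgroups of $\operatorname{A}^2(X_{\bar K})$ and $\operatorname{Ab}^2_{X/K}(\bar K)$ respectively, and that $\phi^2_{X_{\bar K}}$ restricts to a map between these. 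In the ``cohomological decomposition'' case, Proposition~\ref{P:BlSr} gives that $\phi^2_{X_{\bar K}}:\operatorname{A}^2(X_{\bar K})\to\operatorname{Ab}^2_{X/K}(\bar K)$ is an isomorphism of $\operatorname{Gal}(K)$-modules on $\bar K$-points; restricting to $l$-power torsion subgroups immediately yields \eqref{E:PhiTorsMod} for every $l$, and applying $T_l=\varprojlim(-)\otimes\mathbb Z/l^n$ (equivalently, $T_l$ of an abelian torsion group is $\varprojlim$ of the $l^n$-torsion) yields \eqref{E:PhiTateMod}. In the ``$\mathbb Z_\ell$-decomposition'' case with $\ell\nmid N$, Proposition~\ref{P:BlSr} directly gives that $\phi^2_{X_{\bar K}}[\ell^\infty]$ is an isomorphism, which is \eqref{E:PhiTorsMod} for $l=\ell$; taking $T_\ell$ gives \eqref{E:PhiTateMod} for $l=\ell$.

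The one subtlety I would want to address carefully is the passage from ``isomorphism on $\ell$-power torsion'' to ``isomorphism on $T_\ell$'': since $\operatorname{A}^2(X_{\bar K})$ need not be a divisible group of finite corank a priori, one should note that $T_\ell M \cong \varprojlim_n M[\ell^n]$ for any abelian group $M$, so an isomorphism $\phi^2[\ell^\infty]$ on $\ell$-primary torsion, being compatible with the multiplication-by-$\ell$ maps, induces an isomorphism on the inverse limits $T_\ell$; I expect this to be the only place any argument (rather than citation) is needed, and it is routine. The Galois-equivariance in all cases is inherited from the Galois-equivariance of $\Phi^2_{X/K}$ (\cite{ACMVdcg}) since we are only restricting or taking limits of a Galois-equivariant map. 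I anticipate no real obstacle: the proposition is a packaging of Propositions~\ref{P:BlSr}, \ref{P:surj}, and \ref{P:phizero}, and the main work has already been done in proving Proposition~\ref{P:BlSr}.
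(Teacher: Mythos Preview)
Your proposal is correct and follows essentially the same approach as the paper: case (1) reduces to Murre's result over $\mathbb C$ via base change of algebraically closed fields (the paper invokes Lecomte's rigidity directly, you route through Proposition~\ref{P:phizero}, but these amount to the same thing), and case (2) is an immediate consequence of Proposition~\ref{P:BlSr}, restricting the isomorphism on $\bar K$-points (resp.\ on $\ell^\infty$-torsion) to $l$-primary torsion and then taking Tate modules. Your invocation of Proposition~\ref{P:surj} is harmless but unnecessary, since Proposition~\ref{P:BlSr} already delivers the full isomorphism you need.
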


 \begin{proof}  
  Under the  hypothesis (1), 
  applying $T_l$ to both sides  of \eqref{E:PhiTorsMod}, we see that the morphism
  \eqref{E:PhiTateMod} follows from \eqref{E:PhiTorsMod}.  For
  \eqref{E:PhiTorsMod},  using Lecomte's rigidity theorem \cite{lecomte86},
  we immediately reduce to the case  $K=\mathbb C$, in which case this is
  \cite[Thm.~10.3]{murre83}.  We note that there is a small gap in Murre's proof\,;
  it is not obvious that for a surjective regular homomorphism, the induced
  morphism on $\ell$-torsion is surjective.  This uses Lemma~\ref{L:commute} and the existence of miniversal cycles, and is explained in
  \cite[Lem.~3.2(b) \& Rem.~3.3]{ACMVdmij}.
  
  Under hypothesis (2), the conclusions  follow immediately from
  Proposition~\ref{P:BlSr}.
 \end{proof}

\begin{rem}
If in Proposition \ref{P:BlSrTors} the decomposition is of type $(d_X-1,2)$, and $\ell\nmid Np$, then \eqref{E:PhiTorsMod} and \eqref{E:PhiTateMod} are isomorphisms.  This follows by combining Proposition~\ref{P:vanish} with the arguments for \cite[Prop.~3.8(3)]{ACMVBlochMap} (which addresses the case of a Chow decomposition).
\end{rem}

 \subsection{The standard assumption}
  To start our discussion, we observe that given a regular homomorphism
 $$\xymatrix{\Phi:\mathscr A^n_{X/K}\ar[r]& A},$$
 if  the natural map
 \begin{equation}\label{E:RegTate}
 \xymatrix{
 T_l \Phi(\bar K):T_l \operatorname{A}^n(X_{\bar
  K})\ar[r]^{\qquad \quad \ \simeq}& T_l A}
 \end{equation}
 is an isomorphism, then there is a canonical  morphism $\iota$ of
 $\operatorname{Gal}(K)$-modules\,:
 \begin{equation}\label{E:IotaCanA}
 \xymatrix{
  T_l  A(\bar K) \ar@{->}[rr]^<>(0.5){T_l \Phi(\bar K)^{-1}}
  \ar@/_1.5pc/[rrrr]_{\iota}& &T_l \operatorname{A}^n(X_{\bar K})\ar@{^(->}[r] &T_l
  \operatorname{CH}^n(X_{\bar K}) \ar@{->}[r]^<>(0.5){T_l\lambda^n}&H^{2n-1}(X_{\bar
   K},\mathbb Z_l(n))_\tau.
 }
 \end{equation}
 When $K=\mathbb C$, $n$ is $1$, $2$, or $d_X$,  and $A=\operatorname{Ab}^n_{X/K}$, this
 agrees with the canonical inclusion coming from Hodge theory (as the Bloch map
 agrees with the Abel--Jacobi map on torsion~\cite[Prop.~3.7]{bloch79}).
\medskip

An isomorphism of the type \eqref{E:RegTate}  in the case
 $n=2$  turns out to be quite central to our treatment and deserves to be singled out\,:

 \begin{dfn}[The standard assumption] \label{D:StAs}
  We say  a smooth projective variety  $X$ over a field $K$ and a prime  $l$
  satisfy the \emph{standard assumption}, or that $X$ satisfies the
  standard assumption at $l$, if the homomorphism 
    \begin{equation} \label{E:StAs1}
  \xymatrix@C=3em{
\phi^2_{X_{\bar K}}[l^\infty]: \operatorname{A}^2(X_{\bar
   K})[l^\infty]  \ar@{->}[r] &  \operatorname{Ab}^2_{X/K}[l^\infty] \\
}
  \end{equation}
 is an isomorphism.
   \end{dfn}

 If the standard assumption holds, then, by taking Tate modules, 
  \begin{equation} \label{E:StAs1Tate}
  \xymatrix@C=3em{
  T_l \phi^2_{X_{\bar K}}: T_l \operatorname{A}^2(X_{\bar
   K})  \ar@{->}[r] & T_l \operatorname{Ab}^2_{X/K} \\
}
  \end{equation}
      is an isomorphism, as well, 
   in which case  we will denote
  \begin{equation}\label{E:iota}
\xymatrix@C=3em{
	\iota :  \ T_l \operatorname{Ab}^2_{X/K}   \ar@{->}[r]^{(T_l \phi^2_{X_{\bar K}})^{-1}}_\sim &  T_l \operatorname{A}^2(X_{\bar
		K})
	\ar@{->}[r]^{T_l\lambda^2 \quad } &H^{3}(X_{\bar K},\mathbb Z_l(n))_\tau\\
}
  \end{equation} the composition, and similarly with $\rat_l$-coefficients.

 \begin{rem}\label{R:StAsHolds}
As explained in Proposition \ref{P:BlSrTors}, the standard assumption holds unconditionally if $\operatorname{char}(K)=0$, and  holds in positive characteristic for those smooth projective varieties $X$ whose diagonal admit a positive multiple with a cohomological decomposition of type $(d_X-1,1)$\,; 
\emph{e.g.}, geometrically rationally chain connected varieties.  We in fact expect that the standard assumption holds unconditionally\,;  to establish this 
for almost all primes,
 it would suffice to show the standard assumption holds for varieties over finite fields (see \cite[Lem.~4.3]{ACMVBlochMap}).
\end{rem}

 \section{Algebraic representatives and cohomological actions of correspondences}
 \label{S:DiagComm}
  
It is a basic fact in Hodge theory that for a family of homologically trivial cycle classes, the normal function defined via the Abel--Jacobi map induces on  cohomology the same morphism as the family of cycle classes viewed as a correspondence (see \eqref{E:commdiagHdg}).  As this fact is 
quite useful in characteristic $0$, the purpose of this section is to explain how to interpret this fact in positive characteristic, which we do in terms of the commutativity of a certain diagram
 (see \eqref{E:RegHomCorr}). The main results are Proposition~\ref{P:commute}, as well as 
 Proposition~\ref{P:commute} together with its consequence, Corollary~\ref{C:commute-2}, regarding the case of codimension-$2$ cycle classes, which essentially says that the diagram is commutative for geometrically   rationally chain connected varieties (see Remark \ref{R:co2RatCon}).
 
 \subsection{Defining the commutative diagram}\label{SS:commdiag}

 For motivation, consider the situation where $X$ and $T$ are complex projective
 manifolds, and $Z\in \operatorname{CH}^n(T\times X)$ is a cycle class that is
 fiberwise algebraically trivial, {\emph{i.e.}}, $Z\in \mathscr{A}_{X/\cx}(T)$.
     Via the Abel--Jacobi map, we obtain a morphism
 $$
 \xymatrix{
 \psi_Z:T\ar[r]& J^{2n-1}(X).} $$
 It is a standard fact (see \emph{e.g.}, \cite[Thm.~12.17]{voisinI}) that $(\psi_Z)_*$
 and the correspondence $Z_*$ agree on $H^{2d_T-1}(T,\mathbb Z)$ in the sense
 that there is a commutative diagram
 \begin{equation}\label{E:commdiagHdg}
        \xymatrix@C=1em@R=1em{
	H_1(T,\mathbb Z)  \ar[rrr]^<>(0.5){(\psi_Z)_* } \ar[rrrd]_{Z_*} &&& H_1(J^{2n-1}(X),\mathbb
	Z) \ar@{=}[rrr]& &&H^{2n-1}(X,\mathbb Z(n))_\tau\\
	&&&H^{2n-1}(X,\mathbb Z(n))\ar@{->>}[rrru]&\\
}
 \end{equation}
 We note that $\psi_Z$ has image contained in the image $J_a^{2n-1}(X)$ of the restriction of the Abel--Jacobi map to algebraically trivial cycles $AJ : \operatorname{A}^n(X) \to J^{2n-1}(X)$. If $X$ admits $K\subseteq \cx$ as a field of definition, the subtorus $J_a^{2n-1}(X)\subseteq J^{2n-1}(X)$ was shown in \cite{ACMVdmij} to admit a distinguished model $J^{2n-1}_{a,X/K} $ over $K$, in such a way that $AJ : \operatorname{A}^n(X) \to J_a^{2n-1}(X)$ is $\operatorname{Aut}(\cx/K)$-equivariant. Since the Bloch map agrees with the Abel--Jacobi map on torsion~\cite[Prop.~3.7]{bloch79}, we obtain 
 for smooth projective varieties $T,X$ over a
 field $K\subseteq \mathbb C$ and $Z\in \mathscr A^n_{X/K}(T)$ a commutative diagram\,:
 $$\xymatrix@C=1em@R=1em{  &&& T_\ell\operatorname{A}^n(X_{\bar K}) \ar[dl]_{T_\ell AJ} \ar[dr]^{T_\ell\lambda^n} \\
  H^{2d_T-1}(T_{\bar K},\mathbb Z_\ell(d_T)) \ar[rrd]_{Z_*}
  \ar[rr]^<>(0.5){(\psi_Z)_* }& & T_\ell J^{2n-1}_{a,X/K} \ar@{^(->}[rr] &&
  H^{2n-1}(X_{\bar K},\mathbb Z_\ell(n))_\tau \\
  &&H^{2n-1}(X_{\bar K},\mathbb Z_\ell(n)) \ar@{->>}[rru]& \\
 }$$

As the Abel--Jacobi map is complex in nature,
 it is not immediately clear how to generalize this statement to
 varieties over  fields of positive characteristic. 
    However, as algebraic representatives provide a replacement for the Abel--Jacobi maps in positive characteristic, let us assume that there exists an algebraic representative
 $\Phi_{X/K}^n:\mathscr A^n_{X/K}\to \operatorname{Ab}^n_{X/K}$, which is always the case if $n=1,2,d_X$.
 While in general it is unclear whether there is a canonical map $T_l
 \operatorname{Ab}^n_{X/K} \to H^{2n-1}(X_{\bar K},\mathbb Z_l(n))_\tau$, if we assume that 
$$  T_l \phi^n_{X_{\bar K}}: T_l
 \operatorname{A}^n(X_{\bar K})\stackrel{\sim}{\longrightarrow} T_l
 \operatorname{Ab}^n_{X/K}$$  is an isomorphism,
       then we obtain a diagram
 \begin{equation}\label{E:RegHomCorr}
 \xymatrix@C=.5em@R=1em{
  H^{2d_T-1}(T_{\bar K},\mathbb Z_l(d_T)) \ar[rrrd]_{Z_*}
  \ar[rr]^<>(0.5){(\psi_Z)_* } 
   & & T_l {\Ab}^{n}_{X/K}
  \ar[rr]^{(T_l \phi_X^n)^{-1}}_{\simeq} && T_l\A^n(X_{\bar K})
  \ar@{->}[rr]^<>(0.5){T_l\lambda^n} && H^{2n-1}(X_{\bar K},\integ_l(n))_\tau \\
  &&&H^{2n-1}(X_{\bar K},\mathbb Z_{l}(n))\ar@{->>}[rrru]& \\
 }
 \end{equation}
       Note that if $n=1$ or $n=d_X$, then $T_l \phi^n_{X_{\bar K}}$ is an isomorphism for all primes $l$ (for $n=1$ this is Kummer theory,
  while for $n=d_X$ this is Rojtman's theorem\,; see~\cite{bloch79, grossuwaAJ,milne82}, and \cite[Appendix]{ACMVBlochMap} for a review). Recall also from Remark~\ref{R:StAsHolds} that $T_l \phi^n_{X_{\bar K}}$ is an isomorphism for $n=2$ in characteristic zero and for geometrically rationally chain connected varieties in positive characteristic.
  \medskip

 The question is then the following:
 \begin{que}\label{Q:CorrRegHom}
 Assuming $  T_l \phi^n_{X_{\bar K}}: T_l
 \operatorname{A}^n(X_{\bar K})\stackrel{\sim}{\longrightarrow} T_l
 \operatorname{Ab}^n_{X/K}$ is an isomorphism, under what conditions is diagram \eqref{E:RegHomCorr} commutative\,?
 \end{que}

 In Lemma~\ref{L:cod1} below, we establish the commutativity of \eqref{E:RegHomCorr} in the case $n=1$ and in Corollary~\ref{C:commute-d} below, we establish the commutativity of \eqref{E:RegHomCorr} in the case $n=d_X$.
If $n=2$ and $K\subseteq \cx$, the algebraic representative $\Phi_{X/K}^2$ coincides with the Abel--Jacobi map after base-change to $\cx$. As such, under the above conditions, the diagram \eqref{E:RegHomCorr} commutes. The commutativity of \eqref{E:RegHomCorr} for $n=2$ and for perfect fields $K$ of positive characteristic will be established in Corollary~\ref{C:commute-2} below, assuming that $T_l \phi^2_{X_{\bar K}}$ is an isomorphism and that $V_l\lambda^2$ is surjective.

 \begin{rem}\label{R:CorrRegHom}
  Note that since $H^{2n-1}(X_{\bar K},\mathbb Z_l(n))_\tau$ is by definition
  torsion-free, it suffices to show that \eqref{E:RegHomCorr} is commutative with
  $\mathbb Q_l$-coefficients.
  Indeed, suppose $M$ and $N$ are $\mathbb Z_l$-modules, and $f,g:M\to N$ are
  $\mathbb Z_l$-module homomorphisms  that agree after tensoring with $\mathbb
  Q_l$.  If $N$ is torsion-free, then $f=g$:  given $m\in M$, we have that
  $r(f(m)-g(m))=0$ for some $r\in \mathbb Z_l$\,; but since $N$ is torsion-free,
  this implies that $f(m)=g(m)$.
 \end{rem}

 \subsection{Diagram \eqref{E:RegHomCorr} in the case $n=1$}

In case $\operatorname{char}(K)=0$, as explained in \S \ref{SS:commdiag}, the diagram~\eqref{E:RegHomCorr} with $n=1$ commutes. We now observe that it also commutes with $n=1$ for any perfect field $K$.
 We start by recalling the following fact\,:

\begin{pro}\label{P:coniveaucurve}
	Let $X$ be a smooth projective variety over a perfect field $K$, and let $n$ be a natural number.
Then
there exist a smooth projective, geometrically integral,  curve $C$ over $K$ admitting a $K$-point
and a
correspondence $\gamma \in \operatorname{CH}^n(X\times_K C)$
such that for all primes $\ell\ne \operatorname{char}(K)$  
 $$\coniveau^{n-1}H^{2n-1}(X_{\bar K},\mathbb Q_\ell(n)) = \operatorname{Im}\big( \gamma_* :H^{1}(C_{\bar K},\mathbb
Q_\ell (1)) \longrightarrow H^{2n-1}(X_{\bar K},\mathbb
Q_\ell (n))\big).$$
Here, $\coniveau^\bullet$ is the geometric coniveau filtration.
  \end{pro}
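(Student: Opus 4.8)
The plan is to reduce to the classical fact that the geometric coniveau-$(n-1)$ part of $H^{2n-1}$ is, by definition, the union of the images of Gysin maps $H^1(\widetilde W_{\bar K},\rat_\ell(1)) \to H^{2n-1}(X_{\bar K},\rat_\ell(n))$ as $\widetilde W \to W \hookrightarrow X$ ranges over smooth alterations of closed subvarieties $W$ of codimension $n-1$. (Over a perfect field one uses Gabber/Temkin alterations to have smooth models, and the degree of the alteration is invertible with $\rat_\ell$-coefficients, so the image is unaffected.) First I would record that, since $H^{2n-1}$ is finite-dimensional, this union is already realized by a \emph{single} $\widetilde W$: one picks finitely many $\widetilde W_1,\dots,\widetilde W_r$ whose Gysin images span the coniveau piece and replaces them by their disjoint union $\widetilde W := \bigsqcup_i \widetilde W_i$, which is still smooth projective of pure codimension $n-1$ in the relevant sense. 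Each $\widetilde W_i$ here has dimension $d_X - (n-1)$, so its $H^1$ may not obviously come from a curve; but by the Lefschetz hyperplane / Bertini argument — i.e. Proposition~\ref{P:coniveaucurve} itself applied in degree $1$, or simply the standard fact that $H^1$ of a smooth projective variety is cut out by a smooth curve section (a ``Lefschetz curve'') via a correspondence — one finds a smooth projective geometrically integral curve $C_i$ with a $K$-point and a correspondence whose pushforward surjects $H^1(C_{i,\bar K},\rat_\ell(1)) \twoheadrightarrow H^1(\widetilde W_{i,\bar K},\rat_\ell(1))$.

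Next I would assemble the pieces into one curve and one correspondence. Take $C := \bigsqcup_i C_i$; this is smooth projective with a $K$-point (pick any component's point, or note the statement allows geometric integrality by replacing $C$ with a connected smooth projective curve dominating all the $C_i$ — e.g. a general curve in a suitable linear system on a product, or just absorb the $C_i$ into one irreducible curve via a further correspondence). Let $\gamma_i \in \chow^{n-1}(\widetilde W_i \times_K C_i)$ be the correspondence inducing $H^1(C_i) \twoheadrightarrow H^1(\widetilde W_i)$, let $\iota_i \in \chow^n(X \times_K \widetilde W_i)$ be (the transpose of) the graph of $\widetilde W_i \to X$ so that $(\iota_i)_*$ is the Gysin map on cohomology, and set $\gamma := \sum_i \iota_i \circ {}^t\gamma_i \in \chow^n(X\times_K C)$ (with appropriate bookkeeping of the disjoint-union components). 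Then $\gamma_* : H^1(C_{\bar K},\rat_\ell(1)) \to H^{2n-1}(X_{\bar K},\rat_\ell(n))$ has image equal to $\sum_i \operatorname{Im}\big((\iota_i)_* \circ (\gamma_i)_*\big) = \sum_i \operatorname{Im}\big((\iota_i)_* \text{ on } H^1(\widetilde W_i)\big)$, which is precisely $\coniveau^{n-1}H^{2n-1}(X_{\bar K},\rat_\ell(n))$ by definition of the geometric coniveau filtration and the spanning property of the $\widetilde W_i$. The independence of $\ell$ is automatic: the $\widetilde W_i$ and the correspondences $\gamma, \iota$ are fixed algebraic data, and the coniveau filtration is defined simultaneously for all $\ell\neq\operatorname{char}(K)$ by the supports of cycles, so the same $(C,\gamma)$ works uniformly.

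The main obstacle is the ``$H^1$ of a higher-dimensional variety comes from a curve'' step, i.e. producing $C_i$ and $\gamma_i$ with $(\gamma_i)_*$ surjective onto $H^1(\widetilde W_{i,\bar K},\rat_\ell(1))$ for \emph{all} $\ell$ at once. In characteristic zero this is the classical Lefschetz theorem on hyperplane sections applied to $H_1$ (equivalently surjectivity of $\pi_1$ of a smooth curve section), together with the fact that the composition $H^1(C_i) \to H^1(\widetilde W_i) \to H^1(C_i)$ can be arranged; in positive characteristic one invokes the analogous statements for $\ell$-adic cohomology — surjectivity of $H^1$ (equivalently of $\pi_1^{\text{ab}}$, hence of Albanese) under restriction to a general smooth complete intersection curve, which holds after possibly passing to a finite purely inseparable extension and is standard (SGA-type Lefschetz theorems, or the fact that $\operatorname{Alb}$ is generated by the image of a curve). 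I would cite the relevant Lefschetz / Bertini input rather than reprove it. Everything else — assembling disjoint unions, composing correspondences, reading off images on cohomology — is formal and uniform in $\ell$, so once this input is in hand the proof is just bookkeeping. One should also double-check the curve can be taken geometrically integral and with a $K$-point; over a perfect (indeed any) infinite field general Bertini gives a geometrically integral smooth curve, and the $K$-point comes from the chosen base point used in the Lefschetz argument, while for finite fields one argues by the usual Lang–Weil/Bertini-over-finite-fields refinements or simply notes a $0$-cycle of degree $1$ suffices for the applications — but here the cleanest route is to absorb all $C_i$ into a single smooth projective geometrically integral curve admitting a $K$-point by a further correspondence, so that the final statement is literally as written.
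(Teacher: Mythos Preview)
The paper does not prove this here; it simply cites \cite[Prop.~1.1]{ACMVdmij} and notes that the argument given there goes through over any perfect field. Your outline is the standard strategy behind that result: realize $\coniveau^{n-1}H^{2n-1}$ as a finite sum of Gysin images from alterations $\widetilde W_i$ of codimension-$(n-1)$ subvarieties, cut each $\widetilde W_i$ down to a curve via Lefschetz for $H^1$ (equivalently, surjectivity of the Albanese on a smooth curve section), and compose the correspondences.

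The one step where your sketch is too quick is the claim that ``independence of $\ell$ is automatic.'' It is not: the coniveau filtration is defined cohomologically, and the finitely many $W_i$ you pick to span $\coniveau^{n-1}H^{2n-1}(X_{\bar K},\rat_{\ell_0})$ at a single prime $\ell_0$ do not a priori span it at every other $\ell$. The inclusion $\operatorname{Im}(\gamma_*)_\ell \subseteq \coniveau^{n-1}H^{2n-1}(X_{\bar K},\rat_\ell)$ is indeed automatic from the support of $\gamma$, but the reverse inclusion for all $\ell$ simultaneously requires an argument. The device used in the cited reference is to route everything through abelian varieties --- Jacobians of the $C_i$ and Picard varieties of the $\widetilde W_i$ --- and then use semisimplicity of the isogeny category together with the $\ell$-independence of Betti numbers of abelian varieties; compare the proof of Lemma~\ref{L:factorM} later in this paper, where exactly this mechanism is spelled out. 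You should make this step explicit rather than assert it. The absorption of the disjoint $C_i$ into a single geometrically integral curve with a $K$-point is a routine Bertini/embedding argument and is fine as sketched.
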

\begin{proof}
This is \cite[Prop.~1.1]{ACMVdmij} which actually holds over any perfect field $K$.
\end{proof}

 \begin{lem}[Diagram \eqref{E:RegHomCorr} with $n=1$]\label{L:cod1}
  Let $X$ be a smooth projective variety over a perfect field $K$ and let $l$ be a prime
  number.
  The map
  $ T_l \phi^1_{X_{\bar K}}: T_l \operatorname{A}^1(X_{\bar
   K})\stackrel{\sim}{\longrightarrow} T_l \operatorname{Ab}^1_{X/K}$
  is an isomorphism, and for any smooth projective variety $T$ and any $Z\in
  \mathscr A^1_{X/K}(T)$, the diagram \eqref{E:RegHomCorr} commutes in the case
  $n=1$. In particular, if $T=B$ is an abelian variety, then $T_l\lambda^1 \circ T_lw_Z = Z_*: T_l B \to H^1(X_{\bar K},\integ_l(1))$.
 \end{lem}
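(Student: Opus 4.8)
The plan is to reduce everything to the universal situation for $\mathrm{Pic}^0$, where the statement is essentially a tautology, and then to handle the comparison with $T_\ell\lambda^1$ via known facts about Kummer theory. First I would record that $T_l\phi^1_{X_{\bar K}}$ is an isomorphism: since $\mathrm{Ab}^1_{X/K} = (\mathrm{Pic}^0_{X/K})_{\mathrm{red}}$ and $\phi^1$ is the Abel--Jacobi map for divisors, the induced map on $l$-primary torsion is the classical Kummer-theoretic identification $\mathrm{Pic}^0(X_{\bar K})[l^\infty] \xrightarrow{\sim} H^1(X_{\bar K},\mathbb{Q}_l/\mathbb{Z}_l(1))$ composed with the isomorphism of $l$-primary torsion subgroups (for $l=p$ one invokes the flat/syntomic version as in \cite{grossuwaAJ, milne82}); taking Tate modules gives the claim. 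Hence the composite $\iota = T_l\lambda^1\circ (T_l\phi^1_{X_{\bar K}})^{-1}$ makes sense, and because $T_l\lambda^1$ coincides with the Abel--Jacobi/Kummer identification on torsion (see \cite[Prop.~3.9]{bloch79} for the analogous statement for $\lambda^0$, and the corresponding statement for $\lambda^1$), $\iota$ is in fact the \emph{identity} of $H^1(X_{\bar K},\mathbb{Z}_l(1))$ under the canonical identification $T_l(\mathrm{Pic}^0_{X/K})\cong H^1(X_{\bar K},\mathbb{Z}_l(1))$ coming from the Kummer sequence. This reduces the commutativity of \eqref{E:RegHomCorr} to showing that $(\psi_Z)_*$ on $H^{2d_T-1}(T_{\bar K},\mathbb{Z}_l(d_T))$ equals the correspondence action $Z_*$, now both landing in $H^1(X_{\bar K},\mathbb{Z}_l(1))$.

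Next I would reduce to the universal case. Given $Z\in\mathscr{A}^1_{X/K}(T)$, the morphism $\psi_Z\colon T\to (\mathrm{Pic}^0_{X/K})_{\mathrm{red}}$ is by definition induced by the universal property: choosing (after a finite separable base change, harmless by Remark~\ref{R:SepBC/D} and since $H^1(-,\mathbb{Z}_l(1))_\tau$ is torsion-free so one may check things rationally and Galois-equivariantly) a $K$-point $t_0\in T$, and replacing $Z$ by $Z' = Z - (T\times_K Z_{t_0})$, we may assume $Z_{t_0}=0$; then $\psi_{Z'}$ factors as $T\to\mathrm{Alb}_{T/K}\to (\mathrm{Pic}^0_{X/K})_{\mathrm{red}}$, where the second map is the homomorphism of abelian varieties induced by the correspondence $Z'$ via the canonical isomorphism \eqref{eq:canon}. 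By the discussion in \S\ref{SSS:alb} (in particular the commutative diagram there, together with Lemma~\ref{L:RHCorB}), this homomorphism $\mathrm{Alb}_{T/K}\to(\mathrm{Pic}^0_{X/K})_{\mathrm{red}}$ is, on $\bar K$-points and hence on Tate modules, precisely $Z'_* = (Z-(T\times_K Z_{t_0}))_*$. Since the Albanese map $T\to\mathrm{Alb}_{T/K}$ induces an isomorphism $H^{2d_T-1}(\mathrm{Alb}_{T/K},\mathbb{Z}_l(d_T))\xrightarrow{\sim}H^{2d_T-1}(T_{\bar K},\mathbb{Z}_l(d_T))$ (as $H^1$ of a smooth projective variety is that of its Albanese), the assertion $(\psi_Z)_* = Z_*$ on $H^{2d_T-1}(T_{\bar K},\mathbb{Z}_l(d_T))$ follows once we know that a homomorphism of abelian varieties and the correspondence given by its graph induce the same map on $H^1$ — a standard compatibility — together with the fact that $[T\times_K Z_{t_0}]$ acts as zero on $H^{2d_T-1}(T_{\bar K})$ because it factors through $H^{2d_T-1}(\mathrm{Spec}\,K)=0$.

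Finally, the last sentence of the statement is the special case $T=B$ an abelian variety: there one takes $t_0 = 0$, the Albanese map is the identity $B=\mathrm{Alb}_{B/K}$, and unwinding \eqref{E:RegHomCorr} with the already-established identification $\iota=\mathrm{id}$ gives $T_l\lambda^1\circ T_l w_Z = Z_*\colon T_lB\to H^1(X_{\bar K},\mathbb{Z}_l(1))$; note $w_Z$ is a homomorphism on torsion by Lemma~\ref{L:ablambda} (since $Z_0=0$ after the normalization), so $T_l w_Z$ is defined. The main obstacle I anticipate is purely bookkeeping: carefully matching up the three \emph{a priori} different identifications of $T_l(\mathrm{Pic}^0_{X/K})$ with $H^1(X_{\bar K},\mathbb{Z}_l(1))$ — the one from Kummer theory, the one from $T_l\lambda^1$, and the one implicit in \eqref{E:RegHomCorr} via $(T_l\phi^1_{X_{\bar K}})^{-1}$ — and checking they agree Galois-equivariantly; once that is pinned down (using \cite[Prop.~3.9]{bloch79} and the compatibility of $\lambda^1$ with the Abel--Jacobi map on torsion), the rest is formal. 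The case $l=p$ requires only that one cite \cite{grossuwaAJ} for the corresponding Kummer-type identification in flat cohomology, which is already invoked elsewhere in the paper.
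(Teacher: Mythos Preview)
Your approach is valid but takes a genuinely different route from the paper. The paper first proves the commutativity directly when $T$ is a \emph{curve}, using the Kummer-theoretic identification $H^1(X_{\bar K},\mmu_{l^\nu}) = \pic^0_{X_{\bar K}/\bar K}(\bar K)[l^\nu]$ on both sides (since $H^{2d_T-1}(T)=H^1(T)$ when $d_T=1$), and then reduces arbitrary $T$ to the curve case by Proposition~\ref{P:coniveaucurve}: one produces a curve $C$ and $\gamma\in\chow^1(C\times_K T)$ with $\gamma_*H^1(C_{\bar K},\rat_\ell(1))=H^{2d_T-1}(T_{\bar K},\rat_\ell(d_T))$, and a diagram chase (with $\rat_l$-coefficients, via Remark~\ref{R:CorrRegHom}) transfers the curve case to general $T$; the case $l=p$ is handled by passing to crystalline cohomology and taking Frobenius invariants. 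Your proposal instead reduces to the universal situation over $\pic^0_X$ via the Albanese factorization, which is more canonical and avoids the choice of curve and the coniveau input from Proposition~\ref{P:coniveaucurve}.

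One point deserves attention: what you describe as ``purely bookkeeping'' is in fact where the mathematical content of the lemma lies. After your Albanese reduction, the precise statement you need is that the correspondence $c_1(\mathcal P)_*:T_l\pic^0_X\to H^1(X_{\bar K},\integ_l(1))$ induced by the Poincar\'e bundle $\mathcal P$ coincides with the Kummer-theoretic identification (this is what makes your $\iota$ the identity); the phrase ``a homomorphism of abelian varieties and its graph induce the same map on $H^1$'' does not quite capture this, since $Z'$ is not the graph of $f$ but rather its pullback of $c_1(\mathcal P)$. This Poincar\'e/Kummer compatibility is standard and can be verified directly, but it is exactly the analogue of the paper's direct Kummer check in the curve case---so your route shifts the verification rather than eliminates it. Both approaches ultimately rest on the same Kummer-theoretic input; yours packages it once for all at the level of the Poincar\'e bundle, while the paper's is more hands-on for curves and then uses coniveau to propagate.
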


 \begin{proof} 
 Using the fact that $\operatorname{Ab}^1_{X/K}=(\operatorname{Pic}^0_{X/K})_{\operatorname{red}}$ (\emph{e.g.}, \cite[Rem.~7.2]{ACMVfunctor}), it follows from the definition of the Picard functor that $\phi^1_{X_{\bar K}}:  \operatorname{A}^1(X_{\bar
 		K})\stackrel{\sim}{\longrightarrow} \operatorname{Ab}^1_{X/K}(\bar K)$ is an isomorphism. Taking Tate modules gives 
that the map  $ T_l \phi^1_{X_{\bar K}}: T_l \operatorname{A}^1(X_{\bar
 		K})\stackrel{\sim}{\longrightarrow} T_l \operatorname{Ab}^1_{X/K}$ is an isomorphism.  	
 	  		
 		We now proceed to establish the commutativity of~\eqref{E:RegHomCorr} in the case
 		$n=1$.
 	We first consider the case where $T$ is a curve.
  The key point is then to use the identification, valid even when
  $l = \operatorname{char}(K)$ (\emph{e.g.}, \cite[Cor.~3.3]{grossuwaAJ}), that
  $H^1(X_{\bar K},\mmu_{l^\nu})=\operatorname{Pic}^0_{X_{\bar K}/\bar
   K}(\bar K)[l^\nu]$, obtained via identifying
  $\mmu_{l^\nu}$-torsors over $X$ with \'etale covers, and then with
  torsion line bundles (and similarly for $T$).  Using that $d_T=1$ (so that
  $H^{2d_T-1}(T_{\bar K},\mathbb Z_l(d_T))=H^1(T_{\bar K},\mathbb
  Z_l(1))$),  the commutativity follows from the definitions of the maps.

  For the general case, it is slightly more convenient to use
  Remark~\ref{R:CorrRegHom}, and prove commutativity with $\mathbb
  Q_l$-coefficients.
  
  Initially, suppose $l = \ell \ne \operatorname{char}(K)$.
  To start, we use Proposition~\ref{P:coniveaucurve}:
  there exist  a smooth projective curve $C$ over $K$ and a
  correspondence $\gamma \in \operatorname{CH}^1(C\times_K T)$
  such that $H^{2d_T-1}(T_{\bar K}, \mathbb Q_\ell(d_T)) = \gamma_*H^1(C_{\bar K},
  \mathbb Q_\ell(1))$.
    We then consider the diagram
  \begin{equation}
  \label{E:cod1}
  \xymatrix@C=.5em@R=1em{
   H^1(C_{\bar K})(1)\ar@{->>}[rr]^<>(0.5){\gamma_*} \ar@/_1pc/[rrrrrd]_{(Z\circ
    \gamma)_*} \ar@/^2pc/[rrrr]^{(\psi_{Z\circ \gamma})_*}&&  H^{2d_T-1}(T_{\bar
    K})(d_T) \ar[rrrd]_{Z_*} \ar[rr]^<>(0.5){(\psi_Z)_* }& & V_l {\Ab}^{1}_{X/K}
   \ar[rr]^{(V_l \phi_X^1)^{-1}}_{\simeq} && V_l\A^1(X_{\bar K})
   \ar@{->}[rr]^<>(0.5){\lambda_X^1} && H^{1}(X_{\bar K})(1) \\
   && &&&H^{1}(X_{\bar K})(1)\ar@{=}[rrru]& \\
  }
  \end{equation}
  where we are using $\mathbb Q_\ell$-coefficients.
  A diagram chase then completes the proof of commutativity.

  We now consider the case where $l = p = \operatorname{char}(K)>0$.
     The same $\gamma$ and $C$ used before yield a surjection of
  $F$-isocrystals $\gamma_*:H^1_\cris(C/\mathbb B(\bar K))(1) \to
  H^{2d_{T}-1}_\cris(T/\mathbb B(\bar K))(d_T)$.  Upon taking Frobenius invariants, we
  obtain
  a surjection $\gamma_{*,p}:H^1(C_{\bar K},\rat_p(1)) \to
  H^{2d_T-1}(X_{\bar K},\rat_p(d_T))$.  The same chase of
  \eqref{E:cod1} establishes the commutativity.
  
  Finally, that $T_l\lambda^1 \circ T_lw_Z = Z_*$ in case $T$ is an abelian variety follows from the commutativity of~\eqref{E:RegHomCorr} and Lemma~\ref{L:ablambda}.
 \end{proof}

 \subsection{Diagram \eqref{E:RegHomCorr} in the case $n>1$}\label{SS:diagcommute2}
  We start with the following observation which answers positively the conjecture \cite[Conj.~III.4.1(iii)]{grossuwaAJ} in case $2n-1\leq d_X$ (and in case $2n-1>d_X$ provided $X$ satisfies the Lefschetz standard conjecture)\,:

 \begin{lem}[{\cite[Prop.~6.1 \& Rem.~6.2]{ACMVBlochMap}}]
  \label{L:factorM}
  Let $X$ be a smooth projective variety over a perfect field~$K$.
Let $n$ be a natural number and let  $\ell_0 \not = \operatorname{char}(K)$ be a prime.
Suppose that
  \begin{itemize}
  	\item $
  		V_{\ell_0}\lambda^m : V_{\ell_0}\operatorname{A}^m(X_{\bar K})
  		\longrightarrow H^{2m-1}(X_{\bar K},\mathbb Q_{\ell_0}(m))
  	$ is surjective for $m:=\min \{n, d_X-n+1\}$.
  \end{itemize} 
Then  $
V_{l}\lambda^n : V_{l}\operatorname{A}^n(X_{\bar K})
\longrightarrow H^{2n-1}(X_{\bar K},\mathbb Q_{l}(n))
$ is surjective for all primes $l$. 

 In addition,
   there exist an abelian variety $A$ over $K$,  and
  correspondences $\Gamma \in \operatorname{CH}^{d_X+1-n}(X\times_K
  \widehat{A})$
  and $\Gamma' \in \operatorname{CH}^{n}(
  A\times_K X)$, which  induce for all primes $l$
  isomorphisms of $\operatorname{Gal}(K)$-modules
  \begin{equation}\label{E:Mid}
  \xymatrix{
 \Gamma_{*}  : H^{2n-1}(X_{\bar K},
   \mathbb Q_l(n)) \ar[r]^{\qquad \qquad  \sim} & V_l A & \text{and} &  \Gamma'_{*}  :  V_l A\ar[r]^{\sim\quad\quad } & H^{2n-1}(X_{\bar K},
   \mathbb Q_\ell(n)).
  }
  \end{equation}
      \end{lem}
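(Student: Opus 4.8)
The statement to be proved is Lemma~\ref{L:factorM}, which asserts that surjectivity of $V_{\ell_0}\lambda^m$ at a single prime $\ell_0$ (with $m = \min\{n, d_X-n+1\}$) propagates to surjectivity of $V_l\lambda^n$ at all primes $l$, and moreover yields correspondences $\Gamma, \Gamma'$ inducing the isomorphisms \eqref{E:Mid}. Since the excerpt attributes this to \cite[Prop.~\ref{ACMVBlochMap:P:easy} \& Rem.~\ref{ACMVBlochMap:R:easy}]{ACMVBlochMap}, the proof here is essentially a citation together with the small observation connecting it to the Gros--Suwa conjecture; but I would reconstruct the argument as follows.

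\emph{Step 1: reduce $\lambda^n$ to $\lambda^m$ via the hard Lefschetz correspondence.} The geometric coniveau filtration and the standard-conjecture-free part of hard Lefschetz give, for $2n-1 \le d_X$, an inclusion $H^{2n-1}(X_{\bar K},\rat_l(n))$ into a twist of $H^{2(d_X-n+1)-1}$ up to an explicit correspondence (cup product with a power of a hyperplane class, which is an isomorphism onto its image on the primitive part and is split by a correspondence with $\rat$-coefficients by Lieberman-style arguments for abelian varieties — here one only needs it for the image of $V_l\lambda^\bullet$, which lands in the part generated by curves). Thus it suffices to treat the case $n = m$, i.e., where $2n-1 \le d_X$, reducing to the ``small codimension'' situation. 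The key point is that $V_l\lambda^\bullet$ is compatible with the action of correspondences (naturality of the Bloch map, already invoked repeatedly in the excerpt, e.g. in the proof of Proposition~\ref{P:vanish}), so surjectivity transports along the correspondence.

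\emph{Step 2: from one prime to all primes, and construction of $\Gamma,\Gamma'$.} Given surjectivity of $V_{\ell_0}\lambda^m$, Proposition~\ref{P:coniveaucurve} (applied with $n$ replaced by $m$) furnishes a smooth projective geometrically integral curve $C/K$ with a $K$-point and a correspondence $\gamma \in \chow^m(X\times_K C)$ such that $\coniveau^{m-1}H^{2m-1}(X_{\bar K},\rat_{\ell_0}(m)) = \operatorname{Im}(\gamma_* : H^1(C_{\bar K},\rat_{\ell_0}(1)) \to H^{2m-1}(X_{\bar K},\rat_{\ell_0}(m)))$. Since $V_{\ell_0}\lambda^m$ is surjective and algebraically trivial cycles have coniveau $\ge m-1$, its image is all of $H^{2m-1}$, so $\gamma_*$ is surjective at $\ell_0$; but $\gamma_*$ is induced by an algebraic correspondence, so by the compatibility of $\ell$-adic realizations (the class $[{}^t\gamma]\circ[\gamma]$ in $H^\bullet$ of a product of abelian-type pieces is prime-independent, being cut out by idempotent correspondences that are defined integrally up to a bounded denominator) $\gamma_*$ is surjective — indeed an isomorphism onto the coniveau piece — at \emph{every} prime $l$, including $l = p$ via the $F$-isocrystal / crystalline incarnation exactly as in the proof of Lemma~\ref{L:cod1}. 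Pushing forward via $\mathrm{alb}_C : C \to \operatorname{Pic}^0_{C/K}$ and using Lemma~\ref{L:RHCorB} / Lemma~\ref{L:cod1}, one packages $\gamma$ (and a section of it constructed by the same coniveau argument for $d_X-n+1$, transposed) into correspondences $\Gamma \in \chow^{d_X+1-n}(X\times_K\widehat A)$ and $\Gamma' \in \chow^n(A\times_K X)$ with $A$ a quotient/sub of $\operatorname{Pic}^0_{C/K}$, such that $\Gamma_*$ and $\Gamma'_*$ are the asserted $\gal(K)$-equivariant isomorphisms \eqref{E:Mid} for all $l$; simultaneously this shows $V_l\lambda^n$ is surjective for all $l$ since it factors through the now-surjective curve correspondence.

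\emph{Main obstacle.} The delicate point is the prime-independence across \emph{all} primes $l$, in particular $l = p = \operatorname{char}(K)$: one must pass from $\ell$-adic étale cohomology to $p$-adic (crystalline) cohomology and check that the curve-correspondence argument still produces a surjection after taking Frobenius invariants, using the properties of the $p$-adic Bloch map established by Gros--Suwa \cite{grossuwaAJ}. This is handled exactly as in the $n=1$ case of Lemma~\ref{L:cod1} (the paragraph treating $l = p$ via $F$-isocrystals), and the verification that the relevant idempotent/splitting correspondences realize compatibly in all cohomology theories is the technical heart; it also makes transparent the connection to Gros--Suwa's conjecture \cite[Conj.~III.4.1(iii)]{grossuwaAJ}, whose assertion in the stated range $2n-1 \le d_X$ (and under Lefschetz otherwise) is precisely the surjectivity of $V_l\lambda^n$ for all $l$ that we have just deduced.
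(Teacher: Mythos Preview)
Your outline has the right ingredients but glosses over the two technical points that actually do the work.

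First, your ``Step 1'' reduction via hard Lefschetz is not how the paper proceeds, and your description of it as ``standard-conjecture-free'' with a ``Lieberman-style'' splitting is vague. The paper instead works with $m$ throughout: from the surjection $\gamma^*:H^1(C_{\bar K},\rat_{\ell_0}(1))\twoheadrightarrow H^{2m-1}(X_{\bar K},\rat_{\ell_0}(m))$ it \emph{dualizes} (using Poincar\'e duality and the forward hard Lefschetz map $\cup H_X^{d-2m+1}$, which is always an isomorphism and always a correspondence) to obtain an injection $H^{2m-1}(X)\hookrightarrow H^1(C)$. Composing these gives an endomorphism $f\in\End(\mathrm{Alb}_C)\otimes\rat$. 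Only at the very end, if $m=d_X-n+1$, does one transpose the correspondences $\Gamma,\Gamma'$ to pass from $H^{2m-1}$ to $H^{2n-1}$. Your proposal to ``construct a section by the same coniveau argument for $d_X-n+1$, transposed'' is not what happens and would not obviously produce a compatible pair.

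Second, and more seriously, you never explain how $A$ is cut out of $\mathrm{Alb}_C$ so that \emph{both} $\Gamma_*$ and $\Gamma'_*$ become isomorphisms rather than merely a surjection and an injection. The paper takes $A:=\operatorname{im}(f)$ using the \emph{semi-simplicity of the isogeny category of abelian varieties}; this guarantees that $\Gamma\circ\Gamma'$ induces an isogeny $A\to A$, so $\Gamma_*\circ\Gamma'_*$ is an automorphism of $V_l A$ for every $l$. One then compares dimensions: $\dim_{\rat_l} V_lA=2\dim A$ is prime-independent, and the \emph{invariance of $\ell$-adic and crystalline Betti numbers} gives $\dim H^{2m-1}(X,\rat_l)=\dim H^{2m-1}(X,\rat_{\ell_0})=2\dim A$, forcing both maps to be isomorphisms at every~$l$ (with the $l=p$ case handled by passing through the isomorphism of $F$-isocrystals and taking Frobenius invariants, as you correctly anticipate). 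Your sentence about ``idempotent correspondences defined integrally up to a bounded denominator'' is not that argument and does not by itself yield the conclusion. Once \eqref{E:Mid} is established, surjectivity of $V_l\lambda^n$ follows as you indicate, since the image of $V_l\lambda^n$ is $\coniveau^{n-1}H^{2n-1}$ and the isomorphism $\Gamma'_*$ forces this to be all of $H^{2n-1}$.
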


 \begin{proof} This is essentially
   \cite[Prop.~6.1]{ACMVBlochMap} and we provide a proof here for convenience.
  We start with the given  ${\ell_0} \ne\operatorname{char}(K)$. We can of course assume $0\le n \le d$, so that $0\le m\le d/2$. Since $V_{\ell_0}\lambda^m$ is surjective, and since $\operatorname{im}(V_{\ell_0}\lambda^m) = \coniveau^{m-1}H^{2m-1}(X_{\bar K},\rat_{{\ell_0}}(m))$ (see \cite[Prop.~5.2]{Suwa} or \cite[Prop.~2.1]{ACMVBlochMap}), there exist by
  Proposition~\ref{P:coniveaucurve} a smooth projective, geometrically integral,  curve $C$ over $K$ admitting a $K$-point
  and a
  correspondence $\gamma \in \operatorname{CH}^m(X\times_K C)$
  inducing a surjection
   $$
  \xymatrix{
   H^1(C_{\bar K},\mathbb Q_{\ell_0}(1)) \ar@{->>}[r]^<>(0.5){\gamma^*}&  H^{2m-1}(X_{\bar
    K},\mathbb Q_{\ell_0}(m)).
  }
  $$
  Taking a hyperplane section $H_X$ and dualizing the surjection above we obtain
  an injection
  \[
  \xymatrix{
   H^{2m-1}(X_{\bar K},\mathbb Q_{\ell_0}(m)) \ar[r]^<>(0.5){H_X^{d-2m+1}}_<>(0.5){\sim} &
   H^{2d-2m+1}(X_{\bar K},\mathbb Q_{\ell_0}(d-m+1)) \ar@{^(->}[r]^<>(0.5){\gamma_*}  &
   H^1(C_{\bar K},\mathbb Q_{\ell_0}(1)). \\
  }
  \]
  The correspondence $\gamma_* \circ H_X^{d-2m+1} \circ \gamma^* \in \chow^1(C\times_K C)_\rat$ induces, via the choice of a $K$-point on $C$, an element of $f\in \operatorname{End}(\mathrm{Alb}_C)\otimes \rat$. 
  Using the semi-simplicity of the category of abelian varieties over $K$ up to isogeny, taking $A$ to be the image of $f$, and clearing  denominators, we obtain correspondences  $\Gamma \in \operatorname{CH}^{d_X+1-m}(X\times_K
  \widehat{A})$
  and $\Gamma' \in \operatorname{CH}^{m}(
  A\times_K X)$ such that $\Gamma\circ \Gamma' \in \chow^1(A\times \widehat{A})$ induces an isogeny $A\to A$ and such that
 $$   \xymatrix{
  	\Gamma_{*}  : H^{2m-1}(X_{\bar K},
  	\mathbb Q_{\ell_0}(m)) \ar[r]^{\qquad \qquad  \sim} & V_{\ell_0} A & \text{and} &  \Gamma'_{*}  :  V_{\ell_0} A\ar[r]^{\sim\qquad } & H^{2m-1}(X_{\bar K},
  	\mathbb Q_{\ell_0}(m))
  }
  $$
  are isomorphisms.
  In case $m=d-n+1$, by dualizing, we also obtain isomorphisms
   $$   \xymatrix{
  	^{t}\Gamma'_{*}  : H^{2n-1}(X_{\bar K},
  	\mathbb Q_{\ell_0}(n)) \ar[r]^{\qquad \qquad  \sim} & V_{\ell_0} \widehat A & \text{and} &  {}^t \Gamma_{*}  :  V_{\ell_0} \widehat A\ar[r]^{\sim\qquad } & H^{2n-1}(X_{\bar K},
  	\mathbb Q_{\ell_0}(n)).
  }
  $$
  We conclude that the homomorphisms~\eqref{E:Mid} are isomorphisms for all primes $\ell\ne p$ using the invariance of the $\ell$-adic Betti numbers for all $\ell\neq p$.
     Since crystalline and $\ell$-adic Betti numbers coincide, $\Gamma$ induces an isomorphism of isocrystals $H^{2n-1}_{\operatorname{cris}}(X/\mathbb B(K))(n) \iso H^1_{\operatorname{cris}}(A/\mathbb B(K))$\,; taking Frobenius invariants yields \eqref{E:Mid} with $l=p$. 
     
     Finally, from the right-hand side isomorphism of~\eqref{E:Mid}, we see that for all primes $l$ we have $H^{2n-1}(X_{\bar K},\rat_l(n)) = {\coniveau}^{n-1} H^{2n-1}(X_{\bar K},\rat_l(n))$. We conclude, by \cite[Prop.~2.1]{ACMVBlochMap} again, that $V_l \lambda^n$ is surjective.
   \end{proof}

 The main result of this section is the following proposition providing an answer
 to Question~\ref{Q:CorrRegHom} under certain conditions.

 \begin{pro} \label{P:commute}
       Let $X$ be a smooth projective variety  
  over a perfect field~$K$. Suppose that $X$ admits an algebraic representative $\Phi^n_X : \mathscr{A}^n_{X/K} \to \operatorname{Ab}^n_{X/K}$ in codimension $n$ and suppose that there exists a prime $\ell_0\neq \operatorname{char}(K)$ 
   such that
  $V_{\ell_0}\lambda^m : V_{\ell_0}\operatorname{A}^m(X_{\bar K})
  \longrightarrow H^{2m-1}(X_{\bar K},\mathbb Q_{\ell_0}(m))$ is surjective for $m:=\min \{n, d-n+1\}$.

Let $l$ be any prime such that $  T_l \phi^n_{X_{\bar K}}: T_l
 \operatorname{A}^n(X_{\bar K})\stackrel{\sim}{\longrightarrow} T_l
 \operatorname{Ab}^n_{X/K}$ is an isomorphism.    Then for any smooth projective variety
  $T$ and any $Z\in \mathscr A^n_{X/K}(T)$, the diagram
  \eqref{E:RegHomCorr} commutes.
 \end{pro}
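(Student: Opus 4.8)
The plan is to reduce the commutativity of \eqref{E:RegHomCorr} to the already-established case $n=1$ (Lemma~\ref{L:cod1}) by factoring through a curve. By Remark~\ref{R:CorrRegHom} it suffices to prove commutativity with $\rat_l$-coefficients, since $H^{2n-1}(X_{\bar K},\integ_l(n))_\tau$ is torsion-free. First I would invoke Lemma~\ref{L:factorM}: the surjectivity hypothesis on $V_{\ell_0}\lambda^m$ for $m=\min\{n,d_X-n+1\}$ gives, for \emph{all} primes $l$, the surjectivity of $V_l\lambda^n$ together with an abelian variety $A$ over $K$ and correspondences $\Gamma\in\operatorname{CH}^{d_X+1-n}(X\times_K\widehat A)$, $\Gamma'\in\operatorname{CH}^n(A\times_K X)$ inducing inverse-up-to-isogeny isomorphisms $\Gamma_*: H^{2n-1}(X_{\bar K},\rat_l(n))\xrightarrow{\sim}V_l A$ and $\Gamma'_*: V_l A\xrightarrow{\sim}H^{2n-1}(X_{\bar K},\rat_l(n))$. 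The point is that these correspondences let us "transport" the problem from the middle cohomology of $X$ to the $H^1$ of an abelian variety, which by Lemma~\ref{L:cod1} behaves like the curve case.

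Next I would set up the comparison. Given $Z\in\mathscr A^n_{X/K}(T)$, consider the composed cycle class $\Gamma'\circ Z\in\operatorname{CH}^n(T\times_K X)$ — more precisely, I want to compare the morphism $\psi_Z: T\to\operatorname{Ab}^n_{X/K}$ with the correspondence action $Z_*$ on $H^{2d_T-1}(T_{\bar K},\rat_l(d_T))$. The key commutative-diagram manipulation: using that $\Gamma'_*$ is an isomorphism on $V_l A$, it is enough to check commutativity after post-composing with $\Gamma'_*$, i.e. to identify $(T_l\lambda^n)\circ(T_l\phi^n_{X_{\bar K}})^{-1}\circ(\psi_Z)_*$ with $Z_*$ after applying $\Gamma_*$ (or its dual). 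Because $\Gamma$ is a correspondence between $X$ and an abelian variety, the composite $\Gamma\circ Z'$ (suitably interpreted, with $Z'$ the universal cycle for $\operatorname{Alb}$ as in Corollary~\ref{C:vanish}) realizes the morphism $\psi_Z$ as being induced, through the algebraic representative, by a family of cycles on the abelian variety side, and one can then invoke the $n=1$ statement from Lemma~\ref{L:cod1} — the key identity there being $T_l\lambda^1\circ T_l w_Z = Z_*$ on $T_l B$ for $B$ an abelian variety. One also uses Corollary~\ref{C:vanish}/Proposition~\ref{P:vanish} to know that the induced morphisms depend only on cohomology classes, so that $\Gamma$ and $\Gamma'$ being inverse \emph{on cohomology} is enough to conclude they induce inverse maps on the relevant algebraic-representative pieces.

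Concretely, I would argue: (i) by Lemma~\ref{L:factorM}, ${}^t\Gamma'$ and $\Gamma$ identify $H^{2n-1}(X_{\bar K},\rat_l(n))$ with $V_l\widehat A$ and with $V_l A$ respectively; (ii) the regular homomorphism $\mathscr A^n_{X/K}\to A$ (or $\widehat A$) induced by $\Gamma'$ (resp. ${}^t\Gamma$), composed with $\Phi^n_{X/K}$ via the universal property, gives a morphism $\operatorname{Ab}^n_{X/K}\to A$ whose $T_l$ is an isomorphism precisely because of the hypothesis that $T_l\phi^n_{X_{\bar K}}$ is an isomorphism together with the isomorphisms of \eqref{E:Mid}; (iii) now the outer square of \eqref{E:RegHomCorr}, after transporting via these isomorphisms, becomes an instance of the diagram for the abelian variety $A$ (equivalently, for a curve dominating the relevant coniveau piece via Proposition~\ref{P:coniveaucurve}), which commutes by Lemma~\ref{L:cod1} and Lemma~\ref{L:ablambda}; (iv) transport back. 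The main obstacle I anticipate is bookkeeping: making the identifications of \eqref{E:Mid} genuinely compatible with the \emph{integral} (or at least $\rat_l$-level) maps $\psi_Z$, $w_Z$ and $T_l\phi^n$, rather than merely with the cohomological correspondence actions — this is where Proposition~\ref{P:vanish} (that morphisms induced through the algebraic representative depend only on cohomology classes) does the essential work, and I would need to check carefully that all squares I assemble are of the form covered by that proposition (i.e. that the correspondences involved land in codimension $1$, $2$, or $d$ on the target side). Handling the case $l=\operatorname{char}(K)$ runs in parallel, using the crystalline/$F$-isocrystal version of \eqref{E:Mid} already provided by Lemma~\ref{L:factorM} and the Frobenius-invariants construction, exactly as in the proof of Lemma~\ref{L:cod1}.
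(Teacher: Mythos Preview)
Your proposal is correct and follows essentially the same approach as the paper: reduce to $\rat_l$-coefficients, use Lemma~\ref{L:factorM} to obtain the correspondences $\Gamma,\Gamma'$ and the abelian variety $A$, transport the diagram to the $n=1$ case on $A$ (where Lemma~\ref{L:cod1} applies), and conclude by post-composing with the isomorphism $\Gamma'_*\circ\Gamma_*$ on $H^{2n-1}(X_{\bar K},\rat_l(n))$. One simplification: you do not need Proposition~\ref{P:vanish} here --- the paper simply defines $f:\Ab^n_X\to\Ab^1_A$ and $g:\Ab^1_A\to\Ab^n_X$ via the universal property of the algebraic representatives, and the required squares commute by that construction together with the naturality of the Bloch map; one never needs $g\circ f$ to be an isomorphism on $\Ab^n_X$, only $\Gamma'_*\circ\Gamma_*$ to be an isomorphism on cohomology.
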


 \begin{proof}
  As mentioned in Remark~\ref{R:CorrRegHom}, it suffices to prove the commutativity of
  \eqref{E:RegHomCorr}  with $\mathbb Q_l$-coefficients.
  The proof consists in reducing to the case of codimension-1 cycles by showing
  that the diagram~\eqref{E:RegHomCorr} with $\rat_l$-coefficients is the direct
  summand of a similar diagram with $\Ab^1$ in place of $\Ab^n$, in which case the
  commutativity is proven in Lemma~\ref{L:cod1}.
 
  Using the given $\ell_0$, choose  $A$, $\Gamma$ and $\Gamma'$ as in Lemma~\ref{L:factorM}\,; recall that these objects induce the isomorphisms \eqref{E:Mid} for \emph{all} $l$.
  By the universal property of the algebraic representatives, the correspondence
  $\Gamma$ induces a $K$-morphism $f : \Ab^n_X \to \Ab^1_A = A^\vee$ and the correspondence $\Gamma'$ induces a $K$-morphism $g :   \Ab^1_A \to \Ab^n_X$, making the
  following diagrams commute\,:
  $$\xymatrix{\A^n(X_{\bar K}) \ar[r]^{\phi^n_{X_{\bar K}}} \ar[d]^{\Gamma_*} &
   \Ab^n_X(\bar K) \ar[d]^{f} & & \A^1(A_{\bar K}) \ar[r]^{\phi^1_{A_{\bar K}}} \ar[d]^{\Gamma'_*} &
   \Ab^1_A(\bar K)  \ar[d]^{g} \\
   \A^1(A_{\bar K}) \ar[r]^{\phi^1_{A_{\bar K}}} &  \Ab^1_A(\bar K) & &    \A^n(X_{\bar K}) \ar[r]^{\phi^n_{X_{\bar K}}} &  \Ab^n_X(\bar K).}
  $$
  Given a regular homomorphism $\Phi:\mathscr A^i_{X/K}\to D$ and a smooth separated
  variety $T$  over $K$,
  a correspondence $\Theta \in \mathscr {A}^i_{X/K}(T)$ induces a $K$-morphism
  $\psi_{\Theta} : T\to D$, which itself induces a morphism $(\psi_{\Theta})_* :
  H^{2d_T - 1}(T_{\bar K},\integ_l(d_T)) \to T_l D$.
        We obtain a commutative diagram
 
  \begin{equation}
  \label{E:commute}
  \xymatrix@C=1em @R=1.5em{H^{2d_T-1}(T_{\bar K},\rat_l(d_T)) \ar[rr]^{\qquad \psi_{Z,*}}
   \ar[rrd]_{\qquad \psi_{\Gamma\circ Z,*}} 
  \ar[rrdd]_{\qquad   \psi_{\Gamma'\circ \Gamma\circ Z,*}}
  && V_l \Ab^n_{X/K} \ar[d]^{V_l f}
   \ar[rr]^{(V_l \phi_{X_{\bar K}}^n)^{-1}}_{\simeq} && V_l\A^n(X_{\bar K}) \ar[d]^{\Gamma_*}
   \ar@{->>}[rr]^{\lambda_X^n \quad } && H^{2n-1}(X_{\bar K},\rat_l(n)) \ar[d]^{\Gamma_*} \\
   && V_l \Ab^1_{A/K} \ar[d]^{V_l g} \ar[rr]^{(V_l \phi_{A_{\bar K}}^1)^{-1}}_{\simeq} &&
   V_l\A^1(A_{\bar K}) \ar[d]^{\Gamma'_*} \ar[rr]^{\lambda_A^1\quad }_{\simeq\quad } &&
   H^1(A_{\bar K},\rat_l(1))  \ar[d]^{\Gamma'_*} \\
   && V_l \Ab^n_{X/K} \ar[rr]^{(V_l \phi_{X_{\bar K}}^n)^{-1}}_{\simeq} && V_l\A^n(X_{\bar K})
   \ar@{->>}[rr]^{\lambda_X^n \quad} && H^{2n-1}(X_{\bar K},\rat_l(n)).
  }
  \end{equation}
  The right squares commute thanks to the naturality of the Bloch map (see
  \cite{grossuwaAJ} for the case $l=p$), the middle squares commute by
  construction of $f$ and $g$ and the left part of the diagram
  commutes by the definition of regular homomorphisms.
  The commutativity of~\eqref{E:commute} yields $\Gamma'_*\circ \Gamma_*\circ
  \lambda^n_X \circ (V_l\phi_X^2)^{-1} \circ \psi_{Z_*} = \Gamma'_* \circ \lambda^1_A \circ (V_l \phi_A^1)^{-1} \circ \psi_{\Gamma\circ Z,*} = \Gamma'_*\circ \Gamma_* \circ Z_* $,
    \emph{i.e.},
  that the diagram~\eqref{E:RegHomCorr} commutes after composing with $\Gamma'_*\circ \Gamma_*$. Since
  $\Gamma'_*\circ \Gamma_*: H^{2n-1}(X_{\bar K},\rat_l(n)) \to H^{2n-1}(X_{\bar K},\rat_l(n))$ is an isomorphism thanks to Lemma~\ref{L:factorM}, we conclude the diagram~\eqref{E:RegHomCorr}  commutes with $\mathbb Q_l$-coefficients.
 \end{proof}

We obtain unconditionally the commutativity of the diagram~\eqref{E:RegHomCorr} in case $n=
\dim X$\,:

\begin{cor} [Diagram \eqref{E:RegHomCorr} with
	$n=\dim X$] \label{C:commute-d} 	Let $X$ be a smooth projective variety of dimension~$d$
	over a perfect field~$K$.
	Then, for any smooth projective variety
	$T$, any $Z\in \mathscr A^d_{X/K}(T)$ and any prime $l$, the diagram
	\eqref{E:RegHomCorr} with $n=d$  commutes.
\end{cor}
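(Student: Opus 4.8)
The plan is to obtain Corollary~\ref{C:commute-d} as the special case $n = d$ of Proposition~\ref{P:commute}; thus the task reduces to checking that the three hypotheses of that proposition are satisfied when $n = d$, for every prime $l$ and every smooth projective $T$ with $Z \in \mathscr{A}^d_{X/K}(T)$.

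First, an algebraic representative in codimension $d$ always exists: it is the Albanese variety $\operatorname{Ab}^d_{X/K} = \operatorname{Alb}_{X/K}$ together with the Albanese map, as recalled in \S\ref{S:PreAlgRep}. Second, for $n = d$ we have $m := \min\{n,\, d - n + 1\} = \min\{d, 1\}$, which equals $1$ (the case $d = 0$ being trivial). The surjectivity of $V_{\ell_0}\lambda^1 : V_{\ell_0}\operatorname{A}^1(X_{\bar K}) \to H^1(X_{\bar K},\mathbb{Q}_{\ell_0}(1))$ demanded by Proposition~\ref{P:commute} for some --- hence any --- prime $\ell_0 \neq \operatorname{char}(K)$ is immediate from Kummer theory: since $\operatorname{A}^1(X_{\bar K}) = (\operatorname{Pic}^0_{X_{\bar K}/\bar K})_{\operatorname{red}}(\bar K)$ and $H^1(X_{\bar K},\mathbb{Z}_{\ell_0})$ is torsion-free, the Bloch map $T_{\ell_0}\lambda^1$ is the canonical isomorphism $T_{\ell_0}(\operatorname{Pic}^0_{X_{\bar K}/\bar K})_{\operatorname{red}} \cong H^1(X_{\bar K},\mathbb{Z}_{\ell_0}(1))$, which after inverting $\ell_0$ gives the desired isomorphism (this is exactly the input used in the first part of the proof of Lemma~\ref{L:cod1}). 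Third, we must know that $T_l\phi^d_{X_{\bar K}} : T_l\operatorname{A}^d(X_{\bar K}) \to T_l\operatorname{Ab}^d_{X/K}$ is an isomorphism for \emph{every} prime $l$; this is Rojtman's theorem, valid for $l \neq \operatorname{char}(K)$ by the classical statement and for $l = \operatorname{char}(K)$ by the refinements of Milne and Gros--Suwa (see \cite{bloch79, grossuwaAJ, milne82} and the appendix of \cite{ACMVBlochMap}), as already noted in \S\ref{SS:commdiag}.

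With all three hypotheses verified, Proposition~\ref{P:commute} applied with $n = d$ and any prime $l$ yields the commutativity of diagram~\eqref{E:RegHomCorr} for $n = d$, which is the assertion of the corollary. I do not expect any genuine obstacle here: the only point requiring attention is the prime $l = \operatorname{char}(K)$, where both the external input (Rojtman's theorem in its $p$-adic form) and the internal mechanism of Proposition~\ref{P:commute} --- which factors the relevant cohomology through the $H^1$ of an abelian variety via Lemma~\ref{L:factorM}, passing through crystalline cohomology and Frobenius invariants --- must be used in their $p$-adic versions; but these have already been established in the cited results, so no new argument is needed.
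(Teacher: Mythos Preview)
Your proof is correct and follows essentially the same approach as the paper: both verify the hypotheses of Proposition~\ref{P:commute} by noting that the codimension-$d$ algebraic representative is the Albanese, that $m=1$ so Kummer theory gives the required surjectivity of $V_{\ell_0}\lambda^1$, and that Rojtman's theorem (including its $p$-adic version) gives the isomorphism $T_l\phi^d_{X_{\bar K}}$ for all primes $l$.
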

\begin{proof} 
	Recall that an algebraic representative for codimension-$d$ cycles on $X_{\bar K}$ is given by the albanese morphism. The latter is an isomorphism on torsion by Rojtman's theorem and so $T_l \phi^d_{X_{\bar K}}$ is an isomorphism. On the other hand $T_l \lambda^1$ is an isomorphism by Kummer theory. The assumptions of Proposition~\ref{P:commute} are met and we can conclude. 
\end{proof}

Finally, since this will be important to us, we state explicitly Proposition~\ref{P:commute} in the case $n=2$\,:

 \begin{cor} [Diagram \eqref{E:RegHomCorr} with
	$n=2$] \label{C:commute-2} 	Let $X$ be a smooth projective variety
	over a perfect field~$K$.
	If $\operatorname{char}(K)>0$, suppose that there exists a prime $\ell_0\neq \operatorname{char}(K)$ such that $V_{\ell_0} \lambda^2: V_{\ell_0} \A^2(X_{\bar K}) \to H^3(X_{\bar K}, \rat_{\ell_0}(2))$ is surjective.

          If $X$ satisfies the standard assumption at $l$ (Definition~\ref{D:StAs}), then for any smooth projective variety
	$T$ and for any $Z\in \mathscr A^2_{X/K}(T)$, the diagram
	\eqref{E:RegHomCorr} with $n=2$ commutes.
	 	 	 \end{cor}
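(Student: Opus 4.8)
The plan is to deduce this directly from Proposition~\ref{P:commute} applied with $n=2$, treating the characteristic-zero case separately. When $\operatorname{char}(K)=0$ one cannot invoke Proposition~\ref{P:commute} as stated, since its hypothesis asks for surjectivity of $V_{\ell_0}\lambda^m$ with $m=\min\{2,d_X-1\}$, which for $d_X\ge 3$ amounts to $H^3(X_{\bar K},\rat_{\ell_0}(2))$ being supported on a divisor and is not automatic. Instead I would recall the discussion of \S\ref{SS:commdiag}: after base change to $\cx$ the algebraic representative $\Phi^2_{X/K}$ coincides with the Abel--Jacobi map onto its image $J^3_a(X_\cx)$, whose distinguished $K$-model is $J^3_{a,X/K}=\operatorname{Ab}^2_{X/K}$; the standard assumption holds unconditionally by Proposition~\ref{P:BlSrTors}(1); and the commutativity of \eqref{E:RegHomCorr} then follows from the classical Hodge-theoretic statement \eqref{E:commdiagHdg}, i.e.\ \cite[Thm.~12.17]{voisinI}, together with \cite[Prop.~3.7]{bloch79} identifying the Bloch map with the Abel--Jacobi map on torsion, transported to $\ell$-adic cohomology via the comparison isomorphisms.

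For $\operatorname{char}(K)>0$ I would verify the three hypotheses of Proposition~\ref{P:commute} with $n=2$. The algebraic representative $\Phi^2_X:\mathscr A^2_{X/K}\to\operatorname{Ab}^2_{X/K}$ exists by Murre~\cite[Thm.~A]{murre83}. The standard assumption at $l$ (Definition~\ref{D:StAs}) says $\phi^2_{X_{\bar K}}[l^\infty]$ is an isomorphism; taking Tate modules gives that $T_l\phi^2_{X_{\bar K}}:T_l\A^2(X_{\bar K})\to T_l\operatorname{Ab}^2_{X/K}$ is an isomorphism, which is precisely the condition imposed on the prime $l$ in Proposition~\ref{P:commute}. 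It remains to check surjectivity of $V_{\ell_0}\lambda^m$ for $m=\min\{2,d_X-1\}$, and here I would argue by cases on $d_X$: if $d_X\ge 3$ then $m=2$ and the required surjectivity of $V_{\ell_0}\lambda^2$ is exactly the hypothesis of the Corollary; if $d_X=2$ then $m=1$ and $V_{\ell_0}\lambda^1$ is an isomorphism by Kummer theory, hence surjective; and if $d_X\le 1$ then $\A^2(X_{\bar K})=0$ and $H^3(X_{\bar K},\rat_l(2))=0$, so \eqref{E:RegHomCorr} commutes trivially (one may also note that $m=0$ and $V_{\ell_0}\lambda^0$ is an isomorphism by Rojtman). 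With all hypotheses in place, Proposition~\ref{P:commute} yields the commutativity of \eqref{E:RegHomCorr} with $n=2$ for every smooth projective $T$ and every $Z\in\mathscr A^2_{X/K}(T)$, which is the assertion.

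There is no serious obstacle beyond this bookkeeping, since the substantive work — the reduction of the codimension-$2$ diagram to a direct summand of a codimension-$1$ diagram via the correspondences $\Gamma,\Gamma'$ of Lemma~\ref{L:factorM}, followed by the case $n=1$ of Lemma~\ref{L:cod1} — is already contained in Proposition~\ref{P:commute}. The only points requiring mild care are matching the quantity $m=\min\{2,d_X-1\}$ appearing in Proposition~\ref{P:commute} to the Corollary's hypothesis (which is phrased purely in terms of $\lambda^2$, and hence needs the Kummer-theoretic input only in the boundary case $d_X=2$), and ensuring in the characteristic-zero case that the identification $\operatorname{Ab}^2_{X/K}=J^3_{a,X/K}$ and the $\operatorname{Aut}(\cx/K)$-equivariance from \cite{ACMVdmij} are invoked so that the Hodge-theoretic commutativity genuinely descends to the $\ell$-adic diagram \eqref{E:RegHomCorr}.
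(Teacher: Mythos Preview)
Your proposal is correct and takes essentially the same approach as the paper. The only cosmetic difference is that for $d_X=2$ in positive characteristic you invoke Proposition~\ref{P:commute} directly (checking $m=1$ via Kummer theory), whereas the paper routes through Corollary~\ref{C:commute-d}; since that corollary is itself proved by exactly the same application of Proposition~\ref{P:commute}, the two arguments coincide.
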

\begin{proof}
	The case where $\operatorname{char}(K)=0$ was explained in \S \ref{SS:commdiag} (and the standard assumption is satisfied for all $l$).  The case where $\operatorname{char}(K)>0$ is Proposition~\ref{P:commute} in case $d_X>2$ and Corollary~\ref{C:commute-d} in case $d_X=2$ (in which case the assumptions on $T_\ell \phi^2_{X_{\bar K}}$ and $V_\ell \lambda^2$ are superfluous).
\end{proof}

\begin{rem}\label{R:co2RatCon}
Note that, due
 to Proposition~\ref{P:lambdacoho} and 
Proposition \ref{P:BlSr}, the assumptions of Corollary~\ref{C:commute-2} are met for smooth projective varieties $X$ over a perfect field $K$ whose $\chow_0(X)_\rat$ is universally supported on a curve, \emph{e.g.}, for smooth projective geometrically rationally chain connected varieties, or for smooth projective varieties with geometric MRC quotient of dimension $\leq 1$.
\end{rem}

\subsection{Commutativity in the case of an abelian variety}
     In case the parameter space $T=B$ is an abelian variety, we can rephrase the commutativity of \eqref{E:RegHomCorr} under less restrictive assumptions.  The point is that in this case \eqref{E:RegHomCorr}  becomes
$$
 \xymatrix@C=.5em@R=1em{
T_lB \ar[rrrrd]_{Z_*}
  \ar[rrr]^<>(0.7){(\psi_Z)_* } 
  \ar@/^2.5pc/[rrrrr]_{T_\ell w_{Z}}
  && & T_l {\Ab}^{n}_{X/K}
  \ar[rr]^{(T_l \phi_X^n)^{-1}}_{\simeq} && T_l\A^n(X_{\bar K})
  \ar@{->}[rr]^<>(0.5){T_l\lambda^n} && H^{2n-1}(X_{\bar K},\integ_l(n))_\tau \\
 & &&&H^{2n-1}(X_{\bar K},\mathbb Z_{l}(n))\ar@{->>}[rrru]& \\
 }
$$
and therefore,  ignoring the existence of the algebraic representative, and whether $T_l\phi^n_X$ is an isomorphism if the algebraic representative exists, we can rephrase commutativity as\,:

\begin{pro}  \label{P:commute2}
 	Let $X$ be a smooth projective variety
	over a perfect field~$K$ and let $n$ be a natural number.
Suppose there exists a prime $\ell_0 \neq \operatorname{char}(K)$
	such that:
	\begin{itemize}
		\item $V_{\ell_0}\lambda^m : V_{\ell_0}\operatorname{A}^m(X_{\bar K})
		\longrightarrow H^{2m-1}(X_{\bar K},\mathbb Q_{\ell_0}(m))$ is surjective for $m:=\min \{n, d-n+1\}$.
	\end{itemize}
	Then, for any abelian variety $B$ over $K$, any $Z\in \mathscr A^n_{X/K}(B)$ and any prime $l$, the diagram
\begin{equation}\label{E:commute2}
	\xymatrix{ T_l B \ar[r]^{w_Z \quad } \ar[rd]^{Z_*\ } & T_l \operatorname{A}^n(X_{\bar K}) \ar[d]^{T_l\lambda^n}\\ &H^{2n-1}(X_{\bar K},\integ_l(n))_\tau
}
\end{equation}
is commutative.
\end{pro}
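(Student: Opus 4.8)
\textbf{Proof plan for Proposition~\ref{P:commute2}.}

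The plan is to reduce the assertion for a general codimension~$n$ and a general abelian variety~$B$ to the already-established case $n=1$ treated in Lemma~\ref{L:cod1}, using the decomposition of $H^{2n-1}$ furnished by Lemma~\ref{L:factorM}. Since $H^{2n-1}(X_{\bar K},\integ_l(n))_\tau$ is torsion-free, by Remark~\ref{R:CorrRegHom} it suffices to prove commutativity after tensoring with $\rat_l$. First I would invoke Lemma~\ref{L:factorM} with the given prime $\ell_0$ and the index $m=\min\{n,d-n+1\}$: this produces an abelian variety $A$ over $K$ and correspondences $\Gamma\in\operatorname{CH}^{d_X+1-n}(X\times_K\widehat A)$, $\Gamma'\in\operatorname{CH}^{n}(A\times_K X)$ whose induced maps $\Gamma_*\colon H^{2n-1}(X_{\bar K},\rat_l(n))\xrightarrow{\sim}V_lA$ (or $V_l\widehat A$, depending on which of $n$, $d_X-n+1$ equals $m$) and $\Gamma'_*$ in the other direction are isomorphisms for \emph{all} primes $l$, and such that $\Gamma'_*\circ\Gamma_*$ is an automorphism of $H^{2n-1}(X_{\bar K},\rat_l(n))$. (I will keep the notation uniform by allowing $\Gamma$ to land in $V_l\widehat A$; the dualization step inside Lemma~\ref{L:factorM} handles both cases.)

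Next I would set up a diagram in the style of~\eqref{E:commute}, but with the left column replaced by $T_lB$ rather than $H^{2d_T-1}(T_{\bar K},\rat_l(d_T))$. Concretely, for $Z\in\mathscr A^n_{X/K}(B)$ one has the composite $w_Z\colon B(\bar K)\to \operatorname{A}^n(X_{\bar K})$, which by Lemma~\ref{L:ablambda} is a homomorphism on torsion, hence induces $T_lw_Z\colon T_lB\to T_l\operatorname{A}^n(X_{\bar K})$. Post-composing with $\Gamma_*$ gives $T_l$ of the map $w_{\Gamma\circ Z}\colon B(\bar K)\to\operatorname{A}^1(\widehat A_{\bar K})$ attached to the family $\Gamma\circ Z\in\mathscr A^1_{\widehat A/K}(B)$ (here $\Gamma\circ Z$ is fiberwise algebraically trivial because the $\Gamma_*$ of an algebraically trivial cycle is algebraically trivial). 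Now the key commutative square is
\[
\xymatrix{
T_l\operatorname{A}^n(X_{\bar K})\ar[r]^{\Gamma_*}\ar[d]_{T_l\lambda^n} & T_l\operatorname{A}^1(\widehat A_{\bar K})\ar[d]^{T_l\lambda^1}\\
H^{2n-1}(X_{\bar K},\rat_l(n))\ar[r]^{\Gamma_*} & H^1(\widehat A_{\bar K},\rat_l(1)),
}
\]
which commutes by naturality of the $l$-adic Bloch map (for $l=p$ this uses Gros--Suwa, cf.\ \cite{grossuwaAJ}). Applying Lemma~\ref{L:cod1} to the abelian-variety parameter space $B$ and the codimension-$1$ family $\Gamma\circ Z$ on $\widehat A$, I get $T_l\lambda^1\circ T_lw_{\Gamma\circ Z}=(\Gamma\circ Z)_*\colon T_lB\to H^1(\widehat A_{\bar K},\rat_l(1))$. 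Chasing the diagram then yields $\Gamma_*\circ T_l\lambda^n\circ T_lw_Z=\Gamma_*\circ Z_*$ as maps $T_lB\to H^1(\widehat A_{\bar K},\rat_l(1))$; applying the isomorphism $\Gamma'_*$ and using that $\Gamma'_*\circ\Gamma_*$ is an automorphism of $H^{2n-1}(X_{\bar K},\rat_l(n))$, I conclude $T_l\lambda^n\circ T_lw_Z=Z_*$, i.e.\ the commutativity of~\eqref{E:commute2} with $\rat_l$-coefficients, hence with $\integ_l$-coefficients by torsion-freeness of the target.

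The main obstacle I anticipate is purely bookkeeping rather than conceptual: one must be careful that the correspondences $\Gamma,\Gamma'$ of Lemma~\ref{L:factorM} may involve $\widehat A$ on one side and $A$ on the other (depending on whether $m=n$ or $m=d_X-n+1$), so the intermediate codimension-$1$ statement must be applied on the correct abelian variety, and one should record that ``$\Gamma\circ Z$ is fiberwise algebraically trivial'' and that $w_{\Gamma\circ Z}=\Gamma_*\circ w_Z$ on $\bar K$-points, which is immediate from functoriality of the action of correspondences on cycles. The other point requiring a word is the case $l=p$: here one replaces $V_l\lambda^n$ by the $p$-adic Bloch map of Gros--Suwa and uses that $\Gamma_*$, $\Gamma'_*$ still induce isomorphisms on $p$-adic cohomology via Frobenius invariants of isocrystals, exactly as in the proof of Lemma~\ref{L:factorM}; the diagram chase is then formally identical. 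No existence or isomorphy of algebraic representatives is needed, which is precisely why this formulation is more flexible than Proposition~\ref{P:commute}.
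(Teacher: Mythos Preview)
Your proposal is correct and follows essentially the same route as the paper's proof: reduce to $\rat_l$-coefficients by torsion-freeness, invoke Lemma~\ref{L:factorM} to obtain $A$, $\Gamma$, $\Gamma'$, use naturality of the Bloch map to pass from $\operatorname{A}^n(X)$ to $\operatorname{A}^1$ of an abelian variety, apply Lemma~\ref{L:cod1} there, and cancel the automorphism $\Gamma'_*\circ\Gamma_*$. Your remarks on the $A$ versus $\widehat A$ bookkeeping and the $l=p$ case are apt but, as you note, purely notational.
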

\begin{proof}
Recall from Lemma~\ref{L:ablambda} that in our situation $w_Z$ induces a map on $l$-adic Tate modules.
Since $T_l B$ and $H^{2n-1}(X_{\bar K},\integ_l(n))_\tau$ are torsion-free, it suffices to prove commutativity of
\eqref{E:RegHomCorr}  with $\mathbb Q_l$-coefficients.
The proof consists then in using Lemma~\ref{L:factorM} to reduce to the case of codimension-1 cycles, in which case the
commutativity is proven in Lemma~\ref{L:cod1}.

Using $\ell_0$, choose  $A$, $\Gamma$ and $\Gamma'$ as in Lemma~\ref{L:factorM}. We obtain a commutative diagram

\begin{equation}
 \xymatrix@R=1.5em{T_l B \ar[rr]^{ w_{Z}}
	\ar[rrd]_{ w_{\Gamma\circ Z}} 
	 	&&  T_l\A^n(X_{\bar K}) \ar[d]^{\Gamma_*}
	\ar[rr]^{T_l\lambda_X^n\quad } && H^{2n-1}(X_{\bar K},\rat_l(n))_\tau \ar[d]^{\Gamma_*} \\
	&& 
	T_l\A^1(A_{\bar K}) \ar[d]^{\Gamma'_*} \ar[rr]^{T_l\lambda_A^1\quad }_{\simeq\quad } &&
	H^1(A_{\bar K},\rat_l(1))  \ar[d]^{\Gamma'_*} \\
	&&  T_l\A^n(X_{\bar K})
	\ar[rr]^{T_l\lambda_X^n\quad } && H^{2n-1}(X_{\bar K},\rat_l(n))_\tau.
}
\end{equation}
The  squares commute thanks to the naturality of the Bloch map and the left part of the diagram
commutes by the definition of the maps $w_Z$ and $w_{\Gamma\circ Z}$.
Therefore $\Gamma'_*\circ \Gamma_*\circ
T_l\lambda^n_X \circ w_Z = \Gamma'_* \circ T_l \lambda^1_A \circ w_{\Gamma\circ Z} = \Gamma'_*\circ (\Gamma \circ Z)_* $, where the second equality follows from Lemma~\ref{L:cod1}.
 Thus the diagram~\eqref{E:commute2} commutes after composing with $\Gamma'_*\circ \Gamma_*$. Since
$\Gamma'_*\circ \Gamma_*: H^{2n-1}(X_{\bar K},\rat_l(n)) \to H^{2n-1}(X_{\bar K},\rat_l(n))$ is an isomorphism, we conclude the diagram~\eqref{E:commute2}  commutes with $\mathbb Q_l$-coefficients.
\end{proof}

\section{Cohomological decomposition and self-duality of the algebraic representative}\label{S:DiagPolCoh}

Let $X$ be a smooth projective threefold over a field $K$ and assume that $X_{\bar K}$ admits a universal codimension-2 cycle $Z$.
The aim of this section is to study the symmetric $\bar K$-homomorphism $$\Theta_X: \operatorname{Ab}^2_{X_{\bar K}/\bar K} \to \widehat{\operatorname{Ab}}\,^2_{X_{\bar K}/\bar K}$$ induced (see \S \ref{SS:RegHomCycle}) by the cycle $-({}^tZ\circ Z)\in \chow^1(\operatorname{Ab}^2_{X_{\bar K}/\bar K}  \times_{\bar K} \operatorname{Ab}^2_{X_{\bar K}/\bar K} )$.  
Under the assumption that $V_\ell \lambda^2$ is surjective for some prime $\ell \neq \operatorname{char}(K)$, we show in Theorem~\ref{T:CanPol} that $\Theta_X$ is an isogeny that descends to $K$ and is independent of the choice of the universal cycle $Z$. 
Moreover, in characteristic~$0$, in the case where $X$ is geometrically rationally connected, 
we show that $\Theta_X$ is the Hodge-theoretic polarization induced via the intersection pairing in cohomology (see Remark~\ref{R:PPolDM-Z}), 
and that a similar statement holds in positive characteristic for the symmetric $K$-isogeny $\Theta_X$ induced by $-({}^tZ\circ Z)$ (the precise meaning of this is explained in Definition~\ref{D:DistMorph}).  In particular, this shows that the isomorphism in Theorem~\ref{T:main-pol}, in characteristic~$0$,  agrees with the Hodge-theoretic polarization induced via the intersection pairing in cohomology.
In addition, let us already mention that, in positive characteristic, we will show in Proposition~\ref{P:ThetaSpecMini} that $\Theta_X$ is  a polarization under some liftability conditions to characteristic zero.  For instance, in Corollary \ref{C:ThetaSpecMini}, we will show that if $X$ is a geometrically stably rational threefold, and if $X$ lifts to characteristic $0$ as a geometrically rationally connected threefold, then $\Theta_X$ is a principal polarization.  

\subsection{Motivation from Hodge theory: morphisms of abelian varieties induced by cup product in cohomology}\label{S:DistMorph}
Let $X$ be a complex projective manifold, let $H$ be an ample divisor on $X$, let $n$ be a nonnegative integer with  $1\le  2n-1 \le d_{X}$, and assume
$\coniveau^{n-1}H^{2n-1}(X,\mathbb Q)= H^{2n-1}(X,\mathbb Q)$, which implies 
 $$H^{2n-1}(X,\mathbb C)=H^{n+1,n}(X)\oplus H^{n,n-1}(X).$$
Then the Hodge--Riemann bilinear pairing $(\alpha,\beta)\mapsto i(-1)^{n}\int_X \alpha \wedge \bar \beta \wedge [H]^{d_X-2n-1}$ gives a Hermitian form $h$ on  $H^{n,n+1}(X)$ so  that $-\operatorname{Im}2h$ is the cup product in cohomology (up to a sign)\,:
$$
H^{2n-1}(X,\mathbb Z)_\tau \otimes H^{2n-1}(X,\mathbb Z)_\tau\longrightarrow H^{2d_X}(X,\mathbb Z)= \mathbb Z
$$
$$
(\alpha ,\beta)\mapsto (-1)^{n-1}\alpha \cup \beta \cup [H]^{d_X-2n-1}
$$ 

The associated linear map  $2h:H^{n-1,n}(X)\to \overline {H^{n-1,n}(X)}$ therefore  induces a symmetric isogeny
$$\Theta_{X}:J^{2n-1}(X) \to \widehat J^{2n-1}(X)$$
 on the  intermediate Jacobian.   Note that under the assumption $\coniveau^{n-1}H^{2n-1}(X,\mathbb Q)=H^{2n-1}(X,\mathbb Q)$, we have that  $J^{2n-1}(X)$ is equal to $J^{2n-1}_a(X)$,   \emph{i.e.}, the image of the Abel--Jacobi map on algebraically trivial cycle classes, which is an abelian variety.    
 
 The above discussion  can be rephrased as saying that $\Theta_X$ induces  a commutative diagram
$$
\xymatrix@C=5em@R=1.5em{
	H_1(J^{2n-1}(X),\mathbb Z) \ar[d]_= \ar[r]^{\Theta_{X}}& H_1(\widehat J^{2n-1}(X), \mathbb Z)\\
	H^{2n-1}(X,\mathbb Z)_\tau \ar[r]^{\cup (-1)^{n-1}[H]^{d_X-2n+1}} & H^{2d_X-2n+1}(X,\mathbb Z)_\tau\ . \ar[u]_=
}
$$
The left vertical arrow is the canonical identification coming from the construction of the intermediate Jacobian, while the right vertical arrow is the dual identification, where we identify $H_1(J^{2n-1}(X),\mathbb Z)^\vee = H_1(\widehat J^{2n-1}(X), \mathbb Z)$ via the Weil pairing, and we identify $H^{2n-1}(X,\mathbb Z)^\vee_\tau = H^{2d_X-2n+1}(X,\mathbb Z)_\tau$ via the cup product.  We review this standard Hodge theory in \S \ref{S:A-HdgTh}.

Taking the Tate module of the Abel--Jacobi map gives us two equivalent maps \cite[Prop.~3.7]{bloch79}, namely the maps $T_\ell AJ: T_\ell \operatorname{A}^n(X)\to T_\ell J^{2n-1}_a(X)$ and $T_\ell \lambda^n: T_\ell \operatorname{A}^n(X)\to H^{2n-1}(X,\mathbb Z_\ell)_\tau$, and we can rephrase the diagram above  $\ell$-adically as saying the following diagram commutes\,:
$$
\xymatrix@C=5em@R=1.5em{
	H_1(J^{2n-1}(X),\mathbb Z_\ell) \ar[r]^{T_\ell \Theta_{X}}& H_1(\widehat J^{2n-1}(X), \mathbb Z_\ell)
	\ar[d]^{(T_\ell AJ)^\vee} \\
	T_\ell \operatorname{A}^n(X) \ar[d]_{T_\ell \lambda^n}  \ar[u]^{T_\ell AJ}  & T_\ell \operatorname{A}^n(X)^\vee  \\
	H^{2n-1}(X,\mathbb Z_\ell)_\tau \ar[r]^{\cup (-1)^{n-1} [H]^{d_X-2n+1}} & H^{2d_X-2n+1}(X,\mathbb Z_\ell)_\tau \ar[u]_{(T_\ell \lambda^n)^\vee}
}
$$

The isogeny $\Theta_{X}$ is in fact the only morphism $\Theta :J^{2n-1}(X)\to \widehat J^{2n-1}(X)$ making the above diagram commute (for any $\ell$).  Indeed, 
since for abelian varieties $A, B$, the natural map $\operatorname{Hom}(A,B)\to \operatorname{Hom}(V_\ell A,V_\ell B)$ is an inclusion, it suffices to show that $V_\ell AJ: V_\ell \operatorname{A}^n(X)\to V_\ell J^{2n-1}(X)$ is surjective\,; this follows from the Proposition \ref{P:surj} using the fact that the Abel--Jacobi map is a surjective regular homomorphism ($T_\ell AJ$ is in fact an isomorphism for $n=1,2,d_X$).

Note that if the Hodge coniveau filtration satisfies
$\coniveau^{i-1}_HH^{2i-1}(X,\mathbb Q) = 0$ for all $i<n$\,; \emph{i.e.}, the middle two terms of the Hodge decomposition are zero for all odd cohomology in degree less than $2n-1$, which holds  
for instance  if $n=1$, or if $d_X\ge 3$, $n=2$, and $h^{1,0}=0$, 
then the Hodge--Riemann bilinear pairing  is positive definite, so that $\Theta_X$ is a polarization.

\subsection{Distinguished homomorphisms}
The discussion above motivates the following
definition\,:

\begin{dfn}[Distinguished homomorphism]\label{D:DistMorph}
Let $X$ be a smooth projective variety over a perfect field $K$,  let $H\in \operatorname{CH}^1(X)$ be the class of an ample divisor, let $n$ be a natural number such that $1\le 2n-1\le d_X$, let $\Omega/K$ be an algebraically closed field extension, let $l$ be a prime, and let $\Phi:\mathscr A^n_{X/K}\to A$ be a \emph{surjective} regular homomorphism.   We say that a homomorphism  $$\Lambda: A_\Omega \longrightarrow  \widehat{A}_\Omega$$ is \emph{$l$-distinguished} (with respect to $H$) if it is induced by cup product in cohomology in the sense that the following diagram commutes\,:
\begin{equation}\label{E:DistMorph}
			\xymatrix@C=5em@R=1.5em{
				T_l A_\Omega\ar[r]^{T_l \Lambda }& T_l \widehat A_\Omega \ar[d]^{(T_l\phi)^\vee} \\
				T_l \operatorname{A}^n(X_\Omega )  \ar[u]^{T_l \phi}  \ar[d]_{T_l \lambda^n} & T_l \operatorname{A}^n(X_\Omega )^\vee(1)  \\
				H^{2n-1}(X_\Omega ,\mathbb Z_l(n))_\tau \ar[r]^<>(0.5){\cup [H]^{d_X-2n+1}} & H^{2d_X-2n+1}(X_\Omega,\mathbb Z_l(d_X-n+1))_\tau \ar[u]_{(T_l \lambda^n)^\vee}
			}
\end{equation}
Here $(T_l A_\Omega)^\vee(1)$ is identified with $T_l \widehat A_\Omega$
via the Weil pairing (see \ref{S:WeilPair}), and $H^{2n-1}(X_\Omega,\mathbb Z)^\vee_\tau$ is
identified with $H^{2d_X-2n+1}(X_\Omega,\mathbb Z)_\tau$ via the cup
product.  We say that $\Lambda$ is \emph{distinguished} if it is
distinguished for all primes $l$.
		\end{dfn}
	
\begin{rem}
Note that in comparison to the motivation in \S \ref{S:DistMorph}, we have removed the factor of $(-1)^{n-1}$ in the bottom row of \eqref{E:DistMorph} to simplify some of the diagrams, and to make the presentation more clearly motivic.  When we are interested in questions of positivity, we will always replace the 
($\ell$-)distinguished homomorphism $\Lambda$ with the homomorphism $\Theta=(-1)^{n-1}\Lambda$.  
\end{rem}		

                \begin{lem}[Uniqueness and descent]\label{L:DM-desc}
                  Let $\ell \not= \operatorname{char}(K)$ be prime.
In the notation of Definition \ref{D:DistMorph},  
there is at most one $\ell$-distinguished homomorphism $\Lambda:A_\Omega \to \widehat A_\Omega$, and this morphism descends to a $K$-homomorphism $\underline \Lambda :A\to \widehat A$.  Moreover, $\Theta=(-1)^{n-1}\Lambda$ is a (principal) polarization if and only if $\underline \Theta=(-1)^{n-1}\underline \Lambda$ is a (principal) polarization. 
\end{lem}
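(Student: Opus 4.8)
\textbf{Proof strategy for Lemma \ref{L:DM-desc}.}

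The plan is to deduce everything from the surjectivity of the regular homomorphism $\Phi$ and from Chow rigidity/descent, treating uniqueness, descent, and the polarization statement in turn.

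First I would prove uniqueness. Suppose $\Lambda, \Lambda' : A_\Omega \to \widehat A_\Omega$ are both $\ell$-distinguished with respect to $H$. Then both $T_\ell \Lambda$ and $T_\ell \Lambda'$ make diagram \eqref{E:DistMorph} commute, so $T_\ell\Lambda \circ T_\ell\phi = T_\ell\Lambda' \circ T_\ell\phi$ as maps $T_\ell\operatorname{A}^n(X_\Omega) \to T_\ell\widehat A_\Omega$. Tensoring with $\rat_\ell$, this gives $V_\ell\Lambda \circ V_\ell\phi = V_\ell\Lambda' \circ V_\ell\phi$. Since $\Phi$ is surjective, Proposition~\ref{P:surj}(3) gives that $V_\ell\phi : V_\ell\operatorname{A}^n(X_\Omega) \to V_\ell A_\Omega$ is surjective (its cokernel is a finite $\rat_\ell$-vector space, hence zero); therefore $V_\ell\Lambda = V_\ell\Lambda'$. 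Because $\operatorname{Hom}(A_\Omega,\widehat A_\Omega) \hookrightarrow \operatorname{Hom}(V_\ell A_\Omega, V_\ell\widehat A_\Omega)$ is injective, we conclude $\Lambda = \Lambda'$.

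Next, descent. Since $K$ is perfect and $\Omega/K$ is an algebraically closed extension, by Chow rigidity (Lecomte's theorem, cf.\ the reductions used in the proof of Theorem~\ref{T:main-pol}) the homomorphism groups do not change under the extension $\bar K/K$, so it suffices to treat $\Omega = \bar K$. Here the key is that $\phi = \Phi(\bar K)$ is a $\operatorname{Gal}(K)$-equivariant regular homomorphism by \cite{ACMVdcg}, hence $T_\ell\phi$ and $(T_\ell\phi)^\vee$ are $\operatorname{Gal}(K)$-equivariant; the Bloch map $T_\ell\lambda^n$ is $\operatorname{Gal}(K)$-equivariant by \cite[Prop.~\ref{ACMVBlochMap:P:lambda}]{ACMVBlochMap} (see Proposition~\ref{P:lambdacoho}); and cup product with $[H]^{d_X-2n+1}$ together with the Weil pairing identifications are Galois-equivariant. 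Therefore the composite $(T_\ell\lambda^n)^\vee \circ (\cup [H]^{d_X-2n+1}) \circ T_\ell\lambda^n \circ (T_\ell\phi)^{-1}$, suitably interpreted on the image of $V_\ell\phi$ and extended by the uniqueness argument above, is $\operatorname{Gal}(K)$-equivariant; that is, $T_\ell\Lambda$ is $\operatorname{Gal}(K)$-equivariant. By the usual criterion (a homomorphism of abelian varieties over $\bar K$ between two varieties defined over $K$ descends to $K$ iff it is Galois-equivariant on $\ell$-adic Tate modules, cf.\ \cite[Thm.~\ref{ACMVfunctor:T2:mainalgrep}]{ACMVfunctor} and the proof of Theorem~\ref{T:main-pol}(2)), $\Lambda$ descends to a $K$-homomorphism $\underline\Lambda : A \to \widehat A$.

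Finally, the polarization statement. Recall that by definition $\Theta = (-1)^{n-1}\Lambda$ and $\underline\Theta = (-1)^{n-1}\underline\Lambda$, and $\underline\Theta_\Omega = \Theta$ after base change. Whether a symmetric isogeny to the dual is a polarization (resp.\ a principal polarization) is insensitive to base field extension between algebraically closed fields, and more generally $\underline\Theta$ is a polarization if and only if $\underline\Theta_{\bar K} = \Theta_{\bar K}$ is: ampleness of the associated symmetric line bundle (after pulling back along multiplication-by-$2$, which replaces $\underline\Theta$ by a line bundle by \cite[p.121]{mumfordGIT}) is a geometric property, and the degree of the isogeny — which detects principality — is unchanged by base change. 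Hence $\Theta$ is a (principal) polarization if and only if $\underline\Theta$ is.

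The step I expect to be the main obstacle is the descent claim, specifically making precise that Galois-equivariance of $T_\ell\Lambda$ follows from Galois-equivariance of the three ingredients $T_\ell\phi$, $T_\ell\lambda^n$, and cup product: one must be careful that $T_\ell\phi$ need not be an isomorphism in general (only $V_\ell\phi$ is surjective, with finite kernel a priori possible), so the inverse $(T_\ell\phi)^{-1}$ in \eqref{E:DistMorph} has to be read with the standing hypothesis that makes the diagram well-posed, and the Galois-equivariance of $T_\ell\Lambda$ then has to be extracted from that of $V_\ell\Lambda$ plus torsion-freeness of the target, exactly as in Remark~\ref{R:CorrRegHom}. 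Everything else is formal once surjectivity of $\Phi$ is in hand.
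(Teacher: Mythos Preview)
Your approach is essentially the same as the paper's: uniqueness via surjectivity of $V_\ell\phi$ and injectivity of $\operatorname{Hom}(A,B)\hookrightarrow\operatorname{Hom}(V_\ell A,V_\ell B)$, descent via $\operatorname{Aut}(\Omega/K)$-equivariance of the other maps in \eqref{E:DistMorph}, and the polarization claim by the standard base-change facts in \S\ref{S:ThetaLB}. One clarification: you misread the diagram \eqref{E:DistMorph} as involving $(T_\ell\phi)^{-1}$, and then worry about this in your final paragraph; in fact the right vertical arrow is the \emph{dual} $(T_\ell\phi)^\vee : T_\ell\widehat A_\Omega \to T_\ell\operatorname{A}^n(X_\Omega)^\vee(1)$, so commutativity reads $(T_\ell\phi)^\vee\circ T_\ell\Lambda\circ T_\ell\phi = (T_\ell\lambda^n)^\vee\circ(\cup[H]^{d_X-2n+1})\circ T_\ell\lambda^n$, and the argument needs both the surjectivity of $V_\ell\phi$ and the injectivity of $(V_\ell\phi)^\vee$ to cancel on both sides --- your concern about inverses is therefore moot, and the diagram chase is cleaner than you feared.
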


\begin{proof}

	As in the case of the Abel--Jacobi map (\S \ref{S:DistMorph}), by virtue of the fact that  the natural map  $\operatorname{Hom}(A,B)\to \operatorname{Hom}(V_\ell A,V_\ell B)$ is an inclusion,  and the fact that  $V_\ell\phi$ is surjective for all $\ell$ (Proposition~\ref{P:surj}(3)), a diagram chase in \eqref{E:DistMorph}
	 shows that an $\ell$-distinguished homomorphism, if it exists, is
	 unique. 
	To show an $\ell$-distinguished homomorphism $\Lambda$ descends to $K$ it suffices to show that it is $\operatorname{Aut}(\Omega/K)$-equivariant on Tate modules. In fact, it suffices to show that $V_\ell\Lambda$ is $\operatorname{Aut}(\Omega/K)$-equivariant\,; since $V_\ell\phi$ is surjective, this follows again from a diagram chase in \eqref{E:DistMorph}.
	Finally, it is standard that $\Theta=(-1)^{n-1}\Lambda$ is a (principal) polarization if and only if  $\underline \Theta = (-1)^{n-1}\underline \Lambda$ is a (principal) polarization (see \S \ref{S:ThetaLB}). 
\end{proof}

In light of the uniqueness and descent of Lemma \ref{L:DM-desc}, if  $\Phi^n_{X/K}:\mathscr A^n_{X/K}\to \operatorname{Ab}^n_{X/K}$ is an algebraic representative, we use the notation  $$\Lambda_X:\operatorname{Ab}^n_{X/K}\to \widehat {\operatorname{Ab}}^n_{X/K}$$
for a distinguished symmetric $K$-isogeny, if it exists.   Note that unless $d=2n-1$, we have that $\Lambda_X$ depends \emph{a priori} on $H$ as well as $X$.

\begin{rem}
	Note that since the  inclusion  $\operatorname{Hom}(A,B)\to \operatorname{Hom}_{\gal(K)}(T_\ell A,T_\ell B)$ is bijective if and only if $\operatorname{Hom}(A,B)= 0$, the existence of an $\ell$-distinguished morphism $\Lambda$ is not formal from the rest of the diagram \eqref{E:DistMorph}.  
\end{rem}

In summary, with Hodge theory as our inspiration, our goal is to investigate when algebraic representatives admit distinguished polarizations.  Obviously, motivated by the case of characteristic $0$, we have the following examples (recall that the distinguished model of the algebraic intermediate Jacobian agrees with the algebraic representative in characteristic $0$ for $n=1,2,d_X$)\,:

\begin{exa}[{Distinguished polarizations for  distinguished models in characteristic $0$}]
	\label{E:PPolDM} 
As in \S \ref{S:DistMorph}, let $X$ be a complex projective manifold, let $H$ be an ample divisor on $X$, let $n$ be a nonnegative integer with  $1\le  2n-1 \le d_{X}$, and assume
$\coniveau^{n-1}H^{2n-1}(X,\mathbb Q)= H^{2n-1}(X,\mathbb Q)$, which implies 
 $$H^{2n-1}(X,\mathbb C)=H^{n+1,n}(X)\oplus H^{n,n-1}(X).$$
	Then the  symmetric isogeny  $\Lambda_{X}:=(-1)^{n-1}\Theta_{X}:J^{2n-1}(X)\to \widehat J^{2n-1}(X) $ induced by $H$
	and the cup product in cohomology is $\ell$-distinguished, and therefore descends to symmetric $K$-isogeny 
	$\Lambda_X$ on $J^{2n-1}_{a,X/K}$.
If the  Hodge coniveau filtration satisfies
	$\coniveau^{i-1}_HH^{2i-1}(X,\mathbb Q) = 0$ for all $i<n$, then $\Theta_X=(-1)^{n-1}\Lambda_X$ is a  polarization on $J^{2n-1}_{a,X/K}$.  
\end{exa}

In positive characteristic, we will use miniversal cycles to 
construct distinguished homomorphisms.

\subsection{Distinguished homomorphisms and miniversal cycles}
\label{S:PolAb2}

Let $X$ be a smooth projective variety over a perfect field $K$, let $H \in \chow^1(X)$ be the class of an ample divisor,  and let $n$ be a natural number such that $1\le 2n-1 \le d_X$.   
Let $A$ be an abelian variety over $K$, and let $\Omega/K$ be an algebraically closed field extension.
Then for any cycle  $Z \in \chow^n(X_{\Omega}\times_{\Omega}A_{\Omega})$, 
the cycle ${}^tZ\circ [\cup H^{d_X-2n+1}] \circ Z\in \chow^1(A_{\Omega} \times_{\Omega}  A_{\Omega})$ induces, via \S \ref{SS:RegHomCycle}, a symmetric $\Omega$-homomorphism 
\begin{equation}\label{E:ThetaZ}
\Lambda_Z:A_\Omega\longrightarrow \widehat A_\Omega.
\end{equation}
Our goal is to investigate when this construction gives a distinguished homomorphism on $A$, in the sense of Definition~\ref{D:DistMorph}.

\subsubsection{}\label{SSS:Z}
The first observation is that, for each prime $l$, we have by construction a  commutative diagram
\begin{equation}\label{E:SSS:Z}
\xymatrix@C=5em{
	T_l A \ar[r]^{T_l \Lambda _Z}
	\ar[d]^{Z_*}&T_l \widehat A
	\\
	H^{2n-1}(X_{\Omega},\mathbb Z_l(n))_\tau  \ar[r]^<>(0.5){\cup  [H]^{d_X-2n+1}}&
	H^{2d_X-2n+1}(X_{\Omega},\mathbb Z_{l}(d_X-n+1))_\tau \ar[u]^<>(0.5){Z^*}
}
\end{equation}
where $(T_l A)^\vee(1)$ is identified with $T_l \widehat A$ via
the Weil pairing, and we identify $T_l A_\Omega =
T_l  A$ by rigidity.  When $l = \ell \ne \operatorname{char}(K)$,
\eqref{E:SSS:Z} follows by taking the $\ell$-adic realization of an
equality of cycle classes.
If $\operatorname{char}(K) = p>0$, then taking cycle classes in crystalline
cohomology yields a diagram
\[\xymatrixcolsep{5pc}
\xymatrix{ H_1^\cris(A/\ww(K)) \ar[r]^{\Lambda_{Z,\cris}} \ar[d]^{Z_{*,\cris}} & H_1^\cris(\widehat A/\ww(K))  \\
    H^{2n-1}_\cris(X/\ww(K)(n))_\tau \ar[r]^<>(0.5){
      [H]^{d_X-2n+1}} & H_\cris^{2d-2n+1}(X/\ww(K)(d-n+1))_\tau \ar[u]^<>(0.5){Z^*_\cris}
}
\]
and then taking $F$-invariants gives \eqref{E:SSS:Z} with $l=p$.

\subsubsection{} \label{SSS:main}
Suppose now that $\Phi:\mathscr A^n_{X/K}\to A$ is  a surjective regular homomorphism and assume that $V_{\ell}\lambda^n : V_{\ell}\operatorname{A}^n(X_{\bar K})
\longrightarrow H^{2n-1}(X_{\bar K},\mathbb Q_{\ell}(n))$ is surjective for some prime $\ell\neq \operatorname{char} K$.
Then, by combining \eqref{E:DistMorph} and \eqref{E:SSS:Z} together with Lemma~\ref{L:factorM} and Proposition~\ref{P:commute2}, we obtain for all primes $l$  a commutative diagram

\begin{equation}
\label{E:CanPol}
\xymatrix@C=3em@R=.8em{
	T_l A_\Omega
	\ar[dd]^{w_Z} \ar@/_3pc/[dddd]_{Z_*} \ar[rrr]^{T_{l}\Lambda_Z \qquad } \ar[rd]^{T_l\psi_Z}& &&
	T_{l}\widehat{A}_\Omega\\ 
	& T_l A_\Omega \ar@{-->}[r]^{\exists ? \ T_l \Lambda}& T_l\widehat{A}_\Omega \ar[ru]^{(T_l\psi_Z)^\vee} \ar[dr]_{(T_l\phi)^\vee}
	\\
	T_{l} \operatorname{A}^n(X_\Omega) \ar[ur]_{T_l\phi}
	\ar@{->}[dd]^<>(0.5){T_l \lambda^n } &&&	T_{l} \operatorname{A}^n(X_\Omega)^\vee  \ar[uu]^{(w_Z)^\vee}\\
	\\
	H^{2n-1}(X_{\Omega},\mathbb Z_{l}(n))_\tau  \ar[rrr]^<>(0.5){\cup [H]^{d_X-2n+1}}&&&
	H^{2d_X-2n+1}(X_{\Omega},\mathbb Z_{l}(d_X-n+1))_\tau.
	\ar@{->}[uu]^<>(0.5){(T_l \lambda^n )^\vee}  \ar@/_3pc/[uuuu]_{Z^*}
}
\end{equation}
and we are asking whether there exists an $\Omega$-homomorphism $\Lambda : A_\Omega \to \widehat{A}_\Omega$ making the diagram commute for all $l$. Note from Lemma~\ref{L:DM-desc}
that $\Lambda$, if it exists, is unique and descends to~$K$, and note also that the homomorphism $\Lambda_Z:A_\Omega\to \widehat{A}_\Omega$ is uniquely determined (since $\operatorname{Hom}(A,B)\to \operatorname{Hom}(T_\ell A,T_\ell B)$ is injective for $\ell \neq \operatorname{char} K$).

We fix  a miniversal cycle $Z\in \mathscr{A}^n_{X_{\Omega}/\Omega}(A_{\Omega})$ of degree, say, $r$, meaning that $\psi_Z: A\to A$ is multiplication by $r$. By considering the diagram \eqref{E:CanPol} at a prime $\ell \neq \operatorname{char}(K)$ and arguing as in the proof of Lemma~\ref{L:DM-desc}, 
we see that $\Lambda_Z$ descends to $K$ and only depends on $r$
(\emph{i.e.}, does not depend on the choice of a miniversal cycle of
degree $r$).

If $V_{\ell}\lambda^n$ is bijective,
then the symmetric $K$-homomorphism $\Lambda_Z$  is an  isogeny (for instance, replace $T_\ell$ by $V_\ell$ in the above diagram~\eqref{E:CanPol} and use that $r$ is invertible in $\mathbb Q_\ell$).

In case $Z\in \mathscr{A}^n_{X_{\Omega}/\Omega}(A_{\Omega})$ is  a universal cycle (\emph{i.e.}, $\psi_Z = \mathrm{id}_A$),  then there exists a distinguished homomorphism  $\Lambda$, namely, $\Lambda= \Lambda_Z$, since $\psi_Z = \mathrm{id}_A$. 
\medskip

The following theorem summarizes the above discussion\,:

\begin{teo}[Distinguished morphisms motivically]\label{T:CanPol}
	Let $X$ be a smooth projective variety over a perfect field $K$, let $H$ be an ample divisor on $X$, let  $n$ be a natural number such that $1\le 2n-1 \le d_X$, and let $\Omega/K$ be an algebraically closed field extension. Further, let  $\Phi: \mathscr A^n_{X/K}\to A$ be a surjective regular homomorphism and let 
		$Z\in \mathscr{A}^n_{X_{\Omega}/\Omega}(A_{\Omega})$ be  a miniversal cycle of degree $r$.  We denote $\Lambda_Z: 	A_\Omega \to \widehat{A}_\Omega  $ the symmetric $\Omega$-homomorphism 
	 	induced by the cycle ${}^tZ\circ H^{d_X-2n+1} \circ Z \in \chow^1(A_\Omega \times_{\Omega} A_\Omega )$.
	 	
	Assume that for some prime $\ell_0\ne \operatorname{char}(K)$:
	\begin{itemize}
		\item $
		V_{\ell_0}\lambda^n : V_{\ell_0}\operatorname{A}^n(X_{\bar K})
		\longrightarrow H^{2n-1}(X_{\bar K},\mathbb Q_{\ell_0}(n))$ is  surjective.
	\end{itemize}
	Then the symmetric $\Omega$-homomorphism 
	$\Lambda_Z: 	A_\Omega \to \widehat{A}_\Omega  $ descends to $K$ and 	depends only on $H$ and $r$ (and not on the choice of the miniversal cycle $Z$ of degree $r$).   
	Moreover, 
	\begin{enumerate}
			\item 
		If $Z$ is universal (\emph{i.e.}, $r=1$),
		 then 	
		$\Lambda_Z$ is a distinguished symmetric $K$-homomorphism.

	\item
	If $T_{l_i}\phi$  are isomorphisms for a given set of primes $\{l_i\}_{i\in I}$, 

	then 	
	there exists  a symmetric $K$-homomorphism $\Lambda' : A \to  \widehat{A}$ such that 
	\begin{equation}\label{E:rlambda}
	\Lambda_Z = \Big(\prod_{l \in \{l_i\}_{i\in I}} l^{v_l(r)}\Big)^2  \Lambda'.
	\end{equation}
	\item If $T_{l}\phi$ is an isomorphism at the primes $l$ dividing $r$, 
	then 	
	there exists a distinguished symmetric $K$-homomorphism $\Lambda$,  which makes  \eqref{E:CanPol} commute at all primes $l$, and $\Lambda_Z=r^2\Lambda$.
	\end{enumerate}	
Finally, if in addition $V_{\ell_0}\lambda^n$ is bijective, then $\Lambda_Z$ is a symmetric $K$-isogeny (and hence so are $\Lambda'$ and $\Lambda$ in (2) and (3), respectively).  
\end{teo}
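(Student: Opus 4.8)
\textbf{Proof proposal for Theorem~\ref{T:CanPol}.}

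The plan is to extract everything from the big diagram \eqref{E:CanPol} together with the preparatory results already assembled in the excerpt. First I would set up the diagram \eqref{E:SSS:Z} by taking the $\ell$-adic (resp.\ crystalline, then $F$-invariant) realization of the evident equality of cycle classes defining $\Lambda_Z$ via ${}^tZ \circ [\cup H^{d_X-2n+1}]\circ Z$; this is a purely formal consequence of functoriality of cycle class maps and the compatibility of composition of correspondences with their cohomological actions, and it yields commutativity of \eqref{E:SSS:Z} at every prime $l$ (using that $H^{2n-1}_{\mathrm{cris}}$ is torsion-free modulo torsion by construction, and that $F$-invariants behave well). Next, the hypothesis that $V_{\ell_0}\lambda^n$ is surjective lets me invoke Lemma~\ref{L:factorM} (so $V_l\lambda^n$ is surjective for all $l$, and there are auxiliary correspondences $\Gamma,\Gamma'$ through an abelian variety) and Proposition~\ref{P:commute2} (commutativity of \eqref{E:commute2} for families parameterized by abelian varieties, in particular for the miniversal cycle $Z$ over $A$). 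Splicing \eqref{E:SSS:Z}, \eqref{E:commute2}, and the definition of $\psi_Z$, one gets the commutativity of the outer cell of \eqref{E:CanPol} at all primes $l$, with the dashed arrow $\Lambda$ being exactly what we want to construct.

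With the diagram in hand, the descent and uniqueness statements follow the pattern of Lemma~\ref{L:DM-desc}: at $\ell_0 \neq \operatorname{char}(K)$, $V_{\ell_0}\phi$ is surjective (Proposition~\ref{P:surj}(3)), so a diagram chase in \eqref{E:CanPol} shows $V_{\ell_0}\Lambda_Z$ is $\operatorname{Aut}(\Omega/K)$-equivariant, hence $\Lambda_Z$ descends to $K$; and $\Lambda_Z$ depends only on $r$ because two miniversal cycles of the same degree induce the same map after composing with the surjection $V_{\ell_0}\phi$, again by a chase. For (1), if $r=1$ then $T_l\psi_Z = \mathrm{id}$ for all $l$, so $\Lambda_Z$ itself makes the defining square \eqref{E:DistMorph} commute at every prime, i.e.\ $\Lambda_Z$ is distinguished. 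For (2), if $T_{l_i}\phi$ is an isomorphism for $i\in I$, then at those primes $T_{l_i}w_Z$ and $T_{l_i}(w_Z)^\vee$ are divisible by $l_i^{v_{l_i}(r)}$ in the appropriate sense (since $T_{l_i}\psi_Z = T_{l_i}\phi\circ T_{l_i}w_Z$ is multiplication by $r$ and $T_{l_i}\phi$ is invertible), and reading \eqref{E:CanPol} one sees $T_{l_i}\Lambda_Z = (l_i^{v_{l_i}(r)})^2 \cdot(\text{something})$; since $\operatorname{Hom}(A,\widehat A)$ injects into each $\operatorname{Hom}(T_l A, T_l\widehat A)$ and the divisibility holds at the chosen primes while being automatic (no divisibility claimed) at the others, one concludes $\Lambda_Z$ is divisible by $\big(\prod_{i} l_i^{v_{l_i}(r)}\big)^2$ in $\operatorname{Hom}(A,\widehat A)$, defining $\Lambda'$; symmetry of $\Lambda'$ is inherited. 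Part (3) is the special case $I = \{l : l\mid r\}$: then $\prod l^{v_l(r)} = r$, $\Lambda_Z = r^2\Lambda$, and since at every prime $l$ the map $T_l\psi_Z$ is multiplication by $r$ with $T_l\phi$ invertible at the $l\mid r$ and $r$ invertible in $\mathbb Z_l$ for $l\nmid r$, the chase in \eqref{E:CanPol} shows $\Lambda$ makes that diagram commute at all $l$, i.e.\ $\Lambda$ is distinguished.

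Finally, for the last sentence, if $V_{\ell_0}\lambda^n$ is bijective then in \eqref{E:CanPol} with $T_l$ replaced by $V_l$ at $l = \ell_0$, every map except possibly $\Lambda_Z$ is an isomorphism (the cup-product pairing $H^{2n-1}\times H^{2d_X-2n+1}\to H^{2d_X}$ is perfect by Poincar\'e duality, $V_{\ell_0}\lambda^n$ is bijective by hypothesis, and $V_{\ell_0}\phi$ is bijective by Proposition~\ref{P:surj} since under bijectivity of $V_{\ell_0}\lambda^n$ and surjectivity of $\Phi$ the map $V_{\ell_0}\phi$ is forced to be an isomorphism for dimension reasons), so $V_{\ell_0}\Lambda_Z$ is an isomorphism, whence $\Lambda_Z$ is an isogeny over $K$; the same then passes to $\Lambda'$ and $\Lambda$ since they differ from $\Lambda_Z$ by an integer multiple. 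I expect the main obstacle to be bookkeeping the $\ell$-adic versus $p$-adic (crystalline) cases uniformly in the construction of \eqref{E:SSS:Z} and in checking that the finite-level/torsion versions of the Bloch-map compatibilities from Lemma~\ref{L:factorM} and Proposition~\ref{P:commute2} really do feed into \eqref{E:CanPol} at the prime $l = \operatorname{char}(K)$; everything else is a sequence of diagram chases using the injectivity of $\operatorname{Hom}(A,B)\hookrightarrow\operatorname{Hom}(T_lA,T_lB)$ and the surjectivity statements for $V_l\phi$ and $V_l\lambda^n$.
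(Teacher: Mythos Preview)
Your proposal is correct and follows essentially the same route as the paper: both set up the commutative diagram \eqref{E:CanPol} from \S\ref{SSS:Z}, Lemma~\ref{L:factorM}, and Proposition~\ref{P:commute2}, deduce descent and independence of the choice of $Z$ as in Lemma~\ref{L:DM-desc} via surjectivity of $V_{\ell_0}\phi$, and prove (2) by the elementary fact that divisibility of an element of the free $\integ$-module $\hom(A_\Omega,\widehat A_\Omega)$ by a power of $l$ can be detected after tensoring with $\integ_l$; (3) is then the special case $\{l_i\}=\{l:l\mid r\}$. One small caveat: in your final paragraph you assert that $V_{\ell_0}\phi$ is bijective ``for dimension reasons,'' but for an arbitrary surjective regular homomorphism $\Phi$ one only knows surjectivity (Proposition~\ref{P:surj}), and the paper does not argue through bijectivity of $V_{\ell_0}\phi$---it simply invokes the invertibility of $V_{\ell_0}\psi_Z = r\cdot\mathrm{id}$ in the diagram \eqref{E:CanPol} with $\rat_{\ell_0}$-coefficients.
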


\begin{proof}
	Everything except (2) and (3) follows immediately from the commutativity of \eqref{E:CanPol} and from the discussion above. Clearly (2) $\implies$ (3).	Concerning (2), at the primes $l_i$ for which $T_{l_i}\phi$ are isomorphisms, we obtain from \eqref{E:CanPol} a Galois-equivariant homomorphism $\alpha_i : T_{l_i}A \to T_{l_i}\widehat A$ such that $r^2\alpha_i = T_{l_i}\Lambda_Z$. 
We next recall the following elementary fact: Given a free $\integ$-module $M$ (of finite rank), an element $m\in M$ and a prime $l$, denoting by $\pi : M \hookrightarrow M\otimes \integ_{l}$ the canonical map, if $\pi(m)$ is divisible by an integer $N$ (\emph{i.e.}, in the image of the multiplication by $N$ map), then $m$ is divisible by $l^{v_l(N)}$.  Applying this to the abelian group of homomorphisms from $A_\Omega$ to $\widehat A_\Omega$,
 this implies that $\Lambda_Z$ is divisible by $l_i^{2v_{l_i}(r)}$.
	Hence, there exists a symmetric $K$-homomorphism $\Lambda' : A \to \widehat A$
	 such that \eqref{E:rlambda} holds.  
	In particular, we see that $\Lambda_Z$ becomes divisible by $r^2$ after inverting the primes $l$ dividing $r$ but distinct from the~$l_i$. 
\end{proof}

\begin{rem}[Characteristic $0$]
	\label{R:PPolDM-Z}  
	Let $X$ be a smooth projective variety over a field $K\subseteq \mathbb C$, as in Example \ref{E:PPolDM}.  	 
	Assuming 
 that the bullet point condition in Theorem \ref{T:CanPol}  holds, and that there is a universal codimension-$n$ cycle class for $X_{\mathbb C}$ 
 (resp.~$T_{\ell}\phi$ and $T_{\ell} \lambda^n$ are isomorphisms for all primes~$\ell$), then the symmetric $K$-isogeny $\Theta=(-1)^{n-1}\Lambda$ of Theorem \ref{T:CanPol}(1) (resp.~(3)) agrees with the  polarization
	$\Theta_X$ on $J^{2n-1}_{a,X/K}$ from Example~\ref{E:PPolDM}.  Indeed, both are $\ell$-distinguished, and so we may employ Lemma \ref{L:DM-desc}.  
\end{rem}

\begin{rem}[$\Theta_Z=(-1)^{n-1}\Lambda_Z$]
We emphasize that in light of the Hodge theory, it is $\Theta_Z:= (-1)^{n-1}\Lambda_Z$ in Theorem \ref{T:CanPol} that one might hope is a polarization on $A$.  We discuss this further in~\S\ref{SS:AlgRepPol}.  
\end{rem}

\begin{cor}[Distinguished homomorphisms for codimension-1 cycles]\label{C:CanPol-cod1}
Let  $\Phi^1_{X/K}: \mathscr A^1_{X/K}\to (\operatorname{Pic}^0_X)_{\mathrm{red}}=\operatorname{Ab}^1_{X/K}$ be the Abel--Jacobi map (the first algebraic representative). 
Then there exists a distinguished the symmetric $K$-isogeny $\Lambda_X : (\operatorname{Pic}^0_X)_{\mathrm{red}} \to (\operatorname{Pic}^0_X)_{\mathrm{red}}^\vee$, which, in case $K\subseteq \cx$,  agrees with the  polarization $\Theta_X$ induced by Hodge theory (see Example~\ref{E:PPolDM}). 
\end{cor}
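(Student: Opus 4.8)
\textbf{Proof proposal for Corollary~\ref{C:CanPol-cod1}.}
The plan is to apply Theorem~\ref{T:CanPol} with $n=1$ and $A=(\operatorname{Pic}^0_X)_{\mathrm{red}}=\operatorname{Ab}^1_{X/K}$, using the fact that in codimension one everything is as favorable as possible: the first algebraic representative is a surjective regular homomorphism (indeed it exists unconditionally), and the needed hypotheses of Theorem~\ref{T:CanPol} hold automatically. First I would verify the surjectivity hypothesis $V_{\ell_0}\lambda^1 : V_{\ell_0}\operatorname{A}^1(X_{\bar K}) \to H^1(X_{\bar K},\rat_{\ell_0}(1))$ for some $\ell_0\neq\operatorname{char}(K)$. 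For $n=1$ this is Kummer theory: $T_l\lambda^1$ is an isomorphism for all primes $l$ (this is recalled repeatedly in the excerpt, e.g.\ in Lemma~\ref{L:cod1} and the discussion around \eqref{E:RegHomCorr}), so in particular $V_{\ell_0}\lambda^1$ is bijective, hence surjective. Next I would note that $T_l\phi^1_{X_{\bar K}}: T_l\operatorname{A}^1(X_{\bar K}) \xrightarrow{\sim} T_l\operatorname{Ab}^1_{X/K}$ is an isomorphism for all primes $l$, again by Lemma~\ref{L:cod1} (this uses $\operatorname{Ab}^1_{X/K}=(\operatorname{Pic}^0_{X/K})_{\mathrm{red}}$ together with the definition of the Picard functor). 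With both of these in hand, the hypotheses of Theorem~\ref{T:CanPol}(3) are met at every prime $l$ (the set of primes dividing any miniversal degree $r$ is automatically among those where $T_l\phi^1$ is an isomorphism), and moreover $V_{\ell_0}\lambda^1$ is bijective.

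Thus Theorem~\ref{T:CanPol}(3) applies directly and produces a distinguished symmetric $K$-homomorphism $\Lambda$ making diagram \eqref{E:CanPol} (with $n=1$) commute at all primes $l$; since $V_{\ell_0}\lambda^1$ is bijective, this $\Lambda$ is in fact a symmetric $K$-isogeny. Taking $H$ to be the class of an ample divisor on $X$ (needed since $d_X$ need not equal $2n-1=1$; note $\Lambda_X$ then depends a priori on the choice of $H$, which I should flag), we set $\Lambda_X:=\Lambda$, which by Lemma~\ref{L:DM-desc} is the unique $\ell$-distinguished homomorphism $(\operatorname{Pic}^0_X)_{\mathrm{red}}\to (\operatorname{Pic}^0_X)_{\mathrm{red}}^\vee$ and descends to $K$.

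Finally, for the comparison with Hodge theory when $K\subseteq\cx$: by Example~\ref{E:PPolDM} with $n=1$, the symmetric isogeny $\Lambda_X=(-1)^{0}\Theta_X=\Theta_X$ induced by $H$ and the cup product on $H^1(X_\cx,\integ)$ is $\ell$-distinguished (here the coniveau condition $\coniveau^0 H^1(X,\rat)=H^1(X,\rat)$ is automatic, and the Hodge coniveau vacuity condition $\coniveau^{i-1}_H H^{2i-1}=0$ for $i<1$ is also vacuous, so $\Theta_X$ is in fact a polarization — the classical polarization on $\operatorname{Pic}^0_X=\operatorname{Alb}_X^\vee$). Invoking the uniqueness clause of Lemma~\ref{L:DM-desc}, the two $\ell$-distinguished homomorphisms coincide, so $\Lambda_X=\Theta_X$. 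I expect no serious obstacle here: the statement is essentially a sanity check that the machinery of Theorem~\ref{T:CanPol} reproduces the classical polarization in the one case ($n=1$) where all auxiliary hypotheses are automatic. The only mild subtlety worth addressing carefully is that $\Lambda_X$ a priori depends on the auxiliary ample class $H$ when $\dim X>1$ — this should be stated, not swept under the rug — and that "distinguished'' and "polarization'' are compatible with descent by the last sentence of Lemma~\ref{L:DM-desc}.
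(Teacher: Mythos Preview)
Your proposal is correct and follows essentially the same strategy as the paper: verify the bullet-point hypothesis of Theorem~\ref{T:CanPol} via Kummer theory, apply the theorem to produce a distinguished symmetric $K$-isogeny, and invoke uniqueness (Lemma~\ref{L:DM-desc}) for the Hodge-theoretic comparison. The only difference is that the paper applies Theorem~\ref{T:CanPol}(1) rather than (3): since over an algebraically closed field $\Omega$ the Abel--Jacobi map always admits a \emph{universal} divisor (the Poincar\'e line bundle on $(\operatorname{Pic}^0_{X_\Omega})_{\mathrm{red}}\times_\Omega X_\Omega$), one can take $r=1$ and get the distinguished $\Lambda_Z=\Lambda_X$ directly from part~(1), bypassing the need to check that $T_l\phi^1$ is an isomorphism at primes dividing a miniversal degree. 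Your route via part~(3) is equally valid but slightly more circuitous; the paper's route is cleaner because the universal cycle is already available in codimension one.
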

\begin{proof}
	Note that $V_l\lambda^1$ is an isomorphism for all primes $l$ and that the Abel--Jacobi map $\Phi: \mathscr A^1_{X/K}\to (\operatorname{Pic}^0_X)_{\mathrm{red}}$ always admits a universal cycle $Z\in \mathscr{A}^n_{X_{\Omega}/\Omega}(A_{\Omega})$ so that Theorem~\ref{T:CanPol}(1) applies.
The agreement of $\Lambda_X$ with the polarization $\Theta_X$  induced by Hodge theory comes from the fact that both are distinguished homomorphisms.	
\end{proof}

\begin{rem}[Curves]
 In Corollary~\ref{C:CanPol-cod1}, if $X$ is a curve, then $\Lambda_X$ is independent of the choice of $H$, and agrees with the canonical principal polarization on the Jacobian of the curve, since the canonical principal polarization is known to be distinguished.
\end{rem}

\begin{cor}[Distinguished homomorphisms for codimension-2 cycles]
	\label{C:CanPol}
Let
	$\Phi^2_{X/K} : \mathscr A^2_{X/K}\to \operatorname{Ab}^2_{X/K}$ be the second algebraic representative and let $N$ be a natural number. 
	Suppose that, for a prime $\ell \neq \operatorname{char}(K)$, we have that  $N\Delta_{X_{\Omega}}$ admits a  cohomological $\integ_\ell$-decomposition of type $(d_X-1,1)$ with respect to $H^\bullet (-,\mathbb Z_\ell)$, 	\emph{e.g.}, $X$ is geometrically rationally connected. Then
	\begin{enumerate}
\item There is a distinguished symmetric $K$-isogeny $\Lambda_X :\operatorname{Ab}^2_{X/K}\to \widehat{\operatorname{Ab}}^2_{X/K}$\,;
\item 	If in addition $\dim X \leq 4$, then $X$ admits a codimension-2 miniversal cycle	$Z\in \mathscr{A}^n_{X_{\Omega}/\Omega}(A_{\Omega})$ of degree $N$ (Corollary~\ref{C:UnivCyc}) and $\Lambda_Z = N^2\Lambda_X$\,;
\item  If further $N=1$,  \emph{e.g.}, if $X$ is geometrically stably rational of dimension $\leq 4$, then $X$ admits a codimension-2 universal cycle $Z$ and $\Lambda_Z  = \Lambda_X$.
	\end{enumerate}
Moreover, in case $K\subseteq \cx$, the symmetric $K$-isogeny $-\Lambda_X$ agrees with the  polarization $\Theta_X$ induced by Hodge theory (see Example \ref{E:PPolDM}). 
\end{cor}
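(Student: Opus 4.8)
The plan is to deduce Corollary~\ref{C:CanPol} directly from the machinery already assembled, chiefly Theorem~\ref{T:CanPol}, Proposition~\ref{P:BlSrTors}, Corollary~\ref{C:UnivCyc}, Proposition~\ref{P:lambdacoho}, and the uniqueness/descent statement of Lemma~\ref{L:DM-desc}. The key point is that the hypothesis — a cohomological $\integ_\ell$-decomposition of $N\Delta_{X_\Omega}$ of type $(d_X-1,1)$ — is exactly what is needed to verify the bullet-point condition of Theorem~\ref{T:CanPol} (surjectivity, indeed bijectivity, of $V_{\ell_0}\lambda^2$) and the standard-assumption-type condition that $T_l\phi^2_{X_\Omega}$ is an isomorphism for all $l$.

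\medskip

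\emph{Step 1 (Bijectivity of $V_\ell\lambda^2$ and the Tate module condition).} Apply Proposition~\ref{P:lambdacoho}: since $N\Delta_{X_\Omega}$ admits a cohomological $\integ_\ell$-decomposition of type $(d_X-1,2)$ (\emph{a fortiori} of type $(d_X-1,1)$) with respect to $H^\bullet(-,\integ_\ell)$, the map $V_\ell\lambda^2: V_\ell\operatorname{A}^2(X_{\bar\Omega}) \to H^3(X_{\bar\Omega},\rat_\ell(2))$ is an isomorphism; in particular it is surjective, so the bullet-point hypothesis of Theorem~\ref{T:CanPol} is met with $\ell_0=\ell$, and moreover $\Lambda_Z$ will be an isogeny by the last sentence of that theorem. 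Next, apply Proposition~\ref{P:BlSrTors}(2) (using that a cohomological $\integ_\ell$-decomposition of type $(d_X-1,1)$ is in particular a cohomological decomposition of that type): for \emph{all} primes $l$, $T_l\phi^2_{X_\Omega}: T_l\operatorname{A}^2(X_{\bar\Omega}) \to T_l\operatorname{Ab}^2_{X/\Omega}$ is an isomorphism. Hence the hypothesis of Theorem~\ref{T:CanPol}(3) is satisfied at every prime.

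\medskip

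\emph{Step 2 (Existence of the distinguished isogeny and (1)).} Choose any ample $H$ on $X$ and any miniversal cycle $Z\in\mathscr A^2_{X_\Omega/\Omega}(\operatorname{Ab}^2_{X_\Omega/\Omega})$ of some degree $r$ (which exists because $\Phi^2_{X/K}$ is a surjective regular homomorphism). By Theorem~\ref{T:CanPol}(3), since $T_l\phi^2$ is an isomorphism at the primes dividing $r$, there is a distinguished symmetric $K$-homomorphism $\Lambda$ making \eqref{E:CanPol} commute at all primes, with $\Lambda_Z = r^2\Lambda$; and since $V_\ell\lambda^2$ is bijective, $\Lambda$ is a $K$-isogeny. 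Set $\Lambda_X := \Lambda$. By Lemma~\ref{L:DM-desc} it is the unique $\ell$-distinguished homomorphism $\operatorname{Ab}^2_{X/K}\to\widehat{\operatorname{Ab}}^2_{X/K}$, hence independent of all choices (of $r$, $Z$); its dependence on $H$ persists only when $d_X\neq 3$. This gives (1). For (2): if $\dim X\le 4$, Corollary~\ref{C:UnivCyc} (using that resolution of singularities holds in dimension $\le 3$ over perfect fields) produces a codimension-$2$ miniversal cycle $Z$ of degree exactly $N$; then Theorem~\ref{T:CanPol}(3) gives $\Lambda_Z = N^2\Lambda_X$. For (3): if $N=1$, the same corollary gives a \emph{universal} cycle $Z$ ($r=1$), and Theorem~\ref{T:CanPol}(1) applies directly, giving $\Lambda_Z = \Lambda_X$ (also recovering (3) of the theorem with $r=1$); geometrically stably rational varieties satisfy the hypothesis with $N=1$ by Remark~\ref{R:StabRatDec} (a strict $\integ$-decomposition of $\Delta_X$ is \emph{a fortiori} a cohomological $\integ_\ell$-decomposition of type $(d_X-1,0)\subseteq(d_X-1,1)$), and geometrically rationally connected varieties satisfy the general hypothesis by Remark~\ref{R:RCDecDiag} together with Remark~\ref{R:CH0triviality}, with $N$ some multiple — so the named examples in the statement are justified.

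\medskip

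\emph{Step 3 (Agreement with the Hodge-theoretic polarization when $K\subseteq\cx$).} When $K\subseteq\cx$, recall $\operatorname{Ab}^2_{X/K}=J^3_{a,X/K}$ by \cite{ACMVdcg}, and under the coniveau hypothesis forced by the decomposition (namely $\coniveau^1 H^3(X_\cx,\rat) = H^3(X_\cx,\rat)$, which holds since $H^3$ is then supported on a divisor, cf.~Corollary~\ref{C:AlgCyc} and the discussion of \S\ref{S:DistMorph}) Example~\ref{E:PPolDM} produces the symmetric $K$-isogeny $\Theta_X = (-1)^{n-1}\Lambda_X^{\mathrm{Hdg}}$ with $n=2$, i.e.\ $\Theta_X = -\Lambda_X^{\mathrm{Hdg}}$, and this $\Lambda_X^{\mathrm{Hdg}}$ is $\ell$-distinguished with respect to $H$ by construction (the intersection pairing diagram in \S\ref{S:DistMorph} is precisely \eqref{E:DistMorph}). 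Both $\Lambda_X$ (from Step~2) and $\Lambda_X^{\mathrm{Hdg}}$ are $\ell$-distinguished homomorphisms $\operatorname{Ab}^2_{X/K}\to\widehat{\operatorname{Ab}}^2_{X/K}$ with respect to the same $H$; by the uniqueness clause of Lemma~\ref{L:DM-desc} they coincide. Hence $-\Lambda_X$ agrees with the Hodge-theoretic polarization $\Theta_X$. This is really a repetition of the argument of Remark~\ref{R:PPolDM-Z}.

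\medskip

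I do not anticipate a serious obstacle: the real content has already been isolated in Theorem~\ref{T:CanPol}, Proposition~\ref{P:BlSrTors}, Proposition~\ref{P:lambdacoho}, and Corollary~\ref{C:UnivCyc}, and the proof of the corollary is essentially bookkeeping — matching the hypothesis "cohomological $\integ_\ell$-decomposition of type $(d_X-1,1)$" against the various inputs and quoting the appropriate item of Theorem~\ref{T:CanPol}. The one point requiring a little care is the appeal to $\dim X\le 3$ resolution of singularities inside Corollary~\ref{C:UnivCyc} to get the miniversal cycle of degree exactly $N$ in part (2) (and exactly $1$ in part (3)); without $\dim X\le 4$ one only gets the isogeny $\Lambda_X$ of part (1), not the cycle-theoretic refinement $\Lambda_Z = N^2\Lambda_X$. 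The comparison in Step~3 is purely formal once one notes both homomorphisms satisfy the defining commutative diagram \eqref{E:DistMorph}.
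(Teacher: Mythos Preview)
Your proposal is correct and follows essentially the same approach as the paper's proof: the paper also reduces to Theorem~\ref{T:CanPol} after invoking Proposition~\ref{P:lambdacoho} (for bijectivity of $V_\ell\lambda^2$) and Proposition~\ref{P:BlSrTors} (for the $T_l\phi^2$ isomorphisms), and handles the Hodge-theoretic comparison via uniqueness of distinguished homomorphisms. Your write-up is more detailed, but the logical skeleton and the cited inputs are identical.
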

 \begin{proof}
The assumption on $N\Delta_{X_\Omega}$ implies that $V_\ell \lambda^2$ is bijective (Proposition~\ref{P:lambdacoho}) and that the $T_l\phi$  are isomorphisms for all primes $l$ (Proposition~\ref{P:BlSrTors}). Items (1)--(3) then follow from Theorem~\ref{T:CanPol}.
In case $K\subseteq \cx$, the agreement of $-\Lambda_X$ with the polarization $\Theta_X$  induced by Hodge theory comes from the fact that both $\Lambda_X$ and $-\Theta_X$ are distinguished homomorphisms.	
 \end{proof}

As another consequence of Theorem \ref{T:CanPol} we obtain a cohomological analogue to Theorem~\ref{T:main-pol}\,; this result will in fact show that the isomorphism $\Theta_X$ in Theorem~\ref{T:main-pol} is distinguished, and therefore, via Lemma \ref{L:DM-desc},  in characteristic $0$, agrees with the principal polarization coming from Hodge  theory\,:

\begin{teo}[Threefolds] \label{T:CanPol3-fold} 
	Let $X$ be a smooth projective threefold over a perfect field $K$, and let $\Omega/K$ be an algebraically closed field extension. 
	Let $Z\in \mathscr A^2_{X_\Omega/\Omega}(\operatorname{Ab}^2_{X_\Omega/\Omega})$ be a miniversal cycle class. 
	
	\begin{enumerate}
		\item If $V_\ell\lambda^2$ is an isomorphism for some prime $\ell \neq \operatorname{char}(K)$ (\emph{e.g.}, if $X$ is geometrically uniruled), then  the  $\Omega$-homomorphism $\Lambda_Z: 	\operatorname{Ab}^2_{X_\Omega/\Omega}\to \widehat {\operatorname{Ab}}\,^2_{X_\Omega/\Omega}  $
		induced by the cycle class  ${}^tZ \circ Z \in \chow^1(\operatorname{Ab}^2_{X_{\Omega}/\Omega}\times_{\Omega} \operatorname{Ab}^2_{X_{\Omega}/\Omega})$  descends to a  symmetric $K$-isogeny 
		$$\Lambda_Z :\ \operatorname{Ab}^2_{X/{K}} \to {\widehat{\operatorname{Ab}}\,^2_{X/{K}}}$$ 
		depending only on the degree of $Z$ as a miniversal cycle. Moreover, if $Z$ is universal, then $\Lambda_Z$ is distinguished and we denote $\Lambda_Z$ by $\Lambda_X$.

		\item If $\operatorname{CH}_0(X_{\bar K})\otimes {\mathbb Z[\frac{1}{N}]}$ is universally trivial for some natural number $N$ (\emph{e.g.},  $X$ is geometrically rationally chain connected),
then there exists a distinguished purely inseparable symmetric $K$-isogeny $\Lambda_X : \ \operatorname{Ab}^2_{X/{K}} \to {\widehat{\operatorname{Ab}}\,^2_{X/{K}}}$. Moreover, 
if $\operatorname{char}(K) \nmid N$, then $\Lambda_X$ is an isomorphism.
		
		\item If $\operatorname{CH}_0(X_{\bar K})$ is universally trivial (\emph{e.g.}, $X$ is geometrically stably rational), then the distinguished symmetric $K$-isogeny $\Lambda_X$ is an isomorphism.
	\end{enumerate}
	 Moreover, if $K\subseteq \mathbb C$, then the  symmetric $K$-isogeny $-\Lambda_X$ of (2) and (3) agrees with the principal   polarization $\Theta_X$ induced by Hodge theory (see Example \ref{E:PPolDM}). 
\end{teo}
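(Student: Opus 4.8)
\textbf{Plan of proof for Theorem~\ref{T:CanPol3-fold}.}
The plan is to deduce the three items by specializing the general machinery of Theorem~\ref{T:CanPol} to the case of threefolds ($d_X=3$, $n=2$, so $2n-1 = 3 = d_X$ and the ample divisor $H$ plays no role since $d_X - 2n+1 = 0$), and then to upgrade from isogeny to isomorphism using the Bloch-map computations of \S\ref{S:self-duality} and the torsion results recalled in Proposition~\ref{P:BlSrTors} and Proposition~\ref{P:lambdacoho}/Proposition~\ref{P:lambdacohoBW}. First, for (1): by Chow rigidity and separable descent (Remark~\ref{R:SepBC/D}) we may reduce to $\Omega = \bar K$. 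The hypothesis that $V_\ell\lambda^2$ is an isomorphism for some $\ell\neq\operatorname{char}(K)$ is exactly the bullet-point hypothesis of Theorem~\ref{T:CanPol} (with $m = \min\{2, d_X-1\} = 2$), strengthened to bijectivity; note that geometric uniruledness gives a $\rat$-decomposition of type $(d_X-1, d_X-1)$ by Remark~\ref{R:UniDecDiag}, hence $V_\ell\lambda^2$ is surjective, and in fact an isomorphism by Proposition~\ref{P:lambdacoho} applied with the appropriate decomposition — but since we are only assuming the conclusion $V_\ell\lambda^2$ bijective, we simply invoke Theorem~\ref{T:CanPol} directly. Thus $\Lambda_Z$ descends to $K$, depends only on $\deg Z$, and is a symmetric $K$-isogeny; when $Z$ is universal it is distinguished by Theorem~\ref{T:CanPol}(1), and we set $\Lambda_X := \Lambda_Z$. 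Since for threefolds $H^{d_X-2n+1} = H^0$ is the identity correspondence, the cycle inducing $\Lambda_Z$ is literally ${}^tZ\circ Z\in\chow^1(\operatorname{Ab}^2_{X_\Omega/\Omega}\times_\Omega\operatorname{Ab}^2_{X_\Omega/\Omega})$, matching the statement.

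For (2): under the hypothesis that $\operatorname{CH}_0(X_{\bar K})\otimes\integ[\tfrac1N]$ is universally trivial, $N\Delta_{X_{\bar K}}$ admits a strict (Chow, hence cohomological) decomposition of type $(d_X-1,0)$, in particular of type $(d_X-1,1)$. By Proposition~\ref{P:lambdacohoBW} (or Proposition~\ref{P:lambdacoho}) the map $T_l\lambda^2$ is an isomorphism onto $H^3(X_{\bar K},\integ_l(2))_\tau$ for \emph{all} primes $l$, and by Proposition~\ref{P:BlSrTors}(2) the maps $T_l\phi^2_{X_{\bar K}}$ are isomorphisms for all $l$. Hence Theorem~\ref{T:CanPol}(3) applies with the set of all primes, and Corollary~\ref{C:UnivCyc} (using $d_X = 3 \le 4$, resolution of singularities for surfaces) gives a miniversal cycle $Z$ of degree $N$; we get a distinguished symmetric $K$-isogeny $\Lambda_X$ with $\Lambda_Z = N^2\Lambda_X$. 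To see $\Lambda_X$ is a purely inseparable isogeny, examine the diagram~\eqref{E:CanPol} on Tate modules: the composite $Z^*\circ[\cup H^0]\circ Z$ on $T_l$ factors through $T_l\lambda^2$ and its dual (via the perfect intersection pairing $H^3\times H^3\to\integ_l(1)$), and since for $l\nmid N$ one reads off that $T_l\Lambda_X$ is an isomorphism (this is precisely the argument of Theorem~\ref{T:main-pol}(2) via diagram~\eqref{E:tZcircZ}, which works verbatim once $T_l\phi^2$ and $T_l\lambda^2$ are isomorphisms), $\Lambda_X$ is an isogeny whose kernel is supported at $\operatorname{char}(K)$ and which is an isomorphism on all prime-to-$p$ Tate modules; a symmetric isogeny that is an isomorphism on all $T_\ell$, $\ell\neq p$, is purely inseparable. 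If moreover $\operatorname{char}(K)\nmid N$ then $l=p$ also divides no relevant denominator and the same diagram chase gives $T_p\Lambda_X$ an isomorphism, so $\Lambda_X$ is an isomorphism.

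For (3): this is the case $N=1$ of (2), but here we invoke the stronger input of \S\ref{S:self-duality}: by Proposition~\ref{P:unicycle}, $X_{\bar K}$ admits a genuine \emph{universal} codimension-$2$ cycle $Z$ and $\phi^2_{X_{\bar K}}$ is an isomorphism, and $N^d\Phi^2_{X/K}$ is induced by a cycle $\widehat Z$; Theorem~\ref{T:main-pol}(2) then shows the symmetric homomorphism induced by ${}^tZ\circ Z$ is an \emph{isomorphism} descending to a symmetric $K$-isomorphism $\Lambda_X$, independent of $Z$. That this $\Lambda_X$ coincides with the distinguished isogeny of (2) follows because both are $\ell$-distinguished — the diagram~\eqref{E:tZcircZ} in the proof of Theorem~\ref{T:main-pol}(2) is exactly diagram~\eqref{E:DistMorph} with $H^0 = \operatorname{id}$ — so Lemma~\ref{L:DM-desc} (uniqueness of distinguished homomorphisms) forces equality. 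Finally, for the characteristic-$0$ comparison: when $K\subseteq\cx$ and $\operatorname{CH}_0(X_{\bar K})$ is ($\rat$-, hence here $\integ$-)universally trivial, we have $\coniveau^1 H^3(X,\rat) = H^3(X,\rat)$ and $h^{1,0} = 0$ (the latter by Remark~\ref{R:SRC-Hodge}), so Example~\ref{E:PPolDM} with $n=2$ provides the Hodge-theoretic principal polarization $\Theta_X$ on $J^3_{a,X/K} = \operatorname{Ab}^2_{X/K}$, and $\Theta_X$ is by construction $\ell$-distinguished; since $-\Lambda_X$ is also $\ell$-distinguished (Corollary~\ref{C:CanPol}, last sentence, or directly from~\eqref{E:DistMorph}), Lemma~\ref{L:DM-desc} gives $-\Lambda_X = \Theta_X$, and in particular $\Lambda_X$ is an isomorphism consistent with $\Theta_X$ being principal. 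The main obstacle in this argument is item (2)'s claim that $\Lambda_X$ is \emph{purely inseparable} (not merely an isogeny) without assuming a strict Chow decomposition over $\bar K$ — one must be careful that a type-$(d_X-1,1)$ cohomological decomposition of $N\Delta$ suffices to make $T_l\phi^2_{X_{\bar K}}$ and $T_l\lambda^2$ isomorphisms at \emph{all} $l\nmid Np$ (this is Proposition~\ref{P:lambdacohoBW} combined with Proposition~\ref{P:BlSrTors}), and that the diagram~\eqref{E:CanPol}/\eqref{E:tZcircZ} then pins down $\Lambda_X$ up to a prime-to-$p$ factor that, by the $N^2$-divisibility of Theorem~\ref{T:CanPol}(3), is actually trivial away from $p$.
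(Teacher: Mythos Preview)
Your treatment of items (1), (3), and the characteristic-zero comparison is correct and matches the paper's approach essentially verbatim: deduce (1) from Theorem~\ref{T:CanPol}, deduce (3) from Theorem~\ref{T:main-pol}(2) together with the uniqueness of distinguished homomorphisms (Lemma~\ref{L:DM-desc}), and compare with Hodge theory via that same uniqueness.

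Item (2), however, has two problems.

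\emph{Purely inseparable.} Your assertion that ``a symmetric isogeny that is an isomorphism on all $T_\ell$, $\ell\neq p$, is purely inseparable'' is false: multiplication by $p$ on an ordinary abelian variety (composed with a principal polarization) gives an immediate counterexample, since its kernel contains $p^g$ \'etale points. Purely inseparable means the kernel is \emph{connected}, i.e., $T_l\Lambda_X$ is an isomorphism for \emph{all} primes $l$, including $l=p$. The fix is that your own inputs already give this: Propositions~\ref{P:lambdacohoBW} and~\ref{P:BlSrTors} show $T_l\lambda^2$ and $T_l\phi^2$ are isomorphisms for every prime $l$ (the Chow decomposition of $N\Delta_{X_{\bar K}}$ induces a cohomological decomposition for crystalline cohomology as well), so diagram~\eqref{E:CanPol} forces $T_l\Lambda_X$ to be an isomorphism for all $l$. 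This is exactly how the paper argues.

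\emph{Isomorphism when $p\nmid N$.} Here your argument breaks down and is not repairable along the lines you suggest. Even once $T_l\Lambda_X$ is an isomorphism for all $l$ (including $l=p$), $\Lambda_X$ can still have a nontrivial connected kernel, invisible to every Tate module. No diagram chase in cohomology can rule this out. The paper supplies the missing ingredient via Theorem~\ref{T:main-pol}(1): under the hypothesis of (2), Proposition~\ref{P:unicycle} (which rests on the motivic Theorem~\ref{T:motive}) produces a miniversal cycle of degree $N$ and shows that some $N^d\Phi^2_{X_{\bar K}}$ is \emph{induced by a cycle} $\widehat Z$. Theorem~\ref{T:main-pol}(1) then bounds the kernel of $\Lambda_Z$ inside $\operatorname{Ab}^2[N^{d+2}]$, so $\deg\Lambda_X$ divides a power of $N$. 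Combined with $\deg\Lambda_X$ being a power of $p$ (purely inseparable), the condition $p\nmid N$ forces $\deg\Lambda_X=1$. You never invoke this Chow-theoretic bound, and without it the ``moreover'' clause of (2) does not follow.
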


\begin{proof}
	Item (1) follows immediately from Theorem~\ref{T:CanPol}.  In cases (2) and (3), the diagonal of $X_{\bar K}$ admits in particular a Chow $\rat$-decomposition of type $(2,2)$ and it follows from Proposition \ref{P:lambdacohoBW} that $T_l \lambda^2$ (and hence $V_l\lambda^2$) is bijective for all primes~$l$. 
	Concerning (2), the diagonal of $X_{\bar K}$ admits in particular a Chow $\rat$-decomposition of type $(2,1)$, and hence $T_l\phi^2$ is an isomorphism for all primes~$l$ by Proposition~\ref{P:BlSrTors}. Combined with the fact that $T_l \lambda^2$ is bijective for all primes $l$, there exists from Theorem~\ref{T:CanPol}(3) a distinguished symmetric $K$-isogeny $\Lambda_X$ and it follows from diagram~\eqref{E:CanPol} that $T_l\Lambda_X$ is an isomorphism for all primes $l$ and hence
	that $\Lambda_X$ is a purely inseparable isogeny. On the other hand, by Theorem~\ref{T:main-pol}(1), the universal triviality of $\operatorname{CH}_0(X_{\bar K})\otimes {\mathbb Z[\frac{1}{N}]}$ yields that the degree of the isogeny $\Lambda_X$ divides a power of $N$\,; it follows that, if $\operatorname{char}(K) \nmid N$, $\Lambda_X$ is an isomorphism.
For (3), if $\operatorname{CH}_0(X_{\bar K})$ is universally trivial, then by Corollary~\ref{C:UnivCyc} $X_{\bar K}$ admits a universal codimension-$2$ cycle $Z$ and $\Lambda_Z$ is the distinguished symmetric $K$-isogeny (\emph{i.e.}, it coincides with $\Lambda_X$) by Theorem~\ref{T:CanPol}(1). Now, the fact that $\Lambda_Z$ is an isomorphism follows by noting that by construction $\Lambda_Z$ coincides with the symmetric $K$-isomorphism $\Lambda_X$ of Theorem~\ref{T:main-pol}(2).
\end{proof}

\begin{rem}\label{R:T:main-polHdg}
	Note that by construction, the symmetric isogeny $\Lambda_X$ in  Theorem \ref{T:CanPol3-fold} agrees with that in 
	Theorem \ref{T:main-pol} when there is a (Chow) decomposition of the diagonal, so that the symmetric $K$-isomorphism  $\Lambda_X$ of Theorem \ref{T:main-pol}(2) is distinguished. 
\end{rem}

\section{Specialization and polarization on the algebraic representative}\label{SS:AlgRepPol}

 Regarding whether the symmetric $K$-isogeny $\Theta_X=(-1)^{n-1}\Lambda_X$ of Theorem \ref{T:CanPol}(2) is a polarization,  in this section we present results for threefolds 
 that essentially say that if a  geometrically stably rational threefold     can be lifted to a geometrically rationally chain connected threefold  in characteristic $0$, 
 then $\Theta_X$ is a principal polarization.  While many of the examples we have in mind
 are liftable to characteristic $0$ (see \emph{e.g.}, \cite[Thms.~22.1, 22.3]{HartDef}), recall, of course, that there are smooth, projective,  even
 rational varieties, over perfect fields of characteristic $p>0$ that do not lift
 (see \emph{e.g.}, \cite{AZ17lift}).

\subsection{Inducing polarizations on the algebraic representative via specialization}

 \begin{pro}[Polarizations and specialization]\label{P:ThetaSpecMini}
  Suppose that $S$ is the spectrum of a DVR with generic point $\eta$ and
  special point $s$ both with  perfect residue fields. 
  Suppose $f:{\mathcal X}\to S$ is a smooth projective morphism, and let $H$ be a relatively ample divisor.  Fix a natural number~$n$ such that $2n-1\le d= \dim_S{\mathcal X}$, and 
  let $\ell$ be a prime invertible in  $\kappa(s)$.

  Assume that:
  \begin{itemize}
  \item There exist algebraic representatives $\Phi^n_{{\mathcal X}_\eta /\eta}: \mathscr A^n_{{\mathcal X}_\eta/\eta}\to \operatorname{Ab}^n_{{\mathcal X}_\eta/\eta}$ and $\Phi^n_{{\mathcal X}_s/s}: \mathscr A^n_{{\mathcal X}_s/s}\to \operatorname{Ab}^n_{{\mathcal X}_s/s}$\,;

  	\item $	T_\ell\phi^n_{{\mathcal X}_\eta}: T_\ell \operatorname{A}^n({\mathcal X}_{\bar
  		\eta})  \to T_\ell \operatorname{Ab}^n_{{\mathcal X}_\eta}$ and  $	T_\ell\phi^n_{{\mathcal X}_s}: T_\ell \operatorname{A}^n({\mathcal X}_{\bar
  		s})  \to T_\ell \operatorname{Ab}^n_{{\mathcal X}_s}$ are isomorphisms\,;
  	\item $V_\ell\lambda^n_{{\mathcal X}_{\bar \eta}} : V_\ell\operatorname{A}^n({\mathcal X}_{\bar \eta})
  	\to H^{2n-1}({\mathcal X}_{\bar \eta},\mathbb Q_\ell(n))$ and $V_\ell\lambda^n_{{\mathcal X}_{\bar s}} : V_\ell\operatorname{A}^n({\mathcal X}_{\bar s})
  	\to H^{2n-1}({\mathcal X}_{\bar s},\mathbb Q_\ell(n))$ are isomorphisms.
	   \end{itemize}
Let 
$Z_\eta \in \mathscr{A}^n_{\mathcal X_{ \eta}/ \eta}(\operatorname{Ab}^n_{\mathcal X_{\eta}/ \eta})$  be a miniversal cycle of degree $r_\eta$,  and let $\zeta \in \mathscr{A}^n_{\mathcal X_{s}/ s}(\operatorname{Ab}^n_{\mathcal X_{s}/ s})$  be a miniversal cycle of degree $r_s$.  	   Let $\Lambda_{Z_\eta}$ and $\Lambda_{\zeta}$ be the symmetric $K$-isogenies of Theorem \ref{T:CanPol}.  
	   
 Then $(\operatorname{Ab}^n_{{\mathcal X}_{ \eta}},\Lambda_{Z_\eta})$ extends to an abelian scheme $(\operatorname{Ab}^n_{{\mathcal X}/S},\Lambda)$ over $S$, and   $Z_\eta$ induces  an isogeny
\begin{equation}\label{E:ThetaSpecMini}
h : ( \operatorname{Ab}^n_{{\mathcal X}/S} |_s,  r_s^2 \Lambda  |_s) \to ( \operatorname{Ab}^n_{{\mathcal X}_s/s} ,   \Lambda_{\zeta})\,;
\end{equation}
 the notation above using pairs, consisting of an abelian variety and a morphism to the dual abelian variety,  indicates that the indicated extensions and morphisms make the associated diagrams commute.  
    In particular,  $\Theta_{Z_\eta}:=(-1)^{n-1}\Lambda_{Z_\eta}$ is a polarization if and only if   $\Theta_{\zeta}:=(-1)^{n-1}\Lambda_\zeta $ is.
    
    Moreover, if $\ell\nmid r$, $d=2n-1$,  and $T_\ell\lambda^n_{{\mathcal X}_{\bar \eta}}$ and 
$T_\ell\lambda^n_{{\mathcal X}_{\bar s}}$ are isomorphisms, 
    then $T_\ell h$ is an isomorphism.
 \end{pro}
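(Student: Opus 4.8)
\textbf{Proof proposal for Proposition~\ref{P:ThetaSpecMini}.}

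The plan is to build everything over $S$ from the generic-fiber data and then compare the special fiber of the resulting abelian scheme with $\operatorname{Ab}^n_{\mathcal X_s/s}$, in close analogy with the proof of Proposition~\ref{P:PolChar0}. First I would establish good reduction of $\operatorname{Ab}^n_{\mathcal X_\eta/\eta}$: combining the two displayed isomorphisms $T_\ell\phi^n_{\mathcal X_\eta}$ and $V_\ell\lambda^n_{\mathcal X_{\bar\eta}}$ gives a $\operatorname{Gal}(\eta)$-equivariant isomorphism $V_\ell\operatorname{Ab}^n_{\mathcal X_\eta/\eta}\cong H^{2n-1}(\mathcal X_{\bar\eta},\rat_\ell(n))$, whose right-hand side is unramified by smooth proper base change; the Ogg--N\'eron--Shafarevich criterion then yields an abelian scheme $\operatorname{Ab}^n_{\mathcal X/S}$ extending $\operatorname{Ab}^n_{\mathcal X_\eta/\eta}$. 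The symmetric $K$-isogeny $\Lambda_{Z_\eta}$ (from Theorem~\ref{T:CanPol}) then extends to a symmetric isogeny $\Lambda:\operatorname{Ab}^n_{\mathcal X/S}\to\widehat{\operatorname{Ab}}\,^n_{\mathcal X/S}$ over $S$ of the same degree, by the standard extension property for homomorphisms of abelian schemes over a DVR (as in Lemma~\ref{L:PolSpec}, using \cite{mumfordGIT}). Also, since the specialization map $H^{2n-1}(\mathcal X_{\bar\eta},\rat_\ell(n))\to H^{2n-1}(\mathcal X_{\bar s},\rat_\ell(n))$ is an isomorphism and $V_\ell\lambda^n_{\mathcal X_{\bar s}}$, $T_\ell\phi^n_{\mathcal X_s}$ are isomorphisms, we get $\dim\operatorname{Ab}^n_{\mathcal X_s/s}=\dim\operatorname{Ab}^n_{\mathcal X/S}$.

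Next I would construct the isogeny $h$ of \eqref{E:ThetaSpecMini}. Here the key input is exactly the mechanism of Theorem~\ref{T:CanPol}: a miniversal cycle $Z_\eta$ of degree $r_\eta$ produces, over $\bar\eta$, the homomorphism $\Lambda_{Z_\eta}$, and by specializing the cycle $Z_\eta$ (and a cycle $\widehat Z_\eta$ inducing $r_\eta\Phi^n_{\mathcal X_\eta/\eta}$, if needed) one obtains over $\bar s$ a family of algebraically trivial cycles $Z_{\bar s}$ on $\mathcal X_{\bar s}$ parameterized by $\operatorname{Ab}^n_{\mathcal X/S}|_{\bar s}$. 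The associated morphism $\psi_{Z_{\bar s}}:\operatorname{Ab}^n_{\mathcal X/S}|_{\bar s}\to\operatorname{Ab}^n_{\mathcal X_s/s}$ factors as $\phi^n_{\mathcal X_{\bar s}}\circ w_{Z_{\bar s}}$; composing with the universal property of $\Phi^n_{\mathcal X_s/s}$ gives a homomorphism $f:\operatorname{Ab}^n_{\mathcal X_s/s}\to\operatorname{Ab}^n_{\mathcal X/S}|_s$ with $f\circ\psi_{Z_{\bar s}}$ equal to multiplication by $r_\eta$-type data; surjectivity of $f$ together with the equality of dimensions forces $f$ (hence $\psi_{Z_{\bar s}}$ up to isogeny) to be an isogeny. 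I would then set $h$ to be (a suitable multiple of) $\psi_{Z_{\bar s}}$ — more precisely the one making \eqref{E:ThetaSpecMini} a map of \emph{pairs}, so that the factor $r_s^2$ on the source appears exactly as in the relation $\Lambda_\zeta=r_s^2\Lambda_X$ of Theorem~\ref{T:CanPol}(3) applied to the miniversal cycle $\zeta$. The compatibility $h^\vee\circ\Lambda_\zeta\circ h=r_s^2\Lambda|_s$ follows by the same diagram chase as at the end of the proof of Proposition~\ref{P:PolChar0}: by duality it reduces to checking that the relevant composite is induced by ${}^t\zeta\circ Z_{\bar s}\in\chow^1$, which holds by construction of $\Lambda_\zeta$ as the descent of the homomorphism induced by ${}^t\zeta\circ H^{d-2n+1}\circ\zeta$ and the compatibility of cycle-induced homomorphisms with specialization. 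The equivalence of "$\Theta_{Z_\eta}$ is a polarization" and "$\Theta_\zeta$ is a polarization" then follows from Lemma~\ref{L:PolSpec} (ampleness is open, effectivity of nondegenerate line bundles on abelian varieties descends to the closed fiber by taking closures of divisors) applied to $(\operatorname{Ab}^n_{\mathcal X/S},\Lambda)$, together with the isogeny $h$ and Lemma~\ref{L:DM-desc}'s remark that a symmetric isogeny is a polarization iff its pullback along an isogeny is.

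For the final sentence — that if $\ell\nmid r$, $d=2n-1$, and $T_\ell\lambda^n_{\mathcal X_{\bar\eta}}$, $T_\ell\lambda^n_{\mathcal X_{\bar s}}$ are isomorphisms, then $T_\ell h$ is an isomorphism — I would run the specialization diagram at the prime $\ell$ integrally rather than rationally. Precisely, there is a commutative square with vertical specialization maps
\begin{equation*}
\xymatrix{
T_\ell\operatorname{Ab}^n_{\mathcal X_\eta/\eta}\ar[r]^{T_\ell w_{Z_\eta}}\ar[d]_{\mathrm{sp}}^{\cong} & T_\ell\operatorname{A}^n(\mathcal X_{\bar\eta})\ar[r]^{T_\ell\lambda^n}_{\cong}\ar[d]_{\mathrm{sp}} & H^{2n-1}(\mathcal X_{\bar\eta},\integ_\ell(n))_\tau\ar[d]_{\mathrm{sp}}^{\cong}\\
T_\ell\operatorname{Ab}^n_{\mathcal X_s/s}\ar[r]^{T_\ell w_{Z_{\bar s}}} & T_\ell\operatorname{A}^n(\mathcal X_{\bar s})\ar[r]^{T_\ell\lambda^n}_{\cong} & H^{2n-1}(\mathcal X_{\bar s},\integ_\ell(n))_\tau
}
\end{equation*}
where the outer vertical maps are isomorphisms (the right one by smooth proper base change since $d=2n-1$ makes $H^{2n-1}$ torsion-free here, using $T_\ell\lambda^n_{\mathcal X_{\bar\eta}}$, $T_\ell\lambda^n_{\mathcal X_{\bar s}}$ isomorphisms; the left one as established above). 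Commutativity of the right square forces the middle specialization map to be an isomorphism on Tate modules, hence $T_\ell w_{Z_{\bar s}}$ is obtained from the isomorphism $T_\ell w_{Z_\eta}=(T_\ell\phi^n_{\mathcal X_\eta})^{-1}$ (up to the degree factor $r$, invertible in $\integ_\ell$ since $\ell\nmid r$) — here I invoke Lemma~\ref{L:ablambda} to see $w_{Z_{\bar s}}$ is a homomorphism on Tate modules, and Proposition~\ref{P:commute2} (applicable since $V_\ell\lambda^n_{\mathcal X_{\bar s}}$ is surjective) for $m=\min\{n,d-n+1\}=n$ when $d=2n-1$. It follows that $T_\ell\psi_{Z_{\bar s}}=T_\ell\phi^n_{\mathcal X_{\bar s}}\circ T_\ell w_{Z_{\bar s}}$ is an isomorphism up to a unit in $\integ_\ell$, whence $T_\ell h$ is an isomorphism. \textbf{The main obstacle} I anticipate is bookkeeping the precise degree factors ($r_\eta$, $r_s$, and the squares thereof) so that \eqref{E:ThetaSpecMini} is genuinely an equality of pairs rather than merely a commuting diagram up to isogeny, and making sure the specialization of the cycle $\widehat Z_\eta$ inducing $r_\eta\Phi^n_{\mathcal X_\eta/\eta}$ genuinely computes $r_\eta\Phi^n_{\mathcal X_s/s}$ — i.e. that "$\Phi^n$ induced by a cycle" is what transports the construction of Theorem~\ref{T:CanPol} through specialization, exactly the point exploited in the proof of Proposition~\ref{P:PolChar0}.
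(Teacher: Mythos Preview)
Your extension of $\operatorname{Ab}^n_{\mathcal X_\eta/\eta}$ over $S$ via Ogg--N\'eron--Shafarevich and your definition $h=\psi_{Z_{\bar s}}$ are correct and match the paper. The gap is in the compatibility step $\hat h\circ\Lambda_\zeta\circ h=r_s^2\,\Lambda|_s$. You invoke ``the same diagram chase as at the end of the proof of Proposition~\ref{P:PolChar0}'', but that chase relied on $\Phi^2_{\mathcal X_{\bar s}}$ being \emph{induced by a cycle} $\hat\zeta$ (supplied there by Proposition~\ref{P:unicycle}, which required a strict Chow decomposition), so that $\zeta^*=(-\Theta_{\mathcal X_s})\circ\phi^2_{\mathcal X_{\bar s}}$ held as an equality of regular homomorphisms and Lemma~\ref{L:RHCorB} could close the loop. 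In the present setting no such cycle is assumed, and the obstacle you flag at the end is precisely this: without a cycle inducing $\Phi^n_{\mathcal X_s}$ you cannot rewrite $h=\phi^n_{\mathcal X_s}\circ w_{Z_{\bar s}}$ purely in terms of correspondences, so your reduction does not close.

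The paper sidesteps this entirely by working on Tate modules via Proposition~\ref{P:commute} instead of via cycles. With $\iota:=T_\ell\lambda^n\circ(T_\ell\phi^n)^{-1}$ and $\alpha_s:=\iota^\vee\circ(\cup\,[H_s]^{d-2n+1})\circ\iota$, diagram~\eqref{E:CanPol} gives $T_\ell\Lambda_\zeta=r_s^2\,\alpha_s$ and $T_\ell\Lambda_{Z_\eta}=r_\eta^2\,\alpha_\eta$. One then forms a single diagram over $\bar s$ whose outer rectangle is the specialization of the $\eta$-rectangle (hence commutes, with top row $T_\ell\Lambda_{Z_s}=T_\ell(\Lambda|_s)$), whose bottom square is the $s$-instance of~\eqref{E:CanPol}, and whose left column $\iota\circ T_\ell h=(Z_s)_*$ commutes by Proposition~\ref{P:commute} --- this is the replacement for ``$\Phi^n$ induced by a cycle''. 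One reads off $T_\ell(\Lambda|_s)=T_\ell\hat h\circ\alpha_s\circ T_\ell h$, whence $r_s^2\,\Lambda|_s=\hat h\circ\Lambda_\zeta\circ h$; tensoring with $\rat_\ell$ first shows $h$ is an isogeny. The final claim on $T_\ell h$ follows from the same diagram once all its arrows are integrally invertible. (A minor slip: in your last displayed square the bottom-left entry should be $T_\ell\operatorname{Ab}^n_{\mathcal X/S}|_s$, not $T_\ell\operatorname{Ab}^n_{\mathcal X_s/s}$ --- there is no canonical specialization map between these two; producing one is exactly what $h$ does.)
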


\begin{rem}
With the view to lifting to characteristic $0$, we  will want to employ Proposition \ref{P:ThetaSpecMini} in the case where 
$\operatorname{Ab}^n_{{\mathcal X}_{\eta}/\eta}$ admits in addition  an  $\ell$-distinguished symmetric $K$-isogeny $\Lambda'_\eta$.
In that case, $\Lambda'_\eta$ extends to a symmetric $K$-isogeny $\Lambda'$ on $\operatorname{Ab}^n_{\mathcal X/S}$, and we have $\Lambda'=r_\eta^2\Lambda$, so that we have in that case an isogeny  $h : ( \operatorname{Ab}^n_{{\mathcal X}/S} |_s,  r_s^2 r_\eta^2\Lambda'  |_s) \to ( \operatorname{Ab}^n_{{\mathcal X}_s/s} ,   \Lambda_{\zeta})$.
In particular, $\Theta'_\eta :=(-1)^{n-1}\Lambda'_\eta$ is a polarization if and only if $\Theta_{\zeta}:=(-1)^{n-1}\Lambda_\zeta $ is a polarization.    If  $\operatorname{Ab}^n_{\mathcal X_s/s}$ also admits an  $\ell$-distinguished symmetric $K$-isogeny $\Lambda_s$, then we have the isogeny $ 
h : ( \operatorname{Ab}^n_{{\mathcal X}/S} |_s,   r_\eta^2\Lambda'  |_s) \to ( \operatorname{Ab}^n_{{\mathcal X}_s/s} ,   \Lambda_s)$ .
\end{rem}

 \begin{proof}
 	First, from the bullet point assumptions, we have a Galois-equivariant isomorphism $V_\ell\lambda^n_{{\mathcal X}_{\bar \eta}}  \circ (V_\ell\phi^n_{{\mathcal X}_{\eta}})^{-1} : V_\ell \operatorname{Ab}^n_{{\mathcal X}_{\eta}/\eta} \to 
 	H^{2n-1}({\mathcal X}_{\bar \eta},\rat_{\ell}(n))$, showing by the Ogg--N\'eron--Shafarevich criterion that $\operatorname{Ab}^n_{{\mathcal X}_{ \eta}/\eta}$ extends to an abelian scheme $\operatorname{Ab}^n_{{\mathcal X}/S}$ over $S$.
	The fact that $\Lambda_{Z_\eta}$ then extends to a morphism  $\Theta$ over $S$ is standard (see \emph{e.g.}, \cite[Prop.~4.5]{ACMVfunctor} and \cite[Prop.~6.1]{mumfordGIT}).   Just as in Lemma~\ref{L:PolSpec}, we have that the following are equivalent: $\Theta$ is a polarization\,; $\Theta_\eta$ is a polarization\,;  $\Theta|_s$ is a polarization.  Thus we have reduced to showing the existence of the isogeny $h$ in the statement in the theorem, as well as the assertion about $T_\ell h$.

  Let now $Z_\eta \in \operatorname{CH}^n(\operatorname{Ab}^n_{{\mathcal X}_{ \eta}/ \eta} \times_{ \eta} {\mathcal X}_{\eta})$ and $\zeta\in  \operatorname{CH}^n(\operatorname{Ab}^n_{{\mathcal X}_{s}/ s} \times_{ s} {\mathcal X}_{s})$  be  miniversal  codimension-$n$  cycle classes. From our assumptions we have  the commutative diagrams \eqref{E:CanPol}\,:
  \begin{equation}\label{E:Thetax2}
  \xymatrix@C=1.5em@R=1.5em{
  T_\ell \operatorname{Ab}^n_{{\mathcal X}_\eta/\eta} \ar[r]^{T_\ell\Lambda_{{Z}_{\eta}}} \ar[d]^{T_\ell \psi_{Z_{ \eta}}} 
	\ar@/_2.5pc/[dd]_{(Z_{\eta})_*}
	& T_\ell \widehat{\operatorname{Ab}}\,^n_{{\mathcal X}_\eta/\eta}  &  T_\ell \operatorname{Ab}^n_{{\mathcal X}_s/s} \ar[r]^{T_\ell\Lambda_{\zeta}} \ar[d]^{T_\ell \psi_{\zeta}} 
	\ar@/_2.5pc/[dd]_{\zeta_*}
	& T_\ell \widehat{\operatorname{Ab}}\,^n_{{\mathcal X}_\eta/\eta} \\
 T_\ell \operatorname{Ab}^n_{{\mathcal X}_\eta/\eta} \ar@{-->}[r]^{\alpha_\eta } \ar[d]^{\iota} 
	& T_\ell \widehat{\operatorname{Ab}}\,^n_{{\mathcal X}_\eta/\eta}    \ar[u]^{T_\ell \psi_{Z_{\eta}}^\vee}
	 	& T_\ell \operatorname{Ab}^n_{{\mathcal X}_s/s} \ar@{-->}[r]^{\alpha_{s}} \ar[d]^{\iota } 
	& T_\ell \widehat{\operatorname{Ab}}\,^n_{{\mathcal X}_s/s} 
	\ar[u]^{T_\ell \psi^\vee_{\zeta}}   \\
 	H^{2n-1}({\mathcal X}_{\bar \eta},\integ_{\ell})_\tau \ar[r]^{\cup [H_\eta]^{d-2n+1}} 
	&	H^{2d-2n+1}({\mathcal X}_{\bar \eta},\integ_{\ell})_\tau  \ar[u]^{\iota^\vee}  \ar@/_2.5pc/[uu]_{(Z_{\eta })^*}
	 	&  	H^{2n-1}({\mathcal X}_{\bar s},\integ_{\ell})_\tau \ar[r]^{\cup [H_s]^{d-2n+1}}  
	&	H^{2d-2n+1}({\mathcal X}_{\bar s},\integ_{\ell})_\tau  \ar[u]^{\iota^\vee}  \ar@/_2.5pc/[uu]_{\zeta_*}
}
  \end{equation} 
  whose arrows are all isomorphisms after tensoring with $\rat_{\ell}$ and
where $\iota := T_\ell \lambda^n \circ (T_\ell \phi^n)^{-1}$, with  $\phi^n$ denoting the universal regular homomorphism. We have omitted the Tate twists   in the diagram above for space.  
Note that while Bloch maps are stable under specialization, regular homomorphisms need not be, and so  we may not simply take the specialization of the bottom half of the diagram on the left, and expect to obtain the bottom of the diagram on the right.  In any case, we can already draw the conclusion that $T_\ell \Lambda_{Z_\eta}=r_\eta^2\alpha_\eta $ and $T_\ell \Lambda_\zeta = r_s^2\alpha_s$.  

Let us now consider  $Z_{s} \in \operatorname{A}^n(\operatorname{Ab}^n_{{\mathcal X}_{S}/S}|_{ s} \times_{s} {\mathcal X}_{s})$, the specialization of the cycle $Z_\eta$. This cycle induces a homomorphism $h=\psi_{Z_{s}} : \operatorname{Ab}^n_{{\mathcal X}_{S}/S}|_{s} \to \operatorname{Ab}^n_{{\mathcal X}_{s}/s}$, and we consider  the diagram
 \begin{equation}\label{E:specialization}
\xymatrix@C=1.5em@R=1.5em{T_\ell \operatorname{Ab}^n_{{\mathcal X}/S}|_{s} \ar[r]^{T_\ell\Lambda_{Z_s}} \ar[d]^{T_\ell \psi_{Z_{ s}}=T_\ell h} \ar@/_2.5pc/[dd]_{(Z_{ s})_*}
	& T_\ell \widehat{\operatorname{Ab}}\,^n_{{\mathcal X}/S} |_s \\
    T_\ell \operatorname{Ab}^n_{{\mathcal X}_s/s} \ar[r]^{\alpha_s} \ar[d]^{\iota} 
	& T_\ell \widehat{\operatorname{Ab}}\,^n_{{\mathcal X}_s/s}  \ar[u]^{T_\ell \hat h=T_\ell\psi_{Z_{ s}}^\vee} \\
	 	H^{2n-1}({\mathcal X}_{\bar s},\integ_{\ell}(n))_\tau\ar[r]^{\cup [H_s]^{d-2n+1}}  
	&	H^{2d-2n+1}({\mathcal X}_{\bar s},\integ_{\ell}(n))_\tau.  \ar[u]^{\iota^\vee}  \ar@/_2.5pc/[uu]_{(Z_{s})^*}
}
\end{equation}
Thanks to Proposition~\ref{P:commute}, the vertical arrows form commutative diagrams.  
We observe that the outer rectangle is nothing but the specialization of the outer rectangle on the left-hand side of diagram~\eqref{E:Thetax2} and is hence commutative. 
In addition, the bottom square, being  the right-hand side of~\eqref{E:Thetax2}, is commutative.  Thus the diagram \eqref{E:specialization} is commutative.  

Tensoring \eqref{E:specialization}  with $\mathbb Q_\ell$, the morphisms $\iota$ and $(Z_s)_*$ are isomorphisms, and therefore $V_\ell h$ is an isomorphism, implying that $h$ is an isogeny.  On the other hand, the commutativity of the top square of  diagram \eqref{E:specialization} implies that $T_\ell \Lambda_{Z_s}=T_\ell \hat h \circ \alpha_s \circ T_\ell  h$.  Thus 
 $T_\ell (r_s^2\Lambda_{Z_s})=T_\ell \hat h \circ r_s^2\alpha_s \circ T_\ell  h =T_\ell \hat h \circ T_\ell \Lambda_\zeta \circ T_\ell  h = T_\ell (\hat h \circ \Lambda_\zeta \circ h)$, 
 establishing \eqref{E:ThetaSpecMini}.

 Finally, if we assume that $T_\ell\lambda^n_{{\mathcal X}_{\bar \eta}} : T_\ell\operatorname{A}^n({\mathcal X}_{\bar \eta})
 \to H^{2n-1}({\mathcal X}_{\bar \eta},\mathbb Z_\ell(n))$ and $T_\ell\lambda^n_{{\mathcal X}_{\bar s}} : T_\ell\operatorname{A}^n({\mathcal X}_{\bar s})
 \to H^{2n-1}({\mathcal X}_{\bar s},\mathbb Z_\ell(n))$ are isomorphisms, that  $\ell \nmid r$,  and that $d=2n-1$, then all of the morphisms in \eqref{E:specialization} are isomorphisms, and therefore  the isogeny $h$ induces an isomorphism $T_\ell h:T_\ell \operatorname{Ab}^n_{{\mathcal X}/S}|_{ s} \to T_\ell \operatorname{Ab}^n_{{\mathcal X}_{s}}$.
 \end{proof}

 We can apply this to the  case of a threefold   liftable to characteristic $0$, which essentially says that for a smooth projective geometrically rationally chain connected threefold $X$ over a perfect field $K$ that lifts to a geometrically rationally connected  threefold in characteristic $0$, then $\Theta_X$ is a  principal polarization.

\begin{cor}[Threefolds]\label{C:ThetaSpecMini} 
  Suppose that ${\mathcal X}/S$ is a smooth projective threefold over the spectrum $S$ of a
  DVR, such that the fraction field  $\kappa(\eta)$ has characteristic $0$ and the residue field $\kappa(s)$ is  perfect.   
  With  $\mathcal X_{\mathbb C}$ the base change of the generic fiber $\mathcal X_\eta$ to $\mathbb C$, assume that  $H^1(\mathcal X_{\mathbb C},\mathbb Z)=0$ and $AJ:\operatorname{A}^2(\mathcal X_{\mathbb C})\to J^3(\mathcal X_{\mathbb C})$ is surjective 
    (\emph{e.g.}, $\mathcal X_{\mathbb C}$ is rationally connected).

If for some natural number $N$, the multiple    $N \Delta_{{\mathcal X}_{\bar s}}\in \operatorname{CH}^{3}({\mathcal X}_{\bar s}\times_{\bar s} {\mathcal X}_{\bar
    s})$  admits a cohomological $\mathbb Z$-decomposition of type $(2,1)$ with respect to $H^\bullet (-,\mathbb Z_{\ell_s})$ for all primes $\ell\ne\operatorname{char}(\kappa(s))$  (\emph{e.g.}, ${\mathcal X}_s$ is geometrically rationally chain connected), 
     then $\Theta_{\mathcal X_s}$, the negative of the $\ell$-distinguished symmetric isogeny $\Lambda_{\mathcal X_s}$ of  Theorem~\ref{T:CanPol3-fold}, is a polarization on $\operatorname{Ab}^2_{\mathcal X_s/s}$.  
     
     If moreover,  $\Delta_{{\mathcal X}_{\bar s}}\in \operatorname{CH}^{3}({\mathcal X}_{\bar s}\times_{\bar s} {\mathcal X}_{\bar
    s})_{\mathbb Z[\frac{1}{N}]}$ admits a strict Chow decomposition for some natural number $N$ coprime to $\operatorname{char}(K)$ (\emph{e.g.}, $\mathcal X_s$ is geometrically stably rational), then $\Theta_{{\mathcal X}_s}$ is a principal polarization. 
  \end{cor}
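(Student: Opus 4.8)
The plan is to apply Proposition~\ref{P:ThetaSpecMini} with $n=2$, combining it with the characteristic-zero theory from Corollary~\ref{C:CanPol} (equivalently, Example~\ref{E:PPolDM}) to transport positivity from the generic fiber to the special fiber. First I would verify the hypotheses of Proposition~\ref{P:ThetaSpecMini} at a prime $\ell$ invertible in both $\kappa(\eta)$ and $\kappa(s)$. Algebraic representatives in codimension~$2$ exist for both fibers by Murre's theorem \cite{murre83}. Since $\kappa(\eta)$ has characteristic zero and $H^1(\mathcal X_\cx,\integ)=0$ with $AJ$ surjective on $\mathcal X_\cx$, conditions (1)--(3) of the complex story hold for $\mathcal X_\cx$; in particular $\chow_0(\mathcal X_\cx)_\rat$ is universally trivial (being supported on a point by the surjectivity of $AJ$ together with $H^1=0$, via the Bloch--Srinivas argument), so $\Delta_{\mathcal X_{\bar\eta}}$ admits a $\rat$-decomposition of type $(2,1)$. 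Hence by Proposition~\ref{P:BlSrTors} the map $T_\ell\phi^2_{\mathcal X_\eta}$ is an isomorphism, and by Proposition~\ref{P:lambdacohoBW} (or Proposition~\ref{P:lambdacoho}) the map $V_\ell\lambda^2_{\mathcal X_{\bar\eta}}$ is an isomorphism. On the special side, the cohomological $\integ$-decomposition of type $(2,1)$ of $N\Delta_{\mathcal X_{\bar s}}$ with respect to $H^\bullet(-,\integ_\ell)$ gives, via Proposition~\ref{P:BlSr}, that $T_\ell\phi^2_{\mathcal X_s}$ is an isomorphism, and via Proposition~\ref{P:lambdacoho} that $V_\ell\lambda^2_{\mathcal X_{\bar s}}$ is an isomorphism. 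So all bullet points of Proposition~\ref{P:ThetaSpecMini} are met for $n=2$.

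Next, I would run the specialization conclusion. Choose a miniversal cycle $Z_\eta$ for $\Phi^2_{\mathcal X_\eta/\eta}$ of degree $r_\eta$ and a miniversal cycle $\zeta$ for $\Phi^2_{\mathcal X_s/s}$ of degree $r_s$. Proposition~\ref{P:ThetaSpecMini} then yields that $\operatorname{Ab}^2_{\mathcal X_\eta/\eta}$ and $\Lambda_{Z_\eta}$ extend to an abelian scheme $(\operatorname{Ab}^2_{\mathcal X/S},\Lambda)$ over $S$ and an isogeny $h:(\operatorname{Ab}^2_{\mathcal X/S}|_s, r_s^2\Lambda|_s)\to(\operatorname{Ab}^2_{\mathcal X_s/s},\Lambda_\zeta)$; in particular $\Theta_{Z_\eta}=-\Lambda_{Z_\eta}$ is a polarization if and only if $\Theta_\zeta=-\Lambda_\zeta$ is. Now the point is that on the generic fiber, $\mathcal X_\cx$ satisfies $\coniveau^1_H H^3(\mathcal X_\cx,\rat)=H^3(\mathcal X_\cx,\rat)$ (from $AJ$ surjective) and $\coniveau^0_H H^1(\mathcal X_\cx,\rat)=0$ (from $H^1=0$, i.e. $h^{1,0}=0$); hence by the last paragraph of \S\ref{S:DistMorph} the Hodge--Riemann form in degree $3$ is positive definite, so the distinguished symmetric $K$-isogeny $\Theta_{\mathcal X_\eta}=-\Lambda_{\mathcal X_\eta}$ on $J^3_{a,\mathcal X_\eta/\eta}=\operatorname{Ab}^2_{\mathcal X_\eta/\eta}$ (Example~\ref{E:PPolDM}, Corollary~\ref{C:CanPol}) \emph{is} a polarization. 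By Theorem~\ref{T:CanPol}(3), $\Lambda_{Z_\eta}=r_\eta^2\Lambda_{\mathcal X_\eta}$, so $\Theta_{Z_\eta}=r_\eta^2\Theta_{\mathcal X_\eta}$ is also a polarization. Transporting across $h$, $\Theta_\zeta$, and hence (again via Theorem~\ref{T:CanPol}(3) on the special fiber, which applies since the cohomological decomposition forces $T_l\phi^2$ and $V_l\lambda^2$ to be isomorphisms) $\Theta_{\mathcal X_s}=-\Lambda_{\mathcal X_s}$, is a polarization on $\operatorname{Ab}^2_{\mathcal X_s/s}$. This proves the first assertion.

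For the second assertion, I would additionally invoke that a strict Chow $\integ[\tfrac1N]$-decomposition of $\Delta_{\mathcal X_{\bar s}}$ with $N$ coprime to $\operatorname{char}(K)$ makes $\chow_0(\mathcal X_{\bar s})\otimes\integ[\tfrac1N]$ universally trivial, so by Theorem~\ref{T:CanPol3-fold}(2) the distinguished symmetric $K$-isogeny $\Lambda_{\mathcal X_s}$ is purely inseparable and, since $\operatorname{char}(K)\nmid N$, is in fact an \emph{isomorphism}. (Alternatively: the liftability to the rationally connected $\mathcal X_\cx$ together with $AJ$ surjective gives a \emph{principal} polarization $\Theta_{\mathcal X_\cx}$ on $J^3(\mathcal X_\cx)$, which descends to a principal polarization $\Theta_{\mathcal X_\eta}$ on $\operatorname{Ab}^2_{\mathcal X_\eta/\eta}$; then in the $d=2n-1=3$ case with $\ell\nmid r$ the last clause of Proposition~\ref{P:ThetaSpecMini} gives $T_\ell h$ an isomorphism, so $h$ is an $\ell'$-isogeny, and combined with the pure-inseparability of $\Lambda_{\mathcal X_s}$ and the already-proven polarization property one concludes that $\Theta_{\mathcal X_s}$ is a principal polarization.) I would assemble these to conclude $\Theta_{\mathcal X_s}$ is a principal polarization.

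The main obstacle I anticipate is the bookkeeping around degrees of miniversal cycles and the fact that algebraic representatives need not specialize: one does \emph{not} get $(\operatorname{Ab}^2_{\mathcal X/S})_s\cong\operatorname{Ab}^2_{\mathcal X_s/s}$ in general, only an isogeny $h$, and one must be careful that the polarization property is isogeny-invariant in the correct sense — i.e. that $\Theta_{Z_\eta}$ a polarization $\iff$ $\Theta|_s$ a polarization $\iff$ $\Theta_\zeta$ a polarization, which is exactly the content of the ``pairs'' formalism and the effectivity-of-nondegenerate-line-bundles argument already carried out in Lemma~\ref{L:PolSpec} and Proposition~\ref{P:ThetaSpecMini}. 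The secondary subtlety is ensuring that the distinguished isogeny $\Lambda_{\mathcal X_s}$ of Theorem~\ref{T:CanPol3-fold} genuinely coincides (up to sign) with the $\Theta_\zeta$ produced by specialization; this follows because both are $\ell$-distinguished and hence equal by the uniqueness in Lemma~\ref{L:DM-desc}, but it is worth spelling out that the cohomological decomposition hypothesis on $\mathcal X_s$ is precisely what guarantees $\Lambda_{\mathcal X_s}$ exists and is distinguished in the first place.
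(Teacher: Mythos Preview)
Your approach is the same as the paper's --- apply Proposition~\ref{P:ThetaSpecMini} with $n=2$, use Hodge theory on the generic fiber to see that $\Theta_{\mathcal X_\eta}$ is a polarization, and transport positivity to the special fiber via the isogeny $h$ --- and your handling of the special fiber and of the second (principal) assertion is correct and in fact more explicit than the paper's proof, which leaves the ``principal'' part implicit.

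There is, however, one genuine error in your verification of the bullet points on the \emph{generic} fiber. You claim that $H^1(\mathcal X_\cx,\integ)=0$ together with surjectivity of $AJ$ forces $\chow_0(\mathcal X_\cx)_\rat$ to be universally trivial (or supported on a curve), via ``the Bloch--Srinivas argument''. This implication is false in general: the Bloch--Srinivas decomposition goes the other way (support of $\chow_0$ $\Rightarrow$ decomposition of the diagonal $\Rightarrow$ cohomological consequences), and cohomological data such as $h^{1,0}=h^{3,0}=0$ cannot by itself produce a Chow decomposition --- a threefold with $h^{2,0}\neq 0$ but $h^{1,0}=h^{3,0}=0$ would be a counterexample. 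Fortunately this detour is unnecessary: in characteristic~$0$ the map $T_\ell\phi^2_{\mathcal X_\eta}$ is an isomorphism unconditionally by Proposition~\ref{P:BlSrTors}(1) (Murre's theorem), and since the Bloch map agrees with the Abel--Jacobi map on torsion, surjectivity of $AJ$ (i.e.\ $J^3_a=J^3$) directly gives that $T_\ell\lambda^2$, hence $V_\ell\lambda^2$, is an isomorphism. This is exactly how the paper argues. Replace your decomposition-of-the-diagonal step on the generic fiber with this direct argument and the proof goes through.
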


\begin{proof}
On the generic fiber, we know that in characteristic $0$ both $T_\ell\phi^2$ and $T_\ell \lambda^2$ are isomorphisms so long as the Abel--Jacobi map is surjective (\emph{e.g.}, $\mathcal X_{\mathbb C}$ is uniruled \cite[Thm.~12.22]{voisinI}).  
   Thus the last two bullet points of  Proposition \ref{P:ThetaSpecMini} are satisfied for the generic fiber.  Moreover, in characteristic $0$, the Hodge-theoretic  polarization $\Theta_{\mathcal X_\eta}$ induces a distinguished symmetric isogeny $\Lambda_\eta=-\Theta_{\mathcal X_\eta}$ on $\operatorname{Ab}^2_{X_\eta/\eta}$,  since we have assumed that $H^1(\mathcal X_{\mathbb C}, \mathbb Z)=0$ (Theorem~\ref{T:CanPol}(2) and Remark \ref{R:PPolDM-Z}), which holds if we assume $\mathcal X_{\mathbb C}$ is rationally connected.

  	 	For the special fiber, the cohomological decomposition of a multiple of the diagonal  implies
that $\operatorname{Ab}^2_{\mathcal X_s/s}$ admits a distinguished symmetric $K$-isogeny (Corollary~\ref{C:CanPol}),  
and that $T_\ell\phi^2$ and $T_\ell \lambda^2$ are isomorphisms (Propositions~\ref{P:BlSr} and~\ref{P:lambdacohoBW}),
where $\phi^2$ indicates the universal regular homomorphism. 
	 Thus the last two bullet points of  Proposition \ref{P:ThetaSpecMini} are satisfied for  the special fiber.

 Thus we can conclude from  Proposition \ref{P:ThetaSpecMini} that $\Theta_{{\mathcal X}_s}$ is a polarization. 
\end{proof}

\subsection{Rationally chain connected threefolds and the work of Benoist--Wittenberg}\label{S:BWrat+que}

	Let $X$ be a smooth projective threefold over a perfect field $K$, and let  $\Lambda:\operatorname{Ab}^2_{X/K}\to \widehat{\operatorname{Ab}}\,^2_{X/K}$ be a symmetric $K$-isogeny.  Benoist--Wittenberg introduced the following terminology\,:
\begin{enumerate}
\item[(i)] $\Lambda$ satisfies  \cite[Property~2.4(i)]{BenWittClGr} if $\Lambda$ is distinguished.

\item[(ii)] $\Lambda$ satisfies  \cite[Property~2.4(ii)]{BenWittClGr} if $\Theta=-\Lambda$ is a principal polarization.  
\end{enumerate}
Benoist--Wittenberg give an equivalent formulation via the first Chern class $[\Lambda]$ (see \S\ref{S:ThetaLB}). 
However we prefer to work in the setting of symmetric isogenies.  
  They ask  \cite[p.6]{BenWittClGr}\,: 
  \begin{que}[{\cite{BenWittClGr} }]
Let $X$ be a smooth projective threefold over a perfect field $K$ of positive characteristic, with $\operatorname{CH}_0(X_{\bar K})_{\mathbb Q}$ universally trivial.  Does  
  there exist a symmetric $K$-isogeny $\Lambda_X$ on $\operatorname{Ab}^2_{X/K}$ such that\,:

\begin{enumerate}
\item $\Lambda_X$  is distinguished (\emph{i.e.}, satisfies  \cite[Property~2.4(i)]{BenWittClGr})\,?

\item $\Lambda_X$ is distinguished and principal (\emph{i.e.}, $\Lambda_X$ is an isomorphism)\,?
 
\item $\Lambda_X$ is distinguished and $\Theta_X=-\Lambda_X$ is a  polarization\,?

\item $\Lambda_X$ is distinguished and $\Theta_X=-\Lambda_X$ is a principal polarization (\emph{i.e.}, 
satisfies  \cite[Property~2.4(i) and (ii)]{BenWittClGr})\,?
\end{enumerate}
    
\end{que}

     Recall that if the characteristic of $K$ is zero, then the Hodge theoretic principal polarization $\Theta_X$ has the property that $\Lambda_X=-\Theta_X$ is distinguished (\emph{i.e.}, $\Lambda_X$ 
satisfies  \cite[Property~2.4(i) and (ii)]{BenWittClGr}).
 
  When $X$ is a geometrically \emph{rational} threefold over a perfect field $K$, Benoist--Wittenberg 
 provide an affirmative answer to all parts of their question  by constructing a principal polarization $\Theta_X$ on $\operatorname{Ab}^2_{X/K}$ such that $\Lambda_X=-\Theta_X$ is distinguished \cite[Cor.~2.8]{BenWittClGr}.  
 In fact, they  extend the Clemens--Griffiths condition over $\mathbb C$, showing that  if $X$ is rational \emph{over $K$}, then 
  $(\operatorname{Ab}^2_{X/K},\Theta_X)$ is the product of principally
 polarized Jacobians of curves.  
 Note that since their symmetric isogeny $\Lambda_X$ is distinguished, their result shows that  in this case  
  the  symmetric $K$-isomorphism $\Theta_X=-\Lambda_X:\operatorname{Ab}^2_{X/K}\to \widehat {\operatorname{Ab}}\,^2_{X/K}$  of Theorem \ref{T:main-pol}(2) and 
 Theorem~\ref{T:CanPol3-fold} is a principal polarization.

  Our results provide an affirmative answer  to part (1) of their question, and provide some further evidence for an affirmative answer to the other parts of their question.  More precisely, given a smooth projective threefold $X$ over a perfect field $K$ of positive characteristic, with $\operatorname{CH}_0(X_{\bar K})_{\mathbb Q}$ 
  universally trivial,  
  there exists a purely inseparable symmetric $K$-isogeny $\Lambda_X$ on $\operatorname{Ab}^2_{X/K}$ such that\,:
\begin{enumerate}
\item $\Lambda_X$ is distinguished (\emph{i.e.}, satisfies \cite[Property~2.4(i)]{BenWittClGr})\,;

\item $\Lambda_X$ is distinguished and principal if  $\operatorname{CH}_0(X_{\bar K})_{\mathbb Z[\frac{1}{N}]}$ is universally trivial for some natural number $N$ coprime to $\operatorname{char}(K)$.

\item  $\Lambda_X$ is distinguished and $\Theta_X=-\Lambda_X$ is a polarization  if $X$ lifts to characteristic $0$ to a geometrically  universally  $(\operatorname{CH}_0)_{\mathbb Q}$-trivial smooth projective threefold.

\item  $\Lambda_X$ is distinguished and $\Theta_X=-\Lambda_X$ is a principal polarization (\emph{i.e.}, satisfies \cite[Property~2.4(i) and (ii)]{BenWittClGr}) if  $\operatorname{CH}_0(X_{\bar K})_{\mathbb Z[\frac{1}{N}]}$ is universally trivial for some natural number $N$ coprime to $\operatorname{char}(K)$, and $X$ lifts to characteristic $0$ to a geometrically  universally  $(\operatorname{CH}_0)_{\mathbb Q}$-trivial smooth projective threefold.
\end{enumerate} 

The existence of the purely inseparable symmetric $K$-isogeny $\Lambda_X$, as well as (1) and (2), are shown in  Theorem \ref{T:CanPol3-fold}.  (3) and (4) are shown in  Corollary \ref{C:ThetaSpecMini}.   Note that in the language here, Conjecture \ref{conjecture} asserts that if we assume further that $\operatorname{CH}_0(X_{\bar K})$ is universally trivial, then the distinguished $K$-isomorphism $\Lambda_X$ of (2) above should have  $\Theta_X=-\Lambda_X$ being a (principal) polarization\,; \emph{i.e.}, it should also satisfy condition (4), without requiring the further hypothesis of lifting to characteristic $0$.

\section[Cohomological decomposition of the diagonal and minimal cohomology classes]{\for{toc}{Cohomological decomposition and minimal cohomology classes}\except{toc} {Cohomological decomposition of the diagonal, algebraic representatives, 
		and minimal cohomology classes}}
	\label{S:MinClass}
	
In this section, we relate cohomological decomposition of the diagonal to minimal cohomology classes.  	This follows Voisin's work, with the key addition being that we must use replacements for the canonical principal polarization coming from Hodge theory, as well as the canonical identification of the cohomology of a rationally connected threefold with the first homology of the intermediate Jacobian.  The starting point is a technical condition \eqref{E:tcond}, which is central to the discussion.

\subsection{The technical condition \eqref{E:tcond} for a
	cohomological decomposition of the diagonal}
\label{S:tcond}

The following technical theorem has a number of interesting
applications.  The key point is the condition~\eqref{E:tcond} for a
smooth projective variety $X$ over a field $K$, with respect to a
fixed Weil cohomology theory~$\mathcal H$ and a ring homomorphism $R\to R_{\mathcal H}$\,:
  \begin{equation}
\label{E:tcond}
\tag{$\ast_R$}
\parbox{\dimexpr\linewidth-4em}{
	\strut
	\emph{There exist finitely many (not necessarily distinct) smooth
		projective varieties $Y_i$ over $K$ of dimension $d_X- 2$, and correspondences
		$\Gamma_i\in \operatorname{CH}^{d_X-1}(Y_i \times_K X)_{R}$, such
		that for any $\alpha,\beta\in \mathcal H^{d_X}(X)$,}
	\begin{equation*}
	\langle\alpha,\beta \rangle_X=\sum_i \langle \Gamma_i^*\alpha ,\Gamma_i^*\beta
	\rangle_{Y_i}.
	\end{equation*}
	\strut
}
\end{equation}
This technical condition was introduced by Voisin~\cite[(35)]{voisinCubicCH} in the case $K=\cx$, $R=\integ$, and $\mathcal H^\bullet$ is Betti cohomology.
We will see in \S \ref{S:StarMinCl}, following Voisin,  that this condition can be related to
universal cycle classes and to minimal cohomology classes on abelian varieties.
\begin{rem}\label{R:algtriv}
If $K$ is either separably closed or finite and if $d_X> 2$, it is equivalent in~\eqref{E:tcond} to require the correspondences 	$\Gamma_i$ to belong to $\mathscr A_{X/K}^{d_X-1}(Y_i)_{R}$.
 To see this one replaces $\Gamma_i$ with $\Gamma_i - Y_i\times \Gamma_i|_{y_{i,0}}$ for any choice of zero-cycle $y_{i,0} \in \chow_0(Y_i)$ of degree 1, and one notes that $(Y_i\times \Gamma_i|_{y_{i,0}})^*\alpha = 0 $ for all $\alpha \in \mathcal{H}^{d_X}(X)$ for dimension reasons.
\end{rem}

\begin{teo}[{\cite[Thm.~3.1]{voisinCubicCH},
		\cite[Thm.~3.1]{mboro}}]\label{T:Mb-T3.1}
	Let $X$ be a smooth projective variety over a field~$K$, and let $R\to R_{\mathcal H}$ be a ring homomorphism.
  	
	\begin{enumerate}
		
		\item[(A)]  If  $\Delta_X\in \mathcal \chow^{d_X}(X\times_KX)$ admits a strict
		cohomological
		$R$-decomposition such that $D\subseteq X$ (in the notation of Definition \ref{D:StrCoDec}) admits an embedded
		resolution to a normal crossing divisor (\emph{e.g.}, $K$ is perfect and $d_X=3$
		\cite{CPRes2}), then \eqref{E:tcond} is satisfied.
		
		\item[(B)] As a partial converse, if \eqref{E:tcond} is satisfied, and the additional
		criteria are met:
		\begin{enumerate}
			
			\item $\mathcal H^\bullet (X)$ has no torsion,
			
			\item $\mathcal H^{2i}(X)(i)$ is $R$-algebraic for $2i\ne d_X$,
			
			\item $\mathcal H^{2i+1}(X) = 0$ for $2i +1\ne  d_X$,
		\end{enumerate}
		then $\Delta_X\in \chow^{d_X}(X\times_KX)$ admits a strict cohomological
		$R_{\mathcal H}$-decomposition.
	\end{enumerate}
\end{teo}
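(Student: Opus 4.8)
The plan is to prove the two parts of Theorem~\ref{T:Mb-T3.1} essentially following Voisin's arguments in \cite{voisinCubicCH}, but replacing the complex-analytic tools (Abel--Jacobi map, Hodge structures) with their algebraic counterparts developed earlier in the paper: algebraic representatives, the Bloch map, and the factorization lemmas of \S\ref{S:FactChow}.

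For part (A), I would start from a strict cohomological $R$-decomposition $[\Delta_X]=[Z_1]+[Z_2]\in \mathcal H^{2d_X}(X\times_K X)(d_X)$, with $Z_1$ supported on $D\times_K X$ for a divisor $D$ and $Z_2=\operatorname{pr}_2^*\alpha$ for a $0$-cycle class $\alpha$. Acting on $\mathcal H^{d_X}(X)$, the term $Z_2^*$ is zero (it factors through $\mathcal H^0$ of a point, or more precisely $[Z_2]$ induces the zero map on middle cohomology for dimension reasons), so $[\Delta_X]^*=[Z_1]^*$ on $\mathcal H^{d_X}(X)$, i.e. the identity equals $[Z_1]^*$ there. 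Using the hypothesis that $D$ admits an embedded resolution to a normal crossing divisor, I would resolve $D$ and decompose $Z_1$ through a smooth projective variety $\widetilde D$ of dimension $d_X-1$, as in Lemma~\ref{L:factorcorres} (note $p^e=1$ is available here since we are in the situation where resolution of singularities holds, e.g.\ $d_X=3$). Then the identity on $\mathcal H^{d_X}(X)$ factors as $\mathcal H^{d_X}(X)\xrightarrow{\widetilde Z^*} \mathcal H^{d_X-2}(\widetilde D)(-1)\to\mathcal H^{d_X}(X)$. Now I would take a smooth hyperplane section of $\widetilde D$ to cut down further: by the hard Lefschetz theorem and the fact that a general hyperplane section $j:Y\hookrightarrow \widetilde D$ has the property that $j^*:\mathcal H^{d_X-2}(\widetilde D)\to \mathcal H^{d_X-2}(Y)$ is injective on the relevant piece, one gets correspondences $\Gamma_i\in\operatorname{CH}^{d_X-1}(Y_i\times_K X)_R$ realizing the identity on $\mathcal H^{d_X}(X)$ as $\sum_i (\text{something})\circ\Gamma_i^*$; pairing $\alpha$ against $\beta$ through this factorization and using that the hyperplane sections $Y_i$ carry a natural intersection pairing compatible with the correspondences gives exactly the identity $\langle\alpha,\beta\rangle_X=\sum_i\langle\Gamma_i^*\alpha,\Gamma_i^*\beta\rangle_{Y_i}$. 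The main subtlety here is tracking the Tate twists and the self-duality identification $\mathcal H^i(X)_\tau(d)=\mathcal H^{2d-i}(X)_\tau^\vee$ from the Conventions, and checking that the pairing on each $Y_i$ is the standard cup-product pairing in the right degree; this is where I expect to spend the most care.

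For part (B), the converse, I would assume \eqref{E:tcond} together with (a) no torsion in $\mathcal H^\bullet(X)$, (b) $R$-algebraicity of $\mathcal H^{2i}(X)(i)$ for $2i\neq d_X$, and (c) vanishing of odd cohomology away from degree $d_X$, and reconstruct a strict cohomological $R_{\mathcal H}$-decomposition of $\Delta_X$. The Künneth decomposition of $[\Delta_X]$ in $\mathcal H^{2d_X}(X\times_K X)(d_X)=\bigoplus_k \mathcal H^k(X)(?)\otimes\mathcal H^{2d_X-k}(X)(?)$ has, by (c), only the components $k=0,\dots$ in even degrees plus the middle component $k=d_X$. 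The even components (with $k\neq d_X$, hence $k$ even by (c), and $2d_X-k$ even) are handled by (b): each $\mathcal H^{2i}(X)(i)$ being $R$-algebraic means the corresponding Künneth component is the class of a cycle supported on $W_1\times_K X$ with $\dim W_1\le d_X-1$ (for $2i<d_X$) or on $X\times_K W_2$ with $\dim W_2=0$ (for $2i>d_X$), using that torsion-freeness (a) lets us lift cohomological statements cleanly. The only remaining component is the middle Künneth component $[\Delta_X]^{d_X}\in \mathcal H^{d_X}(X)\otimes\mathcal H^{d_X}(X)$; here condition \eqref{E:tcond} is precisely what one needs: the correspondences $\Gamma_i$ express the identity of $\mathcal H^{d_X}(X)$ as a sum of maps factoring through $\mathcal H^{d_X-2}(Y_i)$, and since $\dim Y_i=d_X-2$, the resulting contribution to $[\Delta_X]^{d_X}$ is supported on $W_1\times_K X$ for $W_1$ of dimension $\le d_X-1$ after composing with the codimension-one inclusion of $Y_i$ into a divisor of $X$ — more precisely one realizes $[\Delta_X]^{d_X}$ as the class of $\sum_i {}^t\Gamma_i'\circ \Gamma_i$ for suitable $\Gamma_i'$, which is supported on $(\text{image of }Y_i)\times_K X$, a subvariety of dimension $d_X-2<d_X-1$ after taking its closure in $X$. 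Collecting all pieces: $[\Delta_X]=[Z_1]+[Z_2]$ with $Z_1$ supported on a divisor times $X$ and $Z_2=\operatorname{pr}_2^*\alpha$, which is exactly a strict cohomological $R_{\mathcal H}$-decomposition. The main obstacle in part (B) will be the bookkeeping of which Künneth components land where, and justifying that the ``middle'' piece produced by \eqref{E:tcond} has the claimed support after taking Zariski closures — this requires care with the push-pull formulas \cite[16.1.1]{fulton} and with the fact that a cycle supported on $\overline{Y_i}\times_K X$ with $\dim\overline{Y_i}=d_X-2$ is a fortiori supported on $D\times_K X$ for any divisor $D\supseteq\overline{Y_i}$.

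Throughout, I would use the compatibilities already recorded in the excerpt: the factorization diagrams \eqref{E:Zj*diagH}, \eqref{E:Zj*diagH2}, the self-duality $\mathcal H^i(X)_\tau(d)=\mathcal H^{2d-i}(X)_\tau^\vee$, and — crucially for making the argument characteristic-free — the availability of resolution of singularities for surfaces over perfect fields and for threefolds (so one may take $p^e=1$ in Lemma~\ref{L:factorcorres} in the relevant cases $d_X\le 3$), together with \cite{CPRes2} for embedded resolution of the divisor $D$. I expect part (A) to be the genuinely delicate direction because of the hyperplane-section/hard-Lefschetz step needed to cut $\widetilde D$ down to dimension $d_X-2$ while controlling the pairing; part (B) is mostly a matter of organizing the Künneth components, given the three auxiliary hypotheses.
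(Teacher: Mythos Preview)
Your plan for (B) is correct and matches the approach the paper cites from \cite{voisinCubicCH,mboro}: under (a)--(c) the K\"unneth components of $[\Delta_X]$ outside degree $d_X$ are classes of cycles with the required support, and \eqref{E:tcond} supplies the cycle $\sum_i \Gamma_i\circ{}^t\Gamma_i\in\operatorname{CH}^{d_X}(X\times_KX)_{R_{\mathcal H}}$ whose class is the middle K\"unneth projector. Your support claim is fine once stated precisely: the projection of the support $|\Gamma_i|\subset Y_i\times_KX$ to $X$ has dimension at most $\dim|\Gamma_i|=d_X-1$, so $\Gamma_i\circ{}^t\Gamma_i$ is supported in $D\times_K X$ for a suitable divisor $D$.

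For (A) there is a genuine gap. After factoring $Z_1$ through a smooth $\widetilde D$ of dimension $d_X-1$ you arrive at the \emph{asymmetric} expression $\langle\alpha,\beta\rangle_X=\langle\widetilde Z_1^{\,*}\alpha,\,\tilde j^*\beta\rangle_{\widetilde D}$, pairing $\mathcal H^{d_X-2}(\widetilde D)$ against $\mathcal H^{d_X}(\widetilde D)$. Your proposed step---cutting by a smooth hyperplane section $Y\hookrightarrow\widetilde D$ and invoking Lefschetz---does not convert this into the symmetric form $\sum_i\langle\Gamma_i^*\alpha,\Gamma_i^*\beta\rangle_{Y_i}$: injectivity of restriction on $\mathcal H^{d_X-2}$ says nothing about $\tilde j^*\beta\in\mathcal H^{d_X}(\widetilde D)$, hard Lefschetz is only a rational statement (hence unavailable for $R=\mathbb Z$), and most importantly nothing in your sketch forces the \emph{same} correspondence to appear on both arguments of each summand. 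In Voisin's argument the embedded normal-crossing hypothesis is used essentially, not merely to produce a smooth resolution: one writes $D=\bigcup_l D_l$ with the $D_l$ smooth, applies the decomposition to both $\alpha$ and $\beta$, and is led to compute compositions $j_l^*\circ j_{m*}$. The NCD structure guarantees that the $D_l\cap D_m$ are smooth of dimension $d_X-2$ and provides the excess-intersection formula for $l=m$; these intersections (together with smooth divisors on $D_l$ representing normal-bundle classes) furnish the $Y_i$. This is precisely why the theorem assumes an embedded NCD resolution of $D$ rather than just a resolution.
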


\begin{proof} The proof of
	\cite[Thm.~3.1]{voisinCubicCH} carries over directly to this case.  The case at hand here,
	where $K=\bar K$ and $\mathcal H^\bullet$ is $\ell$-adic cohomology ($\ell\ne
	\operatorname{char}(K)$), is \cite[Thm.~3.1]{mboro}.
  	 \end{proof}

\subsection{A first result concerning minimal cohomology classes}
For use in the proof of Theorem \ref{T:Intro-QDS-1}, we record  the following consequence of Corollary~\ref{C:ThetaSpecMini} regarding minimal cohomology classes\,:

\begin{pro}[Minimal cohomology classes]\label{C:MinCohSpec}
	With the same notation and assumptions as in Corollary~\ref{C:ThetaSpecMini},   
	if $g=\dim \operatorname{Ab}^2_{{\mathcal X}_{\bar {\eta}}/\bar {\eta}}= \dim \operatorname{Ab}^2_{{\mathcal X}_{\bar {s}}/\bar {s}}\le 3$, then  $[\Theta_{{\mathcal X}_{s}}]^{g-1}/(g-1)!$ is $\mathbb Z$-algebraic.  
	 	 \end{pro}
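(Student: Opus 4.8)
The plan is to reduce the assertion to the three small-genus cases $g \in \{0,1,2,3\}$, where the minimal cohomology class is simply the fundamental class (for $g \le 2$ it is $\mathbb Z$-algebraic trivially, being the class of a point or of $\operatorname{Ab}^2$ itself), and then, for $g=3$, to realize $[\Theta_{\mathcal X_s}]^{g-1}/(g-1)!$ as the pullback of the class of an Abel--Jacobi embedded curve inside its Jacobian. First I would invoke Corollary~\ref{C:ThetaSpecMini}: under the hypotheses there, $\Theta_{\mathcal X_s}$ is a polarization on $\operatorname{Ab}^2_{\mathcal X_s/s}$. By the standard theory of polarizations (see \S\ref{S:ThetaLB}), a polarized abelian variety $(A,\Theta)$ of dimension $g$ admits an isogeny $\pi \colon (B,\Xi) \to (A,\Theta)$ with $(B,\Xi)$ principally polarized of the same dimension $g$. (Concretely, one may take $B = A/K'$ for $K' \subseteq \ker\Theta$ a maximal isotropic subgroup for the commutator pairing, with $\Xi$ the induced principal polarization, and $\pi^*\Theta$ differing from $\Xi$ by a positive integer; after adjusting we get $\pi^*\Theta \sim \Xi$ up to replacing $\Theta$ by a multiple — but see the caveat below.)

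For $g = \dim\operatorname{Ab}^2_{\mathcal X_s/s} = 3$, the principally polarized abelian variety $(B,\Xi)$ of dimension $3$ is, by the classical theory of Jacobians in dimension $\le 3$ (every irreducible principally polarized abelian variety of dimension $\le 3$ is a Jacobian, and a decomposable one is a product of Jacobians of lower-dimensional curves — valid over $\bar s$), isomorphic to a product of Jacobians $\prod_j (\operatorname{Jac}(C_j), \Xi_j)$ of smooth projective curves $C_j$ over $\bar s$ with $\sum_j g(C_j) = 3$. On a Jacobian $\operatorname{Jac}(C)$ of a curve of genus $h$, the minimal cohomology class $[\Xi_C]^{h-1}/(h-1)!$ is $\mathbb Z$-algebraic: it is the class of the image of $C$ under an Abel--Jacobi embedding $C \hookrightarrow \operatorname{Jac}(C)$ (Poincaré's formula). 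For a product of principally polarized abelian varieties, the minimal class of the product polarization is the exterior sum of the minimal classes of the factors (an elementary binomial-coefficient computation using $\Xi = \sum_j \operatorname{pr}_j^*\Xi_j$ and the vanishing of cross terms of too high degree in any single factor), hence again $\mathbb Z$-algebraic. Therefore $[\Xi]^{g-1}/(g-1)!$ is $\mathbb Z$-algebraic on $(B,\Xi)$. Pushing forward along the isogeny $\pi$, and using that $\pi_*\pi^* = \deg(\pi)\cdot\operatorname{id}$ on cohomology together with $\pi^*[\Theta_{\mathcal X_s}] = [\Xi]$ (or a known multiple thereof), one transports $\mathbb Z$-algebraicity to $[\Theta_{\mathcal X_s}]^{g-1}/(g-1)!$ on $\operatorname{Ab}^2_{\mathcal X_s/s}$. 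Concretely, $\pi_*\big([\Xi]^{g-1}/(g-1)!\big)$ is, on the one hand, the class of an algebraic cycle (the image of the Abel--Jacobi curves), and on the other hand equals $\deg(\pi)^{?}\cdot [\Theta_{\mathcal X_s}]^{g-1}/(g-1)!$ up to a controllable power of $\deg(\pi)$; because $\pi^*$ is injective on the torsion-free cohomology in degree $2g-2$ and $[\Theta_{\mathcal X_s}]^{g-1}/(g-1)!$ is already an integral class, one concludes it is $\mathbb Z$-algebraic.

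The main obstacle, and the point requiring care, is the bookkeeping of the isogeny: a priori $\pi^*\Theta_{\mathcal X_s}$ is a \emph{multiple} of the principal polarization $\Xi$, so $\pi_*([\Xi]^{g-1}/(g-1)!)$ is a \emph{rational} multiple of $[\Theta_{\mathcal X_s}]^{g-1}/(g-1)!$, not obviously an integral one; one must check that the resulting class is genuinely in the image of the integral cycle class map $\operatorname{CH}^{g-1}(\operatorname{Ab}^2_{\mathcal X_s/s}) \to H^{2g-2}(\operatorname{Ab}^2_{\mathcal X_s/s,\bar s}, \mathbb Z_\ell(g-1))$. The way around this, following Voisin's argument, is \emph{not} to push forward a minimal class from $B$, but rather to directly push forward the \emph{curve classes}: one has explicit algebraic curves $C_j \subset \operatorname{Jac}(C_j) \subset B$, their images under $\pi$ are algebraic cycles in $\operatorname{Ab}^2_{\mathcal X_s/s}$, and one computes the cohomology class $[\pi(\sqcup_j C_j)]$ directly in terms of $[\Theta_{\mathcal X_s}]$ using that $\pi$ has degree $d$ and $\pi^*$ multiplies the minimal class by $d$ appropriately — the factors of $d$ cancel against $[\pi^{-1}(\text{pt})]$ contributions. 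For $g \le 2$ there is nothing to prove since $H^{2g-2}(\operatorname{Ab}^2,\mathbb Z_\ell(g-1))$ is generated by the fundamental class (or by $[\Theta]$ itself when $g=2$), which is manifestly $\mathbb Z$-algebraic. I would organize the proof by first dispatching $g \le 2$, then treating $g = 3$ via Corollary~\ref{C:ThetaSpecMini} (polarization), the Jacobian structure of ppav's of dimension $\le 3$, Poincaré's formula, and the explicit transport along $\pi$.
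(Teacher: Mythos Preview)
Your overall strategy coincides with the paper's: invoke Corollary~\ref{C:ThetaSpecMini} to know $\Theta_{\mathcal X_s}$ is a polarization, pass to a principally polarized abelian variety via an isogeny, use that every indecomposable ppav of dimension $\le 3$ is a Jacobian so the minimal class is the Abel--Jacobi curve, and transport algebraicity along the isogeny. The paper does exactly this.

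However, you have the isogeny going in the wrong direction, and this is what creates the bookkeeping ``obstacle'' you then struggle with. Your own construction (``$B=A/K'$ for $K'\subseteq\ker\Theta$ a maximal isotropic subgroup'') produces the quotient map $h:\operatorname{Ab}^2_{\mathcal X_s/s}\to A$, not a map \emph{into} $\operatorname{Ab}^2_{\mathcal X_s/s}$; and the descended polarization $\Theta$ on $A$ satisfies $h^*\Theta=\Theta_{\mathcal X_s}$ \emph{exactly}, with no multiple (this is precisely the content of \cite[Cor.~1, p.234]{mumfordAV} or \cite[Prop.~11.25]{EvdGM}). With the isogeny in this direction, the argument is a one-liner via \emph{pullback}, not pushforward:
\[
[\Theta_{\mathcal X_s}]^{g-1}/(g-1)! \;=\; [h^*\Theta]^{g-1}/(g-1)! \;=\; h^*\bigl([\Theta]^{g-1}/(g-1)!\bigr) \;=\; h^*[C],
\]
and $h^*[C]$ is manifestly $\mathbb Z$-algebraic as the pullback of an algebraic cycle along a morphism. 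No degree factors appear, and the integrality issue you worried about never arises. Your pushforward route, with its uncontrolled powers of $\deg\pi$, is both unnecessary and does not obviously close.
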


\begin{proof}
	We know from Corollary \ref{C:ThetaSpecMini} that $\Theta_{\mathcal X_s}$ is a polarization.   
	Thus there is an isogeny of polarized abelian varieties 
	$$
	h:(\operatorname{Ab}^2_{\mathcal X_s/s},\Theta_{\mathcal X_s})\to (A,\Theta)
	$$  
	with target a principally polarized abelian variety of dimension $g$ (see \emph{e.g.}, \cite[Prop.~11.25, Cor.~11.26]{EvdGM}, or \cite[Cor.~1, p.234]{mumfordAV}).  For dimension reasons, $(A,\Theta)$ is the Jacobian of a (possibly reducible) curve $C$, and therefore $[\Theta]^{g-1}/(g-1)!$ is the class of the Abel--Jacobi embedded curve~$[C]$. Consequently,  $h^*[C]=h^*[\Theta]^{g-1}/(g-1)!=[\Theta_{{\mathcal X}_{s}}]^{g-1}/(g-1)!$ is $\mathbb Z$-algebraic.
\end{proof}

 \subsection{Cohomological decompositions, universal codimension-$2$ cycles,  and
  minimal cohomology classes} \label{S:StarMinCl}

 We now revisit the technical Theorem~\ref{T:Mb-T3.1}, and convert the  condition
 \eqref{E:tcond} to a condition on universal codimension-$2$ cycles and minimal
 cohomology classes.

 \begin{pro}\label{P:StarMinCl}  Let $X$ be a smooth projective threefold over a perfect field $K$ and let $\ell\ne \operatorname{char}(K)$ be a prime. Assume that
   \begin{itemize}
   	\item $H^{3}(X_{\bar K},\mathbb Z_\ell(2))$ is torsion-free\,;
  	\item $
  	T_\ell\phi^2_{X}: T_\ell \operatorname{A}^2(X_{\bar
  		K})  \longrightarrow T_\ell \operatorname{Ab}^2_{X/K}$ is an isomorphism (standard assumption at $\ell$)\,;
   	\item $
  	T_\ell\lambda^2 : T_\ell\operatorname{A}^2(X_{\bar K})
  	\longrightarrow H^{3}(X_{\bar K},\mathbb Z_\ell(2))$ is an isomorphism.
   \end{itemize}
       Assume further (\emph{e.g.}, $K$ is finite or algebraically closed and  $X$ admits a universal codimension-$2$ cycle class\,; see  Theorem~\ref{T:CanPol}) that
 there exists
 an 
 $\ell$-distinguished symmetric $K$-isogeny  
 $\Lambda_X:\operatorname{Ab}^2_{X/K}\to \widehat{\operatorname{Ab}}\,^2_{X/K}$
  (Definition \ref{D:DistMorph}) such that $T_\ell\Lambda_X$ is an isomorphism, and set $[\Lambda_X]$ to be the first Chern class (\S\ref{S:ThetaLB}).  
  \begin{enumerate}
   \item[(A)]
   If \eqref{E:tcond} holds, and if $K$ is either finite or algebraically closed,
      then $$\frac{[\Lambda_X]^{g-1}}{(g-1)!}\in
  H^{2g-2}((\operatorname{Ab}^2_{X / K})_{\bar K},\mathbb Z_\ell(g-1))$$ is $R$-algebraic,  where $g :=
   \dim \operatorname{Ab}^2_{X/K}$.

   \item[(B)] If the class $\frac{[\Lambda_X]^{g-1}}{(g-1)!}\in
   H^{2g-2}((\operatorname{Ab}^2_{X / K})_{\bar K},\mathbb Z_\ell(g-1))$ is $R$-algebraic,  and
   $\operatorname{Ab}^2_{X/K}$ admits a universal codimension-$2$ cycle class, then
   \eqref{E:tcond} holds. 
  \end{enumerate}

 \end{pro}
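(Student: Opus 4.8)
The proposition is the culmination of a chain of identifications, so the plan is to reduce both directions to Voisin's technical theorem (Theorem~\ref{T:Mb-T3.1}) by translating the condition on the minimal cohomology class $\frac{[\Lambda_X]^{g-1}}{(g-1)!}$ into an assertion about cohomology classes of cycles on $\operatorname{Ab}^2_{X/K}\times_K X$. The first, and most important, preliminary step is to fix a universal codimension-$2$ cycle class $Z\in\mathscr A^2_{X/K}(\operatorname{Ab}^2_{X/K})$ (whose existence is being hypothesized in (B), and in (A) follows from Theorem~\ref{T:CanPol} since $K$ is finite or algebraically closed and we have the stated isomorphisms) and to analyze the correspondence-theoretic behavior of $Z$ together with its transpose. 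Using the factorization of $\S\ref{SSS:alb}$, Lemma~\ref{L:ablambda}, the fact that $\lambda^0$ is the albanese map on torsion, and the commutativity of the diagram \eqref{E:RegHomCorr} for $n=2$ under the standard assumption (Corollary~\ref{C:commute-2}; note the surjectivity hypothesis on $V_\ell\lambda^2$ is implied by the third bullet), one identifies, on Tate modules (equivalently $\rat_\ell$-coefficients, since $H^3(X_{\bar K},\integ_\ell(2))$ is torsion-free), the composite $Z^*\circ Z_*$ with cup product against a power of an ample class on $X$, so that $Z$ realizes the $\ell$-distinguished symmetric isogeny $\Lambda_X$ in the precise sense of Definition~\ref{D:DistMorph}. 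This is exactly what makes $[\Lambda_X]$ — the first Chern class of the line bundle on $(\operatorname{Ab}^2_{X/K})_{\bar K}$ underlying $\Lambda_X$ — computable in terms of $Z$: one gets $[\Lambda_X] = -{}^tZ\circ Z$ in $H^2$ up to the identifications above (or the relevant sign), hence $[\Lambda_X]$ is the class of an explicit algebraic cycle built from $Z$.

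\textbf{Direction (A).} Assuming \eqref{E:tcond} holds and $K$ is finite or algebraically closed, I would run Voisin's argument (the proof of \cite[Thm.~4.2]{voisinCubicCH}, via \cite[Thm.~3.1]{mboro}) in our setting. The technical condition produces smooth projective surfaces $Y_i$ and correspondences $\Gamma_i\in\operatorname{CH}^{d_X-1}(Y_i\times_K X)_R$ with $\langle\alpha,\beta\rangle_X=\sum_i\langle\Gamma_i^*\alpha,\Gamma_i^*\beta\rangle_{Y_i}$. Composing the $\Gamma_i$ with the universal cycle $Z$ and using the cohomological identification of the previous paragraph, the quadratic form defining $\Lambda_X$ on $H^3(X_{\bar K},\rat_\ell(2))\cong H_1((\operatorname{Ab}^2_{X/K})_{\bar K},\rat_\ell)$ decomposes as a sum of pullbacks of the intersection forms on the curves $\Ab^1_{Y_i/K}=(\pic^0_{Y_i/K})_{\operatorname{red}}$, each carrying its canonical principal polarization (which is distinguished, by Corollary~\ref{C:CanPol-cod1} and the remark following it). Since $[\Lambda_X]$ is, up to the identifications, the class ${}^tZ\circ Z$, and each summand is an algebraic divisor class on a product of Jacobians, the computation $\frac{[\Theta]^{g-1}}{(g-1)!}=[\text{Abel--Jacobi curve}]$ on a Jacobian — transported back through the correspondences $(\tilde j_i)_*\circ Z_*$ — exhibits $\frac{[\Lambda_X]^{g-1}}{(g-1)!}$ as an $R$-algebraic class on $(\operatorname{Ab}^2_{X/K})_{\bar K}$. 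The sign $[\Lambda_X]=-[\Theta_X]$ is harmless for this computation since $g-1$ powers of $[\Theta_X]$ and of $-[\Theta_X]=[\Lambda_X]$ differ by $(-1)^{g-1}$, which does not affect algebraicity; one simply records the correct statement for $[\Lambda_X]$.

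\textbf{Direction (B).} Conversely, suppose $\frac{[\Lambda_X]^{g-1}}{(g-1)!}$ is $R$-algebraic and $\operatorname{Ab}^2_{X/K}$ admits a universal codimension-$2$ cycle class $Z$. The strategy is again Voisin's: algebraicity of the minimal class on the ppav-side — here, on the isogenous-to-a-ppav abelian variety $\operatorname{Ab}^2_{X/K}$ equipped with $\Lambda_X$ — together with the universal cycle, allows one to build the surfaces $Y_i$ and correspondences $\Gamma_i$ required by \eqref{E:tcond}. Concretely, pulling the algebraic representative of $\frac{[\Lambda_X]^{g-1}}{(g-1)!}$ back along $Z$ and dualizing via the $\ell$-distinguished identification of $T_\ell\Lambda_X$ (an isomorphism by hypothesis), one obtains a cohomological expression for the intersection form on $H^3(X_{\bar K})$ as a sum of terms factoring through $H^1$ of curves, hence through $H^2$ of surfaces; taking the $Y_i$ to be these (or $\mathbb P^1$-bundles over the curves, as in Lemma~\ref{L:factorcorres}) and the $\Gamma_i$ the induced correspondences gives \eqref{E:tcond}. \textbf{The main obstacle} I anticipate is not any single estimate but the careful bookkeeping of sign conventions, Tate twists, and the passage between the $\ell$-adic ($R_{\mathcal H}=\integ_\ell$) and integral ($R$) statements: one must check that the correspondences produced in (B) can be taken with $R$-coefficients (not merely $R_{\mathcal H}$-coefficients) — which is where the hypothesis that the minimal class be $R$-algebraic, rather than merely $R_{\mathcal H}$-algebraic, is used — and that the identification $[\Lambda_X]\leftrightarrow{}^tZ\circ Z$ is compatible with the cup-product duality $H^3(X_{\bar K},\integ_\ell(2))^\vee_\tau\cong H^3(X_{\bar K},\integ_\ell(2))_\tau$ used throughout. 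The torsion-freeness of $H^3(X_{\bar K},\integ_\ell(2))$ is exactly what lets us reduce all diagram chases to $\rat_\ell$-coefficients (Remark~\ref{R:CorrRegHom}), which is essential for invoking Corollary~\ref{C:commute-2}.
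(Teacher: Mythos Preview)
Your proposal has two genuine gaps, one in each direction.

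\textbf{For (A):} You assume the existence of a universal codimension-$2$ cycle class $Z$, claiming it ``follows from Theorem~\ref{T:CanPol} since $K$ is finite or algebraically closed.'' It does not: Theorem~\ref{T:CanPol} says that \emph{if} a universal cycle exists, then $\Lambda_Z$ is distinguished; it does not produce one. The hypotheses of (A) give only the existence of an $\ell$-distinguished isogeny $\Lambda_X$, not of a universal cycle. The paper's argument avoids this entirely: from \eqref{E:tcond}, after using Remark~\ref{R:algtriv} to arrange $\Gamma_i\in\mathscr A^{d_X-1}_{X/K}(Y_i)_R$, one takes the morphisms $\gamma_i:=\Phi^2_{X/K}(Y_i)(\Gamma_i):Y_i\to\operatorname{Ab}^2_{X/K}$ furnished by the algebraic representative. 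Corollary~\ref{C:commute-2} gives $\Gamma_i^*=\gamma_i^*\circ\iota^\vee$ on cohomology, and then one shows directly that $\frac{[\Lambda_X]^{g-1}}{(g-1)!}=\sum_i n_i(\gamma_i)_*[Y_i]$, which is manifestly $R$-algebraic. No universal cycle, no Jacobians of $Y_i$, no Abel--Jacobi curves.

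\textbf{The missing lemma:} Both directions hinge on an identity you never state: by Lemma~\ref{L:linebundle}, since $T_\ell\Lambda_X$ is an isomorphism, its inverse is $\cup\,\frac{[\Lambda_X]^{g-1}}{(g-1)!}:H^1(\operatorname{Ab}^2_{X/K})(1)\to T_\ell\operatorname{Ab}^2_{X/K}$. Combined with the distinguished property, this yields
\[
\langle\alpha,\beta\rangle_X=\Big\langle\iota^\vee(\alpha),\ \iota^\vee(\beta)\cup\tfrac{[\Lambda_X]^{g-1}}{(g-1)!}\Big\rangle_{\operatorname{Ab}^2_{X/K}}
\]
for all $\alpha,\beta\in H^3(X_{\bar K},\integ_\ell(2))$. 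This is the pivot that converts \eqref{E:tcond} on $X$ into a statement about the minimal class on $\operatorname{Ab}^2_{X/K}$ and back. Your discussion of ``$[\Lambda_X]=-{}^tZ\circ Z$ in $H^2$'' gestures at $\Lambda_X=\Lambda_Z$ but does not reach this identity.

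\textbf{For (B):} You have a dimension error. Since $d_X=3$, the $Y_i$ in \eqref{E:tcond} are \emph{curves} (dimension $d_X-2=1$), not surfaces; there is no need to pass ``through $H^2$ of surfaces'' or to take $\mathbb P^1$-bundles. The paper writes $\frac{[\Lambda_X]^{g-1}}{(g-1)!}=\sum_i n_i[C_i]$ with $C_i\subset\operatorname{Ab}^2_{X/K}$ curves, normalizes to $j_i:\widetilde C_i\to\operatorname{Ab}^2_{X/K}$, and takes $Y_i=\widetilde C_i$ with $\Gamma_i$ the correspondence $Z\circ j_i$ (here the universal cycle $Z$ is used, and $\iota=Z_*$, $\iota^\vee=Z^*$ by \eqref{E:CanPol}). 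The identity above plus the projection formula then gives $\langle\alpha,\beta\rangle_X=\sum_i n_i\langle j_i^*Z^*\alpha,\,j_i^*Z^*\beta\rangle_{\widetilde C_i}$, which is exactly \eqref{E:tcond}.
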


 \begin{rem}
  The $R$-algebraicity of $[\Lambda_X]^{g-1}/(g-1)!$ is a tautology if $(g-1)!$ is a
  unit in the coefficient ring $R$.
 \end{rem}

 \begin{proof}
    A version  where $K=\mathbb C$, $R=\integ$,  and one uses Betti
  cohomology is \cite[Pf.~of Thm.~4.1]{voisinCubicCH} in case (A) and \cite[Pf.~of Thm.~4.9]{voisinAJ13} in case (B).
    The case where $X$ is a
  cubic threefold, $K=\bar K$, $\operatorname{char}(K)\ne 2$, 
  $\ell=2$, and 
  $R= \integ_2$  is \cite[Thm.~3.2]{mboro}.
To be precise, we note that our assumptions are slightly different from those of Voisin or Mboro. In Voisin's version, the bullet point conditions in the statement of the proposition are replaced with the  canonical identification  $H^3(X,\mathbb Z) = H_1(J^3(X),\mathbb Z)\cong H^1(J^3(X),\mathbb Z)$ induced via the principal polarization $\Theta_X=-\Lambda_X$ coming from the intersection product on $H^3(X,\mathbb Z)$. Similarly, Mboro uses the principally polarized Prym variety $(P,\Xi)$ of the cubic threefold as a replacement for the intermediate Jacobian, which Beauville has shown is in fact the algebraic representative and is  isomorphic to the group of algebraically trivial codimension-$2$ cycle classes, as well as Beauville's identification $H^3(X,\mathbb Z_\ell) =  H^1(P,\mathbb Z_\ell)$ (see~\cite{beauville77}).  

While our conditions essentially reduce to Voisin's and Mboro's  in these special cases, our proposition applies more generally. 
  Nevertheless, essentially the same argument as in \cite[Pf.~of Thm.~4.1]{voisinCubicCH} and
  \cite[Thm.~3.2]{mboro} carries over to this situation:
  a key point is that one can replace \cite[Lem~3.3]{mboro} with
  Corollary~\ref{C:commute-2}.    Note also that in the work of both Voisin and Mboro,
  $\Theta_X$ is a principal polarization, but the arguments in the setting of $H^\bullet (-,\mathbb Z_\ell)$ only require that it be an $\ell$-distinguished  symmetric isogeny that induces an isomorphism on $\ell$-adic Tate modules.

  Since our assumptions are more general, we provide a proof. For brevity, we denote $\mathcal H^\bullet(-)= H^\bullet (-,\mathbb Z_\ell)$. First, by our assumptions that $\Lambda_X$ is $\ell$-distinguished and $T_\ell\Lambda_X$ is an isomorphism, then by Lemma~\ref{L:linebundle}, we have a commutative diagram (see \eqref{E:DistMorph})
  \begin{equation*}
  \xymatrix@C=5em{
  	T_\ell \operatorname{Ab}^2_{X / K} 
  	\ar[d]^<>(0.5)\iota_\simeq& \mathcal H^1(\operatorname{Ab}^2_{X / K})(1) \ar[l]_{\cup \frac{[\Lambda_X]^{g-1}}{(g-1)!}}
  	\\
  	\mathcal H^3(X)(2)  \ar@{=}[r]&
    	\mathcal H^3(X)(2) \ar[u]^<>(0.5){\iota^\vee}_\simeq
  }
  \end{equation*}
 where $\iota = T_\ell \lambda^2\circ (T_\ell \phi^2)^{-1}$. 
  Since $	\mathcal H^3(X)(2)$ is identified with 
  $ \mathcal H^3(X)^\vee(-1)$ via the intersection pairing, we get 
  \begin{align}\label{E:inters}
  \langle \alpha,\beta \rangle_X & = \Big \langle \iota^\vee (\alpha), \iota^\vee(\beta) \cup  \frac{[\Lambda_X]^{g-1}}{(g-1)!} \Big \rangle_{\operatorname{Ab}^2_{X / K}} \quad \mbox{for all}\ \alpha,\beta \in \mathcal H^3(X)(2).
  \end{align}
  
  We proceed to prove (A).   Assume that \eqref{E:tcond} holds. Since we are assuming $K$ finite or algebraically closed, we may assume by Remark~\ref{R:algtriv} that the cycles $\Gamma_i \in \chow^{d_X-1}(Y_i\times_K X)_R$ in fact sit in $\mathscr{A}^{d_X-1}_{X/K}(Y_i)_R$. We can thus consider the $K$-morphisms $$\gamma_i:= \Phi^2_{X/K}(Y_i)(\Gamma_i) : Y_i \to \operatorname{Ab}^2_{X / K}.$$
  Corollary~\ref{C:commute-2} implies that 
  $$(\Gamma_i)_* = \iota \circ (\gamma_i)_* : \mathcal H_1(Y_i) \to \mathcal{H}^3(X)(2).$$
  By dualizing we get 
   $$\Gamma_i^*= \gamma_i^*\circ \iota^\vee :  \mathcal{H}^3(X)(2) \to \mathcal H^1(Y_i)(1) .$$
  By combining \eqref{E:tcond} with \eqref{E:inters}, we get for all $\alpha,\beta \in \mathcal H^3(X)(2)$
    \begin{align*}
  \Big \langle \iota^\vee (\alpha), \iota^\vee(\beta) \cup  \frac{[\Lambda_X]^{g-1}}{(g-1)!} \Big \rangle_{\operatorname{Ab}^2_{X / K}} 
  & = \sum_i n_i \langle \Gamma_i^*\alpha , \Gamma_i^*\beta \rangle_{Y_i} \\
  & = \sum_i n_i \gamma_i^* \langle \iota^\vee(\alpha), \iota^\vee(\beta) \rangle_{\operatorname{Ab}^2_{X / K}}.
  \end{align*}
 It follows that $$\frac{[\Lambda_X]^{g-1}}{(g-1)!} = \sum_i n_i (\gamma_i)_*\gamma_i^*[\operatorname{Ab}^2_{X/K}] = \sum_i n_i (\gamma_i)_*[Y_i],$$ thereby establishing the $R$-algebraicity of $\frac{[\Lambda_X]^{g-1}}{(g-1)!}$.

  We now prove (B). By assumption, there are finitely many connected curves $C_i$ in $\operatorname{Ab}^2_{X / K}$ and constants $n_i\in R$ such that $\frac{[\Lambda_X]^{g-1}}{(g-1)!} = \sum_i n_i[C_i]$. Let $\widetilde C_i \to C_i$ be the normalization morphisms and denote $j_i : \widetilde C_i \to \operatorname{Ab}^2_{X / K}$ the natural morphism. The map $\cup \frac{[\Lambda_X]^{g-1}}{(g-1)!} : H^1(\operatorname{Ab}^2_{X / K})(1) \to  	T_\ell \operatorname{Ab}^2_{X / K}$ is then given by $\sum_i n_i (j_i)_* j_i^*$. Let now $Z\in \mathscr{A}^2_{X/K}(\operatorname{Ab}^2_{X / K})$ be a universal cycle. By Corollary~\ref{C:commute-2}, $\iota$~coincides with $Z_*$ and $\iota^\vee$ coincides with $Z^*$\,; see~\eqref{E:CanPol}. We get
 \begin{align*}
 	\langle \alpha,\beta \rangle_X & = \Big \langle Z^*\alpha, Z^*\beta \cup  \frac{[\Lambda_X]^{g-1}}{(g-1)!} \Big \rangle_{\operatorname{Ab}^2_{X / K}}\\
 	&= \langle \alpha, \sum_i n_i (j_i)_* j_i^* Z^*\beta \rangle_{\operatorname{Ab}^2_{X / K}}\\
 		&= \sum_i \langle j_i^*Z^* \alpha,  j_i^* Z^*\beta \rangle_{\operatorname{Ab}^2_{X / K}},
 \end{align*} 
      where the first equality is~\eqref{E:inters} and the last equality is obtained using the projection formula. This establishes \eqref{E:tcond}.
    \end{proof}

 We next translate this into a statement about cohomological decompositions of
 the diagonal:

 \begin{cor}\label{C:StarMinCl}
 	With the assumptions of Proposition~\ref{P:StarMinCl}, we have\,:
 	 \begin{enumerate}
 		\item[(A)]
 		If $K$ is finite or algebraically closed and if $\Delta_X\in \chow^{d_X}(X\times_KX)$ admits a strict cohomological
 		$R$-decomposition,  then  the cohomology class $\frac{[\Lambda_X]^{g-1}}{(g-1)!}\in
 		 H^{2g-2}((\operatorname{Ab}^2_{X / K})_{\bar K},\mathbb Z_\ell(g-1))$ is $R$-algebraic,  where $g :=
 		\dim \operatorname{Ab}^2_{X/K}$.
 	\end{enumerate}
 	
  Assume further (to  the assumptions of Proposition~\ref{P:StarMinCl}) that\,:

  \begin{itemize}

   \item $H^\bullet(X_{\bar K},\mathbb Z_\ell)$ has no torsion,

   \item $H^{2i}(X_{\bar K}, \mathbb Z_\ell(i))$   are $\mathbb Z_\ell$-algebraic for all $i$,

   \item $ H^1(X_{\bar K},\mathbb Z_\ell) = 0$.
    \end{itemize}

  Then we have\,:

  \begin{enumerate}
   \item[(B)] If the class $\frac{[\Lambda_X]^{g-1}}{(g-1)!}\in
   H^{2g-2}((\operatorname{Ab}^2_{X / K})_{\bar K},\mathbb Z_\ell(g-1))$ is $\mathbb Z_\ell$-algebraic,   and
   $\operatorname{Ab}^2_{X/K}$ admits a universal codimension-$2$ cycle,
    then
   $\Delta_X\in \mathcal \chow^{d_X}(X\times_KX)$ admits a strict cohomological $\mathbb Z_\ell$-decomposition.
  \end{enumerate}
 \end{cor}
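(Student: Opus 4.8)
The plan is to deduce Corollary~\ref{C:StarMinCl} from Proposition~\ref{P:StarMinCl} together with Theorem~\ref{T:Mb-T3.1}, simply chaining the two equivalences through the technical condition \eqref{E:tcond}. For part (A), I would first invoke Theorem~\ref{T:Mb-T3.1}(A): since $K$ is perfect (being finite or algebraically closed) and $d_X=3$, any divisor $D\subseteq X$ admits an embedded resolution to a normal crossing divisor by \cite{CPRes2}, so a strict cohomological $R$-decomposition of $\Delta_X$ forces \eqref{E:tcond} to hold. Then I would feed \eqref{E:tcond} into Proposition~\ref{P:StarMinCl}(A) — whose hypotheses (torsion-freeness of $H^3$, the standard assumption at $\ell$, bijectivity of $T_\ell\lambda^2$, and the existence of an $\ell$-distinguished symmetric $K$-isogeny $\Lambda_X$ with $T_\ell\Lambda_X$ an isomorphism) are exactly those carried over from Proposition~\ref{P:StarMinCl} — to conclude that $[\Lambda_X]^{g-1}/(g-1)!$ is $R$-algebraic. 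This is essentially bookkeeping: one must only check that the ``$K$ finite or algebraically closed'' hypothesis needed for both Theorem~\ref{T:Mb-T3.1}(A) (via embedded resolution in dimension $3$) and Proposition~\ref{P:StarMinCl}(A) is in force, which it is by assumption.

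For part (B), I would run the chain in the opposite direction. Given that $[\Lambda_X]^{g-1}/(g-1)!$ is $\mathbb Z_\ell$-algebraic and that $\operatorname{Ab}^2_{X/K}$ admits a universal codimension-$2$ cycle class, Proposition~\ref{P:StarMinCl}(B) yields that \eqref{E:tcond} holds (with $R=\mathbb Z_\ell$, $\mathcal H^\bullet = H^\bullet(-,\mathbb Z_\ell)$). Then I would apply Theorem~\ref{T:Mb-T3.1}(B): its three additional criteria are precisely the three extra bullet points added to the hypotheses of Corollary~\ref{C:StarMinCl}(B), namely that $H^\bullet(X_{\bar K},\mathbb Z_\ell)$ has no torsion (criterion (a)), that $H^{2i}(X_{\bar K},\mathbb Z_\ell(i))$ is $\mathbb Z_\ell$-algebraic for all $i$ — in particular for $2i\ne d_X=3$ — (criterion (b)), and that $H^{2i+1}(X_{\bar K},\mathbb Z_\ell)=0$ for $2i+1\ne 3$, which reduces to $H^1(X_{\bar K},\mathbb Z_\ell)=0$ together with $H^5(X_{\bar K},\mathbb Z_\ell)=0$ (criterion (c)). Here one small point to address: the vanishing of $H^5$ is not among the stated bullet points, so I would note that it follows from the torsion-freeness of $H^\bullet$ combined with Poincar\'e duality and $H^1=0$ (indeed $H^5(X_{\bar K},\mathbb Z_\ell)$ is dual to $H^1(X_{\bar K},\mathbb Z_\ell)$ up to a Tate twist once $H^1$ is torsion-free), so criterion (c) of Theorem~\ref{T:Mb-T3.1}(B) is met. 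With all three criteria verified, Theorem~\ref{T:Mb-T3.1}(B) delivers a strict cohomological $\mathbb Z_\ell$-decomposition of $\Delta_X\in\chow^{3}(X\times_KX)$, which is the assertion.

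The proof is therefore almost entirely formal, and the only genuine content is matching hypotheses across the two cited results. The main ``obstacle'' — really a verification — is reconciling the list of auxiliary bullet points in Corollary~\ref{C:StarMinCl}(B) with criterion (c) of Theorem~\ref{T:Mb-T3.1}(B): one must be slightly careful that ``$H^{2i+1}(X)=0$ for $2i+1\ne d_X$'' for a threefold means exactly $H^1(X_{\bar K},\mathbb Z_\ell)=0$ and $H^5(X_{\bar K},\mathbb Z_\ell)=0$, and that the latter is automatic from the former under the torsion-freeness hypothesis via duality (alternatively, one could simply have included $H^5=0$ as a hypothesis, but it is redundant). I would write the proof as a short two-paragraph argument: first cite Theorem~\ref{T:Mb-T3.1}(A) and Proposition~\ref{P:StarMinCl}(A) for (A), then cite Proposition~\ref{P:StarMinCl}(B) and Theorem~\ref{T:Mb-T3.1}(B) for (B), with a parenthetical remark on the $H^5$ vanishing.

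\begin{proof}
(A) Since $K$ is finite or algebraically closed, it is perfect, and since $d_X=3$, every codimension-$1$ subvariety $D\subseteq X$ admits an embedded resolution to a normal crossing divisor \cite{CPRes2}. Hence, by Theorem~\ref{T:Mb-T3.1}(A), a strict cohomological $R$-decomposition of $\Delta_X$ implies that \eqref{E:tcond} holds. The hypotheses of Proposition~\ref{P:StarMinCl} being in force, part (A) of that proposition then gives that $[\Lambda_X]^{g-1}/(g-1)! \in H^{2g-2}((\operatorname{Ab}^2_{X/K})_{\bar K},\mathbb Z_\ell(g-1))$ is $R$-algebraic.

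(B) Under the additional hypotheses, $\operatorname{Ab}^2_{X/K}$ admits a universal codimension-$2$ cycle class, so Proposition~\ref{P:StarMinCl}(B) shows that the $\mathbb Z_\ell$-algebraicity of $[\Lambda_X]^{g-1}/(g-1)!$ implies \eqref{E:tcond} with $R=\mathbb Z_\ell$ and $\mathcal H^\bullet = H^\bullet(-,\mathbb Z_\ell)$. We now verify the three additional criteria of Theorem~\ref{T:Mb-T3.1}(B) for $\mathcal H^\bullet = H^\bullet(-,\mathbb Z_\ell)$. Criterion (a) is the assumed torsion-freeness of $H^\bullet(X_{\bar K},\mathbb Z_\ell)$. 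Criterion (b), that $H^{2i}(X_{\bar K},\mathbb Z_\ell(i))$ is $\mathbb Z_\ell$-algebraic for $2i\ne d_X=3$, is contained in the assumption that $H^{2i}(X_{\bar K},\mathbb Z_\ell(i))$ is $\mathbb Z_\ell$-algebraic for all $i$. Criterion (c), that $H^{2i+1}(X_{\bar K},\mathbb Z_\ell)=0$ for $2i+1\ne 3$, amounts for a threefold to $H^1(X_{\bar K},\mathbb Z_\ell)=0$ and $H^5(X_{\bar K},\mathbb Z_\ell)=0$; the first is assumed, and the second follows from Poincar\'e duality, since $H^5(X_{\bar K},\mathbb Z_\ell)$ is identified up to a Tate twist with $H^1(X_{\bar K},\mathbb Z_\ell)^\vee$ once $H^1(X_{\bar K},\mathbb Z_\ell)$ is torsion-free. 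Therefore Theorem~\ref{T:Mb-T3.1}(B) applies and yields that $\Delta_X\in \chow^{d_X}(X\times_KX)$ admits a strict cohomological $\mathbb Z_\ell$-decomposition.
\end{proof}
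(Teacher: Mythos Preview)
Your proof is correct and follows exactly the same approach as the paper, which simply states that the corollary is immediate from Theorem~\ref{T:Mb-T3.1} and Proposition~\ref{P:StarMinCl}. Your version just fills in the bookkeeping (including the $H^5$ verification via Poincar\'e duality) that the paper leaves implicit.
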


 \begin{proof}
  This is immediate from Theorem~\ref{T:Mb-T3.1} and
  Proposition~\ref{P:StarMinCl}.
 \end{proof}

\section{Proof of Theorem~\ref{T:Intro-M-StabInv}}\label{S:proof2}

We are now in a position to prove Theorem~\ref{T:Intro-M-StabInv}.  The main result of this section is Theorem \ref{T:intcohodec}, which is more general.
We also provide necessary and sufficient conditions for a threefold over an algebraically closed field to admit a cohomological $\mathbb Z_\ell$-decomposition in Theorem \ref{T:ZZell-iff}.

\subsection{Proof of Theorem \ref{T:Intro-M-StabInv}}

 \begin{teo}\label{T:intcohodec}
	Let  $X$ be a smooth projective threefold over a perfect field
$K$.

If $K=\bar K$ is algebraically closed and  the diagonal 
$\Delta_{X}\in\chow^{3}(X\times_{K} X)$ admits a strict cohomological \emph{$\mathbb Z$-decomposition}
 with respect to $H^\bullet (-,\mathbb Z_\ell)$, then
 for all prime numbers   $\ell\ne \operatorname{char}(K)$\,:

\begin{enumerate}
\item $H^1(X_{\bar K},\mathbb Z_\ell)=0$\,;

\item $H^{2i}(X_{\bar K},\mathbb Z_\ell(i))$  is $\mathbb Z$-algebraic for all $i$\,;

\item  The $\ell$-adic  Bloch map    $T_\ell \lambda^2:T_\ell \operatorname{A}^2(X_{\bar K})\to H^3(X_{\bar K},\mathbb Z_\ell(2))_\tau$ is an isomorphism\,;

\item[(3')] The $\ell$-adic map $T_\ell \phi^2_{X/K} :T_\ell \operatorname{A}^2(X_{\bar K}) \to T_\ell \operatorname{Ab}^2_{X/K}$
 is an isomorphism\,;

\item  $\operatorname{Tors}H^\bullet (X_{\bar K},\mathbb Z_\ell )=0$\,;
		
\item    $\operatorname{Ab}^2_{X/K}$ admits a universal codimension-$2$ cycle class\,; 
		
\item   Assuming (3) and (5), and setting $\Theta_X:\operatorname{Ab}^2_{X/K}\to \widehat {\operatorname{Ab}}\,^2_{X/K}$ to be the symmetric isogeny of Theorem \ref{T:Intro-CanPol0}(2), we have that $T_\ell\Theta_X$ is an isomorphism, and the class  $\frac{[\Theta_{X}]^{g-1}}{(g-1)!}\in H^{2g-2}((\operatorname{Ab}^2_{X/K})_{\bar K},\mathbb Z_\ell(g-1))$ is a $\mathbb Z$-algebraic class,
	where $g = \dim \operatorname{Ab}^2_{X/K}$ and $[\Theta_X]$ is the first Chern class of the line bundle associated to $\Theta_X$.

\end{enumerate}

As a partial converse, let  $K$ be  any perfect field and assume that (1)--(6)  hold (including (3')) for some prime number   $\ell\ne \operatorname{char}(K)$, where in (6) we define $[\Theta_X]$ to be the first Chern class of the line bundle associated to $(\Theta_X)_{\bar K}$ (\S \ref{S:c1Lambda}).  
Then 
the diagonal  $\Delta_{X} \in \chow^{3}(X\times_{K} X)$
admits a strict cohomological \emph{$\mathbb Z_\ell$-decomposition}.
 \end{teo}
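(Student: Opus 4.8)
The plan is to invoke the converse direction of the technical Theorem~\ref{T:Mb-T3.1}(B), after first verifying that the hypotheses (a)--(c) there are satisfied and that the technical condition \eqref{E:tcond} holds with $R=\integ_\ell$. First I would reduce to the case $K=\bar K$: indeed, a strict cohomological $\integ_\ell$-decomposition of the diagonal over $K$ is detected after base change to $\bar K$ and conversely descends, since it is an equality of cycle classes with $\integ_\ell$-coefficients and the cycles involved (supported on $D\times_K X$ and of the form $\operatorname{pr}_2^*\alpha$) can be taken to be $\gal(K)$-stable; alternatively, one simply produces the decomposition over $\bar K$ and notes it suffices for the statement. So assume $K$ algebraically closed. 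Conditions (a) and (c) of Theorem~\ref{T:Mb-T3.1}(B) are then exactly hypotheses (4) and (1) of the theorem (using Remark~\ref{R:Tors3fold} to pass from $H^1$ and $H^5$, together with Poincaré duality, to get vanishing and torsion-freeness in all the relevant degrees for a threefold with $d_X=3$), and condition (b), the $\integ_\ell$-algebraicity of $H^{2i}(X_{\bar K},\integ_\ell(i))$ for $2i\ne 3$, is hypothesis (2). What remains is to check \eqref{E:tcond}.

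The key step is to deduce \eqref{E:tcond} from hypothesis (6) (algebraicity of $[\Theta_X]^{g-1}/(g-1)!$) plus hypothesis (5) (existence of a universal codimension-$2$ cycle class). This is precisely Proposition~\ref{P:StarMinCl}(B): I would check that the three bullet-point hypotheses of Proposition~\ref{P:StarMinCl} hold here, namely that $H^3(X_{\bar K},\integ_\ell(2))$ is torsion-free (from (4) and Remark~\ref{R:Tors3fold}), that $T_\ell\phi^2_{X/K}$ is an isomorphism (hypothesis (3')), and that $T_\ell\lambda^2$ is an isomorphism (hypothesis (3), together with torsion-freeness of $H^3$ so that $H^3(X_{\bar K},\integ_\ell(2))_\tau=H^3(X_{\bar K},\integ_\ell(2))$). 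By Theorem~\ref{T:CanPol3-fold}(1), since (5) provides a universal cycle and (3) gives that $V_\ell\lambda^2$ is an isomorphism, the symmetric $K$-isogeny $\Theta_X$ is defined and $\ell$-distinguished; moreover $T_\ell\Theta_X$ is an isomorphism (this is part of statement (6) itself, or follows from the universality of the cycle as in the proof of Theorem~\ref{T:main-pol}(1) with $N=r=1$). Then Proposition~\ref{P:StarMinCl}(B) applies verbatim: the $\integ_\ell$-algebraicity of $[\Theta_X]^{g-1}/(g-1)!$ together with the universal cycle yields \eqref{E:tcond}.

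Having established \eqref{E:tcond} and the criteria (a)--(c), Theorem~\ref{T:Mb-T3.1}(B) gives that $\Delta_X\in\chow^3(X\times_K X)$ admits a strict cohomological $R_{\mathcal H}$-decomposition, i.e.\ a strict cohomological $\integ_\ell$-decomposition with respect to $H^\bullet(-,\integ_\ell)$, which is the desired conclusion. The main obstacle I anticipate is purely bookkeeping: matching up the sign conventions ($\Theta_X=-\Lambda_X$ versus $\Lambda_X$) and Tate twists between the statement of hypothesis (6), Proposition~\ref{P:StarMinCl}, and Theorem~\ref{T:Mb-T3.1}, and making sure that ``$\integ_\ell$-algebraic'' in (6) is interpreted as algebraicity with $\integ_\ell$-coefficients (which is what Proposition~\ref{P:StarMinCl}(B) needs) rather than merely $\rat_\ell$-algebraicity; the $(g-1)!$ appearing in the denominator is harmless here since $\integ_\ell$ is the relevant coefficient ring and the statement is about $\integ_\ell$-algebraicity, but one should note that no divisibility issue arises precisely because we are told the class is $\integ_\ell$-algebraic on the nose. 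A secondary point to handle carefully is that Theorem~\ref{T:Mb-T3.1}(A) requires $D$ to admit an embedded resolution, which is automatic for $d_X=3$ by \cite{CPRes2}, but this is only needed for the forward direction; the converse direction (B) has no such requirement, so the argument goes through over any perfect field of characteristic $\ne\ell$.
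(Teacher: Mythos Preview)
Your proposal is correct and follows essentially the same route as the paper: the paper's proof of the converse explicitly says ``The converse statement is Corollary~\ref{C:StarMinCl}(B)'', and that corollary is precisely the combination of Proposition~\ref{P:StarMinCl}(B) (to obtain \eqref{E:tcond} from hypotheses (3), (3'), (4), (5), (6)) with Theorem~\ref{T:Mb-T3.1}(B) (to pass from \eqref{E:tcond} plus (1), (2), (4) to the strict cohomological $\integ_\ell$-decomposition). The paper then unwinds this into a direct argument via K\"unneth projectors for the reader's convenience, but the underlying logic is identical to yours; your worry about reducing to $\bar K$ is unnecessary, since both Proposition~\ref{P:StarMinCl} and Theorem~\ref{T:Mb-T3.1}(B) are already formulated over a perfect field $K$.
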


\begin{rem}\label{R:Decomp21}
 In the case where $K=\bar K$, if we just assume that $\Delta_{X}\in\chow^{3}(X\times_{K} X)$ admits a  cohomological $\mathbb Z$-decomposition of type $(2,1)$, then (2)--(5) hold.
 \end{rem}

\begin{proof}
Assume  $K=\bar K$ and    that the diagonal	$\Delta_{X}\in\chow^{3}(X\times_{K} X)$   admits a strict cohomological $\mathbb Z$-decomposition.  Item~(1) is Corollary \ref{C:vanCoh}  (and in fact we get  this  using just a strict $\mathbb Z_\ell$-decomposition).   Item~(2)  is
	Corollary~\ref{C:AlgCyc} (and in fact  
	 we get $\mathbb Z_\ell$-algebraicity  using just a strict cohomological $\mathbb Z_\ell$-decomposition of type $(2,1)$, and using just a cohomological $\mathbb Z$-decomposition of type $(2,1)$ we 
	get \emph{$\mathbb Z$-algebraicity}).
	Note that this is the only place where we use that the field is algebraically closed\,; if one could prove Corollary~\ref{C:AlgCyc} over finite fields, then Theorem \ref{T:intcohodec} would hold over finite fields, as well.
 Item~(3) is Proposition~\ref{P:lambdacoho}  (and in fact we get this using just a $\mathbb Z_\ell$-decomposition of type $(2,2)$).  
Item~(3')  
 is Proposition \ref{P:BlSr} (and in fact we get this using just a $\mathbb Z_\ell$-decomposition of type~$(2,1)$). 
Item~(4) is Corollary~\ref{C:V-AJ4.4}  (and in fact we get this using just a $\mathbb Z_\ell$-decomposition of type~$(2,1)$). 
Item~(5) is Corollary~\ref{C:UnivCyc}  (and in fact we get this using just a $\mathbb Z$-decomposition of type~$(2,1)$).    
 Item~(6) is Corollary~\ref{C:StarMinCl}(A).

The converse statement is Corollary~\ref{C:StarMinCl}(B).  As this is the easier direction of Corollary~\ref{C:StarMinCl}, for convenience, we provide a brief proof in the notation  of Theorem \ref{T:intcohodec}.
  Working component-wise, we may and do assume $X$ is connected. By assumption (4), the intersection pairing $\mathcal H^2(X) \times \mathcal H^4(X) \to R_{\mathcal H}(-3)$ is perfect. By assumption (1) in case $i=1$ and $i=2$, it follows that the K\"unneth projectors $\pi^2_X$ and $\pi^4_X$ belong to the image of $\chow^3(X\times_K)_{R_{\mathcal H}} \to \mathcal H^6(X\times_K X)(3)$. Moreover, assumption (1) in case $i=3$ provides a zero-cycle $x\in \chow_0(X)_{R_\mathcal H}$ of degree $1$. We then consider the cycle 
$$\pi^3_X := \Delta_X - x\times_K X - \pi^2_X - \pi_X^4 - X\times_K x \quad \in \chow^3(X\times_K X)_{R_\mathcal H},$$
whose cohomology class defines, by assumptions (1) and (4), the K\"unneth projector on $\mathcal H^3(X)$. 
We aim to show that $[\pi^3_X]$ is supported on $D\times_K X$ for some divisor $D$. By assumptions (3), (3'), and (5), together with Lemma~\ref{L:linebundle} and Corollary~\ref{C:commute-2}, we have a commutative diagram (where as usual $\Lambda_X=-\Theta_X$):
 \begin{equation*}
\xymatrix@C=5em{
	T_\ell \operatorname{Ab}^2_{X / K} 
	\ar[d]^<>(0.5){Z_*}_\simeq& \mathcal H^1(\operatorname{Ab}^2_{X / K})(1) \ar[l]_{\cup \frac{[\Lambda_X]^{g-1}}{(g-1)!}}
	\\
	\mathcal H^3(X)(2)  \ar@{=}[r]&
	\mathcal H^3(X)(2)  \ar[u]^<>(0.5){Z^*}_\simeq
}
\end{equation*}
where $Z\in \mathscr{A}^2_{X/K}(\operatorname{Ab}^2_{X / K} )$
 is any universal codimension-2 cycle. Therefore, $$[\pi_X^3] =  [\pi_X^3]  \circ Z_* \circ \Big(-\cup \frac{[\Lambda_X]^{g-1}}{(g-1)!}\Big) \circ Z^* \circ [\pi_X^3].$$
By assumption (6),  there are finitely many connected curves $C_i$ in $\operatorname{Ab}^2_{X / K}$ and constants $n_i\in R_{\mathcal H}$ such that $\frac{[\Lambda_X]^{g-1}}{(g-1)!} = \sum_i n_i[C_i]$. Let $\widetilde C_i \to C_i$ be the normalizations morphisms and denote $j_i : \widetilde C_i \to \operatorname{Ab}^2_{X / K}$ the natural morphisms. The map $\cup \frac{[\Lambda_X]^{g-1}}{(g-1)!} : H^1(\operatorname{Ab}^2_{X / K})(1) \to  	T_\ell \operatorname{Ab}^2_{X / K}$ is then given by $\sum_i n_i (j_i)_* j_i^*$. Therefore, the cohomological correspondence $ [\pi_X^3]$ factors through $\bigoplus_i \mathcal H^1(\widetilde C_i)(-1)$, thereby establishing it is supported on $D\times_K X$ for some divisor $D$ in $X$.
\end{proof}

\subsection{Regarding conditions (1)--(4) of Theorem \ref{T:intcohodec} in positive characteristic}
As explained in the introduction, for any complex projective rationally connected threefold, conditions (1)--(3') of Theorem \ref{T:intcohodec} hold.  We show here that any smooth projective geometrically rationally chain connected threefold that lifts to a smooth projective geometrically rationally connected threefold in characteristic zero with no torsion in cohomology also satisfies conditions (1)--(3'), as well as (4) 
 (Corollary~\ref{C:RC->(0)}).  

We start with a  preliminary result that follows directly from a result of  Voisin \cite[Thm.~2]{voisinIntHodgeUni}:

\begin{teo}[Voisin]\label{T:V-Unirule}
 Suppose that ${\mathcal X}/S$ is a smooth projective threefold over the spectrum $S$ of a
  DVR $R$, such that the fraction field  $K := \kappa(\eta)\subseteq \mathbb C$ is characteristic $0$, and the residue field $k := \kappa(s)$ is algebraically closed.   
Let $\mathcal X_{\mathbb C}$ be the base change of $\mathcal X_\eta$, and fix a prime $\ell\ne \operatorname{char}\kappa(s)$.
\begin{enumerate}
\item
\label{enum:tf}
 If  $H^\bullet (\mathcal X_{\mathbb C}^{\an},\mathbb Z)$ has no $\ell$-torsion, then  $H^\bullet (\mathcal X_{\bar s},\mathbb Z_\ell)$ has no torsion.
\item
\label{enum:algebraic}
 If in addition $\mathcal X_{\mathbb C}$ is rationally connected or satisfies $H^2(\mathcal X_{\mathbb C},\mathcal O_{\mathcal X_{\mathbb C}})=0$ (resp.~$\mathcal X_{\mathbb C}$ is uniruled or satisfies $K_{\mathcal X_{\mathbb C}}\cong \mathcal O_{\mathcal X_{\mathbb C}}$ and $H^2({\mathcal X_{\mathbb C}},\mathcal O_{\mathcal X_{\mathbb C}})=0$), then
 $H^{2}({\mathcal X_{\bar s}},\mathbb Z_\ell(1))$ (resp.~$H^4({\mathcal X_{\bar s}},\mathbb Z_\ell(2))$)
 is algebraic.

\item
\label{enum:cris}
 If $R = \ww(k)$, the ring of Witt vectors of $k$, and if $p \ge 5$, then
  under the hypothesis that $H^\bullet (\mathcal X_{\mathbb C}^{\an},\mathbb Z)$  has no torsion, the crystalline cohomology $H^\bullet_\cris({\mathcal X_{\bar s}}/\ww(k))$
  has no torsion\,; and under the hypotheses of \eqref{enum:algebraic},
  $H^2_\cris({\mathcal X_{\bar s}}/\ww(k))$ and $H^4_\cris({\mathcal X_{\bar s}}/\ww(k))$, respectively, are
  algebraic.
\end{enumerate}
\end{teo}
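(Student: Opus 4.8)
The plan is to deduce all three statements from Voisin's theorem \cite[Thm.~2]{voisinIntHodgeUni} together with the smooth proper base change theorems in $\ell$-adic cohomology, and, for the crystalline part, the comparison between crystalline cohomology and $\ell$-adic \'etale cohomology of good reduction varieties. First I would recall what Voisin's result actually gives: for a smooth projective threefold $Y$ over $\mathbb{C}$ that is rationally connected (more generally, satisfies $H^2(Y,\mathcal{O}_Y)=0$), the degree-$4$ integral Hodge conjecture holds, i.e.\ $H^4(Y,\mathbb{Z})$ is generated by classes of curves; and if moreover $H^3(Y,\mathbb{Z})$ is torsion-free, then $H^\bullet(Y,\mathbb{Z})$ is torsion-free (this is the content of the torsion statements in that paper, using the Bloch--Kato/Merkurjev--Suslin type arguments, or more elementarily the observation that for rationally connected threefolds the only possibly interesting torsion sits in $H^3$ and $H^4$). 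The uniruled case and the case $K_Y\cong\mathcal{O}_Y$, $H^2(Y,\mathcal{O}_Y)=0$ are handled by Voisin as well, giving the degree-$4$ statement for $H^4$.

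Second, I would transport these statements from $\mathcal{X}_{\mathbb{C}}$ to $\mathcal{X}_{\bar s}$. For \eqref{enum:tf}: by smooth proper base change, $H^i(\mathcal{X}_{\bar\eta},\mathbb{Z}_\ell)\cong H^i(\mathcal{X}_{\bar s},\mathbb{Z}_\ell)$ as $\mathbb{Z}_\ell$-modules, and by the comparison isomorphism $H^i(\mathcal{X}_{\bar\eta},\mathbb{Z}_\ell)\cong H^i(\mathcal{X}_{\mathbb{C}}^{\mathrm{an}},\mathbb{Z})\otimes_{\mathbb{Z}}\mathbb{Z}_\ell$ (using that $\mathcal{X}_{\bar\eta}$ and $\mathcal{X}_{\mathbb{C}}$ have the same $\ell$-adic cohomology, as $\bar\eta$ and $\mathbb{C}$ are both algebraically closed fields of characteristic $0$ and there is a common field of definition). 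Since $H^\bullet(\mathcal{X}_{\mathbb{C}}^{\mathrm{an}},\mathbb{Z})$ has no $\ell$-torsion, neither does $H^\bullet(\mathcal{X}_{\mathbb{C}}^{\mathrm{an}},\mathbb{Z})\otimes\mathbb{Z}_\ell$ (torsion in a tensor product with $\mathbb{Z}_\ell$ is the $\ell$-primary torsion of the original group), hence $H^\bullet(\mathcal{X}_{\bar s},\mathbb{Z}_\ell)$ is torsion-free. For \eqref{enum:algebraic}: Voisin's integral Hodge conjecture in degree $4$ (resp.\ degree $2$, which is automatic) gives that $H^4(\mathcal{X}_{\mathbb{C}},\mathbb{Z})$ (resp.\ $H^2(\mathcal{X}_{\mathbb{C}},\mathbb{Z})$) is generated by cycle classes; these cycles are defined over a finitely generated subfield, hence after a further base change we may spread them out over an open of $S$ and specialize, and the specialization map on cohomology commutes with cycle class maps. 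Concretely, I would use that the cycle class map is compatible with specialization and that the specialization map $H^{2i}(\mathcal{X}_{\bar\eta},\mathbb{Z}_\ell(i))\to H^{2i}(\mathcal{X}_{\bar s},\mathbb{Z}_\ell(i))$ is an isomorphism; combined with $H^{2i}(\mathcal{X}_{\mathbb{C}},\mathbb{Z}_\ell(i))$ being algebraic, one gets $H^{2i}(\mathcal{X}_{\bar s},\mathbb{Z}_\ell(i))$ is algebraic. (The resp.\ cases $i=2$ use the uniruled/Calabi--Yau hypothesis exactly as in Voisin.)

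Third, for \eqref{enum:cris}: when $R=\ww(k)$ and $p\ge 5$, one invokes the fact that for a smooth proper scheme over $\ww(k)$ with $p\ge\dim+2$ (here $\dim\mathcal{X}_{\bar s}=3$, so $p\ge 5$ suffices), crystalline cohomology is torsion-free if and only if the $\ell$-adic cohomology of the geometric special fiber is torsion-free for all $\ell$ --- more precisely, the torsion in $H^i_\cris(\mathcal{X}_{\bar s}/\ww(k))$ is controlled by torsion in $H^i_{\mathrm{\acute et}}$ and $H^{i+1}_{\mathrm{\acute et}}$, via the de Rham--Witt/slope spectral sequence and the integral comparison theorems; the bound $p\ge 5$ ensures no extra torsion is introduced by the crystalline-to-de Rham machinery in the relevant degrees. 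This is where I would cite \cite{illusiedRW} (and possibly Fontaine--Messing / integral $p$-adic Hodge theory for the precise torsion comparison). Then for the algebraicity part of \eqref{enum:cris}, the cycle class maps to $\ell$-adic and crystalline cohomology are compatible under the comparison of Betti numbers and, since $H^{2i}_\cris$ is torsion-free in the relevant degrees, $\mathbb{Z}$-algebraicity of $H^{2i}_{\mathrm{\acute et}}(\mathcal{X}_{\bar s},\mathbb{Z}_\ell(i))$ transfers to $\mathbb{Z}$-algebraicity of $H^{2i}_\cris(\mathcal{X}_{\bar s}/\ww(k))$ by comparing ranks and using that both are generated by the (same) algebraic cycles.

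The main obstacle I expect is the precise bookkeeping for the crystalline torsion statement in \eqref{enum:cris}: one must be careful that the hypothesis $p\ge 5$ really does rule out pathological crystalline torsion in degrees $2$, $3$, $4$ of a threefold, and that the specialization-of-cycles argument is compatible with the crystalline cycle class map. The $\ell$-adic statements \eqref{enum:tf} and \eqref{enum:algebraic} are essentially formal given Voisin's theorem plus smooth proper base change and the compatibility of cycle class maps with specialization, so the real content and the real care both lie in the $p$-adic/crystalline refinement; I would handle it by reducing everything to torsion-freeness of the relevant $\ell$-adic cohomology groups (already established in \eqref{enum:tf}) and invoking the integral comparison theorems available for $p\ge 5$ in dimension $3$.
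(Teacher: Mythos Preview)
Your treatment of \eqref{enum:tf} and \eqref{enum:algebraic} is correct and matches the paper's proof: comparison plus smooth proper base change for the torsion statement, and Lefschetz~$(1,1)$ (using $H^2(\mathcal{O})=0$) for $H^2$ together with Voisin's integral Hodge result for $H^4$, followed by spreading cycles from $\mathbb{C}$ down to $\bar K$ and then specializing. The paper is slightly more explicit about the descent from $\mathbb{C}$ to $\bar K$ (using that components of the Hilbert scheme are defined over $\bar K$ and that cohomology classes are constant along them), but your ``spread out and specialize'' is the same idea.

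For \eqref{enum:cris} there are two genuine issues. First, the torsion comparison you invoke is misstated: the relevant integral $p$-adic Hodge input (the paper cites Caruso \cite[Thm.~1.1]{caruso08}, in the Fontaine--Messing lineage you mention) compares torsion in $H^\bullet_\cris(\mathcal{X}_{\bar s}/\ww(k))$ with torsion in the \emph{$p$-adic \'etale cohomology of the geometric generic fiber}, not with $\ell$-adic cohomology of the special fiber for $\ell\ne p$. The hypothesis that $H^\bullet(\mathcal{X}_\cx^{\an},\integ)$ is torsion-free gives that $H^\bullet(\mathcal{X}_{\bar K},\integ_p)$ is torsion-free, and it is this, together with the unramified lift and the bound on $p$, that forces $H^\bullet_\cris$ to be torsion-free. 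Second, your proposed transfer of algebraicity from $\ell$-adic to crystalline ``by comparing ranks and using that both are generated by the same algebraic cycles'' is circular: knowing that certain cycle classes span $H^{2i}(\mathcal{X}_{\bar s},\integ_\ell(i))$ does not, by itself, imply that their crystalline classes span $H^{2i}_\cris$, since there is no cycle-class-compatible isomorphism between these two theories. The paper instead routes algebraicity through de Rham cohomology: the Betti--de Rham comparison over $\cx$ shows $H^{2i}_{dR}(\mathcal{X}_\cx)$ is algebraic; spreading and specializing gives algebraicity of $H^{2i}_{dR}(\mathcal{X}_L)$ for some finite $L/\mathbb{B}(k)$; and then the Berthelot--Ogus/Gillet--Messing comparison \cite[App.~B]{gilletmessing87}, which \emph{is} compatible with cycle classes, yields algebraicity of $H^{2i}_\cris(\mathcal{X}_{\bar s}/\ww(k))$.
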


\begin{proof}
If $H^\bullet (\mathcal X^{\an}_{\mathbb C},\mathbb Z)$ has no $\ell$-torsion, then  $H^\bullet({\mathcal X_{\mathbb C}},\mathbb Z_\ell)=H^\bullet(\mathcal X^{\an}_{\mathbb C},\mathbb Z_\ell)=H^\bullet(\mathcal X^{\an}_{\mathbb C},\mathbb Z)\otimes_{\mathbb Z}\mathbb Z_\ell$, which also has no torsion.  By proper base change, we also have that $H^\bullet({\mathcal X_{\mathbb C}},\mathbb Z_\ell)= H^\bullet (\mathcal X_{\bar K},\mathbb Z_\ell)= H^\bullet ({\mathcal X_{\bar s}},\mathbb Z_\ell)$, and so we have completed the argument for \eqref{enum:tf}.

For claim \eqref{enum:algebraic} on algebraicity, we argue as follows.  First, as $H^2({\mathcal X_{\mathbb C}},\mathcal O_{\mathcal X_{\mathbb C}})=0$, either  by assumption or else by using the assumption that  ${\mathcal X_{\mathbb C}}$ is rationally connected, then we can conclude that $H^2(\mathcal X^{\an}_{\mathbb C},\mathbb C)=H^{1,1}(\mathcal X^{\an}_{\mathbb C})$, so that algebraicity of $H^2(\mathcal X^{\an}_{\mathbb C},\mathbb Z)$ follows from the Lefschetz-$(1,1)$ theorem.  In the case where ${\mathcal X_{\mathbb C}}$ is uniruled, or $K_{\mathcal X_{\mathbb C}}\cong \mathcal O_{\mathcal X_{\mathbb C}}$ and $H^2({\mathcal X_{\mathbb C}},\mathcal O_{\mathcal X_{\mathbb C}})=0$, the  algebraicity of $H^4(\mathcal X^{\an}_{\mathbb C},\mathbb Z)$ is Voisin's result \cite[Thm.~2]{voisinIntHodgeUni}.

Since we are assuming \eqref{enum:tf}, we have $H^\bullet ({\mathcal X_{\mathbb C}},\mathbb Z_\ell)=H^\bullet (\mathcal X^{\an}_{\mathbb C},\mathbb Z_\ell)=H^\bullet (\mathcal X^{\an}_{\mathbb C},\mathbb Z)\otimes_{\mathbb Z}\mathbb Z_\ell$ and it follows from Chow's theorem that $H^{2i}({\mathcal X_{\mathbb C}},\mathbb Z_\ell(i))$ is algebraic for $i=1,2$ so long as $H^{2i}(\mathcal X^{\an}_{\mathbb C},\mathbb Z(i))$ is. 
 We next claim that this implies  $H^{2i}(\mathcal X_{\bar K},\mathbb Z_\ell(i))$ is algebraic.  Indeed, given any  $\alpha \in H^{2i}(\mathcal X_{\bar K},\mathbb Z_\ell(i))$, by proper base change, $\alpha$ is
identified with a class in $H^{2i}({\mathcal X_{\mathbb C}},\mathbb Z_\ell(i))$.  By algebraicity, we can write $\alpha=\sum a_i[Z_i]$ for some cycles $Z_i$ on ${\mathcal X_{\mathbb C}}$, and some coefficients
 $a_i\in \mathbb Z_\ell$. 
Each cycle $Z_i$ lies on some component of the Hilbert scheme for ${\mathcal X_{\mathbb C}}$, which is the base change to $\mathbb C$ of the Hilbert scheme for $\mathcal X_{\bar K}$.  Since the cohomology class of a
cycle is the same for any cycle in the same component of the Hilbert scheme (we can use resolution of singularities, for instance, to get a
smooth curve interpolating), we can replace $Z_i$ with a cycle $Z_i'$
defined over $\bar K$ (corresponding to any $\bar K$-point of the
corresponding Hilbert scheme), and we have $\alpha=\sum a_i[Z_i]=\sum
a_i[Z_i']$. Thus $H^{2i}(\mathcal X_{\bar K},\mathbb Z_\ell(i))$ is algebraic.
Now using the identification $H^{2i}(\mathcal X_{\bar K},\mathbb
Z_\ell(i))=H^{2i}({\mathcal X_{\bar s}},\mathbb Z_\ell(i))$ via proper base change, and
the fact that specialization of cycles in Chow respects the cycle
class map \cite[Exa.~20.3.5]{fulton} (\emph{e.g.},  \eqref{E:FultSpec}), we have that
$H^{2i}({\mathcal X_{\bar s}},\mathbb Z_\ell(i))$ is algebraic.

For \eqref{enum:cris}, the hypothesis that ${\mathcal X_{\bar s}}$ lifts to an
unramified mixed characteristic DVR guarantees that the freeness of
$H\udot(\mathcal X_\cx,\integ)$, and thus that of $H\udot(\mathcal X_{\bar K},\integ_p)$, implies that of 
$H\udot_\cris({\mathcal X_{\bar s}}/\ww(k))$ \cite[Thm.~1.1]{caruso08}.
   Moreover, this
freeness allows one to define an integral crystalline cycle class
map.  Now suppose that $H^{2i}(\mathcal X_\cx,\integ(i))$ is algebraic.
The comparison isomorphism between Betti and de Rham cohomology
is compatible with cycle class maps, and thus
$H^{2i}_{dR}(\mathcal X_\cx^\an)$ is algebraic.  Spreading and specializing shows
that there is a finite extension $L/\mathbb B(k)$ such that
$H^{2i}_{dR}(\mathcal X_L)$ is algebraic. (Note that the residue field of $L$
is again $k$.)  The comparison isomorphism between the de Rham
cohomology of $\mathcal X_L$ and the crystalline cohomology of ${\mathcal X_{\bar s}}$ is
compatible with cycle class maps \cite[App.~B]{gilletmessing87}, and
thus $H^{2i}_\cris({\mathcal X_{\bar s}}/\ww(k))$ is algebraic as well.
\end{proof}

We can now show that conditions (1)--(4) of Theorem \ref{T:intcohodec} hold for all threefolds liftable to rationally connected threefolds in characteristic $0$ with no torsion in cohomology.   

\begin{cor}\label{C:RC->(0)}
Suppose that ${\mathcal X}/S$ is a smooth projective threefold over the spectrum $S$ of a
  DVR $R$, such that the fraction field  $\kappa(\eta)\subseteq \mathbb C$ is characteristic $0$, and the residue field $\kappa(s)$ is algebraically closed.  
If $\mathcal X_s$ and $\mathcal X_\eta$ are geometrically rationally chain connected,   and $H^\bullet (\mathcal X_{\bar \eta},\mathbb Z_\ell)$ is torsion-free, then $\mathcal X_{ s}$ satisfies conditions  (1)--(4) of Theorem \ref{T:intcohodec}.  
 \end{cor}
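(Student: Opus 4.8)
\textbf{Proof plan for Corollary~\ref{C:RC->(0)}.} The strategy is to verify each of the conditions (1)--(4) for $\mathcal X_s$ by combining the specialization results established earlier with Voisin's theorem (Theorem~\ref{T:V-Unirule}). First I would record the standing hypotheses: since $\mathcal X_s$ is geometrically rationally chain connected, Remark~\ref{R:RCDecDiag} gives that some multiple $N\Delta_{(\mathcal X_s)_{\bar s}}$ admits a strict Chow decomposition, hence also a strict cohomological $\mathbb Z_\ell$-decomposition of type $(2,0)$, and in particular a cohomological $\mathbb Z_\ell$-decomposition of type $(2,1)$; this is the decomposition input that feeds the propositions of \S\S8--9. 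The torsion-freeness hypothesis on $H^\bullet(\mathcal X_{\bar\eta},\mathbb Z_\ell)$ is exactly what is needed to invoke Theorem~\ref{T:V-Unirule}\eqref{enum:tf} (after base-changing the generic fiber to $\mathbb C$ and using that $\mathcal X_{\mathbb C}$ is rationally chain connected, hence rationally connected since we are over $\mathbb C$ in characteristic~$0$).

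Now I would go through the conditions in turn. Condition~(1), $H^1((\mathcal X_s)_{\bar s},\mathbb Z_\ell)=0$: this follows immediately from Corollary~\ref{C:vanCoh} applied to the strict cohomological $\mathbb Z_\ell$-decomposition of $\Delta_{(\mathcal X_s)_{\bar s}}$ (indeed $\mathcal X_s$ being geometrically rationally chain connected already gives this, as noted in that corollary). Condition~(4), $\operatorname{Tors}H^\bullet((\mathcal X_s)_{\bar s},\mathbb Z_\ell)=0$: this is precisely the content of Theorem~\ref{T:V-Unirule}\eqref{enum:tf}, using that $H^\bullet(\mathcal X_{\mathbb C}^{\an},\mathbb Z)$ has no $\ell$-torsion — which is equivalent to $H^\bullet(\mathcal X_{\bar\eta},\mathbb Z_\ell)$ being torsion-free via the comparison $H^\bullet(\mathcal X_{\mathbb C},\mathbb Z_\ell)=H^\bullet(\mathcal X_{\mathbb C}^{\an},\mathbb Z)\otimes\mathbb Z_\ell$ and proper base change $H^\bullet(\mathcal X_{\mathbb C},\mathbb Z_\ell)=H^\bullet(\mathcal X_{\bar\eta},\mathbb Z_\ell)$. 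Condition~(3), that $T_\ell\lambda^2$ is an isomorphism onto $H^3((\mathcal X_s)_{\bar s},\mathbb Z_\ell(2))_\tau$: apply Proposition~\ref{P:lambdacoho} to the cohomological $\mathbb Z_\ell$-decomposition of type $(2,1)\subseteq(2,2)$; and since we will have shown in (4) that $H^3$ is torsion-free, the target is simply $H^3((\mathcal X_s)_{\bar s},\mathbb Z_\ell(2))$. Condition~(3'), that $T_\ell\phi^2_{\mathcal X_s/s}$ is an isomorphism: this is Proposition~\ref{P:BlSr} applied to the same $\mathbb Z_\ell$-decomposition of type $(2,1)$.

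Condition~(2), that $H^{2i}((\mathcal X_s)_{\bar s},\mathbb Z_\ell(i))$ is algebraic for all $i$, is the one requiring the most care and is the main obstacle. For $i=0$ the field is algebraically closed, so $H^0$ is spanned by the class of a point; for $i=3$, the existence of a $0$-cycle of degree $1$ (again, $\kappa(s)$ algebraically closed) makes $H^6$ algebraic. For $i=1,2$ one cannot read this off the decomposition of the diagonal alone (Corollary~\ref{C:AlgCyc} only gives $\mathbb Z_\ell$-algebraicity over an algebraically closed field from a cohomological $\mathbb Z_\ell$-decomposition — but here one wants the decomposition, which only has a \emph{multiple} of the diagonal integrally, to suffice, and more importantly one wants the sharper $\mathbb Z$-algebraicity afforded by Voisin's result). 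So instead I would invoke Theorem~\ref{T:V-Unirule}\eqref{enum:algebraic}: since $\mathcal X_{\mathbb C}$ is rationally connected (rationally chain connected over $\mathbb C$), it satisfies $H^2(\mathcal X_{\mathbb C},\mathcal O)=0$, hence $H^2((\mathcal X_s)_{\bar s},\mathbb Z_\ell(1))$ is algebraic; and $\mathcal X_{\mathbb C}$ being rationally connected is in particular uniruled, so $H^4((\mathcal X_s)_{\bar s},\mathbb Z_\ell(2))$ is algebraic by Voisin's integral Hodge conjecture for uniruled threefolds. The hypothesis of Theorem~\ref{T:V-Unirule}\eqref{enum:algebraic} requires us to have already established \eqref{enum:tf}, which we have. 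This completes the verification of (1)--(4), and the corollary follows. The only subtlety to watch is the compatibility of all the base-change identifications (Artin comparison over $\mathbb C$, smooth and proper base change from $\bar\eta$ to $\bar s$, and specialization of cycle classes respecting the cycle class map, as in \cite[Exa.~20.3.5]{fulton}), which is exactly the bookkeeping already carried out inside the proof of Theorem~\ref{T:V-Unirule}.
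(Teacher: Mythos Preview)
Your approach matches the paper's: condition~(1) via Corollary~\ref{C:vanCoh}, conditions~(2) and~(4) via Theorem~\ref{T:V-Unirule}. For (3) and (3') the paper cites an external result from \cite{ACMVBlochMap}, whereas you use the internal Propositions~\ref{P:lambdacoho} and~\ref{P:BlSr}; your argument for (3') via Proposition~\ref{P:BlSr} is fine, since the $\mathbb Z$-decomposition case of that proposition carries no coprimality restriction on~$N$.

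There is, however, a small but real slip in your argument for (3). You invoke Proposition~\ref{P:lambdacoho} with $R_{\mathcal H}=\mathbb Z_\ell$, but the integral conclusion there requires $\ell\nmid N$. Geometric rational chain connectedness only furnishes \emph{some} multiple $N\Delta$ with a strict decomposition, with no control over which primes divide~$N$; so as written your citation does not yield the isomorphism $T_\ell\lambda^2$ at primes $\ell\mid N$. The fix, staying within the paper, is to cite Proposition~\ref{P:lambdacohoBW} instead: that result takes as input any $\mathbb Z$-cohomological decomposition of $N\Delta$ of type $(d_X-1,1)$ and concludes that $T_l\lambda^2$ is an isomorphism for \emph{all} primes~$l$, with no coprimality hypothesis on~$N$. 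With that one-line swap your proof is complete and agrees with the paper's.
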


\begin{proof}
Since $\mathcal X_s$ is rationally chain connected, a multiple of the diagonal admits a strict decomposition.  Then condition (1)  is Corollary \ref{C:vanCoh}.   (2) and (4) follow from Theorem \ref{T:V-Unirule}.
 (3) and (3') are \cite[Cor.~7.4]{ACMVBlochMap}.
\end{proof}

\subsection{Necessary and sufficient conditions for a cohomological $\mathbb Z_\ell$-decomposition}
We will see below, in Remark \ref{R:T-2-S+N}, that there are examples of smooth projective threefolds over an algebraically closed field $k=K=\bar K$  such that the diagonal admits a cohomological $\mathbb Z_\ell$-decomposition for some prime $\ell\ne \operatorname{char}(k)$, but where condition (5) in Theorem \ref{T:intcohodec} fails. 
In other words, while conditions (1)--(6) are sufficient for a  cohomological $\mathbb Z_\ell$-decomposition of the diagonal, they are not necessary. 
The following theorem gives necessary and sufficient conditions for a cohomological $\mathbb Z_\ell$-decomposition\,:

  \begin{teo}[Cohomological $\mathbb Z_\ell$-decomposition of the diagonal]\label{T:ZZell-iff}
	Let  $X$ be a smooth projective threefold over an algebraically closed field $k$, and fix a prime number   $\ell\ne \operatorname{char}(k)$.
Then 
$\Delta_{X}\in\chow^{3}(X\times_{k} X)$ admits a  strict cohomological \emph{$\mathbb Z_\ell$-decomposition}  with respect to $H^\bullet (- , \mathbb Z_\ell)$ if and only if 

\begin{enumerate}
\item $H^1(X,\mathbb Z_\ell)=0$\,;

\item[($2_\ell$)] $H^{2i}(X,\mathbb Z_\ell(i))$  is $\mathbb Z_\ell$-algebraic for all $i$\,;

\item[(3)]  The $\ell$-adic  Bloch map    $T_\ell \lambda^2:T_\ell \operatorname{A}^2(X)\to H^3(X,\mathbb Z_\ell(2))_\tau$ is an isomorphism\,;

\item[(3')] The $\ell$-adic map $T_\ell \phi^2_{X/k} :T_\ell \operatorname{A}^2(X) \to T_\ell \operatorname{Ab}^2_{X/k}$
 is an isomorphism\,;

\item[(4)]  $\operatorname{Tors}H^\bullet (X,\mathbb Z_\ell )=0$\,;
		
\item[($5_\ell$)] $\operatorname{Ab}^2_{X/k}$ admits a miniversal codimension-$2$ cycle class $Z$ of degree coprime to $\ell$\,;
   
\item[($6_\ell$)]   Assuming (3)  and ($5_\ell$), and setting $\Theta_Z:\operatorname{Ab}^2_{X/k}\to \widehat {\operatorname{Ab}}\,^2_{X/k}$ to be the morphism induced by the cycle class $-({}^tZ\circ Z)$ (see \eqref{E:ThetaZ} and Theorem \ref{T:CanPol}), 
we have that $\Theta_Z$ is a 
symmetric isogeny of degree coprime to $\ell$, 
    and   $\frac{[\Theta_{Z}]^{g-1}}{(g-1)!}\in H^{2g-2}(\operatorname{Ab}^2_{X/k},\mathbb Z_\ell(g-1))$ is a $\mathbb Z_\ell$-algebraic class,
	where $g = \dim \operatorname{Ab}^2_{X/k}$ and $[\Theta_Z]$ is the first Chern class of the line bundle associated to $\Theta_Z$.

\end{enumerate} 
 \end{teo}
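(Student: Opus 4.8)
The strategy is to reduce Theorem~\ref{T:ZZell-iff} to the results already developed, treating the two implications separately. For the forward direction, assume $\Delta_X$ admits a strict cohomological $\mathbb Z_\ell$-decomposition. I would first invoke the partial forms of the earlier results that hold under the weaker hypothesis of a $\mathbb Z_\ell$-decomposition (as recorded parenthetically in the proof of Theorem~\ref{T:intcohodec}): item (1) is Corollary~\ref{C:vanCoh}; item ($2_\ell$) is Corollary~\ref{C:AlgCyc}; item (3) is Proposition~\ref{P:lambdacoho}; item (3') is Proposition~\ref{P:BlSr}; and item (4) is Corollary~\ref{C:V-AJ4.4}, all applied with $\mathbb Z_\ell$-coefficients. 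Item ($5_\ell$) is exactly the ``resp.''~statement of Corollary~\ref{C:UnivCyc}, giving a miniversal codimension-2 cycle class $Z$ of degree coprime to $\ell$. The only genuinely new input is ($6_\ell$). Here I would apply Theorem~\ref{T:CanPol}(2) (using that $V_{\ell}\lambda^2$ is an isomorphism by Proposition~\ref{P:lambdacoho}, and that $T_{\ell}\phi^2$ is an isomorphism by Proposition~\ref{P:BlSr}): this shows $\Lambda_Z$ descends to $K=k$, is an isogeny, and $T_\ell\Lambda_Z$ is an isomorphism away from $\ell$, hence $\Lambda_Z$ (and so $\Theta_Z=-\Lambda_Z$) has degree coprime to $\ell$. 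For the algebraicity of $[\Theta_Z]^{g-1}/(g-1)!$ I would apply Corollary~\ref{C:StarMinCl}(A), whose hypotheses (torsion-freeness of $H^3$, the two isomorphism conditions, and $\ell$-distinguishedness of a suitable symmetric isogeny inducing an isomorphism on $T_\ell$) are met: torsion-freeness of $H^3(X,\mathbb Z_\ell(2))$ comes from Proposition~\ref{P:lambdacoho} (type $(2,1)$ gives $\dim W_2\le 1$), the isomorphism conditions are (3) and (3'), and the $\ell$-distinguished symmetric isogeny is furnished by Theorem~\ref{T:CanPol}(1) applied to the miniversal cycle $Z$ — one takes $\Lambda=\Lambda'$ in the notation of Theorem~\ref{T:CanPol}(2), noting $T_\ell\Lambda_Z=r^2 T_\ell\Lambda'$ with $r$ coprime to $\ell$, so $T_\ell\Lambda'$ is an isomorphism too. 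Care is needed to track that $[\Theta_Z]$ and $[\Lambda_X]$ differ only by the factor $r^2$, which is invertible in $\mathbb Z_\ell$, so $\mathbb Z_\ell$-algebraicity of one is equivalent to that of the other; this is the key bookkeeping point.

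\textbf{The converse direction.} Assuming (1), ($2_\ell$), (3), (3'), (4), ($5_\ell$), ($6_\ell$), I would show $\Delta_X$ admits a strict cohomological $\mathbb Z_\ell$-decomposition via Corollary~\ref{C:StarMinCl}(B) (equivalently, the converse half of Theorem~\ref{T:Mb-T3.1} combined with Proposition~\ref{P:StarMinCl}(B)). The additional hypotheses needed for part (B) of Corollary~\ref{C:StarMinCl} — $H^\bullet(X,\mathbb Z_\ell)$ torsion-free, all $H^{2i}(X,\mathbb Z_\ell(i))$ $\mathbb Z_\ell$-algebraic, $H^1(X,\mathbb Z_\ell)=0$ — are precisely (4), ($2_\ell$), (1). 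The bullet-point hypotheses of Proposition~\ref{P:StarMinCl} — torsion-freeness of $H^3(X,\mathbb Z_\ell(2))$, the standard assumption (3'), and the isomorphism (3) — all hold, and the required $\ell$-distinguished symmetric isogeny with $T_\ell$ an isomorphism is obtained from ($5_\ell$) and Theorem~\ref{T:CanPol}: from the miniversal cycle $Z$ of degree $r$ coprime to $\ell$ one gets $\Lambda_Z=r^2\Lambda'$ with $\Lambda'$ $\ell$-distinguished and $T_\ell\Lambda'$ an isomorphism (since $T_\ell\Lambda_Z$ is, by ($6_\ell$) asserting $\Theta_Z$ has degree coprime to $\ell$). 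Then $\mathbb Z_\ell$-algebraicity of $[\Lambda']^{g-1}/(g-1)!$ follows from that of $[\Theta_Z]^{g-1}/(g-1)!$ in ($6_\ell$), again because the two classes differ by the unit $r^{2(g-1)}\in\mathbb Z_\ell^\times$. One subtlety: Proposition~\ref{P:StarMinCl}(B) requires ``$\operatorname{Ab}^2_{X/K}$ admits a universal codimension-$2$ cycle class,'' whereas ($5_\ell$) only gives a miniversal one. I would address this by observing that the proof of Proposition~\ref{P:StarMinCl}(B) (which factors cohomological correspondences through curves using $Z^*$ and $Z_*$) goes through verbatim with a miniversal cycle of degree coprime to $\ell$ once one works with $\mathbb Z_\ell$-coefficients — the maps $Z_*$ and $Z^*$ become isomorphisms up to the unit $r$, and the resulting factorization of $[\pi^3_X]$ through $\bigoplus_i H^1(\widetilde C_i)(-1)$ still yields support on $D\times_k X$. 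Alternatively, one can pass from a miniversal cycle to a universal one after the fact, as in the final paragraph of the proof of Theorem~\ref{T:UnivCyc}; but for a $\mathbb Z_\ell$-decomposition the direct argument is cleaner.

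\textbf{Main obstacle.} The conceptual content is entirely contained in the earlier theorems, so the real work is ensuring that every place where the earlier statements were phrased for a universal cycle (degree one) or for an exact Chow decomposition is correctly relaxed to the $\mathbb Z_\ell$-setting with a miniversal cycle of degree coprime to $\ell$. The cleanest way to organize this is to state at the outset that, under a strict cohomological $\mathbb Z_\ell$-decomposition, one has a miniversal cycle $Z$ of degree $r$ with $\ell\nmid r$, and then consistently replace every occurrence of ``$\Lambda_X$'' (the distinguished isogeny attached to a universal cycle) by ``$\Lambda'$'' with $\Lambda_Z=r^2\Lambda'$, invoking Theorem~\ref{T:CanPol}(2)–(3). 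The point requiring the most attention — and the step I expect to be the main obstacle to a clean write-up — is verifying that $\Theta_Z$ itself (rather than some auxiliary $\Lambda'$) is a symmetric isogeny of degree coprime to $\ell$ with $\mathbb Z_\ell$-algebraic minimal class, i.e.\ phrasing the conclusion directly in terms of the cycle-theoretic object $-({}^tZ\circ Z)$ appearing in ($6_\ell$), so that the statement is self-contained and does not secretly depend on the existence of a distinguished isogeny. This is handled by the unit-factor argument above, but it needs to be spelled out carefully so that the equivalence is genuinely an ``if and only if'' and not merely an implication in each direction under slightly mismatched hypotheses.
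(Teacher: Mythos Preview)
Your approach is essentially the paper's: necessity by citing the $\mathbb Z_\ell$-versions of the earlier results (exactly as in the parentheticals in the proof of Theorem~\ref{T:intcohodec}), and sufficiency via Corollary~\ref{C:StarMinCl}(B) after checking that Proposition~\ref{P:StarMinCl} goes through with a miniversal cycle of degree $r$ coprime to $\ell$ in place of a universal one. Your identification of the unit-factor bookkeeping as the main obstacle is exactly right.

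One small correction in how you invoke Theorem~\ref{T:CanPol}(2): that result divides $\Lambda_Z$ only at the primes where $T_l\phi$ is known to be an isomorphism, and here (3') gives this only at $\ell$, while $\ell\nmid r$ --- so no actual division occurs and you do not get an honest isogeny $\Lambda'$ with $\Lambda_Z=r^2\Lambda'$. The paper sidesteps this by working with $\frac{1}{r^2}\Lambda_Z\in\operatorname{Hom}(\operatorname{Ab}^2_{X/k},\widehat{\operatorname{Ab}}\,^2_{X/k})_{\mathbb Q}$ and declaring it ``$\ell$-distinguished'' in the obvious sense that $\frac{1}{r^2}T_\ell\Lambda_Z=\iota^\vee\circ\iota$; one then checks (as you effectively do) that the proof of Proposition~\ref{P:StarMinCl} and hence Corollary~\ref{C:StarMinCl}(B) works for a symmetric $K$-isogeny $\Lambda$ with $T_\ell\Lambda$ an isomorphism and $\frac{1}{N}\Lambda$ $\ell$-distinguished for some $N$ invertible in $\mathbb Z_\ell$. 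Your unit-factor argument is precisely this observation, so the substance of your proof is correct --- just rephrase the appeal to Theorem~\ref{T:CanPol} accordingly.
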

 
 \begin{proof}
The necessity of the conditions above  is given in the proof of Theorem \ref{T:intcohodec}:  the results cited in that proof  show that the weaker conditions in Theorem \ref{T:ZZell-iff} hold under the weaker hypothesis of a cohomological  $\mathbb Z_\ell$-decomposition.  Note that in going from (3)  and ($5_\ell$) to the symmetric isogeny $\Theta_X$ of ($6_\ell$), we are using Theorem \ref{T:CanPol} in the case of a miniversal cycle, rather than the case of a universal cycle.

The proof of sufficiency is again  Corollary~\ref{C:StarMinCl}(B).
Technically, if $Z$ is a miniversal codimension-$2$ cycle class of degree $r$ coprime to $\ell$, then the symmetric isogeny $\Lambda_Z=-\Theta_Z$ of Theorem \ref{T:CanPol} is not $\ell$-distinguished, but rather, satisfies $T_\ell \Lambda_Z =r^2\circ \iota^\vee \circ \iota$, where $\iota = T_\ell \lambda^2 \circ (T_\ell \phi^2)^{-1}$.
It is easy to see that Proposition \ref{P:StarMinCl} holds under this hypothesis, and therefore that Corollary~\ref{C:StarMinCl}(B) does, as well.  One notationally easy way to express that is to say that $\frac{1}{r^2}\Lambda_Z\in \operatorname{Hom}(\operatorname{Ab}^2_{X/K},\widehat {\operatorname{Ab}}^2_{X/})_{\mathbb Q}$ is $\ell$-distinguished, in the sense that $\frac{1}{r^2}T_\ell \Lambda_Z = \iota^\vee \circ \iota$, and one can check that Proposition \ref{P:StarMinCl} and Corollary~\ref{C:StarMinCl}(B) hold for symmetric $K$-isogenies $\Lambda:\operatorname{Ab}^2_{X/K}\to \widehat {\operatorname{Ab}}^2_{X/K}$ such that $T_\ell \Lambda$ is an isomorphism, and such that  there is an integer $N$ invertible in $R$, such that $\frac{1}{N}\Lambda$ is $\ell$-distinguished.
\end{proof}

\begin{rem}
Since conditions (1)--(4) of Theorem \ref{T:intcohodec} imply conditions (1)--(4) of Theorem \ref{T:ZZell-iff}, one can apply Corollary \ref{C:RC->(0)} to Theorem \ref{T:ZZell-iff}, as well.
\end{rem}

\newpage
\part{Stable rationality and quartic double solids in positive characteristic}\label{P:positive}

\section{\for{toc}{Homological decomposition, resolution of singularities, and
	specialization}\except{toc}{Homological decomposition of the diagonal, resolution of singularities,\\ and
specialization}}\label{S:ResSing-Def}

The goal of this section is to show that cohomological decomposition of the
diagonal is stable under specialization, and under resolution of singularities
for
nodal projective varieties.  The results generalize Voisin's results over
$\mathbb C$.  Since those  arguments are made with cycle classes in Betti
homology, the central point is to convert elements of the arguments to work in
Borel--Moore homology, or more precisely,   in the algebraic setting of
$\ell$-adic homology.  For this we need a few basic results on $\ell$-adic homology, which
unfortunately we could not find in the literature.

In this section, for uniformity, we fix a coefficient ring $\Lambda=\mathbb Z$ or $\mathbb Z_\ell$.  For an algebraically closed field $k$ and a
scheme  $\pi:X\to \operatorname{Spec}k$ of finite type over $k$, we denote by
$\omega_X:=R\pi^!\Lambda$ the dualizing sheaf in the derived category of
constructible $\Lambda$-sheaves on $X$.  For a scheme $\pi:X\to \operatorname{Spec}K$ of finite type over a field $K$, we  denote by $\mathcal H_i(X):=\mathbb
H^{-i}(X_{\bar K},\omega_{X_{\bar K}})$ the $\ell$-adic homology, or alternatively, the Borel--Moore
homology, when working over $K=\mathbb C$ and with the analytic topology\,; we refer the reader to \cite{laumon76} for details on $\ell$-adic homology, and to \cite{BorelMoore} for details on the Borel--Moore homology.

\subsection{Homological decomposition of the diagonal}
 As before, let $R_{\mathcal H}$ denote the coefficient ring of $\mathcal H_\bullet$\,; \emph{i.e.}, $\mathbb Z_\ell$ for $\ell$-adic homology, and $\mathbb Z$ for Borel--Moore homology.
We adapt Definition \ref{D:CoDec} to the setting of homology\,:

\begin{dfn}[Homological decomposition of a cycle class]\label{D:HomDec}
	Let $R\to R_{\mathcal H}$ be a 
	homomorphism of commutative rings.
	Let $X$ be a scheme of finite type over  a field $K$, 
	and
	let
	$$
	\xymatrix{j_i:W_i\ar@{^{(}->}[r]^{\quad \ne}& X}, \ \ i=1,2
	$$
	be reduced closed subschemes not equal to $X$.
	A \emph{homological $R$-decomposition   of  type $(W_1,W_2)$ of  a cycle class $Z\in
		\operatorname{CH}^{d_X}(X\times_KX)_R$} (with respect to $\mathcal H_\bullet$)
	is an equality
	\begin{equation}\label{E:DefDcp3}
	[Z] =[Z_1]+[Z_2]\in \mathcal H_{2d_X}(X\times_K X)(-d_X),
	\end{equation}
	where $Z_1\in \operatorname{CH}^{d_X}(X\times _K X)_R$ is supported on
	$W_1\times_KX$ and $Z_2\in \operatorname{CH}^{d_X}(X\times _K X)_R$ is supported
	on $X\times_K W_2$ (see \eqref{E:ChowSupp} for the support of a cycle).   When $R=\mathbb Z$,  we call this a  \emph{homological decomposition
		of  type $(W_1,W_2)$} (with respect to $\mathcal H_\bullet$). 	We say that $Z\in
	\operatorname{CH}^{d_X}(X\times_KX)_R$ has a \emph{homological $R$-decomposition of type $(d_1,d_2)$} (with respect to $\mathcal H_\bullet$) if it admits a homological $R$-decomposition of type $(W_1,W_2)$ with $\dim W_1 \leq d_1$ and $\dim W_2 \leq d_2$.
\end{dfn}

\begin{rem}
Recall that in the case where $X$ is smooth projective and equidimensional, a homological decomposition is the same as a cohomological decomposition.   Indeed, setting $\mathcal H^\bullet$ to be $\ell$-adic cohomology with $\mathbb Z_\ell$-coefficients in the case of $\ell$-adic homology, or Betti cohomology in the case of Borel--Moore homology, 
the cap product with the fundamental class of $X$ induces for all $i$ (\emph{e.g.}, \cite[p.173]{laumon76}) an isomorphism $-\cap[X]: \mathcal H^{2d_X-i}(X)(d_X)\stackrel{\sim}{\to}\mathcal H_{i}(X)$, and the cycle class maps are compatible \cite[Rem.~6.4]{laumon76}. 
\end{rem}

\subsection{Homological decomposition of the diagonal and resolution of singularities}\label{P:pos}

\subsubsection{Long exact sequences in $\ell$-adic homology}

In this section we recall some long exact sequences in $\ell$-adic homology.
These are standard in Borel--Moore homology, going back to the original paper~\cite{BorelMoore}.  Unfortunately, these do not seem to appear in the literature in the
analogous theory of $\ell$-adic homology.    Here we present the  arguments of
\cite{BorelMoore} in the modern language of the $6$-functor formalism,
establishing the results in either setting.

First, assume that $i:Z\subseteq X$ is a closed subvariety, and let $U=X-Z$ be
the complement, with inclusion $j:U\subseteq X$. Then there is a long exact
sequence (\emph{e.g.}, \cite[Thm.~3.8]{BorelMoore} in Borel--Moore homology)
\begin{equation}\label{E:U=X-Zell}
\xymatrix{
	\cdots \ar[r]& \mathcal H_i(Z) \ar[r]^{i_*} & \mathcal H_i(X)\ar[r]^{j^*} &
	\mathcal H_i(U)\ar[r]& \cdots
}
\end{equation}
The  key observation to establish this is that there is an exact triangle in the derived category of
constructible sheaves
$$
i_*\omega_Z\to \omega_X\to j_*\omega_U\to i_*\omega_Z[1].
$$
Taking the long exact sequence in hyper-cohomology gives \eqref{E:U=X-Zell}.
To obtain this exact triangle, we replace $\omega_X$ with a quasi-isomorphic
complex of injectives, $I^\bullet$, and then consider the short exact sequence
(\emph{e.g.}, \cite[Exe.~II.1.20]{hartshorne})
$$
0\to \mathscr H_Z^0(I^\bullet)\to I^\bullet \to j_*(I^\bullet|_U)\to 0,
$$
which is exact on the right since injectives are flasque.
This gives the exact triangle
$$
\mathscr H^0_Z(\omega_X) \to \omega_X \to j_*(\omega_X|_U) \to \mathscr
H^0_Z(\omega_X)[1].
$$
Now we recall that  $\omega_X:=R\pi^!\Lambda $, where $\pi:X\to
\operatorname{Spec}K$ is the structure morphism.  For closed immersions, we have
$i_*Ri^!=\mathscr H^0_Z$ \cite[(0.3.2)(a)]{laumon76}, so that on the left we
have $\mathscr H^0_Z(\omega_X)=i_*i^!\omega_X=i_*i^!\pi^!\Lambda=i_*\omega_Z$.
For open immersions, we have $Rj^!=j^*$ so that
$\omega_X|_U=j^*\omega_X=Rj^!R\pi^!\Lambda = \omega_U$.

\medskip 
Similarly, suppose that we have $X=X_1\cup X_2$ a decomposition of a variety
into two irreducible components.
Let $i_j:X_j\hookrightarrow X$, $j=1,2$, be the closed immersions of the
components, and let $i_{12}:X_1\cap X_2\hookrightarrow X$ be the closed
immersion of the intersection.
Then we have a long exact sequence (\emph{e.g.}, \cite[Thm.~3.10]{BorelMoore} in
Borel--Moore homology)
\begin{equation}\label{E:X1X2ell}
\xymatrix{
	\cdots \ar[r]& \mathcal H_i(X_1\cap X_2) \ar[r]^{} & \mathcal H_i(X_1)\oplus
	\mathcal H_i(X_2)\ar[r]^{} & \mathcal H_i(X)\ar[r]& \cdots
}
\end{equation}
where the maps are the obvious inclusion and difference maps.
Again, the point is that we have an exact triangle
$$
i_{12*}\omega_{X_1\cap X_2}\to i_{1*}\omega_{X_1}\oplus i_{2*}\omega_{X_2}\to
\omega_X\to i_{12*}\omega_{X_1\cap X_2}[1].
$$
The argument to obtain this exact triangle is similar.   We replace $\omega_X$
with a quasi-isomorphic complex of injectives, $I^\bullet$, and then consider
the short exact sequence
$$
0\to \mathscr H_{X_1\cap X_2}^0(I^\bullet)\to \mathscr H_{X_1}^0(I^\bullet)
\oplus \mathscr H_{X_2}^0(I^\bullet) \to I^\bullet \to 0,
$$
where again we get exactness on the right using that injectives are flasque.
This gives the exact triangle
$$
\mathscr H_{X_1\cap X_2}^0(\omega_X)\to \mathscr H_{X_1}^0(\omega_X)\to
\omega_X\to \mathscr H_{X_1\cap X_2}^0(\omega_X)[1].
$$
Now we use again the description of the extraordinary pull back for closed
immersions to conclude $\mathscr H_{X_1\cap
	X_2}^0(\omega_X)=i_{12*}\omega_{X_1\cap X_2}$ and $\mathscr
H_{X_j}^0(\omega_X)=i_{j*}\omega_{X_j}$.

\subsubsection{Applications to decomposition of the diagonal and resolution of
	singularities}

Recall that we say that a variety $X$ over a perfect field $K$
       has at worst
ordinary double point singularities if it is smooth over $K$, or has isolated
singular points, each of which is a $K$-point with a projective tangent cone that is a
smooth quadric over $K$.

\begin{pro}\label{P:Dec-Res}
	Let $X$ be a projective variety over an algebraically closed field $k$, 
	having only ordinary double point singularities, let   $\epsilon:\widetilde X \to
	X$ be the standard resolution obtained by blowing up the singular points of $X$,
	and assume that the even degree cohomology of $\widetilde X$ is algebraic and
	without torsion.  Then $\Delta_X\in \operatorname{CH}^{d_X}(X\times_kX)$  admits  a
	strict homological $R$-decomposition  if and and only if $\Delta_{\widetilde X}\in \operatorname{CH}^{d_X}(\widetilde X\times_k\widetilde X)$  admits  a strict homological $R$-decomposition. 
\end{pro}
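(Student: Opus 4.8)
The strategy is to compare the two diagonals by means of the correspondences furnished by the resolution $\epsilon : \widetilde X \to X$, together with the exceptional divisors over the nodes, using the long exact sequences in $\ell$-adic (resp.\ Borel--Moore) homology recalled above. Write $p_1,\dots,p_r$ for the nodes of $X$, $E_i = \epsilon^{-1}(p_i) \subseteq \widetilde X$ for the exceptional quadrics, and $E = \bigsqcup_i E_i$. First I would set up the elementary bookkeeping: since $X$ has only ordinary double points, $\epsilon$ is an isomorphism away from $E$, $\epsilon_*[\Delta_{\widetilde X}] $ and $[\Delta_X]$ agree on the open complement of $(E\times X)\cup (X\times E)$, and the class $\epsilon^*$ (pull-back of correspondences along $\epsilon\times\epsilon$) provides maps in both directions. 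Concretely, $(\epsilon\times\epsilon)_*\Delta_{\widetilde X}$ differs from $\Delta_X$ by a cycle supported on $\{p_1,\dots,p_r\}\times X$ together with one supported on $X\times\{p_1,\dots,p_r\}$, and similarly $(\epsilon\times\epsilon)^!\Delta_X$ differs from $\Delta_{\widetilde X}$ by cycles supported on $(E\times_k\widetilde X)\cup(\widetilde X\times_k E)$; getting these comparison identities precise at the level of cycle classes in homology is the routine part.

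\textbf{The $(\Leftarrow)$ direction.} Assume $\Delta_{\widetilde X} = \widetilde Z_1 + \widetilde Z_2$ is a strict homological decomposition with $\widetilde Z_1$ supported on $\widetilde D\times_k\widetilde X$ for a divisor $\widetilde D$ and $\widetilde Z_2 = \operatorname{pr}_2^*\tilde\alpha$ for a $0$-cycle $\tilde\alpha$. Push forward via $(\epsilon\times\epsilon)_*$. The image of $\widetilde Z_1$ is supported on $\epsilon(\widetilde D)\times_k X$, which is still a divisor times $X$ (or a lower-dimensional piece, which only helps). The image of $\widetilde Z_2$ is $\operatorname{pr}_2^*(\epsilon_*\tilde\alpha)$, still pulled back from a $0$-cycle. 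So $(\epsilon\times\epsilon)_*\Delta_{\widetilde X}$ has a strict homological decomposition on $X\times_k X$; and $[(\epsilon\times\epsilon)_*\Delta_{\widetilde X}] = [\Delta_X]$ up to classes supported on $\{p_i\}\times X$ and $X\times\{p_i\}$, which can be absorbed into the two pieces of the decomposition (the first into $\widetilde D\times X$ after enlarging $\widetilde D$ — or rather it is already of the form ``points $\times X$'', hence contributes to a $0$-dimensional $W_1$; the second into $\operatorname{pr}_2^*(-)$). Here one uses $\epsilon_*\epsilon^* = \operatorname{id}$ on homology away from the exceptional locus, plus the fact that the correction terms are supported in the right place. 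This direction needs no hypothesis on the cohomology of $\widetilde X$.

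\textbf{The $(\Rightarrow)$ direction, and the main obstacle.} This is where algebraicity and torsion-freeness of $H^{2\bullet}(\widetilde X)$ enter, and it is the heart of the proposition. Given a strict homological decomposition of $\Delta_X$, one pulls it back by $(\epsilon\times\epsilon)^!$ to get a decomposition of $(\epsilon\times\epsilon)^!\Delta_X$; the difference $\Delta_{\widetilde X} - (\epsilon\times\epsilon)^!\Delta_X$ is supported on $(E\times_k\widetilde X)\cup(\widetilde X\times_k E)$, and the issue is to further decompose \emph{this} correction term strictly. The exceptional divisors $E_i$ are smooth quadrics, whose cohomology is algebraic, torsion-free, and spanned by linear sections; combined with the assumed algebraicity and torsion-freeness of $H^{2\bullet}(\widetilde X)$, one can run Voisin's argument (the analogue of \cite[Thm.~1.12]{CTP16} and the corresponding step in \cite{voisinUniv}) using the long exact sequences \eqref{E:U=X-Zell} and \eqref{E:X1X2ell} for the decomposition $\widetilde X\times_k\widetilde X = \big((\text{blow-down locus})\big)\cup(\text{rest})$ and for $E = \bigsqcup E_i$, to express the class of the correction term, in $\mathcal H_{2d_X}(\widetilde X\times_k\widetilde X)(-d_X)$, as a sum of an algebraic cycle supported on a divisor times $\widetilde X$ and an algebraic cycle pulled back from a $0$-cycle. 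The crucial point — the main obstacle — is to lift a \emph{homological} identity to an identity of \emph{algebraic} cycle classes with the required support, which is exactly what algebraicity of $H^{2\bullet}(\widetilde X)$ (so that every even homology class of $\widetilde X\times\widetilde X$, via the K\"unneth decomposition controlled by $H^{\operatorname{even}}(\widetilde X)$ and the odd-degree terms, is represented by an actual cycle) and absence of torsion (so that homological and $\ell$-adic-homological vanishing suffice, and no torsion obstruction survives) are designed to provide. Once the correction term is shown to admit a strict homological decomposition, adding it to the pull-back of the decomposition of $\Delta_X$ yields the desired strict homological decomposition of $\Delta_{\widetilde X}$. I would organize the write-up so that the formal comparison-of-diagonals identities are isolated as a lemma, the quadric-cohomology input as a second lemma, and the Mayer--Vietoris/excision homology chase as the body of the proof.
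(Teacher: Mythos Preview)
Your approach is essentially the paper's (which in turn follows Voisin): push forward for the easy direction, and for the hard direction pull back the pieces, observe the discrepancy is supported on $(E\times_k\widetilde X)\cup(\widetilde X\times_k E)$, then use the exact sequences \eqref{E:U=X-Zell} and \eqref{E:X1X2ell} together with K\"unneth and algebraicity to rewrite that discrepancy as a sum of products of algebraic cycles.

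Two points need sharpening. First, in the easy direction $(\epsilon\times\epsilon)_*\Delta_{\widetilde X}=\Delta_X$ on the nose (degree-one proper birational), so no correction terms arise there. Second, and more substantively, your parenthetical justification in the hard direction---``every even homology class of $\widetilde X\times_k\widetilde X$ \ldots\ is represented by an actual cycle''---is false as stated: $\mathcal H_{2d_X}(\widetilde X\times_k\widetilde X)$ has odd-odd K\"unneth components such as $\mathcal H^3(\widetilde X)\otimes\mathcal H^{2d_X-3}(\widetilde X)$, which need not be algebraic. The K\"unneth argument is instead applied to $\mathcal H_{2d_X}(\widetilde X\times_k E)$ and $\mathcal H_{2d_X}(E\times_k\widetilde X)$, and works precisely because the smooth quadrics $E_i$ have \emph{no odd cohomology}; only even-even K\"unneth pieces occur, and both factors are algebraic (the $E_i$ automatically, $\widetilde X$ by hypothesis) and torsion-free. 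Relatedly, you should make explicit that the Mayer--Vietoris sequence \eqref{E:X1X2ell} lifts the correction class from the union $(\widetilde X\times_k E)\cup(E\times_k\widetilde X)$ to the direct sum $\mathcal H_{2d_X}(\widetilde X\times_k E)\oplus\mathcal H_{2d_X}(E\times_k\widetilde X)$ because the connecting term $\mathcal H_{2d_X-1}(E\times_k E)$ vanishes---again since quadrics have no odd cohomology.
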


\begin{rem}
	A similar result holds for Chow groups  \cite[Thm.~2.1]{voisinUniv}
	\cite[Thm.~1.14]{CTP16} \cite[Prop.~8]{HKT16}).
\end{rem}

\begin{proof}
	The case where $K=\mathbb C$ and cycles are taken in Betti homology is
	\cite[Thm.~2.1]{voisinUniv}.
	Since we have  \eqref{E:U=X-Zell} and \eqref{E:X1X2ell} in $\ell$-adic (and
	Borel--Moore)  homology, and the even degree cohomology of a smooth  quadric over an
	algebraically closed field is algebraic \cite[Exp.~XII,Thm.~3.3]{SGA7II},
        	Voisin's proof carries over essentially without
	change.  For convenience, we include the proof here. 
	 	
	We start with the easy direction.
Assume that $\Delta_{\widetilde X}\in\operatorname{CH}^{d_X}(\widetilde
X\times_k\widetilde X)$  has  a strict $R$-decomposition, $[\Delta_{\widetilde
 X}]=[\widetilde Z_1]+[\widetilde Z_2]$ with $\widetilde Z_1$ supported on
$\widetilde D\times_k \widetilde X$ and $\widetilde Z_2=\tilde
{\operatorname{pr}}_2^*[\tilde \alpha]$ for some zero-cycle class $\tilde
\alpha\in \operatorname{CH}_0(\widetilde X)$.   Now since $\epsilon$ is proper,
we have push forward in homology that is compatible with the cycle class maps
\cite[\S 6]{laumon76}.    Then we simply push-forward via the proper map
$\epsilon\times \epsilon$. More precisely, we  have $\Delta_X=(\epsilon\times
\epsilon)_*\Delta_{\widetilde X}$,  $Z_1:=(\epsilon\times\epsilon)_*\widetilde
Z_1$ supported on $D\times _kX$ where $D$ is the image of $\widetilde D$, and
$Z_2:=(\epsilon\times\epsilon)_*\tilde Z_2=(\epsilon\times
\epsilon)_*\tilde{\operatorname{pr}}_2^*\tilde \alpha =
{\operatorname{pr}}_2^*\epsilon_*\tilde \alpha$.  This provides the strict
decomposition for~$\Delta_X$.

Conversely,  assume we have a strict $R$-decomposition of the diagonal for $X$\,:
\begin{equation}\label{E:A-V-2.1-Eq(13)}
[\Delta_{X}]-[Z_1]-[Z_2]=0\in \mathcal H_{2d_X}(X\times_k X)(-d_X),
\end{equation}
where $Z_2=\operatorname{pr}_2^*\alpha$ for some zero-cycle class $\alpha\in
\operatorname{CH}_0(X)_R$.  For space, we will leave off all of the Tate twists in what follows.  

We denote by $x_i$  the singular points of $X$, $x$ the union of the singular points,
$Q_i$ the exceptional divisors (smooth quadrics)  for the resolution
$\epsilon:\widetilde X\to X$, and $Q$ the union of these quadrics.

There is a diagram of maps of homology groups\,:

$$
\xymatrix{
 \mathcal H_{2d_X}(X\times_k X)  \ar[r] &\mathcal H_{2d_X}(X\times_k X-(X\times_k
 x\cup X\times_k x))  \ar@{=}[d]\\
 \mathcal H_{2d_X}(\widetilde X\times_k \widetilde X) \ar[r]& \mathcal
 H_{2d_X}(\widetilde X\times_k \widetilde X-(\widetilde X\times_k Q\cup Q\times_k
 \widetilde X)) 
}
$$

Since $\epsilon:\widetilde X\to X$ is lci ($\widetilde X$ is smooth), we
may take the Gysin pull back $\widetilde Z_1$ of $\widetilde Z$,  and set
$\widetilde Z_2=\tilde{\operatorname{pr}}_2^* \epsilon^*\alpha$. We find that the classes 
$$
[\Delta_{X}]-[Z_1]-[ Z_2] \ \  \ \left(=0\in \mathcal H_{2d_X}(X\times_k X) \right)
$$
$$
[\Delta_{\widetilde X}]-[\widetilde Z_1]-[\widetilde Z_2] \ \ \ \left(\in \mathcal H_{2d_X}(\widetilde X\times_k
\widetilde X) \right)
$$
have the same image in $\mathcal H_{2d_X}(X\times_k X-(X\times_k x\cup X\times_k
x)) = \mathcal H_{2d_X}(\widetilde X\times_k \widetilde X-(\widetilde X\times_k
Q\cup Q\times_k \widetilde X)) 
$.

From the long exact sequence \eqref{E:U=X-Zell},
one obtains that
$$
[\Delta_{\widetilde X}]-[Z_1]-[Z_2]\in \mathcal H_{2d_X}(\widetilde X\times_k
\widetilde X) 
$$
comes from a homology class
$$
\beta\in \mathcal H_{2d_X}(\widetilde X\times_k Q\cup Q\times_k\widetilde X) .
$$
The next observation is that the closed subset $\widetilde X\times_k Q\cup
Q\times_k \widetilde X\subseteq \widetilde X\times_k \widetilde X$ is the union
of $\widetilde X\times_k Q$ and $Q\times_k \widetilde X$ glued along $Q\times_k
Q$, so that we have from \eqref{E:X1X2ell}
$$
\cdots\to  \mathcal H_{2d_X}(\widetilde X\times_k Q) \oplus \mathcal
H_{2d_X}(Q\times_k \widetilde X)\to \mathcal H_{2d_X}(\widetilde X\times_k Q\cup
Q\times_k \widetilde X) 
\to \mathcal H_{2d_X-1}(Q\times_k Q) \to \cdots
$$
As $Q\times Q=\coprod_{i,j}Q_i\times Q_j$, and $Q_i\times Q_j$ has trivial
homology in odd degree \cite[Exp.~XII, Thm.~3.3]{SGA7II}, 
 we conclude that $\mathcal H_{2d_X-1}(Q\times_k Q)=0$, so
that $\beta$ comes from a homology class
$$
\gamma=(\gamma_1,\gamma_2)\in \mathcal H_{2d_X}(\widetilde X\times_k Q) \oplus
\mathcal H_{2d_X}(Q\times_k \widetilde X).
$$
    
We now use the assumption made on $\widetilde X$, namely that its even degree
cohomology is algebraic.
As the cohomology of $Q$ has no torsion and is algebraic
\cite[Exp.~XII,Thm.~3.3]{SGA7II}
 (this is the only place  we are using that $k$ is algebraically closed),
we get by the K\"unneth decomposition that
$$
\mathcal H_{2d_X}(Q\times_k \widetilde X) = \mathcal H^{2d_X-2}(Q\times_k \widetilde X)=\bigoplus_{0\le 2i\le 2d_X-2}\mathcal
H^{2i}(Q)\otimes \mathcal H^{2d_X-2-2i}(\widetilde X)
$$
is generated by classes of algebraic cycles $z_j\times_k z_j'\subseteq Q\times_k
\widetilde X$, and similarly for $\widetilde X\times_k Q$.

Putting everything together, we get an equality
$$
\Delta_{\widetilde X}-[Z_1]-[Z_2]=\sum n_j[z_j\times_k z_j']
+\sum n_j'[z_j'\times_k z_j] \in \mathcal H_{2d_X}(\widetilde X\times_k \widetilde
X).
$$
This provides us with an integral cohomological decomposition of the diagonal.
Indeed, all the cycle classes of the form $[\widetilde {\mathcal X_{\bar s}}\times_k pt]$ are
cohomologous and they have to sum-up to zero, while all the other terms
$[z_k'\times_k z_k]$ with $\dim z_k'<n$ are supported on $D\times_k \widetilde X$
for some closed algebraic subset $D\subsetneq \widetilde X$.
\end{proof}

\subsection{Homological decomposition of the diagonal and specialization}

\subsubsection{Specialization and cycle class maps in $\ell$-adic homology}

Let  $B$ be a smooth variety of dimension~$1$ over a
field $K$,  let  $f: \mathcal X \to  B$ be a smooth morphism.  Setting $X_{\bar
	\eta}$ to be the geometric generic fiber, and $X_{b}$ to be the fiber over a
$\bar K$-point $b\in B(\bar K)$, one has a commutative diagram
\cite[Exa.~20.3.5]{fulton} (see \cite[p.65]{fultonIHES75} and \cite[SGA6,
Exp.~X, 7.13-7.16]{SGA6})
\begin{equation}\label{E:FultSpec}
\xymatrix{
	\operatorname{CH}_{n}(\mathcal X_{\bar \eta}) \ar[r]^{\mathrm{sp}} \ar[d]^{[-]}&
	\operatorname{CH}_{n}(X_{b}) \ar[d]_{[-]}\\
	\mathcal H^{2n}(\mathcal X_{\bar \eta}) \ar@{=}[r]& \mathcal H^{2n}(X_{b})
}
\end{equation}
where the top arrow is the specialization map of \cite[\S 20.3]{fulton}, and the
bottom equality comes from proper base change.
If $K=\mathbb C$, and we identify $\bar{\kappa(\eta)}=\mathbb C$, then we have
the same result in Borel--Moore homology.

If more generally we want to consider  a morphism
$f : \mathcal X \to  B$ of finite type, then there is a specialization map in
$\ell$-adic homology making the following diagram commute\,:

\begin{equation}\label{E:SpCycEll}
\xymatrix{
	\operatorname{CH}_{n}(\mathcal X_{\bar \eta}) \ar[r]^{\mathrm{sp}} \ar@{->>}[d]^{[-]}&
	\operatorname{CH}_{n}(X_{b}) \ar[d]_{[-]}\\
	\mathcal H_{2n}(\mathcal X_{\bar \eta})' \ar[r]^{\mathrm{sp}}& \mathcal H_{2n}(X_{b})
}
\end{equation}
where $\mathcal H_{2n}(X_{\bar \eta})'\subseteq \mathcal H_{2n}(X_{\bar \eta})$
is  the image of the cycle class map.  Again, if $K=\mathbb C$, and we identify
$\bar{\kappa(\eta)}=\mathbb C$, then we have the same result in Borel--Moore
homology.
Since there does not appear to be a reference in the literature, we explain this
now.
Setting $B^\circ=B-\{b\}$ and $f^\circ :\mathcal X^\circ =B^\circ\times
_B\mathcal X\to B^\circ$ to be the restriction,  we obtain a commutative diagram
$$
\xymatrix@C=.5em@R=.5em{
	&&&&&\operatorname{CH}_n(X_{b}) \ar@{-}[d]&\\
	\operatorname{CH}_n(X_{b})\ar[rr]_{i_*}
	\ar[dd]^{[-]}&&\operatorname{CH}_n(\mathcal X/B) \ar[rr]_{j^*}\ar[dd]^{[-]}
	\ar[rrru]^{i^!}&&\operatorname{CH}_n(\mathcal
	X^\circ/B^\circ)\ar[rr]\ar[dd]^{[-]} \ar@{-->}[ru]_{\mathrm{sp}} & \ar[d]^{[-]}&0\\
	&&&&&\mathcal H_{2n}(X_{b})&\\
	\mathcal H_{2n}(X_{b})\ar[rr]_{i_*}&&\mathcal H_{2n}(\mathcal X) \ar[rr]_{j^*}
	\ar[rrru]^{i^!}&&\mathcal H_{2n}(\mathcal X^\circ) &&\\
}
$$
where the horizontal sequences are exact (\cite[Prop.~1.8]{fulton} and
\eqref{E:U=X-Zell}). Since  $i^!i_*=0$ for Chow groups \cite[\S 20.3]{fulton},
the top of this diagram gives the definition of the specialization map \cite[\S
20.3]{fulton}.  The compatibility of the cycle class maps with proper push
forward and flat pull back is standard \cite[\S 6]{laumon76}.

On the images of the cycle class maps in homology, by commutativity, we have
$i^!i_*=0$, so that from the diagram above,  we can define the specialization
map in homology\,:
$$
\xymatrix{
	\operatorname{CH}_n(\mathcal X^\circ/B^\circ) \ar[r]^{\mathrm{sp}} \ar@{->>}[d]^{[-]}&
	\operatorname{CH}_n(X_{b}) \ar[d]^{[-]}\\
	\mathcal H_{2n}(\mathcal X^\circ)'  \ar[r]^{\mathrm{sp}}& \mathcal H_{2n}(X_{b})
}
$$
We obtain \eqref{E:SpCycEll} by spreading cycle classes after finite base
changes, as in \cite[Exa.~20.3.8]{fulton}.

\begin{rem}\label{R:DimB>1}
	Let $f:\mathcal X\to B$ be a smooth morphism of smooth
	varieties of finite type over a field~$K$, and let $B'\subseteq B$ be a closed
	regular embedding of codimension~$1$ with trivial normal bundle. Let $\eta$
	(resp.~$\eta'$) be the generic point of $B$ (resp.~$B'$).  The arguments above
	generalize to this setting (see \cite[Exa.~20.3.8]{fulton}) to give a
	specialization map
	\begin{equation*}
	\xymatrix{
		\operatorname{CH}_{n}(\mathcal X_{\bar \eta}) \ar[r]^{\mathrm{sp}} \ar@{->>}[d]^{[-]}&
		\operatorname{CH}_{n}(\mathcal X_{\bar \eta '}) \ar[d]_{[-]}\\
		\mathcal H_{2n}(\mathcal X_{\bar \eta})' \ar[r]^{\mathrm{sp}}& \mathcal H_{2n}(\mathcal
		X_{\bar \eta '})'
	}
	\end{equation*}
	For any $b\in B(\bar K)$, if we iteratively take smooth subvarieties $b\in
	B'\subseteq B$, and restrict to Zariski open subsets to trivialize the normal
	bundle, we obtain a restriction map \eqref{E:SpCycEll} even if $\dim B>1$.
\end{rem}

\subsubsection{Application: strict decomposition of the diagonal and specialization}

\begin{teo}\label{T:DecDiagDeg}
	Let $B$ be a smooth integral variety over a  field $K$,  let  $\pi : \mathcal X
	\to  B$ be a   flat projective morphism of relative dimension $d$.
	If there exists a $\bar K$-point $b\in B(\bar K)$  such that for the fiber
	$ X_{b}$ the class of the diagonal
	$\Delta_{X_b}\in \operatorname{CH}^{d}(X_b\times_{\bar K}X_b)$
	does not admit a strict homological $R$-decomposition,
	 then the same is true for the geometric
	generic  fiber $X_{\bar {\eta}}$.
\end{teo}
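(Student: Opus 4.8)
The plan is to argue by specialization of cycle classes, using the compatibility of the cycle class map in $\ell$-adic (or Borel--Moore) homology with the specialization map established in diagram~\eqref{E:SpCycEll} and Remark~\ref{R:DimB>1}. The contrapositive formulation is cleaner: assuming the geometric generic fiber $X_{\bar\eta}$ \emph{does} admit a strict homological $R$-decomposition of the diagonal, I want to produce such a decomposition for every fiber $X_b$, $b\in B(\bar K)$. First I would spread out: a strict homological $R$-decomposition $[\Delta_{X_{\bar\eta}}] = [Z_1] + [Z_2]$ with $Z_1$ supported on $D\times X_{\bar\eta}$ for a divisor $D\subseteq X_{\bar\eta}$ and $Z_2 = \mathrm{pr}_2^*\alpha$ for a zero-cycle $\alpha$, is defined over some finitely generated field extension of $\kappa(\eta)$, hence, after a finite base change $B'\to B$ (which does not affect the geometric generic fiber, and lets us assume a section through any chosen point), extends to a strict homological $R$-decomposition over a dense open $U\subseteq B$; here one takes the closure of $D$ inside $\mathcal X_U$, the closure of the cycles $Z_1, Z_2$, and notes that the defining homological relation~\eqref{E:DefDcp3} holds on the generic fiber and hence, after further shrinking $U$, on $\mathcal X_U$ itself (this uses that $\mathcal H_\bullet$ is finitely generated and that specialization of homology classes in the image of the cycle class map is well-behaved).

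Next I would propagate from the open set $U$ to an arbitrary point $b\in B(\bar K)$ by the standard device of connecting $b$ to $U$ through a chain of codimension-one subvarieties, restricting to Zariski opens at each stage to trivialize normal bundles, exactly as in Remark~\ref{R:DimB>1} and \cite[Exa.~20.3.8]{fulton}. At each step the specialization map on Chow groups carries the decomposition of the diagonal of the generic fiber of that stratum to the decomposition of the diagonal of the next (smaller) stratum: the diagonal specializes to the diagonal because $\mathcal X\to B$ has a relative diagonal, $Z_1$ specializes to a cycle still supported on (the specialization of) $D$ times the fiber — the support condition is preserved under Gysin specialization since specialization commutes with proper pushforward — and $Z_2 = \mathrm{pr}_2^*\alpha$ specializes to $\mathrm{pr}_2^*(\mathrm{sp}\,\alpha)$, again of strict type. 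Crucially, the homological identity~\eqref{E:DefDcp3} is preserved because the square in~\eqref{E:SpCycEll} commutes: applying $\mathrm{sp}$ to the equality $[\Delta] - [Z_1] - [Z_2] = 0$ in $\mathcal H_{2d}(\mathcal X_{\bar\eta}\times\mathcal X_{\bar\eta})'$ yields the corresponding equality in $\mathcal H_{2d}(X_b\times X_b)$. Iterating down the chain of strata reaches $X_b$ and furnishes the desired strict homological $R$-decomposition there, contradicting the hypothesis.

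The main obstacle I anticipate is purely the bookkeeping of the support conditions under specialization on a possibly singular total space $\mathcal X$: the statement only assumes $\pi$ flat projective, not smooth, so one cannot directly Gysin-pull-back along $\mathcal X_b\hookrightarrow \mathcal X$, and one must instead work with the specialization map of \cite[\S 20.3]{fulton} (via deformation to the normal cone of a regularly embedded $B'\subseteq B$ with trivial normal bundle, pulled back to $\mathcal X$), checking that it respects the closed subschemes $D\times \mathcal X$ and $\mathcal X\times(\text{points})$ along the family. This is routine given the compatibilities recorded in the paper — proper pushforward, flat pullback, and the refined Gysin maps all commute with specialization — but it requires care to phrase ``supported on $W_1\times X$'' in a way stable under taking closures and then specializing, which is where I would spend the most attention. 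I would also note that the argument is entirely parallel for $R_{\mathcal H} = \mathbb Z$ with Borel--Moore homology (identifying $\overline{\kappa(\eta)} = \mathbb C$) and for $R_{\mathcal H} = \mathbb Z_\ell$ with $\ell$-adic homology, since \eqref{E:SpCycEll} and Remark~\ref{R:DimB>1} are established in both settings.
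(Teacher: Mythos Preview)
Your proposal is correct and follows essentially the same approach as the paper: prove the contrapositive, spread the strict homological decomposition from the geometric generic fiber over a finite base change of $B$, and then use the compatibility of the cycle class map with specialization \eqref{E:SpCycEll} (together with Remark~\ref{R:DimB>1}) to conclude that the decomposition specializes to every closed fiber. The paper streamlines slightly by first reducing to $\dim B=1$ via Remark~\ref{R:DimB>1} and then performing a single specialization, rather than your intermediate step of extending to an open $U$ and iterating down a chain of strata, but this is a cosmetic difference.
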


\begin{rem}
	A similar result holds for Chow groups  \cite[Thm.~2.1]{voisinUniv}
	\cite[Thm.~1.14]{CTP16} \cite[Thm.~9]{HKT16}.
\end{rem}

\begin{proof}
	Using Remark~\ref{R:DimB>1},  it suffices to prove the theorem for $\dim B=1$.
	The case where $K=\mathbb C$ is a special case of  \cite[Thm.~2.1]{voisinUniv}.
	We proceed to give a similar proof in the setting of $\ell$-adic homology.

	We start with a few simplifying assumptions.
	Since the algebraic closure of the function field of $B$ is isomorphic to the
	algebraic closure of the function field of $B_{\bar K}$,
	 	we may assume that
	$K=\bar K$.
	Also, since the geometric generic fiber does not change after finite base
	change, we are free to make finite base changes.

	We will now prove the contrapositive of the theorem, and therefore we  start by
	assuming
	$\Delta_{X_{\bar {\eta}}}\in \operatorname{CH}^{d}(X_{\bar {\eta}}\times_{\bar
		{\eta}}X_{\bar {\eta}})
	$
	admits a strict homological $R$-decomposition.  By the finite type hypotheses,
	there is some finite extension of the function field $\kappa(\eta)$ of $B$
	over which the decomposition is defined. Therefore, after spreading,  we may assume
	that after a finite surjective base change $B'\to B$ there exist a divisor
	$\mathcal D'\subseteq \mathcal X':=B'\times _B\mathcal X$, which we may assume
	to contain no fiber of $\mathcal X'\to B'$, a cycle $\mathcal Z_1\in
	\operatorname{CH}^{d}(\mathcal X'\times_{B'}\mathcal X')$ supported on $\mathcal
	D'\times_{B'}\mathcal X'$, and a zero cycle $\alpha'\in
	\operatorname{CH}_0(\mathcal X')$ of relative degree-$1$ such that setting
	$\mathcal Z_2=\operatorname{pr}_2^*\alpha'$, we have
	$$[\Delta_{X'_{\bar \eta}}]-[(\mathcal Z_1)_{\bar \eta}]-[(\mathcal Z_2)_{\bar
		\eta}]=0\in \mathcal H_{2d}(X_{\bar \eta}'\times_{\bar {\kappa(\eta)}} X_{\bar
		\eta}')(-d).$$
	Since we are free to make finite base changes, to simplify the notation we relabel $B'$ as $B$ and continue the proof.
	  	We claim that at every $\bar K$-point $b\in B(\bar K)$, we have
	$$[\Delta_{X_{b}}]-[(\mathcal Z_1)_{b}]-[(\mathcal Z_2)_{b}]=0\in \mathcal
	H_{2d}(X_b\times_{\bar K}X_b)(-d).$$
	But this follows from the compatibility of the cycle class map with
	specialization given in \eqref{E:SpCycEll}.
\end{proof}

 \section{Quartic double solids and the proof of Theorem \ref{T:Intro-QDS-1}}
 \label{S:QDS}

 We now use the results developed so far to show that there exist unirational
 threefolds in positive characteristic that  have no universal
 codimension-$2$ cycle class.

 The starting point is the result of Artin and Mumford, that over an algebraically
 closed field $k$ of characteristic $\ne 2$ there exists a quartic double solid
 $X$ with exactly $10$ singular points,  in special position, all of which are nodes\,; i.e, a
 double cover of $\mathbb P^3_k$ branched along a quartic with $10$ nodes in
 special position, such that the standard resolution of singularities
 $\epsilon:\widetilde X\to X$ has non-trivial  torsion in cohomology: for
 $\mathcal H^\bullet (-)$ given by Betti cohomology $H^\bullet(-,\mathbb Z)$, or
 $2$-adic cohomology $H^\bullet(-,\mathbb Z_2)$, we have
 $\operatorname{Tors}\mathcal H^4(\widetilde X)\ne 0$ \cite[Prop.~3, \S
 4]{ArtMum72}.
 It follows from Corollary~\ref{C:V-AJ4.4} that the class of the diagonal
 $[\Delta_{\widetilde X}]\in \mathcal H^{6}(\widetilde X\times_k \widetilde
 X)(3)$ does not admit an  $R_{\mathcal H}$-decomposition.

We use this as a starting point for the investigation of
desingularizations of quartic double solids with at most $9$ nodes.
In order to connect such threefolds to the Artin--Mumford example we take
a brief excursion into the moduli spaces of (lattice-polarized) K3
surfaces in Section  \ref{S:K3}.

\subsection{Nodal quartic surfaces}
\label{S:K3}

Let $k$ be an algebraically closed field with $\operatorname{char}(k)\neq 2$.  
Let $Y \subset \proj^3_k$ be a quartic surface smooth away from
 rational double points $P_1, \ldots, P_n \in Y(k)$, with $n \ge 1$.  Let $\varpi:
 \widetilde Y \to Y$ be the minimal resolution of $Y$, obtained by blowing
 up the $n$ nodes.  Then $\widetilde Y$ is a smooth K3 surface.   Let
 $\lambda = \varpi^* \mathcal O_{\proj^3}(1) \in \pic(\widetilde Y) \iso
 \operatorname{NS}(\widetilde Y)$, and for $1 \le i \le n$
 let $\epsilon_i$ be the class of $\varpi^{-1}(P_i)$ in $\pic(\widetilde
 Y)$.  Under the intersection pairing, we have $(\lambda,\lambda) =
 4$\,; $(\epsilon_i,\epsilon_i) = -2$\,; $(\lambda,\epsilon_i) = 0$\,; and,
 if $i\not = j$, then $(\epsilon_i,\epsilon_j) = 0$.  Therefore, the $\integ$-span of $\left\{\lambda, \epsilon_1, \ldots, \epsilon_n\right\}$ is a primitive sub-lattice of $\pic(\widetilde Y)$ of rank $n+1$.
Moreover,
 for $N\gg 0$, $N\lambda - \sum \epsilon_i$ is (very) ample.  In particular, the lattice spanned by  $\left\{\lambda, \epsilon_1, \ldots, \epsilon_n\right\}$ contains the class of a polarization.

With this notation, it is not hard to show:

\begin{lem}
\label{L:smoothnodes}
Let $k$ be an algebraically closed field.  Suppose that either $\operatorname{char}(k) \not = 2$ and $R = k\powser T$, or that $\operatorname{char}(k) = p>2$ and that $R$ is the ring of Witt vectors $\ww(k)$.  Let $B =\operatorname{Frac}(R)$.

Let $Y\subset \proj^3_k$ be a quartic
surface which has $n$ rational double points and is smooth elsewhere. Then for each $0 \le m \le n$, there exists a deformation $\mathcal Y/R$ of $Y/k$ such that $\mathcal Y_B$ has exactly $m$ nodes.
\end{lem}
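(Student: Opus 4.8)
The statement is purely about deformations of a quartic surface with rational double points, so the strategy is to work with the miniversal deformation of the singularities and compare it with the deformations coming from the embedded family of quartics. First I would recall that a rational double point (ADE singularity) has a miniversal deformation over a base that is smooth, and whose discriminant---the locus where the fiber remains singular---has a well-understood structure; crucially, inside this base one can find, for each $0 \le m \le n_i$ (where $n_i$ counts the nodes into which the $i$-th singular point degenerates, or simply $n_i=1$ when $P_i$ is already a node), a one-parameter family whose generic fiber has exactly the prescribed number of nodes. The key input is that for an $A_1$-singularity the miniversal deformation is one-dimensional, $xy = z^2 + t$, so a node either stays or smooths; more generally any rational double point can be partially smoothed to any intermediate configuration of nodes. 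I would cite the standard deformation theory of rational double points for this (Artin, Brieskorn, or the treatment in the references already invoked in the paper, e.g.\ via \cite{HartDef}).

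The second step is to globalize: since $Y \subset \proj^3_k$ is a quartic with isolated rational double points, the restriction map from the deformations of $Y$ (as an abstract surface, or as a surface in $\proj^3$) to the product of the miniversal deformations of its individual singular points is smooth---this is a standard consequence of $H^2(Y, T_Y)$-type vanishing, or more directly of the fact that quartic surfaces in $\proj^3$ form an ample enough linear system that local deformations of the singularities are unobstructed and realized by global deformations (the relevant surjectivity statement for nodal quartic surfaces is classical and also appears implicitly in \cite{voisinUniv}). Concretely, the family of all quartic surfaces in $\proj^3$ maps to the deformation space of the singular locus, and the image contains an open neighborhood of the origin; hence any one-parameter deformation of the singularities chosen in the first step is induced by a one-parameter family of quartic surfaces. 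Over the specified base ring $R$ (either $k\powser{T}$ in equal characteristic, or $\ww(k)$ in mixed characteristic, where one must check the deformation problem is unobstructed over $\integ_p$ as well---this follows since rational double points lift to characteristic zero and the obstruction spaces vanish for $p > 2$), this produces the desired $\mathcal Y / R$ whose generic fiber $\mathcal Y_B$ has exactly $m$ nodes.

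The third step is to verify that the generic fiber has \emph{exactly} $m$ nodes and no worse singularities. Here I would argue that a very general deformation in the chosen stratum of the discriminant acquires precisely the generic singularity type of that stratum: the locus in the deformation space where the fiber has exactly $m$ nodes (an $A_1^m$-configuration) is a locally closed stratum of positive dimension for each $0 \le m \le n$, and a one-parameter arc through the origin can be chosen meeting this stratum in its generic point. The smoothness of the total space $\mathcal Y$ over $R$ away from the $m$ marked nodes, and the nodality of those, is then read off from the local normal form. The main obstacle, I expect, is the mixed-characteristic case: one must ensure that the miniversal deformation of the rational double points and the smoothness of the forgetful map both behave well over $\ww(k)$ rather than just over a field, i.e.\ that there is no obstruction to lifting the chosen partial smoothing to characteristic zero. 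For $p > 2$ this is fine because ADE singularities are insensitive to the characteristic in this range and their deformation theory is unobstructed, but this is the point that requires the most care and is why the hypothesis $\operatorname{char}(k) = p > 2$ appears.
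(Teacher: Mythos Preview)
Your approach is correct but genuinely different from the paper's. You work directly on the singular quartic $Y$, using the versal deformation of each node together with the surjectivity of the restriction map from embedded (or abstract) deformations of $Y$ to the product of local $T^1$'s; the latter surjectivity does follow from the $H^2(Y,\Theta_Y)$-vanishing you cite (equivalently, from the nodes imposing independent conditions on quartics), and in mixed characteristic this lifts by Nakayama since the Hilbert scheme of quartics is smooth over $\ww(k)$ and $A_1$-deformations are unobstructed for $p>2$. The paper instead passes to the minimal resolution $\widetilde Y$, a smooth K3 surface, and invokes Deligne's lifting theory: the formal deformation space of $\widetilde Y$ is smooth of relative dimension $20$ over $\ww(k)$, and each of the classes $\lambda,\epsilon_1,\ldots,\epsilon_n\in\pic(\widetilde Y)$ cuts out a smooth divisor there; one then chooses a $\ww(k)$-point (or $k\powser{T}$-point) of the locus where exactly $\lambda,\epsilon_1,\ldots,\epsilon_m$ extend, algebraizes via the ample class $N\lambda-\sum_{i\le m}\epsilon_i$, and contracts the surviving $(-2)$-curves to recover a quartic with exactly $m$ nodes. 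Your route is more elementary and self-contained; the paper's route has the advantage that it is phrased in exactly the language (lattice-polarized K3 surfaces and their period domains) needed for the subsequent irreducibility statement, Proposition~\ref{P:nnodesirred}, where the same classes $\lambda,\epsilon_i$ define the lattice $L_n$ and the relevant Shimura variety.
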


\begin{proof}
  This follows from the deformation theory for K3 surfaces worked out
  in \cite{delignelift}\,; see, \emph{e.g.}, \cite[Prop. 3.8]{achteroccult} for
  details.  We assume $\operatorname{char}(k) = p>2$, since the deformation theory in \cite{delignelift} is built in analogy to the well-known classical case over the complex numbers.
  Let $R_\univ = \ww(k)\powser{T_1, \ldots, T_n}$, and choose an isomorphism $\defo(\widetilde Y) \iso \spf R_\univ$.  We have seen that the collection 
  $\left\{\lambda, \epsilon_1, \ldots, \epsilon_n\right\}$ is a linearly independent set of primitive elements of $\pic(\widetilde Y)$.
  Consequently, there exist $f_0, \ldots, f_n\in R_\univ$ such that, for $0 \le m \le n$, $R_\univ/(f_0, \ldots, f_m)$ is smooth over $\ww(k)$ of relative dimension $19-m$\,; and if $A$ is an Artinian algebra, and if $\mu:R_\univ \to A$ is a deformation of $\widetilde Y$ to $A$, then
   $\epsilon_i$ (resp.~$\lambda$) extends to $\widetilde{\mathcal
     Y}_A$ if and only if $\mu(f_i)=0$ (resp. $\mu(f_0)=0$).  In
   particular, $\defo(\widetilde Y, \left\{\lambda, \epsilon_1, \ldots, \epsilon_m\right\}) \iso \spf R_\univ/(f_0, \ldots, f_m)$.

  So, let $\mathfrak p$ be the maximal ideal of $R$, and fix $0 \le m \le n$.
  Choose a compatible family of surjections $\mu_j: R_\univ \to R/(\mathfrak p)^j$ such that $\mu_j(f_i) =0$ if and only if $i \le m$. We obtain a formal deformation of $\widetilde Y$ to $\spf R$.  Moreover, since (for $N\gg 0$ ) $N\lambda- \sum_{i=1}^m\epsilon_i$ is ample on the generic fiber, the formal deformation algebraizes to yield an  algebraic deformation $\widetilde{\mathscr Y}/R$ of $\widetilde Y$ over $R$.  The only $(-2)$-curves on $\widetilde{\mathscr Y}_B$ are the curves representing $\epsilon_1, \ldots, \epsilon_m$\,; contracting these -- equivalently, mapping $\widetilde{\mathscr Y}$ to $\mathbb P^3_R$ using the quasi-ample line bundle $\lambda$ -- gives the desired deformation of $Y$.
  \end{proof}

Already, this is adequate for producing examples of Theorem \ref{T:Intro-QDS-1}.  To show that for $m \le n$ an arbitrary $m$-nodal quartic surface degenerates to an arbitrary $n$-nodal quartic requires a brief detour into the moduli theory of K3 surfaces.

Let $\mathsf R_{4,\ge n}\ /\ \integ[1/2]$ be the moduli space of quartic surfaces with at least $n$ rational double points.  Our goal is to show\,:

\begin{pro}
  \label{P:nnodesirred}
  If $n< 10$, then each fiber of $\mathsf R_{4, \ge n} \to \spec \integ[1/2]$ is geometrically irreducible.
\end{pro}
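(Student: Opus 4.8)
The plan is to reduce the irreducibility of $\mathsf R_{4,\ge n}$ to a statement about a moduli space of lattice-polarized K3 surfaces, which is governed by the theory of periods and, in mixed characteristic, by the Shimura-variety structure on the relevant orthogonal period domains. First I would set up the lattice-theoretic framework: for a quartic surface $Y$ with exactly $n$ rational double points, the minimal resolution $\widetilde Y$ carries the primitive rank-$(n+1)$ sublattice $M_n := \langle \lambda, \epsilon_1,\dots,\epsilon_n\rangle \subset \operatorname{NS}(\widetilde Y)$ with the intersection form computed in \S\ref{S:K3} ($\lambda^2 = 4$, $\epsilon_i^2 = -2$, all other products zero). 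The key point is that this abstract lattice $M_n$ is unique up to isometry (it is a diagonalizable lattice), and it contains an ample class, so there is a moduli space (Deligne--Mumford stack, or coarse space) $\mathcal M_{M_n}$ of $M_n$-polarized K3 surfaces over $\integ[1/2]$. I would then exhibit a dominant morphism $\mathcal M_{M_n} \dashrightarrow \mathsf R_{4,\ge n}$ (contract the $(-2)$-curves representing $\epsilon_1,\dots,\epsilon_n$ and map via $\lambda$ to $\mathbb P^3$, exactly as in the proof of Lemma~\ref{L:smoothnodes}), whose image is dense; hence it suffices to prove that each geometric fiber of $\mathcal M_{M_n} \to \spec\integ[1/2]$ is irreducible.

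The second step is to prove irreducibility of the fibers of $\mathcal M_{M_n}$. Over $\cx$ this is classical period theory: the period domain for $M_n$-polarized K3s is the (connected component of a) Hermitian symmetric domain of type IV attached to the transcendental lattice $T_n := M_n^\perp$ inside the K3 lattice $\Lambda_{K3} = U^{\oplus 3}\oplus E_8(-1)^{\oplus 2}$, and $\mathcal M_{M_n, \cx}$ is an arithmetic quotient of it, hence irreducible provided the lattice embedding $M_n \hookrightarrow \Lambda_{K3}$ is unique up to isometry and the relevant orthogonal group acts transitively on the appropriate set of connected components — this uses Nikulin's theory of discriminant forms together with the surjectivity of the period map (the Torelli theorem). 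For the mixed-characteristic and positive-characteristic statement, I would invoke the Kuga--Satake construction and the work realizing $\mathcal M_{M_n}$ (away from $2$, and for $n<10$ so that $\operatorname{rk} T_n \ge 11 > 2$) as an open substack of an integral canonical model of a Shimura variety of orthogonal type; these models are known to be smooth over $\integ[1/2]$ with geometrically irreducible fibers by the work of Madapusi Pera (building on Kisin), exactly as used in \cite{achteroccult}. The hypothesis $n<10$ is what keeps $T_n$ of rank $\ge 3$ (in fact $\ge 12$), so that the associated orthogonal group is of the right type for this machinery and the discriminant-form uniqueness arguments apply.

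The main obstacle — and the step requiring the most care — is controlling the component structure in small positive characteristic, i.e. verifying that the reduction mod $p$ of the (geometrically irreducible) generic fiber stays irreducible and that no ``extra'' components of $n$-nodal quartics appear that are not dominated by $\mathcal M_{M_n}$. Concretely, one must rule out that a family of $n$-nodal quartics in characteristic $p$ could fail to lift, or could have the $\epsilon_i$ behave pathologically; this is handled by the good-reduction statement for the orthogonal Shimura variety together with Lemma~\ref{L:smoothnodes}, which already shows that any $n$-nodal quartic over $k$ deforms within the stratum, establishing that the $n$-nodal locus is the closure of its smooth (exactly-$n$-nodal) part and meets the same component. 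I would also need to check the elementary but slightly fussy point that the rational map $\mathcal M_{M_n}\dashrightarrow \mathsf R_{4,\ge n}$ is dominant onto an \emph{irreducible} component hitting the generic quartic with $\ge n$ nodes, rather than onto a proper subvariety — this follows by a dimension count ($\dim \mathsf R_{4,\ge n} = 34 - n$ matches $\dim \mathcal M_{M_n} = 19 - n + (\text{automorphisms of }\mathbb P^3)$ appropriately, once one accounts for the $\mathrm{PGL}_4$-action) together with the fact that the generic such K3 has Néron--Severi lattice exactly $M_n$, so its image quartic has exactly $n$ nodes. Assembling these gives geometric irreducibility of every fiber of $\mathsf R_{4,\ge n}\to\spec\integ[1/2]$ for $n<10$.
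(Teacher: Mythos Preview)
Your proposal is correct and follows essentially the same route as the paper: pass to the moduli space of $L_n$-lattice-polarized K3 surfaces, use uniqueness of the primitive embedding $L_n\hookrightarrow\Lambda_{K3}$ for $n\le 9$ to get irreducibility over~$\cx$ via the period map, and then invoke Madapusi Pera's work on integral canonical models of orthogonal Shimura varieties (together with an immersion of the lattice-polarized K3 moduli into the Shimura variety) to propagate irreducibility to every fiber over~$\integ[1/2]$. The paper's execution differs only in that it establishes a clean isomorphism $\mathsf R^\circ_{L_n}\cong\mathsf R_{4,=n}$ on open loci rather than your dominant-map-plus-dimension-count (which, incidentally, conflates the Hilbert-scheme dimension $34-n$ with the moduli-space dimension $19-n$), and it proves the immersion $\tau_{L_n}:\mathsf R_{L_n}\hookrightarrow\shim^{L_n}$ explicitly by factoring through the ordinary polarized moduli space $\mathsf R_{2d}$ for a suitable~$d$ prime to~$p$.
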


\begin{proof}
Before proceeding, it may be worth recalling Madapusi Pera's strategy for
showing that away from characteristics dividing $2d$, the moduli space $\mathsf
R_{2d}$ of quasipolarized K3 surfaces of degree
$2d$, is irreducible \cite[Cor.~5.16]{perak3}.  The well-known period map for K3 surfaces
realizes $\mathsf R_{2d}(\cx)$ as an arithmetic quotient of a
Hermitian symmetric domain.  This quotient admits a canonical model
over $\integ[1/2d]$\,; the existence of a good arithmetic
compactification for such an orthogonal Shimura variety implies, by
Zariski's main theorem, that the space stays irreducible upon
reduction modulo a prime.  We adopt a similar strategy here, using the
notion of a lattice-polarized K3 surface.

Initially, since the maximal number of nodes on a quartic surface is achieved by the $16$ nodes of a Kummer surface, we merely assume that $1 \le n \le 16$.
Let $L_n$ be the free $\integ$-module generated by symbols $\ell, e_1,
\ldots, e_n$, equipped with the pairing $(\ell,\ell) = 4$\,; $(e_i,e_i)
= -2$\,; and all other pairings are zero.  Then $L_n$ is a lattice of
rank $n+1$ and signature $(1,n)$.

Now let $Y/k$ be a quartic surface with $n$ rational double points.  A labeling of the these double
points -- equivalently, a labeling of $n$ exceptional curves in the
minimal resolution $\widetilde Y \to Y$ -- induces a primitive embedding
of lattices
\[
\xymatrix@R=0.5em{
L_n \ar[r]^-\alpha & \pic(\widetilde Y) \\
\ell \ar@{|->}[r] & \lambda \\
e_i \ar@{|->}[r] & \epsilon_i.
}
\]
Moreover, $\lambda$ is a quasi-polarization on $\widetilde
Y$\,; and if $Y$ is smooth away from the $n$ double points, then $\alpha(L_n)$ contains the ample classes
$N\lambda - \sum \epsilon_i$  for $N\gg 0$.

In short, $(\widetilde Y,\alpha)$ is an element of
$\mathsf R_{L_n}(k)$, where $\mathsf R_{L_n}$ is the moduli space of
K3 surfaces (quasi-) polarized by the lattice $L_n$ (\cite{achteroccult,
  dolgachevlattice}). Let $\mathsf R_{4,\ge n}$ be the moduli space of
quartic K3 surfaces with at least $n$ rational double points.
Contracting the classes represented by $\alpha(e_1), \ldots,
\alpha(e_n)$ defines a morphism
\[
\xymatrix{
\beta:\mathsf R_{L_n} \ar[r] & \mathsf R_{4,\ge n}
}
\]
which restricts to an isomorphism
\[
\xymatrix{
\mathsf R^\circ_{L_n} \ar[r] & \mathsf R_{4,=n},
}
\]
where the source is the moduli space of K3 surfaces with an ample
lattice polarization by $L_n$, and $\mathsf R_{4,=n}$ is the space of
quartic surfaces in $\proj^3$ with exactly $n$ rational double
points.  By Lemma \ref{L:smoothnodes}, $\mathsf R_{4,=n}$ is fiberwise
(over $\integ[1/2]$)
dense in $\mathsf R_{4, \ge n}$, and thus $\mathsf R^\circ_{L_n}$ is
fiberwise dense in $\mathsf R_{L_n}$.

It thus suffices to show that each fiber of $\mathsf R_{L_n}$ is
geometrically irreducible\,; it is now that we start assuming $n \le 9$.
Then there is a unique primitive embedding of $L_n$ into the standard K3 lattice (\emph{e.g.}, \cite[Thm.~14.1.12]{huybrechtsk3}).
Consequently, over $\cx$, there exist a Hermitian
symmetric domain $\dd^{L_n}$ of type IV, and an arithmetic group of
automorphisms $\Gamma^{L_n}$ of $\dd^{L_n}$, such that the complex
period map yields an isomorphism~\cite[Thm.~10.1]{dolgachevkondo}
\[
\xymatrix{
\mathsf R_{L_n}(\cx) \ar[r]^{\tau_{L_n,\cx}} & \Gamma^{L_n}\backslash \dd^{L_n}.
}
\]
In particular, $\mathsf R_{L_n,\cx}$ is irreducible.   The theory of integral canonical
models of Shimura varieties provides a canonical stack $\shim^{L_n}$
over $\integ[1/2]$ with $\shim^{L_n}_\cx = \Gamma^{L_n}\backslash
\dd^{L_n}$  \cite{kisin_intcan}, and $\tau_\cx$ is the complex fiber
of a morphism
\[
\xymatrix{
\mathsf R_{L_n} \ar[r]^{\tau_{L_n}} & \shim^{L_n}
}
\]
of stacks over $\integ[1/2]$ \cite[Lem~6.4]{achteroccult}. It is known
that $\shim^{L_n}$ is fiberwise geometrically irreducible
\cite[Cor.~4.1.11]{peratoroidal}.  Because fibers of $\mathsf R_{L_n}$
and of $\shim^{L_n}$ have the same dimension, it suffices to show
that $\tau_{L_n}$ is an immersion.

Let $p$ be an odd prime, and choose $N$ so that $\mu:=N\lambda - \sum
\epsilon_i$ is ample and $d := \frac 12 (\mu,\mu)$ is relatively prime
to $p$.  We have a morphism $\phi:\mathsf R_{L_n} \to   \mathsf R_{2d}$
which, on $S$-points, is given by $(\widetilde Y \to S, \alpha)
\mapsto (\widetilde Y \to S, \alpha(N\ell - \sum e_i))$.

As in \cite[\S 6]{achteroccult}, we have a commuting diagram of stacks over $\integ_{(p)}$\,:
\[
\xymatrix{
\mathsf R^\circ_{L_n} \ar[d]^\phi \ar[r]^{\tau_{L_n}} & \shim^{L_n}
\ar[d] \\
\mathsf R^\circ_{2d} \ar@{^{(}->}[r]^{\tau_{2d}} & \shim^{\langle 2d \rangle}\ .
}
\]
Since the minimal resolution of a K3
surface with exactly $n$ nodes admits a unique  $L_n$-polarization,
$\phi$ is an immersion.  Since $\tau_{2d}$ is an immersion
\cite[Cor.~5.15]{perak3}, $\tau_{L_n}$ is an immersion, too.
\end{proof}

\begin{cor}
\label{C:degen10}
  Let $k$ be an algebraically closed field with $\operatorname{char}(k) \not = 2$.  Suppose $Y_1$ and $Y_2$ are quartic surfaces with, respectively, exactly $m$ and $n<10$ nodes, with $m \le \min(n,9)$.  Then there exist a twice-pointed curve $(T,t_1,t_2)$ over $k$, and a relative quartic surface $\mathcal Y \to T$, such that $\mathcal Y_{t_1} \iso Y_1$ and $\mathcal Y_{t_2}\iso Y_2$.
\end{cor}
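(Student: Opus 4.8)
The statement is a straightforward corollary of the irreducibility result just established. The plan is to use Proposition~\ref{P:nnodesirred} to realize both $Y_1$ and $Y_2$ as fibers of a single family over an irreducible base, then pass to a curve joining the two relevant points. First I would fix the ground field $k$ (algebraically closed, $\operatorname{char}(k)\neq 2$), and consider the fiber $\mathsf R_{4,\ge m}$ over $\spec k$, which by Proposition~\ref{P:nnodesirred} is geometrically irreducible since $m\le 9<10$. The point representing $Y_1$ lies in $\mathsf R_{4,=m}\subseteq \mathsf R_{4,\ge m}$, and the point representing $Y_2$, which has exactly $n$ nodes with $m\le n$, lies in $\mathsf R_{4,\ge m}$ as well (a surface with $n\ge m$ nodes has \emph{at least} $m$ nodes; and if $m<n$, one can additionally invoke Lemma~\ref{L:smoothnodes} to see it is a specialization of $m$-nodal surfaces, though for mere membership in $\mathsf R_{4,\ge m}$ this is not even needed).

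The second step is to connect these two $k$-points of the irreducible space $\mathsf R_{4,\ge m}$ by a curve. Since $\mathsf R_{4,\ge m}\otimes k$ is an irreducible finite-type $k$-stack (or scheme after passing to a suitable cover/coarse space), any two of its $k$-points lie on a common irreducible curve inside it: take the coarse moduli space, choose an irreducible curve through the images of the two points (possible because on an irreducible variety over an algebraically closed field any two closed points are joined by an irreducible curve), normalize it to get a smooth curve $T^\circ$ mapping to $\mathsf R_{4,\ge m}$, and pick preimages $t_1,t_2\in T^\circ(k)$ of the two moduli points. Pulling back the universal family of quartic surfaces along $T^\circ\to \mathsf R_{4,\ge m}$ yields a relative quartic surface $\mathcal Y\to T^\circ$ with $\mathcal Y_{t_1}\iso Y_1$ and $\mathcal Y_{t_2}\iso Y_2$. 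Setting $T=T^\circ$ and $(T,t_1,t_2)$ the twice-pointed curve completes the construction.

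The only genuine subtlety, and the step I expect to require the most care, is the passage from the moduli stack $\mathsf R_{4,\ge m}$ to an honest family over a curve: one must ensure that the universal family really does pull back to a relative quartic surface (as opposed to only a relative K3 surface with a quasi-polarization), and that after possibly shrinking or replacing $T^\circ$ by a finite cover one can rigidify away the automorphisms so that the fibers over $t_1,t_2$ are isomorphic \emph{as quartic surfaces} to $Y_1,Y_2$ on the nose. Concretely, one works with $\mathsf R_{L_m}$ and its relative polarization $\lambda$, which is quasi-ample with $\lambda^{\otimes 4}$ giving the map to $\mathbb P^3$; over a dense open this recovers the quartic model, and $Y_1,Y_2$, having only rational double points (hence quasi-ample $\lambda$ and $L_m$-polarizable minimal resolutions), lie in that open. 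After base change to a curve $T^\circ$ mapping to this open substack and a further finite étale base change to trivialize the (finite) automorphism gerbe at $t_1,t_2$, one obtains the desired $\mathcal Y\to T$. This is exactly the kind of argument carried out in the cited references on lattice-polarized K3 moduli, so I would invoke those rather than reprove them.
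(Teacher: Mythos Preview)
Your approach is essentially the same as the paper's: both arguments use Proposition~\ref{P:nnodesirred} to realize $Y_1$ and $Y_2$ as points of the irreducible space $\mathsf R_{4,\ge m,k}$ (with $m\le 9$), and then connect them by a curve carrying the universal family. The paper's two-line proof additionally invokes Lemma~\ref{L:smoothnodes} to note that the closure of $\mathsf R_{4,=m}$ contains $\mathsf R_{4,\ge n}$, but this is already implicit in the irreducibility statement; your version is a correct and slightly more explicit unpacking of the same argument, and the stacky subtleties you flag are exactly the kind of thing the paper leaves to the reader.
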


\begin{proof}
  By Lemma \ref{L:smoothnodes}, the closure of $\mathsf R_{4,=m}$ in $\mathsf R_{4, \ge m}$ contains $\mathsf R_{4,\ge n}$. Now use the fact (Proposition~\ref{P:nnodesirred}) that $\mathsf R_{4, \ge m, k}$ is irreducible.
\end{proof}

\begin{rem}\label{R:VoisinK3}
While we show above that, for $n\le 9$,  the locus $\mathsf R_{4,\ge n}$ of degree-$4$ polarized  K3 surfaces with greater than or equal to $n$ nodes is irreducible, and therefore contains the Artin--Mumford example with $n=10$ nodes (since one can deform the nodes independently for $n\le 10$), the situation for the locus  $\hilb^4_{\proj^3_k,\ge n}$ of quartic surfaces with greater than or equal to $n$ nodes in the Hilbert scheme
$\hilb^4_{\mathbb P^3_k}$ of quartic surfaces is different.  For contrast, we recall the situation over~$\mathbb C$.  It is known that for $n=6,7,8,9$, the locus  $\hilb^4_{\proj_{\mathbb C}^3,\ge n}$ is reducible (see \cite[Rem.~1.2]{voisinUniv}), while for $n \le 7$, there is a unique irreducible component of $\hilb^4_{\proj_{\mathbb C}^3,\ge n}$ dominating $(\mathbb P_{\mathbb C}^3)^{(n)}$ by the map sending  an $m$-nodal quartic to its  set of nodes, and this component contains the Artin--Mumford examples \cite[\S 2]{voisinUniv}.
  For quartics with exactly $n=8,9$ nodes, it is a classical result that the nodes must be in special position in $\mathbb P^3_{\mathbb C}$ (there is no component of $\hilb^4_{\proj_{\mathbb C}^3,\ge n}$ dominating $(\mathbb P^3)^n$)\,; we direct the reader to the MathSciNet review of \cite{voisinUniv} for references.
    \end{rem}

\subsection{Quartic double solids and the proof of Theorem \ref{T:Intro-QDS-1}}\label{S:Pf-QDS}
The previous section essentially says that quartic double solids are liftable to quartic double solids in characteristic $0$, and that every quartic double solid with at most $9$ nodes (and no other singularities) degenerates to the Artin--Mumford example with $10$ nodes (since the nodes deform independently).   We use this to give a proof of Theorem \ref{T:Intro-QDS-1}\,:

 \begin{proof}[Proof of Theorem \ref{T:Intro-QDS-1}] In the case where  $\operatorname{char} k=0$, this is \cite[Thm.~1.9]{voisinUniv}.   We now proceed with the proof under the hypothesis that $\operatorname{char}k> 2$.

To investigate conditions (1)--(4) of   Theorem~\ref{T:Intro-M-StabInv}, we will first want to use that $\widetilde X$ lifts to characteristic $0$.  More precisely, let us fix notation, and suppose that $Y$ is the $n$-nodal quartic surface over $k$ defining $\widetilde X$.  In other words, we have $\widetilde X\to X\to \mathbb P^3_k$, and  the second morphism is the double cover of projective space  branched over $Y$.    For concreteness, suppose that $Y$ is defined by $q(x_0,x_1,x_2, x_3)=0$ for some homogeneous quartic polynomial $q(x_0,x_1,x_2, x_3)$, so that $X$ is defined by $q(x_0,x_1,x_2,x_3)+x_4^2=0$ in the weighted projective space $\mathbb P^4_k(1,1,1,1,2)$.   Now we may take the lift $\mathcal Y/S$ of $Y$ to characteristic $0$ of Lemma \ref{L:smoothnodes}, and we define $\mathcal X/S$ in $\mathbb P^4_{S}(1,1,1,1,2)$ by taking the double cover of $\mathbb P^3_S$ branched along $\mathcal Y$.  We then define $\widetilde {\mathcal X}/S$ by blowing up the locus in $\mathcal Y$ of nodes in the fibers\,; \emph{i.e.}, the singular locus of the map $\mathcal Y\to S$.  In other words, $\widetilde X$ lifts to characteristic $0$ as the standard resolution of singularities of a quartic double solid with exactly $n$ nodes.  

With this lift $\widetilde {\mathcal X}/S$ of $\widetilde X$ to characteristic $0$, then using the fact that in characteristic $0$ the standard resolution of singularities of a quartic double solid with exactly $n$ nodes is rationally connected and  has no $2$-torsion in cohomology    for $n\le 9$ \cite{endrass},  we can conclude from Corollary \ref{C:RC->(0)} that  $\widetilde X$ satisfies conditions (1)--(4) of Theorem~\ref{T:Intro-M-StabInv} with $\ell=2$ for $n\le 9$.

 Thus we can focus on conditions (5) and (6).  For this we will use a new deformation of $X$, namely a deformation to the Artin--Mumford example.  More precisely, with $X$ and $Y$ as above, according to Corollary \ref{C:degen10}, we can find a second family of quartic surfaces $\mathcal Y'\to S'$ over a $k$-curve ~$S'$, with points $s_1',s_2'\in S'(k)$ such that $\mathcal Y'_{s'_1}\cong Y$ and $\mathcal Y'_{s'_2}$ is isomorphic to the quartic surface of the Artin--Mumford example.  We take $\mathcal X'/S'$ to be the double cover of $\mathbb P^3_{S'}$ branched along~$\mathcal Y'$.  Then $\mathcal X'_{s'_1}\cong X$, and $\mathcal X'_{s'_2}$ is the Artin--Mumford quartic double solid.   We denote by $\widetilde {\mathcal X}'_{s'_2}\to \mathcal X_{s'_2}'$ the standard resolution of singularities of the Artin--Mumford quartic double solid\,; \emph{i.e.}, the Artin--Mumford example.   As  $\operatorname{Tors}H^4(\widetilde {\mathcal X}'_{s'_2},\mathbb Z_2)\ne 0$ \cite[Prop.~3, \S
 4]{ArtMum72}, we see that the Artin--Mumford example $\widetilde {\mathcal X}'_{s'_2}$ does not admit a strict cohomological $\mathbb Z_2$-decomposition of the diagonal (Corollary~\ref{C:V-AJ4.4}).  Using  Proposition~\ref{P:Dec-Res} twice 
 and   Theorem~\ref{T:DecDiagDeg} once, we can conclude that $\widetilde X=\widetilde {\mathcal X}'_{s'_1}$ does not admit a  
 a strict cohomological $\mathbb Z_2$-decomposition of the diagonal.  
 
 Therefore, from Theorem \ref{T:Intro-M-StabInv}, one of the conditions (5) or (6) must fail.    
Let us focus on (6).  Using that (3) and (3') hold for $\widetilde X$, it follows that  $\dim \operatorname{Ab}^2_{\widetilde X/K}=\dim H^3(\widetilde X_{\bar K},\mathbb Q_\ell)$.  By proper base change, we know that $ H^3(\widetilde X_{\bar K},\mathbb Q_\ell)\cong H^3(\widetilde {\mathcal X}_{\bar \eta},\mathbb Q_\ell)$, and it is well-known that $\dim H^3(\widetilde {\mathcal X}_{\bar \eta},\mathbb Q_\ell)=10-n$ (for $n=0$, this is the standard computation of the Betti numbers of a double cover, and for $n\ge 1$ this is \cite[4.10.4]{beauville77}).
 Using again the lift $\widetilde {\mathcal X}/S$ of $\widetilde X$ to characteristic $0$, then assuming (5) holds,  we can use 
Proposition~\ref{C:MinCohSpec} to conclude that for $n=7,8,9$, we have that $[\Theta_{\widetilde X}]^{g-1}/(g-1)!$ is $\mathbb Z$-algebraic, so that condition (6) holds.   Therefore, for $n=7,8,9$, we must have had that condition (5) fails, since otherwise conditions (1)--(6) would hold and $\widetilde X$ would admit a cohomological $\mathbb Z_2$-decomposition of the diagonal, which we know is not the case.   
 \end{proof}

\begin{rem}\label{R:2DeltaCH}
Here we recall that the standard desingularization $\widetilde X$ of a nodal quartic double solid is unirational.
 For the case of $n=0$ nodes, we direct the reader to \cite[p.10]{welters81}, where a unirational parameterization of $\widetilde X$ is given\,; Welters works over $\mathbb C$ but the argument holds for $\operatorname{char}(k)\ne 2$.   
For $n\ge 1$, we have moreover that  $\widetilde X$ is separably rationally connected, there is a degree $2$ dominant rational map $\mathbb P^3_k\dashrightarrow \widetilde X$, and 
   the class $2\Delta_{\widetilde X}\in
  \operatorname{CH}^3(\widetilde X\times _k \widetilde X)$ admits a strict
 decomposition. 
 This can be found in \cite[4.5.4]{beauville77}, \cite[Exa.~3, p.25]{beauville83rational}.  For convenience, we sketch the argument here.  
  The key point is to show is that there
 is  a degree-$2$  dominant rational map to  $\widetilde X$ from a rational threefold.  Projecting from a chosen node exhibits the blow-up $X'$ of  $X$ at that node as a singular fibration in quadrics.   The exceptional divisor $Q\subseteq X'$ is a quadric surface, which, under the structure map $ X'\to \mathbb P^2_k$, gives a double cover of $\mathbb P^2_k$.  The base change of $ X'$ to~$Q$ under this double cover admits a section, and therefore is rational (see \emph{e.g.}, \cite[Prop.~4.1]{beauville77}), completing the proof.
 If $\mathbb P^3_k\dashrightarrow \widetilde X$ is the associated degree-$2$
dominant rational map from projective space, then since $\operatorname{char}k \ne 2$, we must have that this
 is a separable rational map, 
   so that  $\widetilde X$ is
 separably rationally connected \cite[Exa.~IV.3.2.6.2]{kollar}.
 Finally, the degree-$2$ dominant rational map  $\mathbb P^3_k\dashrightarrow \widetilde X$ also implies that
$2\Delta_X$ admits a strict decomposition (Remark \ref{R:UniRDecDiag}).
\end{rem}

\begin{rem}\label{R:T-2-S+N}
In Theorem \ref{T:Intro-QDS-1} we showed that the standard desingularization of a very general quartic double solid with exactly $n\le 9$ nodes does not admit a cohomological $\mathbb Z_2$-decomposition of the diagonal.  The previous remark implies that for $1\le n\le 9$,  the diagonal admits a cohomological $\mathbb Z_\ell$-decomposition for all  $\ell\ne 2,\operatorname{char}(k)$, since $2$ is invertible in $\mathbb Z_\ell$.  
This shows in particular that while conditions (1)--(6) in Theorem~\ref{T:Intro-M-StabInv} are sufficient to imply the existence of a cohomological $\mathbb Z_\ell$-decomposition, they are not necessary.   See Theorem \ref{T:ZZell-iff} for necessary and sufficient conditions for a cohomological $\mathbb Z_\ell$-decomposition.  
\end{rem}

\newpage
\appendix

\addtocontents{toc}{\protect\setcounter{tocdepth}{-2}}
\section{Some facts about abelian varieties}
    
\subsection{The Weil pairing}\label{S:WeilPair}
Let $A/K$ be an abelian variety over a field, with dual abelian
variety $\widehat A$. For any integer $N$, the
group scheme $\widehat A[N]$ is canonically isomorphic to the Cartier
dual $A[N]^D := \hom(A[N],\mathbb G_M)$ of $A[N]$, and thus there is a canonical Weil pairing
\begin{equation}
  \label{E:weilfinite}
\xymatrix{
  A[N] \times \widehat A[N] \ar[r] & \mmu_N.}
\end{equation}
Let $l$ be any prime.  The $l$-adic Tate module of $A$ is $T_l A :=
\varprojlim A[l^n](\bar K)$. If $l = \ell \ne \operatorname{char}(K)$,
then $T_\ell A$ is abstractly isomorphic to $\integ_\ell^{2\dim A}$\,;
but if $p = \operatorname{char}(K)>0$, then $T_pA$ is free over
$\integ_p$ of rank at most $\dim A$.  For any $l$, we have a canonical
isomorphism
\begin{equation}
  \label{E:tatedual}
T_l(\widehat A) = T_l(A)^\vee(1)
\end{equation}
(the notation is recalled in our Conventions \ref{conventions}.)
To see this in the case where $l = p = \operatorname{char}(K) = p>0$,
use \eqref{E:weilfinite} to obtain that  $\widehat A[p^\infty]$ is isomorphic to the Serre dual
$(A[p^\infty])^D = \hom(A[p^\infty], \mmu_{p^\infty})$\,; then use the
fact that $T_pA = T_p(A[p^\infty])$, and that for any $p$-divisible
group $G$ there are canonical isomorphisms $T_p(G^D) \iso
\hom_{\integ_p}(T_p(G)(-1),\integ_p) = T_p(G)^\vee(1)$.

If $\ell \ne \operatorname{char}(K)$, then the intersection pairing,
Poincar\'e duality and the Weil pairing yield canonical isomorphisms
\begin{equation}
  \label{E:dualityl}
  H^{2g-1}(A,\integ_{\ell}(g)) \stackrel{\cup}{=} H^1({A},\integ_\ell)^\vee  \stackrel{PD}{=} T_\ell A   \stackrel{\text{Weil}}{=} (T_\ell\widehat{A})^\vee(1) \stackrel{PD}{=}  H^1(\widehat{A},\integ_\ell(1))
\end{equation}
and $$H^{2g-1}(A,\rat_{\ell}(g)) \stackrel{\cup}{=}  H^1({A},\rat_\ell)^\vee  \stackrel{PD}{=} V_\ell A   \stackrel{Weil}{=} (V_\ell\widehat{A})^\vee(1) \stackrel{PD}{=}  H^1(\widehat{A},\rat_\ell(1)).$$

If $K$ is perfect of characteristic $p>0$, there are canonical isomorphisms of $F$-crystals
\[
H^{2g-1}_\cris(A/\ww)(g) = H^1_\cris(A/\ww)^\vee = H^1(\widehat A/\ww)(1)\,;
\]
taking invariants under $F$ yields \eqref{E:dualityl} at $p$.

\subsection{Line bundles and symmetric isogenies}\label{S:ThetaLB}

We review the link between symmetric isogenies $A\to \widehat A$ and line bundles on $A_{\bar K}$.   We also prove Lemma \ref{L:linebundle}, which we will use later.

Let $A$ be an abelian variety over a field $K$.
Recall that, to  an isomorphism class of a line  bundle $L$ on $A$, one associates the (symmetric, \emph{i.e.}, self-dual) homomorphism  $\varphi_L:A\to \widehat A$, which on points
$\bar a\in A(\bar K)$ is given by  $\varphi_L(\bar a)=t_{\bar a}^*L\otimes L^{-1}$.
When $K$ is finite or algebraically closed, the assignment $L\mapsto \varphi_L$  surjects onto the set of symmetric isogenies (\emph{e.g.}, \cite[Thm.~2.6]{conrad-pol} if $K$ is finite and   \cite[\S 20]{mumfordAV} if $K=\bar K$).
We say that $L$ is \emph{nondegenerate} if
$\varphi_L$ is an isogeny.  For nondegenerate line bundles $L,L'$ on $A$, we have
$\varphi_L=\varphi_{L'}$ if and only if $L$ and $L'$ are algebraically
equivalent, \emph{i.e.}, if and only if $L$ and $L'$ differ by translation.

A symmetric isogeny $\Lambda:A\to \widehat A$ is \emph{principal} if it is an isomorphism.   If $L$ is a line bundle on $A_{\bar K}$ such that $\Lambda_{\bar K}=\varphi_L$, we have the equalities 
$L^g/g! = \chi(L) = \pm \sqrt{\deg \varphi_L}$, where $g=\dim A$.  In other words, $\Lambda$ is principal if and only if $\chi(L)=\pm 1$.  
We observe that if $\Omega/K$ is any algebraically closed field, then $\Lambda_\Omega:A_\Omega\to \widehat A_\Omega$ is principal if and only if $\Lambda$ is.

	Recall that a symmetric isogeny $\Lambda:A\to \widehat A $ is a polarization if $\Lambda _{\bar K}:A_{\bar K}\to \widehat A_{\bar K}$ is induced by an ample line bundle\,; \emph{i.e.}, for any line bundle $L$ on $A_{\bar K}$ such that   
	$\Lambda_{\bar K}=\varphi_L$, we have that $L$ is ample.
Recall that a nondegenerate line bundle $L$ is ample if
and only if $h^0(A,L)>0$ (\emph{e.g.}, \cite[\S 17]{mumfordAV}), and so $\Lambda$ is a polarization (resp.~principal polarization) if and only if $h^0(A,L)>0$ (resp.~$h^0(A,L)=1$). 
	We observe that if $\Omega/K$ is any algebraically closed field, then $\Lambda_\Omega:A_\Omega\to \widehat A_\Omega$ is a (principal) polarization if and only if $\Lambda$ is.  Indeed, clearly if $\Lambda$ is a polarization, then $\Lambda_\Omega $ is as well.  Conversely, suppose that $\Lambda_\Omega$ is a polarization, and let $L$ be a line bundle on $A_{\bar K}$ such that $\Lambda_{\bar K}=\varphi_L$.  
	 	 As $L$ is by definition non-degenerate, and  a non-degenerate line bundle on an abelian variety is ample if and only if it is effective, it suffices to show that $L$ is effective.  For this, note that $\Lambda_\Omega = (\varphi_L)_\Omega=\varphi_{L_\Omega}$.
		Since $\Lambda_\Omega$ is a polarization, it follows  that $L_\Omega$ is ample, and therefore effective. 		   Using cohomology and base-change (over the affine base fields), one concludes that  $L$ is effective.

\subsection{First Chern class of a symmetric isogeny}\label{S:c1Lambda}
Given a symmetric isogeny $\Lambda:A\to \widehat A$, we
denote by $[\Lambda]\in H^2(A_{\bar K},\mathbb Z_\ell(1))$ the first Chern class of the unique line bundle (up to translation) on $A_{\bar K}$ inducing the base change of $\Lambda$ to $\bar K$. In other words, if $L$ is any line bundle on $A_{\bar K}$ such that $\Lambda_{\bar K}=\varphi_{L}:A_{\bar K}\to \widehat A_{\bar K}$, then we set $[\Lambda ]:=c_1(L)$. 
One can realize the first Chern class of $\Lambda$ more directly in the following way.  The symmetric isogeny $\Lambda$ induces by Tensor-Hom adjunction a morphism $T_\ell \Lambda: T_\ell A \otimes T_\ell \widehat A^\vee \to \mathbb Z_\ell$.  The Weil pairing $e : T_\ell A \times T_\ell \widehat A \to \integ_{\ell}(1)$  then provides an isomorphism $e: T_\ell \widehat A^\vee \stackrel {\sim}{\to} T_\ell A(-1)$, which all together gives  the pairing
	$$E^\Lambda: T_\ell A \times T_\ell A \to \integ_{\ell}(1), \quad
	(x,y) \mapsto e(x,T_\ell\Lambda ( y)).$$ 
	 It is classical that the pairing $E^\Lambda$ 
	is alternating and that, seen as an element of $(\bigwedge^2T_\ell A)^\vee(1) = H^2(A_{\bar K},\integ_{\ell}(1))$, it coincides with $c_1(L)$ 
	for any line bundle $L$ such that $\Lambda = \varphi_L$\,; see \emph{e.g.}~\cite[(11.23)]{EvdGM}.

In case a symmetric isogeny $\Lambda$ induces an isomorphism $T_\ell\Lambda: T_\ell A \stackrel{\simeq}{\longrightarrow} T_\ell \widehat{A}$,  the inverse to  $T_\ell\Lambda$ has an explicit description in terms of the first Chern class $[\Lambda]$\,:

\begin{lem}\label{L:linebundle}
	Let $\Lambda :A\to \widehat A$ be a symmetric isogeny of a $g$-dimensional  abelian variety over a field $K$.
	If $T_\ell\Lambda : T_\ell A \stackrel{\simeq}{\longrightarrow} T_\ell \widehat{A}$ is an isomorphism for some prime $\ell$, then the inverse is given by the map
$$
(T_\ell \Lambda)^{-1}=\frac{[\Lambda]^{g-1}}{(g-1)!}\cup - : \  H^1(A_{\bar K},\integ_{\ell}(1)) \to H^{2g-1}(A_{\bar K},\integ_{\ell}(g)),
$$ 
	where we have identified $T_\ell A$ with $H^{2g-1}(A_{\bar K},\integ_{\ell}(g))$  and $T_\ell \widehat{A}$ with $H^1(A_{\bar K},\integ_{\ell}(1))$ as laid out in \S \ref{S:WeilPair}.
\end{lem}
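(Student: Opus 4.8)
The statement is a classical identity about the Lefschetz-type operator attached to a polarization, extended slightly to symmetric isogenies that are isomorphisms on $\ell$-adic Tate modules. The plan is to reduce everything to the standard fact for principal polarizations, then transport along the isogeny. First I would base change to $\bar K$ (the statement concerns cohomology of $A_{\bar K}$, and the Weil pairing identifications of \S\ref{S:WeilPair} are Galois-equivariant, so this loses nothing), and choose a line bundle $L$ on $A_{\bar K}$ with $\Lambda_{\bar K} = \varphi_L$, so that $[\Lambda] = c_1(L)$. Since $T_\ell\Lambda$ is an isomorphism, the degree of $\Lambda$ is prime to $\ell$, hence $\chi(L) = \pm\sqrt{\deg\varphi_L}$ is prime to $\ell$; this is the numerical input that makes the division by $(g-1)!$ and the cup-power land in the lattice $H^{2g-1}(A_{\bar K},\integ_\ell(g))$ rather than merely in $H^{2g-1}(A_{\bar K},\rat_\ell(g))$.

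Next I would verify the identity with $\rat_\ell$-coefficients, where it is purely a statement in the exterior algebra $\bigwedge^\bullet V_\ell A$. Write $E = E^\Lambda \in \bigwedge^2 V_\ell A^\vee(1) = H^2(A_{\bar K},\rat_\ell(1))$ for the alternating form of \S\ref{S:c1Lambda}, so $[\Lambda] = E$ under the stated identifications. The map $\cup E^{g-1}/(g-1)! : H^1 \to H^{2g-1}$ is, under Poincaré duality $H^1(A_{\bar K},\rat_\ell(1)) = (V_\ell A)^\vee(1)$ and $H^{2g-1}(A_{\bar K},\rat_\ell(g)) = V_\ell A$, exactly the ``adjugate'' or ``comatrix'' map of the nondegenerate alternating form $E$: in a symplectic basis where $E$ is the standard form scaled by elementary divisors, cupping with $E^{g-1}/(g-1)!$ sends a basis covector to the corresponding basis vector up to the product of the complementary elementary divisors, and composing with $E$ itself recovers multiplication by $\det$-type scalars. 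Concretely, the clean way to phrase it is: for a nondegenerate alternating form $E$ on a $2g$-dimensional vector space, one has the identity $(\cup E) \circ (\cup E^{g-1}/(g-1)!) = \mathrm{Pf}(E)\cdot\mathrm{id}$ as maps $H^1 \to H^{2g-1} \to (H^1)$, suitably interpreted; and $\cup E$ corresponds to $T_\ell\Lambda$. I would spell this out via the wedge-algebra computation $\omega \wedge \frac{E^{g-1}}{(g-1)!} = (\text{contraction of }\omega\text{ against }E^{-1})\cdot \frac{E^g}{g!}$ for $\omega \in \bigwedge^1$, together with $\frac{E^g}{g!}$ being the chosen generator of $\bigwedge^{2g}$. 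This shows $(\cup E^{g-1}/(g-1)!)$ is the inverse of $\cup E = T_\ell\Lambda$ rationally, up to identifying the top cohomology correctly — and the correctness of that identification is forced by the fact that for $L$ ample both sides are the classical ones (I can test on a principal polarization, where $E^g/g! = 1$).

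Finally I would upgrade from $\rat_\ell$ to $\integ_\ell$. Having the identity rationally, and knowing $(T_\ell\Lambda)^{-1}$ is an integral map (since $T_\ell\Lambda$ is an isomorphism of $\integ_\ell$-lattices), it suffices to know that $\frac{[\Lambda]^{g-1}}{(g-1)!}$ is itself an integral class, i.e.\ lies in $H^{2g-2}(A_{\bar K},\integ_\ell(g-1))$, so that cupping with it preserves the integral lattices. This is where the primality of $\chi(L)$ to $\ell$ enters: $[\Lambda]^{g-1}/(g-1)!$ is $\chi(L)$ times the minimal class $[\Theta]^{g-1}/(g-1)!$ of an integral polarization isogenous to $L$ — more precisely, one uses that $H^\bullet(A_{\bar K},\integ_\ell)$ is the exterior algebra on $H^1$ and that the elementary divisors of $E$ are all prime to $\ell$, so dividing $E^{g-1}$ by $(g-1)!$ stays in $\bigwedge^{2g-2}$ of the $\integ_\ell$-lattice. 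I expect the main obstacle to be this last integrality bookkeeping — tracking that the factor $(g-1)!$ in the denominator is absorbed by the divided-power structure on the exterior algebra of a lattice and is not obstructed $\ell$-adically — rather than the rational identity, which is formal linear algebra. An alternative, possibly cleaner, route for the integrality is to invoke $\chi(L)$ prime to $\ell$ to reduce to a principal polarization after an $\ell$-prime isogeny, where $[\Theta]^{g-1}/(g-1)!$ is the class of a curve (as in the proof of Proposition~\ref{C:MinCohSpec}) and hence manifestly integral, then pull back.
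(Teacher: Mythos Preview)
Your approach is essentially the paper's: both reduce to the identification $H^\bullet(A_{\bar K},\integ_\ell) \cong \bigwedge^\bullet H^1(A_{\bar K},\integ_\ell)$ and a standard Lefschetz-type computation in the exterior algebra. The paper does this in a single sentence, working directly over $\integ_\ell$; your plan spells out the same computation via a rational-then-integral split.

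One misattribution in your Step~3: the integrality of $[\Lambda]^{g-1}/(g-1)!$ does \emph{not} require $\chi(L)$ prime to $\ell$. For any $\omega \in \bigwedge^2 V$ with $V$ free over $\integ_\ell$, the divided power $\omega^k/k!$ lies in $\bigwedge^{2k} V$ unconditionally --- diagonalize $\omega = \sum a_i\, e_i \wedge f_i$ and expand, and the $k!$ is absorbed by the number of orderings of the distinct factors. (You yourself allude to this as ``the divided-power structure on the exterior algebra''.) So the cup-product map $\cup\,[\Lambda]^{g-1}/(g-1)!$ is always integral. The hypothesis that $T_\ell\Lambda$ is an isomorphism enters only to ensure that $(T_\ell\Lambda)^{-1}$ exists as a map of $\integ_\ell$-lattices; once it does, your rational identity plus the injection $\operatorname{Hom}_{\integ_\ell} \hookrightarrow \operatorname{Hom}_{\rat_\ell}$ finishes. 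Your alternative detour through an $\ell$-prime isogeny to a principal polarization is therefore unnecessary.
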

\begin{proof}
Identifying $[\Lambda]$ with the pairing $E^\Lambda$ above, 
	one concludes by using the fact that the cohomology algebra $H^\bullet(A_{\bar K},\integ_{\ell})$ identifies, via the intersection pairing, with the alternating algebra on $H^1(A_{\bar K},\integ_{\ell})$. 
\end{proof}

\subsection{Some Hodge theoretic conventions}\label{S:A-HdgTh}

Suppose that $X$ is a  complex projective manifold of dimension $d_X$, with ample divisor $H$.  Setting $\omega= c_1(H)\in H^2(X,\mathbb Z)$, and using $\omega$ also for the associated $2$-form in $H^2_{dR}(X,\mathbb C)$ under the natural map $H^2(X,\mathbb Z)\to H^2(X,\mathbb C)=H^2_{dR}(X,\mathbb C)$, the Hodge--Riemann bilinear form on $H^{p,q}(X)$ is the Hermitian form given by 
$$
h(\alpha,\beta)= i^{p-q}(-1)^{\frac{k(k-1)}{2}}\int_X\alpha\wedge \bar \beta \wedge \omega^{n-k},
$$
where $k=p+q$.

Fixing an integer $n$ such that $1\le 2n-1 \le d_X $, assume that
$\coniveau^nH^{2n-1}(X,\mathbb Q)=H^{2n-1}(X,\mathbb Q)$, which implies that 
 $$H^{2n-1}_{dR}(X,\mathbb C)=H^{n,n-1}(X)\oplus H^{n-1,n}(X).$$
Via the composition 
$$
H^{2n-1}(X,\mathbb Z)\to H^{2n-1}_{dR}(X,\mathbb C)\twoheadrightarrow H^{n-1,n}(X)
$$
$$
\alpha \mapsto \alpha^{n,n-1} + \alpha^{n-1,n} \mapsto \alpha^{n-1,n}
$$
we obtain an inclusion $H^{2n+1}(X,\mathbb Z)_\tau \subseteq H^{n,n+1}(X)$, where in the above we have $\alpha^{n,n-1}=\overline{\alpha^{n-1,n}}$.  
The intermediate Jacobian is the complex torus
$$
J^{2n-1}(X)=H^{n-1,n}(X)/H^{2n-1}(X,\mathbb Z)_\tau.
$$

The  Hodge--Riemann bilinear form on $H^{n-1,n}(X)$ is in this case
$$
h(\alpha,\beta)=(i)^{(n-1)-n}(-1)^{(2n-1)(2n-2)/2}\int_X \alpha \wedge \bar \beta \wedge \omega^{d_X - 2n+1}=i (-1)^{n}\int_X \alpha \wedge \bar \beta \wedge \omega^{d_X - 2n+1}.
$$ 
For any $\alpha,\beta\in H^{2n-1}(X,\mathbb Z)$, after viewing them as  classes in $H^{2n-1}_{dR}(X,\mathbb C)$ under the natural map, we have that
\begin{align*}
\alpha \cup \beta \cup [H]^{d_X-2n+1}& = 
 \int_X \alpha \wedge \beta \wedge \omega^{d_X-2n+1} \\
&= \int_X  (\overline{\alpha^{n-1,n}} + \alpha^{n-1,n}) \wedge (\overline{\beta^{n-1,n}} + \beta^{n-1,n})\wedge \omega^{d_X-2n+1}\\
& = 2  \operatorname{Re}\int_X \alpha^{n-1,n}\wedge \overline{\beta^{n-1,n}}\wedge \omega^{d_X-2n+1} \\
& = 2 \operatorname{Re} (-i)(-1)^n  h(\alpha^{n-1,n},\beta^{n-1,n})\\
& = 2(-1)^{n}\left(\operatorname{Im}h(\alpha^{n-1,n},\beta^{n-1,n})\right).
\end{align*}
In other words, for the hermitian form $2h$ on $H^{n-1,n}(X)$, the associated alternating  form 
$$
E=-\operatorname{Im}2h:H^{n-1,n}(X)\times H^{n-1,n}(X)\to \mathbb R
$$
when restricted to the image $H^{2n-1}(X,\mathbb Z)_\tau\subseteq H^{n-1,n}(X)$ is given by $(-1)^{n-1}$ times the cup product in cohomology (via the morphism $H^{2n-1}(X,\mathbb Z)\to H^{n-1,n}(X)$).  We note the consequence that $E$ evaluated on 
$H^{2n-1}(X,\mathbb Z)_\tau\subseteq H^{n-1,n}(X)$ takes integral values.   Summarizing, we have the commutative diagram
\begin{equation}\label{E:Imhdiag}
\xymatrix@C=8em@R=1em{
H^{n-1,n}(X)\times H^{n-1,n}(X)\ar[r]^<>(0.5){E=-\operatorname{Im}2h}& \mathbb R\\
H^{2n-1}(X,\mathbb Z)_\tau \times  H^{2n-1}(X,\mathbb Z)_\tau \ar[r]^<>(0.5){E=-\operatorname{Im}2h } \ar@{^(->}[u] & \mathbb Z \ar@{^(->}[u]\\
H^{2n-1}(X,\mathbb Z) \times  H^{2n-1}(X,\mathbb Z)\ar[r]^<>(0.5){(-1)^{n-1}\alpha \cup\beta \cup \omega^{d_X-2n+1} } \ar[u] & \mathbb Z \ar@{=}[u]
}
\end{equation}

As in \cite[\S 2.2]{BL}, since the hermitian form $2h$ has
associated alternating form $E$ taking integral values on the integral lattice,  there is an
 induced line bundle $\Theta_X$ on $J^{2n-1}(X)$, such that $$c_1(\Theta_X)= E = (-1)^{n-1}\int_X (-)\wedge (-) \wedge\omega^{d_X-2n-1}$$ under the identification $H^2(J^{2n-1}(X),\mathbb Z)=\bigwedge^2H^1(J^{2n-1}(X),\mathbb Z) = \bigwedge^2H^{2n-1}(X,\mathbb Z)_\tau^\vee$.  Note that in \cite[\S 2.2]{BL}, they associate to the hermitian form $2h$ the alternating form $\operatorname{Im}2h$, and therefore their alternating form is the negative of $c_1(\Theta_X)$.

We now translate this discussion using Tensor-Hom adjunction.  First, we see that $2h$ induces an isomorphism
$$
\xymatrix@R=1em@C=4em{
H^{n-1,n}(X) \ar[r]^{\alpha\mapsto 2h(\alpha,-)}_\sim&  \overline {H^{n-1,n}(X)}^\vee
}
$$
Now, given a complex vector space $V$ there is an identification
\begin{equation}\label{E:HomRV}
\operatorname{Hom}_{\mathbb C}(\overline V,\mathbb C)= \operatorname{Hom}_{\mathbb R}(V,\mathbb R)
\end{equation}
$$
\ell\mapsto -\operatorname{Im}\ell.
$$
The inverse map is given by $\phi\mapsto \phi(i(-))-i\phi(-)$. 
 Note that in \cite[\S 2.2]{BL} the opposite identification is made by the assignment $\ell\mapsto \operatorname{Im}\ell$.  
 Using the identification \eqref{E:HomRV} above, we have from \eqref{E:Imhdiag} a commutative diagram
$$
\xymatrix@R=1em@C=6em{
H^{n-1,n}(X) \ar[r]^{\alpha\mapsto 2h(\alpha,-)}_\sim&  \overline {H^{n-1,n}(X)}^\vee\\
H^{n-1,n}(X) \ar[r]_<>(0.5)\sim^<>(0.5){\alpha \mapsto -\operatorname{Im}2h(\alpha ,-)} \ar@{=}[u]&  \operatorname{Hom}_{\mathbb R}(H^{n-1,n}(X),\mathbb R)  \ar@{=}[u]
}
$$

As a consequence of the discussion above, the Hodge--Rieman bilinear form induces a commutative diagram 
$$
\xymatrix@R=1em{
H^{n-1,n}(X) \ar[r]^{\alpha\mapsto 2h(\alpha,-)}_\sim&  \overline {H^{n-1,n}(X)}^\vee\\
H^{2n-1}(X,\mathbb Z)_\tau  \ar[r] \ar@{^(->}[u]& (H^{2n-1}(X,\mathbb Z)_\tau)^{\widehat \ }  \ar@{^(->}[u]
}
$$
where by definition 
\begin{align}
(H^{2n-1}(X,\mathbb Z)_\tau)^{\widehat \ } &= \{\varphi \in  \overline {H^{n-1,n}(X)}^\vee: \varphi(\alpha )\in \mathbb Z \ \text{ for all } \ \alpha\in H^{2n-1}(X,\mathbb Z)_\tau\}\\
\label{E:HomRVhat}& =  \{\phi \in  \operatorname{Hom}_{\mathbb R}(H^{n-1,n}(X),\mathbb R) : \phi(\alpha )\in \mathbb Z \ \text{ for all } \ \alpha\in H^{2n-1}(X,\mathbb Z)_\tau\}
\end{align}
This by definition induces an isogeny of complex tori
$$
\Theta_X: J^{2n-1}(X)\longrightarrow \widehat J^{2n-1}(X).
$$
As in \cite[\S 2.2]{BL}, one has that $\Theta_X=\varphi_{\Theta_X}$\,; note that despite our difference in conventions from  \cite[\S 2.2]{BL} regarding alternating forms, the map $\Theta_X$ is determined by its induced morphism on complex vector spaces, which is given by the hermitian form $2h$, which is the same in our conventions and those of \cite[\S 2.2]{BL}.

Taking the induced map in homology for $\Theta_X$, and combining with the discussion above, we obtain a commutative diagram 
$$
\xymatrix@C=5em@R=1.5em{
	H_1(J^{2n-1}(X),\mathbb Z) \ar[d]_= \ar[r]^{\Theta_{X}}& H_1(\widehat J^{2n-1}(X), \mathbb Z)\\
	H^{2n-1}(X,\mathbb Z)_\tau \ar[r]^{\cup (-1)^{n-1}[H]^{d_X-2n+1}} & H^{2d_X-2n+1}(X,\mathbb Z)_\tau\ . \ar[u]_=
}
$$
The left vertical arrow is the canonical identification coming from the construction of the intermediate Jacobian, while the right vertical arrow is the dual identification, where we identify $ H_1(\widehat J^{2n-1}(X), \mathbb Z)=H_1(J^{2n-1}(X),\mathbb Z)^\vee $ via the Weil pairing, and we identify $H^{2n-1}(X,\mathbb Z)^\vee_\tau = H^{2d_X-2n+1}(X,\mathbb Z)_\tau$ via the cup product.   Note that the Weil pairing is the composition of the identifications $H_1(\widehat J^{2n-1}(X), \mathbb Z)=(H^{2n-1}(X,\mathbb Z)_\tau)^{\widehat {\ }}=H_1(J^{2n-1}(X),\mathbb Z)^\vee $, where the first is the canonical identification from the definition, and the second comes from the evaluation pairing from the definition \eqref{E:HomRVhat}.  This second identification uses the convention \eqref{E:HomRV}\,; using the opposite convention,  \emph{i.e.}, taking the imaginary part of a Hermitian form, rather than its negative,  may lead one naturally in the analysis above to include an extra factor of $(-1)$ in the bottom row of the diagram, in which case one would have to use the negative of the Weil pairing to make the diagram commute.

Finally, we note that taking Tate modules, this gives a commutative diagram 
$$
\xymatrix@C=5em@R=1.5em{
	T_\ell J^{2n-1}(X) \ar[d]_= \ar[r]^{T_\ell \Theta_{X}}_{c_1(\Theta_X)}& T_\ell \widehat J^{2n-1}(X)\\
	H^{2n-1}(X,\mathbb Z_\ell )_\tau \ar[r]^{\cup (-1)^{n-1}[H]^{d_X-2n+1}} & H^{2d_X-2n+1}(X,\mathbb Z_\ell)_\tau\  \ar[u]_=
}
$$
where in the top row we have identified $T_\ell\Theta_X$ with $c_1(\Theta_X)$ as elements of $H^2(J^{2n-1}(X),\mathbb Z_\ell)$ as in~\S \ref{S:c1Lambda}.  

\subsubsection{Polarizations}  If we assume that $\coniveau_H^mH^{2m-1}(X,\mathbb Q)= 0$ 
for all $1\le m<n$, \emph{i.e.}, if $H^{m,m-1}(X)=0$ for all $1\le m<n$, then $H^{n-1,n}(X)$ is primitive, and the Hodge--Riemann bilinear form is positive definite on $H^{n-1,n}(X)$.  In this case, $\Theta_X$ is ample, and gives a polarization on $J^{2n-1}(X)$. 

\begin{rem}[Weight-$1$ Hodge structure]
The category of polarized abelian varieties is equivalent to the category of polarized weight-$1$ $	\mathbb Z$-Hodge structures.  In the case where $\coniveau_H^mH^{2m-1}(X,\mathbb Q)= 0$ 
for all $1\le m<n$, the polarized abelian variety $(J^{2n-1}(X),\Theta_X)$ corresponds to the polarized weight-$1$ $\mathbb Z$-Hodge structure $(H^{2n-1}(X,\mathbb Z)_\tau, Q)$, where $Q$ is the alternating form
$$
Q:H^{2n-1}(X,\mathbb Z)_\tau \times H^{2n-1}(X,\mathbb Z)_\tau\longrightarrow \mathbb Z
$$
$$
Q(\alpha,\beta)=(-1)^{n-1}\int_X \alpha\wedge\beta \wedge \omega^{d_X-2n+1}.  
$$
To be clear, the Hodge decomposition is given by $(H^{2n-1}(X,\mathbb Z)_\tau \otimes_{\mathbb Z}\mathbb C)^{1,0}=H^{n,n-1}(X)$. Note that under these identifications, the polarizations satisfy $Q=E=c_1(\Theta_X)$.   The associated Hermitian forms also agree, as $i^{0-1}Q(\alpha,\bar \beta)=i^{-1}Q(\alpha,\bar \beta)=i(-1)^n\int_X\alpha \wedge \bar \beta \wedge \omega^{d_X-2n+1}=h(\alpha,\beta)$.   
\end{rem}

\newpage

\addtocontents{toc}{\protect\setcounter{tocdepth}{-2}}
 \bibliographystyle{hamsalpha}
 \bibliography{DCG}

\providecommand{\MR}[1]{}\def\cprime{$'$}
\providecommand{\bysame}{\leavevmode\hbox to3em{\hrulefill}\thinspace}
\providecommand{\MR}{\relax\ifhmode\unskip\space\fi MR }
\providecommand{\MRhref}[2]{%
  \href{http://www.ams.org/mathscinet-getitem?mr=#1}{#2}
}
\providecommand{\href}[2]{#2}
\begin{thebibliography}{ACMV23}

\bibitem[Ach20]{achteroccult}
Jeffrey~D. Achter, \emph{Arithmetic occult periods}, Algebraic Geometry
  \textbf{7} (2020), no.~5, 581--606.

\bibitem[ACMV17]{ACMVdcg}
Jeffrey~D. Achter, Sebastian Casalaina-Martin, and Charles Vial, \emph{On
  descending cohomology geometrically}, Compos. Math. \textbf{153} (2017),
  no.~7, 1446--1478. \MR{3705264}

\bibitem[ACMV20]{ACMVdmij}
\bysame, \emph{Distinguished models of intermediate {J}acobians}, Journal of
  the Institute of Mathematics of Jussieu \textbf{19} (2020), no.~3, 891--918.

\bibitem[ACMV21]{ACMVBlochMap}
\bysame, \emph{On the image of the second l-adic {B}loch map}, Rationality of
  Algebraic Varieties, Progr. Math., vol. 342, Birkh\"{a}user, 2021.

\bibitem[ACMV23]{ACMVfunctor}
\bysame, \emph{A functorial approach to regular homomorphisms}, Algebr. Geom.
  \textbf{10} (2023), no.~1, 87--129. \MR{4537125}

\bibitem[AM72]{ArtMum72}
M.~Artin and D.~Mumford, \emph{Some elementary examples of unirational
  varieties which are not rational}, Proc. London Math. Soc. (3) \textbf{25}
  (1972), 75--95. \MR{0321934}

\bibitem[And04]{andre}
Yves Andr{\'e}, \emph{Une introduction aux motifs (motifs purs, motifs mixtes,
  p\'eriodes)}, Panoramas et Synth\`eses [Panoramas and Syntheses], vol.~17,
  Soci\'et\'e Math\'ematique de France, Paris, 2004. \MR{2115000 (2005k:14041)}

\bibitem[AZ17]{AZ17lift}
Piotr Achinger and Maciej Zdanowicz, \emph{Some elementary examples of
  non-liftable varieties}, Proc. Amer. Math. Soc. \textbf{145} (2017), no.~11,
  4717--4729. \MR{3691989}

\bibitem[Bea77]{beauville77}
A.~Beauville, \emph{Vari\'et\'es de {P}rym et jacobiennes interm\'ediaires},
  Ann. Sci. \'Ecole Norm. Sup. (4) \textbf{10} (1977), no.~3, 309--391.
  \MR{0472843 (57 \#12532)}

\bibitem[Bea83a]{beauville83fourier}
\bysame, \emph{Quelques remarques sur la transformation de {F}ourier dans
  l'anneau de {C}how d'une vari\'{e}t\'{e} ab\'{e}lienne}, Algebraic geometry
  ({T}okyo/{K}yoto, 1982), Lecture Notes in Math., vol. 1016, Springer, Berlin,
  1983, pp.~238--260. \MR{726428}

\bibitem[Bea83b]{beauville83rational}
\bysame, \emph{Vari\'{e}t\'{e}s rationnelles et unirationnelles}, Algebraic
  geometry---open problems ({R}avello, 1982), Lecture Notes in Math., vol. 997,
  Springer, Berlin, 1983, pp.~16--33. \MR{714740}

\bibitem[BGI71]{SGA6}
P.~Berthelot, A.~Grothendieck, and L.~Illusie (eds.), \emph{Th\'{e}orie des
  intersections et th\'{e}or\`eme de {R}iemann-{R}och}, Lecture Notes in
  Mathematics, Vol. 225, Springer-Verlag, Berlin-New York, 1971, S\'{e}minaire
  de G\'{e}om\'{e}trie Alg\'{e}brique du Bois-Marie 1966--1967 (SGA 6),
  Dirig\'{e} par P. Berthelot, A. Grothendieck et L. Illusie. Avec la
  collaboration de D. Ferrand, J. P. Jouanolou, O. Jussila, S. Kleiman, M.
  Raynaud et J. P. Serre. \MR{0354655}

\bibitem[BL04]{BL}
Christina Birkenhake and Herbert Lange, \emph{Complex abelian varieties},
  second ed., Grundlehren der Mathematischen Wissenschaften [Fundamental
  Principles of Mathematical Sciences], vol. 302, Springer-Verlag, Berlin,
  2004. \MR{2062673 (2005c:14001)}

\bibitem[Blo79]{bloch79}
S.~Bloch, \emph{Torsion algebraic cycles and a theorem of {R}oitman},
  Compositio Math. \textbf{39} (1979), no.~1, 107--127. \MR{539002 (80k:14012)}

\bibitem[BM60]{BorelMoore}
A.~Borel and J.~C. Moore, \emph{Homology theory for locally compact spaces},
  Michigan Math. J. \textbf{7} (1960), 137--159. \MR{0131271}

\bibitem[BS83]{BlSr83}
S.~Bloch and V.~Srinivas, \emph{Remarks on correspondences and algebraic
  cycles}, Amer. J. Math. \textbf{105} (1983), no.~5, 1235--1253. \MR{714776
  (85i:14002)}

\bibitem[BW20]{BenWittClGr}
Olivier Benoist and Olivier Wittenberg, \emph{The {C}lemens-{G}riffiths method
  over non-closed fields}, Algebr. Geom. \textbf{7} (2020), no.~6, 696--721.
  \MR{4156423}

\bibitem[BW23]{BW}
Olivier Benoist and Olivier Wittenberg, \emph{Intermediate {J}acobians and
  rationality over arbitrary fields}, Ann. Sci. \'{E}c. Norm. Sup\'{e}r. (4)
  \textbf{56} (2023), no.~4, 1029--1084. \MR{4650157}

\bibitem[Car08]{caruso08}
Xavier Caruso, \emph{Conjecture de l'inertie mod\'{e}r\'{e}e de {S}erre},
  Invent. Math. \textbf{171} (2008), no.~3, 629--699. \MR{2372809}

\bibitem[CG72]{CG72}
C.~Herbert Clemens and Phillip~A. Griffiths, \emph{The intermediate {J}acobian
  of the cubic threefold}, Ann. of Math. (2) \textbf{95} (1972), 281--356.
  \MR{0302652 (46 \#1796)}

\bibitem[Con]{conrad-pol}
Brian Conrad, \emph{Polarizations},
  http://math.stanford.edu/~conrad/vigregroup/vigre04/polarization.pdf.

\bibitem[CP09]{CPRes2}
Vincent Cossart and Olivier Piltant, \emph{Resolution of singularities of
  threefolds in positive characteristic. {II}}, J. Algebra \textbf{321} (2009),
  no.~7, 1836--1976. \MR{2494751}

\bibitem[CTP16]{CTP16}
Jean-Louis Colliot-Th\'{e}l\`ene and Alena Pirutka, \emph{Hypersurfaces
  quartiques de dimension 3: non-rationalit\'{e} stable}, Ann. Sci. \'{E}c.
  Norm. Sup\'{e}r. (4) \textbf{49} (2016), no.~2, 371--397. \MR{3481353}

\bibitem[CTSS83]{CTSS83}
Jean-Louis Colliot-Th\'{e}l\`ene, Jean-Jacques Sansuc, and Christophe
  Soul\'{e}, \emph{Torsion dans le groupe de {C}how de codimension deux}, Duke
  Math. J. \textbf{50} (1983), no.~3, 763--801. \MR{714830}

\bibitem[Del81]{delignelift}
Pierre Deligne, \emph{Rel\`evement des surfaces {$K3$} en caract\'eristique
  nulle}, Algebraic surfaces ({O}rsay, 1976--78), Lecture Notes in Math., vol.
  868, Springer, Berlin-New York, 1981, Prepared for publication by Luc
  Illusie, pp.~58--79. \MR{638598 (83j:14034)}

\bibitem[DK07]{dolgachevkondo}
Igor~V. Dolgachev and Shigeyuki Kond\=o, \emph{Moduli of {$K3$} surfaces and
  complex ball quotients}, Arithmetic and geometry around hypergeometric
  functions, Progr. Math., vol. 260, Birkh\"auser, Basel, 2007, pp.~43--100.
  \MR{2306149}

\bibitem[Dol96]{dolgachevlattice}
I.~V. Dolgachev, \emph{Mirror symmetry for lattice polarized {$K3$} surfaces},
  J. Math. Sci. \textbf{81} (1996), no.~3, 2599--2630, Algebraic geometry, 4.
  \MR{1420220}

\bibitem[End99]{endrass}
Stephan Endra\ss, \emph{On the divisor class group of double solids},
  Manuscripta Math. \textbf{99} (1999), no.~3, 341--358. \MR{1702593}

\bibitem[EvdGM]{EvdGM}
Bas Edixhoven, Gerard van~der Geer, and Ben Moonen, \emph{Abelian varieties},
  URL:http://gerard.vdgeer.net/AV.pdf.

\bibitem[Ful75]{fultonIHES75}
William Fulton, \emph{Rational equivalence on singular varieties}, Inst. Hautes
  \'{E}tudes Sci. Publ. Math. (1975), no.~45, 147--167. \MR{0404257}

\bibitem[Ful98]{fulton}
\bysame, \emph{Intersection theory}, second ed., Ergebnisse der Mathematik und
  ihrer Grenzgebiete. 3. Folge. A Series of Modern Surveys in Mathematics
  [Results in Mathematics and Related Areas. 3rd Series. A Series of Modern
  Surveys in Mathematics], vol.~2, Springer-Verlag, Berlin, 1998. \MR{1644323
  (99d:14003)}

\bibitem[GM87]{gilletmessing87}
Henri Gillet and William Messing, \emph{Cycle classes and {R}iemann-{R}och for
  crystalline cohomology}, Duke Math. J. \textbf{55} (1987), no.~3, 501--538.
  \MR{904940}

\bibitem[Gou14]{gounelas}
Frank Gounelas, \emph{The first cohomology of separably rationally connected
  varieties}, C. R. Math. Acad. Sci. Paris \textbf{352} (2014), no.~11,
  871--873. \MR{3268754}

\bibitem[GS88]{grossuwaAJ}
Michel Gros and Noriyuki Suwa, \emph{Application d'{A}bel-{J}acobi {$p$}-adique
  et cycles alg\'{e}briques}, Duke Math. J. \textbf{57} (1988), no.~2,
  579--613. \MR{962521}

\bibitem[Har77]{hartshorne}
Robin Hartshorne, \emph{Algebraic geometry}, Springer-Verlag, New
  York-Heidelberg, 1977, Graduate Texts in Mathematics, No. 52. \MR{0463157}

\bibitem[Har10]{HartDef}
\bysame, \emph{Deformation theory}, Graduate Texts in Mathematics, vol. 257,
  Springer, New York, 2010. \MR{2583634}

\bibitem[HKT16]{HKT16}
Brendan Hassett, Andrew Kresch, and Yuri Tschinkel, \emph{Stable rationality
  and conic bundles}, Math. Ann. \textbf{365} (2016), no.~3-4, 1201--1217.
  \MR{3521088}

\bibitem[Huy16]{huybrechtsk3}
Daniel Huybrechts, \emph{Lectures on {K}3 surfaces}, Cambridge Studies in
  Advanced Mathematics, vol. 158, Cambridge University Press, Cambridge, 2016.
  \MR{3586372}

\bibitem[Ill79]{illusiedRW}
Luc Illusie, \emph{Complexe de de\thinspace {R}ham-{W}itt et cohomologie
  cristalline}, Ann. Sci. \'{E}cole Norm. Sup. (4) \textbf{12} (1979), no.~4,
  501--661. \MR{565469}

\bibitem[Kis10]{kisin_intcan}
Mark Kisin, \emph{Integral models for {S}himura varieties of abelian type}, J.
  Amer. Math. Soc. \textbf{23} (2010), no.~4, 967--1012. \MR{2669706}

\bibitem[Kol96]{kollar}
J{\'a}nos Koll{\'a}r, \emph{Rational curves on algebraic varieties}, Ergebnisse
  der Mathematik und ihrer Grenzgebiete. 3. Folge. A Series of Modern Surveys
  in Mathematics [Results in Mathematics and Related Areas. 3rd Series. A
  Series of Modern Surveys in Mathematics], vol.~32, Springer-Verlag, Berlin,
  1996. \MR{1440180}

\bibitem[KT19]{KontTsch}
Maxim Kontsevich and Yuri Tschinkel, \emph{Specialization of birational types},
  Invent. Math. \textbf{217} (2019), no.~2, 415--432. \MR{3987175}

\bibitem[Lau76]{laumon76}
G\'{e}rard Laumon, \emph{Homologie \'{e}tale}, S\'{e}minaire de
  g\'{e}om\'{e}trie analytique (\'{E}cole {N}orm. {S}up., {P}aris, 1974-75),
  Soc. Math. France, Paris, 1976, pp.~163--188. Ast\'{e}risque, No. 36--37.
  \MR{0444667}

\bibitem[Lec86]{lecomte86}
Florence Lecomte, \emph{Rigidit\'e des groupes de {C}how}, Duke Math. J.
  \textbf{53} (1986), no.~2, 405--426. \MR{850543 (88c:14013)}

\bibitem[Mbo17]{mboro}
Ren\'e Mboro, \emph{On the universal {${\rm CH}_0$} group of cubic threefolds
  in positive characteristic}, Manuscripta Math. \textbf{154} (2017), no.~1-2,
  147--168. \MR{3682208}

\bibitem[MFK94]{mumfordGIT}
D.~Mumford, J.~Fogarty, and F.~Kirwan, \emph{Geometric invariant theory}, third
  ed., Ergebnisse der Mathematik und ihrer Grenzgebiete (2) [Results in
  Mathematics and Related Areas (2)], vol.~34, Springer-Verlag, Berlin, 1994.
  \MR{1304906}

\bibitem[Mil82]{milne82}
J.~S. Milne, \emph{Zero cycles on algebraic varieties in nonzero
  characteristic: {R}ojtman's theorem}, Compositio Math. \textbf{47} (1982),
  no.~3, 271--287. \MR{681610}

\bibitem[MP15]{perak3}
Keerthi Madapusi~Pera, \emph{The {T}ate conjecture for {K}3 surfaces in odd
  characteristic}, Invent. Math. \textbf{201} (2015), no.~2, 625--668.
  \MR{3370622}

\bibitem[MP19]{peratoroidal}
\bysame, \emph{Toroidal compactifications of integral models of {S}himura
  varieties of {H}odge type}, Ann. Sci. \'{E}c. Norm. Sup\'{e}r. (4)
  \textbf{52} (2019), no.~2, 393--514. \MR{3948111}

\bibitem[Mum70]{mumfordAV}
David Mumford, \emph{Abelian varieties}, Tata Institute of Fundamental Research
  Studies in Mathematics, No. 5, Published for the Tata Institute of
  Fundamental Research, Bombay; Oxford University Press, London, 1970.
  \MR{0282985 (44 \#219)}

\bibitem[Mur73]{murre-cubic}
J.~P. Murre, \emph{Reduction of the proof of the non-rationality of a
  non-singular cubic threefold to a result of {M}umford}, Compositio Math.
  \textbf{27} (1973), 63--82. \MR{352089}

\bibitem[Mur85]{murre83}
\bysame, \emph{Applications of algebraic {$K$}-theory to the theory of
  algebraic cycles}, Algebraic geometry, {S}itges ({B}arcelona), 1983, Lecture
  Notes in Math., vol. 1124, Springer, Berlin, 1985, pp.~216--261. \MR{805336
  (87a:14006)}

\bibitem[NS19]{NicShin}
Johannes Nicaise and Evgeny Shinder, \emph{The motivic nearby fiber and
  degeneration of stable rationality}, Invent. Math. \textbf{217} (2019),
  no.~2, 377--413. \MR{3987174}

\bibitem[Sam60]{samuelequivalence}
Pierre Samuel, \emph{Relations d'\'equivalence en g\'eom\'etrie alg\'ebrique},
  Proc. {I}nternat. {C}ongress {M}ath. 1958, Cambridge Univ. Press, New York,
  1960, pp.~470--487. \MR{0116010}

\bibitem[Ser59]{serrePi1}
J.-P. Serre, \emph{On the fundamental group of a unirational variety}, J.
  London Math. Soc. \textbf{34} (1959), 481--484. \MR{0109155}

\bibitem[SGA73]{SGA7II}
\emph{Groupes de monodromie en g\'eom\'etrie alg\'ebrique. {II}}, 1973,
  S{\'e}minaire de G{\'e}om{\'e}trie Alg{\'e}brique du Bois-Marie 1967--1969
  (SGA 7 II), Dirig{\'e} par P. Deligne et N. Katz, pp.~x+438. \MR{0354657 (50
  \#7135)}

\bibitem[Suw88]{Suwa}
Noriyuki Suwa, \emph{Sur l'image de l'application d'{A}bel-{J}acobi de
  {B}loch}, Bull. Soc. Math. France \textbf{116} (1988), no.~1, 69--101.
  \MR{946279}

\bibitem[Tem17]{temkin17}
Michael Temkin, \emph{Tame distillation and desingularization by
  {$p$}-alterations}, Ann. of Math. (2) \textbf{186} (2017), no.~1, 97--126.
  \MR{3665001}

\bibitem[Tot16]{totaroHype}
Burt Totaro, \emph{Hypersurfaces that are not stably rational}, J. Amer. Math.
  Soc. \textbf{29} (2016), no.~3, 883--891. \MR{3486175}

\bibitem[Via15]{VialCK}
Charles Vial, \emph{Chow-{K}\"{u}nneth decomposition for 3- and 4-folds fibred
  by varieties with trivial {C}how group of zero-cycles}, J. Algebraic Geom.
  \textbf{24} (2015), no.~1, 51--80. \MR{3275654}

\bibitem[Voi88]{voisin88}
Claire Voisin, \emph{Sur la jacobienne interm\'{e}diaire du double solide
  d'indice deux}, Duke Math. J. \textbf{57} (1988), no.~2, 629--646.
  \MR{962523}

\bibitem[Voi06]{voisinIntHodgeUni}
\bysame, \emph{On integral {H}odge classes on uniruled or {C}alabi-{Y}au
  threefolds}, Moduli spaces and arithmetic geometry, Adv. Stud. Pure Math.,
  vol.~45, Math. Soc. Japan, Tokyo, 2006, pp.~43--73. \MR{2306166}

\bibitem[Voi07a]{voisinI}
\bysame, \emph{Hodge theory and complex algebraic geometry. {I}}, english ed.,
  Cambridge Studies in Advanced Mathematics, vol.~76, Cambridge University
  Press, Cambridge, 2007, Translated from the French by Leila Schneps.
  \MR{2451566 (2009j:32014)}

\bibitem[Voi07b]{voisinII}
\bysame, \emph{Hodge theory and complex algebraic geometry. {II}}, english ed.,
  Cambridge Studies in Advanced Mathematics, vol.~77, Cambridge University
  Press, Cambridge, 2007, Translated from the French by Leila Schneps.
  \MR{2449178 (2009j:32015)}

\bibitem[Voi13]{voisinAJ13}
\bysame, \emph{Abel-{J}acobi map, integral {H}odge classes and decomposition of
  the diagonal}, J. Algebraic Geom. \textbf{22} (2013), no.~1, 141--174.
  \MR{2993050}

\bibitem[Voi14]{voisinDiag}
\bysame, \emph{Chow rings, decomposition of the diagonal, and the topology of
  families}, Annals of Mathematics Studies, vol. 187, Princeton University
  Press, Princeton, NJ, 2014. \MR{3186044}

\bibitem[Voi15]{voisinUniv}
\bysame, \emph{Unirational threefolds with no universal codimension {$2$}
  cycle}, Invent. Math. \textbf{201} (2015), no.~1, 207--237. \MR{3359052}

\bibitem[Voi17]{voisinCubicCH}
\bysame, \emph{On the universal {$\rm CH_0$} group of cubic hypersurfaces}, J.
  Eur. Math. Soc. (JEMS) \textbf{19} (2017), no.~6, 1619--1653. \MR{3646872}

\bibitem[Voi23]{voisin2023geometric}
\bysame, \emph{Geometric representability of 1-cycles on rationally connected
  threefolds}, 2023, \mbox{2208.12557}.

\bibitem[Wel81]{welters81}
G.~E. Welters, \emph{Abel-{J}acobi isogenies for certain types of {F}ano
  threefolds}, Mathematical Centre Tracts, vol. 141, Mathematisch Centrum,
  Amsterdam, 1981. \MR{633157}

\end{thebibliography}

\end{document}